\newtheorem{satz}{Theorem}[section]
\newtheorem{lemma}[satz]{Lemma}
\newtheorem{prop}[satz]{Proposition}
\newtheorem{korollar}[satz]{Corollary}
\newtheorem{bem}[satz]{Remark}
\newtheorem{definition}[satz]{Definition}
\newtheorem*{einleitung1}{Merzlyakov's Theorem}
\newtheorem*{gen}{Generalized Merzlyakov's Theorem}
\newtheorem*{tarski}{Tarski-problems}
\newtheorem*{einleitungbeispiel}{Example}
\theoremstyle{definition}
\newtheorem{beispiel}[satz]{Example}
\newcommand{\A}{\mathbb{A}}
\newcommand{\N}{\mathbb{N}}
\newcommand{\B}{\mathbb{B}}
\newcommand{\R}{\mathbb{R}}
\newcommand{\C}{\mathbb{C}}
\newcommand{\D}{\mathbb{D}}
\newcommand{\h}{\mathbb{H}}
\newcommand{\U}{\mathbb{U}}
\newcommand{\Z}{\mathbb{Z}}
\newcommand{\mO}{\mathcal{O}}
\newcommand{\vp}{\varphi}
\DeclareMathOperator{\tr}{tr}
\DeclareMathOperator{\CH}{CH}
\DeclareMathOperator{\Ax}{Ax}
\DeclareMathOperator{\id}{id}
\DeclareMathOperator{\rk}{rk}
\DeclareMathOperator{\Cl}{Cl}
\DeclareMathOperator{\CF}{CF}
\DeclareMathOperator{\Th}{Th}
\DeclareMathOperator{\im}{Im}
\DeclareMathOperator{\stab}{Stab}
\DeclareMathOperator{\isom}{Isom}
\DeclareMathOperator{\Hom}{Hom}
\DeclareMathOperator{\Cay}{Cay}
\DeclareMathOperator{\GCl}{GCl}
\DeclareMathOperator{\Aut}{Aut}
\DeclareMathOperator{\Res}{Res}
\DeclareMathOperator{\Mod}{Mod}
\DeclareMathOperator{\Comp}{Comp}
\DeclareMathOperator{\dist}{dist}
\begin{document}
\pagestyle{plain}
\pagenumbering{roman}

\title{Test sequences and formal solutions over hyperbolic groups}
\author{Simon Heil
%Math. Seminar, Christian-Albrechts-University \\
%Kiel, Germany\\
%email: heil@math.uni-kiel.de
}
\maketitle

\begin{abstract}
\noindent In 2006 Z. Sela and independently O. Kharlampovich and A. Myasnikov gave a solution to the Tarski problems by showing that two non-abelian free groups have the same elementary theory. Subsequently Z. Sela generalized the techniques used in his proof of the Tarski conjecture to classify all finitely generated groups elementary equivalent to a given torsion-free hyperbolic group.
One important step in his analysis of the elementary theory of free and torsion-free hyperbolic groups is the Generalized Merzlyakov's Theorem.\\
In our work we prove a version of the Generalized Merzlyakov's Theorem over hyperbolic groups (possibly with torsion). To do this, we show that given a hyperbolic group $\Gamma$ and $\Gamma$-limit group $L$, there exists a larger group $\Comp(L)$, namely its completion,  into which $L$ embeds, and a sequence of points $(\lambda_n)$ in the variety $\Hom(\Comp(L),\Gamma)$ from which one can recover the structure of the group $\Comp(L)$.
Using such a test sequence $(\lambda_n)$ we are finally able to prove a version of the Generalized Merzlyakov's Theorem over all hyperbolic groups.\end{abstract}
%\newpage
%\pagestyle{empty}
%\quad
%\newpage
%\pagestyle{plain}
%\section*{Danksagung}
%\addcontentsline{toc}{section}{Danksagung}
%Mein herzlicher Dank gilt Richard Weidmann f\"ur seine gute Betreuung w\"ahrend meiner Promotion. Er hatte stets ein offenes Ohr f\"ur Fragen und Probleme und durch die vielen Diskussionen mit ihm wurde diese Arbeit \"uberhaupt erst erm\"oglicht.\\
%Besonders danken m\"ochte ich auch Zlil Sela f\"ur seine Geduld mit mir und meinen Fragen zu seiner Arbeit \"uber die Tarski Probleme. Er hat sich stets die Zeit genommen ausf\"uhrlich auf jede meiner Emails zu antworten.\\
%Desweiteren danke ich meinen Mitdoktoranten Daniel Bissinger, Giso Jahncke und Eric Henack f\"ur die angenehme Arbeitsatmosph\"are.\\
%Schlussendlich danke ich Marie f\"ur ihre durchgehende Unterst\"utzung w\"ahrend meiner Promotion.
%\newpage
%\pagestyle{empty}
%\quad
%\newpage
\pagestyle{plain}
\tableofcontents
\newpage
\section{Introduction}
\pagestyle{headings} 
\pagenumbering{arabic}
Model theory is a branch of mathematical logic where mathematical structures are studied by considering the first-order sentences true in those structures and the sets definable by first-order formulas. Intuitively, a structure is a set equipped with distinguished functions, relations and elements. We then use a language where we can talk about this structure. For example the first-order language $L_0$ of groups uses the following symbols:
\begin{itemize}
\item The binary function group multiplication $"\cdot"$, the unary function inverse $" ^{-1}"$, the constant $"1"$ and the equality relation $"="$.
\item Variables $"x,y,z"$ or tuples of variables $"\underline{x}=(x_1,\ldots,x_n)"$ (which will have to be interpreted as individual group elements).
\item Logical connectives $"\wedge"$ ("and"), $"\vee"$ ("or"), $"\neg"$ ("not"), the quantifiers $"\forall"$ ("for all") and $"\exists"$ ("there exists") and parentheses $"("$ and $")"$.
\end{itemize}

\noindent A \textit{formula} in the language of groups is a logical expression using the above symbols.
A variable in a formula is called \textit{bound} if it is bound to a quantifier and \textit{free} otherwise. A \textit{sentence} is a formula in $L_0$ where all variables are bound. \\
Given a group $G$ and a sentence $\sigma$ we will say that \textit{$\sigma$ is a true sentence in $G$} (denoted by $G\models \sigma$) if $\sigma$ is true if interpreted in $G$ (i.e. variables interpreted as group elements in $G$). The \textit{first-order} or \textit{elementary theory} of a group $G$, denoted by $\Th(G)$, is the set of all sentences of $L_0$ true in $G$.\\
The groups whose first-order theory we are mainly interested in, are free and hyperbolic groups.
The interest of model theorist in the first-order theory of free groups dates back to the 1940s when Alfred Tarski posed his famous questions, which are now widely known as the Tarski-problems.

\begin{tarski}
\begin{enumerate}[(1)]
\item $\Th(F_n)=\Th(F_m)$ for all $m,n\geq 2$?
\item Is the first-order theory of a non-abelian free group $F_n$ decidable?
\end{enumerate}
\end{tarski}

There was some early progress on the subject by Vaught who proved that two non-abelian free groups of infinite rank have the same elementary theory. Meanwhile Merzlyakov was able to show that all non-abelian free groups have the same positive theory, that is all sentences not containing negations.\\
In the 1980s Makanin then described an algorithm that could decide if a given system of equations (with constants) has a solution in a free group. Later Razborov refined this process to give a complete description of the  set of solutions for a system of equations in a free group. This is now called the Makanin-Razborov algorithm.\\
Rips then recognized that one could use this algorithm to study group actions on real trees. This culminated in a structure theorem for the actions of finitely presented groups on real trees, which is usually referred to as the Rips machine. The original version of the Rips machine has later been generalized by Sela to actions of finitely generated groups and further refined by Guirardel \cite{guirardel}.\\

Still the original questions of Tarski remained unsolved and became known for their notorious difficulty. It took well over 60 years until finally in 2006 Zlil Sela and independently  O. Kharlampovich and A. Myasnikov published their astonishing solutions to Tarski's problems (\cite{sela1}-\cite{sela6} and \cite{kharlampovich/myasnikov}). Sela only considered the elementary theory of free groups, while Kharlampovich/Myasnikov also answered Tarski's second question to the positive. Despite this now being more than ten years ago, up to the present day the mathematical community has not yet fully absorbed the sophisticated ideas underlying Sela's voluminous work.\\
His proof splits up into seven papers. In \cite{sela1} he uses a geometric interpretation of ideas going back to Makanin and Razborov's algorithm for solving systems of equations over free groups to give a parametrization of $\Hom(G,F_k)$ for a fixed non-abelian free group $F_k$ and an arbitrary finitely generated group $G$.
In \cite{sela} he proves an implicit function-type theorem over free groups, which he calls Generalized Merzlyakov's Theorem and shows that given an $\forall\exists$-sentence true in $F_k$, there exist so-called formal solutions witnessing the truthfulness of this sentence.\\
In his third paper (\cite{sela3}) he shows that there exists a uniform bound on exceptional solutions to systems of equations with parameters. 
In \cite{sela4} he uses the results from the first three papers to show that all non-abelian free groups have the same $\forall\exists$-theory. This is the induction start for the general proof of the elementary equivalence of non-abelian free groups. Namely he proves quantifier elimination over free groups in \cite{sela5.1} and \cite{sela5.2}, that is he shows that an arbitrary first-order sentence over a given non-abelian free group is logically equivalent to a boolean combination of $\forall\exists$-sentences, hence using the results from \cite{sela4} mentioned before he finally was able to prove that all non-abelian free groups have the same elementary theory in \cite{sela6}.\\

Since then many of these results have been generalized and led to other important discoveries. The following list covers some of the developments following Sela's solution to the Tarski problems but is by no means complete.
\begin{itemize}
\item Solutions to systems of equations over
\begin{itemize}
\item torsion-free hyperbolic groups by Sela (\cite{selahyp}),
\item all hyperbolic groups by Reinfeldt-Weidmann (\cite{rewe}),
\item torsion-free toral relatively hyperbolic groups by Groves (\cite{grovessolo}),
\item acylindrical hyperbolic groups by Groves-Hull (\cite{groves})
\item free products of groups by Jaligot-Sela (\cite{jaligot})
\item free semigroups by Sela. (\cite{sela11})
\end{itemize}
\item The elementary theory of torsion-free hyperbolic groups \cite{selahyp}.
\item The elementary theory of free products of groups \cite{sela10}.
\end{itemize}

On the one hand the first paper of Sela on the elementary theory of free groups is now fairly well-understood and has been generalized in several ways, on the other hand for none of the remaining papers there is a generalization from someone else besides Sela himself. The only work we are aware of, dealing with one of the later papers of Sela's solution is the recent PhD thesis of J. Gonz\'ales, who proves a Generalized Merzlyakov's theorem over, what he calls, $\pi$-free groups \cite{javier} and the work of S. Andr\'e, who shows that hyperbolicity is preserved under elementary equivalence \cite{simon}. Some parts of the original results from Sela's second paper are also reproved in \cite{sklinos}.\\ 

We are mainly interested in the first step in Sela's proof of the Tarski problems, that is to show that $\Th_{\forall\exists}(F_m)=\Th_{\forall\exists}(F_n)$ for given $m,n\geq 2$, where $\Th_{\forall\exists}(F_m)$ consists of all sentences with two quantifiers, true in the free group $F_m$. These sentences are called $\forall\exists$- or \textit{AE-sentences}. 
An $\forall\exists$-sentence $\sigma$ is of the form
$$\forall y\ \exists x\ \Sigma(x,y)=1\wedge \Psi(x,y)\neq 1,$$
where $x$ and $y$ are tuples of variables, $\Sigma(x,y)=1$ is a system of equations in the variables $x,y$ and $\Psi(x,y)\neq 1$ is a system of inequalities.\\
Now fix a non-abelian free group $F_k$ and let $\sigma$ be an $\forall\exists$-sentence, true in $F_k$, that does not contain inequalities. Then $F_k\models\sigma$ if and only if for every homomorphism $\vp:F(y)\to F_k$ there exists an extension $\bar{\vp}:G_{\Sigma}\to F_k$ to the group $G_{\Sigma}:=\langle x,y\ |\ \Sigma(x,y)=1\rangle$ making the following diagram commute.
\begin{center}
\begin{tikzpicture}[descr/.style={fill=white}]
\matrix(m)[matrix of math nodes,
row sep=4em, column sep=4em,
text height=1.5ex, text depth=0.25ex]
{G_{\Sigma}& \\
F(y)& F_k\\};
\path[->,font=\scriptsize]
(m-2-1) edge node[descr] {$\nu$}(m-1-1)
(m-2-1) edge node[descr] {$\vp$} (m-2-2)
(m-1-1) edge node[descr] {$\bar{\vp}$} (m-2-2);
\end{tikzpicture}
\end{center}
Here $\nu: F(y)\to G_{\Sigma}$ is the canonical embedding. Clearly if there exists a retraction $\pi:G_{\Sigma}\to F(y)$ such an extension always exists by just setting $\bar{\vp}=\vp\circ\pi$.\\
It turns out that the other direction is also true, as was shown by Yu. I. Merzlyakov.

\begin{einleitung1}[\cite{merzlyakov}]
Let $F_k$ be a non-abelian free group and suppose that
$$F_k\models \forall y\ \exists x\ \Sigma(x,y)=1.$$
Then there exists a retraction $\pi: G_{\Sigma}\to F(y)$.
\end{einleitung1}

The retraction $\pi$, respectively $\pi(x)$, is called a \textit{formal solution}. The existence of formal solutions does not depend on the rank of the free group $F_k$ and therefore it follows immediately that 
$\Th^+_{\forall\exists}(F_m)=\Th^+_{\forall\exists}(F_n),$ i.e. $F_m$ and $F_n$ have the same positive theory.\\
Unfortunately the picture becomes a lot more complicated as soon as inequalities are involved.
A general $\forall\exists$-sentence
$$\forall y\ \exists x\ \Sigma(x,y)=1\wedge \Psi(x,y)\neq 1$$
is true in $F_k$ if and only if
for every homomorphism $\vp:F(y)\to F_k$ there exists an extension $\bar{\vp}:G_{\Sigma}\to F_k$ such that $\vp=\bar{\vp}\circ\nu$ and
with the additional property that $\bar{\vp}(v(x,y))\neq 1$ for all $v(x,y)\in \Psi(x,y)$. By Merzlyakov's Theorem there still exists a retraction $\pi:G_{\Sigma}\to F(y)$ but the formal solution $\pi(x)$ fails to verify the sentence for all homomorphisms $\vp: F(y)\to F_k$ with the property that $\vp\circ\pi(v(x,y))=1$ for some $v(x,y)\in\Psi(x,y)$.
% Hence to find a witness for the truth of the sentence it remains to consider the variety $$\{\vp\in\Hom(F(y),F_k)\ |\ \vp\circ\pi(v(x,y))=1\text{ for some } v(x,y)\in\Psi(x,y)\}.$$
Let  $$v_1'(y):=v_1(\pi(x),y)\neq 1, \ldots, v_r'(y):=v_r(\pi(x),y)\neq 1$$ be the inequalities of $\Psi(x,y)$ after replacing the variables $x$ by $\pi(x)\in F(y)$. Fix $i\in\{1,\ldots, r\}$. We assume for simplicity that $R_i(y):=\langle y\ |\ v_i'(y)=1\rangle$ is a limit group (in general one first has to pass to finitely many limit quotients of $R_i(y)$). Clearly $R_i(y)$ is a proper quotient of $F(y)$ and we define the basic definable set corresponding to the limit group $R_i(y)$ as
$$B_i(y)=\{y_0\in F_k^l\ |\ v_i'(y_0)=1\}.$$
If $$y_0\notin V_B(y):=B_1(y)\cup\ldots\cup B_r(y)$$ then $v_j'(y_0)\neq 1$ for every $j\in\{1,\ldots,r\}$.
The formal solution $\pi(x)$ given by Merzlyakov's Theorem proves the validity of our sentence on the set $F^l_k\setminus V_B(y)$ and hence it only remains to find a witness for the truth of the sentence
$$\forall y\in V_B(y)\ \exists x\ \Sigma(x,y)=1\wedge \Psi(x,y)\neq 1.$$
So the question becomes: Does Merzlyakov's Theorem still hold when the universal variables $y$ are restricted to some variety?\\
Unfortunately, a naive generalization of Merzlyakov's Theorem cannot be true as the following example demonstrates.

\begin{einleitungbeispiel}
Let $L=\langle \underline{y}\ |\ \Theta(\underline{y})\rangle$ be a limit group with JSJ decomposition $$L=R_1\ast_{C} R_2,$$ where $C=\langle c\rangle\cong \Z$ and $R_1=\langle \underline{y_1}\rangle ,R_2=\langle \underline{y_2}\rangle $ are rigid. Let $$S=\langle \underline{s}\ | \prod_{j=1}^g [s_{2j-1},s_{2j}]=1\rangle$$ be the fundamental group of a closed orientable surface of genus $g\geq 2$ and suppose that $S$ is the unique maximal shortening quotient of $L$ with corresponding quotient map $\eta:L\to S$. This means that for every homomorphism $\vp\in\Hom(L,F_k)$ there exists a Dehn-twist $\alpha\in \Aut(L)$ along the single edge of the JSJ decomposition of $L$ and $\bar{\vp}\in \Hom(S,F_k)$ such that $\vp=\bar{\vp}\circ\eta\circ\alpha$. Let $V_{\Theta}=\{ \underline{y}\in F_k^l\ |\ \Theta(\underline{y})=1\}$ and $s_i(\underline{y_1}, t\underline{y_2}t^{-1})$ an element of $(\eta\circ\alpha)^{-1}(s_i)$. Then
$$F_k\models \forall \underline{y}\in V_{\Theta}\ \exists t,\underline{s}\ \underbrace{([t,c]=1)\wedge (\underline{s}=\underline{s}(\underline{y_1}, t\underline{y_2}t^{-1})\wedge (\prod_{j=1}^g[s_{2j-1},s_{2j}]=1)}_{\Sigma(t,\underline{s},\underline{y})=1},$$
i. e. the sentence is true in $F_k$ (here we implicitly view $c$ as a word in $\underline{y}$). Suppose that there exists a retraction $$\pi: G_{\Sigma}=\langle t, \underline{s}, \underline{y}\ |\ V_{\Theta}, \Sigma(t,\underline{s},\underline{y})\rangle \to L=\langle \underline{y}\ |\ V_{\Theta}\rangle.$$
Then in particular $$\pi(\prod_{j=1}^g[s_{2j-1},s_{2j}])=1$$ and hence $\pi$ induces a homomorphism $\iota: S\to L$ such that $\eta\circ\iota=\id$ and therefore $\iota$ is injective.
\end{einleitungbeispiel} 

This example shows that if one wants to prove a version of Merzlyakov's Theorem in the case that the universal variables are restricted to some variety, then one has to guarantee that the maximal shortening quotients of the corresponding limit group $L$ are already contained as subgroups in $L$.\\
Obviously this is not true in general but nonetheless one can show the following. Let $L:=\langle y\ |\ B(y)=1\rangle$ be a limit group and $G_{\Sigma}:=\langle  x,y \ |\ B(y), \Sigma(x,y)\rangle$ then one can embed $L$ into a new group $\Comp(L):=\langle y,z\rangle$, which is called its completion, such that every homomorphism $\vp:L\to F_k$ can be extended to a homomorphism $\hat{\vp}: \Comp(L)\to F_k$ (in general there exist finitely many completions, but for now we suppose that it is only a single one). Moreover, for the group $\Comp(L)$ a generalized version of Merzlyakov's Theorem holds.

\begin{gen}
Let $F_k$ be a non-abelian free group, 
$$L=\langle y\ |\ B(y)=1\rangle$$ a limit group, $V_{B}=\{ y\in F_k^l\ |\ B(y)=1\}$ and suppose that
$$F_k\models \forall y\in V_{B}\ \exists x\ \Sigma(x,y)=1\wedge\Psi(x,y)\neq 1.$$
Then $L$ can be embedded into its completion $\Comp(L)$ via a homomorphism $\iota$ and there exists a homomorphism $\pi: G_{\Sigma}\to \Comp(L)$ such that $\pi\circ\nu=\iota$, where $\nu:L\to G_{\Sigma}$ is the canonical embedding.
\end{gen}

\begin{center}
\begin{tikzpicture}[descr/.style={fill=white}]
\matrix(m)[matrix of math nodes,
row sep=4em, column sep=4em,
text height=1.5ex, text depth=0.25ex]
{ &G_{\Sigma}& \\
L& \Comp(L) &F_k\\};
\path[->,font=\scriptsize]
(m-2-1) edge node[descr] {$\nu$}(m-1-2)
(m-2-1) edge node[descr] {$\iota$} (m-2-2)
(m-2-2) edge node[descr] {$\hat{\vp}$} (m-2-3)
(m-1-2) edge node[descr] {$\pi$} (m-2-2)
(m-2-1) edge[bend right=40] node[above] {$\vp$} (m-2-3);
\end{tikzpicture}
\end{center}

It should be noted though that this theorem is not entirely true as it stands, due to the difficulties involved when $L$ contains non-cyclic abelian subgroups. For a precise formulation of the Generalized Merzlyakov's Theorem see \cite{sela} (or Theorem \ref{maintheorem} for the corresponding theorem over hyperbolic groups).\\
Note that in contrast to the ordinary Merzlyakov's Theorem, the map $\pi$ is no longer a retraction.
Still this gives us a formal solution $\pi(x)=u(y,z)\in \Comp(L)=\langle y,z\rangle$.
% hence it only remains to consider the subvariety 
%\begin{align*}V_{\Comp}:=\langle \vp\in\Hom(L,F_k)\ |\ &\vp \text{ can be extended to }\bar{\vp}\in\Hom(\Comp(L),F_k) \text{ and }\\
%&\bar{\vp}(v(u(y,z),y))=1\text{ for some } v(x,y)\in\Psi(x,y)\rangle.
%\end{align*}
But the price for finding this formal solution was that we had to introduce new variables $z$.\\
Now for all $i\in\{1,\ldots,r\}$ let $M_i$ be the maximal limit quotient of $\Comp(L)$ corresponding to all homomorphisms from $\Comp(L)$ to $F_k$ which map $v_i(u(y,z),y)$ to the identity (again we ignore the fact that in general there are finitely many quotients). This is a proper quotient of $\Comp(L)$. Let $$D_i(y)=\{ y\in B(y)\ |\ \exists z\in F_k^l:\ v_i(u(y,z),y)=1\}$$
be the (Diophantine) definable set corresponding to $M_i$. Set 
$$D(y):=D_1(y)\cup\ldots\cup D_r(y).$$
Then $D(y)\subsetneq B(y)$ and the formal solutions constructed by Merzlyakov's Theorem and the Generalized Merzlyakov's Theorem prove the validity of our sentence on the co-Diophantine set
$$F_k^l\setminus D(y).$$
Hence it remains to find a witness for the truth of the sentence
$$\forall y\in D(y)\ \exists x\ \Sigma(x,y)=1\wedge \Psi(x,a)\neq 1.$$

Repeatedly applying the Generalized Merzlyakov's Theorem yields smaller and smaller sets on which the sentence has to be analyzed. But a priori there is no reason why this process should eventually stop. Still Sela in \cite{sela4} is able to show that the completion was constructed carefully enough such that this iterative procedure indeed terminates. Since all of this does not depend on the rank of the free group $F_k$ it follows that 
$\Th_{\forall\exists}(F_m)=\Th_{\forall\exists}(F_n),$ i.e. $F_m$ and $F_n$ have the same $\forall\exists$-theory.\\

In our work we build on the results of Reinfeldt-Weidmann to generalize Sela's results to hyperbolic groups with torsion. More precisely we prove a  version of the Generalized Merzlyakov's Theorem for all hyperbolic groups. In Sela's paper on the elementary theory of torsion-free hyperbolic groups \cite{selahyp} a version of this theorem for torsion-free hyperbolic groups is announced, with the remark that the proof is fairly similar to the one in the free group case. Over all hyperbolic groups we have to address several problems caused by the presence of torsion, which makes some easy arguments from the torsion-free case a lot more technically involved. We therefore intend to give a short explanation here and there of the corresponding proof in the torsion-free case, supplementing Sela's result in \cite{selahyp}.\\

We start in chapter \ref{1} by recalling the fundamental ideas of graphs of groups and Rips theory, namely how one can deduce a graph of groups decomposition of a group $G$ from its (appropriate) action on a real tree. In chapter \ref{2} we collect basic properties of $\Gamma$-limit groups, describe their Dunwoody- and JSJ decomposition and their actions on real trees. Most of these results are borrowed from \cite{rewe}.\\
In chapter \ref{3} we introduce some model theory and state the Tarski problems. Then in chapter \ref{4} we review the main results from \cite{rewe}, that is the shortening argument and the existence of Makanin-Razborov diagrams for finitely generated groups over a hyperbolic group. We end this chapter with a description of special resolutions appearing in such a diagram, so-called strict and well-structured resolutions.\\
In chapter \ref{7} we construct the completion of a well-structured resolution $\Res(L)$, that is a $\Gamma$-limit group $\Comp(L)$ into which $L$ can be embedded. The main significance of the completion is that it admits so-called test sequences, i.e. sequences of generic points in the variety $\Hom(\Comp(L),\Gamma)$ which allow to recover the group $\Comp(L)$ from it. Constructing these test sequences is the core of our proof of the Generalized Merzlyakov's Theorem  and will take up all of chapter \ref{8}.\\
In chapter \ref{9} we are then finally able to prove a version of the Generalized Merzlyakov's Theorem over all hyperbolic groups. It should be noted though that we were not able to prove a full generalization of the Generalized Merzlyakov's Theorem as stated in \cite{selahyp} to the case of hyperbolic groups with torsion. Namely in order to prove that the collection of closures which appears in the theorem forms a covering closure, we have to restrict ourselves to the torsion-free case (see Theorem \ref{maintheorem}). We expect that this restriction can be removed, so that a full generalization of the Generalized Merzlyakov's Theorem over all hyperbolic groups in fact holds.

\section{Groups acting on trees and Rips theory}\label{1}
The structure of groups acting on simplicial trees is well-understood, thanks to Bass-Serre theory. The picture becomes a lot more complicated when actions of groups on real trees are considered. Still under some mild stability assumptions, Rips theory provides a description of these actions. This theory was developed by Rips (unpublished) who applied ideas from the Makanin-Razborov rewriting process for system of equations over free groups  to describe free actions of finitely presented groups on real trees (see \cite{gaboriau}). His ideas have then been generalized by Bestvina and Feighn \cite{feighn} to stable actions of finitely presented groups and by Sela \cite{sela9} to super-stable actions of finitely-generated torsion-free groups. Guirardel \cite{guirardel} then generalized Sela's version by weakening the stability assumptions and allowing the presence of torsion.\\

In this chapter we fix notations for graphs of groups and the corresponding Bass-Serre tree and we recall the definition of a graph of actions.
We then review some material and state the main theorem of \cite{guirardel} and explain certain simple actions on real trees which appear as the building blocks for general actions of groups on real trees.\\
We expect the reader to be familiar with the ideas of Bass-Serre theory and only fix the notation here, which will be used later on. For more details on the subject see \cite{serre} or \cite{kmw}.

\subsection{Graphs of groups}
A \textit{graph} $A$ is a tuple $A=(VA,EA, \alpha,\omega, ^{-1})$ consisting of a vertex set $VA$, a set of oriented edges $EA$, a fixed point free involution $^{-1}$ and maps $\alpha,\omega: EA\to VA$, which assign to every edge $e\in EA$ its initial vertex $\alpha(e)$ and terminal vertex $\omega(e)$, such that $\alpha(e^{-1})=\omega(e)$.\\
A \textit{simplicial tree} is a connected graph which contains no non-trivial closed paths.

\begin{definition}
A \textnormal{graph of groups} $\A$ is a tuple 
$$(A, (A_v)_{v\in VA}, (A_e)_{e\in EA}, (\alpha_e)_{e\in EA}, (\omega_e)_{e\in EA}),$$
where for all $v\in VA$, $e\in EA$
\begin{itemize}
\item $A$ is a connected graph,
\item $A_v$ and $A_e$ are groups, called vertex and edge groups and
\item $\alpha_e: A_e\to A_{\alpha(e)}$,\quad $\omega_e=\alpha_{e^{-1}}: A_e\to A_{\omega(e)}=A_{\alpha(e^{-1})}$ are monomorphisms.
\end{itemize} 
We call the maps $\alpha_e$ and $\omega_e$ boundary monomorphisms of the edge $e$.
\end{definition}

To every graph of groups $\A$ we can associate a group, namely its fundamental group $\pi_1(\A)$. Moreover we can associate a simplicial tree to $\A$ on which its fundamental group acts, its \textit{Bass-Serre tree} $T_A$.
 
\begin{definition}
Let $\A$ be a graph of groups and suppose that $A_v=\langle X_v\ |\ R_v\rangle$ for all $v\in VA$ and $A_e=\langle X_e\rangle$ for all $e\in EA$. Let $T$ be a maximal subtree of $A$. We then define the \textnormal{fundamental group} $\pi_1(\A,T)$ to be given by
$$\pi_1(\A,T)=\langle \bigcup_{v\in VA} X_v, \{s_e|e\in EA\}\ | \bigcup_{v\in VA} R_v, N\rangle,$$
where
$$N=\{s_e\ |\ e\in ET\}\cup\{s_es_{e^{-1}}\ |\ e\in EA\}\cup\{s_e\omega_e(x)s_e^{-1}=\alpha_e(x)\ |\ e\in EA, x\in X_e\}$$
\end{definition} 

\begin{bem}
Up to isomorphism $\pi_1(\A,T)$ does not depend on the choice of the maximal subtree $T$ and hence we denote the fundamental group of $\A$ by $\pi_1(\A)$. 
\end{bem}

Let $\A$ be a graph of groups. An \textit{$\A$-path} from $v\in VA$ to $w\in VA$ is a sequence $$(a_0,e_1,a_1,\ldots,e_k,a_k),$$ where
$e_1,\ldots,e_k$ is an edge-path from $v$ to $w$ in the underlying graph $A$, $a_0\in A_v$ and $a_i\in A_{\omega(e_i)}$ for $i\in\{1,\ldots,k\}$. Let $\sim$ be the equivalence relation on the set of $\A$-paths generated by the elementary equivalences
$$(a_0,e,a_1)\sim (a_0\alpha_e(c),e,\omega_e(c^{-1})a_1) \text{ and }$$
$$(a_1,e,1,e^{-1},a_2)\sim (a_1a_2).$$
We denote the equivalence class of an $\A$-path $p$ by $[p]$. For every base vertex $v_0\in VA$, the fundamental group of $\A$ with respect to $v_0$ is defined as 
$$\pi_1(\A,v_0)=\{[p]\ |\ p \text{ is an }\A\text{-path from } v_0 \text{ to }v_0\}.$$
It is easy to see that this definition does not depend (up to isomorphism) on the choice of the  base vertex and $\pi_1(\A,v_0)\cong \pi_1(\A,T)$.\\
For all $e\in EA$ let $C_e$ be a set of left coset representatives of $\alpha_e(A_e)$ in $A_{\alpha(e)}$. Then each $\A$-path $p$ is equivalent to a reduced $\A$-path $p'=(a_0,e_1,\ldots,e_k,a_k)$ such that $a_{i-1}\in C_{e_i}$ for all $i\in\{1,\ldots,k\}$. We say that $p'$ is in \textit{normal form} (with respect to the set $\{C_e\ |\ e\in EA\}$).

\begin{definition}\label{defdehntwist}
Let $\A$ be a graph of groups, $e\in EA$ and $g\in Z(A_e)$ an element of the center of $A_e$. The \textnormal{Dehn-twist} along $e$ by $g$ is the automorphism 
$$\vp: \pi_1(\A)\to \pi_1(\A), [a_0,e_1,\ldots,e_k,a_k]\to [\bar{a}_0,e_1,\ldots, e_k,\bar{a}_k],$$
where
$$\bar{a}_i=
\begin{cases}
    \omega_e(g)a_i,& \text{if } e_i=e\\
    \alpha_e(g^{-1})a_i,& \text{if } e_i=e^{-1}\\   
    a_i,              & \text{otherwise}
\end{cases}$$
\end{definition}

In the case of a one-edge splitting $\A$, this just recovers the usual definition of a Dehn-twist.

\subsection{Rips theory}
We now turn our attention from simplicial trees to $\R$-trees.
An \textit{$\R$-tree} (or \textit{real tree}) is a $0$-hyperbolic metric space. Suppose a group $G$ acts on an $\R$-tree $T$ by isometries. We call the action \textit{minimal}, if $T$ has no proper $G$-invariant subtree. We call a subtree of $T$ non-degenerate if it contains an arc, where an arc is a set homeomorphic to the interval $[0,1]$.
We now collect the relevant definitions and results from \cite{guirardel} which we are using in this work. For more details on $\R$-trees see \cite{chiswell} or \cite{guirardel}.

\begin{definition}
Let $G$ be a group acting on a real tree $T$ by isometries.
\begin{enumerate}[(a)]
\item $T$ satisfies the \textnormal{ascending chain condition} if for any sequence of arcs $I_1 \supset I_2 \supset \ldots$ in $T$ whose length converges to $0$, the sequence of stabilizers of the sequence is eventually constant.
\item A non-degenerate subtree $S\subset T$ is called \textnormal{stable} if for every arc $I\subset S$, $\stab_G(I)=\stab_G(S)$, where $\stab_G$ denotes the stabilizer in $G$. Otherwise $S$ is called \textnormal{unstable}.
\item $T$ is \textnormal{super-stable} if any arc with non-trivial stabilizer is stable
\item A non-degenerate subtree $S\subset T$ is called \textnormal{indecomposable} if for every pair of arcs $I,J\subset S$, there is a finite sequence $g_1I,\ldots,g_nI$ which covers $J$ such that $g_i\in G$ for all $i\in\{1,\ldots,n\}$ and $g_iI\cap g_{i+1}I$ is non-degenerate.
\end{enumerate}
\end{definition}

\begin{bem}
\begin{enumerate}[(a)]
\item By stabilizer we always mean pointwise stabilizer.
\item Clearly super-stability implies the ascending chain condition.
\end{enumerate}
\end{bem}

A graph of actions is a way of decomposing the action of a group on a real tree into pieces. For more information see \cite{levitt}.

\begin{definition}
A \textnormal{graph of actions} is a tuple
$$\mathcal{G}=\mathcal{G}(\A)=(\A, (T_v)_{v\in VA}, (p_e^{\alpha})_{e\in EA},l),$$
where
\begin{itemize}
	\item $\A$ is a graph of groups with underlying graph $A$,
	\item for each $v\in VA$, $T_v=(T_v,d_v)$ is a real $A_v$-tree,
	\item for each $e\in EA$, $p_e^{\alpha}\in T_{\alpha(e)}$ is a point fixed by $\alpha_e(A_e)$,
	\item $l: EA\to \R_{\geq 0}$ is a function satisfying $l(e)=l(e^{-1})$ for all $e\in EA$.
\end{itemize}
We call $l(e)$ the length of $e$. If $l=0$ then we omit $l$, i.e. we write $$\mathcal{G}=\mathcal{G}(\A)=(\A, (T_v)_{v\in VA}, (p_e^{\alpha})_{e\in EA}).$$
\end{definition}

Given a graph of actions one can canonically construct an associated real tree $T$ by replacing the vertices of the Bass-Serre tree $T_A$ of $\A$ by copies of the trees $T_v$ and any edge $e$ by a segment of length $l(e)$. Clearly the action of $\pi_1(\A)$ on $T_A$ extends naturally to an action of $\pi_1(\A)$ on $T$.\\

We are now ready to state the main theorem of \cite{guirardel}. This result and its relatives are usually referred to as the Rips machine.

\begin{satz}[\cite{guirardel}]\label{rips} Consider a non-trivial action of a finitely generated group $G$ on a real tree $T$ by isometries. Assume that
\begin{enumerate}
	\item $T$ satisfies the ascending chain condition,
	\item for any unstable arc $J\subset T$,
	\begin{enumerate}
		\item $\stab(J)$ is finitely generated
		\item $\stab(J)$ is not a proper subgroup of any conjugate of itself, i.e. for all $g\in G$ $g\stab(J)g^{-1}\subset \stab(J) \Longrightarrow g\stab(J)g^{-1}=\stab(J)$.
	\end{enumerate}
\end{enumerate}
Then either $G$ splits over the stabilizer of an unstable arc or over the stabilizer of an infinite tripod, or $T$ splits as a graph of actions
$$\mathcal{G}=(\A, (T_v)_{v\in VA}, (p_e^{\alpha})_{e\in EA},l),$$
where each vertex action of $A_v$ on the vertex tree $T_v$ is either
\begin{enumerate}
	\item simplicial: a simplicial action on a simplicial tree,
	\item of orbifold type: the action of $A_v$ has kernel $N_v$ and the faithful action of $A_v/N_v$ is dual to an arational measured foliation on a compact $2$-orbifold with boundary, or
	\item axial: $T_v$ is a line and the image of $A_v$ in $\isom(T_v)$ is a finitely generated group acting with dense orbits on $T_v$
\end{enumerate}
\end{satz}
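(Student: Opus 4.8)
The statement to prove is Theorem~\ref{rips}, the main theorem of Guirardel from \cite{guirardel}. Since the excerpt explicitly says this is quoted from \cite{guirardel} and "This result and its relatives are usually referred to as the Rips machine," the honest thing is to indicate that the proof is not reproduced here and to cite the source, while sketching the strategy one would use if one were to give it. The plan is therefore to (i) recall the overall dichotomy of the Rips machine, and (ii) indicate the main mechanism — the transverse covering / graph of actions decomposition together with the iterative simplification of band complexes.

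First I would reduce the action to a suitable normal form: decompose $T$ into its \emph{transverse covering} by indecomposable subtrees and simplicial edges, which by the theory of transverse coverings (see \cite{guirardel}) gives a graph of actions whose vertex trees are either indecomposable or arcs. The ascending chain condition, together with hypothesis~(2) on unstable arcs, is exactly what is needed to control what happens when this decomposition is not yet of the desired form. Next I would run the Rips machine proper on each indecomposable vertex tree $T_v$: one builds a band complex (a $2$-complex carrying a measured foliation whose leaf space is $T_v$) and repeatedly applies the moves of the Rips machine — collapsing, cutting along finite subgraphs, the ``Pull'' move handling annuli, and processing the thin and interval-exchange components. The ascending chain condition guarantees this process terminates, producing for each $v$ one of the three listed types: a simplicial piece, an orbifold (thin/Levitt) piece dual to an arational foliation on a compact $2$-orbifold with boundary, or an axial (toral, interval-exchange) piece on a line with dense orbits.

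The one genuine obstruction — and the reason for hypotheses~(2a), (2b) — is the presence of unstable arcs: when an arc $J$ is unstable, the Rips machine cannot be pushed through at $J$ without extra input, and instead one extracts a splitting of $G$ over $\stab(J)$ or over the stabilizer of an infinite tripod (the first alternative in the conclusion). Establishing this splitting is the technical heart: one uses that $\stab(J)$ is finitely generated and is not a proper subgroup of a conjugate of itself (super-stability-like conditions) to realize $G$ as the fundamental group of a graph of groups with the appropriate edge group, via a limit argument on the approximating simplicial trees. The final assembly step — gluing the vertex actions back along the simplicial edges of the transverse covering, and checking that the resulting structure really is a graph of actions whose associated real tree is $G$-equivariantly isometric to $T$ — is then a bookkeeping matter, using the functoriality of the graph-of-actions construction recalled just before the theorem statement.

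In short, the proof is that of \cite[Main Theorem]{guirardel}; we use it as a black box throughout the sequel and do not reproduce it here. The only points we will need downstream are the precise trichotomy of vertex actions and the control on edge and tripod stabilizers, both of which are exactly as stated above.
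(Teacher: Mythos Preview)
Your proposal is correct and matches the paper's treatment: the paper does not prove Theorem~\ref{rips} at all but simply cites it as the main theorem of \cite{guirardel}, following it only with a brief remark referring the reader to \cite{guirardel} and \cite{rewe} for details. Your sketch of Guirardel's strategy (transverse coverings, band complexes, the Rips machine moves, and the role of the hypotheses on unstable arcs) is a reasonable outline of the actual proof in \cite{guirardel}, and in fact goes beyond what the paper itself provides.
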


\begin{bem}
\begin{enumerate}[(a)]
\item For a detailed definition of the different types of actions on the vertex trees see \cite{guirardel} or \cite{rewe}.
\item The actions of axial and orbifold type have the indecomposable property.
\end{enumerate}
\end{bem}

Later we will also need a relative version of the Rips machine for pairs of groups:

\begin{definition}
A \textnormal{pair of groups} $(G,\mathcal{H})$ consists of a group $G$ together with a finite set of subgroups $\mathcal{H}=\{H_1,\ldots,H_k\}$ of $G$. A pair $(G,\mathcal{H})$ is \textnormal{finitely generated} if there exists a finite subset $X\subset G$ such that $G$ is generated by $X\cup H_1\cup\ldots\cup H_k$.\\
 An \textnormal{action of a pair} $(G,\mathcal{H})$ on a (real) tree $T$ is an action of $G$ on $T$ such that each subgroup $H_i$ acts elliptically. A \textnormal{splitting} of $(G,\mathcal{H})$ is a graph of groups $\A$ with $\pi_1(\A)=G$ such that every $H_i$ is contained in a conjugate of a vertex group of $\A$.
\end{definition}

The following Theorem is Theorem 5.1 of \cite{guirardel}.

\begin{satz}\label{relativerips}(Relative Version of the Rips machine)
Consider a non-trivial action of a finitely generated pair $(G, \mathcal{H})$ on a real tree $T$ by isometries. Assume that
\begin{enumerate}
	\item $T$ satisfies the ascending chain condition,
	\item there exists a finite family of arcs $I_1,\ldots,I_p$ such that $I_1\cup\ldots\cup I_p$ spans $T$ and such that for any unstable arc $J$ contained in some $I_i$,
	\begin{enumerate}
		\item $\stab(J)$ is finitely generated
		\item $\stab(J)$ is not a proper subgroup of any conjugate of itself, i.e. for all $g\in G$ $g\stab(J)g^{-1}\subset \stab(J) \Longrightarrow g\stab(J)g^{-1}=\stab(J)$
	\end{enumerate}
\end{enumerate}
Then either $(G,\mathcal{H})$ splits over the stabilizer of an unstable arc or over the stabilizer of an infinite tripod (whose normalizer contains a non-abelian free group generated by two elements having disjoint axes), or $T$ splits as a graph of actions
$$\mathcal{G}=(\A, (T_v)_{v\in VA}, (p_e^{\alpha})_{e\in EA},l)$$
where each vertex action of $A_v$ on the vertex tree $T_v$ is either
\begin{enumerate}
	\item simplicial: a simplicial action on a simplicial tree,
	\item of orbifold type: the action of $A_v$ has kernel $N_v$ and the faithful action of $A_v/ N_v$ is dual to an arational measured foliation on a compact $2$-orbifold with boundary, or
	\item axial: $T_v$ is a line and the image of $A_v$ in $\isom(T_v)$ is a finitely generated group acting with dense orbits on $T_v$
\end{enumerate}
\end{satz}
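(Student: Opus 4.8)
This is Theorem~5.1 of \cite{guirardel}, so there is strictly nothing new to prove; I only indicate the strategy. The plan is to follow the blueprint of the absolute Rips machine (Theorem~\ref{rips}), with the finite family of arcs $I_1,\ldots,I_p$ taking over the role played by a finite generating set in the non-relative statement. After reducing to the minimal subtree (which changes nothing, as each $H_i$ stays elliptic), I would encode the restriction of the $G$-action to $I:=I_1\cup\cdots\cup I_p$ as a finite system of partial isometries: the defining data are the finitely many triples $(g,J)$ with $g\in G$ and $J,\,gJ\subset I$ that are needed to generate, on $I$, the relation ``lies in the same $G$-orbit''. Since $I$ spans $T$, this finite system of isometries carries the entire action, and it is this finiteness, rather than finite generation of $G$ itself, that drives the argument.

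One then runs the Rips normalization procedure in Guirardel's super-stable formulation (the moves going back to Bestvina--Feighn \cite{feighn}, see also \cite{rewe}). Processing unstable arcs, one only ever meets unstable arcs contained in $G$-translates of $I$, which is exactly why hypothesis~(2) of the theorem is imposed only for arcs inside the $I_i$. Either some step produces a splitting of the pair $(G,\mathcal{H})$ over the stabilizer of an unstable arc or of an infinite tripod (in the relative setting one additionally records that the normalizer of such a tripod stabilizer contains a two-generator free group with disjoint axes), or the procedure terminates and exhibits a transverse covering of $T$, i.e.\ a graph-of-actions decomposition $\mathcal{G}$ whose vertex actions are simplicial, axial, or of orbifold type. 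Finally one checks the peripheral condition: each $H_i$ is elliptic in $T$, hence sits in a point stabilizer, hence up to conjugacy in a vertex group of the simplicial part of $\mathcal{G}$; so $(G,\mathcal{H})$ genuinely splits as a graph of groups refining $\mathcal{G}$ in which every $H_i$ is elliptic, as required.

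The main obstacle is the normalization step itself: showing that the Rips machine moves terminate and produce precisely the stated trichotomy under only the weak hypotheses available here --- the ascending chain condition together with finite generation and conjugacy-maximality of the stabilizers of unstable arcs inside the $I_i$ --- while keeping the peripheral subgroups elliptic at every stage. This is the technical core of \cite{guirardel}; rather than reproduce it we use the theorem as a black box, exactly as \cite{rewe} does.
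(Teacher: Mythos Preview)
Your proposal correctly identifies that this is Theorem~5.1 of \cite{guirardel} and that nothing new needs to be proved here. The paper itself provides no proof whatsoever for this statement; it simply introduces the theorem with the sentence ``The following Theorem is Theorem 5.1 of \cite{guirardel}'' and states it as a black box, exactly as you suggest in your final paragraph. Your sketch of the underlying strategy (encoding via finite systems of partial isometries on the spanning arcs, running the Rips normalization, checking ellipticity of the peripheral subgroups) is additional exposition not present in the paper, but it is accurate and consistent with how Guirardel's argument proceeds.
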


\subsection{Strong convergence of group actions}
In this section we describe a way how a minimal action of a finitely generated pair on a real tree can be approximated by a sequence of better-understandable actions of pairs on real trees. This is used by Guirardel in his proof of the Rips machine. In the end we state a relative version of Guirardel's Extended Scott's Lemma (Theorem 2.4 in \cite{guirardel}).\\
The non-relative versions of the following definitions and results can be found in \cite{levittpaulin} while the relative versions are due to Guirardel (\cite{guirardel}). 
Given two actions $G_1\curvearrowright T_1$, $G_2\curvearrowright T_2$ of groups $G_1,G_2$ on $\R$-trees $T_1,T_2$ we call $$(\vp,f):G_1 \curvearrowright T_1\to G_2\curvearrowright T_2$$ a morphism of group actions if $\vp: G_1\to G_2$ is a group homomorphism and $f:T_1\to T_2$ is a $\vp$-equivariant morphism of $\R$-trees. We say that $(\vp,f)$ is surjective if both $\vp$ and $f$ are.\\

\begin{definition}
A \textnormal{direct system of actions} on $\R$-trees  is a sequence of actions of finitely generated groups  $G_k\curvearrowright T_k$ and an action $G\curvearrowright T$ with surjective morphisms of group actions $(\vp_k,f_k):G_k \curvearrowright T_k\to G_{k+1}\curvearrowright T_{k+1}$ and $(\Phi,F_k):G_k \curvearrowright T_k\to G\curvearrowright T$ such that the following diagram commutes\\

\centering
\begin{tikzpicture}[descr/.style={fill=white}]
\matrix(m)[matrix of math nodes,
row sep=1em, column sep=2.8em,
text height=1.5ex, text depth=0.25ex]
{G_k&G_{k+1}&\cdots&G\\
T_k&T_{k+1}&\cdots&T\\};
\path[->>,font=\scriptsize]
(m-1-1) edge node[above] {$\vp_k$}(m-1-2)
(m-1-1) edge [bend left=60] node[descr] {$\Phi_{k}$} (m-1-4)

(m-1-2) edge [bend left=40] node[descr] {$\Phi_{k+1}$} (m-1-4)

(m-2-1) edge node[above] {$f_k$} (m-2-2)
(m-2-1) edge [bend right=60] node[descr] {$F_{k}$} (m-2-4)
(m-2-2) edge [bend right=40] node[descr] {$F_{k+1}$} (m-2-4);
\path[->,font=\scriptsize]
(m-1-1.south) edge [bend left=60](m-2-1.north)
(m-1-2.south) edge [bend left=60](m-2-2.north)
(m-1-4.south) edge [bend left=60](m-2-4.north);
\end{tikzpicture}
\end{definition}

Following Guirardel, we will use the notation $f_{kk'}=f_{k'-1}\circ\ldots\circ f_k:T_k\to T_{k'}$ and $\vp_{kk'}=\vp_{k'-1}\circ\ldots\circ \vp_k:G_k\to G_{k'}$ for $k'\geq k$.

\begin{definition}
A direct system of minimal actions of finitely generated groups on $\R$-trees $G_k\curvearrowright T_k$ \textnormal{converges strongly} to $G\curvearrowright T$ if 
\begin{itemize}
\item G is the direct limit of the groups $G_k$,
\item for every finite tree $K\subset T_k$, there exists $k'\geq k$ such that $F_{k'}$ restricts to an isometry on $f_{kk'}(K)$.
\end{itemize}
The action $G\curvearrowright T$ is called the \textnormal{strong limit} of this direct system.
\end{definition}
 
Strong convergence of direct systems allows us to make precise what it means to approximate a minimal action of a finitely generated group on a real tree by so-called geometric actions of groups on real trees. Since the definition of a geometric action is rather long, we refer the reader to the discussion before Definition 1.24 in \cite{guirardel} or \cite{levittpaulin}.

\begin{satz}[Theorem 3.7 in \cite{levittpaulin}]\label{geometric} Consider a minimal action of a finitely generated group $G$ on a real tree $T$. Then $G\curvearrowright T$ is a strong limit of a direct system of geometric actions $\{(\Phi_k,F_k):G_k\curvearrowright T_k\to G\curvearrowright T\}$ such that 
\begin{itemize}
\item $\Phi_k$ is one-to-one in restriction to each arc stabilizer of $T_k$,
\item $T_k$ is dual to a 2-complex $X$ whose fundamental group is generated by free homotopy classes of curves contained in leaves.
\end{itemize}
\end{satz}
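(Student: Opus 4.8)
This is Theorem~3.7 of \cite{levittpaulin}, and the natural strategy is to build the approximating geometric actions by hand from a finite generating set of $G$ together with an exhaustion of $T$. First I would fix a finite generating set $S=\{g_1,\dots,g_n\}$ of $G$, a basepoint $x_0\in T$ moved by some generator (possible since the action is non-trivial and minimal), nested finite subsets $S\subset W_1\subset W_2\subset\cdots$ of $G$ with $\bigcup_k W_k=G$, and nested finite subtrees $x_0\in K_1\subset K_2\subset\cdots$ of $T$ with $\bigcup_k K_k=T$, chosen so that for each $k$ the union $\bigcup_{w\in W_k}wK_k$ is connected and meets $K_k$. The finitely many partial translations $x\mapsto wx$ (defined on $K_k\cap w^{-1}K_k$ for $w\in W_k$) form a finite system of isometries on the finite tree $K_k$; I would let $X_k$ be the associated finite foliated $2$-complex (band complex over a finite underlying graph), $T_k$ its dual $\R$-tree, $G_k=\pi_1(X_k)$ the finitely presented group acting on $T_k$, and $F_k\colon T_k\to T$ the $\Phi_k$-equivariant morphism of $\R$-trees induced by the tautological map of the base into $T$, where $\Phi_k\colon G_k\to G$ sends the generator carried by the band of $w$ to $w$. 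By construction each action $G_k\curvearrowright T_k$ is geometric, and both $\Phi_k$ and $F_k$ are onto; surjectivity of $F_k$ for large $k$ uses minimality of $G\curvearrowright T$.

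Next I would assemble these into a direct system. The inclusion of data $(K_k,W_k)\hookrightarrow(K_{k+1},W_{k+1})$ induces a subdivision-and-inclusion of band complexes $X_k\to X_{k+1}$, hence a surjective morphism of group actions $(\varphi_k,f_k)\colon G_k\curvearrowright T_k\to G_{k+1}\curvearrowright T_{k+1}$, and one checks that all triangles involving $(\Phi_k,F_k)$ commute, producing a direct system of geometric actions mapping to $G\curvearrowright T$. Since $\bigcup_k W_k=G$, the group $G$ is the direct limit of the $G_k$ (every finitely generated group is such a limit of finitely presented groups), and $\bigcup_k K_k=T$ ensures the trees exhaust $T$ in the required sense. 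It then remains to verify strong convergence and the two refinements.

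The main obstacle, as usual for this kind of statement, is strong convergence: given a finite subtree $K\subset T_k$ one must find $k'\geq k$ with $F_{k'}$ restricting to an isometry on $f_{kk'}(K)$. A morphism of $\R$-trees folds a given arc at only finitely many points, so $F_k|_K$ fails to be an isometry only at a finite set of fold points, and each such fold records either a piece of $F_k(K)\subset T$ not yet isometrically contained in the base $K_k$, or an identification $x\sim wx$ with $w\notin W_k$. Because $K_k\uparrow T$ and $W_k\uparrow G$, these finitely many obstructions are all absorbed at some finite stage $k'$: once the base contains $F_k(K)$ isometrically and $W$ contains the relevant finitely many group elements, the corresponding folds have already been carried out inside $X_{k'}$, so $F_{k'}$ no longer folds along $f_{kk'}(K)$. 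Making this rigorous requires careful bookkeeping of how arcs of $T_k$ are subdivided and transported by $f_{kk'}$ and $F_{k'}$; this is the heart of the proof and the step I expect to demand the most care.

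Finally, the two refinements. For injectivity of $\Phi_k$ in restriction to an arc stabilizer of $T_k$: an arc stabilizer of the dual tree of a band complex is carried by a connected subcomplex lying in a single leaf, and by enlarging $K_k$ one forces such a subcomplex to map injectively, since a non-trivial element of $\ker\Phi_k$ fixing an arc would produce a superfluous coincidence in the finite tree $K_k$, contrary to the choice of base. For the generation of $\pi_1(X_k)$ by free homotopy classes of curves contained in leaves: the band complex is built out of its leaves, and a direct inspection shows that every loop is, up to free homotopy, a concatenation of loops each lying in a single leaf, the ``corner loop'' of a band being itself freely homotopic into a leaf, so these leaf-curves generate. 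Combining the explicit construction with the strong convergence statement and these two verifications yields the theorem.
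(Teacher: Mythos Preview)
The paper does not give a proof of this statement; it is simply quoted as Theorem~3.7 of \cite{levittpaulin} and used as a black box (the relative version, Theorem~\ref{relativegeometric}, is likewise quoted from \cite{guirardel}). So there is no proof in the paper to compare your proposal against.

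That said, your sketch follows the standard Levitt--Paulin construction fairly faithfully: build a finite system of isometries on an exhausting sequence of finite subtrees using an exhausting sequence of finite subsets of $G$, take the dual tree of the associated band complex, and verify strong convergence by absorbing the finitely many fold obstructions at a later stage. This is the right architecture. One point to be more careful about is the claim that $\Phi_k$ is injective on each arc stabilizer: in \cite{levittpaulin} this is arranged by a specific choice in the construction (essentially taking $K_k$ large enough relative to $W_k$ so that coincidences in the base already come from genuine relations in $G$), and your one-line justification ``by enlarging $K_k$'' is the right idea but would need to be made precise. Likewise, the statement that $\pi_1(X_k)$ is generated by curves in leaves is exactly the ``pure'' condition in Levitt--Paulin and is again ensured by the construction rather than being automatic for an arbitrary band complex; your remark that band-corner loops are freely homotopic into leaves is correct for suspension-type bands but one should check it in the general setup. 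None of this is a genuine gap in strategy, only in the level of detail one would expect for a full proof.
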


We will later need a relative version of the above theorem. Note that the last claim in Theorem \ref{relativegeometric} is equivalent to the last claim in Theorem \ref{geometric}.

\begin{satz}[Proposition 1.31 in \cite{guirardel}]\label{relativegeometric}
Consider a minimal action $(G,\mathcal{H})\curvearrowright T$ of a finitely generated pair on an $\R$-tree. Then there exists a direct system of minimal actions $(G_k,\mathcal{H}_k)\curvearrowright T_k$ converging strongly to $G\curvearrowright T$ and such that 
\begin{itemize}
\item $\vp_k$ and $\Phi_k$ are one-to-one in restriction to arc stabilizers of $T_k$.
\item $\vp_k$ (resp. $\Phi_k$) restricts to an isomorphism between $\mathcal{H}_k$ and $\mathcal{H}_{k+1}$ (resp. $\mathcal{H}$).
\item $T_k$ splits as a graph of actions where each non-degenerate vertex action is either axial, thin or of orbifold type and therefore indecomposable, or is an arc containing no branch point except at its endpoints.
\end{itemize}
\end{satz}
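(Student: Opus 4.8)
The plan is to run the proof of Theorem~\ref{geometric} (the Levitt--Paulin strong approximation theorem) in the relative setting, carrying the peripheral collection $\mathcal{H}$ along untouched at every stage. Using that the pair is finitely generated, fix a finite set $X\subset G$ with $\langle X\cup H_1\cup\ldots\cup H_k\rangle=G$; using that each $H_i$ is elliptic, fix a point $q_i\in T$ fixed by $H_i$, together with an arbitrary base point $p\in T$. Write $G=\bigcup_n S_n$ as an increasing union of finite subsets with $X\subset S_1$, and let $K_n\subset T$ be the convex hull of $S_n\cdot p\cup\{q_1,\ldots,q_k\}$, a finite subtree of $T$.

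For each $n$ I would resolve the partial data $(K_n,S_n)$ by a finite foliated band complex $Y_n$ whose underlying graph of spaces carries, at a distinguished vertex mapping to $q_i$, the group $H_i$ as a rigid (not necessarily finitely generated) vertex group, and whose bands realise exactly those identifications on $K_n$ that are forced by the elements of $S_n$. Set $G_n=\pi_1(Y_n)$, let $T_n$ be the $\R$-tree dual to the lifted foliation on $\widetilde{Y_n}$, let $\mathcal{H}_n=\{H_1,\ldots,H_k\}$ be these vertex groups viewed inside $G_n$, and let $\Phi_n\colon G_n\to G$, $F_n\colon T_n\to T$ be the tautological maps. Since the $H_i$ are carried rigidly, $\Phi_n$ restricts to the identity on each of them, and choosing the $Y_n$ along a nested resolution (so that every band used at stage $n$ persists at stage $n+1$) produces surjective morphisms $(\vp_n,f_n)\colon G_n\curvearrowright T_n\to G_{n+1}\curvearrowright T_{n+1}$ with $\vp_n$ again the identity on the $H_i$, $G=\varinjlim G_n$, and a commuting direct system. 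In particular $\vp_n$ and $\Phi_n$ restrict to isomorphisms $\mathcal{H}_n\cong\mathcal{H}_{n+1}$ and $\mathcal{H}_n\cong\mathcal{H}$, which is the second bullet; and since the $H_i$ lie in vertex groups of $Y_n$ they remain elliptic on $T_n$, so $(G_n,\mathcal{H}_n)$ is a genuine pair throughout.

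Strong convergence and the control on arc stabilizers come from the bookkeeping of the resolution, exactly as in Levitt--Paulin. A band is only ever added to reflect an identification already present in $T$, so $F_n$ is injective on every arc that has not yet been folded; hence any finite subtree $K\subset T_n$ is carried, after finitely many refinements $f_{nn'}$, into a region of $T_{n'}$ on which $F_{n'}$ restricts to an isometry, which is precisely strong convergence. Keeping track at each step of which arcs carry trivial, resp.\ nontrivial, stabilizer, and never merging two arcs with distinct nontrivial stabilizers, guarantees that $\vp_n$ and $\Phi_n$ are one-to-one on the arc stabilizers of $T_n$, the first bullet. I expect this simultaneous control --- strongly approximating $T$, keeping $\mathcal{H}$ rigid, and not collapsing arc stabilizers --- to be the main obstacle, since it is the technical heart of the Levitt--Paulin argument; the relative refinement essentially amounts to checking that attaching the $H_i$ as rigid vertex groups is compatible with each step, which it is, precisely because the $H_i$ were assumed elliptic and so stay confined to single vertex trees.

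For the last bullet, each $T_n$ is geometric, hence dual to the finite foliated $2$-complex $Y_n$; applying the classification of minimal components of such a complex (Imanishi's theorem together with the Gaboriau--Levitt--Paulin and Bestvina--Feighn analysis) exhibits $T_n$ as a graph of actions in which every non-degenerate vertex action is indecomposable of axial, thin (Levitt), or orbifold (Seifert) type, while the remaining vertex trees are single edges, i.e.\ arcs with no interior branch point. This is the first, combinatorial step underlying the Rips machine and does not itself invoke Theorem~\ref{relativerips}, so no circularity arises.
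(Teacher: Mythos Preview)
The paper does not supply its own proof of this statement: it is quoted verbatim as Proposition~1.31 of \cite{guirardel} and used as a black box. So there is no in-paper argument to compare against; your sketch should be measured against Guirardel's original.

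That said, your outline does follow the correct strategy, namely running the Levitt--Paulin resolution procedure of \cite{levittpaulin} relative to the elliptic collection $\mathcal{H}$, attaching each $H_i$ as a rigid vertex group at a fixed point $q_i$, and then invoking the classification of minimal components of a finite foliated $2$-complex for the last bullet. The one place where your sketch is thin is the assertion that attaching possibly infinitely generated $H_i$ as vertex groups is ``compatible with each step'': in Guirardel's treatment this is handled by working with a relative presentation complex (finite CW-complex with one vertex for each $H_i$, carrying $H_i$ as local group) rather than an honest band complex, and the resolution is carried out on that object. Your informal ``rigid vertex group'' language is pointing at the right construction, but the details of why the dual tree still satisfies the first two bullets require the specific relative setup of \cite[\S1]{guirardel} rather than a direct appeal to \cite{levittpaulin}.
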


Finally we will need the following result, which is Theorem 2.4 in \cite{guirardel}.

\begin{satz}[Relative version of the Extended Scott's Lemma]\label{scott}
Let $$(G_k,\{H_1^k,\ldots,H_p^k\})\curvearrowright S_k$$ be a sequence of non-trivial actions of finitely generated pairs on simplicial trees, and $$(\vp_k,f_k):G_k\curvearrowright S_k\to G_{k+1}\curvearrowright S_{k+1}$$ be epimorphisms mapping $H_i^k$ onto $H_i^{k+1}$. Consider $G=\underrightarrow{\lim}\ G_k$ the inductive limit and $\Phi_k: G_k\to G$ the natural map.
Assume that $(\vp_k,f_k)$ does not increase edge stabilizers in the following sense:
\begin{itemize}
\item $\forall e\in E(S_k),\forall e'\in E(S_{k+1}),\ e'\subset f_k(e)\Rightarrow G_{k+1}(e')=\vp_k(G_k(e)).$
\item $\forall e\in E(S_k), \forall i\in\{1,\ldots,p\},\ \Phi_k(H_i^k)\nsubseteq \Phi_k(e).$
\end{itemize}
Here $G_{k+1}(e')$ denotes the pointwise edge stabilizer of the edge $e'$ in $G_{k+1}$. Then the pair $(G,\mathcal{H})$ has  a non-trivial splitting over the image of an edge stabilizer of some $S_k$. Moreover, any subgroup $H\subset G$ fixing a point in some $S_k$ fixes a point in the obtained splitting of $G$.
\end{satz}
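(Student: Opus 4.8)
The plan is to let $G$ act on the direct limit tree of the given system and to extract the splitting from that action. Let $T$ be the direct limit of the simplicial trees $S_k$ along the equivariant morphisms $f_k$, with natural maps $F_k\colon S_k\to T$; since the $f_k$ are $\vp_k$-equivariant and the direct system commutes, the $G_k$-actions assemble into an action of $G$ on $T$ compatible with the maps $\Phi_k$. I claim the theorem reduces to the following three facts about this action: \textbf{(a)} $T$ is a simplicial tree on which $G$ acts simplicially; \textbf{(b)} every edge stabiliser of $T$ equals $\Phi_k(G_k(e))$ for a suitable edge $e$ of a suitable $S_k$, and each $H_i\subseteq G$ (the common image of the $H_i^k$) fixes a vertex of $T$; \textbf{(c)} $G$ has no global fixed point in $T$. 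Granting (a)--(c), one passes to the minimal $G$-subtree $T_0\subseteq T$, which is non-degenerate by (c); each $H_i$ is still elliptic in $T_0$ (project an $H_i$-fixed point of $T$ onto the $G$-invariant subtree $T_0$), and by (b) every edge stabiliser of $T_0$ is of the form $\Phi_k(G_k(e))$, hence a proper subgroup of $G$ by the hypothesis $\Phi_k(H_i^k)\nsubseteq\Phi_k(e)$. The quotient graph of groups $G\backslash T_0$ is a finite non-trivial splitting of the pair $(G,\mathcal H)$ all of whose edge groups are images of edge stabilisers of the $S_k$; collapsing all edge orbits but one --- a leaf edge of $G\backslash T_0$ if that graph is a tree, and a non-separating edge otherwise, giving an HNN splitting --- then yields a non-trivial one-edge splitting of $(G,\mathcal H)$ over $\Phi_k(G_k(e))$ for a single edge $e$ of a single $S_k$, which is the assertion. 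The ``moreover'' clause is immediate: if $H\subseteq G$ fixes a point of $S_k$, it fixes the image of that point in $T$, hence a point of $T_0$, hence a point of the collapsed tree.

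For \textbf{(a)}: $T$ is connected, being the increasing union of the connected sets $F_k(S_k)$, and has no loop, since any finite subcomplex of $T$ lies in some $F_k(S_k)$ and the $S_k$ are trees, so no loop can arise at a finite stage; and it is simplicial because the $f_k$ are. For \textbf{(b)}: ellipticity of $H_i$ is immediate, since $H_i^k$ fixes some vertex $x\in S_k$ and hence $\Phi_k(H_i^k)$ fixes $F_k(x)$; the stabiliser computation is a direct-limit argument --- if $\Phi_{k'}(h)$ fixes an edge $F_k(e)$ of $T$ (with $e$ an edge of $S_k$ not eventually collapsed), then $\vp_{k'k''}(h)$ fixes the edge $f_{kk''}(e)$ of $S_{k''}$ for all large $k''$, whose pointwise stabiliser equals $\vp_{kk''}(G_k(e))$ because $(\vp_k,f_k)$ does not increase edge stabilisers; composing with $\Phi_{k''}$ and using $\Phi_{k''}\circ\vp_{kk''}=\Phi_k$ gives $\Phi_{k'}(h)\in\Phi_k(G_k(e))$, and the reverse inclusion is clear. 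In particular, by the hypothesis $\Phi_k(H_i^k)\nsubseteq\Phi_k(e)$, no $H_i$ lies in an edge stabiliser of $T$.

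The heart of the matter is \textbf{(c)}. Suppose $G$ fixed a point $v=F_k(v_k)$ of $T$. Then each $H_i$ fixes $v$ and also fixes the subtree $F_k\big(\Fix_{S_k}(H_i^k)\big)$; were $v$ outside this subtree, the arc joining $v$ to it would contain an edge fixed by $H_i$, contradicting that $H_i$ lies in no edge stabiliser of $T$. Hence $v=F_k(p_i)$ for some $p_i\in S_k$ fixed by $H_i^k$. Since $v_k$ and all the $p_i$ have the same image in $T$, for $k'$ large they map to a common point $u_{k'}\in S_{k'}$; and since $\Phi_k(x)$ fixes $v$ for each of the finitely many elements $x$ of a generating set $X$ of the pair $G_k$, the elements $\vp_{kk'}(x)$ fix $u_{k'}$ for $k'$ large. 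As $H_i^{k'}=\vp_{kk'}(H_i^k)$ also fixes $f_{kk'}(p_i)=u_{k'}$, the group $G_{k'}=\langle\,\vp_{kk'}(x)\ (x\in X),\ H_1^{k'},\dots,H_p^{k'}\,\rangle$ fixes $u_{k'}$, contradicting the non-triviality of the action $(G_{k'},\mathcal H_{k'})\curvearrowright S_{k'}$.

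The main obstacle, I expect, is \textbf{(a)}: establishing rigorously that the direct limit $T$ is a genuine simplicial tree, and that its edges --- hence the edge stabilisers entering (b) --- are correctly accounted for, when the $f_k$ fold (or collapse) edges. This is exactly where the strong-convergence bookkeeping recalled earlier, together with the ``does not increase edge stabilisers'' hypothesis, does the real work; by contrast, the reduction to a minimal subtree, the ellipticity of the $H_i$, and the collapse to a one-edge splitting are routine Bass--Serre manipulations.
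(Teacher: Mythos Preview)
The paper does not prove this statement; it is quoted verbatim as Theorem~2.4 of Guirardel~\cite{guirardel} with no argument supplied. There is therefore no in-paper proof to compare your proposal against.

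Assessed on its own terms, your proposal has a genuine gap precisely where you flag it. Claim \textbf{(a)}---that the set-theoretic direct limit $T=\varinjlim S_k$ is a simplicial tree---is not established, and in general there is no reason for it to hold. A morphism of simplicial trees may send a single edge of $S_k$ to an edge path of length $\geq 2$ in $S_{k+1}$; iterated, an edge of $S_1$ can become an arbitrarily long path, and nothing forces a discrete vertex set to survive in the limit. Your sentence ``any finite subcomplex of $T$ lies in some $F_k(S_k)$'' presupposes a simplicial structure on $T$, which is exactly what you are trying to prove; and ``it is simplicial because the $f_k$ are'' is an assertion, not an argument. Even the weaker statement that $T$ is an $\mathbb{R}$-tree requires real work: one has to equip $T$ with the limiting pseudometric $d(F_k(x),F_k(y)):=\lim_{k'}d_{S_{k'}}(f_{kk'}(x),f_{kk'}(y))$, pass to the associated metric quotient, and then verify geodesicity---and that last step is essentially the content of \emph{strong} convergence, which is not among your hypotheses. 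Since both \textbf{(b)} and \textbf{(c)} rely on $T$ being a tree with a well-defined edge set (the stabiliser computation in \textbf{(b)}, the arc-and-edge argument in \textbf{(c)}), the gap propagates through the rest of the proof.

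The ``does not increase edge stabilisers'' hypothesis, which you deploy only in \textbf{(b)}, must in fact be used earlier to control the combinatorics of the $S_k$ enough for any limiting object to exist with the right properties. Waving toward ``strong-convergence bookkeeping'' does not discharge this; it is where the actual content of Guirardel's argument lives. Your parts \textbf{(b)} and \textbf{(c)} are, modulo \textbf{(a)}, clean and correct---the finite-generation argument in \textbf{(c)} is nice---so the overall architecture is sound, but as written the proof is incomplete at its load-bearing step.
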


\section{$\Gamma$-limit groups}\label{2}
\subsection{$\Gamma$-limit groups and their actions on real trees}
From now on let $\Gamma$ be a finitely generated non-elementary hyperbolic group (possibly with torsion).
In this section we will define the notion of a $\Gamma$-limit group and investigate which properties these groups have. In particular we will show how one can construct faithful actions of $\Gamma$-limit groups on real trees. Most of the results are borrowed from \cite{rewe} and we will only sketch some of the ideas found there. For a complete proof of all the results we refer the reader to \cite{rewe}. 

\begin{definition}
Let $G$ be a group and $(\varphi_i)_{i\in\N}\subset \Hom(G,\Gamma)$. 
\begin{enumerate}[(1)]
\item The sequence $(\varphi_i)$ is \textnormal{stable} if for any $g\in G$ either $\varphi_i(g)=1$ for almost all $i$ or $\varphi_i(g)\neq 1$ for almost all $i$.
If $(\varphi_i)$ is stable then the \textnormal{stable kernel} of the sequence, denoted by $\underrightarrow{\ker}(\varphi_i)$, is defined as 
$$\underrightarrow{\ker}(\varphi_i):=\{ g\in G\ |\ \varphi_i(g)=1 \text{ for almost all }i\}.$$
We call the sequence $(\varphi_i)$ \textnormal{stably injective} if $(\varphi_i)$ is stable and $\underrightarrow{\ker}(\varphi_i)=1$.
\item Assume that $(\vp_i)$ is stable. We then call the quotient $G/\underrightarrow{\ker}(\varphi_i)$ the \textnormal{$\Gamma$-limit group} associated to $(\varphi_i)$ and the projection $\pi: G\to G/\underrightarrow{\ker}(\varphi_i)$ the 
\textnormal{$\Gamma$-limit map} associated to $(\varphi_i)$.
\item We call a group $L$ a $\Gamma$-limit group if there exists some group $H$ and a stable sequence $(\vp_i)\subset \Hom(H,\Gamma)$ such that $L=H/\underrightarrow{\ker}(\varphi_i)$.
\end{enumerate}
\end{definition} 

\begin{bem}
\begin{enumerate}[(1)]
\item Let $L$ be a $\Gamma$-limit group. Then there exists a stably injective sequence $(\vp_i)\subset \Hom(L,\Gamma)$ such that $L=L/\underrightarrow{\ker}(\varphi_i)$.
\item The definition of a $\Gamma$-limit group also works for arbitrary groups $\Gamma$, but here we are only interested in the case that $\Gamma$ is hyperbolic.
\end{enumerate}
\end{bem}

\begin{definition}
A group $G$ is called 
\begin{itemize}
\item \textnormal{residually $\Gamma$} if for every non-trivial element $g\in G$ there exists a homomorphism $\vp:G\to \Gamma$ such that $\vp(g)\neq 1$.
\item \textnormal{fully residually $\Gamma$} if for every finite subset $S\subset G$ there exists a homomorphism $\vp:G\to \Gamma$ such that $\vp|_S$ is injective.
\end{itemize}
\end{definition}

The following theorem shows that any finitely generated $\Gamma$-limit group is fully residually $\Gamma$ and the converse also holds.

\begin{satz}[\cite{rewe} Lemma 1.3 and Corollary 5.4]
Let $L$ be a finitely generated group. Then the following are equivalent
\begin{enumerate}[(1)]
\item $L$ is a $\Gamma$-limit group.
\item $L$ is fully residually $\Gamma$
\end{enumerate}
\end{satz}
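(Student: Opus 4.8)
The plan is to prove the two implications separately. The implication $(1)\Rightarrow(2)$ should follow almost immediately from the Remark after the definition of $\Gamma$-limit groups, whereas $(2)\Rightarrow(1)$ will need a short diagonal construction. (If $L$ is trivial the statement is immediate, so assume $L\neq 1$.)

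For $(1)\Rightarrow(2)$ I would argue as follows. Assume $L$ is a $\Gamma$-limit group; by the Remark there is a stably injective sequence $(\varphi_i)\subset\Hom(L,\Gamma)$, i.e. $(\varphi_i)$ is stable and $\underrightarrow{\ker}(\varphi_i)=1$. Given a finite subset $S\subset L$, for each of the finitely many pairs $g\neq h$ in $S$ the element $gh^{-1}$ is non-trivial, hence by stable injectivity $\varphi_i(gh^{-1})=1$ for only finitely many $i$. Taking the union of these finitely many finite ``bad'' index sets produces an index $i_0$ with $\varphi_{i_0}(g)\neq\varphi_{i_0}(h)$ for all distinct $g,h\in S$, i.e. $\varphi_{i_0}$ is injective on $S$. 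Thus $L$ is fully residually $\Gamma$.

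For $(2)\Rightarrow(1)$ I would proceed as follows. Since $L$ is finitely generated it is countable, so enumerate $L\setminus\{1\}=\{g_1,g_2,\dots\}$. For each $n$, full residual $\Gamma$-ness applied to the finite set $\{1,g_1,\dots,g_n\}$ yields a homomorphism $\varphi_n\in\Hom(L,\Gamma)$ that is injective on it, so in particular $\varphi_n(g_j)\neq1$ for all $j\le n$. Then $(\varphi_n)$ is stable with trivial stable kernel: for $g=1$ we have $\varphi_n(g)=1$ for all $n$, while for $g=g_k\neq1$ we have $\varphi_n(g_k)\neq1$ for all $n\ge k$, hence for almost all $n$. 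Consequently $L=L/\underrightarrow{\ker}(\varphi_n)$ exhibits $L$ as the $\Gamma$-limit group associated to $(\varphi_n)$ (taking $H=L$ in the definition), which proves $(1)$.

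Since both directions are short, I do not expect a genuine obstacle here; the real content has effectively been pushed into the Remark, whose proof already requires a diagonalization to manufacture a stably injective sequence into $\Gamma$ from $L$ itself. The one point in the argument above that needs a little care is the patching in $(2)\Rightarrow(1)$: ``fully residually $\Gamma$'' supplies, for each prescribed finite set, merely \emph{some} homomorphism faithful on that set, and these must be assembled into a single sequence that is at once stable and has trivial stable kernel — which is exactly what exhausting $L$ by an increasing chain of finite subsets achieves, and this is the only place where finite generation (hence countability) of $L$ is used. Finally, note that hyperbolicity of $\Gamma$ plays no role in this equivalence; it holds for an arbitrary target group.
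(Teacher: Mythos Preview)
Your proof is correct and is the standard argument for this equivalence. Note that the paper does not actually prove this statement itself; it merely cites \cite{rewe} (Lemma~1.3 and Corollary~5.4) and moves on, so there is no in-paper proof to compare against. Your two directions match the content of those cited results: $(2)\Rightarrow(1)$ is essentially Lemma~1.3 of \cite{rewe} (the diagonal/exhaustion argument producing a stably injective sequence), while $(1)\Rightarrow(2)$ is the easy direction extracted from the existence of a stably injective sequence, which in \cite{rewe} is packaged into Corollary~5.4. Your observations that only countability of $L$ is used in $(2)\Rightarrow(1)$ and that hyperbolicity of $\Gamma$ is irrelevant here are also accurate.
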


Let $G$ be a group. We now explain how a sequence of suitable group actions of $G$ on $\delta$-hyperbolic spaces can be used to construct an action of $G$ on a real tree. This is usually referred to as the Bestvina-Paulin method (\cite{paulin}).\\

Let $G$ be a finitely generated group. A pseudo-metric $d$ on $G$ is called \textit{$G$-invariant} if $d(h_1,h_2)=d(gh_1,gh_2)$ for all $g,h_1,h_2\in G$. We denote by $\mathcal{ED}(G)$ the space of all $G$-invariant pseudo-metrics on $G$, with the compact-open topology (where $G$ is given the discrete topology). Thus a sequence $(d_n)$ of $G$-invariant pseudo-metrics on $G$ converges in $\mathcal{ED}(G)$ against a pseudo-metric $d$ if and only if the sequence $(d_n(1,g))$ converges in $\R$ to $d(1,g)$ for all $g\in G$.

\begin{definition}\label{definition}
\begin{enumerate}[(1)]
\item A (based) \textnormal{$G$-space} is a tuple $(X,x_0,\rho)$ of a metric space $X$, a base point $x_0$ and an action $\rho$ of a group $G$ on $X$. If $g\in G$ and $x\in X$, we will denote the element $\rho(g)(x)\in X$ simply by $gx$ if the action $\rho$ is understood. Let $X=(X,x,\rho)$ be a based $G$-space. Then the $G$-action $\rho$ on $X$ induces a pseudo-metric
$$d_{\rho}^x: G\times G\to \R_{\geq 0}$$
on $G$ given by
$$d_{\rho}^x(g,h)=d_X(\rho(g)(x),\rho(h)(x)).$$
\item Let $G$ be a group with finite generating set $S_G$ and $X=(X,x,\rho)$ a based $G$-space. The norm of the action $\rho$ with respect to the base point $x$ (and the generating set $S_G$), denoted by $|\rho|_x$, is defined as
$$|\rho|_x:=\sum_{s\in S_G}d_X(x,\rho(s)x).$$
\end{enumerate}
\end{definition}

\begin{satz}[Theorem 1.11 in \cite{rewe}]\label{1.11}
Let $G$ be a group with finite generating set $S_G$. For $i\in \N$ let $\delta_i\geq 0$ and $X_i=(X_i,x_i,\rho_i)$ a based $\delta_i$-hyperbolic $G$-space. Assume that the sequence 
$(d_{\rho_i}^{x_i})$ converges in $\mathcal{ED}(G)$ to a limit sequence $d_{\infty}$ and 
$$\lim_{i\to\infty}\delta_i=0.$$
Then there exists a based real $G$-tree $(T,x,\rho)$ such that $T$ is spanned by ${\rho}(G)x$ and $d_{\rho}^x=d_{\infty}$.
If moreover for any $i$ and any $y\in X_i$,
$$|\rho_i|_y\geq|\rho_i|_{x_i},$$
then the limit action of $G$ on $T$ is minimal. 
\end{satz}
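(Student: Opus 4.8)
The statement is the Bestvina--Paulin construction of a limiting $\R$-tree, and the plan is to recover $(T,x,\rho)$ from the length function $g\mapsto d_\infty(1,g)$ via Chiswell's reconstruction and then to extract minimality from the normalization of the base points $x_i$. First I would verify that $d_\infty$ is a $G$-invariant $0$-hyperbolic pseudo-metric on $G$. Symmetry, the triangle inequality, vanishing on the diagonal, and $G$-invariance are closed conditions, hence pass from the $d_{\rho_i}^{x_i}$ to their pointwise limit $d_\infty$. For $0$-hyperbolicity, fix $g_1,\dots,g_4\in G$: the Gromov four-point inequality for $d_{\rho_i}^{x_i}$ with constant $\delta_i$ is exactly the four-point inequality among the points $\rho_i(g_1)x_i,\dots,\rho_i(g_4)x_i\in X_i$, which holds because $X_i$ is $\delta_i$-hyperbolic; letting $i\to\infty$, the finitely many distances involved converge to their $d_\infty$-values while $\delta_i\to 0$, so $d_\infty$ satisfies the four-point inequality with constant $0$. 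Thus $\ell(g):=d_\infty(1,g)$ is a Lyndon length function on $G$ (the axioms reduce to $\ell\ge 0$, $\ell(1)=0$, $\ell(g^{-1})=\ell(g)$, and the $0$-hyperbolicity of $(g,h)\mapsto\ell(g^{-1}h)=d_\infty(g,h)$), and Chiswell's theorem (\cite{chiswell}) provides an $\R$-tree $\widetilde T$ with an isometric $G$-action and a base point $x$ satisfying $d_{\widetilde T}(x,gx)=\ell(g)$ for all $g\in G$; by $G$-invariance $d_{\widetilde T}(gx,hx)=d_\infty(g,h)$. Taking $T$ to be the subtree of $\widetilde T$ spanned by $\rho(G)x$, i.e.\ the convex hull of the orbit, which is a $G$-invariant subtree, yields a based real $G$-tree $(T,x,\rho)$ with $d_\rho^x=d_\infty$; this proves the first assertion.

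For the minimality statement I would use the extra hypothesis to show that $x$ globally minimizes the displacement function $f\colon T\to\R_{\ge 0}$, $f(p):=\sum_{s\in S_G}d_T(p,\rho(s)p)$. Note first that $f(x)=\sum_{s\in S_G}d_\infty(1,s)=\lim_i|\rho_i|_{x_i}$. Given an arbitrary $p\in T$, it lies on a geodesic $[\rho(g)x,\rho(h)x]$ since $T$ is the convex hull of $\rho(G)x$; choosing points $y_i\in X_i$ on geodesics $[\rho_i(g)x_i,\rho_i(h)x_i]$ at parameters chosen so that $y_i$ approximates $p$ within the relevant finite orbit configuration, the hypothesis gives $|\rho_i|_{y_i}\ge|\rho_i|_{x_i}$, and passing to the limit $f(p)=\lim_i|\rho_i|_{y_i}\ge\lim_i|\rho_i|_{x_i}=f(x)$. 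Now suppose $T$ were not minimal; then there is a non-empty $G$-invariant subtree $S\subsetneq T$, and since $T=\operatorname{span}(\rho(G)x)$ we must have $x\notin S$, so the nearest-point projection $q$ of $x$ onto $S$ satisfies $q\ne x$. Since the projection onto the $G$-invariant set $S$ is $G$-equivariant, elementary tree geometry shows that $f$ does not increase as one moves from $x$ towards $S$ and in fact $f(x)>f(q)$ — equality would force every $\rho(s)$, $s\in S_G$, to fix the segment $[q,x]$ pointwise, hence $G$ to fix $x$, whence $T=\{x\}$, contradicting $\emptyset\ne S\subsetneq T$. This contradicts that $x$ minimizes $f$, so the limit action on $T$ is minimal.

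The step I expect to be the main obstacle is the approximation used in the minimality argument: one has to make precise that points of $T$, and their images under $\rho(S_G)$, arise as limits of points of the $X_i$ in a way compatible both with the group action and with the norm functions $|\rho_i|_\bullet$. This rests on the standard but delicate fact that a finite configuration of points in a $\delta_i$-hyperbolic space is, for $\delta_i\to 0$, close to its tree approximation, and it can be organized either via ultralimits or via direct estimates from the four-point condition; it is precisely the content encapsulated in Theorem 1.11 of \cite{rewe}, on which I would ultimately rely.
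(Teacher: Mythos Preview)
The paper does not give its own proof of this statement: it is quoted verbatim as Theorem~1.11 of \cite{rewe} and used as a black box. There is therefore no in-paper argument to compare against.

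That said, your outline is the standard Bestvina--Paulin/Chiswell route and is essentially correct. The first half is unproblematic: the four-point condition passes to the limit, $d_\infty$ is a $0$-hyperbolic $G$-invariant pseudometric, and Chiswell's construction produces the based $\R$-tree spanned by the orbit. For minimality, your strategy (show that $x$ minimizes $p\mapsto\sum_{s\in S_G}d_T(p,sp)$, then project to a hypothetical proper invariant subtree to get a strictly smaller value) is the right one, and your identification of the delicate point is accurate. The approximation step can indeed be made rigorous: if $p$ lies on $[\rho(g)x,\rho(h)x]$ at parameter $t$, pick $y_i$ on a geodesic $[\rho_i(g)x_i,\rho_i(h)x_i]$ at the corresponding parameter; then for each $s\in S_G$ the distance $d_{X_i}(y_i,\rho_i(s)y_i)$ is, up to an error $O(\delta_i)$, a function of the six pairwise distances among $\rho_i(g)x_i,\rho_i(h)x_i,\rho_i(sg)x_i,\rho_i(sh)x_i$ and the parameter $t$, by the approximate-tree property of finite subsets of $\delta_i$-hyperbolic spaces. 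These distances converge and $\delta_i\to 0$, so $|\rho_i|_{y_i}\to f(p)$, and the hypothesis $|\rho_i|_{y_i}\ge|\rho_i|_{x_i}$ passes to the limit.

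One small wording issue: your claim that equality $f(q)=f(x)$ forces each $\rho(s)$ to fix the segment $[q,x]$ pointwise is not literally what the tree computation gives. What you actually get is that for every $s\in S_G$ the geodesic $[x,sx]$ is disjoint from $S$, hence $q=sq$; so $q$ is a global fixed point, whence $d_T(x,sx)=d_T(q,sq)=0$ for all $s$, $G$ fixes $x$, and $T=\{x\}$, contradicting $S\subsetneq T$. The conclusion is the same, but the intermediate justification should be phrased this way.
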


We explain now how the previous statement implies that $\Gamma$-limit groups act on real trees.\\
Let $S_{\Gamma}$ be a generating set of the hyperbolic group $\Gamma$ and $X:=\Cay(\Gamma, S_{\Gamma})$ the Cayley graph of $\Gamma$. Let $G$ be a group with finite generating set $S_G$. Let $\delta$ be the hyperbolicity constant of $X$.  Note that any $\varphi\in \Hom(G,\Gamma)$ defines an action of $G$ on $X$, namely
$$G\times X\to X, (g,x)\mapsto \vp(g)x.$$
It follows that $(X,1,\varphi)$ is a based $G$-space for any $\varphi\in\Hom(G,\Gamma)$. This action induces the $\delta$-hyperbolic pseudo metric $d_{\varphi}:=d_{\varphi}^1$ on $G$ given by 
$$d_{\varphi}(g,h)=d_X(\varphi(g), \varphi(h))=|\varphi(g^{-1}h)|_{S_{\Gamma}}.$$
Then $|\varphi|:=|\varphi|_1=\sum_{s\in S_G}d_X(1,\varphi(s))$ denotes the norm of $\varphi$.\\
For any $g\in \Gamma$ let $c_g:\Gamma \to\Gamma$ be the inner automorphism given by $c_g(h):=g^{-1}hg$ for all $h\in \Gamma$.

\begin{definition}
We call a homomorphism $\varphi\in\Hom(G,\Gamma)$ \textnormal{conjugacy-short} if $|\varphi|\leq|c_g\circ\varphi|$ for all $g\in\Gamma$.
\end{definition}

Clearly a homomorphism $\varphi\in\Hom(G,\Gamma)$ is conjugacy-short if and only if $|\varphi|\leq|\varphi|_g$ for all $g\in G$, since $d_{c_g\circ\varphi}=d_{\varphi}^g$ as
$$d_{c_g\circ\varphi}(h,k)=d_X(g^{-1}\varphi(h)g,g^{-1}\varphi(k)g)=d_X(\varphi(h)g,\varphi(k)g)=d_{\varphi}^g(h,k)$$
for all $h,k\in G$ and therefore in particular $|c_g\circ\varphi|=|\varphi|_g$.\\ 
We now consider a sequence of homomorphisms $(\varphi_i)\subset\Hom(G,\Gamma)$ such that the following hold:
\begin{enumerate}
	\item $\varphi_i$ is conjugacy-short for all $i\in\N$.
	\item $(\varphi_i)$ does not contain a subsequence $(\varphi_{j_i})$ such that $\ker \varphi_{j_i}=\ker \varphi_{j_{i'}}$ for all $i,i'\in\N$.
\end{enumerate}
The second condition implies hat we may assume that the $\varphi_i$ are pairwise distinct after passing to a subsequence. It follows in particular that $\lim_{i\to\infty}|\varphi_i|=\infty$ as for any $K$ there are only finitely many homomorphisms of norm at most $|K|$.\\
Let now $X_i=(X_i,d_{X_i})$ be the metric space obtained from $X$ by scaling with the factor $\frac{1}{|\varphi_i|}$, thus $X_i=X$ (the underlying sets) and $d_{X_i}=\frac{1}{|\varphi_i|}d_X$. Clearly $G$ also acts on $X_i$ by isometries where the action on the underlying sets $X=X_i$ coincide, hence we obtain a based $G$-space $(X_i,1,\rho_i)$ where the action $\rho_i: G\times X_i\to X_i$ is given by
$$\rho_i(g)x=\varphi_i(g)x \text{ for all } x\in X_i=X.$$
It is immediate that $d_{\rho_i}=\frac{1}{|\varphi_i|}d_{\varphi_i}$. Moreover $d_{\rho_i}$ is $\delta_i$-hyperbolic with 
$\delta_i:= \frac{\delta}{|\varphi_i|}$ and $\lim_{i\to\infty}\delta_i=0$.
Applying Theorem \ref{1.11} to the sequence of based $G$-spaces $(X_i,x_i,\rho_i)$ yields the following Theorem.

\begin{satz}[Theorem 1.12 in \cite{rewe}]\label{paulin}
Let $G$ be a finitely generated group, $\Gamma$ a hyperbolic group and $(\varphi_i)\subset \Hom(G,\Gamma)$ a sequence of conjugacy-short homomorphisms. Then one of the following holds:
\begin{enumerate}[(1)]
	\item $(\varphi_i)$ contains a constant subsequence
	\item A subsequence  of $(\frac{1}{|\varphi_i|}d_{\varphi_i})$ converges to $d^x_{\rho}$ for some non-trivial, minimal based real
				$G$-tree $(T,x,\rho)$.
\end{enumerate} 
\end{satz}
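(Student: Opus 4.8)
The plan is to deduce this statement as a direct application of Theorem~\ref{1.11} to the rescaled Cayley graphs set up in the paragraph immediately preceding it, so that most of the work is already done. First I would dispose of the dichotomy: assume alternative~(1) fails, i.e. $(\varphi_i)$ has no constant subsequence. Since for each $K$ only finitely many homomorphisms $G\to\Gamma$ have norm at most $K$, a norm-bounded subsequence of $(\varphi_i)$ would take values in a finite set and hence contain a constant sub-subsequence; therefore $|\varphi_i|\to\infty$, and in particular $\delta_i=\delta/|\varphi_i|\to0$. Now I reuse the based $G$-spaces $X_i=(X,1,\rho_i)$, where $X=\Cay(\Gamma,S_\Gamma)$ carries the metric $\tfrac1{|\varphi_i|}d_X$ and $\rho_i$ is the action induced by $\varphi_i$, so that $X_i$ is $\delta_i$-hyperbolic and $d_{\rho_i}=\tfrac1{|\varphi_i|}d_{\varphi_i}$ at base point $1$. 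For every $g\in G$ one has $d_{\rho_i}(1,g)=\tfrac1{|\varphi_i|}|\varphi_i(g)|_{S_\Gamma}\le|g|_{S_G}$, a bound independent of $i$ (because $|\varphi_i|\ge\max_{s\in S_G}|\varphi_i(s)|_{S_\Gamma}$), so the numerical sequences $\bigl(d_{\rho_i}(1,g)\bigr)_i$ are bounded for all $g$; as $G$ is countable, a diagonal extraction yields a subsequence along which $d_{\rho_i}$ converges in $\mathcal{ED}(G)$ to a pseudo-metric $d_\infty$.

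Applying Theorem~\ref{1.11} to the sequence $(X_i,1,\rho_i)$ along this subsequence produces a based real $G$-tree $(T,x,\rho)$ with $d^x_\rho=d_\infty$ and $T=\overline{\rho(G)x}$; this says precisely that $\tfrac1{|\varphi_i|}d_{\varphi_i}\to d^x_\rho$ subsequentially, which is alternative~(2) once non-triviality and minimality are checked. Non-triviality is automatic from the normalization: $\sum_{s\in S_G}d_\infty(1,s)=\lim_i\tfrac1{|\varphi_i|}\sum_{s\in S_G}|\varphi_i(s)|_{S_\Gamma}=1\neq0$, so $T$ is not a single point and $G$ has no global fixed point. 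For minimality I want the ``moreover'' clause of Theorem~\ref{1.11}, namely $|\rho_i|_y\ge|\rho_i|_1$ for all $y\in X_i$. Conjugacy-shortness of $\varphi_i$ says exactly that $|\varphi_i|\le|c_h\circ\varphi_i|$ for all $h\in\Gamma$, which rescales to $|\rho_i|_1=1\le\tfrac1{|\varphi_i|}|c_h\circ\varphi_i|=|\rho_i|_h$ for every vertex $h\in\Gamma$ of $X_i$.

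The main obstacle is closing the gap in this last inequality between the vertices and the remaining points of $X_i$: a point $y$ interior to an edge need not lie in the $\Gamma$-orbit of the base point, and with torsion present an order-two generator can act on the Cayley graph by flipping an edge, so $|\rho_i|_y$ can genuinely dip below $|\rho_i|_1$ at such $y$, and conjugacy-shortness alone does not literally verify the hypothesis of Theorem~\ref{1.11}. I would handle this as in \cite{rewe}, either by passing to a suitable $G$-equivariant subdivision of $X$ before rescaling, or by applying only the first (non-minimal) conclusion of Theorem~\ref{1.11} to obtain $(T,x,\rho)$ and then arguing, from conjugacy-shortness in the rescaled limit, that the $G$-orbit of $x$ cannot hang off the minimal $G$-invariant subtree of $T$, so that $T$ already equals that subtree. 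Everything else in the argument is the bookkeeping already carried out in the discussion preceding the statement, so no further machinery is required.
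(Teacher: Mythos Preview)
Your proposal is correct and follows exactly the approach the paper takes: the paper's own ``proof'' is the single sentence ``Applying Theorem~\ref{1.11} to the sequence of based $G$-spaces $(X_i,x_i,\rho_i)$ yields the following Theorem,'' with all the details (the dichotomy via finiteness of homomorphisms of bounded norm, the rescaling, the conjugacy-shortness giving minimality) set up in the paragraphs immediately preceding the statement. You have in fact been more careful than the paper in flagging the non-vertex-point issue for the minimality clause; the paper simply writes ``$|\varphi|\le|\varphi|_g$ for all $g\in\Gamma$'' and invokes Theorem~\ref{1.11} without comment, leaving that detail to \cite{rewe} (and it is harmless after rescaling, since the defect at an interior point is bounded by $|S_G|$ and vanishes in the limit).
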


\begin{bem}
\begin{enumerate}[(1)]
\item Suppose we are in case $(2)$ of Theorem \ref{paulin} and moreover that the sequence $(\vp_i)$ is stable. Then the action of $G$ on the limit real tree $T$ induces an action of the limit group $L=G/{\underrightarrow{\ker}(\varphi_i)}$ on $T$ without a global fixed point.
\item Let $(\varphi_i)\subset \Hom(G,\Gamma)$ be a stable sequence of conjugacy-short homomorphisms and suppose $(\varphi_i)$ contains a constant subsequence. Then after passing to this subsequence (still denoted $(\varphi_i)$) we get that $L=G/{\underrightarrow{\ker}(\varphi_i)}=G/\ker(\vp_i)\cong H$ for some subgroup $H$ of $\Gamma$. 
\end{enumerate}
\noindent Hence if $(\vp_i)\subset \Hom(G,\Gamma)$ is a stable sequence of conjugacy-short homomorphisms either the limit group $G/{\underrightarrow{\ker}(\varphi_i)}$ is isomorphic to a subgroup of $\Gamma$ or it acts on a minimal limit real tree without a global fixed point.
\end{bem}

\subsection{Restricted $\Gamma$-limit groups}
We will often not be interested in arbitrary $\Gamma$-limit groups ($\Gamma$ still a hyperbolic group) but in $\Gamma$-limit groups which contain an isomorphic copy of $\Gamma$ as a distinguished subgroup. This leads to the following definition.

\begin{definition}
Let $G$ be a group and $\iota: \Gamma\hookrightarrow G$ an embedding. A homomorphism $\vp: G\to\Gamma$ is called \textnormal{restricted} if $\vp\circ\iota=\id$. Let $$\Hom_{\iota}(G,\Gamma)=\{\vp\in\Hom(G,\Gamma)\ |\ \vp\circ\iota=\id\}.$$
A group $L$ is called a \textnormal{restricted $\Gamma$-limit} group if there exists a group $G$, an embedding $\iota:\Gamma\to G$ and a stable sequence $(\vp_n)\subset \Hom_{\iota}(G,\Gamma)$ such that $L=G/\underrightarrow{\ker}(\vp_n)$.
\end{definition}

From now on we ignore the embedding $\iota:\Gamma\to G$ and consider $\Gamma$ as a subgroup of $G$. Clearly every restricted homomorphism is injective on $\Gamma$ and hence $\Gamma$ is contained in every $\Gamma$-limit group which arises as a quotient of $G$ by a stable kernel of some sequence of restricted  homomorphisms.\\
Now let $G$ be a group and $(\vp_n)\subset \Hom(G,\Gamma)$ a stable sequence of pairwise distinct restricted homomorphisms. Denote by $L$ the associated restricted $\Gamma$-limit group. Theorem \ref{1.11} again provides us with a limit based real tree $(T,x,\rho)$ onto which $G$ (and also $L$) acts. But this tree $T$ may a priori not be minimal or the action of $G$ on $T$ may contain a global fixed point since we are not allowed to postcompose the homomorphisms $\vp_n$ by arbitrary inner automorphisms of $\Gamma$.
The following Theorem shows that we can still guarantee that the limit tree is minimal and the action of $G$ on it is without a global fixed point.

\begin{satz}[Lemma 8.2 in \cite{reinfeld}]
Let $G$ be a finitely generated group, $\Gamma$ a hyperbolic group and $(\varphi_i)\subset \Hom(G,\Gamma)$ a sequence of pairwise distinct  restricted homomorphisms. Then a subsequence of $(\frac{1}{|\varphi_i|}d_{\varphi_i})$ converges to $d^x_{\rho}$ for some non-trivial, minimal based real $G$-tree $(T,x,\rho)$.
\end{satz}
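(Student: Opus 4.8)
The plan is to run the Bestvina--Paulin argument of Theorem \ref{paulin}, the extra work being that, in the absence of a conjugacy-shortness hypothesis, minimality of the limit tree has to be extracted from the embedded non-elementary subgroup $\Gamma$.

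First I would note that $|\varphi_i|\to\infty$: a restricted homomorphism is determined by the images in $\Gamma$ of the finitely many elements of $S_G$, and for each $N$ only finitely many tuples of elements of $\Gamma$ have all entries of length $\le N$, so a sequence of pairwise distinct restricted homomorphisms cannot have bounded norm. Rescaling $X=\Cay(\Gamma,S_\Gamma)$ by $1/|\varphi_i|$ produces based $\delta_i$-hyperbolic $G$-spaces $(X_i,1,\rho_i)$ with $\delta_i=\delta/|\varphi_i|\to 0$; since $\tfrac1{|\varphi_i|}d_{\varphi_i}(1,g)\le|g|_{S_G}$ for all $g$, a diagonal argument produces a subsequence along which $\tfrac1{|\varphi_i|}d_{\varphi_i}$ converges pointwise, hence in $\mathcal{ED}(G)$, to a $G$-invariant pseudometric $d_\infty$. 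Theorem \ref{1.11} then yields a based real $G$-tree $(T,x,\rho)$ with $d^x_\rho=d_\infty$ and $T$ spanned by $\rho(G)x$.

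Non-triviality of $T$ is immediate, since $\sum_{s\in S_G}d_\infty(1,s)=\lim_i\sum_{s\in S_G}\tfrac1{|\varphi_i|}|\varphi_i(s)|_{S_\Gamma}=1\neq0$. The real content is minimality. As $T$ is spanned by a single orbit, this is equivalent to $x$ lying in the minimal $G$-invariant subtree, i.e.\ to the $G$-action having no global fixed point (nor fixed end) and to $x$ not being separated from $\rho(G)x$ by a ``hair''. In the unrestricted setting this comes for free from the minimality clause of Theorem \ref{1.11} once the $\varphi_i$ are replaced by conjugacy-short conjugates, which is forbidden here because conjugating destroys restrictedness.

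I would recover minimality directly, using $\varphi_i|_\Gamma=\mathrm{id}$ and non-elementarity of $\Gamma$. On the one hand $d_\infty(1,\gamma)=\lim_i|\gamma|_{S_\Gamma}/|\varphi_i|=0$ for every $\gamma\in\Gamma$, so $\Gamma$ fixes $x$; because $\Gamma$ is non-elementary one shows that, for a generator $s$ of $G$ with $\rho(s)x\neq x$ (which exists by non-triviality), the $\Gamma$-translates of $\rho(s)x$ point in infinitely many directions at $x$, so $x$ is an interior point of $T$ and is not cut off by a hair. On the other hand $|\varphi_i|\to\infty$ forces some fixed $s_0\in S_G$ with $|\varphi_i(s_0)|_{S_\Gamma}\to\infty$ along the subsequence, and combining $s_0$ with a suitable element of the non-elementary group $\Gamma$ yields elements $g_k\in G$ whose images have length comparable to $k\,|\varphi_i|$, so that $d_\infty(1,g_k)$ grows linearly in $k$; hence $d_\infty$ is unbounded, which rules out a global fixed point (such a point would bound $d_\infty$ by twice its distance from $x$) and, with a little more care, a fixed end. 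Together these give that $G$ acts minimally on the non-trivial tree $T$ with $d^x_\rho=d_\infty$, as required.

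The step I expect to be the main obstacle is exactly this minimality argument: arranging non-cancellation in the words $\varphi_i(g_k)$ and ruling out a hair at $x$. Non-elementarity of $\Gamma$ is essential here (a naive form of the statement fails for general target groups), and the argument is genuinely more delicate when $\Gamma$ has torsion, since if $\varphi_i(s_0)$ happens to be a torsion element its powers do not grow; one then cannot simply iterate $s_0$ but must use several independent loxodromic directions inside the fixed copy of $\Gamma$ both to build the $g_k$ and to control their images.
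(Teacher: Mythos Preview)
The paper does not actually prove this statement; it is cited as Lemma~8.2 in \cite{reinfeld} without argument, so there is no proof in the paper to compare against. Your outline through non-triviality is correct, and you are right that minimality is the only real issue once conjugacy-shortness is dropped.

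Your plan for minimality---showing that $x$ is a branch point via $\Gamma$-translates of $\rho(s)x$, and ruling out a global fixed point by exhibiting unbounded $d_\infty$---can be made to work, but the non-cancellation difficulties you anticipate are genuine and avoidable. A much shorter route uses that arc stabilizers in $T$ are finite-by-abelian: this is Theorem~\ref{eigenschaften}, whose proof in \cite{rewe} relies only on the approximation by $\delta_i$-hyperbolic spaces with $\delta_i\to 0$, not on conjugacy-shortness or on minimality of the limit tree, so it is available here. Since $\Gamma$ is non-elementary it is not finite-by-abelian, hence fixes no nondegenerate arc of $T$, while it does fix $x$. Now if $G$ had a global fixed point $p$, then $\Gamma$ would fix the arc $[x,p]$ unless $p=x$, contradicting non-triviality; so by Serre's lemma some element of the finitely generated group $G$ is hyperbolic and $T_{\min}$ exists. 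If $x\notin T_{\min}$ with nearest point $y\in T_{\min}$, then $\operatorname{Stab}_G(x)$ preserves $T_{\min}$ and hence fixes $y$, so $\Gamma$ fixes the arc $[x,y]$---contradiction. Thus $x\in T_{\min}$, and since $T$ is spanned by $\rho(G)x$ we get $T=T_{\min}$. This bypasses the construction of your elements $g_k$ entirely, and your concern about a fixed end is moot: once there is no global fixed point, finite generation already produces a hyperbolic element.
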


\begin{bem}
Suppose that the sequence $(\vp_i)$ is stable and let $L$ be the corresponding restricted $\Gamma$-limit group. Then as before the action of $G$ on the limit tree $T$ induces an action of $L$ on $T$ and moreover $\Gamma$ fixes the base point of $T$.
\end{bem}

\subsection{Properties of $\Gamma$-limit groups}
In this section we collect some results mainly from \cite{rewe}, which will be used repeatedly in the sequel. It is well-known that in a hyperbolic group $\Gamma$ there exists a uniform bound on the cardinality of torsion subgroups (in particular torsion subgroups are finite). It turns out that the same holds for $\Gamma$-limit groups.

\begin{prop}[Proposition 1.17 and Lemma 1.18 in \cite{rewe}]\label{eig} Let $\Gamma$ be a hyperbolic group and $L$ a $\Gamma$-limit group. Then there exists a constant $N(\Gamma)$ such that the following hold:
\begin{enumerate}[(1)]
\item Every torsion subgroup of $\Gamma$ has at most $N(\Gamma)$ elements.
\item Every torsion subgroup of $L$ has at most $N(\Gamma)$ elements.
\end{enumerate}
\end{prop}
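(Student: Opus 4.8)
The plan is to prove the two assertions separately: (1) is a classical fact about hyperbolic groups, and (2) is deduced from (1) together with the existence of a stably injective approximating sequence defined on $L$ itself (the Remark following the definition of a $\Gamma$-limit group).

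\textbf{Claim (1).} I would argue as follows. First, a torsion subgroup $F\leq\Gamma$ must be finite, because every infinite subgroup of a hyperbolic group contains an element of infinite order, of which a torsion subgroup has none. Second, to bound $|F|$ uniformly, let $F$ act on $X=\Cay(\Gamma,S_\Gamma)$ by left multiplication; then the finite orbit $F\cdot 1$ is a bounded $F$-invariant set, and in the $\delta$-hyperbolic space $X$ its set of quasicenters $Q$ is again $F$-invariant and has diameter bounded by a constant $C(\delta)$ depending only on $\delta$. Picking a vertex $q\in Q$, the (free) orbit $Fq$ lies in the ball $\bar B_X(q,C(\delta))$, so $|F|=|Fq|\leq|\bar B_\Gamma(1,C(\delta))|$, a number depending only on $\Gamma$ and $S_\Gamma$. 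One sets $N(\Gamma)$ equal to this bound (equivalently, one invokes that a hyperbolic group has only finitely many conjugacy classes of finite subgroups).

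\textbf{Claim (2).} Let $F\leq L$ be a torsion subgroup. By the Remark after the definition of a $\Gamma$-limit group there is a stably injective sequence $(\vp_i)\subset\Hom(L,\Gamma)$ with $L=L/\underrightarrow{\ker}(\vp_i)$. The image $\vp_i(F)\leq\Gamma$ is again a torsion group, since each of its elements is the image of a torsion element of $F$; hence by Claim (1) we have $|\vp_i(F)|\leq N(\Gamma)$ for every $i$. On the other hand, given any finite set $g_1,\ldots,g_n$ of pairwise distinct elements of $F$, the elements $g_jg_k^{-1}$ with $j\neq k$ are non-trivial in $L$, so by stable injectivity $\vp_i$ is injective on $\{g_1,\ldots,g_n\}$ for all large $i$, whence $|\vp_i(F)|\geq n$. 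Combining the two inequalities gives $n\leq N(\Gamma)$ for every such $n$, so $F$ is finite with $|F|\leq N(\Gamma)$.

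\textbf{Expected difficulty.} Neither step presents a genuine obstacle: (1) is standard hyperbolic-group theory, and (2) is a short consequence of it once a stably injective sequence \emph{on $L$} is available. The only points requiring care are the use of the Remark to replace the presenting group $H$ by $L$ itself, and — in (2) — applying Claim (1) to the entire image $\vp_i(F)$ rather than to a finitely generated piece, since a torsion subgroup of $L$ need not be locally finite a priori; working with $\vp_i(F)$ is precisely what yields the finiteness of $F$ at the same time as the cardinality bound.
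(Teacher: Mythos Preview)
Your proof is correct. The paper does not actually prove this proposition; it simply cites it as Proposition~1.17 and Lemma~1.18 of \cite{rewe}, so there is no in-paper argument to compare against. Your approach---the quasicenter argument for (1) and pushing a torsion subgroup forward along a stably injective sequence for (2)---is the standard one and is essentially what one finds in the cited source.
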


\begin{definition}
\begin{enumerate}[(1)]
\item A group is called \textnormal{$K$-virtually abelian} if it contains an abelian subgroup of index $K\in\N$ and \textnormal{virtually abelian} if it is $K$-virtually abelian for some $K\in\N$.
\item A group is called \textnormal{finite-by-abelian} if it contains a finite normal subgroup $N$ such that $G/N$ is abelian.
\end{enumerate}
\end{definition}

The following Theorem allows us to use the Rips machine (Theorem \ref{rips}) to analyze the action of a $\Gamma$-limit group on its associated limit real tree as constructed in Theorem \ref{paulin}. 

\begin{satz}[Theorem 1.16 in \cite{rewe}]\label{eigenschaften}
Let $G$ be a finitely generated group, $\Gamma$ a hyperbolic group, $(\vp_i)\subset \Hom(G,\Gamma)$ a convergent sequence of pairwise distinct conjugacy-short homomorphisms with corresponding $\Gamma$-limit group $L$ and $T$ be the associated limit real tree. Then the following hold for the action of $L$ on $T$:
\begin{enumerate}[(1)]
\item The stabilizer of any non-degenerate tripod is finite.
\item The stabilizer of any unstable arc is finite.
\item The stabilizer of any non-degenerate arc is finite-by-abelian.
\item Every subgroup of $L$ which leaves a line in $T$ invariant and fixes its ends is finite-by-abelian.
\end{enumerate}
\end{satz}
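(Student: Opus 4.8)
The plan is to work entirely inside the rescaling construction behind Theorem~\ref{paulin}. I would fix $X=\Cay(\Gamma,S_\Gamma)$, which is $\delta$-hyperbolic for some fixed $\delta$, let $X_i$ be $X$ rescaled by $1/|\vp_i|$ so that $\delta_i=\delta/|\vp_i|\to 0$, and recall from Theorem~\ref{1.11} that $d_T(\rho(g)x,\rho(h)x)=\lim_i\frac1{|\vp_i|}d_X(\vp_i(g),\vp_i(h))$, with $L$ acting on $T$ through this formula. The recurring device for all four claims would be a \emph{transfer}: given a subgroup $H\le L$ fixing a configuration $Y\subset T$ (a tripod, an arc, or a line), I would pick finitely many orbit points $\rho(w_1)x,\dots,\rho(w_m)x$ approximating the salient points of $Y$ to within a prescribed $\eta>0$, lift each $g\in H$ to $\tilde g\in G$, and observe that for $i$ large the isometry $\vp_i(\tilde g)$ of the \emph{fixed}-$\delta$ space $X$ moves every $\vp_i(w_j)$ by at most roughly $3\eta|\vp_i|$, while the $\vp_i(w_j)$ themselves lie at mutual distances comparable to $|\vp_i|$ times the positive distances occurring in $Y$. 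In this way every statement about $\stab_L(Y)$ becomes a statement about subgroups of $\Gamma$ coarsely fixing a configuration in $X$ whose diameter dominates its ``thickness'', to which one can apply the coarse geometry of hyperbolic groups together with the uniform torsion bound $N(\Gamma)$ of Proposition~\ref{eig}.

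For (1) and (2) I would argue as follows. If $H=\stab_L(Y)$ for a non-degenerate tripod $Y$ with centre $m$, then $H$ fixes $Y$ pointwise and hence fixes three non-degenerate arcs issuing from $m$ in distinct directions. If $H$ were infinite then, since torsion subgroups of $L$ have at most $N(\Gamma)$ elements by Proposition~\ref{eig}(2), $H$ is not a torsion group and so contains an element $\gamma$ of infinite order. The crucial point to establish is that an infinite-order element of $L$ fixing a non-degenerate arc cannot fix a branch point of $T$: applying the transfer to the powers $\gamma^n$ and diagonalising $n$ against $i$, one sees that $\vp_i(\tilde\gamma)$ coarsely fixes segments of unbounded length along a single quasi-geodesic direction in $X$, and the uniqueness of that direction is incompatible with the three distinct branches at $m$. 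This contradiction makes $H$ finite, and (2) is then deduced from (1): for an unstable arc $I$ one finds a sub-arc $I'\subsetneq I$ with $\stab_L(I')\neq\stab_L(I)$, and an element realizing this difference, together with the pointwise stabilizer of $I$, yields a non-degenerate tripod whose stabilizer is infinite unless $\stab_L(I)$ is finite.

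For (3) and (4) I would let $H\le L$ fix a non-degenerate arc $I$, respectively leave a line $\ell$ invariant and fix its two ends. After the transfer, for $g,h\in H$ the isometries $\vp_i(\tilde g),\vp_i(\tilde h)$ of $X$ coarsely translate along a common quasi-geodesic -- with translation length forced to be small in case (3) -- whose coarsely fixed (resp.\ coarsely invariant) part has length tending to infinity. Two such isometries of a $\delta$-hyperbolic space almost commute, with an error bounded only in terms of $\delta$ and $N(\Gamma)$ and therefore \emph{independent of $i$}; hence $\vp_i$ sends every commutator $[\tilde g,\tilde h]$ into one fixed finite subset of $\Gamma$. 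Passing to the stable kernel and invoking the torsion bound once more yields that $[H,H]$ is finite, and a short further argument upgrades this to ``$H$ is finite-by-abelian''.

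I expect the main obstacle to be precisely this uniformity in $i$: one has to know that the near-commutation error, and the various exceptional or ``error'' sets produced along the way, do not grow as one travels out along the sequence $(\vp_i)$, and that is exactly what the fixed hyperbolicity constant of $\Cay(\Gamma,S_\Gamma)$ and the fixed torsion bound $N(\Gamma)$ provide. A closely related difficulty, and the reason the torsion case is genuinely harder than the torsion-free case treated by Sela, is the bookkeeping of the finite ``twisting'' part: in the free group case arc stabilizers come out honestly abelian, whereas here one must carry a bounded finite normal subgroup through every step and keep it cleanly separated from the abelian quotient, which is what turns ``abelian'' into ``finite-by-abelian''.
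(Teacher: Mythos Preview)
The paper does not prove this theorem at all: it is quoted verbatim as Theorem~1.16 of \cite{rewe} and used as a black box, with the surrounding section explicitly referring the reader to \cite{rewe} for proofs. So there is no ``paper's own proof'' to compare against.

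That said, your overall strategy---transferring configurations in $T$ back to the fixed $\delta$-hyperbolic Cayley graph $X$ via approximating sequences, and exploiting that the hyperbolicity constant and the torsion bound $N(\Gamma)$ are independent of $i$---is the correct one, and is essentially what \cite{rewe} carries out. Several of the specific arguments in your sketch, however, have genuine gaps.

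For (1), the detour through ``an infinite-order element fixing an arc cannot fix a branch point'' is both roundabout and not convincingly justified. The clean argument is a direct pigeonhole: if $g$ fixes a non-degenerate tripod then for large $i$ the isometry $\vp_i(\tilde g)$ moves the three approximating endpoints by $o(|\vp_i|)$, and $\delta$-thinness of triangles forces it to move the approximating centre $m_i$ by at most a constant $C=C(\delta)$ independent of $i$; hence $m_i^{-1}\vp_i(\tilde g)m_i$ lies in the fixed finite ball $B_C(1)\subset\Gamma$. If the stabilizer had more than $|B_C(1)|$ elements, two would have the same image under $\vp_i$ for infinitely many $i$, forcing them to coincide in $L$.

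For (2), your reduction to (1) does not work as stated: an element $h\in\stab_L(I')\setminus\stab_L(I)$ together with $\stab_L(I)$ does not produce a tripod whose stabilizer contains $\stab_L(I)$, since $\stab_L(I)$ has no reason to fix the point $h\cdot a$. One either argues (2) by a direct transfer, or deduces it from (3) once that is in hand.

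For (3)--(4), the step ``the commutator lands in one fixed finite subset of $\Gamma$, hence $[H,H]$ is finite'' is a gap on two counts. First, the displacement bound you get on $\vp_i([\tilde g,\tilde h])$ is $o(|\vp_i|)$, not a constant independent of $i$, so you do not land in a fixed finite set by that reasoning alone. Second, even knowing that each commutator has finite order does not by itself make the subgroup they generate finite. The argument that actually works is to show that for large $i$ the images $\vp_i(\tilde g),\vp_i(\tilde h)$ lie in a common maximal virtually cyclic subgroup $E_i\le\Gamma$ (because both coarsely preserve the same long quasigeodesic), and then use the uniform structure of such subgroups---each has a finite normal subgroup of order at most $N(\Gamma)$ with infinite cyclic or dihedral quotient---to control commutators uniformly in $i$. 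This is exactly the place where ``abelian'' in the torsion-free case becomes ``finite-by-abelian'' here, and it requires more than the near-commutation heuristic you describe.
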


Note that beside having stated the above Theorem for unrestricted $\Gamma$-limit groups, the same holds for restricted $\Gamma$-limit groups. And since the subgroup $\Gamma$ of a restricted $\Gamma$-limit group $L$ fixes the base point of the associated limit real tree $T$, this allows us to analyze the action of $L$ on $T$ by applying the relative version of the Rips machine (Theorem \ref{relativerips}).\\

Now we turn our attention to virtually abelian subgroups of $\Gamma$-limit groups.

\begin{lemma}[Lemma 1.21 in \cite{rewe}]\label{abelian}
Let $A$ be an infinite $\Gamma$-limit group. Then the following hold:
\begin{enumerate}[(1)]
\item If $A$ is finite-by-abelian with center $Z(A)$ then $|A:Z(A)|<\infty$, i.e. $A$ is virtually abelian.
\item $A$ is virtually abelian if and only if $A$ is either finite-by-abelian or contains a unique finite-by-abelian subgroup of index $2$.
\item $A$ is virtually abelian if and only if all finitely generated subgroups are virtually abelian.
\end{enumerate}
\end{lemma}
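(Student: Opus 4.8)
We may assume $A$ is finitely generated; the general case follows from part~(3) below via the direct-limit argument described there. Write $N_0=N(\Gamma)$ for the torsion bound of Proposition~\ref{eig}. For part~(1) the only input from the theory of $\Gamma$-limit groups is that $A':=[A,A]$ is finite, which is immediate: if $M\trianglelefteq A$ is finite with $A/M$ abelian then $A'\subseteq M$. Granting this, $|A:Z(A)|<\infty$ is the classical fact that a finitely generated group with finite commutator subgroup is central-by-finite: fixing a finite generating set $g_1,\dots,g_r$ of $A$, for each $i$ the map $x\mapsto[g_i,x]$ takes values in the finite set $A'$ and has the left cosets of $C_A(g_i)$ as fibres, so $[A:C_A(g_i)]\le|A'|<\infty$; since $Z(A)=\bigcap_{i=1}^{r}C_A(g_i)$ the centre has finite index, and being abelian it realises $A$ as virtually abelian.

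For part~(2), the implication ``$\Leftarrow$'' is immediate from~(1) (a finite-by-abelian group is virtually abelian, hence so is any group containing one with finite index). For ``$\Rightarrow$'' I would analyse an action of $A$ on a limit real tree. Choose a stably injective sequence $(\varphi_i)\subset\Hom(A,\Gamma)$ and post-compose each $\varphi_i$ with an inner automorphism of $\Gamma$ to make it conjugacy-short. By the remark following Theorem~\ref{paulin}, either $A$ embeds into $\Gamma$, or $A$ acts minimally and non-trivially on a limit $\R$-tree $T$ without global fixed point. In the first case $A$ is an infinite virtually abelian subgroup of a hyperbolic group, hence infinite virtually cyclic (it cannot contain $\Z^{2}$), so it is finite-by-$\Z$ or finite-by-$D_\infty$; a finite-by-$\Z$ group is finite-by-abelian, and if $A/M\cong D_\infty$ with $M$ finite normal then the preimage of the characteristic infinite cyclic subgroup of $D_\infty$ is an index-$2$ finite-by-$\Z$ subgroup $A_0$, unique among index-$2$ finite-by-abelian subgroups because modulo finite normal subgroups such a subgroup is an index-$\le 2$ subgroup of $D_\infty$ that is finite-by-abelian, and a finite-by-$D_\infty$ group is never finite-by-abelian (as $D_\infty$ has no non-trivial finite normal subgroup). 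In the second case, $A$ has a finite-index normal abelian subgroup $B$ which cannot fix a point of $T$ (else $A/B$ would be a finite group acting minimally without global fixed point), so the minimal $B$-invariant subtree of $T$ is a line; being $A$-invariant it equals $T$ by minimality, so $T=\ell$ is a line. Let $A_0\le A$ be the index-$\le 2$ subgroup acting on $\ell$ by translations; it leaves $\ell$ invariant and fixes its two ends, so Theorem~\ref{eigenschaften}(4) shows $A_0$ is finite-by-abelian. If $A_0=A$ we are done; otherwise $[A:A_0]=2$, and $A_0$ is the unique index-$2$ finite-by-abelian subgroup, for any other one $A_1$ is normal, hence (not being contained in $A_0$) contains an orientation-reversing element $g$, still acts minimally on $\ell$ without global fixed point (its minimal invariant subtree is $A$-invariant, so equals $\ell$) and so also contains a non-trivial translation $t$, while its finite normal subgroup $N_1$ with $A_1/N_1$ abelian acts trivially on $\ell$ (as $\Fix(N_1)$ is $A_1$-invariant and non-empty, hence all of $\ell$); then $[g,t]\in N_1$ acts trivially on $\ell$, yet as the commutator of a reflection and a non-trivial translation it acts as a non-trivial translation, a contradiction.

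For part~(3), ``$\Rightarrow$'' is clear since subgroups of virtually abelian groups are virtually abelian. For ``$\Leftarrow$'', write $A$ as the directed union of its finitely generated subgroups $H$; each $H$ is again a $\Gamma$-limit group, so by~(2) and the uniform bound $N_0$ it contains a finite-by-abelian subgroup $H_0$ of index $\le 2$ with $|H_0'|\le N_0$, and the uniqueness clause of~(2) makes the choice of $H_0$ compatible with the inclusions $H\hookrightarrow H'$. Then $A_0:=\bigcup_H H_0$ has index $\le 2$ in $A$ and $A_0'=\bigcup_H H_0'$ is an increasing union of groups of order $\le N_0$, hence finite, so $A_0$, and therefore $A$, is virtually abelian. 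The principal obstacle is the tree case of the forward direction of~(2): identifying the index-$2$ finite-by-abelian subgroup and establishing its uniqueness; the right finiteness input is Theorem~\ref{eigenschaften}(4) applied to the stabiliser of the limit line, and the torsion in $\Gamma$ is felt precisely in the bookkeeping around the finite normal subgroups acting on $\ell$, which is vacuous in the torsion-free case.
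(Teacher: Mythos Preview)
The paper does not prove this lemma; it is cited from \cite{rewe} (Lemma~1.21). Your approach---the classical centre/commutator argument for~(1), the limit-tree analysis via Theorem~\ref{eigenschaften}(4) for~(2), and a direct-limit argument for~(3)---is the natural one and is almost certainly what \cite{rewe} does.

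There is, however, a real gap in the compatibility step of~(3). The uniqueness clause of~(2) only applies when $H'$ is \emph{not} finite-by-abelian; it says nothing about the mixed case where $H$ is finite-by-abelian (so that $H_0=H$) while $H'$ is not. You must still show that every infinite finite-by-abelian subgroup $H\le H'$ lies in $H'_0$. This is true and follows from your own commutator trick: if some $r\in H\setminus H'_0$ existed then, working either in the $D_\infty$ quotient in the virtually cyclic case or on the limit line for $H'$, the element $r$ would reverse orientation, while any infinite-order $z\in H\cap H'_0$ (which exists since this subgroup has index~$2$ in the infinite group $H$) would act as a translation, so $[r,z]$ would act as a non-trivial translation and hence have infinite order---contradicting $[H,H]$ finite. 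But this step has to be supplied; ``the uniqueness clause of~(2)'' is not enough. A smaller issue is your opening reduction for~(1): part~(3) only yields that a non--finitely-generated $A$ is virtually abelian, not that $|A:Z(A)|<\infty$, and for general groups with finite commutator subgroup these conclusions differ (infinite extraspecial $p$-groups give counterexamples). For $\Gamma$-limit groups the torsion bound $N(\Gamma)$ of Proposition~\ref{eig} closes this gap, but again an argument is needed: for instance, pass to the class-$2$ nilpotent subgroup $B=C_A(A')$, observe that $B/Z(B)$ has exponent dividing $|A'|$, and show that any finite subgroup of $B/Z(B)$ lifts into $\mathrm{Tor}(B)\cdot Z(B)$, forcing $|B/Z(B)|\le N(\Gamma)$.
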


Often we will have to deal with maximal virtually abelian subgroups of a $\Gamma$-limit group $L$ and it turns out that every element $a\in L$ of infinite order is contained in a unique maximal virtually abelian subgroup $M(a)$ and $M(a)$ is almost malnormal in $L$.

\begin{prop}[Corollary 1.34 and 1.24 in \cite{rewe}]\label{abe2}
Let $L$ be a $\Gamma$-limit group. Then the following hold:
\begin{enumerate}[(1)]
\item If $a\in L$ is an element of infinite order, then $$M(a):=\langle\{a'\in L\ |\ \langle a,a'\rangle \text{ is virtually abelian }\}\rangle$$ is the unique maximal virtually abelian subgroup of $L$ containing $a$.
\item Any infinite virtually abelian subgroup $H$ is contained in a unique maximal virtually abelian subgroup $M(H)$. In addition the group $M(H)$ is almost malnormal, i.e. if $M(H)\cap gM(H)g^{-1}$ is infinite, then $g\in M(H)$.
\end{enumerate}
\end{prop}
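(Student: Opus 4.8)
The plan is to prove~(1) in the equivalent form ``$M(a)$ is virtually abelian'' and then to read off~(2). First I would record the easy half of~(1): if $H\le L$ is any virtually abelian subgroup with $a\in H$, then $\langle a\rangle$ and, for every $h\in H$, the subgroup $\langle a,h\rangle$ are virtually abelian (subgroups of virtually abelian groups being virtually abelian), so $a$ and every such $h$ lie among the generators defining $M(a)$; hence $H\subseteq M(a)$, and once $M(a)$ is known to be virtually abelian it is automatically the unique maximal virtually abelian subgroup containing $a$. To prove $M(a)$ virtually abelian I would use Lemma~\ref{abelian}(3) to reduce to finitely generated subgroups: such a subgroup lies in $\langle a,b_1,\dots,b_m\rangle$ with each $\langle a,b_j\rangle$ virtually abelian, so by induction on $m$ everything reduces to the two--generator amalgamation claim: \emph{if $a$ has infinite order and $\langle a,b\rangle,\langle a,c\rangle$ are virtually abelian, then $G:=\langle a,b,c\rangle$ is virtually abelian.}

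For this claim I would argue by contradiction; note that $G$, being a finitely generated subgroup of a fully residually $\Gamma$ group, is again a finitely generated $\Gamma$-limit group. Assuming $G$ not virtually abelian, choose a stably injective conjugacy-short sequence $(\vp_i)\subset\Hom(G,\Gamma)$ and apply Theorem~\ref{paulin}. If $(\vp_i)$ has a constant subsequence then $G$ embeds in $\Gamma$, where $\langle a,b\rangle$ and $\langle a,c\rangle$ are infinite virtually abelian, hence virtually cyclic; then $b$ and $c$ both lie in the unique maximal virtually cyclic subgroup of $\Gamma$ containing $a$, so $G$ is virtually cyclic --- contradiction. Otherwise $G$ acts minimally and non-trivially on a limit real tree $T$ with the properties of Theorem~\ref{eigenschaften}; after passing to a sequence detecting a suitable element and, if necessary, descending into the vertex group of the graph-of-actions decomposition of $T$ containing $a$, I would arrange that $a$ acts hyperbolically on the relevant tree. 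Then $\langle a,b\rangle$ and $\langle a,c\rangle$, being virtually abelian and containing the hyperbolic element $a$, preserve the line carrying the axis of $a$ and, up to a subgroup of index $2$, fix its ends; hence so does $G$, and Theorem~\ref{eigenschaften}(4) together with Lemma~\ref{abelian}(1) force $G$ to be virtually abelian --- the desired contradiction. This proves~(1).

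For~(2), given an infinite virtually abelian subgroup $H$, Proposition~\ref{eig}(2) forbids infinite torsion subgroups, so $H$ contains an element $a$ of infinite order; the easy half of~(1) gives $H\subseteq M(a)=:M(H)$. This is independent of $a$: a second infinite-order $a'\in H$ satisfies $a\in H\subseteq M(a')$, so $M(a')$ is virtually abelian and contains $a$, whence $M(a')\subseteq M(a)$, and by symmetry $M(a)=M(a')$. Any virtually abelian $H'\supseteq H$ contains $a$, so $H'\subseteq M(H)$; thus $M(H)$ is the unique maximal virtually abelian subgroup containing $H$. For almost malnormality, suppose $M(H)\cap gM(H)g^{-1}$ is infinite; it is then infinite virtually abelian and so contains an infinite-order element $x=gyg^{-1}$ with $y\in M(H)$. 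Since $a\mapsto M(a)$ is conjugation-equivariant ($gM(a)g^{-1}=M(gag^{-1})$ directly from the definition), $gM(H)g^{-1}=gM(y)g^{-1}=M(x)=M(H)$, so $g\in N_L(M(H))$; it remains to see $N_L(M(H))=M(H)$. A normalizing $g$ induces an automorphism of the virtually abelian group $M(H)$ that is of finite order modulo inner automorphisms, for otherwise $\langle M(H),g\rangle$ would be a non-virtually-abelian subgroup of $L$ leaving a line in a limit tree invariant, contradicting Theorem~\ref{eigenschaften}(4) and Lemma~\ref{abelian}(1); hence $\langle M(H),g\rangle$ is virtually abelian, contains $a$, and so lies in $M(a)=M(H)$, i.e. $g\in M(H)$.

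The hard part will be the two--generator claim, specifically arranging that $a$ (or a suitable power or conjugate of it, possibly only after descending into a vertex action) acts hyperbolically on a tree to which Theorem~\ref{eigenschaften}(4) applies. In the torsion-free case this is essentially immediate, but with finite groups present in arc and vertex stabilizers one must run an induction on the complexity of the limit tree action --- using the Rips machine (Theorem~\ref{rips}) to peel off simplicial, axial and orbifold pieces --- which is where the real work lies. The self-normalizing step $N_L(M(H))=M(H)$ in~(2) is a secondary technical point, resting on the exclusion of Sol-type subgroups in $\Gamma$-limit groups.
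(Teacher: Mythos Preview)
The paper does not prove this proposition; it is quoted from \cite{rewe} (their Corollaries~1.24 and~1.34) without argument. So there is no ``paper's proof'' to compare against, and your proposal should be judged on its own.

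Your reductions are fine: the passage from $M(a)$ to finitely generated subgroups via Lemma~\ref{abelian}(3), and then to the two--generator claim, is correct; so is the derivation of~(2) from~(1) up to the self--normalizing step. The genuine gap is exactly where you flag it: when $a$ is elliptic on the limit tree you have no mechanism to force the conclusion. ``Descending into a vertex group'' does not help, since the vertex group containing $a$ need not contain $b$ or $c$, and there is no obvious complexity that decreases. Your self--normalizing argument in~(2) is also problematic: the claim that conjugation by $g$ has finite order in $\Out(M(H))$ is unjustified (e.g.\ $\Out(\mathbb Z^n)$ is infinite), and the sentence invoking Theorem~\ref{eigenschaften}(4) to rule out the alternative is circular --- you have not produced any line that $\langle M(H),g\rangle$ preserves.

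Both difficulties disappear if you bypass the limit tree entirely. Take any stably injective $(\vp_i)\subset\Hom(G,\Gamma)$ with $G=\langle a,b,c\rangle$. For large $i$ one has $\vp_i(a)$ of infinite order (include $a^{N(\Gamma)!}$ in the separating set), and since $\vp_i(\langle a,b\rangle)$, $\vp_i(\langle a,c\rangle)$ are virtually abelian subgroups of $\Gamma$ containing $\vp_i(a)$, both lie in the unique maximal virtually cyclic subgroup $E_i:=E(\vp_i(a))$; hence $\vp_i(G)\subseteq E_i$. Pass to a subsequence on which the composite $G\to E_i\to E_i/E_i^+\in\{1,\mathbb Z/2\}$ is a fixed homomorphism $\epsilon$, and set $G^+=\ker\epsilon$. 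For $g,h\in G^+$ the commutator $\vp_i([g,h])$ lies in the finite kernel $K_i$ of $E_i^+\twoheadrightarrow\mathbb Z$, which has order at most $N(\Gamma)$; stable injectivity then shows every element of $[G^+,G^+]$ is torsion, so $[G^+,G^+]$ is a torsion subgroup of $G$ and hence finite by Proposition~\ref{eig}(2). Thus $G^+$ is finite--by--abelian, so virtually abelian by Lemma~\ref{abelian}(1), and so is $G$. The same device handles the normalizer: if $g$ normalizes $M(H)=M(a)$ then $gag^{-1}\in M(a)$, whence $\vp_i(g)$ normalizes $E(\vp_i(a))$ in $\Gamma$, forcing $\vp_i(g)\in E(\vp_i(a))$; now apply the commutator argument to $\langle a,g\rangle$.
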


\begin{definition}
Let $K>0$ be some integer.
A group $G$ is called \textnormal{$K$-CSA} if:
\begin{enumerate}[(a)]
\item Any finite subgroup has cardinality at most $K$.
\item Any element $g\in G$ of infinite order is contained in a unique maximal virtually abelian subgroup $M(g)$ and $M(g)$ is $K$-virtually abelian.
\item $M(g)$ is its own normalizer.
\end{enumerate}
\end{definition}

Combining Proposition \ref{eig}, Lemma \ref{abelian} and Proposition \ref{abe2} immediately yields the first assertion of the following result. The second one is Corollary 9.10 in \cite{guirardeljsj}.

\begin{korollar}
$\Gamma$-limit groups are $N(\Gamma)$-CSA. Moreover any subgroup of a $\Gamma$-limit group $G$ is either $N(\Gamma)$-virtually abelian or contains a non-abelian free subgroup.
\end{korollar}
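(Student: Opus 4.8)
The plan is to verify the three defining conditions of $K$-CSA with $K=N(\Gamma)$ by quoting the three results just stated, and then to obtain the final clause either directly from \cite{guirardeljsj} or by a short ping-pong argument. Let $L$ be a $\Gamma$-limit group. Condition (a) is immediate: by Proposition \ref{eig}(2) every finite subgroup of $L$ has cardinality at most $N(\Gamma)$.

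For conditions (b) and (c) I would fix an element $g\in L$ of infinite order. Proposition \ref{abe2}(1) produces the subgroup $M(g)$ and shows it is \emph{the} maximal virtually abelian subgroup of $L$ containing $g$; in particular it is unique, which is the first half of (b). To see that $M(g)$ is $N(\Gamma)$-virtually abelian, I would invoke Lemma \ref{abelian}(2): $M(g)$ is either finite-by-abelian or contains a unique finite-by-abelian subgroup of index $2$, so in either case one is reduced to a finite-by-abelian group $A$ whose finite normal subgroup has order at most $N(\Gamma)$ by Proposition \ref{eig}(2); then $[A,A]$ has order at most $N(\Gamma)$ and Lemma \ref{abelian}(1) (equivalently, the classical fact that a group with commutator subgroup of order $m$ has centre of index bounded by a function of $m$) yields an abelian subgroup of index bounded purely in terms of $N(\Gamma)$, so that after enlarging $N(\Gamma)$ once and for all we may assume $M(g)$ is $N(\Gamma)$-virtually abelian. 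For (c), note that $M(g)=M(\langle g\rangle)$ is almost malnormal by Proposition \ref{abe2}(2): if $h\in L$ normalises $M(g)$ then $M(g)\cap hM(g)h^{-1}=M(g)$ is infinite, hence $h\in M(g)$; thus $M(g)$ is its own normaliser. This establishes the $N(\Gamma)$-CSA property.

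For the second assertion, suppose the subgroup $H\le G$ is not $N(\Gamma)$-virtually abelian. Since by Lemma \ref{abelian}(3) a $\Gamma$-limit group is virtually abelian as soon as all its finitely generated subgroups are, $H$ contains a finitely generated subgroup that is not virtually abelian; one then applies Corollary 9.10 of \cite{guirardeljsj} (or, alternatively, runs a Tits-alternative/ping-pong argument using the $N(\Gamma)$-CSA structure together with the action of the ambient $\Gamma$-limit group on its limit real tree from Theorem \ref{paulin} and the finiteness of tripod and arc stabilisers from Theorem \ref{eigenschaften}) to locate a non-abelian free subgroup inside $H$.

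The only slightly delicate point I anticipate is the book-keeping of the uniform constant through the finite-by-abelian reduction in part (b)--(c), i.e.\ making sure that a single $N(\Gamma)$, possibly after one harmless enlargement, serves all three CSA conditions simultaneously; the genuine mathematical content behind the "moreover" clause lies in \cite{guirardeljsj}, so the main obstacle is really just assembling the cited ingredients consistently rather than any new argument.
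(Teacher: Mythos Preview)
Your proposal is correct and follows exactly the approach the paper takes: the paper simply states that the first assertion follows by ``combining Proposition~\ref{eig}, Lemma~\ref{abelian} and Proposition~\ref{abe2}'' and that the second is Corollary~9.10 of \cite{guirardeljsj}, and you have faithfully unpacked how these ingredients fit together. Your worry about the bookkeeping of the constant $N(\Gamma)$ is legitimate but harmless---the paper (like \cite{guirardeljsj}) implicitly allows $N(\Gamma)$ to be enlarged once to absorb the bound coming from Lemma~\ref{abelian}(1), exactly as you suggest.
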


\subsection{Dunwoody decompositions of $\Gamma$-limit groups}
Let $\Gamma$ again be a non-elementary hyperbolic group and $L$ a restricted $\Gamma$-limit group. A splitting of $L$ is a graph of groups $\A$ with $\pi_1(\A)\cong L$ such that $\Gamma$ is contained in a vertex group. In this section we are interested in all splittings of $L$ along finite subgroups, while the next section covers the (more difficult) case of all splittings along virtually abelian subgroups. We only state the results for unrestricted $\Gamma$-limit groups, but it is obvious that they also hold in the restricted case.

\begin{definition}
A group $G$ is called \textnormal{accessible} if there exists a reduced graph of groups $\A$ such that 
\begin{enumerate}[(a)]
\item $\pi_1(\A)\cong G$,
\item any edge group of $\A$ is finite and
\item any vertex group is one-ended or finite.
\end{enumerate}
We call any such graph of groups a \textnormal{Dunwoody decomposition} of $G$.
\end{definition}

\begin{bem}
The maximal vertex groups of a Dunwoody decomposition are unique up to conjugacy. Indeed they are precisely the maximal one-ended subgroups of $G$.
\end{bem}

\begin{satz}
Let $\Gamma$ be a hyperbolic group and $L$ a $\Gamma$-limit group. Then $L$ is accessible.
\end{satz}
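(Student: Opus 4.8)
The plan is to reduce the accessibility of a $\Gamma$-limit group $L$ to Linnell's accessibility theorem (or Dunwoody's accessibility for finitely presented groups, combined with the bound on torsion). The key structural input is Proposition \ref{eig}(2): every torsion (equivalently, every finite) subgroup of $L$ has at most $N(\Gamma)$ elements. Linnell's theorem states that a finitely generated group $G$ in which the orders of finite subgroups are uniformly bounded is accessible, i.e.\ admits a reduced graph of groups decomposition with finite edge groups all of whose vertex groups are finite or one-ended. Since a finitely generated $\Gamma$-limit group is by definition finitely generated, and has the required uniform torsion bound, Linnell's theorem applies directly and yields a reduced graph of groups $\A$ with $\pi_1(\A)\cong L$, finite edge groups, and vertex groups that are one-ended or finite. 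This is exactly the definition of a Dunwoody decomposition, so $L$ is accessible.

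First I would recall Linnell's accessibility theorem precisely and verify its hypotheses in our setting: finite generation of $L$ is part of the standing assumption on $\Gamma$-limit groups we work with, and the uniform bound $N(\Gamma)$ on the size of finite subgroups of $L$ is Proposition \ref{eig}(2). Second, I would note that a reduced graph of groups with finite edge groups automatically has the property that the complexity (in the sense of Bestvina--Feighn) is finite, so the decomposition produced is genuinely a finite graph of groups. Third, I would observe that in the restricted case the vertex group containing $\Gamma$ is necessarily one of the one-ended (or finite, but $\Gamma$ is non-elementary hence one-ended) vertex groups, so the Dunwoody decomposition can be taken to be a splitting of $L$ relative to $\Gamma$ in the sense defined earlier; this is the remark that the result "also holds in the restricted case."

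The main obstacle — really the only subtlety — is that $L$ need not be finitely presented, so one cannot invoke Dunwoody's original accessibility theorem for finitely presented groups directly; one must use Linnell's stronger statement, which replaces finite presentability by the uniform torsion bound. An alternative, self-contained route (which is perhaps what one would want if one wished to avoid citing Linnell) would be to combine the fact that $\Gamma$-limit groups are finitely presented when $\Gamma$ is torsion-free — but in the torsion case this fails, so the torsion bound is essential. I expect the write-up to cite \cite{rewe} for the precise statement, since the accessibility of $\Gamma$-limit groups is established there as a consequence of Proposition \ref{eig} together with Linnell's theorem, and to simply record that the maximal one-ended vertex groups are well-defined up to conjugacy (the preceding Remark), which is all that is needed downstream for the Dunwoody decomposition to be a canonical object.
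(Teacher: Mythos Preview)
Your proposal is correct and follows exactly the paper's approach: the paper's proof consists of the single sentence that accessibility follows from Linnell's theorem together with the bound $N(\Gamma)$ on the order of finite subgroups of $L$ from Proposition~\ref{eig}. Your additional remarks about finite presentability and the restricted case are accurate context but go beyond what the paper records.
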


This follows from the following theorem of P. Linnell and the fact that the order of finite subgroups of $L$ is bounded by a constant $N(\Gamma)>0$ (Proposition \ref{eig}).

\begin{satz}\label{linnell}\cite{linnell}
Let $G$ be a finitely generated group. Suppose that there exists some constant $C>0$ such that any finite subgroup $H$ of $G$ is of order at most $C$. Then $G$ is accessible.
\end{satz}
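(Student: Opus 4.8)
The plan is to derive accessibility from a uniform bound on the complexity of splittings of $G$ over finite subgroups. First I would reduce the statement to the following assertion: there is a constant $N=N(G,C)$ such that every \emph{reduced} graph of groups decomposition of $G$ with all edge groups finite has at most $N$ edges. Granting this, choose among all such decompositions one, say $\mathcal{A}$, with the maximal number of edges. Its vertex groups are finitely generated (a finitely generated group acting cocompactly on a tree with finitely generated edge stabilizers has finitely generated vertex stabilizers) and each satisfies the same bound $C$ on the orders of its finite subgroups. If some vertex group $A_v$ were infinite and had more than one end, then by Stallings' ends theorem $A_v$ would split nontrivially over a finite subgroup; refining $\mathcal{A}$ at $v$ along this splitting and then re-reducing would yield a reduced decomposition of $G$ with finite edge groups and strictly more edges, contradicting maximality. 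Hence every vertex group of $\mathcal{A}$ is finite or one-ended, i.e. $\mathcal{A}$ is a Dunwoody decomposition and $G$ is accessible.

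It remains to prove the edge bound, which is the technical heart. Let $\mathcal{A}$ be a reduced decomposition with finite edge groups, $Y$ its (finite) underlying graph with $E_0$ edges and $V_0$ vertices, and $T$ the Bass--Serre tree. Over $\mathbb{Q}$, the augmented simplicial chain complex of $T$,
$$0\longrightarrow C_1(T)\stackrel{\partial}{\longrightarrow}C_0(T)\stackrel{\varepsilon}{\longrightarrow}\mathbb{Q}\longrightarrow 0,$$
is exact because $T$ is contractible, and $C_i(T)\cong\bigoplus_{\sigma}\mathbb{Q}[G/G_\sigma]$, the sum running over $G$-orbit representatives of the $i$-cells. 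Since $\operatorname{char}\mathbb{Q}=0$ and each edge stabilizer $G_e$ is finite of order at most $C$, the idempotent $|G_e|^{-1}\sum_{g\in G_e}g$ of $\mathbb{Q}G_e$ splits $\mathbb{Q}[G/G_e]$ off the regular module $\mathbb{Q}G$; thus $C_1(T)$ is a finitely generated projective $\mathbb{Q}G$-module whose Hattori--Stallings rank (the coefficient of the class of $1$) equals $\sum_e|G_e|^{-1}$, which is at least $E_0/C$. On the other hand $C_1(T)\cong\ker\varepsilon$ is a submodule of $C_0(T)$, which is generated by the $V_0$ cyclic modules $\mathbb{Q}[G/G_v]$; feeding in the finite generation of $G$ (which controls the augmentation ideal and hence the relevant first homology) together with the reducedness of $\mathcal{A}$ (which ties the vertex data to the edge data), one obtains $E_0\le N(G,C)$ --- this is precisely where Linnell's counting argument does its work. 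The same identity can be read $\ell^2$-homologically: $E_0/C\le\sum_e|G_e|^{-1}=\dim_{\mathcal{N}(G)}H_1^{(2)}(T;G)+\sum_{v:\,|G_v|<\infty}|G_v|^{-1}$, so the task is to bound the right-hand side.

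The main obstacle is exactly this last estimate. A priori $E_0$, $V_0$, and the first-homology term are all unbounded, and the substance of the theorem is to show that reducedness controls the vertex contribution in terms of the edge contribution and that finite generation, together with the bound $C$ on the orders of finite subgroups, then pins down the number of edges. Everything else --- the reduction to the edge bound, finite generation of vertex groups, Stallings' theorem, refinement and re-reducing --- is routine. An alternative to the module-theoretic bookkeeping above is Dunwoody's structure-tree machinery of almost-invariant subsets, on which Linnell's original argument is built; in either packaging the quantitative step is where the real difficulty lies.
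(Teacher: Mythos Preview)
The paper does not prove this theorem at all: it is quoted from \cite{linnell} and used as a black box to deduce accessibility of $\Gamma$-limit groups from the uniform bound on orders of finite subgroups. So there is no ``paper's own proof'' to compare against.

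Your sketch is a reasonable outline of how Linnell's result is actually established. The reduction step (a uniform bound $N$ on the number of edges in reduced finite-edge-group decompositions implies accessibility, via Stallings' theorem applied to a decomposition of maximal complexity) is correct and routine. The module-theoretic/$\ell^2$ packaging of the edge bound is a legitimate modern rephrasing of Linnell's almost-invariant-set argument, and the inequality $\sum_e |G_e|^{-1}\ge E_0/C$ is the right starting point.

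That said, you have not actually carried out the proof: you explicitly defer the crucial estimate (``this is precisely where Linnell's counting argument does its work''; ``the task is to bound the right-hand side''). The genuine content of the theorem is exactly the step you flag as an obstacle --- showing that finite generation of $G$ together with reducedness bounds the relevant homological quantity independently of the decomposition. What you have written is an accurate roadmap, not a proof; to complete it you would need either Linnell's original computation with almost-invariant sets or the full $\ell^2$-Betti-number argument (bounding $b_1^{(2)}(G)$ in terms of the number of generators of $G$, and controlling the finite-vertex contribution via reducedness).
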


We end the section with the following observation:

\begin{lemma} Let $\Gamma$ be hyperbolic, $L$ a one-ended $\Gamma$-limit group. Then either $L$ is virtually abelian, or a finite extension of the fundamental group of a closed orbifold, or has a non-trivial virtually abelian JSJ decomposition.
\end{lemma}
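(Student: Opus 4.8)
The plan is to use the (relative, virtually abelian) JSJ machinery for $\Gamma$-limit groups together with the Rips-type structure theorems of the preceding sections. Let $L$ be a one-ended $\Gamma$-limit group. First I would recall that, being a $\Gamma$-limit group, $L$ is $N(\Gamma)$-CSA (Corollary following Proposition \ref{abe2}) and every subgroup is either $N(\Gamma)$-virtually abelian or contains a non-abelian free subgroup. Dichotomy: either $L$ is virtually abelian, in which case we are done, or $L$ contains a non-abelian free subgroup. So from now on assume $L$ is not virtually abelian. I would then distinguish whether $L$ admits a non-trivial splitting over a virtually abelian subgroup or not.

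If $L$ has a non-trivial virtually abelian splitting, then by the existence of a JSJ decomposition for one-ended $\Gamma$-limit groups (the virtually abelian JSJ, as referenced in \cite{guirardeljsj}/\cite{rewe}) one obtains a non-trivial virtually abelian JSJ decomposition of $L$, which is the third alternative. The only remaining case is that $L$ is one-ended, not virtually abelian, and \emph{freely indecomposable and rigid} with respect to virtually abelian splittings, i.e. $L$ admits no non-trivial splitting over a virtually abelian (in particular over a finite) subgroup. In this case I would run the Bestvina--Paulin construction: take a stably injective sequence $(\varphi_i)\subset\Hom_\iota(L,\Gamma)$ (using the Remark after the definition of $\Gamma$-limit groups), make the $\varphi_i$ conjugacy-short, and apply Theorem \ref{paulin}. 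Since $L$ is not a subgroup of $\Gamma$ (a non-virtually-abelian, rigid, but finitely generated subgroup of $\Gamma$ would itself be hyperbolic hence have its own splitting, or one argues directly that a constant subsequence contradicts rigidity/non-abelianity as in the standard shortening argument), we land in case (2): $L$ acts non-trivially and minimally on a real tree $T$ with no global fixed point.

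Now apply the relative Rips machine (Theorem \ref{relativerips}), valid here because the arc and tripod stabilizers are finite-by-abelian / finite by Theorem \ref{eigenschaften}, and $\Gamma$ fixes the base point. Since $L$ has no splitting over a finite or virtually abelian subgroup, the first two alternatives of Theorem \ref{relativerips} (splitting over an unstable-arc stabilizer or an infinite-tripod stabilizer — both finite by Theorem \ref{eigenschaften}(1),(2), hence these would be finite splittings) are excluded, and likewise the simplicial pieces of the graph of actions would produce virtually abelian (indeed finite) edge splittings, which are also excluded unless the graph of actions is a single vertex. Hence $T$ is a single vertex action of axial or orbifold type. An axial action has image a finitely generated group acting with dense orbits on a line, which is finite-by-abelian, forcing $L$ to be virtually abelian (Lemma \ref{abelian}), contradicting our assumption. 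Therefore the vertex action is of orbifold type: $L$ has a finite normal subgroup $N$ (the kernel $N_v$) with $L/N$ acting faithfully, dual to an arational measured foliation on a compact $2$-orbifold with boundary; since the action is on all of $T$ with $L$ one-ended, the orbifold is closed, so $L/N$ is the fundamental group of a closed orbifold and $L$ is a finite-by-(closed-orbifold-group), i.e. a finite extension of the fundamental group of a closed orbifold, which is the second alternative.

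The main obstacle I expect is handling the boundary/peripheral structure carefully: making sure that "no virtually abelian splitting" really forces the orbifold to be \emph{closed} (so that its boundary subgroups, which would otherwise give virtually abelian edge groups in the graph of actions, do not appear), and ruling out the simplicial and axial pieces cleanly in the presence of torsion — this is precisely where one invokes Theorem \ref{eigenschaften} to keep all exceptional stabilizers finite, and Lemma \ref{abelian} to collapse the axial case. A secondary technical point is justifying that the limit group $L$ is not already isomorphic to a subgroup of $\Gamma$ in the rigid case; this is the standard first step of the shortening argument and can be cited accordingly.
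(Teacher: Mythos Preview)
Your overall strategy coincides with the paper's: produce a faithful minimal action of $L$ on a limit real tree via Theorem~\ref{paulin}, feed it to the Rips machine, and read off the trichotomy (a single axial vertex forces $L$ virtually abelian, a single closed-orbifold vertex forces $L$ finite-by-closed-orbifold, and anything else yields a non-trivial virtually abelian splitting, hence a non-trivial JSJ). The paper compresses this into two sentences by arguing the contrapositive; your forward case analysis is logically equivalent. Two small corrections: the lemma concerns an \emph{unrestricted} $\Gamma$-limit group, so there is no distinguished copy of $\Gamma$ inside $L$ fixing a basepoint---use Theorem~\ref{rips}, not the relative Theorem~\ref{relativerips}; and your opening dichotomy (``does $L$ admit an abstract virtually abelian splitting?'') is an unnecessary detour, since the Rips output hands you the splitting directly in the non-exceptional case.

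You are right to flag the possibility that Theorem~\ref{paulin} lands in alternative~(1), i.e.\ every conjugacy-short stably injective sequence is eventually constant and $L$ embeds in~$\Gamma$, so that no limit tree is produced. The paper's two-line proof silently assumes this does not happen. However, your proposed patch---``this is the standard first step of the shortening argument''---does not work: the shortening argument \emph{consumes} a non-trivial limit action, it does not manufacture one. In fact the gap is genuine: if $\Gamma$ itself is one-ended, rigid, not finite-by-closed-orbifold, and has finite $\Out(\Gamma)$ (e.g.\ a generic small-cancellation or random group), then $L=\Gamma$ is a one-ended $\Gamma$-limit group for which every conjugacy-short stably injective sequence has a constant subsequence, and the stated trichotomy fails outright. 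So both your argument and the paper's require the additional hypothesis that $L$ is not isomorphic to a subgroup of~$\Gamma$---which is in any case the situation of interest in the rest of the paper.
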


\begin{proof} Assume that $L$ is neither virtually abelian nor a finite extension of the fundamental group of a closed orbifold. By Theorem \ref{rips} $L$ splits as a graph of actions. Since the action of $L$ on the corresponding limit tree is faithful, $L$ does not act with a global fixpoint and hence the splitting is non-trivial.
\end{proof}

\subsection{The virtually abelian JSJ decomposition of a $\Gamma$-limit group}
Let $G$ be a finitely generated group and $\mathcal{H}$ a class of groups (finite, abelian, free, etc.). One could ask, if there exists a splitting of $G$ as a graph of groups $\A$ such that one can read off (in an appropriate sense) all splittings of $G$ along groups contained in $\mathcal{H}$ from $\A$. JSJ theory deals with this question. This theory goes back to the work of Jaco-Shalen and Johannsen on $3$-dimensional topology. For groups the first results were due to Rips and Sela for splittings of finitely presented freely indecomposable groups along infinite cyclic subgroups (\cite{rips}). Since then much has been done for splittings of groups along various classes of subgroups $\mathcal{Z}$. A summary of the historical development together with the most recent results in this area can be found in the excellent work of V. Guirardel and G. Levitt on JSJ decompositions of groups (\cite{guirardeljsj}).\\

Let $\mathcal{Z}$ be a class of subgroups of a group $L$. Given two $\mathcal{Z}$-trees $T_1$ and $T_2$, i.e. simplicial trees on which $L$ acts with edge stabilizers in $\mathcal{Z}$, we say that $T_1$ \textit{dominates} $T_2$ if any group which is elliptic in $T_1$ is also elliptic in $T_2$. A $\mathcal{Z}$-tree is \textit{universally elliptic} if its edge stabilizers are elliptic in every $\mathcal{Z}$-tree.

\begin{definition}\label{defjsj}
Let $L$ be a finitely generated group and $T$ a $\mathcal{Z}$-tree such that
\begin{enumerate}[(a)]
\item $T$ is universally elliptic and
\item $T$ dominates any other universally elliptic $\mathcal{Z}$-tree $T'$. 
\end{enumerate}
We call $T$ a \textnormal{JSJ tree} and the quotient graph of groups $\A=T/L$ a \textnormal{JSJ decomposition} of $L$ (with respect to $\mathcal{Z}$).
\end{definition}

For arbitrary finitely generated groups JSJ decompositions do not always exist, but since we only need the case that $L$ is a one-ended $\Gamma$-limit group ($\Gamma$ hyperbolic) and $\mathcal{Z}$ is the class of all virtually abelian subgroups, we will show that under these assumptions a JSJ decomposition of $L$ does exist.\\
Unfortunately JSJ decompositions are not unique in general, but rather form a deformation space, denoted by $\mathcal{D}_{JSJ}$, which consists of all JSJ trees of $L$. Hence two JSJ trees $T_1,T_2$ are in $\mathcal{D}_{JSJ}$ if and only if they have the same elliptic subgroups.\\
Let now $\A$ be a JSJ decomposition of a finitely generated group $L$. A vertex group $A_v$ of $\A$ is called \textit{rigid} if it is elliptic in every splitting of $L$ along a subgroup from $\mathcal{Z}$ and \textit{flexible} otherwise. The description of flexible vertices of JSJ decompositions (along a given class of groups) is one of the major difficulties in JSJ theory.\\

Let from now on $\Gamma$ be a hyperbolic group and $L$ a $\Gamma$-limit group. As seen in the previous section there exists a Dunwoody decomposition for $L$, which is a JSJ decomposition of $L$ along the class of finite subgroups. Assume now that $L$ is one-ended.
For the study of equations over hyperbolic groups and the proof of a generalization of Merzlyakov's theorem we need to understand splittings of $L$ over virtually abelian subgroups, namely the JSJ decomposition along the class of virtually abelian subgroups.
The construction of this JSJ decomposition and its properties can be found in \cite{rewe}. For convenience we recall the important definitions and results from there.

\begin{definition}\label{defunfolding}
\begin{enumerate}[(1)]
\item Let $\A$ be a graph of groups. Let $e\in EA$ be an edge in $\A$ and $\alpha_e(A_e)\leq C\leq A_{\alpha(e)}$. A \textnormal{folding} along the edge $e$ is the replacement of $A_{\omega(e)}$ by\\ $C\ast_{A_e}A_{\omega(e)}$, $A_e$ by $C$ and the corresponding replacement of the boundary monomorphisms $\alpha_e$ and $\omega_e$. The inverse of a folding is called an \textnormal{unfolding}.
\item Let $\A$ be a splitting of a finitely generated group $G$. We say that $\A$ is a \textnormal{compatible splitting} if all one-ended virtually abelian subgroups are elliptic in it.
\item A vertex group $Q$ of a graph of groups $\A$ is called a \textnormal{QH vertex group} (quadratically hanging vertex group) if the following hold:
\begin{enumerate}[(a)]
\item $Q$ is finite-by-orbifold, i.e. there exists a cone-type orbifold $\mO$ and a finite group $E$ such that $Q$ fits into the short exact sequence
$$1\to E\to Q\xrightarrow{\pi} \pi_1(\mO)\to 1.$$
\item For any edge $e\in EA$ such that $Q$ is the vertex group of $\alpha(e)$ there exists a peripheral subgroup $P_e$ of $\pi_1(\mO)$ such that $\alpha(A_e)$ is in $Q$ conjugate to a finite index subgroup of $\pi^{-1}(P_e)$. 
\end{enumerate}
\item Let $Q$ be a QH subgroup in a graph of groups $\A$ with underlying orbifold $\mO$. Any essential simple closed curve on $\mO$ induces a splitting of $\pi_1(\A)$ along a 2-ended subgroup. we call such a splitting
\textnormal{geometric} with respect to $Q$.
\item A one-edge splitting $\A$ of a group $G$, i.e. either a splitting as an amalgamated product or an HNN-extension, is called \textnormal{hyperbolic-hyperbolic} with respect to another splitting $\B$ of $G$ if the edge group of $\A$ is not conjugate into a vertex group of $\B$ and vice versa.
\end{enumerate}
\end{definition}

Before we proceed we introduce some notation concerning virtually abelian vertex groups of splittings of a group $L$.

\begin{definition}\label{p+}
Let $\A$ be a graph of groups decomposition of a one-ended $\Gamma$-limit group $L$. For every virtually abelian subgroup $A$ of $L$ we denote by $A^+$ the unique maximal finite-by-abelian subgroup of index at most $2$.\\
Let now $A_v$ be a virtually abelian vertex group of $\A$. 
Let $\Delta$ be the set of homomorphisms $\eta: A_v^+\to\Z$ such that $\eta(\alpha_e(A_e^+))=0$ for all $e\in EA$ with $\alpha(e)=v$. We then define $$P_v^+=\{g\in A_v^+\ |\ \eta(g)=0 \text{ for all } \eta\in\Delta\}$$ to be the \textnormal{peripheral subgroup} of $A_v^+$. We call every subgroup $U\leq A_v$ with $U^+=P(A^+_v)$ a \textnormal{peripheral subgroup of $A_v$}. Note that the peripheral subgroup of $A^+_v$ is uniquely defined, while the peripheral subgroup of $A_v$ is not. For every edge $e\in EA$ with $\alpha(e)=v$ we denote the peripheral subgroup of $A_v$ containing $A_e$ by $P(A_v,A_e)$.
\end{definition}

\begin{bem}
$A_v^+/P_v^+$ is a finitely generated free abelian group.
\end{bem}

Now we are ready to define the canonical virtually abelian JSJ decomposition of a one-ended $\Gamma$-limit group.

\begin{definition}\label{jsjdef}
Let $L$ be a one-ended $\Gamma$-limit group and $\A$ be a virtually abelian compatible splitting of $L$. Then $\A$ is called a \textnormal{virtually abelian JSJ decomposition} of $L$ if the following hold.
\begin{enumerate}[(1)]
\item Every splitting over a 2-ended subgroup that is hyperbolic-hyperbolic with respect to another splitting over a 2-ended subgroup is geometric with respect to a QH subgroup of $\A$.
\item Any edge of $\A$ that can be unfolded to be finite-by-abelian is finite-by-abelian.
\item For any virtually abelian vertex group $A_v$, the rank of $A_v^+/P_v^+$ cannot be increased by unfoldings.
\item $\A$ is in normal form and of maximal complexity among all virtually abelian compatible splittings of $L$ that satisfy (1)-(3).
\end{enumerate}
\end{definition}

\begin{bem}
\begin{enumerate}[(1)]
\item The virtually abelian JSJ decomposition is a JSJ decomposition with respect to the class of virtually abelian subgroups in the sense of Definition \ref{defjsj}.
\item Vertex groups in a virtually abelian JSJ decomposition which are neither QH subgroups nor virtually abelian, are rigid.
\item The analogue for restricted limit groups of the graph of groups $\A$ in the above definition is called a \textnormal{restricted JSJ decomposition}.
\end{enumerate}
\end{bem}

\begin{satz}\label{jsj2}
Let $L$ be a one-ended (restricted) $\Gamma$-limit group. Then there exists a (restricted) JSJ decomposition $\A$ of $L$.
\end{satz}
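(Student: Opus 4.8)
The plan is to build the virtually abelian JSJ decomposition of a one-ended $\Gamma$-limit group $L$ by the usual deformation-space machinery, using the Rips machine (Theorem \ref{rips}, respectively its relative version Theorem \ref{relativerips}) as the engine that produces splittings and that pins down the flexible vertices. First I would observe that by Proposition \ref{eig} the finite subgroups of $L$ have order bounded by $N(\Gamma)$, and by the CSA-type properties (Lemma \ref{abelian}, Proposition \ref{abe2}) the class $\mathcal{Z}$ of virtually abelian subgroups is stable under the relevant operations: it is closed under taking subgroups, every infinite member sits inside a unique maximal one which is almost malnormal, and chains of virtually abelian subgroups with strictly increasing rank terminate. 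These are exactly the finiteness/acylindricity inputs needed to run a JSJ construction. Since $L$ is one-ended, it is freely indecomposable over finite groups, and one can restrict attention to $\mathcal{Z}$-trees with one-ended vertex groups not further splitting.

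Next I would address existence of a universally elliptic $\mathcal{Z}$-tree that dominates all others, i.e. a point in the deformation space $\mathcal{D}_{JSJ}$. Here I would either invoke the general existence theory of Guirardel–Levitt (\cite{guirardeljsj}), checking that $L$ with $\mathcal{Z}$ = virtually abelian subgroups satisfies their hypotheses (acylindrical accessibility, which follows from the bound $N(\Gamma)$ on finite subgroups together with almost malnormality of maximal virtually abelian subgroups), or reprove it directly: take a sequence of $\mathcal{Z}$-splittings of increasing complexity, fold to make edges maximal, and use accessibility to extract a maximal universally elliptic tree. The output is a graph of groups $\A_0$ whose edge groups are universally elliptic virtually abelian subgroups and whose vertex groups are either elliptic in every $\mathcal{Z}$-splitting (rigid) or flexible. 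In the restricted case one runs the same argument relative to the distinguished subgroup $\Gamma$, using that $\Gamma$ is elliptic in all the trees under consideration (it fixes the base point of any associated limit tree by the remark following Theorem \ref{eigenschaften}), so that the whole deformation argument takes place among trees in which $\Gamma$ is elliptic.

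Then I would identify the flexible vertices. Applying the relative Rips machine (Theorem \ref{relativerips}) to the action of a flexible vertex group $A_v$ on the limit real tree coming from a non-elliptic $\mathcal{Z}$-splitting, and using that by Theorem \ref{eigenschaften} tripod stabilizers and unstable-arc stabilizers are finite while arc stabilizers are finite-by-abelian, one gets that the only non-simplicial, non-axial pieces are of orbifold type. The axial pieces correspond to the virtually abelian vertex groups; the orbifold pieces yield QH vertex groups in the sense of Definition \ref{defjsj}, finite-by-orbifold with peripheral structure matching the incident edge groups. This is exactly what makes condition (1) of Definition \ref{jsjdef} achievable: a hyperbolic-hyperbolic pair of 2-ended splittings forces a surface piece, so after enough unfolding all such pairs become geometric with respect to a QH subgroup. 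Finally, among all virtually abelian compatible splittings satisfying (1)–(3) — compatibility meaning one-ended virtually abelian subgroups are elliptic, condition (2) arranging edges to be finite-by-abelian wherever an unfolding allows it, and condition (3) maximizing the abelian ranks $A_v^+/P_v^+$ by unfoldings — I would pick one of maximal complexity and put it in normal form; the finiteness of this maximization is guaranteed again by the bound on finite subgroups and the rank-stabilization in Lemma \ref{abelian}/Proposition \ref{abe2}. The resulting $\A$ is the desired (restricted) JSJ decomposition.

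The main obstacle I expect is the interplay between the three optimization conditions (1)–(3) and the normal-form/maximal-complexity requirement in (4): one must show that these can be simultaneously satisfied, i.e. that unfolding to maximize abelian ranks and to make edges finite-by-abelian does not destroy universal ellipticity or the compatible-splitting property, and that the complexity function (number of edges, ranks of QH and abelian vertices) actually attains a maximum rather than increasing indefinitely. Controlling this is precisely where acylindrical accessibility for $\Gamma$-limit groups — itself a consequence of the uniform torsion bound $N(\Gamma)$ and almost malnormality of maximal virtually abelian subgroups (Proposition \ref{eig}, Proposition \ref{abe2}) — does the heavy lifting, and I would expect the bulk of the argument (and the reason the result is quoted from \cite{rewe} rather than reproved here) to be this bookkeeping. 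The torsion-free analogue is standard Rips–Sela theory; the extra work over general hyperbolic $\Gamma$ is keeping track of the finite normal subgroups inside the finite-by-abelian and finite-by-orbifold vertex groups throughout the folding/unfolding process.
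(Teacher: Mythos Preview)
Your proposal is mathematically sound and outlines essentially what the underlying construction in \cite{rewe} does, but it is much more than what the paper actually argues here. The paper's proof is a two-line citation: it invokes Theorem 3.9 of \cite{rewe}, which already delivers a virtually abelian splitting with the required QH and rigid pieces, and then says that the precise form demanded by Definition \ref{jsjdef} is obtained from that splitting by finitely many refinements of non-QH vertices, unfoldings, and the normalization procedure of \cite{rewe}. In other words, the paper treats existence as a black box imported from \cite{rewe} and only records the post-processing needed to match conditions (1)--(4).

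Your approach, by contrast, reconstructs the content of that black box: you run Guirardel--Levitt style accessibility/deformation-space arguments, feed in the $N(\Gamma)$-CSA properties and the Rips machine to classify flexible vertices, and then optimize. This is the right picture of what is going on inside \cite{rewe}, and nothing you wrote is wrong. The practical difference is just scope: the paper relies on \cite{rewe} for the heavy lifting (existence of a maximal-complexity compatible splitting with QH pieces capturing all hyperbolic-hyperbolic 2-ended splittings), whereas you sketch that heavy lifting directly. If you are writing this up, either cite Theorem 3.9 of \cite{rewe} as the paper does, or make explicit that you are sketching its proof.
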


\begin{proof}
This follows from Theorem 3.9 in \cite{rewe} after applying finitely many refinements of non-QH subgroups, unfoldings and the normalization process defined in \cite{rewe} to the graph of groups yielded by this theorem.
\end{proof}

The following theorem explains in which sense one can read off any splitting of a one-ended group along a virtually abelian subgroup from its JSJ decomposition. For a definition of an edge slide and the normal form of a splitting we refer the reader to chapter 3 in \cite{rewe}.

\begin{satz}[Theorem 3.12 in \cite{rewe}]
Let $L$ be a one-ended $\Gamma$-limit group and let $\A$ be a virtually abelian JSJ decomposition of $L$. Then the following hold.
\begin{enumerate}[(1)]
\item Let $\B$ be a virtually abelian compatible splitting of $L$ such that all maximal QH subgroups are elliptic. Assume further that $\B$ is either in normal form or a one-edge splitting. Then $\B$ is visible in a graph of groups obtained from $\A$ by unfoldings followed by foldings and edge slides.
\item Any other JSJ decomposition $\B$ of $L$ can be obtained from $\A$ by a sequence of foldings and unfoldings.
\end{enumerate}
\end{satz}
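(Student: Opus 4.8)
The plan is to follow the standard template for showing that a JSJ decomposition displays all splittings in its class (in the spirit of Rips--Sela and the treatment of Guirardel--Levitt \cite{guirardeljsj}), adapted to virtually abelian edge groups and to torsion by means of Guirardel's Rips machine (Theorem \ref{rips}, Theorem \ref{relativerips}) and the structure theory of the action of a $\Gamma$-limit group on its limit tree (Theorem \ref{eigenschaften}).

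For part (2), the key point is that any two JSJ trees of $L$ lie in a common deformation space: by Definition \ref{defjsj}(b) each JSJ tree dominates the other, so the two have exactly the same elliptic subgroups, i.e. they both lie in $\mathcal{D}_{JSJ}$. One then invokes the deformation-space machinery developed in \cite{rewe} (the virtually abelian analogue of Forester's deformation theorem), which says that two cocompact simplicial $\mathcal{Z}$-trees in the same deformation space are connected by a finite sequence of elementary moves; in the present setting these moves are precisely the foldings and unfoldings of Definition \ref{defunfolding} (an edge slide being itself a composition of an unfolding followed by a folding). The only work is to check that every intermediate tree can be kept a $\mathcal{Z}$-tree and put back into normal form, which is routine once that machinery is available.

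For part (1), I would first reduce to the case that $\B$ is a one-edge splitting: if $\B$ is in normal form it is recovered from the collection of one-edge splittings obtained by collapsing all but one orbit of edges, and visibility is compatible with this, so it suffices to treat each one-edge collapse. So fix a one-edge splitting $\B$ of $L$ over a virtually abelian subgroup $C$, with all maximal QH subgroups of $\A$ elliptic in $\B$, and analyze how $C$ acts on the JSJ tree $T_\A$. If $\B$ were hyperbolic--hyperbolic with respect to some one-edge splitting of $L$ read off from $\A$, then feeding this configuration into the Rips machine (and using that virtually abelian vertex groups of $\A$ already absorb every higher-rank abelian splitting, by Definition \ref{jsjdef}(3)) would, by the maximality defining $\A$ (Definition \ref{jsjdef}(1)), force that splitting to be geometric with respect to a QH vertex group $Q$ of $\A$; but a splitting geometric with respect to $Q$ does not have $Q$ elliptic, contradicting the hypothesis. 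Hence $C$ is elliptic in $T_\A$, so $C\le A_v$ for some vertex group $A_v$ of $\A$. If $A_v$ is rigid it is universally elliptic, hence elliptic in $T_\B$, and together with $C\le A_v$ this lets one recover $\B$ by first unfolding $\A$ along the edges incident to $v$ until $C$ appears as an edge group and then folding and sliding into normal form; if $A_v$ is virtually abelian the same works, now using the peripheral-subgroup bookkeeping of Definition \ref{p+} and the rank-maximality of Definition \ref{jsjdef}(3)--(4); and if $A_v$ is a QH vertex group then, since $Q=A_v$ stays elliptic in $\B$, the edge group $C$ is carried by a peripheral subgroup of the underlying orbifold, and such a splitting is again visible after a single unfolding at $v$.

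The main obstacle is the hyperbolic--hyperbolic case: proving that such a configuration between $\B$ and $\A$ must be explained by a QH vertex of $\A$ is exactly where the maximality of the JSJ is used, and it amounts to re-running the Rips-machine / combination argument in the presence of torsion. In the torsion-free case one simply cites Sela's original argument; here one substitutes Guirardel's Rips machine (Theorem \ref{rips}) together with the finiteness of tripod and unstable-arc stabilizers for limit trees (Theorem \ref{eigenschaften}) to extract the compact $2$-orbifold and its dual measured foliation. Once this geometric input is secured, everything that remains is elementary manipulation of graphs of groups --- unfoldings, foldings, edge slides, and passage to normal form --- and the argument closes.
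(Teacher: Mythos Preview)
The paper does not actually prove this statement: it is quoted verbatim as Theorem 3.12 of \cite{rewe} and treated as background input, with no argument given. There is therefore nothing in the paper to compare your proposal against. Your sketch is a plausible outline of the standard JSJ argument, but if the goal is to reproduce the paper's proof, the correct answer is simply that the paper cites the result from \cite{rewe} without proof.
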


One of the reasons the virtually abelian JSJ decomposition is so useful, is that one can read off special classes of automorphisms of the group from it.
%Before we make this more precise, we recall the definition of a Dehn twist for one-edge splittings from definition \ref{defdehntwist}: Let $\A=A\ast_CB$ (or $\A=A\ast_C$) be a splitting of a $\Gamma$-limit group $L$ along a finite-by-abelian subgroup $C$. Note that in particular $C$ has infinite center. Let $g\in C_B(\omega(C))$ (or $g\in C_A(\omega(C))$. The \textit{Dehn twist} $\vp: L\to L$ by $g$ along the edge of $\A$ is defined as follows:
%\begin{enumerate}
%\item If $\A$ corresponds to the amalgamated product decomposition $G=A\ast_CB$ then 
%\begin{align*}
%\vp(a)&=a \text{ for all } a\in A\text{ and }\\
%\vp(b)&=gbg^{-1} \text{ for all } b\in B,
%\end{align*}
%\item If $\A$ corresponds to the HNN-extension $G=A\ast_C=\langle A,t\ |\ t\omega(C)t^{-1}=\alpha(C)\rangle$ then 
%\begin{align*}
%\vp(a)&=a \text{ for all } a\in A\text{ and }\\
%\vp(t)&=tg,
%\end{align*}
%\end{enumerate}

\begin{definition}\label{modone}
Let $G$ be a one-ended group with virtually abelian JSJ decomposition $\A$. Then $\Mod(G)\leq \Aut(G)$, the \textnormal{modular group of $G$}, is the group generated by the following automorphisms:
\begin{enumerate}[(1)]
\item Inner automorphisms of $G$.
\item If $e\in EA$ with $A_e$ finite-by-abelian and $\A^e$ is the one-edge splitting obtained from $\A$ by collapsing all edges but $e$: A Dehn-twist along $e\in \A^e$ by an elliptic (with respect to $\A$) element $g\in Z(M^+)$ where $M$ is the maximal virtually abelian subgroup containing $A_e$.
\item Natural extensions of automorphisms of QH-subgroups.
\item Natural extensions of automorphisms of maximal virtually abelian vertex groups $A_v$ which restrict to the identity on the peripheral subgroup $P_v^+$ and to conjugation on each virtually abelian subgroup $U\leq A_v$ with $U^+=P_v^+$.\\
 
\end{enumerate}
\end{definition}

\begin{bem}
\begin{enumerate}[(a)]
\item By Corollary 3.18 in \cite{rewe} the modular group of $G$ does not depend on the chosen virtually abelian JSJ decomposition, hence is well-defined. 
\item Once again for restricted $\Gamma$-limit groups one can also define the restricted modular group. This group consists of all modular automorphisms of the restricted virtually abelian JSJ decomposition which fix elementwise the vertex group containing $\Gamma$.
\end{enumerate}
\end{bem}

\begin{definition}
Let $\Gamma$ be a hyperbolic group, $L$ a $\Gamma$-limit group and $\D$ a Dunwoody decomposition of $L$. Then we call the subgroup $\Mod(G)\leq \Aut(G)$, consisting of those automorphisms that restrict to modular automorphisms of the vertex groups of $\D$, the \textnormal{modular group of $L$}.
\end{definition}

Note that modular automorphisms of vertex groups of a Dunwoody decomposition $\D$ of $L$, i.e. of one-ended subgroups of $L$, restrict to conjugation on finite subgroups and hence are naturally extendable to $L$.

\section{Some model theory}\label{3}
\subsection{Introduction}
The first-order language $L_0$ of groups uses the following symbols:
\begin{itemize}
\item The binary function group multiplication $"\cdot"$, the unary function inverse $" ^{-1}"$, the constant $"1"$ and the equality relation $"="$.
\item Variables $"x,y,z"$ or tuples (strings) of variables $"\underline{x}=(x_1,\ldots,x_n)"$ (which will have to be interpreted as individual group elements).
\item Logical connectives $"\wedge"$ ("and"), $"\vee"$ ("or"), $"\neg"$ ("not"), the quantifiers $"\forall"$ ("for all") and $"\exists"$ ("there exists") and parentheses $"("$ and $")"$.
\end{itemize}

A \textit{formula} in the language of groups is a logical expression using the above symbols.
A variable in a formula is called \textit{bound} if it is bound to a quantifier and \textit{free} otherwise. A \textit{sentence} is a formula in $L_0$ where all variables are bound. 

\begin{beispiel}
\begin{enumerate}
\item $\forall x_1 ((x_1x_3=1)\vee (\exists x_2\ x_1x_2=x_3)$ is a formula in $L_0$ but not a sentence, since $x_3$ is a free variable.
\item $\forall x,y\ [x,y]=1$ is a sentence in $L_0$ since both $x$ and $y$ are bound. 
\item The statement $\forall x\exists k\in\N\ x^k=1$ is not allowed in the first-order language of groups, because it quantifies over an integer, and not a group element; similarly one cannot quantify over subsets, subgroups or morphisms. In fact the quantifier does not mention to which group the variables belong, it is the interpretation which specifies the group.
\end{enumerate}
\end{beispiel}

Given a group $G$ and a sentence $\sigma$ we will say that \textit{$G$ satisfies $\sigma$} or \textit{$\sigma$ is a true sentence in $G$} if $\sigma$ is true if interpreted in $G$ (i.e. variables interpreted as group elements in $G$). We denote by $G\models \sigma$ the fact that $\sigma$ is a true sentence in $G$.

\begin{beispiel}
$G\models \forall x,y\ [x,y]=1$ if and only if $G$ is abelian.
\end{beispiel}

\begin{definition}
A \textnormal{universal sentence} (of $L_0$) is a sentence of the form 
$$\forall x_1\ldots\forall x_n\ \vp(x_1,\ldots,x_n),$$ where $\vp(x_1,\ldots,x_n)$ is a quantifier free formula which contains at most the variables $x_1,\ldots,x_n$. Similarly an \textnormal{existential sentence} is one of the form $\exists \underline{x}\ \vp(\underline{x})$, where $\underline{x}=(x_1,\ldots,x_n)$ and $\vp(\underline{x})$ is as above.
An \textnormal{universal-existential-} or \textnormal{$\forall\exists$-sentence} is one of the form $$\forall \underline{y}\exists \underline{x}\ \vp(\underline{x},\underline{y}).$$
A \textnormal{positive sentence} is a sentence containing no negations $\neg$.\\
The \textnormal{universal theory} of a group $G$, which we denote by $\Th_{\forall}(G)$, is the set of all universal sentences of $L_0$ true in $G$. Similarly the \textnormal{existential theory} $\Th_{\exists}(G)$ of a group $G$ is the set of all existential sentences true in $G$.\\
The \textnormal{first-order} or \textnormal{elementary theory} of a group $G$, denoted by $\Th(G)$ is the set of all sentences of $L_0$ true in $G$.
\end{definition}

\begin{bem}
\begin{enumerate}[(1)]
\item It is well-known that every sentence of $L_0$ is logically equivalent to one of the form $Q_1x_1,\ldots Q_nx_n\ \vp(\underline{x}),$
where $\underline{x}=(x_1,\ldots,x_n)$ is a tuple of distinct variables, each $Q_i$ for $1,\ldots,n$ is a quantifier, i.e. either $\forall$ or $\exists$, and $\vp(\underline{x})$ is a formula of $L_0$ containing no quantifiers and at most the variables of $\underline{x}$.
\item Since any existential sentence is logically equivalent to the negation of an universal sentence, it follows that if two groups have the same universal theory, then they also have the same existential theory.
\item If $H\leq G$, a quantifier free formula which is true for every tuple of elements of $G$ is clearly also true for every tuple of elements of $H$, hence $\Th_{\forall}(G)\subset\Th_{\forall}(H)$. We immediately conclude that $\Th_{\forall}(F_2)=\Th_{\forall}(F_n)$ for all $n\geq 3$.
\end{enumerate} 
\end{bem}

Two groups $G$ and $H$ are said to be \textit{elementary equivalent} if they have the same elementary theory, i.e. $\Th(G)=\Th(H)$. The aim is to encode properties of a group in first-order sentences and then conclude that all groups elementary equivalent to this group posses the same properties.

\begin{beispiel}
\begin{enumerate}[(1)]
\item The sentence $\forall x\ (x=1 \vee x^k \neq 1)$ says that the group has no $k$-torsion. Thus, the property of being torsion-free can be encoded by infinitely many elementary sentences. Hence any group elementary equivalent to a torsion-free group is torsion-free.
\item $\Z$ and $\Z^2$ don't have the same elementary theory. Indeed one can encode in a sentence the fact that there are at most $2$ elements modulo the doubles, i.e. the sentence
$$\forall x_1,x_2,x_3\exists x_4\ (x_1=x_2+2x_4)\vee(x_1=x_3+2x_4)\vee(x_2=x_3+2x_4)$$
holds in $\Z$ and not in $\Z^2$ since $\Z/2\Z$ has two elements and $\Z^2/2\Z^2$ has four elements.
\end{enumerate}
\end{beispiel}

If $H$ and $G$ are groups and $f:H\to G$ is a monomorphism, then $f$ is an \textit{elementary embedding} provided whenever $\vp(x_1,\ldots,x_n)$ is a formula of $L_0$ containing at most the distinct variables $x_1,\ldots,x_n$, and $(h_1,\ldots,h_n)\in H^n$, then $\vp(h_1,\ldots,h_n)$ is true in $H$ if and only if $\vp(f(h_1),\ldots,f(h_n))$ is true in $G$.\\
Note that the existence of an elementary embedding $f:H\to G$ is a sufficient but not necessary condition for groups $G$ and $H$ to be elementary equivalent.

\subsection{The Tarski problems}
The following theorem, formerly known as the Tarski conjecture was proved independently by Z. Sela (\cite{sela1}-\cite{sela6}) and Kharlampovich/Myasnikov (\cite{kharlampovich/myasnikov}).

\begin{satz}[Tarski Problem] Finitely generated non-abelian free groups have the same elementary theory.
\end{satz}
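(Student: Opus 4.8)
The plan is to prove the statement in two stages, following the route of Sela (\cite{sela1}--\cite{sela6}); an alternative proof, which additionally settles decidability, is due to Kharlampovich and Myasnikov (\cite{kharlampovich/myasnikov}). First I would show that the $\forall\exists$-theory of a non-abelian free group does not depend on its rank, and then I would reduce an arbitrary first-order sentence to a boolean combination of $\forall\exists$-sentences by quantifier elimination over $F_k$. The base of the whole argument is the $\forall\exists$-case; everything else is an induction on quantifier alternations feeding off it.

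\textbf{Step 1 ($\forall\exists$-sentences).} Fix $k \geq 2$ and let $\sigma = \forall y\, \exists x\ \Sigma(x,y) = 1 \wedge \Psi(x,y) \neq 1$ be an $\forall\exists$-sentence with $F_k \models \sigma$. Using the Makanin--Razborov diagram of $G_\Sigma = \langle x, y \mid \Sigma(x,y)\rangle$ relative to $F(y)$, it suffices to verify $\sigma$ on the variety of each of finitely many limit quotients, so one may assume the quantifier-free part defines a single limit group $L = \langle y \mid B(y)\rangle$. Merzlyakov's Theorem, and in the presence of inequalities its generalized form, produces a \emph{formal solution}: $L$ embeds into its completion $\Comp(L)$ and there is $\pi \colon G_\Sigma \to \Comp(L)$ with $\pi \circ \nu = \iota$, and specializing $\pi$ verifies $\sigma$ for every value of $y$ outside a proper Diophantine subset $D(y) \subsetneq B(y)$. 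One then iterates, applying the same construction to the sentence with $y$ restricted to $D(y)$. The decisive point, established in \cite{sela4}, is that the completion is built carefully enough that this iteration terminates after finitely many steps, so that $\sigma$ is witnessed on all of $F_k^l$ by a finite collection of formal solutions. Since the limit quotients, the completions and the formal solutions produced depend only on $\sigma$ and not on $k$, and since every formal solution can be specialized into $F_n$ for any $n \geq 2$, we obtain $F_n \models \sigma$; hence $\Th_{\forall\exists}(F_m) = \Th_{\forall\exists}(F_n)$ for all $m, n \geq 2$.

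\textbf{Step 2 (quantifier elimination and conclusion).} One shows that every first-order formula over $F_k$ is logically equivalent over $F_k$ to a boolean combination of $\forall\exists$-formulas; this is proved by induction on the number of quantifier alternations, the core being a parametrized analysis of Makanin--Razborov diagrams and of the strict and well-structured resolutions appearing in them, together with the uniform bound on exceptional solutions of \cite{sela3} (see \cite{sela5.1}, \cite{sela5.2}). Given then an arbitrary sentence $\sigma$, rewrite it over $F_k$ as a boolean combination of $\forall\exists$-sentences; by Step 1 each such $\forall\exists$-sentence has the same truth value in $F_m$ as in $F_n$ for all $m, n \geq 2$, hence so does $\sigma$, and therefore $\Th(F_m) = \Th(F_n)$.

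\textbf{Main obstacle.} The genuine weight of the proof sits inside Steps 1 and 2: constructing the completion $\Comp(L)$ together with a test sequence in $\Hom(\Comp(L), F_k)$ from which the structure of $\Comp(L)$ can be recovered, deducing the Generalized Merzlyakov's Theorem from such a sequence, and controlling the complexity of the resolutions so that the iterative procedure for formal solutions — and the quantifier-elimination induction — terminates. Indeed, producing these test sequences, here over a hyperbolic group that may have torsion, is precisely the technical heart of the present paper (Chapter~\ref{8}), culminating in Theorem~\ref{maintheorem}.
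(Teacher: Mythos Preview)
The paper does not give its own proof of this theorem; it is stated as a cited result, attributed to Sela (\cite{sela1}--\cite{sela6}) and Kharlampovich--Myasnikov (\cite{kharlampovich/myasnikov}). Your proposal is a correct high-level outline of Sela's route, and in fact it closely mirrors the informal sketch the paper gives in its Introduction: Merzlyakov and Generalized Merzlyakov to handle the $\forall\exists$-case, termination via \cite{sela4}, and then quantifier elimination (\cite{sela5.1}, \cite{sela5.2}) to reduce arbitrary sentences to boolean combinations of $\forall\exists$-sentences. So as a summary of the approach there is nothing to correct, and you have identified the right load-bearing ingredients.

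One small caveat: you should be aware that what you have written is a roadmap, not a proof in any self-contained sense. Each of the steps you name---the construction of completions and test sequences, the Generalized Merzlyakov Theorem, the termination of the iterative procedure, and the quantifier elimination---is itself the subject of a full paper (or several), and the present paper only carries out one of these pieces (the analogue of \cite{sela} over hyperbolic groups with torsion). If the intent is to record why the Tarski problem is settled, your outline is accurate and appropriately sourced; if the intent is to actually prove it, the content lies entirely in the cited references.
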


\begin{definition}
Let $G$ be a group and $Q$ a formula in the first-order language of the group $G$ (i.e. possibly with coefficients). Let $\underline{p}=(p_1,\ldots,p_n)$ be the tuple of free variables appearing in $Q$. Then the \textnormal{definable set} associated to $Q(\underline p)$ is the set of values $\underline{p_0}$ in $G^n$ such that $Q(\underline{p_0})$ is true in $G$.
\end{definition}

\begin{beispiel}
The following formulas give definable sets:
\begin{enumerate}[(a)]
\item $Q(p)\equiv \exists x,y\ (p=[x,y])$ yields the definable set $\{p_0\in G\ |\  \exists x,y\in G: p_0=[x,y]\}$.
\item $Q(p)\equiv (\exists x_1,x_2,x_3,x_4\ (p=[x_1,x_2][x_3,x_4]))\wedge (\forall y_1,y_2\ (p\neq [y_1,y_2]))$.
\end{enumerate}
\end{beispiel}

Arbitrary definable sets (as in the above example) are Diophantine sets, i.e. they may depend on additional variables aside from $\underline p$. The easiest definable sets are definable sets which are given by sentences without additional variables, thus are varieties.

\begin{definition}\label{definableset}
A \textnormal{basic definable set} is a definable set which corresponds to a formula which is a system of equations, i.e. is of the form
$$Q(\underline p)\equiv (\Sigma(\underline p,\underline a)=1),$$
where $\underline a$ is a tuple of constants from $G$. Hence a basic definable set is a variety over $G$.
\end{definition}

\begin{beispiel}
Let $F_k=\langle a\rangle$ be a non-abelian free group and $$L=\langle y, a\ |\ V(y,a)\rangle$$ a restricted $F_k$-limit group. Then the basic definable set (respectively the variety) associated to $L$ is $$\{y_0\in F_k^n\ |\ V(y_0,a)=1\},$$ i.e. the definable set corresponding to the formula 
$$Q(y)\equiv (V(y,a)=1).$$
\end{beispiel}

\section{Makanin-Razborov resolutions and the shortening argument}\label{4}
\subsection{Systems of equations over hyperbolic groups}
One key part in Sela's solution to the Tarski problems and his subsequent study of the elementary theory of torsion-free hyperbolic groups is the  understanding of the structure of definable sets over free and hyperbolic groups. As seen in Definition \ref{definableset} the easiest of such definable sets are varieties over the given free or hyperbolic group.
Let $\Gamma$ be a hyperbolic group. Recall that a variety over $\Gamma$ is a set $$\{\underline{g}=(g_1,\ldots,g_n)\in G^n\ |\ \Sigma(\underline{g},\underline{a})=1\},$$
where $\Sigma(\underline{x},\underline{a})=1$ is a system of equations with variables $\underline{x}=(x_1,\ldots,x_n)$ and coefficients $\underline{a}$ from $\Gamma$. In the following we will just write $x$ instead of $\underline{x}$. In order to be able to understand definable sets over $\Gamma$, one first of all has to understand solution to systems of equations over $\Gamma$. If $\Gamma$ is a free group this was done by the work of Makanin and Razborov, who developed an algorithm to determine if a given system of equations over a free group has a solution (Makanin) and described the set of solutions to this system of equations (Razborov). This algorithm is commonly known as the Makanin-Razborov algorithm. O. Kharlampovich and A. Myasnikov used a refined version of this algorithm in their solution to the Tarski problems, while Sela used a more geometric approach to get a description of the solution set of a system of equations by heavily exploiting the Rips machine and bypassing much of the combinatorics in the process. The advantage of Sela's approach is, that it can fairly easy be generalized to understand system of equations over other classes of groups. The most important generalizations are for systems of equations over
\begin{itemize}
\item torsion-free hyperbolic groups by Sela (\cite{selahyp}),
\item all hyperbolic groups by Reinfeldt-Weidmann (\cite{rewe})
\item acylindrical hyperbolic groups by Groves-Hull (\cite{groves})
\item free products of groups by Jaligot-Sela (\cite{jaligot})
\item free semigroups by Sela. (\cite{sela11})
\end{itemize}
Note that this list is by no means complete.\\

It turns out that understanding all solutions to a system of equations over a given group $\Gamma$ is equivalent to understanding $\Hom(G,\Gamma)$, i.e. all homomorphisms from a finitely generated group $G$ to $\Gamma$. This can be seen by considering a system of equations $\Sigma(x,a)$ with variables $x=(x_1,\ldots,x_n)$ and coefficients $a=(a_1,\ldots,a_k)$ from $\Gamma$, given by a finite set of words $w_1(x,a),\ldots,w_r(x,a)\in F(x)\ast\Gamma=\langle x,a\rangle$. Then clearly every solution of $\Sigma(x,a)=1$ corresponds to a homomorphism 
$$\vp:G_{\Sigma}:=\langle x, \Gamma\ |\ w_1(x,a)=1,\ldots,w_r(x,a)=1\rangle\to \Gamma$$
and vice versa. In the following we will describe Sela's approach to the solution of systems of equation over hyperbolic groups. The aim is to get a parametrization of $\Hom(G,\Gamma)$, where $G$ is a finitely generated group. Since we need the result for all hyperbolic groups, instead of only torsion-free hyperbolic ones, we are going to present the generalization of Sela's techniques which appear in \cite{rewe}.

\subsection{The shortening argument}
In this section we will give a short account of Z. Sela's shortening argument.   For the original shortening argument in the torsion free case see \cite{rips}. Since we have to deal with torsion, the version of the shortening argument presented here is borrowed from \cite{rewe}. For a detailed discussion on the subject and proofs of the results stated in this chapter we therefore refer the reader to chapter 4.2 in the aforementioned paper. Throughout the whole chapter let $\Gamma$ be a hyperbolic group.

\begin{definition}
Let $G$ be a finitely generated $\Gamma$-limit group with a fixed generating set $S$, $H\leq G$ a subgroup, $\Gamma$ a hyperbolic group with corresponding Cayley graph $(X,d_X)$ and $\vp:G\to \Gamma$ a homomorphism. The \textnormal{length} of $\vp$ is defined as 
$$l(\vp)=\sum_{s\in S} d_X(1,\vp(s)).$$
\begin{enumerate}[(1)]
\item We call $\vp$ \textnormal{short} if $l(\vp)\leq l(c_g\circ\vp\circ\alpha)$ for any $g\in \Gamma$ and any $\alpha\in \Mod(G)$, where $c_g$ denotes conjugation by $g$.
\item $\vp$ is called \textnormal{short with respect to $H$} if $l(\vp)\leq l(c_g\circ\vp\circ\alpha)$ for any $g\in C_{\Gamma}(\vp(H))$ and any $\alpha\in\Mod(G,H)$, i.e. modular automorphisms of $G$ fixing $H$. In particular if   $G$ is a restricted $\Gamma$-limit group, $\vp$ is short with respect to $\Gamma$ if $l(\vp)\leq l(\vp\circ\alpha)$ for any $\alpha\in\Mod(G,\Gamma)$.
%\item We call $\vp$ \textnormal{very short with respect to $H$} if $l(\vp)\leq l(\Psi)$ for all homomorphisms $\Psi:G\to \Gamma$ with $\Psi|_H=\vp|_H$.
%\item Let $v_1,\ldots,v_m$ be elements of $G$, such that $\vp(v_i)\neq 1$ for all $i\in\{1,\ldots,m\}$.
%We call $\vp$ \textnormal{very short with respect to $H$ and the inequalities $v_1,\ldots,v_m$} if $l(\vp)\leq l(\Psi)$ for all homomorphisms $\Psi:G\to \Gamma$ with $\Psi|_H=\vp|_H$ and $\Psi(v_i)\neq 1$ for all $i\in\{1,\ldots,m\}$. 
\end{enumerate}
\end{definition}

\begin{satz}[Proposition 4.6 in \cite{rewe}]\label{shortening argument}
Let $L$ be a one-ended (restricted) $\Gamma$-limit group and $(\vp_n)\subset \Hom(L,\Gamma)$ a stably injective sequence of pairwise distinct non-injective homomorphisms. Then the $\vp_n$ are eventually not short in the unrestricted case and not short relative $\Gamma$ in the restricted case.
\end{satz}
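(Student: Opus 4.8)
The plan is to argue by contradiction along the lines of the Rips--Sela shortening argument, replacing free actions by the real tree action produced by the Bestvina--Paulin method. Suppose the conclusion fails, so infinitely many $\vp_n$ are short (resp. short relative to $\Gamma$); after passing to that subsequence, assume every $\vp_n$ is short. A short homomorphism is in particular conjugacy-short, so Theorem \ref{paulin} applies, and since the $\vp_n$ are pairwise distinct the sequence contains no constant subsequence; hence, after a further subsequence, $\bigl(\tfrac{1}{|\vp_n|}d_{\vp_n}\bigr)$ converges to $d_\rho^x$ for a non-trivial minimal based real $L$-tree $(T,x,\rho)$, normalized so that $\sum_{s\in S}d_T(x,\rho(s)x)=1$. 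Because the sequence is stably injective its stable kernel is trivial, so the action of $L$ on $T$ is faithful; in the restricted case the subgroup $\Gamma$ fixes the base point $x$.

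Next I would feed this action into the Rips machine. By Theorem \ref{eigenschaften} the action has finite tripod stabilizers, finite unstable-arc stabilizers and finite-by-abelian arc stabilizers, so the hypotheses of Theorem \ref{rips} hold (and in the restricted case those of the relative version, Theorem \ref{relativerips}, applied to the pair $(L,\{\Gamma\})$). Since $L$ is one-ended it does not split over a finite subgroup, and stabilizers of unstable arcs and of infinite tripods are finite; therefore the first alternative of the Rips machine is excluded and $T$ decomposes as a graph of actions whose vertex actions are simplicial, of orbifold type, or axial. Collapsing the non-degenerate vertex trees gives a virtually abelian (in the restricted case, $\Gamma$-relative) splitting of $L$ equipped with QH and virtually abelian vertex groups, so that the associated Dehn twists, natural extensions of automorphisms of the orbifold vertices, and generalized Dehn twists supported on the axial vertices all lie in $\Mod(L)$ (resp.\ in the restricted modular group), cf.\ Definition \ref{modone}.

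The core of the proof is the shortening step: I would show that for all sufficiently large $n$ there are $\alpha_n\in\Mod(L)$ (resp.\ in $\Mod(L,\Gamma)$) and $g_n\in\Gamma$ (resp.\ $g_n\in C_\Gamma(\vp_n(\Gamma))$) with $l(c_{g_n}\circ\vp_n\circ\alpha_n)<l(\vp_n)$, contradicting shortness. The mechanism is the usual one: the normalized displacement $\sum_{s\in S}d_T(x,\rho(s)x)=1$ is carried by geodesics $[x,\rho(s)x]$ that traverse the pieces of the graph of actions; using the indecomposability of the orbifold and axial pieces (the Remark following Theorem \ref{rips}) one folds the portions of these geodesics lying in such a piece by the corresponding modular automorphism, while Dehn twists absorb the portions crossing the simplicial edges that are actually used, and elliptic generators contribute nothing obstructive. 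One checks that the resulting modular automorphism of the limit action strictly decreases the total displacement, and then transfers the strict inequality back to $\vp_n$ for $n$ large by comparing, on balls of growing radius, geodesics in the scaled Cayley graph $\tfrac{1}{|\vp_n|}\Cay(\Gamma,S_\Gamma)$ with geodesics in $T$, using the convergence $\tfrac{1}{|\vp_n|}d_{\vp_n}\to d_\rho^x$. In the restricted case the identical construction is run with the $\Gamma$-relative Rips decomposition and the restricted modular group, arranging that the $g_n$ centralize $\vp_n(\Gamma)$.

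The main obstacle is exactly this transfer step combined with the bookkeeping forced by torsion. One must make the shortening of the limit action sufficiently effective and uniform to survive the passage back to the approximating $\vp_n$, and the finite kernels of QH vertex groups together with the finite-by-abelian (rather than abelian) arc and axial stabilizers make the control of how individual generators cross the pieces of the graph of actions considerably more delicate than in the torsion-free case treated by Rips--Sela. I expect essentially all of the work to sit here; once the shortening of the limit action and its transfer are in place, the remainder is the standard packaging of the Bestvina--Paulin limit and the Rips machine as recalled above.
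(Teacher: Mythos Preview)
Your overall strategy---assume shortness, pass to the limit tree via Theorem~\ref{paulin}, apply the Rips machine using Theorem~\ref{eigenschaften}, and then shorten using modular automorphisms---is exactly the paper's approach. The paper likewise splits the graph of actions into axial, orbifold and simplicial pieces and invokes three separate shortening lemmas (borrowed from \cite{rewe}) rather than a single unified mechanism.

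There is, however, one genuine inaccuracy in your sketch. You describe producing a single modular automorphism that ``strictly decreases the total displacement'' on the limit tree $T$ and then transferring this inequality back to the $\vp_n$. This works for the axial and orbifold components (Lemmas~\ref{axial shortening} and~\ref{orbifold shortening}), but it does \emph{not} work for the simplicial part: the Dehn twist along a simplicial edge $e$ is by an element of $Z(A_e)$, which fixes $e$ pointwise in $T$, so no fixed power of this twist changes the limit displacement at all. The paper is explicit about this: ``In this case it is not possible to directly shorten the action on the limit tree.'' Instead one must work directly at the level of the approximating Cayley graph and choose an $n$-dependent power $m_n$ of the Dehn twist so that $l(\vp_n\circ\alpha^{m_n})<l(\vp_n)$ (Lemma~\ref{simplicial shortening}). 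So your picture of ``shorten on $T$, then transfer'' needs to be amended: when only simplicial pieces remain, the shortening happens entirely in the approximants, with the automorphism varying with $n$.
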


\begin{proof}
For the sake of contradiction assume that the homomorphisms $\vp_n$ are short.
By Theorem \ref{paulin} the sequence $(\vp_n)$ converges into a minimal action of  $L$ on some limit real tree $(T,t_0)$. By Theorem \ref{eigenschaften} the action satisfies the stability assertions of Theorem \ref{rips}. Since $L$ is one-ended and stabilizers of tripods and unstable arcs are finite, the action of $L$ on $T$ splits as a non-trivial graph of actions $\A$, such that every vertex action is either simplicial, of axial or orbifold type. Now we can use this decomposition as a graph of actions to construct modular automorphisms $\alpha_n\in\Mod(L)$, such that $l(\vp_n\circ\alpha)<l(\vp_n)$ for large $n$, i.e. show that for large $n$ the $\vp_n$ are not short. Denote by $(X, d_X)$ the Cayley graph of $\Gamma$ with respect to some fixed generating set.

\begin{lemma}[Theorem 4.8 in \cite{rewe}]\label{axial shortening}
Let $v\in VA$ be an axial vertex with corresponding vertex tree $T_v$. For any finite subset $S\subset L$ there exists $\alpha\in\Mod(L)$ such that for any $s\in S$ the following hold.
\begin{enumerate}[(a)]
\item If $[t_0,st_0]\subset T$ has a non-degenerate intersection with a translate of $T_v$ in $T$, then $$d_T(t_0,\alpha(s)t_0)<d_T(t_0,st_0),$$
\item otherwise, $$d_T(t_0,\alpha(s)t_0)=d_T(t_0,st_0).$$
\end{enumerate}
\end{lemma}

\begin{lemma}[Theorem 4.15 in \cite{rewe}]\label{orbifold shortening}
Let $v\in VA$ be an orbifold vertex with corresponding vertex tree $T_v$. For any finite subset $S\subset L$ there exists $\alpha\in\Mod(L)$ such that for any $s\in S$ the following hold.
\begin{enumerate}[(a)]
\item If $[t_0,st_0]\subset T$ has a non-degenerate intersection with a translate of $T_v$ in $T$, then $$d_T(t_0,\alpha(s)t_0)<d_T(t_0,st_0),$$
\item otherwise, $$d_T(t_0,\alpha(s)t_0)=d_T(t_0,st_0).$$
\end{enumerate}
\end{lemma}

Suppose we are either in the case of Lemma \ref{axial shortening} or Lemma \ref{orbifold shortening}. Denote by $d_n$ the scaled metric on the Cayleygraph $X$. In the unrestricted case after postcomposing $\vp$ by an inner automorphism of $\Gamma$ we can assume that $(1)$ is an approximating sequence of $t_0$. In the restricted case this is given by the construction of the limit tree. Therefore 
$$\lim_{n\to\infty}d_n(1,\vp_n\circ\alpha(s))=d_T(t_0,\alpha(s)t_0)<d_T(t_0,st_0)=\lim_{n\to\infty}d_n(1,\vp_n(s))$$
and hence for large $n$, $d_X(1,\vp_n\circ\alpha(s)) <d_X(1,\vp_n(s))$. This implies that for large $n$, $\vp_n$ is not short in the unrestricted case and not short with respect to $\Gamma$ in the restricted case.\\
It remains to consider the case of a simplicial vertex tree. In this case it is not possible to directly shorten the action on the limit tree, but we  are still able to shorten the action of $L$ on $X$ for large $n$.\\
If there exists a simplicial vertex tree in the graph of actions $\A$ we can refine the graph of actions such that every simplicial vertex tree is a point. Then in particular there exists an edge $e\in EA$ with non-zero length. By Theorem \ref{eigenschaften} the stabilizer $A_e$ of this edge is finite-by-abelian and hence has infinite center. Let $\alpha$ be a Dehn-twist along the edge $e$ by an element of infinite order from $Z(A_e)$.

\begin{lemma}[Corollary 4.20 in \cite{rewe}]\label{simplicial shortening}
For sufficiently large $n$, there exists $m_n\in\N$ such that 
$$l(\vp_n\circ\alpha^{m_n})<l(\vp_n).$$
\end{lemma}

This completes the proof of Theorem \ref{shortening argument}.
\end{proof}

A combination of Lemma \ref{axial shortening}, Lemma \ref{orbifold shortening} and Lemma \ref{simplicial shortening} is usually referred to as the \textit{shortening argument}.\\

We draw one corollary which allows us later to shorten relative a finite set of inequalities.

\begin{korollar}\label{shortening}
Let $L$ be a (restricted) one-ended $\Gamma$-limit group, $S\subset L\setminus \{1\}$ a finite set and $(\vp_n)$ a stable sequence of pairwise distinct non-injective homomorphisms, such that the $\vp_n$ are short among all homomorphisms which map all $s\in S$ to a non-trivial image. Then $\underrightarrow{\ker} (\vp_n)\neq 1$. In particular $L/\underrightarrow{\ker} (\vp_n)$ is a proper quotient of $L$.
\end{korollar}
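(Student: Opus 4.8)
The plan is to argue by contradiction, re-running the proof of the shortening argument (Theorem \ref{shortening argument}) and checking at each step that the shortening modular automorphism it produces can be chosen so that the resulting homomorphism still maps every element of $S$ to a non-trivial element of $\Gamma$; since the $\vp_n$ are assumed short \emph{among homomorphisms mapping $S$ to non-trivial images}, this will be the desired contradiction. So suppose $\underrightarrow{\ker}(\vp_n)=1$, i.e.\ $(\vp_n)$ is stably injective. Two elementary facts will be used repeatedly: first, for any fixed finite set $F\subset L\setminus\{1\}$ there is an $N$ with $\vp_n(g)\neq 1$ for all $g\in F$ and all $n\geq N$ (apply stable injectivity to each of the finitely many $g\in F$); secondly, stable injectivity is inherited by subsequences.

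By Theorem \ref{paulin} a subsequence of $(\vp_n)$ (still denoted $(\vp_n)$) converges to a non-trivial minimal action of $L$ on a real tree $(T,t_0)$, which by Theorem \ref{eigenschaften} satisfies the stability hypotheses of the Rips machine; since $L$ is one-ended, $T$ splits as a non-trivial graph of actions with axial, orbifold and simplicial vertex actions, exactly as in the proof of Theorem \ref{shortening argument} (using the relative Rips machine in the restricted case). If some vertex action is axial or of orbifold type, then Lemma \ref{axial shortening} resp.\ Lemma \ref{orbifold shortening}, applied to the finite set $S_L\cup S$ with $S_L$ a generating set of $L$, produces a \emph{single} $\alpha\in\Mod(L)$ with $l(\vp_n\circ\alpha)<l(\vp_n)$ for all large $n$. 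Since $\alpha$ does not depend on $n$, the set $\alpha(S)$ is a fixed finite subset of $L\setminus\{1\}$, so $\vp_n(\alpha(s))\neq 1$ for all $s\in S$ and all large $n$; thus $\vp_n\circ\alpha$ lies in the class over which $\vp_n$ was assumed shortest yet is strictly shorter, a contradiction.

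It remains to treat the simplicial case, which is where the real difficulty lies. After refining so that simplicial vertex trees become points one obtains an edge $e$ of positive length whose stabilizer $A_e$ is finite-by-abelian with a central element $\delta$ of infinite order, and the shortening automorphism is a power $\alpha^{m_n}$ of the Dehn twist $\alpha$ along $e$ by $\delta$ in the one-edge splitting $\A^e$ (Lemma \ref{simplicial shortening}). Write $S=S_{\mathrm{ell}}\sqcup S_{\mathrm{hyp}}$ according to whether an element is elliptic or hyperbolic in $\A^e$. For $s\in S_{\mathrm{ell}}$ the Dehn twist $\alpha$ preserves the conjugacy class of $s$ in $L$ (it restricts to the identity on one vertex group and to a conjugation on the other, resp.\ to the identity on the vertex group in the HNN case), so $\alpha^{m}(s)$ is conjugate to $s$ for every $m$ and hence $\vp_n(\alpha^{m_n}(s))$ is conjugate to $\vp_n(s)$, non-trivial for large $n$. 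For $s\in S_{\mathrm{hyp}}$ note that $\mathrm{ord}(\vp_n(\delta))\to\infty$ by stable injectivity, so by the uniform bound on the order of torsion subgroups of $\Gamma$ (Proposition \ref{eig}) the element $\vp_n(\delta)$ is of infinite order, hence loxodromic, for all large $n$; consequently $\vp_n(\alpha^{m}(s))$, obtained from the cyclically reduced $\A^e$-form of $s$ by inserting the powers $\vp_n(\delta)^{\pm m}$, is non-trivial for all but finitely many exponents $m$. The main obstacle is to show that, for $n$ large, these finitely many ``bad'' exponents (over all $s\in S_{\mathrm{hyp}}$) do not exhaust the set of exponents $m$ for which $l(\vp_n\circ\alpha^{m})<l(\vp_n)$, so that $m_n$ can be chosen both to shorten $l$ and to keep all images of $S$ non-trivial; this needs a closer look at the range of admissible exponents in the proof of Lemma \ref{simplicial shortening}. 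Granting this, $\vp_n\circ\alpha^{m_n}$ again contradicts the shortness of $\vp_n$. Hence $\underrightarrow{\ker}(\vp_n)\neq 1$, and therefore the projection $L\to L/\underrightarrow{\ker}(\vp_n)$ has non-trivial kernel, i.e.\ $L/\underrightarrow{\ker}(\vp_n)$ is a proper quotient of $L$.
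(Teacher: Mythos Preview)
Your approach is essentially the paper's: assume stable injectivity for a contradiction, apply the shortening argument, and verify case by case that the shortening modular automorphism can be chosen so that $\vp_n\circ\alpha_n(s)\neq 1$ for all $s\in S$. The paper's proof is in fact terser than yours---it simply asserts that this last verification ``is implicitly contained in the proof of the shortening argument in \cite{rewe}'' and gives no further detail---so the gap you honestly flag in the simplicial case is exactly the point the paper itself defers to \cite{rewe} (one minor omission: in the unrestricted case the paper also allows postcomposition by an inner automorphism $c_{g_n}$ of $\Gamma$, but this is irrelevant for the non-triviality of images).
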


\begin{proof}
Suppose that $\underrightarrow{\ker} (\vp_n)= 1$, i.e. $(\vp_n)$ is stably injective. It then follows from the shortening argument that there exist modular automorphisms $\alpha_n\in\Mod(L)$ (respectively $\alpha_n\in\Mod(L,\Gamma)$ in the restricted case) and $g_n\in \Gamma$ such that $l(c_{g_n}\circ\vp_n\circ\alpha_n)<l(\vp_n)$. Note that $g_n=1$ in the restricted case. It is implicitly contained in the proof of the shortening argument in \cite{rewe} that for large $n$ the $\alpha_n$ can be chosen such that $\vp_n\circ\alpha_n(s)\neq 1$ for all $s\in S$. This yields a contradiction to the assumption that the $\vp_n$ are short among all homomorphisms with this property.
\end{proof}

\subsection{Shortening quotients}
Equipped with the shortening argument we are now ready to draw some consequences from Theorem \ref{shortening argument}. Let $\Gamma$ again be a hyperbolic group.

\begin{definition}
Let $L$ be a (restricted) one-ended $\Gamma$-limit group and $(\vp_n)\subset\Hom(L,\Gamma)$ a stable sequence of short homomorphisms. Then $Q:=L/\underrightarrow{\ker}(\vp_n)$ is called a \textnormal{shortening quotient} of $L$.
\end{definition}

%\begin{bem}
%By Corollary \ref{shortening} $Q$ is a proper quotient of $L$ if the $\vp_n$ are pairwise distinct. If $(\vp_n)$ contains a constant subsequence, after passing to this subsequence (still denoted $(\vp_n))$ either $\vp_n$ is injective for all $n$ or $Q$ is a proper quotient of $L$ which is isomorphic to a subgroup of $\Gamma$.
%\end{bem}

%\begin{prop}\label{prop6.2}
%Let $G$ be a f.p. group, $L=G/\underrightarrow{\ker}(\varphi_i)$ be a one-ended $\Gamma$-limit group with associated $\Gamma$-limit map $\vp:G\to L$. Let $Q=G/\underrightarrow{\ker}(\eta_i)$ be a shortening quotient of $L$ as constructed in \cite{rewe} following Lemma 6.1. and $\eta: G\to Q$ the corresponding limit map. In particular $Q$ is a quotient of $L$, with quotient map $\pi: L\to Q$, i.e. $\eta=\pi\circ\vp$ . Let moreover $\A$ be a virtually abelian JSJ-decomposition of $L$. Then the following hold:
%\begin{enumerate}[(1)]
%\item The epimorphism $\pi:L\to G$ is injective on rigid vertex groups of $\A$.
%\item If $(\eta_i)$ is not contained in finitely many conjugacy classes then $Q$ is a proper quotient of $L$.
%\item If a subsequence of $(\eta_i)$ factors through $\eta$, then a subsequence of $(\vp_i)$ factors through $\vp$.
%\end{enumerate}
%\end{prop}

We make the following observation.

\begin{lemma}\label{Lemma quotient}
Let $L$ be a (restricted) one-ended $\Gamma$-limit group. 
Then either there exists a proper shortening quotient $Q$ of $L$, or $L$ is isomorphic to a subgroup of $\Gamma$.
\end{lemma}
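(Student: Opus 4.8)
The plan is to argue by a dichotomy on whether $L$ admits a non-injective homomorphism to $\Gamma$ which is, in a suitable sense, "generic". First I would observe that since $L$ is a $\Gamma$-limit group, by the Remark following the definition of $\Gamma$-limit groups there is a stably injective sequence $(\vp_n)\subset\Hom(L,\Gamma)$ with $\underrightarrow{\ker}(\vp_n)=1$. Two cases arise. If all but finitely many $\vp_n$ are injective, then $L$ embeds into $\Gamma$, and we are in the second alternative of the statement (after possibly replacing the sequence by a constant subsequence, as in the Remark after Theorem~\ref{paulin}). Otherwise, after passing to a subsequence, we may assume that every $\vp_n$ is non-injective.

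In the second case, the plan is to shorten. Replace each $\vp_n$ by $c_{g_n}\circ\vp_n\circ\alpha_n$ for suitable $g_n\in\Gamma$ and $\alpha_n\in\Mod(L)$ (respectively $\alpha_n\in\Mod(L,\Gamma)$ and $g_n=1$ in the restricted case) chosen to minimize $l(\cdot)$; note that this modification does not change the kernel, so the new sequence is still a sequence of pairwise distinct (after passing to a subsequence) non-injective homomorphisms — here one uses that there are only finitely many homomorphisms of bounded norm, so infinitely many of the shortened maps are distinct. Passing to a further subsequence, $(\vp_n)$ is stable; let $Q:=L/\underrightarrow{\ker}(\vp_n)$ be the resulting shortening quotient. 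The claim is that $Q$ is a proper quotient of $L$, i.e. $\underrightarrow{\ker}(\vp_n)\neq 1$. If instead $\underrightarrow{\ker}(\vp_n)=1$, then $(\vp_n)$ is a stably injective sequence of pairwise distinct non-injective homomorphisms of the one-ended (restricted) $\Gamma$-limit group $L$, and Theorem~\ref{shortening argument} tells us the $\vp_n$ are eventually not short — contradicting the fact that we chose them to be short. Hence $\underrightarrow{\ker}(\vp_n)\neq 1$, so $Q$ is a proper shortening quotient of $L$.

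The main obstacle is the bookkeeping needed to guarantee that after shortening one still has a genuine \emph{sequence} (pairwise distinct, or at least not eventually constant) of non-injective homomorphisms, so that Theorem~\ref{shortening argument} applies: shortening could a priori collapse the sequence to a single homomorphism or introduce injective ones. To handle this cleanly I would note that if infinitely many of the shortened homomorphisms were injective, they would all be short injective homomorphisms, but Theorem~\ref{shortening argument} is only stated for non-injective ones — so instead the right move is to observe that injectivity of a homomorphism $L\to\Gamma$ forces $L$ to embed in $\Gamma$, putting us back in the second alternative. And if the shortened sequence were eventually constant, equal to some $\psi$, then $L/\ker\psi$ is a proper quotient of $L$ (as $\psi$ is non-injective) which is a subgroup of $\Gamma$; one then checks this already suffices, as it is itself a shortening quotient coming from the constant sequence $(\psi)$, which is proper. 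So in every branch we land in one of the two stated alternatives, completing the proof.
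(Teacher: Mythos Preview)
Your proposal is correct and follows essentially the same approach as the paper: start from a stably injective sequence, pass to a subsequence of non-injective (hence pairwise distinct) homomorphisms, shorten, and apply Theorem~\ref{shortening argument} to conclude that the resulting shortening quotient is proper, with the alternative branch yielding an embedding of $L$ into $\Gamma$. If anything you are more explicit than the paper about the bookkeeping (pairwise distinctness after shortening, the constant-subsequence case), though your worry in the final paragraph about shortening ``introducing injective ones'' is unnecessary, since as you already noted $c_{g_n}\circ\vp_n\circ\alpha_n$ has kernel $\alpha_n^{-1}(\ker\vp_n)$, which is nontrivial iff $\ker\vp_n$ is.
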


\begin{proof}
Let $(\vp_n)\subset\Hom(L,\Gamma)$ be a stable sequence of homomorphisms from $L$ to $\Gamma$.
Clearly either $(\vp_n)$ contains a constant subsequence or we can assume after passing to a subsequence that the $\vp_n$ are pairwise distinct. Suppose the second case holds. For all $n\in\N$ let $\alpha_n\in\Mod(L)$ be a modular automorphism such that $(\vp_n\circ\alpha_n)$ is a stable sequence of short homomorphisms. Hence it follows from Corollary \ref{shortening} that $$Q:=L/\underrightarrow{\ker} (\vp_n\circ\alpha_n)$$ is a proper quotient of $L$.\\
Now assume that every stable sequence of homomorphisms from $L$ to $\Gamma$ has a constant subsequence. Since $L$ is a $\Gamma$-limit group, there exists a stably injective sequence $(\vp_n)\subset \Hom(L,\Gamma)$. After passing to the constant subsequence of $(\vp_n)$ we have $\vp_n=\vp$ for all $n\in\N$ and some fixed injective homomorphism $\vp:L\to\Gamma$. In particular
$$L=L/\underrightarrow{\ker}(\vp_n)=L/\ker \vp\cong \im(\vp)\leq \Gamma.$$
\end{proof}

In the next step we will define shortening quotients not only for one-ended $\Gamma$-limit groups but for arbitrary $\Gamma$-limit groups.

\begin{definition}\label{definitionshortening}
Let $L$ be a (restricted) $\Gamma$-limit group and $\D$ a (restricted) Dunwoody decomposition of $L$, i.e. every vertex group is either finite or one-ended (relative $\Gamma$ in the restricted case).
Let $(\vp_n)\subset \Hom(L,\Gamma)$ be a sequence of homomorphisms such that $\vp_n|_{D_v}$ is short for every vertex group $D_v$ of $\D$ for all $n\in\N$.
Then $Q:=L/\underrightarrow{\ker}(\vp_n)$ is called a \textnormal{shortening quotient} of $L$.
\end{definition}

We immediately get an equivalent result to Lemma \ref{Lemma quotient}.

\begin{lemma}
Let $L$ be a (restricted) $\Gamma$-limit group with Dunwoody decomposition $\D$. Then either there exists a proper shortening quotient $Q$ of $L$, or all vertex groups of $\D$ are isomorphic to subgroups of $\Gamma$.
\end{lemma}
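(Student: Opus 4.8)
The plan is to bootstrap from the one-ended case (Lemma \ref{Lemma quotient}) to the general case, by running the shortening argument on a single one-ended vertex group; the proof parallels that of Lemma \ref{Lemma quotient}.

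First I would dispose of the finite vertex groups: since $L$ is a finitely generated $\Gamma$-limit group it is fully residually $\Gamma$, so any finite subgroup $F\leq L$ is mapped injectively by some homomorphism $L\to\Gamma$ (resp. $\Hom_{\iota}(L,\Gamma)$), hence embeds in $\Gamma$. So it suffices to treat the one-ended vertex groups $D_{v_1},\ldots,D_{v_k}$ of $\D$ (one-ended relative $\Gamma$ in the restricted case), each of which is itself a one-ended $\Gamma$-limit group. If every $D_{v_i}$ embeds in $\Gamma$ we are in the second alternative, so assume from now on that some $D_{v_0}$ does not embed in $\Gamma$; the goal is to produce a proper shortening quotient of $L$.

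To do this I would start from a stably injective sequence $(\phi_n)\subset\Hom(L,\Gamma)$ (resp. $\Hom_{\iota}(L,\Gamma)$), which exists because $L$ is a $\Gamma$-limit group. Using that every modular automorphism of a vertex group of $\D$ restricts to conjugation on the (finite) incident edge groups and therefore extends to a modular automorphism of $L$, for each $n$ I would choose $\alpha_n\in\Mod(L)$ (resp. $\Mod(L,\Gamma)$) and, in the unrestricted case, an element $g_n\in\Gamma$, so that $\vp_n:=c_{g_n}\circ\phi_n\circ\alpha_n$ restricts to a \emph{short} homomorphism on every vertex group of $\D$ simultaneously; after passing to a subsequence assume $(\vp_n)$ is stable. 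By Definition \ref{definitionshortening}, $Q:=L/\underrightarrow{\ker}(\vp_n)$ is then a shortening quotient of $L$, and it only remains to verify $\underrightarrow{\ker}(\vp_n)\neq 1$. For this I would examine the sequence $(\vp_n|_{D_{v_0}})$ of short homomorphisms $D_{v_0}\to\Gamma$: if some subsequence has non-trivial stable kernel then $\underrightarrow{\ker}(\vp_n)\cap D_{v_0}\neq 1$ and we are done; otherwise, after passing to a subsequence, $(\vp_n|_{D_{v_0}})$ is stably injective, and either it has a constant subsequence — whose constant value is then an injective homomorphism $D_{v_0}\hookrightarrow\Gamma$, contradicting the choice of $D_{v_0}$ — or, after a further subsequence, the $\vp_n|_{D_{v_0}}$ are pairwise distinct, in which case the shortening argument (Theorem \ref{shortening argument}) applied to the one-ended $\Gamma$-limit group $D_{v_0}$ forbids them all from being non-injective, so infinitely many are injective and again $D_{v_0}\hookrightarrow\Gamma$, a contradiction. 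Hence $\underrightarrow{\ker}(\vp_n)\neq 1$ and $Q$ is a proper shortening quotient of $L$.

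The step I expect to require the most care is the \textbf{simultaneous shortening}: arranging a single $\alpha_n\in\Mod(L)$ (and a single $g_n$) that shortens $\phi_n$ on all vertex groups of $\D$ at once. This rests on the facts that modular automorphisms supported on distinct vertex groups compose compatibly (they restrict to conjugations on the finite edge groups, hence extend), and that lengths are non-negative integers so the relevant minima are attained — it is the same point that makes Definition \ref{definitionshortening} meaningful. Everything else is routine bookkeeping with subsequences together with direct appeals to the shortening argument.
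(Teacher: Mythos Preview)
Your approach is correct and is precisely the natural elaboration of what the paper does: the paper gives no proof at all, merely stating ``We immediately get an equivalent result to Lemma \ref{Lemma quotient}'', and your argument is exactly how one cashes this out --- reduce to the one-ended vertex group $D_{v_0}$ that fails to embed and invoke the shortening argument there.

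One small remark on the point you yourself flag. Writing $\vp_n = c_{g_n}\circ\phi_n\circ\alpha_n$ with a \emph{single} $g_n\in\Gamma$ is slightly too optimistic: the conjugating element needed to make $\phi_n|_{D_v}$ short may genuinely depend on $v$, and inner automorphisms of $D_v$ only let you move $g$ within the coset $g\cdot\phi_n(D_v)$, not freely. The clean fix --- which you essentially gesture at when you say this is ``the same point that makes Definition \ref{definitionshortening} meaningful'' --- is not to insist on this global form at all: shorten each $\phi_n|_{D_v}$ independently to obtain short maps $\psi_v:D_v\to\Gamma$, and then \emph{glue} these to a homomorphism $\vp_n:L\to\Gamma$ by choosing the Bass--Serre generators appropriately (possible since the edge groups are finite and the modular automorphisms used restrict to conjugation on them). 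Definition \ref{definitionshortening} only asks that each $\vp_n|_{D_v}$ be short, not that $\vp_n$ arise from $\phi_n$ by a single global twist, so this suffices. The paper is equally informal about this in its sketch of Theorem \ref{MRdiagram}, so you are in good company; but since you singled this step out, it is worth recording that the resolution is gluing rather than a single global conjugation. The rest of your argument (constant subsequence versus pairwise distinct, then Theorem \ref{shortening argument} forcing eventual injectivity) is exactly right.
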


We now want to show that there exist only finitely many maximal shortening quotients of a given (restricted) $\Gamma$-limit group $L$ with respect to the following partial order defined on the set of all $\Gamma$-limit quotients of $L$.

\begin{definition}
Let $L$ be a (restricted) $\Gamma$-limit group and $Q_1$, $Q_2$ $\Gamma$-limit quotients of $L$ with corresponding quotient maps $\pi_i:L\to Q_i$ for $i=1,2$. We say that $Q_1\geq Q_2$ if there exists an epimorphism $\eta:Q_1\to Q_2$ such that $\pi_2=\eta\circ\pi_1$. We further say that $Q_1>Q_2$ if $Q_1\geq Q_2$ and $Q_2\ngeq Q_1$.
\end{definition}

\begin{satz}\label{maximal elements}
Let $L$ be a (restricted) $\Gamma$-limit group.
\begin{enumerate}[(1)]
\item Let $Q_1<Q_2<Q_3<\ldots$ be an infinite ascending sequence of $\Gamma$-limit quotients of $L$. Then there exists a maximal $\Gamma$-limit quotient $Q$ of $L$ such that $Q_i<Q$ for all $i\geq 1$.
\item There exist only finitely maximal $\Gamma$-limit quotients of $L$ with respect to $"<"$.
\end{enumerate}
\end{satz}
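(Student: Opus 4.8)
The plan is to establish the finiteness statement (2) first, by an induction on a complexity of $L$ read off from its canonical decompositions together with the shortening argument, and then to derive (1) from (2). For (1), write $K_i=\ker(L\twoheadrightarrow Q_i)$, so that $K_1\supseteq K_2\supseteq\cdots$, and set $N=\bigcap_i K_i$. Then $L/N$ is a $\Gamma$-limit quotient of $L$ dominating every $Q_i$: it is finitely generated, and it is fully residually $\Gamma$ because, the $K_i$ being nested, the finitely many inequalities separating the elements of any finite $S\subseteq L/N$ already hold in $L/K_j=Q_j$ for $j$ large, and $Q_j$ is fully residually $\Gamma$, so that $L/N\twoheadrightarrow Q_j\to\Gamma$ can be arranged injective on $S$; now the characterisation ``finitely generated $\Gamma$-limit group $\Leftrightarrow$ fully residually $\Gamma$'' applies. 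By (2) there is a maximal $\Gamma$-limit quotient $Q\ge L/N$, and $Q_i<Q_{i+1}\le L/N\le Q$ gives $Q_i<Q$ for all $i$.

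For (2): if $L\le\Gamma$ it is its own unique shortening quotient; otherwise one argues by induction. If $L$ is not one-ended, pass to its Dunwoody decomposition --- finitely many vertex groups, each one-ended or finite --- observe that a shortening quotient of $L$ is assembled from shortening quotients of the vertex groups glued along the finitely many edges, and reduce to the one-ended case. For one-ended $L$ that is neither virtually abelian nor virtually the fundamental group of a closed orbifold, take its virtually abelian JSJ decomposition $\A$: a shortening quotient $Q=L/\underrightarrow{\ker}(\varphi_n)$, with the $\varphi_n$ short, restricts on each vertex group of $\A$ to a sequence which, by the shortening argument (short homomorphisms cannot degenerate a quadratically hanging or abelian vertex beyond what its peripheral structure permits), determines: on a rigid vertex, one of its --- inductively, finitely many --- maximal shortening quotients; on a quadratically hanging or abelian vertex, one of finitely many peripheral/topological degenerations; $Q$ is then recovered by gluing these along the finitely many edges of $\A$, so there are finitely many possibilities. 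The base cases $L$ virtually abelian and $L$ virtually the fundamental group of a closed orbifold are handled directly, the modular group cutting the possible degenerations down to finitely many.

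The hard part is the inductive step of (2): showing that a maximal shortening quotient is pinned down by the finite combinatorial data of how it degenerates the JSJ, and that the rigid pieces genuinely behave like shortening quotients of the rigid vertex groups, whose modular automorphisms $\Mod(L)$ does not see --- which is exactly where the full force of the shortening argument (Theorem \ref{shortening argument}) is needed. I note that a naive compactness shortcut does not work: if one assumes there are infinitely many maximal shortening quotients and diagonalises a stable conjugacy-short sequence of short homomorphisms factoring through them, then imposing stability on the diagonal subsequence forces all but one of the quotients to be dropped on passing to a subsequence, so one never obtains the desired contradiction of a single shortening quotient dominating two distinct maximal ones. Alternatively, granting that $\Gamma$ --- hyperbolic, possibly with torsion --- is equationally Noetherian, the $\Gamma$-Zariski topology becomes Noetherian and the maximal shortening quotients correspond to finitely many irreducible components, but that input belongs to the same circle of ideas and would have to be supplied from \cite{rewe}.
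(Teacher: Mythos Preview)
The paper's own proof is a bare citation to Theorems~6.10 and~6.11 of \cite{rewe}, so there is no argument to compare against directly. But the paper proves the exact analogue later (Lemmas~\ref{maximal1} and~\ref{maximal}), and that is the right template to measure your attempt against.

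Your argument for (1) is fine: forming $N=\bigcap_i K_i$ and checking that $L/N$ is fully residually $\Gamma$ is the content of the diagonal construction in Lemma~\ref{maximal1}. The final step ``by (2) there is a maximal $Q\ge L/N$'' is slightly loose --- (2) only counts maximal elements, it does not by itself say every element lies under one --- but Zorn together with your upper-bound construction closes that gap.

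Your plan for (2), however, has a real problem at exactly the point you flag as ``the hard part''. The inductive descent through the JSJ does not exist. If $(\varphi_n)$ is short for $L$ and $R$ is a rigid vertex of the JSJ, then $\Mod(L)$ acts on $R$ only by inner automorphisms (Definition~\ref{modone}), so $\varphi_n|_R$ is merely conjugacy-short, not short relative to $\Mod(R)$. The limit of $\varphi_n|_R$ is therefore an arbitrary $\Gamma$-limit quotient of $R$, not a shortening quotient of $R$, and there are infinitely many of those. No finite list on the rigid pieces is available, and the induction has nothing to descend on.

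Conversely, the ``naive compactness shortcut'' you dismiss is precisely how the result is proved. Your objection --- that passing to a stable diagonal subsequence drops all but one of the $Q_i$ --- is a misconception: one takes $\psi_j=\varphi^{(j)}_{n_j}$ with $\psi_j$ chosen from the sequence converging into $Q_j$, so after passing to any subsequence each surviving $\psi_j$ still factors through its own $Q_j$, for infinitely many distinct $j$. What makes the contradiction go through is the one ingredient you list as an ``alternative'': Theorem~6.5 of \cite{rewe}, i.e.\ that a subsequence of a stable sequence factors through its own limit map (equational Noetherianity of $\Gamma$). With this, the diagonal limit $Q$ has a subsequence of the $\psi_j$ factoring through it; choosing $g_j\in\ker(L\to Q)\setminus\ker(L\to Q_j)$ (possible since the $Q_j$ are maximal and distinct from $Q$) and arranging $\psi_j(g_j)\neq 1$ gives the contradiction. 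This is exactly the argument of Lemma~\ref{maximal}. So the equational-Noetherian input from \cite{rewe} is not an optional alternative route but the actual mechanism, and there is no way to avoid it by a JSJ induction.
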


\begin{proof}
This is a combination of Theorem 6.10 and Theorem 6.11 in \cite{rewe}.
\end{proof}

\begin{bem}
In particular there exist only finitely many maximal shortening quotients of a $\Gamma$-limit group $L$.
\end{bem}

The following result is commonly referred to as the descending chain condition for $\Gamma$-limit groups. It states that there cannot be an infinite descending sequence of $\Gamma$-limit quotients of a given $\Gamma$-limit group. This will be crucial in the sequel, to show that certain iterative procedures terminate after finitely many steps.

\begin{prop}[Descending chain condition]\label{descendingchain} Let $L$ be a (restricted) $\Gamma$-limit group. Then there exists no infinite descending sequence $Q_1>Q_2>Q_3>\ldots$ of $\Gamma$-limit quotients of $L$. 
\end{prop}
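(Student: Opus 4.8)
The plan is to derive the descending chain condition from \emph{equational Noetherianity} of $\Gamma$. Recall that a hyperbolic group, even with torsion, is equationally Noetherian: every system of equations over $\Gamma$ in finitely many variables (allowing coefficients from $\Gamma$, in the restricted case) is equivalent to a finite subsystem of itself. In the torsion-free case this goes back to Sela, and in the presence of torsion it belongs to the circle of results developed in \cite{rewe}. Granting this input, the argument is short.

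First I would fix a finite generating set of $L$ and present $L$ as a quotient $q\colon F\twoheadrightarrow L$, where $F=F(x_1,\dots,x_m)$ in the unrestricted case and $F=F(x_1,\dots,x_m)\ast\Gamma$ in the restricted case. Suppose, for contradiction, that $Q_1>Q_2>\cdots$ is an infinite strictly descending chain of (restricted) $\Gamma$-limit quotients of $L$, with compatible epimorphisms $\pi_i\colon L\to Q_i$; put $K_i=\ker\pi_i$ and $\widetilde K_i=q^{-1}(K_i)\trianglelefteq F$. Since each map $Q_i\to Q_{i+1}$ is a proper epimorphism we have $K_i\subsetneq K_{i+1}$, hence $\widetilde K_1\subsetneq\widetilde K_2\subsetneq\cdots$; set $\widetilde K_\infty=\bigcup_i\widetilde K_i$ and $Q_\infty=F/\widetilde K_\infty=L/\bigcup_iK_i$. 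The key point is that, regarding $\widetilde K_i$ as a system of equations over $\Gamma$ in the variables $x_1,\dots,x_m$, its solution set $V(\widetilde K_i)\subseteq\Gamma^m$ is exactly the set of (restricted) homomorphisms $Q_i\to\Gamma$, because a homomorphism $F\to\Gamma$ factors through $F/\widetilde K_i=Q_i$ if and only if it vanishes on $\widetilde K_i$. Applying equational Noetherianity to $\widetilde K_\infty$ produces a \emph{finite} subset $S\subseteq\widetilde K_\infty$ with $V(S)=V(\widetilde K_\infty)$, and as the union $\widetilde K_\infty=\bigcup_i\widetilde K_i$ is ascending we get $S\subseteq\widetilde K_{i_0}$ for some $i_0$. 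Then
$$V(\widetilde K_{i_0})\subseteq V(S)=V(\widetilde K_\infty)\subseteq V(\widetilde K_{i_0}),$$
so $V(\widetilde K_{i_0})=V(\widetilde K_\infty)$; equivalently, every (restricted) homomorphism $Q_{i_0}\to\Gamma$ factors through the canonical epimorphism $p\colon Q_{i_0}\twoheadrightarrow Q_\infty$.

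To conclude I would use that $Q_{i_0}$, being a (restricted) $\Gamma$-limit group, is residually $\Gamma$ --- in the restricted case via restricted homomorphisms, which follows from the existence of a restricted stably injective defining sequence, as in \cite{rewe}. Hence, given $1\neq g\in\ker p$, any $\varphi\colon Q_{i_0}\to\Gamma$ with $\varphi(g)\neq1$ leads to a contradiction, since $\varphi$ factors through $p$ and therefore $\varphi(g)=1$. So $p$ is an isomorphism, which forces $K_{i_0}=\bigcup_iK_i\supseteq K_{i_0+1}$, contradicting $K_{i_0}\subsetneq K_{i_0+1}$. This rules out an infinite strictly descending chain.

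I expect the one genuine obstacle to be the black box that $\Gamma$ is equationally Noetherian once torsion is permitted; everything else is bookkeeping with kernels and varieties. A variant avoiding a naked appeal to that statement --- but essentially equivalent to it --- would be to first exhibit $Q_\infty=\varinjlim Q_i$ as a $\Gamma$-limit group by a diagonal argument: fix an enumeration $g_1,g_2,\dots$ of $L$ and, using that each finitely generated $\Gamma$-limit group $Q_n$ is fully residually $\Gamma$, choose $\psi_n\in\Hom(Q_n,\Gamma)$ separating the images of $g_1,\dots,g_n$ from $1$; the sequence $(\psi_n\circ\pi_n)$ is then stable with stable kernel $\bigcup_iK_i$, so $Q_\infty$ is a $\Gamma$-limit group, and one may then quote the corresponding Noetherian finiteness built into \cite{rewe}.
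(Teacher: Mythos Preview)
Your proof is correct. The paper does not give an argument at all: its entire proof is the sentence ``This follows immediately from Lemma 6.8 in \cite{rewe}.'' Your derivation of the descending chain condition from equational Noetherianity of $\Gamma$ is the standard route, and it is almost certainly the content underlying that citation --- equational Noetherianity of hyperbolic groups (with torsion) is one of the foundational results of \cite{rewe}, and the passage from it to DCC is exactly the kernel-and-variety bookkeeping you wrote out, together with the fact (recorded in the present paper as the equivalence of ``$\Gamma$-limit group'' and ``fully residually $\Gamma$'') that $Q_{i_0}$ is residually $\Gamma$. So rather than a different approach, your proposal unpacks what the paper leaves packed inside a reference; the diagonal variant you sketch at the end is likewise standard and equally valid.
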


\begin{proof}
This follows immediately from Lemma 6.8 in \cite{rewe}.
\end{proof}

Later on we will need more information about the algebraic structure of special types of shortening quotients, the so-called strict shortening quotients.

\begin{definition}
Let $\Gamma$ be a hyperbolic group and $L$ be a one-ended  (restricted) $\Gamma$-limit group. Let $\A$ be a virtually abelian JSJ decomposition of $L$. We say that a proper shortening quotient $Q$ of $L$ with corresponding quotient map $\eta: L\to Q$ is a \textnormal{strict shortening quotient} if the following hold:
\begin{enumerate}[(1)]
\item $\eta$ maps each QH subgroup $A_v$ of $L$ to a non-virtually abelian image. Moreover if $1\to E\to A_v\xrightarrow{\pi} O\to 1$ is the short exact sequence corresponding to $A_v$, then for every peripheral subgroup $O_e$ of $O$, $\pi^{-1}(O_e)$ is mapped isomorphically into $Q$ by $\eta$.
\item If $A_v$ is a virtually abelian vertex group, then $\eta$ maps every subgroup $U\leq A_v$ with $U^+=P_v^+$ injectively into $Q$. (For a definition of $P_v^+$ see Definition \ref{p+}).
\item Every virtually abelian edge group $A_e$ of $\A$ gets embedded into $Q$ by $\eta$.
\item Let $R$ be a rigid vertex group in $\A$. The \textit{envelope} $E(R)$ of $R$ is the fundamental group of a graph of groups $\B$ whose underlying graph is a star with one vertex $v$ with vertex group $R$ connected to all other vertices via an edge of the form $(v,u)$ for some $u\in VB\setminus \{v\}$. Edges in $\B$ are in 1-to-1 correspondence to edges adjacent to $R$ in $\A$ and the edge groups are isomorphic. Let now $e=(v,u)$ be an edge in $\B$ and $e'=(v',u')$ the corresponding edge in $\A$ adjacent to $R$ (where $R$ is the rigid vertex group of $v'$). If $A_{u'}$ is rigid or a QH vertex group, then $B_u$ is the maximal virtually abelian subgroup of $A_{u'}$ containing $A_{e'}$. If $A_{u'}$ is a virtually abelian vertex group, then $B_u$ is the peripheral subgroup of $A_{u'}$ containing $A_{e'}$.\\
Then $\eta$ is injective when restricted to the envelope $E(R)$ of $R$. In particular every rigid vertex group in $\A$ gets embedded into $Q$.
\end{enumerate}
\end{definition}

As before when we generalized shortening quotients from one-ended groups to the case of arbitrary $\Gamma$-limit groups, we can do the same for strict shortening quotients.

\begin{definition}
Let $L$ be a (restricted) $\Gamma$-limit group with (restricted) Dunwoody decomposition $\D_L$. Let $Q$ be a proper shortening quotient of $L$ with (restricted) Dunwoody decomposition $\D_Q$ and denote by $\eta:L\to Q$ the corresponding quotient map. We call $Q$ a \textnormal{strict shortening quotient} of $L$ if the following hold. 
\begin{enumerate}
\item The underlying graph $D_Q$ of $\D_Q$ is a refinement of the underlying graph $D_L$ of $\D_L$.
\item Let $v\in VD_L$. Then either $Q_v:=\eta(D_v)$ is a strict shortening quotient of $D_v$ or $D_v$ is isomorphic to a subgroup of $\Gamma$ and there exists a vertex group $Q_v$ in $\D_Q$ such that $\eta$ maps $D_v$ isomorphically onto $Q_v$. Note that we do not claim that $Q_v$ is one-ended in the first alternative (hence in particular $Q_v$ may not be isomorphic to a vertex group of $\D_Q$).
\end{enumerate}
\end{definition}

The significance of strict shortening quotients will become apparent in the following chapters.

\subsection{Makanin-Razborov diagrams}
In this chapter we will give a short account of the construction of a Makanin-Razborov diagram for a finitely generated group $G$ over a hyperbolic group $\Gamma$. Essentially everything from this chapter is just a reformulation of the corresponding results in \cite{rewe} to better fit our needs. 

\begin{definition}
Let $G$ be a finitely generated accessible group, $\Gamma$ a group. Then we call a homomorphism $\Psi: G\to \Gamma$ locally injective if $\Psi$ is injective when restricted to the vertex groups of a Dunwoody decomposition of $G$.
\end{definition}

The following is the main theorem in \cite{rewe}.

\begin{satz}[Theorem 5.7 in \cite{rewe}]\label{MRdiagram}
Let $\Gamma$ be a hyperbolic group and $G$ be a finitely generated group. Then there exists a finite directed rooted tree $T$ with root $v_0$ satisfying:
\begin{enumerate}[(a)]
\item The vertex $v_0$ is labeled by $G$.
\item Any vertex $v\in VT$, $v\neq v_0$, is labeled by a $\Gamma$-limit group $G_v$.
\item Any edge $e\in ET$ is labeled by an epimorphism $\pi_e:G_{\alpha(e)}\to G_{\omega(e)}$ such that for any homomorphism $\vp:G\to \Gamma$ there exists a directed path $e_1,\ldots,e_k$ from $v_0$ to some vertex $\omega(e_k)$ such that 
$$\vp=\Psi\circ\pi_{e_k}\circ\alpha_{k-1}\circ\pi_{e_{k-1}}\circ\ldots\circ\alpha_1\circ\pi_{e_1}$$
where $\alpha_i\in \Mod G_{\omega(e_i)}$ for $1\leq i\leq k$ and $\Psi$ is locally injective.
\end{enumerate}
We call $T$ a \textnormal{Makanin-Razborov diagram} of $G$, and a directed path as in $(c)$, a \textnormal{Makanin-Razborov resolution} of $G$.
\end{satz}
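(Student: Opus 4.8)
The plan is to build $T$ by a recursion that at each step either halts (when the homomorphism in hand is locally injective) or descends to one of finitely many proper quotients, the Dunwoody--relative shortening argument furnishing the descent mechanism; one then checks finiteness of $T$ and the resolution property separately. All the substantive input (limit trees, the shortening argument, finiteness of maximal shortening quotients, the descending chain condition) is already available from \cite{rewe}. Label $v_0$ by $G$. If $G$ is not a $\Gamma$-limit group, attach as the children of $v_0$ the finitely many maximal $\Gamma$-limit quotients $L_1,\dots,L_m$ of $G$ (finite in number; cf.\ \cite{rewe}), each of them a proper quotient, since a $\Gamma$-limit quotient with trivial stable kernel would make $G$ itself a $\Gamma$-limit group; the edges carry the quotient maps. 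If $G$ is already a $\Gamma$-limit group, apply the following step to $v_0$ directly. At a vertex $v$ labelled by a $\Gamma$-limit group $L$, fix a Dunwoody decomposition $\D$ of $L$ (it exists by Proposition \ref{eig} and Theorem \ref{linnell}); relative to $\D$ the modular group $\Mod(L)$ and the shortening quotients of $L$ are defined (Definition \ref{definitionshortening}), and by the Remark following Theorem \ref{maximal elements} there are only finitely many maximal shortening quotients, in particular only finitely many maximal among the \emph{proper} ones. Attach these maximal proper shortening quotients of $L$ as the children of $v$ (edges labelled by the quotient maps), and note that $v$ is still permitted to be the terminal vertex of a resolution. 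Iterating produces $T$ together with the vertex and edge labellings of (a)--(c).

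\textbf{Finiteness.} Each vertex has finitely many children, so by K\"onig's lemma it suffices to preclude infinite branches. Along a branch $v_0,v_1,\dots$ with edge maps $\pi_i\colon G_{v_{i-1}}\to G_{v_i}$, every edge map beyond the root is a proper epimorphism, so the kernels $N_i=\ker(\pi_i\circ\cdots\circ\pi_1)$ form a strictly increasing sequence of normal subgroups of $G$, all $G/N_i$ being $\Gamma$-limit groups; since $N_i\subsetneq N_j$ forces $G/N_i>G/N_j$ in the natural partial order on $\Gamma$-limit quotients of $G$ (the quotient onto the smaller one factoring through the larger), an infinite branch would yield an infinite strictly descending chain of $\Gamma$-limit quotients of $G$, contradicting Proposition \ref{descendingchain}. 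Hence $T$ is finite.

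\textbf{Existence of resolutions.} Fix $\vp\colon G\to\Gamma$. The constant sequence $(\vp)$ is stable with stable kernel $\ker\vp$, so $\im\vp\cong G/\ker\vp$ is a $\Gamma$-limit quotient of $G$ and therefore lies below a maximal one $G_{v_1}$; write $\vp=\psi_1\circ\pi_{e_1}$ with $\psi_1\colon G_{v_1}\to\Gamma$ (take $G_{v_1}=G$ and $\pi_{e_1}=\id$ if $G$ is already a $\Gamma$-limit group). Inductively, suppose we have reached a vertex $v_i$ carrying a homomorphism $\psi_i\colon G_{v_i}\to\Gamma$. Choose $\beta_i\in\Mod(G_{v_i})$ (together with, in the unrestricted case, an inner automorphism of $\Gamma$, which we suppress; it is absent in the restricted case) so that $\psi_i\circ\beta_i$ is short relative to the Dunwoody decomposition of $G_{v_i}$, in the sense of Definition \ref{definitionshortening}. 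If $\psi_i\circ\beta_i$ is injective on every vertex group of that decomposition it is locally injective, hence so is $\psi_i$; then we stop, set $\Psi=\psi_i$ and take $v_i$ as the terminal vertex. Otherwise $\psi_i\circ\beta_i$ is non-injective on some vertex group, so $\ker(\psi_i\circ\beta_i)\neq 1$, and the constant sequence exhibits $\im(\psi_i\circ\beta_i)\cong G_{v_i}/\ker(\psi_i\circ\beta_i)$ as a \emph{proper} shortening quotient of $G_{v_i}$; it therefore lies below one of the maximal proper shortening quotients, i.e.\ below $G_{v_{i+1}}$ for a child edge $e_{i+1}$, whence $\psi_i\circ\beta_i=\psi_{i+1}\circ\pi_{e_{i+1}}$, that is $\psi_i=\psi_{i+1}\circ\pi_{e_{i+1}}\circ\alpha_i$ with $\alpha_i:=\beta_i^{-1}\in\Mod(G_{v_i})$. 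Pass to $(G_{v_{i+1}},\psi_{i+1})$ and repeat. By finiteness of $T$ this terminates at some $v_k$ with $\psi_k$ locally injective, and composing the relations $\psi_i=\psi_{i+1}\circ\pi_{e_{i+1}}\circ\alpha_i$ for $1\le i\le k-1$ with $\vp=\psi_1\circ\pi_{e_1}$ and $\Psi=\psi_k$ gives exactly $\vp=\Psi\circ\pi_{e_k}\circ\alpha_{k-1}\circ\cdots\circ\alpha_1\circ\pi_{e_1}$ with $\Psi$ locally injective, as required.

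\textbf{Main obstacle.} The genuinely hard ingredients --- the Bestvina--Paulin limit construction and the structure of limit trees (Theorems \ref{paulin} and \ref{eigenschaften}), the shortening argument (Theorem \ref{shortening argument} and Corollary \ref{shortening}), the finiteness of the set of maximal shortening quotients (Theorem \ref{maximal elements} and the subsequent Remark), and the descending chain condition (Proposition \ref{descendingchain}) --- are all imported from \cite{rewe}, so the remaining work is organisational. The one place where care is needed is the induction step above: one must be certain that a homomorphism failing to be locally injective factors, after a modular twist, through a \emph{proper} shortening quotient --- equivalently, that every proper shortening quotient is dominated by a maximal proper one --- so that every edge traversed along a branch is a strict descent and the recursion cannot stall. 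This is the Dunwoody--relative counterpart of Lemma \ref{Lemma quotient}, and it is precisely why the construction must be carried out with the relative (Dunwoody-indexed) modular group and relative shortening rather than treating $L$ as a black box.
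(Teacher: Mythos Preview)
Your proof is correct and follows essentially the same route as the paper's: both reduce to constructing, for each $\Gamma$-limit group $L$, a finite $\Gamma$-factor set relative $\Mod(L)$ consisting of the maximal (proper) shortening quotients, use the constant-sequence trick to see that a short non-locally-injective homomorphism yields a proper shortening quotient dominated by a maximal one, and invoke the descending chain condition (Proposition~\ref{descendingchain}) for termination. Your write-up is in fact a bit more explicit than the paper's sketch---you spell out the root step when $G$ is not already a $\Gamma$-limit group and organise finiteness via K\"onig's lemma---whereas the paper restricts its sketch to the case $G$ a $\Gamma$-limit group and leaves the rest implicit.
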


Despite the growing interest in these diagrams since Sela's solution to the Tarski conjectures, very few actual examples of MR diagrams have been computed. If $\Gamma$ is a non-abelian free group and $G$ is either a surface group, a free abelian group, or a non-abelian free group, Makanin-Razborov diagrams for $G$ are well-known (see for example \cite{champetierguirardel}).
All these examples have in common that the JSJ decomposition of the limit group is trivial, namely consists of a single vertex. In \cite{heil} we construct a Makanin-Razborov diagram of a $F_k$-limit group $L$ which is a double $F_2\ast_{\langle w\rangle}F_2$ along a special word $w\in F_2$. In particular the JSJ decomposition of $L$ consists of two rigid vertices connected by an edge with cyclic edge group, hence is non-trivial. To our knowledge this is the first example of an explicit Makanin-Razborov diagram of a group with non-trivial JSJ decomposition.\\

In our work we will only be interested in Makanin-Razborov diagrams of $\Gamma$-limit groups, that is if the group $G$ which labels the root $v_0$ is a finitely generated $\Gamma$-limit group. Therefore we only sketch the proof of Theorem \ref{MRdiagram} in this case. It should be noted though that this case already covers essentially all the difficulties of the general case. As mentioned before, a complete proof of the above Theorem for an arbitrary finitely generated group $G$ can be found in \cite{rewe}.

\begin{proof}[Proof of Theorem \ref{MRdiagram}.] Since by Proposition \ref{descendingchain} every descending chain of $\Gamma$-limit groups eventually stabilizes, it clearly suffices to show that any $\Gamma$-limit group $L$ admits a finite set $\{q_i:L\to Q_i\ |\ 1\leq i\leq n\}$ of proper quotient maps such that any homomorphism $\vp:L\to \Gamma$ which is not locally injective, factors through some $q_i$ after precomposition with an element of $\Mod(L)$. Such a set of quotient maps is called a \textit{$\Gamma$-factor set of $L$ relative $\Mod(L)$}.\\
With the construction of the set of maximal shortening quotients of $L$ which exists by Theorem \ref{maximal elements} we have already done the main part of the work.\\
So let $\D$ be a Dunwoody decomposition of $L$. By Theorem \ref{maximal elements} there exist finitely many maximal shortening quotients $Q_1,\ldots, Q_m$ of $L$. Denote the corresponding quotient maps by $\pi_i:L\to Q_i$ for all $i\in\{1,\ldots,m\}$.\\
%First suppose that $L$ is finite. In this case there clearly exists a $\Gamma$-factor set of $L$. So assume now that $L$ is one-ended.
%For $n\in\N$ denote by $B_n(1)\subset L$ the ball of radius $n$ in $L$ around the identity (with respect to the word metric corresponding to some fixed finite generating set). Either the set of quotient maps 
%$$\{\pi_g:L\to L/\langle\langle g\rangle\rangle\ |\ g\in B_n(1)\setminus\{1\} \}$$
%is a $\Gamma$-factor set of $L$, or for every $n\in\N$ there exists a non-injective homomorphism $\hat{\vp}_n\in \Hom(L,\Gamma)$ which is injective on $B_n(1)$. In this case there exists modular automorphisms $\alpha_n\in\Mod(L)$ and (in the unrestricted case) inner automorphisms $c_{g_n}$ of $\Gamma$ such that $$\vp_n:=c_{g_n}\circ\hat{\vp}_n\circ\alpha_n$$ is short for all $n\in\N$. In particular after passing to a subsequence (still denoted $(\vp_n)$) we can assume that $(\vp_n)$ is a stable sequence of pairwise distinct non-injective short homomorphisms.\\
We claim that for every non-locally injective homomorphism $\vp:L\to\Gamma$ there exists a modular automorphism $\alpha\in\Mod(L)$ and (in the unrestricted case) an inner automorphisms $c_g$ of $\Gamma$ such that $c_g\circ\vp\circ\alpha$ factors through one of the $\pi_i$'s.\\
So let $\vp:L\to\Gamma$ be a homomorphism and suppose that $\vp$ is not locally injective. Then there exists a modular automorphism $\alpha\in\Mod(L)$ and (in the unrestricted case) an inner automorphism $c_g$ of $\Gamma$ such that $\hat{\vp}=c_g\circ\vp\circ\alpha$ is short.\\
The constant sequence $(\hat{\vp})$ converges into a $\Gamma$-limit group $M$ with corresponding $\Gamma$-limit map $\hat{\pi}:L\to M$. Since by assumption $\vp$ is not injective when restricted to at least one vertex group $\D_v$, $M$ is a proper quotient of $L$. Since $\hat{\vp}$ was chosen to be short when restricted to every vertex group of $\D$, $M$ is a proper shortening quotient of $L$. Hence by Theorem \ref{maximal elements} either $M$ is itself a maximal shortening quotient of $L$ or there exists a maximal shortening quotient $Q$ of $L$ such that $M<Q$. In both cases $\hat{\vp}$ splits through a maximal shortening quotient of $L$. As mentioned before the descending chain condition for $\Gamma$-limit groups (Proposition \ref{descendingchain}) seals the deal.
\end{proof}

\subsection{Strict and well-structured resolutions}
\begin{definition}
Let $\Gamma$ be a hyperbolic group, $G$ a $\Gamma$-limit group and let 
$$G=G_0\xrightarrow{\eta_0} G_1\xrightarrow{\eta_1}G_2\rightarrow \cdots\xrightarrow{\eta_{m-1}}G_m=\pi_1(\D)$$
be a MR resolution of $\Gamma$, such that $\D$ is a Dunwoody decomposition of $G_m$ and every vertex group of $\D$ can be embedded into $\Gamma$. We say that the given resolution is a \textnormal{strict MR resolution} if $G_i$ is a strict shortening quotient for all $i\in\{1,\ldots,m\}$.
\end{definition}

\begin{prop}\label{prop1.29} Let $L$ be a $\Gamma$-limit group. Then the  Makanin-Razborov diagram of $L$ contains a strict Makanin-Razborov resolution.
\end{prop}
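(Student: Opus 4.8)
The plan is to construct a strict resolution of $L$ from the top down and to invoke the descending chain condition (Proposition~\ref{descendingchain}) to see that the construction terminates. It suffices to prove the following one-step statement: \emph{if $L$ is a $\Gamma$-limit group that is not terminal, i.e.\ some vertex group of its $($restricted$)$ Dunwoody decomposition $\D_L$ does not embed into $\Gamma$, then $L$ admits a proper strict shortening quotient} $q\colon L\to Q$. Granting this, set $G_0=L$, $G_1=Q$, replace $L$ by $Q$ and iterate; each $G_{i+1}$ is a proper $\Gamma$-limit quotient of $G_i$, so by Proposition~\ref{descendingchain} the process stops at some $G_m$ all of whose Dunwoody vertex groups embed into $\Gamma$, while every $G_i$ with $i\geq 1$ is by construction a strict shortening quotient. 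Since each step arises from a (suitably short) stable sequence of homomorphisms together with a modular automorphism, the chain $G_0\to\cdots\to G_m$ is a Makanin--Razborov resolution in the sense of Theorem~\ref{MRdiagram}, and hence occurs as a directed path in a Makanin--Razborov diagram of $L$ built along the lines of the proof of that theorem.

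For the one-step statement, fix a $($restricted$)$ virtually abelian JSJ decomposition $\A_v$ (Definition~\ref{jsjdef}) of each one-ended Dunwoody vertex group $D_v$ of $L$ that does not embed into $\Gamma$, and pick such a $D_{v_0}$. Following the proof of Lemma~\ref{Lemma quotient} and its extension to arbitrary $\Gamma$-limit groups (Definition~\ref{definitionshortening}), start from any stable sequence in $\Hom(L,\Gamma)$ that is non-injective on $D_{v_0}$, shorten each of its terms on every vertex group of $\D_L$ by a modular automorphism, and note that the shortening argument (Theorem~\ref{shortening argument}, in the form of Corollary~\ref{shortening}) forces the stable kernel of the resulting sequence $(\vp_n)$ to be non-trivial, so that $Q:=L/\underrightarrow{\ker}(\vp_n)$ is a proper shortening quotient of $L$. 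The conditions defining a strict shortening quotient that concern QH vertex groups and virtually abelian vertex groups can be arranged for such a short sequence, using the surface and axial shortenings of Lemma~\ref{orbifold shortening} and Lemma~\ref{axial shortening}: the large modular groups of these vertex groups prevent a short homomorphism from sending a QH orbifold group to a virtually abelian image or from failing to be injective on a peripheral preimage $\pi^{-1}(O_e)$, respectively on a peripheral subgroup of an abelian vertex group. What remains, and is the substantive part, is to also guarantee that $q$ is injective on every finite-by-abelian edge group of $\A_v$ and on the envelope $E(R)$ of every rigid vertex group $R$.

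To handle this, note that each of these subgroups --- the edge groups, the peripheral subgroups of the abelian vertex groups, and the envelopes $E(R)$ --- is a finitely generated subgroup of $L$, hence a finitely generated $\Gamma$-limit group, and therefore fully residually $\Gamma$; in particular each admits stably injective sequences of homomorphisms to $\Gamma$. One then chooses $(\vp_n)$ so that its stable kernel avoids this finite list of subgroups, concretely by using the graph-of-actions structure of the limit real tree attached to $(\vp_n)$ to run the shortening only on the QH and axial vertex actions while keeping the rigid pieces and their envelopes frozen along stably injective partial sequences; the quotient $Q$ then satisfies all of (1)--(4) in the definition of a strict shortening quotient. \textbf{The main obstacle is precisely this compatibility}: producing a single sequence $(\vp_n)$ which is short on every Dunwoody vertex group (so that the shortening argument makes $Q$ a \emph{proper} quotient), whose stable kernel is disjoint from the prescribed subgroups, and which leaves the QH and axial conditions intact. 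Carrying this out requires a careful analysis of how the shortening argument, the modular group, and the $($restricted$)$ Dunwoody and virtually abelian JSJ decompositions interact; it is the analogue over $\Gamma$ of Sela's construction of strict resolutions over free groups, with the additional bookkeeping forced by torsion absorbed into the use of envelopes and of finite-by-abelian edge and peripheral subgroups.
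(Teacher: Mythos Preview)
Your approach diverges from the paper's in a way that creates a genuine gap. The proposition asserts that a strict resolution already lives inside \emph{the} canonical Makanin--Razborov diagram of $L$ --- the one built, as in Theorem~\ref{MRdiagram}, out of \emph{maximal} shortening quotients. This is exactly how the result is used in Proposition~\ref{prop101}: one starts from the finite set $U$ of resolutions in the canonical diagram and needs to know $U$ contains at least one strict one. You instead construct a strict shortening quotient at each stage by hand; even if your construction succeeded, the resulting $Q$ need not be a maximal shortening quotient of $L$, so the chain $G_0\to G_1\to\cdots\to G_m$ you build is not, a priori, a directed path in the canonical diagram. The sentence ``hence occurs as a directed path in a Makanin--Razborov diagram of $L$ built along the lines of the proof of that theorem'' papers over exactly this point.

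Separately, your construction of a single strict shortening quotient is not complete. You correctly identify the main obstacle (simultaneously keeping the sequence short on every Dunwoody vertex group while having trivial stable kernel on all envelopes, edge groups and peripheral subgroups, and preserving the QH non-virtual-abelianness), but you do not resolve it; the claims about what ``large modular groups prevent'' are heuristic. The paper's route, following Sela's Proposition~1.29 in \cite{selahyp}, avoids both problems at once and is much shorter: take a stably injective sequence $(\varphi_n)\subset\Hom(L,\Gamma)$, factor each $\varphi_n$ through the canonical diagram, and use that the diagram is finite to pass to a subsequence factoring through a \emph{single} resolution $L=L_0\xrightarrow{\eta_0}\cdots\xrightarrow{\eta_{m-1}}L_m$. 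Now if some $\eta_i$ failed any of the strictness conditions --- killed a nontrivial element of an edge group, of an envelope $E(R)$, of a peripheral subgroup, or sent a QH vertex group to a virtually abelian image --- one finds a nontrivial element of $L$ lying in the kernel of every $\varphi_n$ in the subsequence, contradicting stable injectivity. This argument works inside the canonical diagram by design and needs no delicate compatibility construction.
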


\begin{proof}
Using the results from \cite{rewe} this is identical to the proof of Proposition 1.29 in \cite{selahyp}. Note that in the case that every vertex group of a Dunwoody decomposition of $L$ is isomorphic to a subgroup of $\Gamma$, the empty resolution is considered a strict resolution.
\end{proof}

We are now going to replace the canonical collection of MR resolutions associated with a restricted $\Gamma$-limit group, with a canonical collection of strict MR resolutions.

\begin{prop}\label{prop101}
Let $L$ be a restricted $\Gamma$-limit group. There exists a canonical finite collection of strict MR resolutions $\Res_1(L),\ldots,\Res_s(L)$, such that the $\Gamma$-limit groups associated with the resolutions $\Res_i(L)$ (i.e. the $\Gamma$-limit groups where the resolution starts) are either $L$ itself or a quotient of it, and every homomorphism from $L$ to $\Gamma$, factors through (at least) one of these resolutions.
\end{prop}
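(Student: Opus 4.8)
The plan is to run the same iterative scheme that produces the Makanin--Razborov diagram of $L$ in the proof of Theorem \ref{MRdiagram}, but with ``maximal shortening quotient'' systematically replaced by ``maximal strict shortening quotient'', and with those homomorphisms that fail to be strict shunted off to proper quotients of $L$. In other words, Proposition \ref{prop101} is to Proposition \ref{prop1.29} what the full Makanin--Razborov diagram is to a single resolution; this mirrors Sela's treatment in the torsion-free hyperbolic case, with the ingredients of \cite{rewe} substituted throughout.

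Concretely, I would first fix a restricted Dunwoody decomposition $\D_L$ of $L$ together with restricted virtually abelian JSJ decompositions of its one-ended vertex groups. Let $\vp\colon L\to\Gamma$ be arbitrary; after precomposing with a suitable $\alpha\in\Mod(L,\Gamma)$ we may assume $\vp\circ\alpha$ is short on every vertex group of $\D_L$, so that the constant sequence $(\vp\circ\alpha)$ determines a shortening quotient $q\colon L\to Q$. The decisive case distinction is whether $q$ is \emph{strict}. If it is, then (by a variant of Theorem \ref{maximal elements} applied to the subfamily of strict shortening quotients, which still has only finitely many maximal members since it sits inside the set of all $\Gamma$-limit quotients of $L$) one may take $q$ to factor through one of the finitely many maximal strict shortening quotients of $L$, and the construction continues from each of these. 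If $q$ is \emph{not} strict, then unravelling the four defining conditions of a strict shortening quotient shows that $q$ must fail to be injective on one of the finitely many canonical pieces carried by the JSJ/Dunwoody structure of $L$: the preimage of a peripheral subgroup of a QH vertex, a peripheral subgroup $U\le A_v$ of a virtually abelian vertex with $U^+=P_v^+$, a virtually abelian edge group, or the envelope $E(R)$ of a rigid vertex (or injectivity may already fail on a vertex group of $\D_L$). For each such piece, the homomorphisms collapsing it all factor through a proper $\Gamma$-limit quotient of $L$ obtained by imposing the corresponding relations (passing, where needed, to the finitely many $\Gamma$-limit quotients of the resulting group), and by Theorem \ref{maximal elements} there are again only finitely many maximal such quotients. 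Thus from $L$ one obtains finitely many $\Gamma$-limit groups: the maximal strict shortening quotients, used as the next level of a resolution, and the ``collapsing'' proper quotients of $L$, at each of which a fresh resolution is started.

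I would then iterate this on every group produced, each equipped with its own Dunwoody and JSJ decompositions. Termination is guaranteed by the descending chain condition (Proposition \ref{descendingchain}): a strict shortening quotient is in particular a proper quotient, and the collapsing quotients are proper too, so along any branch the underlying $\Gamma$-limit groups form a strictly descending chain of $\Gamma$-limit quotients; hence no branch is infinite, and since every node has finitely many children the whole tree is finite. Its branches, read off together with the interspersed modular automorphisms, are the resolutions $\Res_1(L),\dots,\Res_s(L)$: each starts either at $L$ itself --- when the first shortening quotient encountered is already strict --- or at a proper quotient of $L$ reached after one or more collapses, which is exactly the clause in the statement. A branch stops upon reaching a $\Gamma$-limit group with no proper shortening quotient at all, and by Lemma \ref{Lemma quotient} (and its extension to arbitrary $\Gamma$-limit groups with a Dunwoody decomposition) every Dunwoody vertex group of such a terminal group embeds into $\Gamma$, so the terminal condition in the definition of a strict MR resolution is met; the homomorphisms remaining there are, up to a modular automorphism, locally injective and are absorbed by the final locally injective map $\Psi$, as in Theorem \ref{MRdiagram} (the empty resolution being permitted, as in Proposition \ref{prop1.29}). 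Canonicity of the collection is inherited from the canonicity of the Dunwoody and virtually abelian JSJ decompositions, of the modular group, and of the sets of maximal (strict) shortening quotients (Theorem \ref{maximal elements}), since every choice in the construction is among these canonical objects.

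The hardest part will be the middle step: verifying that a short but non-strict homomorphism necessarily collapses one of \emph{finitely many canonical} pieces, and organising the resulting collapsing quotients into a finite canonical family. This needs a careful dissection of the four conditions defining a strict shortening quotient against the fine structure of virtually abelian JSJ decompositions of $\Gamma$-limit groups established in \cite{rewe}, and --- because of torsion --- a good deal of bookkeeping when passing between a $\Gamma$-limit group, its Dunwoody decomposition, the JSJ decompositions of its one-ended vertex groups, and the canonical finite-by-abelian and peripheral subgroups attached to its virtually abelian subgroups. Once this is in place, the remainder is parallel to the proof of Proposition \ref{prop1.29} and to the corresponding statement in \cite{selahyp}.
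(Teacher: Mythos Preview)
Your approach is genuinely different from the paper's. You build the strict resolutions from scratch, splitting at each node according to whether the shortening quotient of a given homomorphism is strict: strict ones are routed through ``maximal strict shortening quotients'', non-strict ones through ``collapsing quotients'' that kill the offending JSJ piece. The paper instead starts from the already-built canonical MR diagram of $L$ (Theorem~\ref{MRdiagram}) and repairs each non-strict resolution in it. Given a resolution $L=G_0\to\cdots\to G_m$, the paper forms for each level $j$ the quotient $R_j$ of $G_j$ by the common kernel of \emph{all} homomorphisms $G_j\to\Gamma$ that factor through the tail $G_j\to\cdots\to G_m$. The key fact (asserted in the paper, standard from \cite{selahyp}) is that non-strictness of the resolution forces some $R_{j_0}$ to be a proper quotient of $G_{j_0}$; one keeps the resolution above level $j_0$, replaces $G_{j_0}$ by $R_{j_0}$, and continues downward along the MR diagram of $R_{j_0}$. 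Iterating, the descending chain condition gives termination.

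The paper's route sidesteps both of your hard steps. It never needs the notion of ``maximal strict shortening quotient''---and your justification that there are only finitely many (``sits inside the set of all $\Gamma$-limit quotients of $L$'') is not adequate as written: a sub-poset of a poset with finitely many maximal elements need not have finitely many maximal elements, nor need every element lie below a maximal one. What you would actually have to check is that the four strictness conditions are preserved when passing to the dominating $\Gamma$-limit quotient produced in Theorem~\ref{maximal elements}(1); this is true (each condition is either injectivity on a fixed subgroup or surjection onto a non-virtually-abelian image, and both are inherited by a dominating quotient), but it has to be said. Nor does the paper need to sort the non-strict homomorphisms into finitely many canonical collapsing quotients: the single object $R_{j_0}$ absorbs all failures of strictness along a given tail at once. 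What your approach would buy, if completed, is a construction that does not presuppose the full MR diagram and is closer in spirit to how one proves Proposition~\ref{prop1.29}; what the paper's approach buys is a much shorter argument that leverages Theorem~\ref{MRdiagram} wholesale and avoids the case analysis you correctly flag as the hardest part.
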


\begin{proof}
Let $U$ be the collection of all MR resolutions that appear in the canonical MR diagram of the restricted $\Gamma$-limit group $L$. Each resolution in $U$, which is a strict resolution is taken to be one of the resolutions in our new collection. By Proposition \ref{prop1.29}, U contains at least one strict resolution. Each non-strict resolution in $U$ will be replaced by a finite collection of strict resolutions either of $L$ or of a quotient of it in the following way:
Let 
$$\Res(L):\ L=G_0\xrightarrow{\eta_0} G_1\xrightarrow{\eta_1}G_2\rightarrow \cdots\xrightarrow{\eta_{m-1}}G_m=\pi_1(\D)$$
be a resolution in the MR diagram of $L$, which is not a strict resolution. For each level $j\in\{1,\ldots,m-1\}$ let $R_j$ be the quotient group of $G_j$ obtained from the collection of homomorphisms $G_j\to\Gamma$, that factor through the part $G_j\xrightarrow{\eta_j}G_{j+1}\rightarrow \cdots\xrightarrow{\eta_{m-1}}G_m$ of $\Res(L)$, i.e. the quotient of $G_j$ by the subgroup generated by all elements which lie in the kernel of all such homomorphisms.\\
Since $\Res(L)$ is not a strict resolution at least for some level $j$, $R_j$ is a proper quotient of $G_j$ with corresponding quotient map $\tau_j$. Denote by $R_j^1,\ldots, R_j^m$ the finitely many maximal $\Gamma$-limit quotients of $R_j$ that appear on the level below $R_j$ in the MR diagram of $R_j$. We assume without loss of generality that $R_j$ is already a $\Gamma$-limit group (if this is not the case we proceed in the following with the finitely many $\Gamma$-limit quotients $R_j^1,\ldots, R_j^m$). Let $j_0$ be the highest integer $j$ (i.e. the lowest level in figure \ref{fig7}) for which $R_{j_0}$ is a proper quotient of $G_{j_0}$. We replace the resolution $\Res(L)$ by finitely many resolutions obtained in the following way:
\begin{enumerate}[(1)]
\item The top part of the new resolutions is identical with the top part of the resolution $\Res(L)$ starting at $j_0-1$ and above.
\item The $\Gamma$-limit group at level $j_0$ in all the new resolutions is equal to $R_{j_0}$ and the corresponding (new) epimorphism $\eta'_{j_0-1}: G_{j_0-1}\to R_{j_0}$ is equal to $\tau_{j_0}\circ\eta_{j_0-1}$.
\item Each of the new resolutions continues after the level $j_0$ along one of the resolutions that appear in the MR diagram of the restricted $\Gamma$-limit group $R_{j_0}$.
\end{enumerate}
\begin{figure}[htbp]
\centering
\includegraphics{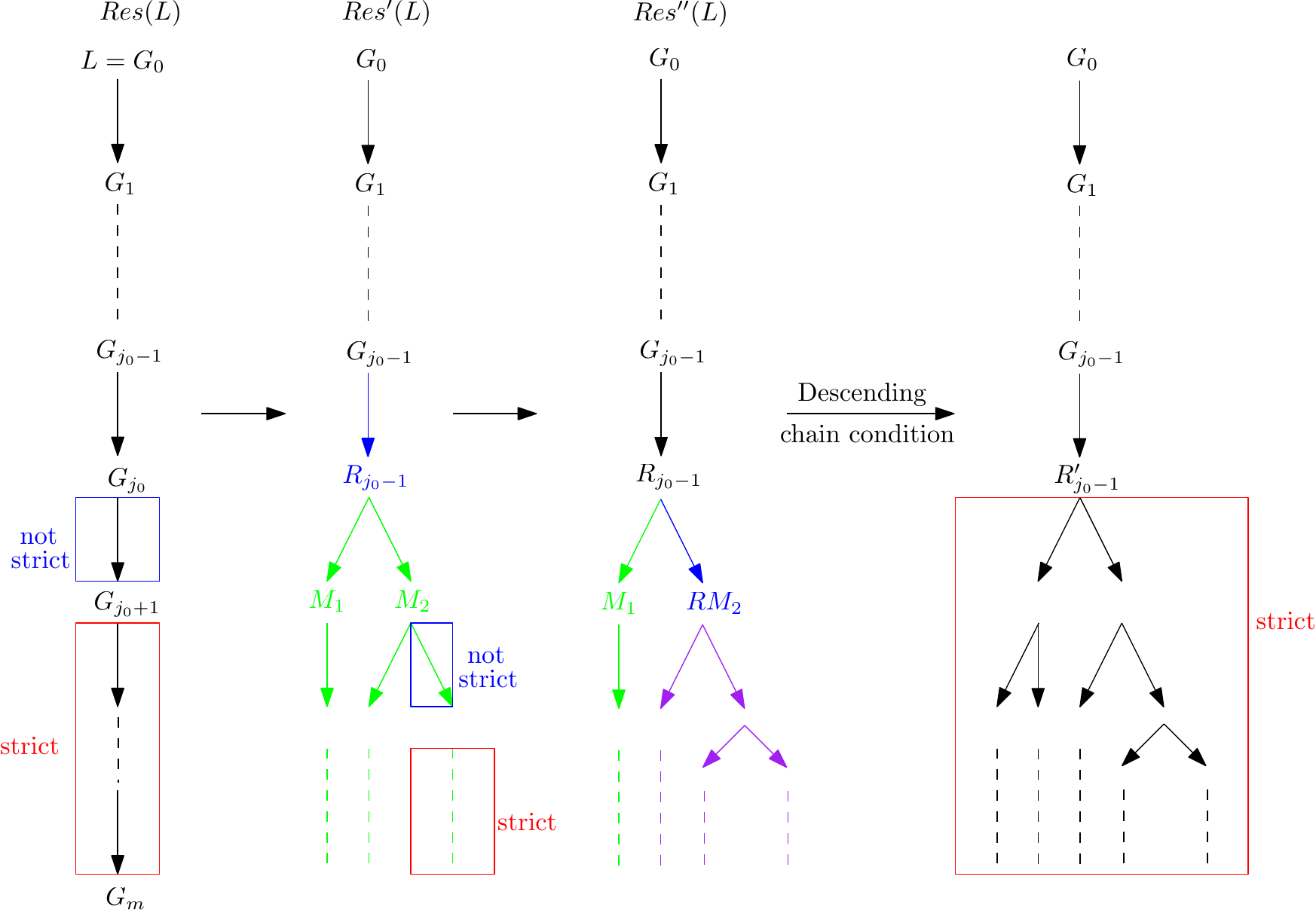}
\caption{Constructing strict resolutions}
\label{fig7}
\end{figure}
Hence every new resolution is of the form
$$L=G_0\xrightarrow{\eta_0} \cdots\xrightarrow{\eta_{j_0-2}}G_{j_0-1}\xrightarrow{\tau_{j_0}\circ\eta_{j_0-1}}R_{j_0}\xrightarrow{\pi_0} M_1\xrightarrow{\pi_1}M_2\rightarrow\cdots\xrightarrow{\pi_{l-1}}M_l=\pi_1(\D')$$
where 
$$R_{j_0}\xrightarrow{\pi_0} M_1\xrightarrow{\pi_1}M_2\rightarrow\cdots\xrightarrow{\pi_{l-1}}M_l$$
is a MR resolution in the MR diagram of $R_{j_0}$.\\
Clearly every homomorphism from $L$ to $\Gamma$ that factors through $\Res(L)$, factors through (at least) one of the new resolutions.
Any strict resolution from the obtained collection of new resolutions is taken to be a resolution in our collection of strict resolutions. If one of the new obtained resolutions is not strict we replace it by finitely many new resolutions obtained by the same procedure as before.\\
Whenever we replace a $\Gamma$-limit group which lies on a lower level then $j_0$ (i.e. on some level $j>j_0$) in the new resolutions with its proper quotient $R_j$, then $R_j$ is clearly also a proper quotient of $G_{j_0}$.
Hence by the descending chain condition (Proposition \ref{descendingchain}), this can happen only finitely many times.
Therefore after finitely many steps all of the new resolutions are strict resolutions below level $j_{0}-1$ (see figure \ref{fig7}) and hence the procedure described above terminates after finitely many steps. By construction after the termination of the procedure every obtained resolution is a strict MR resolution and every homomorphism from $L$ to $\Gamma$ factors through at least one of these strict resolutions.
\end{proof}

\begin{definition}
We call the collection of strict MR resolutions constructed in Proposition \ref{prop101} the \textnormal{strict Makanin-Razborov diagram} of the restricted $\Gamma$-limit group $L$.
\end{definition}

We now introduce some notation:\\
Let $\Gamma=\langle a_1,\ldots,a_k\rangle$ be a hyperbolic group. Let $L$ be a restricted $\Gamma$-limit group and let
$$\Res(L):\ L=L_0\xrightarrow{\eta_0} L_1\xrightarrow{\eta_1}L_2\rightarrow \cdots\xrightarrow{\eta_{l-1}}L_l=\pi_1(\D)$$
be a strict MR resolution of $L$, where $\pi_1(\D)$ is a Dunwoody decomposition of $L_l$, such that every vertex group is isomorphic to a subgroup of $\Gamma$. Each restricted $\Gamma$-limit group $L_i$ that appears along the resolution $\Res(L)$, admits an associated restricted Dunwoody decomposition $\U^{(i)}$ with underlying graph $U^{(i)}$. \\
Denote by $R_1^{0},\ldots,R_{k_0}^{0}$ the vertex groups of $\U^{(0)}$. Let $Q_1^0,\ldots,Q_{s(0)}^0$ be the QH vertex groups in the various virtually abelian JSJ decompositions $\A_j^0$ of the vertex groups $R_j^0$. If $Q_t^0$ is contained in $R_j^0$, let $\B^0_{t}$ be the graph of groups obtained from $\A_j^0$ by collapsing all edges which are not connected to the QH vertex group $Q_t^0$. In particular all edge groups in $\B^0_t$ are virtually cyclic.
We say that the \textit{circumference} of the QH subgroup $Q^0_t$, denoted by $\CF(Q_t^0)$, is the subgroup of $R_j^0$ generated by $Q_t^0$ and the Bass-Serre generators in $\pi_1(B_t^0)$ (i.e. the generators of the fundamental group of the underlying graph).\\
We set $\h_0$ to be the empty subgraph of groups of $\U^{(0)}$ and for all $i\in\{1,\ldots,l\}$ we define the following (possibly trivial, possibly disconnected) subgraphs $\h_i\subset \U^{(i)}$ inductively as follows:\\ 
For all $i\in\{1,\ldots,l\}$ denote by $A_1^{i-1},\ldots A_{k(i-1)}^{i-1}$ the vertex groups of $\U^{(i-1)}$ which are either infinite virtually abelian or finite and by $O_1^{i-1},\ldots,O_{u(i-1)}^{i-1}$ the vertex groups of $\U^{(i-1)}$ which are finite-by-(closed) orbifold groups. We call such vertex groups \textit{isolated virtually abelian} (respectively \textit{isolated QH}) vertex groups.\\
For $i=1$:\\
For every QH vertex group $Q_t^0$ defined above, let $\mathbb{M}_t$ be the  decomposition along finite subgroups of $\eta_0(CF(Q_t^0))$ inherited from the restricted Dunwoody decomposition $\U^{(1)}$ of $L_1$. Then there exist vertex groups of $\mathbb{M}_t$, denoted by $H_{t_1}^0,\ldots,H_{t_b}^0$, which do not contain any conjugate of the image $\eta_0(P)$ of a peripheral subgroup $P$ of $Q_t^0$ (recall that peripheral subgroups correspond to boundary components of the underlying orbifold), while all other vertex groups, say $V_1,\ldots,V_s$, of $\mathbb{M}_t$ do contain a conjugate of the image of a peripheral subgroup of $Q_t^0$. Moreover the image of every peripheral subgroup of $Q_t^0$ can be conjugated into one of the vertex groups $V_1,\ldots,V_s$. To simplify notation we assume that there exists only one vertex group in $\mathbb{M}_t$, say $H_{t}^0$, which does not contain any conjugate of the image $\eta_0(P)$ of a peripheral subgroup $P$ of $Q_t^0$.\\
Let now $\D^{(1)}$ be a Dunwoody decomposition of $L_1$ relative 
$$\{\eta_{0}(A_1^0),\ldots,\eta_{0}(A_{k(0)}^0),\eta_{0}(O_1^0),\ldots,\eta_{0}(O_{u(0)}^0),H_1^0,\ldots H_{s(0)}^0\}$$
and denote the subgraph of $\D^{(1)}$ consisting of the collection of vertices with vertex groups which contain (at least) one of the groups
$$\eta_{0}(A_1^0),\ldots,\eta_{0}(A_{k(0)}^0),\eta_{0}(O_1^0),\ldots,\eta_{0}(O_{u(0)}^0),H_1^0,\ldots H_{s(0)}^0,$$
together with the edges connecting two such vertices, by $\h_1$. Denote the other vertex groups of $\D^{(1)}$ by $R_1^{1},\ldots,R_{k_1}^{1}$.\\
For $i\in\{2,\ldots,l\}$:\\
Let $Q_1^{i-1},\ldots,Q_{s(i-1)}^{i-1}$ be the QH vertex groups in the various virtually abelian JSJ decompositions $\A_j^{i-1}$ of the vertex groups $R_j^{i-1}$. If $Q_t^{i-1}$ is contained in $R_j^{i-1}$, let $\B^{i-1}_{t}$ be the graph of groups obtained from $\A_j^{i-1}$ by collapsing all edges which are not connected to the QH vertex group $Q_t^{i-1}$. In particular all edge groups in $\B^{i-1}_t$ are virtually cyclic.
We say that the \textit{circumference} of the QH subgroup $Q^{i-1}_t$, denoted by $CF(Q_t^{i-1})$, is the subgroup of $R_j^{i-1}$ generated by $Q_t^{i-1}$ and the Bass-Serre generators in $\pi_1(B_t^{i-1})$ (i.e. the generators of the fundamental group of the underlying graph).\\
For every QH vertex group $Q_t^{i-1}$ defined above, let $\mathbb{M}_t$ be the decomposition along finite subgroups of $\eta_{i-1}(CF(Q_t^{i-1}))$ inherited from the restricted Dunwoody decomposition $\U^{(i)}$ of $L_i$. Then there exist vertex groups in $\mathbb{M}_t$, denoted by $H_{t_1}^{i-1},\ldots,H_{t_b}^{i-1}$, which do not contain any conjugate of the image $\eta_{i-1}(P)$ of a peripheral subgroup $P$ of $Q_t^{i-1}$, while all other vertex groups, say $V_1,\ldots,V_s$, of $\mathbb{M}_t$ do contain a conjugate of the image of a peripheral subgroup of $Q_t^{i-1}$. Moreover the image of every peripheral subgroup of $Q_t^{i-1}$ can be conjugated into one of the vertex groups $V_1,\ldots,V_s$. Again to simplify notation we assume that there exists only one vertex group in $\mathbb{M}_t$, say $H_{t}^{i-1}$, which does not contain any conjugate of the image $\eta_{i-1}(P)$ of a peripheral subgroup $P$ of $Q_t^{i-1}$.\\
Denote the vertex groups of $\h_{i-1}$ by $K_1^{i-1},\ldots,K_{q(i-1)}^{i-1}$ and let now $\D^{(i)}$ be a Dunwoody decomposition of $L_i$ relative 
\begin{align*}
 \{&\eta_{i-1}(A_1^{i-1}),\ldots,\eta_{i-1}(A_{k(i-1)}^{i-1}),
 \eta_{i-1}(O_1^{i-1}),\ldots,\eta_{i-1}(O_{u(i-1)}^{i-1}),\\
& H_1^0,\ldots H_{s(i-1)}^0,
 \eta_{i-1}(K_1^{i-1}),\ldots,\eta_{i-1}(K_{q(i-1)}^{i-1})\}
\end{align*}
and denote the subgraph of $\D^{(i)}$ consisting of the collection of vertices with vertex groups which contain (at least) one of the groups
\begin{align*}
 &\eta_{i-1}(A_1^{i-1}),\ldots,\eta_{i-1}(A_{k(i-1)}^{i-1}),
 \eta_{i-1}(O_1^{i-1}),\ldots,\eta_{i-1}(O_{u(i-1)}^{i-1}),\\
& H_1^0,\ldots H_{s(i-1)}^0,
 \eta_{i-1}(K_1^{i-1}),\ldots,\eta_{i-1}(K_{q(i-1)}^{i-1}),
\end{align*}
together with edges connecting two such vertices, by $\h_i$. Denote the other vertex groups of $\D^{(i)}$ by $R_1^{i},\ldots,R_{k_i}^{i}$.\\

So let us briefly recall the setting we are in: $\Gamma=\langle a_1,\ldots,a_k\rangle$ is as always a hyperbolic group. Let $L$ be a restricted $\Gamma$-limit group and let
$$\Res(L):\ L=L_0\xrightarrow{\eta_0} L_1\xrightarrow{\eta_1}L_2\rightarrow \cdots\xrightarrow{\eta_{l-1}}L_l=\pi_1(\D)$$
be a strict MR resolution of $L$, where $\pi_1(\D)$ is a Dunwoody decomposition of $L_l$, such that every vertex group is isomorphic to a subgroup of $\Gamma$.\\
With each restricted $\Gamma$-limit group $L_i$ that appears along the resolution $\Res(L)$, we have constructed an associated (restricted) decomposition $\D^{(i)}$ with finite edge groups, such that every vertex group $V$ of $\D^{(i)}$ either belongs to $\{R_1^{i},\ldots,R_{k_i}^{i}\}$ or to $\h_i$ and these two sets are disjoint as (not necessarily connected)  subgraphs of $\D^{(i)}$. Moreover $L_i=\pi_1(\D^{(i)})$ and the coefficient group $\Gamma$ is contained in one of the vertex groups $R_j^{i}$. With each factor $R_j^{i}$ there is an associated restricted virtually abelian JSJ decomposition $\A_j^i$. 
While reading the following definition we encourage the reader to look at Figure \ref{wellstructured}.

\begin{definition}\label{def well-structured} 
Let $\Res(L)$ be a strict MR resolution of a $\Gamma$-limit group $L$ as above. We call $\Res(L)$ a \textnormal{well-structured resolution} if the following conditions hold for all $i$:
\begin{enumerate}[(1)]
\item $\D^{(i+1)}$ is a refinement of $\D^{(i)}$. In particular the fundamental group $\pi_1(D^{(i)})$ of the underlying graph of $\D^{(i)}$ gets mapped injectively into $\pi_1(D^{(i+1)})$ and the respective edge groups of $\D^{(i)}$ and their images in $\D^{(i+1)}$ are isomorphic.
\item The subgraph $\h_i$ of $\D^{(i)}$ gets mapped isomorphically to a subgraph of $\h_{i+1}$ in $\D^{(i+1)}$.
\item If the factor $R_j^i$ is neither a finite-by-(closed) orbifold group nor a (infinite or finite) virtually abelian group, then $\eta_i(R_j^i)$ has a Dunwoody decomposition $\B$, such that $\B$ is a connected subgraph of groups of $\D^{(i+1)}$, i.e. the vertex and edge groups coincide. This means the vertex groups of $\B$ are isomorphic to (possibly) some of the $R_j^{(i+1)}$ and some of the vertex groups of $\h_{i+1}$. 
\item Let $j\neq j'$. Then $R_j^i$ and $R_{j'}^i$ get mapped onto fundamental groups of distinct (and disjoint) subgraphs of groups of $\D^{(i+1)}$ and these fundamental groups have finite intersection with the image of the vertex groups of $\h_i$ under $\eta_i$. Moreover these subgraphs do not contain any edge of $ED^{(i)}$, where we view $ED^{(i)}$ as a subset of $ED ^{(i+1)}$ (since $\D^{(i+1)}$ is a refinement of $\D^{(i)}$).
\item Since by assumption $\Res(L)$ is a strict MR resolution, $\eta_i$ maps every rigid vertex group and every edge group in $\A_j^i$ monomorphically into $L_{i+1}$ for all $j$. Moreover the image of a QH vertex group under $\eta_i$ is non-virtually abelian. 
%If the surface underlying a QH vertex group has Euler characteristic at most $-2$ or is a once-punctured torus, then the image in $L_{i+1}$ of the fundamental group of this surface is non-abelian.
\item Let $A_1^i,\ldots,A_{q(i)}^i$ be the (infinite or finite) isolated virtually abelian vertex groups among the $R_j^i$'s in the Dunwoody decomposition of $L_i$. Then $C_u^i:=\eta_i(A_u^i)$ is an (infinite or finite) virtually cyclic vertex group of $\h_{i+1}$ and $C_1^i,\ldots,C_{q(i)}$ have pairwise finite intersection with each other and with the image of every vertex group of $\h_i$.
\item Let $S^i_1,\ldots, S^i_{d(i)}$ be the isolated (i.e. finite-by-closed) orbifold groups among the $R_j^i$'s. Then for all $k\in\{1,\ldots,d(i)\}$, 
$K_k^i:=\eta_i(S_k^i)$ has a decomposition as a subgraph of groups of $\h_{i+1}$ and 
$K_1^i,\ldots,K_{d(i)}^i$ have pairwise finite intersection with each other, with $C_1^i,\ldots,C_{q(i)}^i$ and with the image of every vertex group of $\h_i$.
\item Let $Q_1^i,\ldots,Q_{s(i)}^i$ be the QH vertex groups in the various abelian JSJ-decompositions $\A_j^i$ of the vertex groups $R_j^i$. 
For every QH vertex group $Q_t^i$ let $\mathbb{M}_t$ be the decomposition along finite subgroups of $\eta_i(CF(Q_t^i))$ inherited from $\D^{(i+1)}$ such that several vertex groups of $\mathbb{M}_t$, denoted by $H_{t_1}^i,\ldots,H_{t_b}^i$, do not contain any conjugate of the image $\eta_i(P)$ of a peripheral subgroup $P$ of $Q_t^i$, while all other vertex groups, say $V_1,\ldots,V_{m(t)}$, of $\mathbb{M}_t$ do contain a conjugate of the image of a peripheral subgroup of $Q_t^i$. Moreover the image of every peripheral subgroup of $Q_t^i$ can be conjugated into one of the vertex groups $V_1,\ldots,V_{m(t)}$. Again to simplify notation we assume that there exists only one vertex group in $\mathbb{M}_t$, say $H_{t}^{i}$, which does not contain any conjugate of the image $\eta_{i}(P)$ of a peripheral subgroup $P$ of $Q_t^{i}$.\\
Then
\begin{enumerate}[(a)]
\item $H_1^i,\ldots,H_{s(i)}^i$ are vertex groups of $\h_{i+1}$.
\item $H_1^i,\ldots,H_{s(i)}^i$ have pairwise finite intersection with each other, with $K_1^i,\ldots,K_{d(i)}^i$, $C_1^i,\ldots,C_{q(i)}^i$ and with the image of every vertex group of $\h_i$.
\item $V_1,\ldots, V_{m(t)}\leq \langle R_1^{i+1},\ldots,R_{q(i+1)}^{i+1},\pi_1(D_{i+1})\rangle.$
\end{enumerate} 
\item Denote by $U_1^i,\ldots, U_{f(i)}^i$ the images of vertex groups of $\h_i$ under $\eta_i$. Then a group $X$ is a vertex group of $\h_{i+1}$ if and only if 
$$X\in\{H_1^i,\ldots,H_{s(i)}^i,K_1^i,\ldots,K_{d(i)}^i, C_1^i,\ldots,C_{q(i)}^i, U^i_1,\ldots, U^i_{f(i)}\}.$$
\end{enumerate}
\end{definition}

\begin{figure}[htbp]
\centering
\includegraphics[scale=1.1]{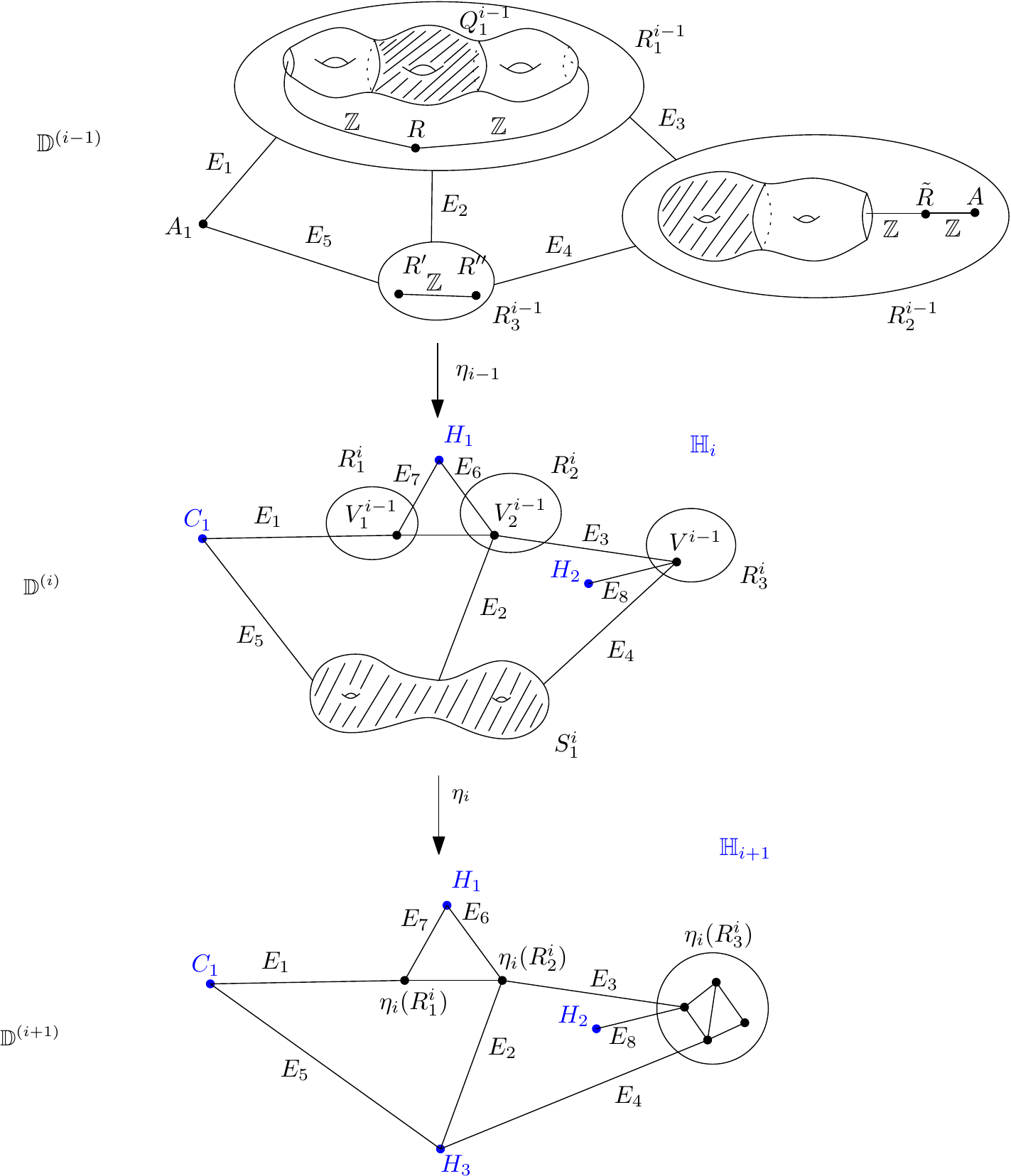}
\caption{A well-structured resolution of a $\Gamma$-limit group}
\label{wellstructured}
\end{figure}

\begin{bem}
It follows from \cite{rewe} that every resolution of a $\Gamma$-limit group which is constructed from a descending chain of maximal shortening quotients is a well-structured resolution. In fact the well-structured resolution is defined in order to capture the properties of a descending chain of shortening quotients.
It follows that every resolution in the strict MR diagram constructed in Proposition \ref{prop101} is well-structured.
\end{bem}

Later we will need an additional property to hold for our well-structured resolution.

\begin{definition}
We call a well-structured MR resolution
$$\Res(L):\ L=L_0\xrightarrow{\eta_0} L_1\xrightarrow{\eta_1}L_2\rightarrow \cdots\xrightarrow{\eta_{l-1}}L_l=\pi_1(\D)$$
of a restricted $\Gamma$-limit group $L$ \textnormal{good} if there exists a homomorphism $\vp: L\to\Gamma$ that factors through $\Res(L)$, such that the centralizer in $\Gamma$ of the image of every edge group of a Dunwoody decomposition $\D$ of $L_l$ contains a quasi-convex non-abelian free group.
\end{definition}
 
\subsection{$\Gamma$-limit tower}
Recall that a subgroup of a group $G$ is called a QH subgroup if it is conjugate to a vertex group $A_v$ of some splitting $\A$ of $G$ such that the following hold:
\begin{enumerate}[(a)]
\item $A_v$ is finite-by-orbifold, i.e. there exists an orbifold $\mathcal{O}$, some finite group $E$ and a short exact sequence 
$$1\to E\to A_v\xrightarrow{\pi} \pi_1(\mO)\to 1.$$
\item For any edge $e\in EA$, such that $\alpha(e)=v$ there exists a peripheral subgroup $O_e$ of $\pi_1(\mO)$ such that $\alpha_e(A_e)$ is in $A_v$ conjugate to a finite index subgroup of $\pi^{-1}(O_e)$.
\end{enumerate}

\begin{definition}
Let $G$ be a group and $H$ be a subgroup of $G$.
\begin{enumerate}[(a)]
\item $G$ has the structure of a \textnormal{virtually abelian flat over $H$}, if $G$ is the amalgamated product $H\ast_{A'}A$, where $A$ is virtually abelian and $A'$ is either an infinite maximal virtually abelian subgroup of $H$ or finite, and if $A'$ is infinite then there exists a retraction $\pi:G\to H$.
%\item $G$ has the structure of a \textnormal{hyperbolic floor over $H$} if $G$ admits a decomposition as a graph of groups $\A$ such that
%\begin{itemize}
%\item the set of vertices $VA$ is partitioned in two sets $V_1$ and $V_2$, where all vertex groups of vertices in $V_1$ are QH-subgroups in $\A$, and the underlying surfaces of the corresponding orbifolds are either once punctured tori or have Euler characteristic at most $-2$.
%\item Every edge $e\in EA$ is either of the form $e=(v_1,v_2)$ for some $v_1\in V_1$ and $v_2\in V_2$ or of the form $e=(v_2,v'_2)$ for some $v_2,v'_2\in V_2$. For any edge $e$ of the first type such that $\alpha(e)=v\in V_1$ and $\mathcal{O}$ is the orbifold corresponding to $v$, there exists a peripheral subgroup $O_e$ of $\pi_1(\mathcal{O})$ such that $\alpha_e(A_e)$ is in $A_v$ conjugate to a finite index subgroup of $\pi^{-1}(O_e)$, where $\pi:A_v\to\pi_1(\mathcal{O})$ denotes the canonical projection. Edges of the second type have finite edge groups.
%\item $H$ is the fundamental group of the subgraph of groups corresponding to $V_2$.
%\item Either there exists a retract $r:G\to H$, that for every $v\in V_1$, sends $A_v$ to a non-virtually abelian image, or $H$ is virtually cyclic and there exists a finite group $E$, a virtually cyclic group $Z$ and a retraction $r':G\ast Z\to H\ast_E Z$ which, for every $v\in V_1$ sends $A_v$ to a non-virtually abelian image.
%\end{itemize}
\item $G$ has the structure of an \textnormal{orbifold flat over $H$} if $G$ admits a decomposition as a graph of groups $\A$ such that
\begin{itemize}
\item There exists a vertex $v\in VA$ with a QH subgroup $Q=A_v$ as vertex group, which fits into a short exact sequence 
$$1\to E\to Q\xrightarrow{\pi} \pi_1(\mO)\to 1.$$
\item Every edge $e\in EA$ is of the form $e=(v,u)$ for some $u\in V\setminus \{v\}$ and for every edge $e$  with $\alpha(e)=v$, either there exists a peripheral subgroup $O_e$ of $\pi_1(\mathcal{O})$ such that $\alpha_e(A_e)$ is in $A_v$ conjugate to a finite index subgroup of $\pi^{-1}(O_e)$, or $A_e$ is finite. Moreover every boundary component of $\mO$ corresponds to an edge in such a way.
\item $H$ admits a decomposition as a graph of groups along finite subgroups, such that the vertex groups of this decomposition are precisely the vertex groups of the vertices in $VA\setminus\{v\}$.
\end{itemize}
\end{enumerate}
\end{definition}

%Clearly the following lemma is true:
%\begin{lemma}
%Suppose $G$ has the structure of a hyperbolic floor over $H$. Then there exists a finite sequence 
%$$G=G^n\geq G^{n-1}\geq\ldots\geq G^0=H$$ 
%such that, for each $0\leq i\leq n-1$, $G^{i+1}$ has the structure of an orbifold flat over $G^i$. 
%\end{lemma}

\begin{definition}
A group $G$ has the structure of a \textnormal{tower} of height $n\in\N$ over a subgroup $H$ if there exists a sequence
$$G=G^n\geq G^{n-1}\geq\ldots\geq G^0=H,$$
such that for each $0\leq i\leq n-1$, one of the following holds:
\begin{itemize}
\item $G^{i+1}$ is an HNN-extension of $G^i$ along some finite group.
\item $G^{i+1}$ is the amalgamated product along a finite group of $G^i$ with a group $K$ which is  isomorphic to a subgroup of $H$.
\item $G^{i+1}$ has the structure of an orbifold flat over $G^i$ and $H$ is contained in one of the vertex groups of the corresponding graph of groups decomposition of $G^i$.
\item $G^{i+1}$ has the structure of a virtually abelian flat $G^{i}\ast_{A_1}A$ over $G^i$ and if $A_1$ is infinite, it cannot be conjugated into a virtually abelian group which corresponds to a virtually abelian flat which is lower in the tower hierarchy.
\end{itemize} 
\end{definition}

\begin{bem}
\begin{enumerate}[(1)]
\item The case that $G^{i+1}$ is a product along a finite group with either a virtually abelian group or with an extension of a closed orbifold group is covered by the virtually abelian/orbifold flat cases.
\item Our definition of a tower is solely designed to describe the completion of a well-structured resolution defined later. We chose this name because in the case of limit groups over free and torsion free hyperbolic groups, the completion of a resolution is always a tower in the classical sense. We refer the reader to \cite{sklinos} for a detailed discussion on towers over free groups.
\item In the torsion free case there always exists a retraction from $G^{i+1}$ onto $G^i$, since for the orbifold flat there exists one by definition and in the other cases there always exists a canonical one. 
In the presence of torsion we do not claim that such a retraction always exist.
\end{enumerate}
\end{bem}

\begin{definition}
Suppose $G$ has the structure of a tower $G=G^n\geq G^{n-1}\geq\ldots\geq G^0=\Gamma$ over $\Gamma$. Then we say that $G$ has the structure of a \textnormal{$\Gamma$-limit tower} if $\Gamma$ is a hyperbolic group and $G_i$ is a $\Gamma$-limit group for all $i\in\{1,\ldots,n\}$.
\end{definition}

\section{Completions of Resolutions}\label{7}
The aim of this section is to show, that every restricted $\Gamma$-limit group $L$ can be embedded into a larger group, called its completion, which is a $\Gamma$-limit tower. The original paper of Sela (\cite{sela}) contains the construction of the completion over free groups but is lacking a proof of the embeddability. In \cite{sklinos} a slightly different construction together with a complete proof of the embeddability appears, but it only covers the case of  minimal-rank well-structured resolutions over free groups (this means that all limit groups which appear along the resolution are one-ended relative to the coefficient free group).\\
We follow the approach of the authors of \cite{sklinos} but have to generalize their ideas in several ways in order to deal with general resolutions over hyperbolic groups and in particular with the various problems caused by the presence of torsion.\\

So let us briefly recall the setting we are in: $\Gamma=\langle a_1,\ldots,a_k\rangle$ is as always a hyperbolic group. Let $L$ be a restricted $\Gamma$-limit group and let
$$\Res(L):\ L=L_0\xrightarrow{\eta_0} L_1\xrightarrow{\eta_1}L_2\rightarrow \cdots\xrightarrow{\eta_{l-1}}L_l=\pi_1(\D)$$
be a well-structured MR resolution of $L$, where $\pi_1(\D)$ is a Dunwoody decomposition of $L_l$, such that every vertex group is isomorphic to a subgroup of $\Gamma$.\\
With each restricted $\Gamma$-limit group $L_i$ that appears along the resolution $\Res(L)$, we have constructed (see Definition \ref{def well-structured}) an associated (restricted) decomposition $\D^{(i)}$ along finite edge groups, such that every vertex group $V$ of $\D^{(i)}$ either belongs to $\{R_1^{i},\ldots,R_{k_i}^{i}\}$ or to $\h_i$ and these two sets are disjoint as (not necessarily connected)  subgraphs of $\D^{(i)}$. Moreover $L_i=\pi_1(\D^{(i)})$ and the coefficient group $\Gamma$ is contained in one of the vertex groups $R_j^{i}$. With each factor $R_j^{i}$ there is an associated restricted (possibly trivial) virtually abelian JSJ decomposition $\A_j^i$.

\subsection{The construction}
We begin with a simple observation how the virtually abelian JSJ decomposition of a one-ended $\Gamma$-limit group can be modified, such that the construction of its completion will be simplified later.\\
We first recall some definitions. Let $\A$ be the virtually abelian JSJ decomposition of a one-ended $\Gamma$-limit group $L$. Let $A_v$ be a virtually abelian vertex group in $\A$ and let $\Delta$ be the set of homomorphisms $\eta: A_v^+\to \Z$ such that $\eta(\alpha_e(A^+_e))=0$ for all $e\in EA$ with $\alpha(e)=v$. We then define the \textit{peripheral subgroup of $A^+_v$} as $$P(A^+_v)=\{g\in A^+_v\ |\ \eta(g)=0 \text{ for all } \eta\in\Delta\}.$$
We call every subgroup $U\leq A_v$ with $U^+=P(A^+_v)$ a \textit{peripheral subgroup of $A_v$}. Note that the peripheral subgroup of $A^+_v$ is uniquely defined, while the peripheral subgroup of $A_v$ is not. For every edge $e\in EA$ with $\alpha(e)=v$ we denote a peripheral subgroup of $A_v$ containing $A_e$ by $P(A_v,A_e)$.\\
Recall that the envelope $E(R)$ of a rigid vertex group $R$ is the fundamental group of a graph of groups $\B$ whose underlying graph is a star with one vertex $v$ with vertex group $R$ connected to all other vertices via an edge of the form $(v,u)$ for some $u\in VB\setminus \{v\}$. Edges in $\B$ are in 1-to-1 correspondence to edges adjacent to $R$ in $\A$ and the edge groups are isomorphic. Let now $e=(v,u)$ be an edge in $\B$ and $e'=(v',u')$ the corresponding edge in $\A$ adjacent to $R$ (where $R$ is the rigid vertex group of $v'$). If $A_{u'}$ is rigid or a QH vertex group, then $B_u$ is the maximal virtually abelian subgroup of $A_{u'}$ containing $A_{e'}$. If $A_{u'}$ is a virtually abelian vertex group, then $B_u$ is the peripheral subgroup of $A_{u'}$ containing $A_{e'}$.\\
Note that in a strict resolution the envelopes of rigid vertex groups and the peripheral subgroups of virtually abelian vertex groups get mapped injectively by the canonical quotient maps.

\begin{lemma}\label{changes}
Let $L$ be a one-ended $\Gamma$-limit group and $\A$ be a virtually abelian JSJ decomposition of $L$. Then we can enlarge every rigid vertex group $R$ to (a subgroup of) its envelope, such that moreover all adjacent edge groups, which connect $R$ to another rigid vertex are maximal finite-by-abelian in it. In addition all edge groups connecting a rigid vertex to a virtually abelian vertex are maximal virtually abelian in the rigid vertex group.
\end{lemma}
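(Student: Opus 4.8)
The plan is to transform the given JSJ decomposition $\A$ by a finite sequence of moves that do not change the isomorphism type of $L=\pi_1(\A)$ --- foldings and unfoldings in the sense of Definition \ref{defunfolding}, together with edge slides (for which we refer to \cite{rewe}) --- and to arrange that after these moves every rigid vertex group has been replaced by a subgroup of its envelope with the asserted maximality of adjacent edge groups. I would work one edge at a time. Fix a rigid vertex $v$, put $R=A_v$, and let $e=(v,u)$ be an adjacent edge with edge group $A_e$. Let $B_u\le A_u$ be the subgroup that occurs in the definition of the envelope $E(R)$: the maximal virtually abelian subgroup of $A_u$ containing (the image of) $A_e$ when $A_u$ is rigid or a QH vertex group, and the peripheral subgroup $P(A_u,A_e)$ (Definition \ref{p+}) when $A_u$ is virtually abelian. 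Folding $A_u$ into $R$ along $e$ --- a fold along $e^{-1}$ with the group $B_u$ in the role of $C$ --- replaces $R$ by $\bar R:=B_u\ast_{A_e}R$, replaces the edge group $A_e$ by $B_u$, leaves $A_u$ untouched, and preserves $\pi_1$. By construction $\bar R\le E(R)$; and running over all edges at $v$, the resulting $\bar R$ is the fundamental group of the corresponding iterated sub-graph-of-groups, because in the Bass--Serre tree of $\A$ one has $R\cap A_u=A_e$ for each adjacent $u$, so the natural map from the amalgam is injective.

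The substantive point is that the new edge group is then maximal of the required kind in $\bar R$, and here I would use that $\Gamma$-limit groups are $N(\Gamma)$-CSA (the corollary following Proposition \ref{abe2}), so that maximal virtually abelian subgroups are unique and almost malnormal (Proposition \ref{abe2}, Lemma \ref{abelian}). Suppose $A_u$ is virtually abelian, so the new edge group is $B_u=P(A_u,A_e)$. Any virtually abelian subgroup of $\bar R=B_u\ast_{A_e}R$ strictly containing $B_u$ contains an element centralising a finite-index subgroup of $B_u^{+}$, hence, $B_u$ being infinite, lies in the unique maximal virtually abelian subgroup $M$ of $L$ through $B_u$; but $M\cap\bar R=B_u$, since $M\cap R$ is the maximal virtually abelian subgroup of $R$ containing $A_e$ and the subgroup $\langle B_u,M\cap R\rangle$ --- a non-trivial amalgam over $A_e$ --- could not sit inside the virtually abelian group $M$ unless $A_e$ is already virtually maximal in $R$ (which I would ensure by a preliminary normalisation of $\A$ in the style of \cite{rewe}). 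Hence $B_u$ is maximal virtually abelian in $\bar R$. When $A_u$ is rigid, the same CSA argument applied on both sides of $e$ shows the edge group between the two enlarged rigid vertices is a maximal virtually abelian subgroup $M$ of each; passing to $M^{+}$ (Definition \ref{p+}) makes it maximal finite-by-abelian, because a finite-by-abelian subgroup of $\bar R$ strictly containing $M^{+}$ would be infinite virtually abelian, so contained in $M$, so equal to $M$ by $|M:M^{+}|\le 2$ --- forcing $M$ itself to be finite-by-abelian, a contradiction.

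I expect the main obstacle to be the bookkeeping required to run these local moves consistently over the whole graph, especially along an edge $e$ joining two rigid vertices $R_1,R_2$: one cannot replace both by their full envelopes, since $E(R_1)$ and $E(R_2)$ would each absorb the maximal virtually abelian subgroup sitting on $e$ and $\pi_1$ would grow --- which is precisely why the Lemma only claims enlargement to \emph{a subgroup} of the envelope. The resolution is again the CSA observation: along a rigid--rigid edge the edge group cannot fail to be virtually maximal in \emph{both} endpoints, since that would exhibit inside a single maximal virtually abelian subgroup of $L$ a non-trivial amalgam of two maximal virtually abelian subgroups over $A_e$; so at most one side needs genuine enlargement and the two sides are compatible, with the remaining side handled by a small unfolding that exposes the virtually abelian subgroup of the rigid vertex through $A_e$. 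One must then check that performing all of these adjustments --- together with the analogous ones at edges to QH vertices, which are re-decomposed in the next section in any case --- produces a single graph-of-groups decomposition of $L$ whose edge groups remain finite or virtually abelian and which has the asserted properties; these verifications (edge-group control, no new cycles, commutation of the moves) go exactly as in the normalisation arguments of \cite{rewe}.
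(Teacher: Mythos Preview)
Your outline --- enlarge each rigid vertex by folding in the appropriate subgroup $B_u$ from each neighbouring vertex --- is the paper's approach. The gap is in how you verify maximality of the resulting edge groups.

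You argue that if $M\cap R\supsetneq A_e$ then $\langle B_u,\,M\cap R\rangle$ is a non-trivial amalgam over $A_e$ and ``could not sit inside the virtually abelian group $M$''. That inference is false in general: a non-trivial amalgam of virtually abelian groups over a virtually abelian subgroup can itself be virtually abelian --- for instance $\langle x,y\mid x^2=y^2\rangle\cong\Z\ast_{2\Z}\Z$ is the Klein bottle group, which is virtually $\Z^2$. The same issue undermines your rigid--rigid claim that the edge group cannot fail to be maximal on both sides because ``that would exhibit inside $M$ a non-trivial amalgam''. Your parenthetical appeal to a ``preliminary normalisation in the style of \cite{rewe}'' is precisely the content that needs to be supplied, not deferred.

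The paper sidesteps this entirely by citing one structural fact from \cite{rewe} at the outset: the maximal virtually abelian subgroup $M$ of $L$ containing an edge group adjacent to a rigid vertex is elliptic in $\A$ and in fact lies in one of the two adjacent vertex groups. For a rigid--abelian edge this forces $M$ to equal the abelian vertex $A$ itself, so $M\cap R=A_e$ automatically; after folding $P(A,A_e)$ into $R$ one has $M\cap\bar R=P(A,A_e)$ in the new splitting via Bass--Serre, and maximality is immediate. For a rigid--rigid edge, $M$ lies in exactly one side, say $R_2$; the paper folds the maximal finite-by-abelian subgroup $C\le R_2$ containing $A_e$ directly into $R_1$ (rather than folding the full $M$ and then passing to $M^+$ as you propose), obtaining $(R_1\ast_{A_e}C)\ast_C R_2$. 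Any finite-by-abelian overgroup of $C$ in $R_1\ast_{A_e}C$ then lies in $M\subset R_2$, hence in $R_2\cap(R_1\ast_{A_e}C)=C$. No amalgam-inside-abelian argument is needed, and the compatibility worry between the two sides of a rigid--rigid edge evaporates: only the side containing $M$ is enlarged.
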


\begin{proof}
First note that it is shown in \cite{rewe} that every edge group adjacent to a rigid vertex group is contained in a maximal virtually abelian subgroup and this subgroup is elliptic in $\A$, i.e. it is contained in at least one of the adjacent vertex groups. Let $e$ be an edge with edge group $E$ adjacent to a rigid vertex group $R$ and a virtually abelian vertex group $A$. We then replace $R$ by $R\ast_{E}P(A,E)$ and $E$ by $P(A,E)$, i.e. the subgraph of groups $R\ast_EA$ is replaced by $(R\ast_EP(A,E))\ast_{P(A,E)}A$. Since $R\ast_EP(A,E)\leq E(R)$, the new vertex group is still rigid. We perform these modifications for all edges connecting a rigid and a virtually abelian vertex in parallel and denote the resulting graph of groups again by $\A$.\\
We will now explain how we can change the graph of groups $\A$ such that every edge group connecting two rigid vertices is maximal finite-by-abelian in both adjacent vertex groups.
Assume that $R_1\ast_Z R_2$ is such an edge connecting two rigid vertex groups and suppose that $Z$ is contained in a maximal finite-by-abelian subgroup $C$ which is contained in $R_2$ (the HNN case is similar). Note that 
$$\langle R_1,C\rangle=R_1\ast_ZC\subset E(R_1).$$
We change the splitting $R_1\ast_Z R_2$ to $R_1\ast_ZC\ast_CR_2$ and collapse the edge with edge group $Z$, i.e. we get the splitting $(R_1\ast_ZC)\ast_CR_2$. Note that $(R_1\ast_ZC)$ is still rigid and $C$ is maximal finite-by-abelian in both adjacent vertex groups. After repeating this step for all edges adjacent to two rigid vertex groups we get a graph of groups which satisfies the claim of the Lemma.
\end{proof}

We will use these modifications either explicitly or implicitly in the following constructions whenever a JSJ decomposition of a one-ended $\Gamma$-limit group appears.

\begin{satz}\label{completion}
Let $\Gamma$ be a hyperbolic group and let $\Res(L)$ be a well-structured resolution of a restricted $\Gamma$-limit group $L$. Then $L$ embeds into a (canonical) $\Gamma$-limit tower. We call this tower the \textnormal{completion of $L$} and denote it by $\Comp(L)$.
\end{satz}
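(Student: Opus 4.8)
The plan is to construct $\Comp(L)$ by induction along the well-structured resolution, working from the terminal group $L_l$ up to $L=L_0$ and producing at each stage a $\Gamma$-limit tower $\Comp(L_i)$ together with an embedding $L_i\hookrightarrow\Comp(L_i)$ and an inclusion $\Comp(L_{i+1})\leq\Comp(L_i)$ realised by tower steps; the group $\Comp(L):=\Comp(L_0)$ is then the tower we want. For the base case, $L_l=\pi_1(\D)$ is a graph of groups with finite edge groups whose vertex groups all embed into $\Gamma$, one of them containing --- hence, being a subgroup of $\Gamma$, equal to --- $\Gamma$ itself. Adding the edges of $\D$ one at a time exhibits $L_l$ as an iterated amalgam along finite groups of $\Gamma$ with subgroups of $\Gamma$ together with finitely many HNN extensions along finite groups, i.e. as a tower over $\Gamma$ using only the first two tower moves; set $\Comp(L_l):=L_l$. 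The resulting tower will be canonical because the virtually abelian JSJ decompositions $\A_j^i$, the Dunwoody decompositions, and the well-structured resolution itself are canonical.

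For the inductive step, assume $\Comp(L_{i+1})$ has been built with $L_{i+1}\hookrightarrow\Comp(L_{i+1})$. Recall $L_i=\pi_1(\D^{(i)})$, with vertex groups the factors $R_j^i$ --- each carrying a virtually abelian JSJ decomposition $\A_j^i$, which I first normalise using Lemma \ref{changes} so that edge groups adjacent to rigid vertices are maximal in them --- together with the vertex groups of $\h_i$. Because $\Res(L)$ is a well-structured strict resolution, $\eta_i$ is injective on $\h_i$, on every rigid vertex group, and on every edge group of $\D^{(i)}$ and of each $\A_j^i$, and it carries $\h_i$ isomorphically into $\h_{i+1}\subset L_{i+1}\subset\Comp(L_{i+1})$; so the ``rigid part'' of $L_i$ already lives inside $\Comp(L_{i+1})$. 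It remains to glue back the flexible pieces. For each isolated virtually abelian vertex group $A_v$ of some $\A_j^i$ its peripheral subgroup maps injectively into $\Comp(L_{i+1})$ by strictness, and I adjoin $A_v$ as a virtually abelian flat amalgamated along this peripheral subgroup; for each QH vertex group $Q_t^i$, fitting into $1\to E\to Q_t^i\to\pi_1(\mathcal{O})\to 1$, I adjoin an orbifold flat by gluing a copy of $Q_t^i$ to the group below along the images under $\eta_i$ of its peripheral subgroups, which land injectively in the prescribed vertex groups $V_1,\dots,V_{m(t)}$ of the decomposition $\mathbb{M}_t$. Carrying out all these flat attachments together with the finite-group amalgams and HNN extensions coming from the remaining edges of $\D^{(i)}$ presents $\Comp(L_i)$ as a tower over $\Comp(L_{i+1})$, hence over $\Gamma$; compatibility of the various gluings is exactly what the disjointness and pairwise-finite-intersection clauses of Definition \ref{def well-structured} provide.

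The first thing still to prove is that the resulting natural map $L_i=\pi_1(\D^{(i)})\to\Comp(L_i)$ --- sending the rigid vertices and $\h_i$ into $\Comp(L_{i+1})$ via $\eta_i$ and each flexible vertex onto its flat --- is injective; this is the embeddability statement that is merely asserted in \cite{sela}. I would prove it by a Bass--Serre normal-form argument for the iterated amalgam/HNN structure defining $\Comp(L_i)$ relative to $\Comp(L_{i+1})$, showing that a reduced $\D^{(i)}$-path cannot collapse because each flat has been attached along a subgroup that is malnormal enough in the piece below it: maximal virtually abelian subgroups of $\Gamma$-limit groups are almost malnormal (Proposition \ref{abe2}) and $\Gamma$-limit groups are $N(\Gamma)$-CSA, while peripheral subgroups of QH and of virtually abelian vertices are themselves maximal of their respective kinds, so the amalgamations defining the flats identify no elements coming from distinct vertices of $\D^{(i)}$; the finite subgroups are uniformly controlled by $N(\Gamma)$ (Proposition \ref{eig}). \textbf{I expect this injectivity, and the bookkeeping required to keep it valid with finite edge groups and finite-by-abelian (in place of abelian) subgroups, to be the main obstacle.} Finally one checks that every group occurring in the tower is again a $\Gamma$-limit group: an orbifold flat or a virtually abelian flat over a $\Gamma$-limit group, and a finite amalgam or HNN extension with a subgroup of $\Gamma$, is $\Gamma$-limit, which one sees by writing explicit stable sequences of homomorphisms to $\Gamma$ (Dehn twisting along the orbifold curves, respectively scaling in the abelian flat, while extending a given homomorphism of the base piece) --- precisely the sequences that will be refined into test sequences in Chapter \ref{8}.
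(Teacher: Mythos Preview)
Your overall plan --- build $\Comp(L_i)$ inductively from $\Comp(L_{i+1})$ by pushing the rigid pieces in via $\eta_i$ and then re-attaching the flexible pieces as flats --- is the same skeleton the paper uses, and your identification of the normal-form injectivity check as the main obstacle is accurate. But there is a genuine gap in how you handle the rigid pieces, and it is exactly the place where the injectivity argument would fail.

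You write that ``the `rigid part' of $L_i$ already lives inside $\Comp(L_{i+1})$'' and then only add flats for virtually abelian and QH vertices. This omits the case of an edge $e$ in some $\A_j^i$ connecting \emph{two rigid} vertices $R_1, R_2$ with finite-by-abelian edge group $C$. Strictness guarantees that $\eta_i$ is injective on each $R_k$ individually (indeed on their envelopes), but $\eta_i$ is a proper quotient map on $L_i$, so there is no reason whatsoever that $\langle \eta_i(R_1),\eta_i(R_2)\rangle\leq\Comp(L_{i+1})$ is isomorphic to $R_1\ast_C R_2$; extra relations between the images will typically collapse the amalgam. Almost-malnormality of maximal virtually abelian subgroups does not help here: it constrains how copies of $C$ sit inside each $R_k$, not how $\eta_i(R_1)$ and $\eta_i(R_2)$ interact inside $\Comp(L_{i+1})$. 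The paper's fix is to add, for each conjugacy class of such edges, a \emph{new} finite-by-abelian flat $M_l^+\oplus\Z^{|I_l|}$ amalgamated to $\Comp(L_{i+1})$ along the maximal finite-by-abelian subgroup $M_l^+$ containing $\eta_i(C)$, and then to define the embedding by sending $R_2$ to $g^{-1}z_1g\cdot\eta_i(R_2)\cdot g^{-1}z_1^{-1}g$, conjugating by a fresh $\Z$-generator $z_1$. The normal form in $W_t\ast_{M_l^+}(M_l^+\oplus\Z^{|I_l|})$ then separates the images and the injectivity check goes through edge by edge. Without this extra abelian flat your embedding is simply not injective.

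A smaller but related issue: for a non-isolated virtually abelian vertex $A'$ you propose to ``adjoin $A_v$ as a virtually abelian flat amalgamated along this peripheral subgroup''. The tower definition requires the amalgamating subgroup to be \emph{maximal} virtually abelian in the base; $\eta_i(P(A'))$ need not be maximal in $\Comp(L_{i+1})$. The paper therefore amalgamates along the maximal virtually abelian $M\geq\eta_i(P(A'))$ and replaces $A'$ by a carefully constructed quotient $A=(M\ast_{P(A')}A')/\underrightarrow{\ker}\,\Psi_n$, then spends real effort showing $A$ is virtually abelian and that a retraction $A\to M$ exists (this is the most delicate case in the proof). Your sketch does not address either point.
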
 

\begin{proof}
We construct the desired $\Gamma$-limit tower iteratively from bottom to top. Assume that the well-structured resolution of $L$ is given by the following decreasing sequence of restricted $\Gamma$-limit groups:
$$\Res(L):\ L=L_0\xrightarrow{\eta_0} L_1\xrightarrow{\eta_1}L_2\rightarrow \cdots\xrightarrow{\eta_{l-1}}L_l=\pi_1(\D^{(l)}),$$
where $\eta_i: L_i\to L_{i+1}$ is the canonical quotient map for all $i\in\{0,\ldots,l-1\}$.\\
For all $i\in\{1,\ldots,l\}$ we construct a group $\Comp(L_i)$ which we call the completion of $L_i$, together with an embedding $\iota_i:L_i\to \Comp(L_i)$ and a decomposition $\B_i$ of $\Comp(L_i)$, such that every vertex group of $\h_i$ is a vertex group in $\B_i$. In fact the image of $\pi_1(\U_i)$ for every connected subgraph of groups $\U_i$ of $\h_i$ under $\iota_i$ admits a decomposition as a subgraph of groups of $\B_i$ which is isomorphic to $\U_i$. In particular $\iota_i|_{\pi_1(\U_i)}$ is an isomorphism.\\
We set $\Comp(L_l)=L_l$ to be the completion of $L_l$ and $\B_l:=\D^{(l)}$. Clearly $\iota_l:=\id_{L_l}$ embeds $L_l$ into its completion and $\h_l$ is a subgraph of $\B_l$.  
Since $L$ is a restricted $\Gamma$-limit group (and so is $L_l$), $\Gamma$ is contained in a vertex group of $\D^{(l)}$. But since every vertex group of $\D^{(l)}$ is isomorphic to a subgroup of $\Gamma$ and one-ended hyperbolic groups are cohopfian, this implies that $\Gamma$ is the whole vertex group (not only a subgroup). Hence $\Gamma$ is a vertex group of $\D^{(l)}$ and therefore it follows immediately that $\Comp(L_l)$ is a $\Gamma$-limit tower. We say that $\B_l=\D^{(l)}$ is the \textit{completed decomposition} of $\Comp(L_l)$.\\

We continue the construction of the completion inductively. Let $i\in \{0,\ldots,l-1\}$ and assume we have already constructed the completed $\Gamma$-limit group $\Comp(L_{i+1})$ on the $(i+1)$-th level together with its completed decomposition $\B_{i+1}$ and an embedding $\iota_{i+1}: L_{i+1}\to\Comp(L_{i+1})$. We now collapse a minimal number of edges in $\B_{i+1}$ necessary to guarantee that all images under $\iota_{i+1}\circ\eta_i$ of non-QH vertex groups in all the virtually abelian JSJ decompositions $A_j^i$ are elliptic in this new graph of groups. By abuse of notation we still call this new graph of groups $\B_{i+1}$.\\
We will construct (in several steps) a new graph of groups $\B_{i}$ and call its fundamental group the completion of $L_i$. Throughout the construction we will heavily use the properties of a well-structured resolution (either explicitly or implicitly).\\
Let $\A$ be a graph of groups which we get by plugging in all the virtually abelian JSJ decompositions for the one-ended vertex groups in $\D^{(i)}$. Note that it follows from Lemma \ref{changes} that after performing finitely many edge slides, we can assume that all edges adjacent to non-isolated virtually abelian vertices have infinite edge groups.\\
If $\A$ does not contain a rigid vertex group, then every vertex group is either a finite-by-closed orbifold, a virtually abelian group or isomorphic to a subgroup of $\Gamma$. Let $V$ be such a vertex group. Then $\iota_{i+1}\circ\eta_{i}(V)$ has a decomposition  as a connected subgraph of groups, say $\mathbb{V}$, of $\B_{i+1}$ (since $\mathbb{V}$ is contained in $\h_{i+1}$) and we replace $\mathbb{V}$ in $\B_{i+1}$ by a vertex with vertex group $V$. We moreover adjust the boundary monomorphisms of adjacent edges with finite edge groups in the obvious way. This is well-defined since $\Res(L)$ is a well-structured resolution. Repeating this for all vertex groups $V$ of $\A$, yields a graph of groups $\B_i$. We set $\Comp(L_i):=\pi_1(\B_i)$ to be the completion of $L_i$ and call $\B_i$ the completed decomposition of $\Comp(L_i)$. Clearly $\Comp(L_i)$ is a $\Gamma$-limit tower.\\
Hence from now on (by introducing redundant rigid vertices if necessary) we can assume that there exists at least one rigid vertex, say $v$, in $\A$.  Let $e_1,\ldots,e_m$ be an enumeration of the edges of $A$ in such a way that $\alpha(e_1)=v$ and for all $t\in\{0,\ldots,m\}$ there exists a graph of groups $\A_t$ with the following properties:
\begin{itemize}
\item $\A_t$ is a connected subgraph of groups of $\A$.
\item $\A_0$ consists of the single vertex $v$.
\item $\A_t\subset \A_{t+1}$ is a proper subgraph of groups and $A_{t+1}$ arises from $\A_t$ by adding the edge $e_{t+1}$ together with possibly one of the vertices $\alpha(e_{t+1})$ or $\omega(e_{t+1})$ (if both adjacent vertices don't already belong to $\A_t$).
\end{itemize}
We start the construction of the completion $\Comp(L_i)$ and its completed decomposition $\B_i$ with the graph of groups $\B_{i+1}$. 
Let $V$ be a finite-by-closed orbifold or a virtually abelian vertex group of $\A$ adjacent only to edges with finite edge group. Recall that we call such vertex groups isolated QH or virtually abelian vertex groups. Then $\iota_{i+1}\circ\eta_{i}(V)$ has a decomposition as a connected subgraph of groups, say $\mathbb{V}$, of $\B_{i+1}$ (since $\mathbb{V}$ is contained in $\h_{i+1}$) and we replace $\mathbb{V}$ in $\B_{i+1}$ by a vertex with vertex group $V$. We also adjust the boundary monomorphisms of edges adjacent to the new vertex with vertex group $V$ in the obvious way. Again this is well-defined since $\Res(L)$ is a well-structured resolution. Repeating this for all vertex groups $V$ of $\A$ of this type, yields a graph of groups $\B'_0$. We set $W_0:=\pi_1(\B'_0)$.\\
We then proceed inductively through the sequence of graphs of groups $\A_0\subset \ldots\subset A_m=\A$ in such a a way, that we construct a graph of groups $\B'_t$ which contains $\B'_0$ (which is constructed out of $\B_{i+1}$) as a subgraph, such that $\pi_1(\A_t)$ embeds into $W_t:=\pi_1(\B'_t)$ via a monomorphism $\nu_t$ for all $t\in\{0,\ldots,m\}$, where $\nu_{t}$ coincides on all rigid vertices of $\A_t$ with $\iota_{i+1}\circ\eta_i$ up to conjugation by an element which is either trivial or contained in $W_t\setminus \Comp(L_{i+1})$. Moreover $\nu_{t}|_{\pi_1(\A_{t-1})}=\nu_{t-1}$ and $W_t=\pi_1(\B'_t)$ is a $\Gamma$-limit tower.\\
For $t=0$ the graph of groups $\A_0$ consists by definition of a single vertex $v$ with rigid vertex group $A_v$. Since $\Res(L)$ is well-structured, $\iota_{i+1}\circ\eta_i$ is injective on $A_v$. Hence $\nu_0:=\iota_{i+1}\circ\eta_i|_{A_v}$ yields the desired embedding of $\pi_1(\A_0)$ into $W_0=\pi_1(\B'_0)$. (This holds since the image of a rigid vertex group under $\iota_{i+1}\circ\eta_i$ has at most finite intersection with the fundamental group of every connected subgraph of groups of $\h_{i+1}$ and all the subgraphs of groups $\mathbb{V}$ which have been replaced in the step before are contained in $\h_{i+1}$). Clearly $W_0$ is a $\Gamma$-limit tower, since $\Comp(L_{i+1})$ is one by induction hypothesis.\\
So lets assume we already have constructed a graph of groups $\B'_{t}$ out of $\B_{i+1}$ together with an embedding $$\nu_{t}:\pi_1(\A_t)\hookrightarrow W_t=\pi_1(\B'_t),$$
which coincides on rigid vertex groups with $\iota_{i+1}\circ\eta_i$ up to conjugation by an element which is either trivial or contained in $W_t\setminus\Comp(L_{i+1})$.
Assume without loss of generality that $\alpha(e_{t+1})\in A_t$. We distinguish several cases, depending on the edge $e_{t+1}$. Denote by $A_e$ the edge group of $e_{t+1}$ and by $\alpha: A_e\hookrightarrow A_{\alpha(e_{t+1})}$, $\omega: A_e\hookrightarrow A_{\omega(e_{t+1})}$ the canonical embeddings into the adjacent vertex groups. 
\begin{enumerate}[(1)]
\item Assume that $A_{e}$ is infinite.
\begin{enumerate}[(a)]
\item Assume that $e_{t+1}$ connects two rigid vertices in $\A$ and hence $A_e$ is finite-by-abelian (see the construction of the virtually abelian JSJ decomposition in \cite{rewe}).\\
Let $f_1,\ldots, f_m$ be the edges in the various JSJ decompositions $\A_j^i$ connecting two rigid vertices. Let $I_1\cup \ldots\cup I_k$ be a partition of $\{1,\ldots,m\}$ with the following property:\\
For any $l\in\{1,\ldots,k\}$ there exists a maximal finite-by-abelian subgroup $M_l^+$ of $\Comp(L_{i+1})$ such that for all $j\in I_l$ the image $\iota_{i+1}\circ\eta_{i}(A_{f_j})$ of the corresponding edge group can be conjugated into $M_l^+$. Moreover $I_l$ is maximal with this property for all $1\leq l\leq k$.\\
Let $l\in\{1,\ldots,k\}$ such that $\iota_{i+1}\circ\eta_{i}(A_{e})$ can be conjugated into $M_l^+$. If the group $M_l^+\oplus\Z^{|I_l|}$ has been amalgamated along $M_l^+$ to $\Comp(L_{i+1})\leq W_{h}$ for some $h<t$ earlier in the progress then we set $\B'_{t+1}:=\B'_t$ and $W_{t+1}:=W_t$. Otherwise we amalgamate the group $M_l^+\oplus\Z^{|I_l|}$ along $M_l^+$ to $\Comp(L_{i+1})\leq W_{t}$. (Note that $M_l^+$ is still contained in $W_t$ since $\Res(L)$ is well-structured, i.e. the maximal finite-by-abelian subgroups containing the images of edge groups connecting two rigid vertices have at most finite intersection with $\pi_1(\h_{i+1})$ and therefore where not replaced in the previous modifications). Hence $\B'_{t+1}$ is the graph of groups $\B'_t$ together with a new vertex with vertex group $M_l^+\oplus\Z^{|I_l|}$ and an edge connecting both with edge group $M_l^+$ (see Figure \ref{fig12}). Therefore $W_{t+1}$ has the structure of a virtually abelian (in fact even finite-by-abelian) flat over $W_t$ and hence is a $\Gamma$-limit tower. 
\begin{figure}[htbp]
\centering
\includegraphics{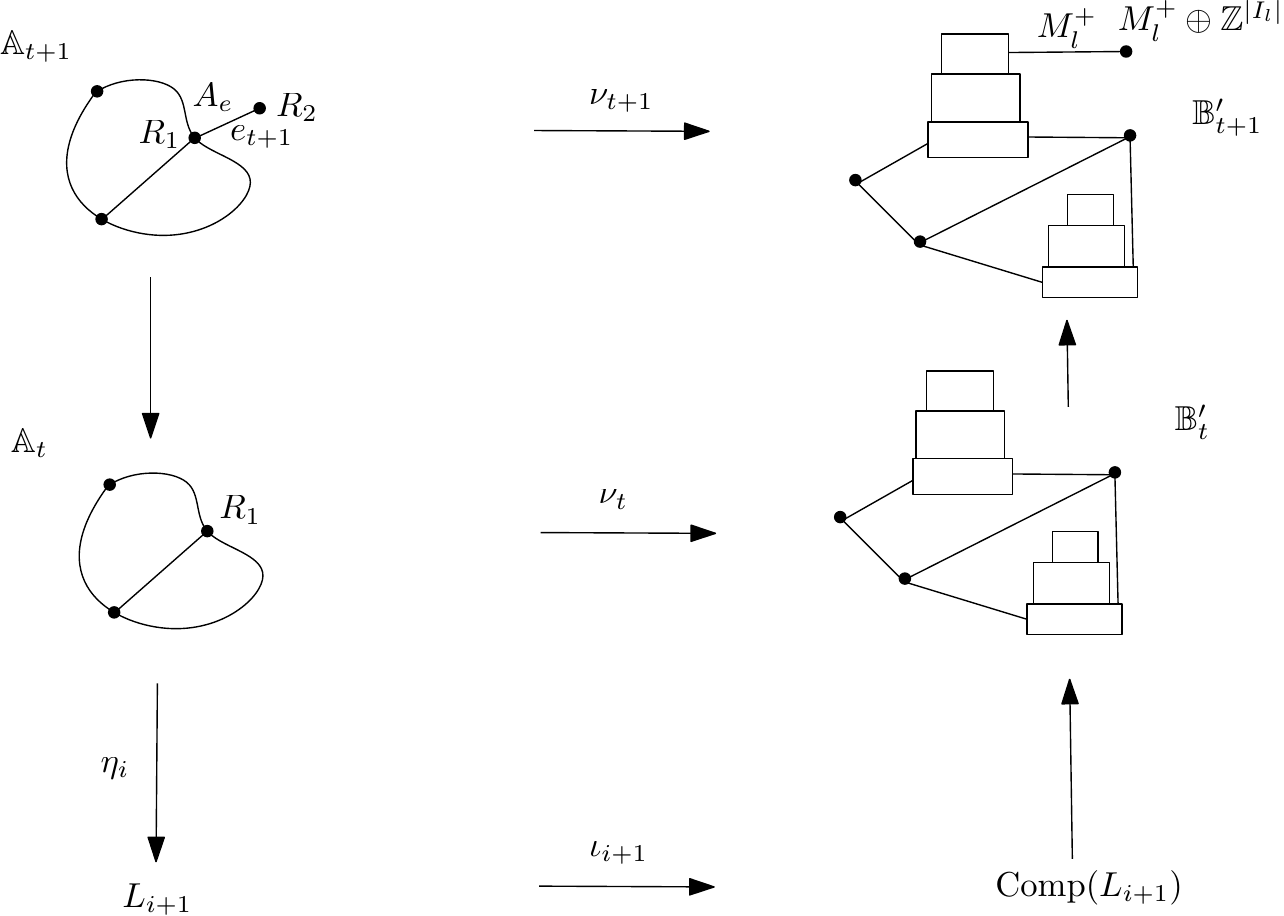}
\caption{Case (1)(a)}
\label{fig12}
\end{figure}
\item Assume that $e_{t+1}$ connects a rigid vertex $r$ with vertex group $R$ and a QH vertex $q$ with vertex group $Q$ in $\A$.\\
If $q=\omega(e_{t+1})$ and $q\notin A_t$, then we add a new vertex $v$ to $\B'_t$ with vertex group $Q$, together with an edge $f$ connecting $v$ to the vertex $w$ with vertex group $B_w$ in $\B'_t$ which contains the image $\iota_{i+1}\circ\eta_i\circ\alpha(A_e)$ of $\alpha(A_e)\leq R$ under $\iota_{i+1}\circ\eta_i$. Note that by the previous modifications applied to $\B_{i+1}$, this image is in fact elliptic in $\B_{i+1}\subset\B'_t$. The edge group of $f$ is $A_e$. The embeddings are the obvious ones, namely 
$$\omega: A_e\to Q\text{  and  } \iota_{i+1}\circ\eta_i\circ\alpha: A_e\to B_w.$$
If $q\in A_t$ then a vertex $v$ with vertex group $Q$ has already been added to $\B'_t$ earlier in our iterative process and we only add a new edge $f$ to $\B'_t$ connecting $v$ to the vertex $w$ with vertex group $B_w$ in $\B'_t$ which contains the image $\iota_{i+1}\circ\eta_i\circ\alpha(A_e)$. The edge group of $f$ is again $A_e$ and the embeddings are also the same as the ones in the previous case.\\
If $q=\alpha(e_{t+1})$ then again a vertex $v$ with vertex group $Q$ has already been added to $\B'_t$ earlier in our iterative process and we only add a new edge $f$ to $\B'_t$ connecting $v$ to the vertex $w$ with vertex group $B_w$ in $\B_t$ which contains the image $\iota_{i+1}\circ\eta_i\circ\omega(A_e)$. The edge group of $f$ is again $A_e$ and the embeddings are also the same as the ones in the previous case.\\

\item Assume that $e_{t+1}$ connects a virtually abelian vertex $a$ with vertex group $A'$ and a rigid vertex $r$ with vertex group $R$ in $\A$. Note that by Lemma \ref{changes} $A_{e}^+=P(A'^+)$, i.e. $A_e$ is a peripheral subgroup of $A'$. For simplicity we set $P(A'):=A_e$. This case is by far the most difficult one, mainly because the simple strategy of the torsion-free case can no longer be applied. Assume that $a=\omega(e_{t+1})\notin A_t$.\\
Let $G:=\pi_1(\A_t)$. Hence if we collapse in $\A_{t+1}$ all edges but $e_{t+1}$ we get the amalgamated free product $G\ast_{P(A')}A'$.
Here we view $P(A')$ as a subgroup of $G$ and $A'$ (to simplify notation), hence both embeddings into the vertex groups are the identity map.
Since $\eta_i:L_i\to L_{i+1}$ is strict, it is in particular injective on $P(A')$. We set $$M:=M_{W_t}(\iota_{i+1}\circ\eta_i(P(A')))$$ to be the maximal virtually abelian subgroup of $W_t$ which contains the (isomorphic) image of $P(A')$. Here we view $\Comp(L_{i+1})$ as a subgroup of $W_t$.\\
Since $W_t$ is a $\Gamma$-limit group there exists a stably injective sequence $\vp_n:W_t\to \Gamma$ such that 
$$W_t=W_t/_{\displaystyle \underrightarrow{\ker} \vp_n}.$$
By the construction of $W_t$ and since $\eta_i$ is a strict quotient map, there exists a sequence of modular automorphisms $\alpha_n\in\Mod(L_i)$, such that $$\mu_n:=(\vp_n\circ \iota_{i+1}\circ\eta_i\circ\alpha_n)|_{A'}$$ is stably injective. Let $$U:=M\ast_{P(A')}A'$$ be the amalgamated product given by the embeddings $$\alpha=\iota_{i+1}\circ\eta_i:P(A')\to M$$ and $$\omega=\id:P(A')\to A'.$$
We define a sequence of homomorphisms $$\Psi_n: W_t\ast_M U\to\Gamma$$ ($W_t\ast_M U$ with the obvious embeddings) via
\begin{itemize}
\item $\Psi_n|_{W_t}=\vp_n$,
\item $\Psi_n|_M=\vp_n$ and
\item $\Psi_n|_{A'}=\mu_n$.
\end{itemize}
We first check that this is a well-defined homomorphisms on $W_t\ast_MU$.
\begin{figure}[htbp]
\centering
\includegraphics{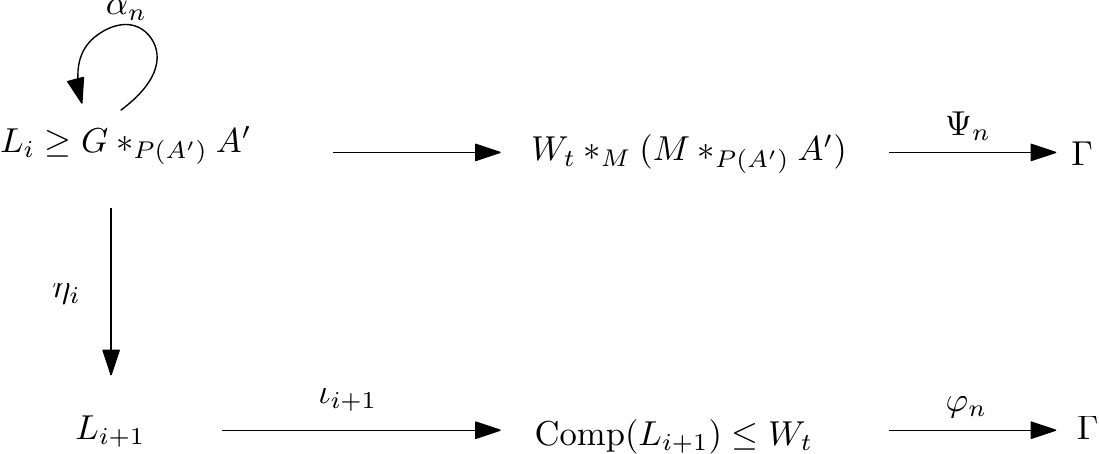}
\caption{Constructing the sequence ($\Psi_n$)}
\end{figure}
First note that since $\alpha_n\in \Mod(L_i)$, it fixes $P(A')$. Now by definition
\begin{align*}
\Psi_n(\omega(P(A')))&=\Psi_n(\underbrace{\id(P(A'))}_{\leq A'})\\
&=\mu_n(P(A'))\\
&=\vp_n\circ \iota_{i+1}\circ\eta_i\circ\alpha_n(P(A'))\\
&=\vp_n\circ \iota_{i+1}\circ\eta_i(P(A'))\\
&=\Psi_n(\underbrace{\iota_{i+1}\circ\eta_i(P(A'))}_{\leq M\subset W_t})\\
&=\Psi_n(\alpha(P(A')))
\end{align*}
and hence $\Psi_n$ is a well-defined homomorphism. Note however that (after passing to a subsequence) the sequence $(\Psi_n)\subset\Hom(W_t\ast_MU,\Gamma)$ is stable but not necessarily stably injective.\\
We set $$A:=U/_{\displaystyle \underrightarrow{\ker}\Psi_n}=(M\ast_{P(A')}A')/_{\displaystyle \underrightarrow{\ker}\Psi_n}.$$
Since $\Psi_n|_M=\vp_n$ is stably injective, we have that $M\leq A$. We now define $W_{t+1}:=W_{t}\ast_MA$. Since $\Res(L)$ is a well-structured resolution, $W_{t+1}$ admits a decomposition as a graph of groups $\B'_{t+1}$ which is the graph of groups $\B'_t$ together with a new edge with edge group $M$ connecting a new vertex $v$ with vertex group $A$ to the vertex in $\B'_t$ containing $M$.\\
\begin{figure}[htbp]
\centering
\includegraphics{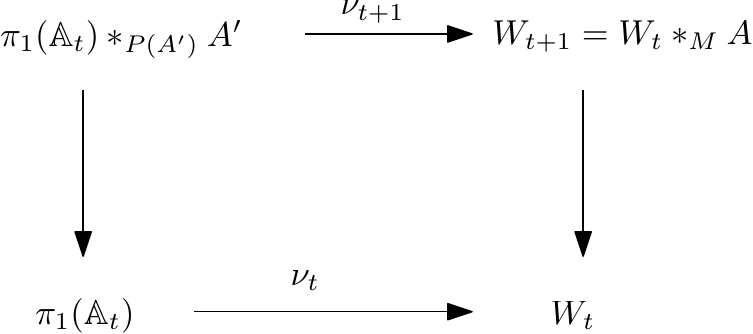}
\caption{Adding a virtually abelian flat to $W_t$}
\end{figure}
Clearly $W_{t+1}$ is a $\Gamma$-limit group.
It remains to show that $W_{t+1}=W_t\ast_MA$ is a virtually abelian flat over $W_t$. Hence we have to show that $A$ is in fact virtually abelian and moreover that there exists a retraction from $W_{t+1}$ to $W_t$.\\
We start by showing that $A$ is virtually abelian. Since $P(A')$ is infinite virtually abelian and $\Psi_n|_{P(A')}$ is by construction stably injective, it holds that for large $n$, $\Psi_n(P(A'))$ is an infinite subgroup of a maximal virtually cyclic subgroup $C_n$ of $\Gamma$. Hence $\Psi_n(A')$ and $\Psi_n(M)$ are also contained in $C_n$. This implies that $\Psi_n(M\ast_{P(A')}A')\leq C_n$.\\
Hence after passing to a subsequence of $(\Psi_n)$ we can assume that $C_n=C$ for all $n$ for some maximal virtually cyclic subgroup $C$ of $\Gamma$. Hence $$A=M\ast_{P(A')}A'/_{\displaystyle \underrightarrow{\ker} \Psi_n}$$
is a $C$-limit group and therefore virtually abelian (since otherwise there exists a commutator in $A$ which cannot be mapped to a non-trivial element in $C$ for any $n$).
It only remains to show that there exists a retraction from $W_{t+1}=W_t\ast_M A$ onto $W_t$.\\
It clearly suffices to show that there exists a retraction from $A$ onto $M$. Recall that $A'^+$ denotes the unique finite-by-abelian subgroup of $A'$ of index $\leq 2$. It follows from the discussion in \cite{rewe} after Definition 3.10 that $A'^+/P(A')^+\cong \Z^k$ is free abelian of rank $k$ for some $k\in\N$. Let $\{x_1,\ldots,x_k\}$ be a $\Z$-basis of $A'^+/P(A')^+$. By abuse of notation we denote a lift of this basis to $A'$ again by $\{x_1,\ldots,x_k\}$. It follows from section 4.2.1 in \cite{rewe} that for large enough $n$ we can choose the sequence of modular automorphisms $(\alpha_n)$ earlier in the construction such that  $\alpha_n$ fixes $P(A')$ and such that the tuple of word lengths
$$(|\vp_n\circ\iota_{i+1}\circ\eta_i\circ\alpha_n(x_i)|_{\Gamma})=(\lambda_1,\ldots,\lambda_k)$$
is linearly independent over $\mathbb{Q}$, where $|\cdot|_{\Gamma}$ denotes the word length with respect to a fixed finite generating set of $\Gamma$.\\
Now the sequence $\Psi_n|_{M^+\ast_{P(A')^+}A'^+}$ converges to the action of $A^+$ on a real tree $T$ isometric to a line. The kernel of this action is $M^+$, while $x_1,\ldots,x_k$ act hyperbolically by translations on $T$. Since $(\lambda_1,\ldots,\lambda_k)$ is linearly independent over $Q$, 
$\{x_1,\ldots,x_k\}$ generate a free abelian subgroup of rank $k$ of $A^+$. Since $M^+$ is the kernel of the action, it is a normal subgroup of $A^+$, i.e. by definition of the sequence, $A^+/M^+\cong \Z^k$. Hence $A^+$ fits into the short exact sequence
$$1\to M^+\to A^+\to\Z^n\to 1.$$
Therefore for all $i,j\in\{1,\ldots,k\}$, $p\in\{1,\ldots,l\}$ there exist elements $e_{ij}$, $\bar{e}_{ip}$ of finite order in $M^+$ such that $A^+$ has the presentation 
\begin{align*}
A^+=\langle &M^+,x_1,\ldots,x_k\ |\ [x_i,x_j]=e_{ij}, [x_i,y_p]=\bar{e}_{ip}\\
&\text{ for all } i,j\in\{1,\ldots,k\}, p\in\{1,\ldots,l\}\rangle,
\end{align*}
where $\{y_1,\ldots,y_l\}$ is a generating set for $M^+$. Note that there exists an element $s$ such that $M$ is generated by $\{s,y_1,\ldots,y_l\}$ and either $s=1$ or the image of $s$ in the quotient of $A'$ by its torsion subgroup has order $2$. Using Lemma 4.9 in \cite{rewe}, we conclude that $A$ has the presentation
\begin{align*}
A=\langle &M,x_1,\ldots,x_k\ |\ [x_i,x_j]=e_{ij}, [x_i,y_p]=\bar{e}_{ip}, sx_is^{-1}x_i=\tilde{e}_{i},\\ 
&\text{ for all } i,j\in\{1,\ldots,k\}, p\in\{1,\ldots,l\}\rangle,
\end{align*}
% Die Relatoren beschreiben wie y und s sich in M verhalten:
%sy_ps^{-1}y_p=e'_p
where $\tilde{e}_i$ is an element of finite order in $M^+$ for all $i\in\{1,\ldots,k\}$.\\ 
For all $i\in\{1,\ldots,k\}$, $n\in\N$ we set $m_n^i:=\iota_{i+1}\circ\eta_i\circ\alpha_n(x_i)$. Let $E$ be the torsion subgroup of $M^+$. Since $A'$ is virtually abelian it holds for all $i,j\in\{1,\ldots,k\}$ that there exist some $e_{ij}^{(n)}, \tilde{e}_{ij}^{(n)}\in E$ such that 
$$[m_n^i,m_n^j]=e_{ij}^{(n)} \text{ and } sm_n^is^{-1}m_n^i=\tilde{e}_{ij}^{(n)}.$$
Moreover for all $n\in\N$, $\iota_{i+1}\circ\eta_i\circ\alpha_n$ defines an action of $A'$ on $M$ and since $M$ is virtually abelian, there exists elements $\bar{e}_{ij}^{(n)}\in E$ for all $i\in\{1,\ldots,k\}$, $j\in\{1,\ldots,l\}$ such that
$$[m_n^i,y_j]=\bar{e}_{ij}^{(n)}.$$
Since $|E|$ is finite, after passing to a subsequence of $(\alpha_n)$ (still denoted $(\alpha_n)$) we can assume that for all $i,j\in\{1,\ldots,k\}$, $p\in\{1,\ldots,l\}$:
\begin{align*}
&[m_n^i,m_n^j]=e_{ij},\\
&sm_n^is^{-1}m_n^i=\tilde{e}_{ij} \text{ and}\\
&[m_n^i,y_p]=\bar{e}_{ij}
\end{align*}
holds. Therefore there exists some fixed $n\in\N$ such that the map 
$$\pi_0:A=(M\ast_{P(A')}A')/_{\displaystyle \underrightarrow{\ker} \Psi_n}\to M$$ defined via
$$\pi_0|_M=\id \text{ and } \pi_0(x_i)=m_n^i$$
extends to a retraction $\pi: A\to M$. Hence $W_{t+1}$ has the structure of a virtually abelian flat over $W_t$.\\
Now assume that the virtually abelian vertex $a$ is already contained in $A_t$. In this case we set $B_{t+1}':=B_t'$ and $W_{t+1}=W_t$.
\end{enumerate}
\item Assume that $A_e$ is finite. By the construction of $\Comp(L_{i+1})$ and $\B_{i+1}$ there exists precisely one distinguished edge, say $f$, with edge group $A_f$ (isomorphic to) $A_e$ in $\B_{i+1}$ (which corresponds to $e_{t+1}$). We set $A_w:=A_{\alpha(e_{t+1})}$, $A_u:=A_{\omega(e_{t+1})}$ and again have to distinguish several cases. Recall that by assumption $\alpha(e_{t+1})\in A_t$.
\begin{enumerate}[(a)]
\item First suppose that $A_u$ and $A_w$ are either isolated QH vertex groups or isolated virtually abelian vertex groups in $\A$. In this case we set $\B'_{t+1}:=\B'_t$ and $W_{t+1}:=W_t$.
\item Suppose that $A_u\in \A_t$.
\begin{itemize} 
\item If $A_w$ and $A_u$ are rigid we replace $f$ by a new edge $f'$ (with edge group $A_e$) connecting the vertices $u'$ and $w'$ of $\B'_t$ containing $\iota_{i+1}\circ\eta_i(A_u)$ and $\iota_{i+1}\circ\eta_i(A_v)$ respectively. The boundary monomorphisms are given by $$\alpha_{f'}=\iota_{i+1}\circ\eta_i\circ\alpha_{e+1}:A_e\to B_{v'}$$ and $$\omega_{f'}=\iota_{i+1}\circ\eta_i\circ\omega_{e+1}:A_e\to B_{u'}.$$
\item If $A_w$ and $A_u$ are QH vertices of one of the JSJ decompositions $\A_i^j$, i.e. in particular not extensions of closed orbifold groups, we replace $f$ by a new edge $f'$ (with edge group $A_e$) connecting the vertices $w'$ and $u'$ of $\B'_t$ whose vertex groups are the isomorphic copies of $A_w$ and $A_u$ respectively, which have been added earlier in the process. Denote by $\tau_1:A_w\to B_{w'}$ and $\tau_2:A_u\to B_{u'}$ the corresponding canonical isomorphisms. Then the boundary monomorphisms of $f'$ are given by $\alpha_{f'}=\tau_1\circ\alpha_{e+1}:A_e\to B_{w'}$ and $\omega_{f'}=\tau_2\circ\omega_{e+1}:A_e\to B_{u'}$.
\item Suppose that either $A_u$ is rigid and $A_w$ is a QH vertex or vice versa. Without loss of generality we assume that $A_w$ is rigid. Then again we replace $f$ by a new edge $f'$ (with edge group $A_e$) connecting the vertex $w'$ of $\B'_t$ whose vertex group contains $\iota_{i+1}\circ\eta_i(A_w)$ to the vertex whose vertex group is the isomorphic copy of $A_u$, which has been added earlier in the process. Denote by $\tau:A_u\to B_{u'}$ the corresponding canonical isomorphism. Then the boundary monomorphisms of $f'$ are given by $\alpha_{f'}=\iota_{i+1}\circ\eta_i\circ\alpha_{e+1}:A_e\to B_{w'}$ and $\omega_{f'}=\tau\circ\omega_{e+1}:A_e\to B_{u'}$.
\item Suppose that either $A_w$ is rigid or a QH vertex of one of the non-trivial JSJ decompositions $A_j^i$ and $A_u$ is an isolated QH or virtually abelian vertex group or vice versa. Without loss of generality we assume that $A_w$ is rigid or a non-isolated QH vertex. Then again we replace $f$ by a new edge $f'$ (with edge group $A_e$) connecting the vertex $w'$ of $\B_t$ whose vertex group contains $\iota_{i+1}\circ\eta_i(A_w)$ (or the isomorphic copy of $A_w$) with the vertex $u'$ with vertex group $B_{u'}=A_u$. If $A_w$ is a QH vertex group, denote by $\tau:A_w\to B_{w'}$ the corresponding canonical isomorphism. Then the boundary monomorphisms of $f'$ are given by 
$$\alpha_{f'}=\iota_{i+1}\circ\eta_i\circ\alpha_{e+1}:A_e\to B_{w'}$$ (or $\alpha_{f'}=\tau\circ\alpha_{e+1}:A_e\to B_{w'}$) and
$$\omega_{f'}=\omega_{e+1}:A_e\to A_u=B_{u'}.$$
\end{itemize}
\item Suppose that $A_u\notin \A_t$.
\begin{itemize}
\item If $A_u$ is a rigid vertex, there exists a vertex $u'$ in $\B'_t$ which contains $\iota_{i+1}\circ\eta_i(A_u)$ and we proceed as in the respective subcases (1),(3) and (4) of $(b)$
\item If $A_u$ is an isolated QH or virtually abelian vertex group, we either already have dealt with this edge in case (a) or we proceed as in the subcase $(4)$ of $(b)$.
\item If $A_u$ corresponds to a QH vertex in one of the non-trivial JSJ decompositions we replace $f$ by a new edge $f'$ together with a new vertex $u'$, where $f'$ connects $u'$ to the vertex $v'$ which either contains $\iota_{i+1}\circ\eta_i(A_w)$ (in case that $A_w$ is rigid), or the isomorphic copy of $A_w$ (in the QH case), or the group $A_w$ (in the isolated QH/Virtually abelian vertex case). The edge group $A_{f'}$ is isomorphic to $A_e$ and the embeddings are constructed in the same way as in the various subcases of $(b)$.\\
It is possible that this new graph of groups is not connected. In this case we add the edge, say $f$, which we have replaced before again, making the graph connected again. We then proceed with the iterative procedure until removing the edge $f$ does no longer disconnect the graph and then remove $f$.  
\end{itemize}
\end{enumerate}
\end{enumerate}

By now we have constructed a new graph of groups $\B'_{t+1}$ whose fundamental group $W_{t+1}=\pi_1(\B'_{t+1})$ has the structure of a tower over $\Comp(L_{i+1})$. It remains to construct the embedding $\nu_{t+1}:\pi_1(\A_{t+1})\to W_{t+1}$. To do this, we again distinguish several cases, depending on the edge $e_{t+1}$.\\ 
Recall that we started with a graph of groups $\B'_{t}$ together with an embedding $$\nu_{t}:\pi_1(\A_t)\hookrightarrow W_t=\pi_1(\B'_t),$$
which coincides on rigid vertex groups with $\iota_{i+1}\circ\eta_i$ up to conjugation by an element, which is either trivial or contained in $W_t\setminus\Comp(L_{i+1})$.
Moreover $w:=\alpha(e_{t+1})\in \A_t$ and $A_e$ denotes the edge group of $e_{t+1}$ with the corresponding embeddings into the adjacent vertex groups $\alpha: A_e\hookrightarrow A_w:=A_{\alpha(e_{t+1})}$ and $\omega: A_e\hookrightarrow A_u:=A_{\omega(e_{t+1})}$.
\begin{enumerate}[(1)]
\item Assume that $A_e$ is infinite.
\begin{enumerate}[(a)]
\item Assume that $e_{t+1}$ connects two rigid vertices in $\A$. Then there exists a maximal finite-by-abelian subgroup $M_l^+$ of $\Comp(L_{i+1})$ into which $\iota_{i+1}\circ\eta_i(A_e)$ can be conjugated by an element $g\in \Comp(L_{i+1})$. Suppose without loss of generality that we are in the case that $W_{t+1}$ is the amalgamated product $$W_{t+1}=W_t\ast_{M_l^+}(M_l^+\oplus \Z^{|I_l|}).$$ Let $\{z_1,\ldots,z_{|I_l|}\}$ be a basis of $\Z^{|I_l|}$. Moreover we assume without loss of generality that the edge $e_{t+1}$ corresponds to the first element of the ordered set $I_l$.
\begin{itemize}
\item We first consider the case that $w=\alpha(e_{t+1})\in\A_t$ and $u=\omega(e_{t+1})\notin \A_t$. 
By induction hypothesis there exists $h\in (W_t\setminus \Comp(L_{i+1}))\cup\{1\}$ such that 
$$\nu_t|_{A_w}=c_h\circ\iota_{i+1}\circ\eta_{i}|_{A_w}.$$
We define $\nu_{t+1}:\pi_1(\A_{t+1})\to W_{t+1}$ by 
$$\nu_{t+1}|_{\pi_1(\A_t)}=\nu_t$$ and $$\nu_{t+1}|_{A_u}=c_h\circ c_{g^{-1}z_1g}\circ\iota_{i+1}\circ\eta_i.$$
We first show that this is in fact a homomorphism. Let $a\in A_e$. Then for $\omega(a)\in A_u$ holds that $g(\iota_{i+1}(\eta_i(\omega(a)))g^{-1}\in M_l^+$ and hence commutes with $z_1$. It follows that
\begin{align*}
c_h\circ c_{g^{-1}z_1g}\circ\iota_{i+1}\circ\eta_i(\omega(a))
&=c_hc_{g^{-1}g}\circ\iota_{i+1}\circ\eta_i(\omega(a))\\
&=c_h\circ\iota_{i+1}\circ\eta_i(\alpha(a))\\
&=\nu_t(\alpha(a))
\end{align*}
and therefore $\nu_{t+1}$ is a well-defined homomorphism. Moreover $hg^{-1}z_1g\notin \Comp(L_{i+1})$. It remains to show the injectivity of $\nu_{t+1}$. Let $$y=a_1b_1\cdots a_kb_k\in \pi_1(\A_{t+1})\setminus\{1\}$$ be reduced with respect to the amalgamated product decomposition 
$$\pi_1(\A_{t+1})=\pi_1(\A_t)\ast_{A_e}A_u.$$
We show that
\begin{align*}
\nu_{t+1}(y)=&\nu_t(a_1)hg^{-1}z_1g(\iota_{i+1}\circ\eta_i(b_1))g^{-1}z_1^{-1}gh^{-1}\nu_t(a_2)\cdots\\
&\cdots \nu_t(a_k)hg^{-1}z_1g(\iota_{i+1}\circ\eta_i(b_k))g^{-1}z_1^{-1}gh^{-1}
\end{align*}
is reduced with respect to the decomposition 
$$W_{t+1}=W_t\ast_{M_l^+}(M_l^+\oplus \Z^{|I_l|}).$$
Suppose not. Then either there exists $j\in\{1,\ldots,k\}$ such that 
$$g(\iota_{i+1}\circ\eta_i(b_j))g^{-1}\in M_l^+$$
or there exists $j\in\{2,\ldots,k\}$ such that 
$$gh^{-1}\nu_t(a_j)hg^{-1}\in M_l^+.$$
Suppose we are in the first case. There exists a non-trivial element $x\in \omega(A_e)\leq A_u$ such that $g(\iota_{i+1}\circ\eta_i(x))g^{-1}\in M^+_l$. Hence $\iota_{i+1}\circ\eta_i(x)$ and  $\iota_{i+1}\circ\eta_i(b_j)$ lie in the same maximal finite-by-abelian subgroup $g^{-1}M_l^+g$ of $\Comp(L_{i+1})$. But since $\iota_{i+1}\circ\eta_i$ is injective on $A_u$ this implies that $b_j$ and $x$ lie in the same maximal finite-by-abelian subgroup of $A_u$. By Lemma \ref{changes} $\omega(A_e)$ is maximal finite-by-abelian in $A_u$ and therefore $b_j\in \omega(A_e)$, a contradiction to the reducedness of $y$.\\
Now suppose we are in the second case. There exists a non-trivial element $x\in \alpha(A_e)\leq A_w$ such that $g(\iota_{i+1}\circ\eta_i(x))g^{-1}\in M_l^+$. Hence $h^{-1}\nu_t(a_j)h$ and $\iota_{i+1}\circ\eta_i(x)$ lie in the same maximal finite-by-abelian subgroup of $W_t$. It follows that $h(\iota_{i+1}\circ\eta_i(x))h^{-1}$ and $\nu_t(a_j)$ lie in the same maximal finite-by-abelian subgroup of $W_t$. But since by the induction hypothesis $$\nu_t(a_j)=h(\iota_{i+1}\circ\eta_i(a_j))h^{-1}$$ and $\iota_{i+1}\circ\eta_i$ is injective on $A_w$ we have that $a_j$ and $x$ lie in the same maximal finite-by-abelian subgroup of $A_w$. Again Lemma \ref{changes} yields a contradiction to the reducedness of $y$.
\item Now consider the case that $\alpha(e_{t+1}), \omega(e_{t+1})\in \A_t$, i.e. $\pi_1(\A_{t+1})$ splits as an HNN-extension $\pi_1(\A_{t+1})=\pi_1(\A_t)\ast_{A_e}$. By the induction hypothesis there exist elements $h_1,h_2\in (W_t\setminus \Comp(L_{i+1}))\cup\{1\}$ such that 
$$\nu_t|_{A_w}=c_{h_1}\circ\iota_{i+1}\circ\eta_i|_{A_w}$$
and 
$$\nu_t|_{A_u}=c_{h_2}\circ\iota_{i+1}\circ\eta_i|_{A_u}.$$
We define $\nu_{t+1}:\pi_1(\A_{t+1})\to W_{t+1}$ by 
$$\nu_{t+1}|_{\pi_1(\A_t)}=\nu_t$$ and 
$$\nu_{t+1}(p)=h_2(\iota_{i+1}\circ\eta_i(p))g^{-1}z_1gh_1^{-1},$$
where $p$ denotes the stable letter of the HNN-extension $\pi_1(\A_{t+1})=\pi_1(\A_t)\ast_{A_e}$. It follows from similar arguments as in the amalgamated product case that $\nu_{t+1}$ is an injective homomorphism.
\end{itemize}
\item Assume that $e_{t+1}$ connects a rigid and a QH vertex in $\A$. First assume that $\omega(e_{t+1})=u\notin\A_t$ is the QH vertex.
In this case 
$$\pi_1(\A_{t+1})=\pi_1(\A_t)\ast_{A_e}A_u$$
and 
$$W_{t+1}=W_t\ast_{A_e}A_u.$$
By the induction hypothesis there exists $h\in (W_t\setminus \Comp(L_{i+1}))\cup\{1\}$ such that 
$$\nu_t|_{A_w}=c_h\circ\iota_{i+1}\circ\eta_{i}|_{A_w}.$$
We define $\nu_{t+1}:\pi_1(\A_{t+1})\to W_{t+1}$ by 
$$\nu_{t+1}|_{\pi_1(\A_t)}=\nu_t$$ and $$\nu_{t+1}|_{A_u}=c_h.$$
It is a straightforward calculation that this defines in fact an injective homomorphism.\\
Now assume that the QH vertex is $\alpha(e_{t+1})=w\in\A_t$. Then either
$$\pi_1(\A_{t+1})=\pi_1(\A_t)\ast_{A_e}A_u$$ for some rigid vertex group $A_u$ or 
$$\pi_1(\A_{t+1})=\pi_1(\A_t)\ast_{A_e}.$$
In both cases $W_{t+1}=W_t\ast_{A_e}$. Assume we are in the first case.\\
We define $\nu_{t+1}:\pi_1(\A_{t+1})\to W_{t+1}$ by 
$$\nu_{t+1}|_{\pi_1(\A_t)}=\nu_t$$ and $$\nu_{t+1}|_{A_u}=c_s\circ \iota_{i+1}\circ\eta_i|_{A_u},$$
where $s$ is the stable letter of the HNN-extension $W_{t+1}=W_t\ast_{A_e}$. Clearly $s\notin\Comp(L_{i+1})$. Once again it is straightforward to show that $\nu_{t+1}$ is an injective homomorphism.\\
Now assume that we are in the second case, i.e. $\pi_1(\A_{t+1})=\pi_1(\A_t)\ast_{A_e}.$ Denote by $p$ the stable latter of this HNN-extension and by $s$ the stable letter of the HNN-extension $W_{t+1}=W_t\ast_{A_e}$. By the induction hypothesis there exists $h\in (W_t\setminus \Comp(L_{i+1}))\cup\{1\}$ such that 
$$\nu_t|_{A_u}=c_h\circ\iota_{i+1}\circ\eta_{i}|_{A_u}.$$
We define $\nu_{t+1}:\pi_1(\A_{t+1})\to W_{t+1}$ by 
$$\nu_{t+1}|_{\pi_1(\A_t)}=\nu_t$$ and $$\nu_{t+1}(p)=hs.$$
As in the cases before one can easily check that this is an injective homomorphism.
\item Assume that $e_{t+1}$ connects a rigid and a virtually abelian vertex in $\A$. First assume that $\omega(e_{t+1})=u\notin\pi_1(\A_t)$. In this case 
$$\pi_1(\A_{t+1})=\pi_1(\A_t)\ast_{A_e}A_u$$
and $A^+_e=P(A^+_u)$. Let $M$ be the maximal virtually abelian subgroup of $W_t$ containing $\nu_t(A_e)$. Then by construction
$$W_{t+1}=W_t\ast_M A,$$
where $A$ is virtually abelian and contains $A_u$. Denote by $\tau: A_u \hookrightarrow A$ the corresponding embedding. Note that by construction 
$$\tau(\omega(A_e))=\iota_{i+1}\circ\eta_i(\omega(A_e))\leq M\leq A.$$
By the induction hypothesis there exists $h\in (W_t\setminus \Comp(L_{i+1}))\cup\{1\}$ such that 
$$\nu_t|_{A_w}=c_h\circ\iota_{i+1}\circ\eta_{i}|_{A_w}.$$
We define $\nu_{t+1}:\pi_1(\A_{t+1})\to W_{t+1}$ by 
$$\nu_{t+1}|_{\pi_1(\A_t)}=\nu_t$$ and $$\nu_{t+1}|_{A_u}=c_h\circ \tau.$$
One can immediately check that this is a homomorphism since
\begin{align*}
\nu_{t+1}(\alpha(A_e))&=c_h\circ\iota_{i+1}\circ\eta_{i}(\alpha(A_e))\\
&=c_h\circ\iota_{i+1}\circ\eta_{i}(\omega(A_e))\\
&=c_h\circ\tau(\omega(A_e))\\
&=\nu_{t+1}(\omega(A_e)).
\end{align*}
For the injectivity let $y=a_1b_1\cdots a_kb_k\in \pi_1(\A_{t+1})\setminus \{1\}$ be reduced with respect to the amalgamated product decomposition 
$$\pi_1(\A_{t+1})=\pi_1(\A_t)\ast_{A_e}A_u.$$
Suppose that 
$$\nu_{t+1}(y)=\nu_t(a_1)h\tau(b_1)h^{-1}\nu_t(a_2)\cdots\nu_t(a_k)h\tau(b_k)h^{-1}$$
is not reduced with respect to the splitting $W_{t+1}=W_t\ast_M A$. Then either 
$h^{-1}\nu_t(a_j)h\in M$ for some $j\in\{2,\ldots,k\}$ or $\tau(b_j)\in M$ for some $j\in\{1,\ldots,k\}$.\\
First suppose that $\tau(b_j)\in M$. By construction $$A=(M\ast_{A_e}A_u)/_{\displaystyle \underrightarrow{\ker} \Psi_n},$$ where $(\Psi_n)\subset \Hom(M\ast_{A_e}A_u,\Gamma)$ is a stable sequence, and hence since $b_j\notin A_e$ we have that $\tau(b_j)b^{-1}_j\in \underrightarrow{\ker} \Psi_n$. Once again by the construction of the stable sequence $(\Psi_n)$ this would imply that $b_j\in A_e$, a contradiction to the reducedness of $y$.\\
So suppose now that $h^{-1}\nu_t(a_j)h\in M$. Then there exists an element $x\in \omega(A_e)\leq A_u$ of infinite order, such that $\tau(x)=\iota_{i+1}\circ\eta_i(x)\in M$. Hence $h^{-1}\nu_t(a_j)h$ and $\iota_{i+1}\circ\eta_i(x)$ lie in the same maximal virtually abelian subgroup of $W_t$. It follows that $\nu_t(a_j)$ and $c_h\circ\iota_{i+1}\circ\eta_i(x)$ lie in the same maximal virtually abelian subgroup of $W_t$. We can consider $x$ also as an element of $A_w$ and since by the induction hypothesis 
$\nu_t|_{A_w}=c_h\circ\iota_{i+1}\circ\eta_i$ and $\nu_t$ is injective on $\pi_1(\A_t)$ we have that $a_j$ and $x$ lie in the same maximal virtually abelian subgroup of $\pi_1(\A_t)$. Hence $a_j\in A_e$, a contradiction to the reducedness of $y$.\\
Now assume that $w=\alpha(e_{t+1})\in \A_t$ is the virtually abelian vertex. In this case $\pi_1(\A_{t+1})=\pi_1(\A_t)\ast_{A_e}A_u$ for some rigid vertex group $A_u$ or $\pi_1(\A_{t+1})=\pi_1(\A_t)\ast_{A_e}$. In both cases $W_{t+1}=W_t$ has a decomposition as $W_{t+1}=H\ast_M A$, where $A$ is a virtually abelian group containing $A_w$, $M$ is the maximal virtually abelian subgroup containing $\iota_{i+1}\circ\eta_i(A_e)$ and $\iota_{i+1}\circ\eta_i(A_u)\leq \Comp(L_{i+1})\leq H$.\\
Suppose we are in the first case. We define $\nu_{t+1}:\pi_1(\A_{t+1})\to W_{t+1}$ by 
$$\nu_{t+1}|_{\pi_1(\A_t)}=\nu_t$$ and $$\nu_{t+1}|_{A_u}=\iota_{i+1}\circ\eta_i|_{A_u}.$$
Now suppose we are in the second case, i.e. $\pi_1(\A_{t+1})=\pi_1(\A_t)\ast_{A_e}$. Let $t$ be the stable letter of this HNN-extension. By induction hypothesis there exists an element $h\in (W_t\setminus \Comp(L_{i+1}))\cup\{1\}$ such that  
$$\nu_t(A_u)=c_h\circ\iota_{i+1}\circ\eta_{i}.$$
We then define $\nu_{t+1}:\pi_1(\A_{t+1})\to W_{t+1}$ by 
$$\nu_{t+1}|_{\pi_1(\A_t)}=\nu_t$$ and $$\nu_{t+1}(t)=\iota_{i+1}\circ\eta_i(t)h^{-1}.$$
In both cases one can check as before that $\nu_{t+1}$ is an injective homomorphism.
\end{enumerate}
\item Assume that $A_e$ is finite. Then by Lemma \ref{changes} the vertex groups of the adjacent vertices of $e_{t+1}$ are either rigid, QH subgroups, isolated QH vertex groups or isolated virtually abelian vertex groups. We again distinguish several cases.
\begin{enumerate}[(a)]
\item First assume that $A_w$ is rigid and $u=\omega(e_{t+1})\notin \A_t$, i.e. $\A_{t+1}=\A_t\ast_{A_e}A_u$. By construction there exists a distinguished edge $f$ in $\B_{t+1}$, whose edge group is isomorphic to $A_e$ (we ignore the isomorphism and consider them equal). Denote by $\alpha_f:A_e\to B_{v'}:=B_{\alpha(f)}$ and $\omega_f: A_e\to B_{u'}:=B_{\omega(f)}$ the corresponding boundary monomorphisms. By the induction hypothesis there exists an element $h\in (W_t\setminus \Comp(L_{i+1}))\cup\{1\}$ such that  
$$\nu_t(A_w)=c_h\circ\iota_{i+1}\circ\eta_{i}.$$
\begin{itemize}
\item Suppose that $A_u$ is rigid. Note that then $\iota_{i+1}\circ\eta_i(A_u)$ is contained in $B_{\omega(f)}$. We define $\nu_{t+1}:\pi_1(\A_{t+1})\to W_{t+1}$ by 
$$\nu_{t+1}|_{\pi_1(\A_t)}=\nu_t$$ and $$\nu_{t+1}|_{A_u}=c_h\circ \iota_{i+1}\circ\eta_i|_{A_u}.$$
\item Suppose that $A_u$ is a QH vertex group, an isolated QH vertex group or an isolated virtually abelian vertex group. Then $B_{u'}$ is an isomorphic copy of $A_u$ witnessed by the isomorphism $\tau: A_u\to B_{u'}$.  
We define $\nu_{t+1}:\pi_1(\A_{t+1})\to W_{t+1}$ by 
$$\nu_{t+1}|_{\pi_1(\A_t)}=\nu_t$$ and $$\nu_{t+1}|_{A_u}=c_h\circ\tau.$$
\end{itemize} 
In both cases it follows straightforward from the construction that $\nu_{t+1}$ defines a monomorphism.
\item Suppose that $A_w$ is either a QH vertex group in one of the non-trivial JSJ decompositions $\A_j^i$, an isolated QH vertex group or an isolated virtually abelian vertex group in $\A$. Assume moreover that $u=\omega(e_{t+1})\notin \A_t$, i.e. $\A_{t+1}=\A_t\ast_{A_e}A_u$. By construction there exists a distinguished edge $f$ in $\B_{t+1}$, whose edge group is isomorphic to $A_e$ (we ignore the isomorphism and consider them equal). Denote by $\alpha_f:A_e\to B_{v'}:=B_{\alpha(f)}$ and $\omega_f: A_e\to B_{u'}:=B_{\omega(f)}$ the corresponding boundary monomorphisms
\begin{itemize}
\item Suppose that $A_u$ is rigid. Note that then $\iota_{i+1}\circ\eta_i(A_u)$ is contained in $B_{\omega(f)}$. We define $\nu_{t+1}:\pi_1(\A_{t+1})\to W_{t+1}$ by 
$$\nu_{t+1}|_{\pi_1(\A_t)}=\nu_t$$ and $$\nu_{t+1}|_{A_u}=\iota_{i+1}\circ\eta_i|_{A_u}.$$
\item Suppose that $A_u$ is a QH vertex group, an isolated QH vertex group or an isolated virtually abelian vertex group. Then $B_{u'}$ is an isomorphic copy of $A_u$ witnessed by the isomorphism $\tau: A_u\to B_{u'}$.  
We define $\nu_{t+1}:\pi_1(\A_{t+1})\to W_{t+1}$ by 
$$\nu_{t+1}|_{\pi_1(\A_t)}=\nu_t$$ and $$\nu_{t+1}|_{A_u}=\tau.$$ 
\end{itemize} 
Again in both cases it follows straightforward from the construction that $\nu_{t+1}$ defines a monomorphism which satisfies the claim.
\item Assume that $A_w$ is rigid and $u=\omega(e_{t+1})\in \A_t$, i.e. $\A_{t+1}=\A_t\ast_{A_e}$. Denote by $t$ the stable letter of this HNN-extension. By construction there exists a distinguished edge $f$ in $\B_{t+1}$, whose edge group is isomorphic to $A_e$ (we ignore the isomorphism and consider them equal). Denote by $\alpha_f:A_e\to B_{v'}:=B_{\alpha(f)}$ and $\omega_f: A_e\to B_{u'}:=B_{\omega(f)}$ the corresponding boundary monomorphisms. Furthermore denote by $s$ the generator of $\pi_1(\B_{t+1})$ corresponding to $f$. By the induction hypothesis there exists an element $h\in (W_t\setminus \Comp(L_{i+1}))\cup\{1\}$ such that  
$$\nu_t|_{A_w}=c_h\circ\iota_{i+1}\circ\eta_{i}|_{A_w}.$$
\begin{itemize}
\item Suppose that $A_u$ is rigid. Note that then $\iota_{i+1}\circ\eta_i(A_u)$ is contained in $B_{\omega(f)}$. By the induction hypothesis there exists an element $\tilde{h}\in (W_t\setminus \Comp(L_{i+1}))\cup\{1\}$ such that  
$$\nu_t|_{A_u}=c_{\tilde{h}}\circ\iota_{i+1}\circ\eta_{i}|_{A_u}.$$
We define $\nu_{t+1}:\pi_1(\A_{t+1})\to W_{t+1}$ by 
$$\nu_{t+1}|_{\pi_1(\A_t)}=\nu_t$$ and $$\nu_{t+1}(t)=\tilde{h}sh^{-1}.$$
We show that this is a well-defined homomorphism. Let $x\in A_e$. Then
\begin{align*}
\nu_{t+1}(t\alpha_{e_{t+1}}(x)t^{-1})&= \nu_{t+1}(t)h(\iota_{i+1}\circ\eta_i(\alpha_{e_{t+1}}(x)))h^{-1}\nu_{t+1}(t)^{-1}\\
&=\tilde{h}sh^{-1}h(\iota_{i+1}\circ\eta_i(\alpha_{e_{t+1}}(x)))h^{-1}hs^{-1}\tilde{h}^{-1}\\
&=\tilde{h}s(\iota_{i+1}\circ\eta_i(\alpha_{e_{t+1}}(x)))s^{-1}\tilde{h}^{-1}\\
&=\tilde{h}(\iota_{i+1}\circ\eta_i(\omega_{e_{t+1}}(x)))\tilde{h}^{-1}\\
&=\nu_{t+1}(\omega_{e_{t+1}}(x))
\end{align*}
and hence we have shown that $\nu_{t+1}$ is a homomorphism. The injectivity is now an easy computation. 
\item Suppose that $A_u$ is a QH vertex group, an isolated QH vertex group or an isolated virtually abelian vertex group. Then $B_{u'}$ is an isomorphic copy of $A_u$ witnessed by the isomorphism $\tau: A_u\to B_{u'}$.  
We define $\nu_{t+1}:\pi_1(\A_{t+1})\to W_{t+1}$ by 
$$\nu_{t+1}|_{\pi_1(\A_t)}=\nu_t$$ and $$\nu_{t+1}(t)=sh^{-1}.$$
As in the case before one can show that $\nu_{t+1}$ is in injective homomorphism. 
\end{itemize} 
\item Suppose that $A_w$ is either a QH vertex group in one of the non-trivial JSJ decompositions $\A_j^i$, an isolated QH vertex group or an isolated virtually abelian vertex group in $\A$. Assume moreover that $u=\omega(e_{t+1})\in \A_t$, i.e. $\A_{t+1}=\A_t\ast_{A_e}$. Denote by $t$ the stable letter of this HNN-extension. By construction there exists a distinguished edge $f$ in $\B_{t+1}$, whose edge group is isomorphic to $A_e$ (we ignore the isomorphism and consider them equal). Furthermore denote by $s$ the generator of $\pi_1(\B_{t+1})$ corresponding to $f$. Denote by $\alpha_f:A_e\to B_{v'}:=B_{\alpha(f)}$ and $\omega_f: A_e\to B_{u'}:=B_{\omega(f)}$ the corresponding boundary monomorphisms
\begin{itemize}
\item Suppose that $A_u$ is rigid. Note that then $\iota_{i+1}\circ\eta_i(A_u)$ is contained in $B_{\omega(f)}$. By the induction hypothesis there exists an element $h\in (W_t\setminus \Comp(L_{i+1}))\cup\{1\}$ such that  
$$\nu_t|_{A_u}=c_h\circ\iota_{i+1}\circ\eta_{i}|_{A_u}.$$
We define $\nu_{t+1}:\pi_1(\A_{t+1})\to W_{t+1}$ by 
$$\nu_{t+1}|_{\pi_1(\A_t)}=\nu_t$$ and $$\nu_{t+1}(t)=hs.$$
\item Suppose that $A_u$ is a QH vertex group, an isolated QH vertex group or an isolated virtually abelian vertex group. Then $B_{u'}$ is an isomorphic copy of $A_u$ witnessed by the isomorphism $\tau: A_u\to B_{u'}$.  
We define $\nu_{t+1}:\pi_1(\A_{t+1})\to W_{t+1}$ by 
$$\nu_{t+1}|_{\pi_1(\A_t)}=\nu_t$$ and $$\nu_{t+1}(t)=s.$$ 
\end{itemize} 
Again in both cases it follows straightforward from the construction that $\nu_{t+1}$ defines a monomorphism.
\end{enumerate}
\end{enumerate}

\noindent Therefore in all cases we have constructed an embedding $\nu_{t+1}:\pi_1(\A_{t+1})\to W_{t+1}$ which coincides on rigid vertex groups with $\iota_{i+1}\circ\eta_i$ up to conjugation by an element which is either trivial or does not belong to $\Comp(L_{i+1})$.\\ 
After finitely many steps of this iterative procedure we have constructed a  graph of groups $\B'$ with $W:=\pi_1(\B')$ such that $L_i=\pi_1(\A)$ embeds into $W$. For the purpose of a possible trial and error procedure for quantifier elimination similar to the torsion-free case in \cite{selahyp}, we have to remove some vertices and edges from $\B'$.\\
Let $Q$ be a non-isolated QH vertex group in $\A$ and assume that $\eta_i(Q)$ has infinite intersection with $\pi_1(\U)$ for some connected subgraph of groups $\U$ of $\h_{i+1}$. Then $\eta_i(Q)$ splits as a subgraph of groups of the Dunwoody decomposition $\D^{(i+1)}$ of $L_{i+1}$ and we denote the vertex groups of this graph of groups which belong to $\h_{i+1}$ by $P_1,\ldots, P_k$. Since $\h_{i+1}$ is isomorphic to a subgraph of groups of $\B_{i+1}$, by the construction of $\B'$, there exists for all $s\in\{1,\ldots,k\}$ a distinguished vertex group $\tilde{P}_i$ in $\B'$ isomorphic to $P_i$. We then remove all the vertices with vertex groups $\tilde{P}_1,\ldots, \tilde{P}_k$ together with adjacent edges from $\B'$ and call this new graph of groups $\B_{i}$. Note that $\B_{i}$ is still connected and there exists an embedding $\iota_i:L_i\to \pi_1(\B_i)$ (namely the same embedding as before).
This completes the proof of Theorem \ref{completion}.
\end{proof}

\begin{definition}
Keeping the notation from the above construction, we say that 
$$\Comp(L):=\Comp(L_0)$$
is the \textnormal{completed limit group} or the \textnormal{completion} of the  well-structured resolution $\Res(L)$ and the sequence of completed limit groups $\Comp(L_i)$, $1\leq i\leq l$, together with the quotient maps $\Comp(\eta_i)$ is the \textnormal{completed resolution} of $\Res(L)$, which we denote by $\Comp(\Res(L))$.
\end{definition}

The following Lemma is now a straightforward computation, making use of the construction before.

\begin{lemma}
$\Comp(L)$ is a $\Gamma$-limit tower and for all $i\in\{0,\ldots,l-1\}$ the quotient map $\eta_i:L_i\to L_{i+1}$ can be naturally extended to a quotient map $\Comp(\eta_i): Comp(L_i)\to\Comp(L_{i+1})$.
\end{lemma}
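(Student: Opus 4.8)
The plan is to read off both assertions from the inductive construction carried out in the proof of Theorem \ref{completion}; no genuinely new argument is needed, only careful bookkeeping.

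\textbf{The tower and $\Gamma$-limit structure.} Recall that the construction produces at each level $i\in\{0,\dots,l\}$ a group $\Comp(L_i)$ with a decomposition $\B_i$, and that $\Comp(L_l)=L_l=\pi_1(\D^{(l)})$. Since every vertex group of $\D^{(l)}$ is isomorphic to a subgroup of $\Gamma$ and one-ended hyperbolic groups are cohopfian, $\Gamma$ itself occurs as a vertex group of $\D^{(l)}$, and $\Comp(L_l)$ is obtained from $\Gamma$ by iterated amalgams and HNN-extensions along finite subgroups with the other (subgroups-of-$\Gamma$) vertex groups; hence $\Comp(L_l)$ is a tower over $\Gamma$. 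Proceeding downwards, I would observe that each single step in the passage from $\Comp(L_{i+1})$ to $\Comp(L_i)$ is exactly one of the four elementary tower moves: an HNN-extension or an amalgam along a finite edge group (cases (2), and the finite-edge attachments to QH and abelian copies); a finite-by-abelian flat $M_l^+\oplus\Z^{|I_l|}$ amalgamated along $M_l^+$ (case (1)(a)); an orbifold flat obtained by attaching a QH copy along its peripheral structure (case (1)(b)); and a virtually abelian flat $W_t\ast_M A$ (case (1)(c)), which was shown in the proof to be a genuine virtually abelian flat, $A$ being virtually abelian with a retraction $\pi\colon A\to M$. The hierarchy side-condition in the virtually abelian flat move follows from the maximality of $M$ as a virtually abelian subgroup of $W_t$ together with the fact that each class $I_l$ of edges between rigid vertices contributes its flat only once. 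The final pruning step (removal of the duplicated vertices $\tilde P_1,\dots,\tilde P_k$ and their adjacent edges) does not disturb the tower structure, since those pieces are precisely the part of $\h_{i+1}$ that was copied. As $\Comp(L_{i+1})$ and every intermediate $W_t$ was verified to be a $\Gamma$-limit group during the construction, $\Comp(L)=\Comp(L_0)$ is a $\Gamma$-limit tower over $\Gamma$.

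\textbf{Extending $\eta_i$.} I would define $\Comp(\eta_i)\colon\Comp(L_i)\to\Comp(L_{i+1})$ on the decomposition $\B_i$, working inductively along the tower filtration $W_0\subset\dots\subset W_m=\pi_1(\B')$ and then passing to $\B_i$. On the part of $\B_i$ inherited unchanged from $\B_{i+1}$ — the surviving vertex groups of $\h_{i+1}$, the images $\iota_{i+1}\circ\eta_i(R)$ of rigid vertex groups, the subgroups $M_l^+$ — it is the inclusion into $\Comp(L_{i+1})$. If $V$ is an isolated orbifold, isolated virtually abelian, or $\Gamma$-subgroup vertex group of $\A$, whose blow-up $\mathbb V=\iota_{i+1}(\eta_i(V))\subseteq\h_{i+1}$ was replaced by $V$ in $\B_i$, then $\Comp(\eta_i)|_V$ is the composite $V\xrightarrow{\eta_i}\eta_i(V)=\pi_1(\mathbb V)\hookrightarrow\Comp(L_{i+1})$, i.e. it undoes the replacement. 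On a copy of a QH vertex group $Q$ of $\A$, set $\Comp(\eta_i)|_Q=\iota_{i+1}\circ\eta_i|_Q$ (the image lies in $L_{i+1}$ since $\eta_i$ is strict, hence injective on the peripheral subgroups of $Q$). On a virtually abelian flat $A$ with $W_{t+1}=W_t\ast_M A$ set $\Comp(\eta_i)|_A=\rho\circ\pi$, where $\rho$ is the already-defined restriction of $\Comp(\eta_i)$ to $M\subseteq W_t$; on a finite-by-abelian flat $M_l^+\oplus\Z^{|I_l|}$ the generators of $\Z^{|I_l|}$ go to the Bass-Serre elements in $\Comp(L_{i+1})$ of the edges between rigid vertices that $\eta_i$ keeps visible. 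Finally all the new tower generators (stable letters, etc.) introduced at level $i$ are sent to the corresponding elements of $\Comp(L_{i+1})$ used by $\iota_{i+1}\circ\eta_i$, so that $\Comp(\eta_i)$ annihilates the conjugating elements $h\in(W_t\setminus\Comp(L_{i+1}))\cup\{1\}$ recorded in the construction. One then checks level by level that these prescriptions agree on every edge group — this is forced on the copies of vertex groups of $\A$ by $\eta_i$ itself, and on the remaining edges the check reuses, verbatim, the normal-form computations and the conjugating-element bookkeeping of the construction, together with Lemma \ref{changes} to control maximal (finite-by-)abelian subgroups. By construction $\Comp(\eta_i)\circ\iota_i=\iota_{i+1}\circ\eta_i$, and the image of $\Comp(\eta_i)$ contains $\iota_{i+1}(L_{i+1})$ together with all of $\B_{i+1}$, hence is all of $\Comp(L_{i+1})$; so $\Comp(\eta_i)$ is the required quotient map.

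\textbf{Main obstacle.} The delicate point is the well-definedness of $\Comp(\eta_i)$ in the virtually abelian flat case: one must ensure that $\rho\circ\pi$ agrees on $M$ with the inclusion-based definition, i.e. that $\pi$ fixes $M$ (true by construction of $\pi$) and that $\Comp(\eta_i)$ has already been consistently defined on $M$ even though $M$ is the \emph{maximal} virtually abelian subgroup of $W_t$ and might a priori meet a flat attached at an earlier stage. This is handled by performing the definition strictly in the order of the filtration $W_0\subset\dots\subset W_m$ and invoking the almost-malnormality of maximal virtually abelian subgroups (Proposition \ref{abe2}), which pins down exactly which earlier pieces $M$ can contain. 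The other verifications — agreement on the finitely many finite edge groups and on the peripheral images of QH copies — are routine.
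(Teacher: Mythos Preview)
Your approach matches the paper's: the paper gives no proof beyond the sentence ``straightforward computation, making use of the construction before,'' and you correctly locate both assertions inside the inductive construction of Theorem~\ref{completion}, reading off the tower structure from the explicit verification that each $W_{t+1}$ is a flat or finite-edge extension over $W_t$, and assembling $\Comp(\eta_i)$ vertex-group-by-vertex-group.

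One small correction: on the finite-by-abelian flat $M_l^+\oplus\Z^{|I_l|}$ from case~(1)(a), the generators $z_j$ should simply be sent to $1$ under $\Comp(\eta_i)$ (the canonical retraction onto $M_l^+$), not to ``Bass--Serre elements of edges that $\eta_i$ keeps visible.'' There are no such distinguished elements in $\Comp(L_{i+1})$; the $z_j$ were introduced precisely to create room for the conjugating elements $h$ appearing in $\nu_{t+1}$, and killing them is what makes $\Comp(\eta_i)\circ\iota_i$ agree with $\iota_{i+1}\circ\eta_i$ on rigid vertex groups. With this adjustment your piecewise definition is consistent, and the surjectivity argument (the surviving part of $\B_{i+1}$ is hit by the identity, the pruned vertices $\tilde P_j$ are hit by $\iota_{i+1}\circ\eta_i|_Q$, and the replaced subgraphs $\mathbb V$ are hit by $\iota_{i+1}\circ\eta_i|_V$) goes through as you indicate.
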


\subsection{Closures of completions}
The next lemma follows immediately from the construction of the completion.

\begin{lemma}\label{completion property}
Let $\Res(L)$ be a well-structured resolution of a restricted $\Gamma$-limit group $L$ and let $\Comp(\Res(L))$ be its completed resolution. Denote by $\iota:L\hookrightarrow\Comp(L)$ the embedding of $L$ into its completion. Then:
\begin{enumerate}[(1)]
\item The completed resolution $\Comp(Res(L))$ is well-structured.
\item The rank  of the completed resolution $\Comp(\Res(L))$ is at most the rank of the well-structured resolution $\Res(L)$.
\item Any homomorphism $\vp:L\to \Gamma$ that factors through the resolution $\Res(L)$ can be extended to a homomorphism $\bar{\vp}:\Comp(L)\to\Gamma$ that factors through the completed resolution $\Comp(\Res(L))$, such that $\vp=\bar{\vp}\circ\iota$. 
\end{enumerate}
\end{lemma}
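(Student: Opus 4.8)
\emph{The plan} is to extract all three assertions directly from the level‑by‑level construction of $\Comp(L)$ carried out in the proof of Theorem~\ref{completion}; items (1) and (2) are bookkeeping, while (3) requires a downward induction over the levels of the resolution.

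For (1) and (2) I would argue as follows. Recall that the completed decomposition $\B_i$ of $\Comp(L_i)$ is built from $\B_{i+1}$ by replacing the $\h_{i+1}$‑pieces carrying the images of isolated QH and isolated virtually abelian vertex groups by honest copies of those groups, and by attaching, over each rigid vertex of the graph of groups $\A$ obtained by plugging the JSJ decompositions $\A_j^i$ into $\D^{(i)}$, finitely many finite‑by‑abelian flats $M_l^+\oplus\Z^{|I_l|}$ (amalgamated along $M_l^+$), orbifold flats (a QH vertex group $Q_t^i$ together with its circumference edges), and virtually abelian flats $W_t\ast_M A$. Each of these attachments is one of the elementary steps in the definition of a $\Gamma$‑limit tower, so $\Comp(L)$ is again a $\Gamma$‑limit tower; moreover every attached abelian flat has exactly the $\Z$‑rank of the virtually abelian vertex group $A_u^i$, resp.\ $A'$, it is modeled on, and every orbifold flat has exactly the underlying $2$‑orbifold of the QH vertex group $Q_t^i$ it is modeled on, so no flat of larger rank and no surface of larger genus than already occurs along $\Res(L)$ is introduced — this gives (2). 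For (1), one checks that $(\Comp(L_i))$ together with the extended quotient maps $\Comp(\eta_i)$ satisfies the conditions of Definition~\ref{def well-structured}: the extended maps are still strict (rigid envelopes, peripheral subgroups and edge groups remain embedded because the new flats are glued along finite or maximal (finite‑by‑)abelian subgroups, using Lemma~\ref{changes}), the subgraphs $\h_i$ were never altered so they still embed isomorphically into $\h_{i+1}$, and conditions (1)--(9) hold by construction.

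For (3) I would set up a downward induction. Write a factorization of $\vp$ through $\Res(L)$ as $\vp=\Psi\circ\eta_{l-1}\circ\alpha_{l-1}\circ\eta_{l-2}\circ\cdots\circ\alpha_1\circ\eta_0$ with $\alpha_i\in\Mod(L_i)$ and $\Psi\colon L_l\to\Gamma$ locally injective with respect to $\D$, and let $\psi_i\colon L_i\to\Gamma$ denote the corresponding tails, so $\psi_l=\Psi$, $\psi_i=\psi_{i+1}\circ\eta_i\circ\alpha_i$ for $1\le i\le l-1$, and $\psi_0=\psi_1\circ\eta_0=\vp$. I claim one can build $\bar\psi_i\colon\Comp(L_i)\to\Gamma$ with $\bar\psi_i|_{L_i}=\psi_i$ and factoring through the lower part of $\Comp(\Res(L))$. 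At the bottom $\Comp(L_l)=L_l$, so $\bar\psi_l:=\Psi$ works, still locally injective with respect to the Dunwoody decomposition $\D$ of $L_l$. Inductively, given $\bar\psi_{i+1}$, the modular automorphism $\alpha_i\in\Mod(L_i)$ — a product of inner automorphisms, Dehn twists along finite‑by‑abelian edges, and natural extensions of automorphisms of QH and virtually abelian vertex groups of the $\A_j^i$ — extends to some $\bar\alpha_i\in\Mod(\Comp(L_i))$, taken to be the identity on the newly attached flats, because every such QH vertex group, virtually abelian vertex group and finite‑by‑abelian edge survives (inside an orbifold flat, a virtually abelian flat, resp.\ a finite‑by‑abelian flat) in $\B_i$. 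One then simply sets $\bar\psi_i:=\bar\psi_{i+1}\circ\Comp(\eta_i)\circ\bar\alpha_i$, and using that $\Comp(\eta_i)$ restricts to $\eta_i$ on $L_i$ (the preceding lemma), that $\bar\psi_{i+1}$ restricts to $\psi_{i+1}$ on $L_{i+1}$, and that $\bar\alpha_i$ restricts to $\alpha_i$ on $L_i$, one gets $\bar\psi_i|_{L_i}=\psi_i$. Taking $\bar\vp:=\bar\psi_0$ yields $\bar\vp\circ\iota=\vp$, and $\bar\vp$ factors through $\Comp(\Res(L))$ via the $\bar\alpha_i$ and $\bar\Psi:=\Psi$.

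The step I expect to be the real content — and the reason (3) is not a one‑liner as in the torsion‑free case — is the claim that every modular automorphism of $L_i$ extends to $\Comp(L_i)$. In the torsion‑free setting one would instead use a retraction $\Comp(L)\to L$ and set $\bar\vp=\vp\circ(\text{retraction})$, but in the presence of torsion no such retraction need exist (as noted after the definition of a tower); what makes the argument above go through is precisely that the homomorphism extension at each level is obtained by \emph{composing} maps that are already available ($\Comp(\eta_i)$, the inductive $\bar\psi_{i+1}$, and $\bar\alpha_i$), so the only genuine verification is the extendability of $\alpha_i$. That in turn rests on the completion having been built to retain, up to isomorphism, every QH vertex group, every virtually abelian vertex group and every finite‑by‑abelian edge of the JSJ decompositions along $\Res(L)$ — exactly what conditions (5)--(9) of Definition~\ref{def well-structured}, inherited through part (1), guarantee. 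The explicit retractions $\pi\colon A\to M$ of case (1)(c) in the proof of Theorem~\ref{completion} are not needed for (3) as stated, but they are what underlies the tower structure that keeps the whole induction consistent.
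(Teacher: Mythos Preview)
The paper itself does not give a proof of this lemma; it only remarks that the statement ``follows immediately from the construction of the completion''. Your outline for (1) and (2) is adequate bookkeeping, and your downward induction for (3) via extending modular automorphisms is the natural way to fill in the details. The formula $\bar\psi_i=\bar\psi_{i+1}\circ\Comp(\eta_i)\circ\bar\alpha_i$ is exactly right, and the verification $\bar\psi_i\circ\iota_i=\psi_i$ follows once you know $\Comp(\eta_i)\circ\iota_i=\iota_{i+1}\circ\eta_i$ and $\bar\alpha_i\circ\iota_i=\iota_i\circ\alpha_i$.

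There is, however, a concrete error in your description of $\bar\alpha_i$. You assert that it can be ``taken to be the identity on the newly attached flats, because every such \ldots\ finite-by-abelian edge survives (inside \ldots\ a finite-by-abelian flat) in $\B_i$''. For a Dehn twist of type (2) in Definition~\ref{modone} along an edge $e$ connecting two rigid vertices $R_1,R_2$ in $\A_j^i$, this is false: that edge does \emph{not} survive as an edge of $\B_i$. In case (1)(a) of the construction, both rigid vertex groups are mapped into $\Comp(L_{i+1})$ and the edge is replaced by the attached abelian flat $M_l^+\oplus\Z^{|I_l|}$; the embedding $\iota_i$ sends $R_2$ to a conjugate of $\iota_{i+1}(\eta_i(R_2))$ by $g^{-1}z_1g$ with $z_1$ the new generator. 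If you take $\bar\alpha_i$ to be the identity on this flat and on $\Comp(L_{i+1})$, then $\bar\alpha_i\circ\iota_i$ is the identity on $\iota_i(L_i)$, not $\iota_i\circ\alpha_i$. The correct extension of the Dehn twist by $c\in Z(M^+)$ is the automorphism of $\Comp(L_i)$ that is the identity on $\Comp(L_{i+1})$ and sends $z_1\mapsto z_1\cdot m$ with $m=g\,\iota_{i+1}(\eta_i(c))\,g^{-1}\in M_l^+$; this is a modular automorphism of type (4) applied to the abelian vertex group $M_l^+\oplus\Z^{|I_l|}$, and a direct computation shows it restricts to $\alpha_i$ on $\iota_i(L_i)$. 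So the mechanism is right but the extension is non-trivial on precisely the flat that encodes that edge --- which is, after all, the whole point of introducing the pegs $z_1,\ldots,z_{|I_l|}$.
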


Given a $\Gamma$-limit tower $T$, we can associate a slightly larger tower to it, a so-called closure of $T$.
 
\begin{definition}\label{closure}
\begin{enumerate}[(1)]
\item Let $T=T_n\geq\ldots\geq T_1\geq T_0=\Gamma$ be a $\Gamma$-limit tower. A \textnormal{tower closure} of $T$ is a $\Gamma$-limit tower $\Cl(T)$ together with an embedding $\iota:T\to \Cl(T)$ such that the following diagram commutes
\begin{center}
\begin{tikzpicture}[descr/.style={fill=white}]
\matrix(m)[matrix of math nodes,
row sep=3em, column sep=1.8em,
text height=1.5ex, text depth=0.25ex]
{T=T_n&T_{n-1}&\cdots& T_1&T_0=\Gamma\\
\Cl(T)=\bar{T}_n&\bar{T}_{n-1}&\cdots&\bar{T}_1&\bar{T}_{0}=\Gamma\\};
\path[->]
(m-1-1) edge [white] node[black] {$\geq$} (m-1-2)
(m-1-2) edge [white] node[black] {$\geq$} (m-1-3)
(m-1-3) edge [white] node[black] {$\geq$} (m-1-4)
(m-1-4) edge [white] node[black] {$\geq$} (m-1-5)
(m-2-1) edge [white] node[black] {$\geq$} (m-2-2)
(m-2-2) edge [white] node[black] {$\geq$} (m-2-3)
(m-2-3) edge [white] node[black] {$\geq$} (m-2-4)
(m-2-4) edge [white] node[black] {$\geq$} (m-2-5)
(m-1-1) edge node[right] {$\iota$} (m-2-1)
(m-1-2) edge node[right] {$\iota_{n-1}$} (m-2-2)
(m-1-4) edge node[right] {$\iota_1$} (m-2-4)
(m-1-5) edge node[right] {$\iota_0$} (m-2-5);   
\end{tikzpicture}
\end{center}
where for all $i\in\{0,\ldots,n-1\}$ the map $\iota_i=\iota|_{T_i}$ is an embedding and the horizontal lines are the tower decompositions. Moreover the following hold:
\begin{itemize}
\item If $T_i=T_{i-1}\ast_MA$ is a virtually abelian flat over $T_{i-1}$ then $\bar{T}_i=\bar{T}_{i-1}\ast_{\iota(M)} \bar{A}$ is a virtually abelian flat over $\bar{T}_{i-1}$, where $\iota|_A: A\to \bar {A}$ is an embedding which is an isomorphism when restricted to $M$, such that $\iota(A)$ is a subgroup of finite index in $\bar{A}$ and $Z_{\iota(M)}$ is a direct factor of $Z_{\bar{A}}$. Here we denote by $Z_{\bar{A}}$ and $Z_{\iota(M)}$ the maximal free abelian subgroups of $\bar{A}$ and $\iota(M)$ respectively.
\item If $T_i=T_{i-1}\ast_E$ is an HNN-extension along a finite subgroup, then $\bar{T}_i=\bar{T}_{i-1}\ast_{E'}$, where $E'$ is a finite subgroup containing $\iota(E)$.
\item If $T_i\ast_E H$ is an amalgamated product with a subgroup $H$ of $\Gamma$ along a finite group $E$, then $\bar{T}_i=\bar{T}_{i-1}\ast_{E'} V$, where $E'$ is a finite group containing $\iota(E)$, $V$ is a subgroup of $\Gamma$ and $\iota(H)\leq V$.
\item If $T_i$ is an orbifold flat over $T_{i-1}$ witnessed by the graph of groups $\A$ with QH vertex group $Q$ fitting into the short exact sequence
$$1\to E\to Q\to \pi_1(\mO)\to 1,$$
then $\bar{T}_i$ is a an orbifold flat over $\bar{T}_{i-1}$ witnessed by the graph of groups $\B$ with QH vertex group $Q'$ fitting into the short exact sequence
$$1\to E'\to Q'\to \pi_1(\mO)\to 1,$$
where $E'$ is a finite group containing $\iota(E)$ and $\iota(Q)\leq Q'$. Moreover the underlying graphs of $\A$ and $\B$ are isomorphic and edge groups of $\B$ are finite index supergroups of the corresponding edge groups in $\A$.
\end{itemize} 
\item Let $\Res(L)$ be a well-structured resolution of a $\Gamma$-limit group $L$ and $\Comp(\Res(L))$ its completed resolution. A \textnormal{closure of the resolution} $\Res(L)$, denoted by $\Cl(\Res(L))$, is defined as follows: 
We replace the completed limit group $\Comp(L)$ by a tower closure of itself and all other limit groups which appear along the completed resolution by the tower closure which is induced by the tower closure of $\Comp(L)$.
\end{enumerate}
\end{definition}

%If $\Gamma$ is torsion-free, the definition of a closure of a $\Gamma$-limit tower is slightly easier.
%
%\begin{bem}
%If $\Gamma$ is a torsion-free hyperbolic group and $$T=T_n\geq\ldots\geq T_1\geq T_0=\Gamma$$ is a $\Gamma$-limit tower, then in particular $T$ is torsion-free and a tower closure $\Cl(T)$ of $T$ is iteratively defined as follows. The closure embeddings are the same as in the general case, so we omit them.
%\begin{itemize}
%\item $\Cl(T_0)=\Gamma$. 
%\item If $T_i\ast H$ is a free product with a subgroup of $\Gamma$, then $\Cl(T_i)=\Cl(T_{i-1})\ast V$, where $V$ is a subgroup of $\Gamma$ containing $H$.
%\item If $T_i=T_{i-1}\ast_MA$ is an abelian flat over $T_{i-1}$ then $\Cl(T_i)=\Cl(T_{i-1})\ast_M \bar{A}$, where $\bar{A}$ is the  abelian closure of $A$.
%\item If $T_i$ is an orbifold flat over $T_{i-1}$ with underlying surface $\Sigma$, then $\Cl(T_i)$ is the $\Gamma$-limit tower obtained by gluing $\Sigma$ along its boundary components onto $T_{i-1}\leq\Cl(T_{i-1})$ in the obvious way.
%\item If $T_i=T_{i-1}\ast F$ for some free group $F$, then $\Cl(T_i)=\Cl(T_{i-1})\ast F$.
%\end{itemize}
%\end{bem}
%
\begin{definition}
Let $$\Cl(\Res(L)):\ \Cl(\Comp(L))=L_0\xrightarrow{\pi_1}L_1\xrightarrow{\pi_2}L_2\xrightarrow{\pi_3}\ldots\xrightarrow{\pi_k}L_k$$  be a closure of a well-structured resolution of a $\Gamma$-limit group $L$. Let $G$ be a $\Gamma$-limit group and $$\GCl(\Res(L)):G=G_0\xrightarrow{\eta_1}G_1\xrightarrow{\eta_2}G_2\xrightarrow{\eta_3}\ldots\xrightarrow{\eta_k}G_k$$ a resolution of $G$ with the following properties:
\begin{enumerate}[(a)]
\item $G_0$ admits a decomposition as a graph of groups $\D_0$ with finite edge groups, such that one vertex group is isomorphic to $L_0$ and all other vertex groups $H_1,\ldots,H_l$ are isomorphic to subgroups of $\Gamma$. 
\item For all $i\in\{1,\ldots,k\}$, $G_i$ admits a decomposition as a graph of groups $\D_i$ which arises from the graph of groups $\D_0$ by replacing the vertex group $L_0$ by $L_i$ and adjusting the adjacent boundary morphisms in the obvious way. 
\item $\eta_i|_{L_i}=\pi_i$ for all $i\in\{1,\ldots,k\}$.
\item $\eta_i|_{H_j}=\id$ for all $j\in\{1,\ldots,l\}$, for all $i\in\{1,\ldots,k\}$.
\end{enumerate}
Then we call $\GCl(\Res(L))$ a \textnormal{generalized closure of the resolution $\Res(L)$} and $$\GCl(\Comp(L)):=G$$ a \textnormal{generalized closure of the completion of $L$}.
\end{definition}

Closures of the completion of a well-structured resolution of a $\Gamma$-limit group $L$ will appear in a natural way in the proof of the Generalized Merzlyakov's Theorem.

\begin{definition}
Let $\Res(L)$ be a well-structured resolution of a $\Gamma$-limit group $L$. We call a finite set $\{\Cl(\Res(L))_1,\ldots,\Cl(\Res(L))_q\}$ of closures of $\Res(L)$ a \textnormal{covering closure} if for any homomorphism $\vp: L\to \Gamma$ that factors through $\Res(L)$ there exists an extension $\bar{\vp}: \Cl(\Comp(L))_i\to \Gamma$ of $\vp$ that factors through $\Cl(\Res(L))_i$ for (at least) one index $i\in\{1,\ldots,q\}$.
\end{definition}

In order to understand what it means for a finite set $\{\Cl(\Res)_1,\ldots,\Cl(\Res)_q\}$ to be a covering closure we look at the following toy example.

\begin{beispiel}
Let $F=F(a_1,a_2)$ and $L=\langle a_1,a_2,c\ |\ [a_2,z]\rangle$. Then 
$$L=F\ast_{\Z} \Z^2=\langle a_1,a_2\rangle \ast_{\{a_2=c_1\}}\langle c_1,c_2\ |\ [c_1,c_2]\rangle$$
is a restricted $F$-limit group that admits a well-structured resolution $\Res(L): L\xrightarrow{\eta} F$. Clearly $\Comp(L)=L$ and $\Comp(\Res(L))=\Res(L)$.\\
Let $C:=\langle c_1,c_2\rangle\leq L$, $Z=\langle c_1, z_2\ |\ [c_1,z_2]\rangle\cong \Z^2$ and consider the homomorphism 
$f: C\to Z$ given by $f(c_1)=c_1$ and $f(c_2)=c_1^2z_2^3$. Then since $\langle c_1, c_1^2z_2^3\rangle$ is of index $3$ in $Z$, the map $f$ defines a closure 
$$\Cl(L)=F\ast_{\{a_2=c_1\}}Z$$ of $L$.
To the closure embedding $f$ we can assign the following system of linear equation(s) with two variables and one equation: 
$$\Sigma(x,y):\quad x=2+3y.$$
For any $p\in\Z$ the system of equations $\Sigma(p,y)$ (in one variable $y$) has a solution if and only if $p\in 2+3\Z$.\\
Let now $\vp\in \Hom(L,F)$ be a restricted homomorphism and assume that $\vp$ can be extended to a homomorphism $\tilde{\vp}: \Cl(L)\to F$. Clearly $\vp(c_2)=a_2^p$ for some $p\in\Z$.
Since $\vp$ can be extended to $\Cl(L)$, this implies that 
$$a_2^p=\vp(c_2)=\tilde{\vp}\circ f(c_2)=\tilde{\vp}(c_1^2z_2^3)=a_2^{2+3k}$$ for some $k\in\Z$. In particular the system of equations $\Sigma(p,y)$ has a solution.\\
On the contrary suppose we are given a finite index subgroup $U=m\Z$ of $\Z$. Then for every integer $k$ the coset $k+U$ defines a closure embedding $h: C\to Z$ by $h(c_1)=c_1$ and $h(c_2)=c_1^kz_2^m$. A homomorphism $\vp: L\to F$ can be extended to a homomorphism of $\Cl_h(L)\to F$ if and only if $\vp(c_2)=a_2^p$ for some $p\in k+U$ (here $\Cl_h(L)$ denotes the closure given by $h$).\\
Therefore a set $\{\Cl(\Res(L))_1,\ldots,\Cl(\Res(L))_q\}$ is a covering closure of $L$ if and only if the union of the cosets $$\bigcup_{i=1}^qk_i+U_i=\bigcup_{i=1}^qk_i+m_i\Z$$
associated to the (systems of equations of the) closure embeddings cover $\Z$.
\end{beispiel}

The following observation is now immediate.

\begin{bem}\label{abeliantorsionfree}
\begin{enumerate}[(1)]
\item Let $A=\langle a_1,\ldots,a_m\rangle$ and $\bar{A}=\langle z_1,\ldots,z_m\rangle$ be free abelian groups of rank $m$ and $f: A\to \bar{A}$ be an embedding which  maps $A$ to a finite index subgroup of $\bar{A}$. By the Elementarteilersatz we can assume that for all $i\in\{1,\ldots,m\}$ there exists $k_i\in\Z$ such that $f(a_i)=k_iz_i$. Hence $$f(A)\cong k_1\Z\oplus k_2\Z\oplus\ldots\oplus k_m\Z=:U_f\leq \Z^m.$$
Let $\Gamma$ be a torsion-free hyperbolic group and $\vp:A\to \Gamma$ an arbitrary homomorphism. Clearly $\vp(A)\leq\langle u\rangle\cong \Z$ for some element $u\in \Gamma$ with no root. For all $i\in\{1,\ldots,m\}$ there exists $p_i\in \Z$ such that $\vp(a_i)=u^{p_i}$. Then the following hold:
\begin{align*}
&\vp \text{ can be extended to }\bar{A}\\
\Longleftrightarrow &\text{ there exists a homomorphism } \bar{\vp}:\bar{A}\to F \text{ such that }\vp=\bar{\vp}\circ f\\
\Longleftrightarrow &\ u^{p_i}=\vp(a_i)=\bar{\vp}\circ f(a_i)=\bar{\vp}(k_iz_i)=\bar{\vp}(z_i)^{k_i} \text{ for all } i\in\{1,\ldots,m\}\\
\Longleftrightarrow &\ p_i\in k_i\Z\text{ for all } i\in\{1,\ldots,m\}\\
\Longleftrightarrow &\ (p_1,\ldots,p_m)\in U_f.
\end{align*}
Hence given finitely many embeddings $f_1,\ldots,f_k$ of $A$ into finite index abelian supergroups $\bar{A}_1,\ldots,\bar{A}_k$ the following holds:\\
Any homomorphism $\vp:A\to \Gamma$ can be extended to one of the groups $\bar{A}_1,\ldots,\bar{A}_k$ if and only if the union $\bigcup _{i=1}^k U_{f_i}$ of the associated subgroups of $\Z^m$ covers $\Z^m$.
\item Let now  $A=\langle c,a_1,\ldots,a_m\rangle$ and $\bar{A}=\langle c, z_1,\ldots,z_m\rangle$ be free abelian groups of rank $m+1$ and $f: A\to \bar{A}$ be an embedding which  maps $A$ to a finite index subgroup of $\bar{A}$ and stays the identity on $c$. Let $\vp:A\to\Gamma$ be an arbitrary homomorphism that maps $c$ to an element $u\in\Gamma$ with no root. Again $\vp(A)=\langle u\rangle\cong \Z$ and for all $i\in\{1,\ldots,m\}$ there exists $p_i\in \Z$ such that $\vp(a_i)=u^{p_i}$.
 Similar to case $(1)$ there exist $K_f\in \Z^m$ and a subgroup $U_f\leq \Z^m$ such that $\vp:A\to \Gamma$ can be extended to $\bar{A}$ if and only if $(p_1,\ldots,p_m)\in K_f+U_f$.\\
Hence given finitely many embeddings $f_1,\ldots,f_k$ of $A$ into finite index abelian supergroups $\bar{A}_1,\ldots,\bar{A}_k$ that stay the identity on $c$ the following holds:\\
Any homomorphism $\vp:A\to \Gamma$ can be extended to one of the groups $\bar{A}_1,\ldots,\bar{A}_k$ if and only if the union $\bigcup _{i=1}^k K_{f_i}+U_{f_i}$ of the associated cosets of subgroups of $\Z^m$ covers $\Z^m$.
\end{enumerate}
\end{bem}

We will use the ideas from this Remark later in the proof of the Generalized Merzlyakov's Theorem. But before we can start proving the theorem we have to define so-called test sequences for $\Gamma$-limit groups.

\section{Test sequences}\label{8}
We are going to define test sequences for the completion of a good well-structured resolution of a $\Gamma$-limit group $L$. We are doing this iteratively by defining these test sequences from bottom to top of the $\Gamma$-limit tower associated to the completion. We have to look at all the possible cases how the tower can be build, namely if we add virtually abelian or orbifold flats or we perform an HNN-extension or an amalgamated product along some finite subgroup.
The idea of a test sequence $(\lambda_n)$ for a $\Gamma$-limit tower $$T=T_0\geq T_{1}\geq\ldots\geq T_l=\Gamma$$ is the following:\\
Let $G$ be a group containing $T$ and $(\vp_n)\subset \Hom(G,\Gamma)$ a stable sequence of extensions of $(\lambda_n)$, which converges into the action of a $\Gamma$-limit group $L=G/\underrightarrow{\ker}\ \vp_n$ on some real tree $Y$ such that:
\begin{itemize}
\item $T\leq L$,
\item $L$ is one-ended relative $T$ and
\item $T$ acts without a global fixed point on $Y$.
\end{itemize}
Then the splitting of $T$ coming from the top level tower decomposition is "visible" in a decomposition of $L$ associated to its action on the real tree $Y$. To achieve this we need to control which elements of $T=T_0$ act hyperbolically on $Y$, while we have to guarantee that all elements of $T_1$ act elliptically on $T$. 

\begin{definition}
Let $G, \Gamma$ be groups with fixed finite generating sets and $(\lambda_n)\subset \Hom(G,\Gamma)$ be a sequence of homomorphisms. For $g,g'\in G$ we say that $\lambda_n(g)$ dominates the growth of $\lambda_n(g')$ if 
$$\lim_{n\to\infty}\frac{|\lambda_n(g')|_{\Gamma}}{|\lambda_n(g)|_{\Gamma}}=0.$$
For two subgroups $A,B\leq G$ we say that $\lambda_n|_A$ dominates the growth of $\lambda_n|_B$ if
$$\lim_{n\to\infty}\frac{|\lambda_n(b)|_{\Gamma}}{|\lambda_n(a)|_{\Gamma}}=0$$
for all $a\in A\setminus\{1\}, b\in B\setminus\{1\}$.
\end{definition}

We are now ready to define test sequences.

\begin{definition}\label{testseq} Let $\Gamma$ be a non-elementary hyperbolic group, 
$$L=T_m\geq T_{m-1}\geq\ldots\geq T_0=\Gamma$$
 a $\Gamma$-limit tower and $\vp_n\subset\Hom(L,\Gamma)$ a stably injective sequence. For $i\in\{1,\ldots,m\}$ let $\B_i$ be the graph of groups which is given by the i-th level of the tower decomposition of $L$. We call $(\vp_n)$ a \textnormal{test sequence} for the tower $L$ if the following holds
for any restricted $\Gamma$-limit group $M_i$ which contains $T_i$ and is one-ended relative $T_i$:\\
For any stably injective convergent sequence $(\Psi_n)\subset\Hom(M_i,\Gamma)$ such that
\begin{itemize}
\item for all $n\in\N$, $\Psi_n$ is an extension of $\vp_n|_{T_i}$ which is short relative to $T_i$ with respect to the word metric on $\Gamma$ and
\item $T_i$ does not act with a global fixpoint on the real tree $T$ into which the sequence $(\Psi_n)$ converges, 
\end{itemize}
the following hold:
\begin{enumerate}[(1)]
\item If $T_i$ has the structure of an orbifold flat over $T_{i-1}$ with QH vertex group $Q$ fitting into the short exact sequence 
$$1\to E\to Q\to\pi_1(\mO)\to 1,$$
then $M_i$ admits an orbifold flat decomposition $\A$ over a group $V$, which has one vertex $v$ with  QH vertex group $Q'$ fitting into the short exact sequence 
$$1\to E'\to Q'\to\pi_1(\mO)\to 1$$ 
and several vertices with vertex groups $V_1,\ldots,V_r$ corresponding to the various orbits of point stabilizers in the action of $M_i$ on $T$. Moreover the underlying graphs of $\A$ and $\B_i$ are isomorphic and the edge groups of $\A$ are finite index supergroups of the corresponding edge groups in $\B_i$, and $T_{i-1}\leq V$.
\item If $T_i$ has the structure of a virtually abelian flat $T_i=T_{i-1}\ast_CK$ over $T_{i-1}$ then there exist finitely many $\Gamma$-limit quotients $N_1,\ldots,N_p$ of the $\Gamma$-limit group $M_i$ and each $N_i$ admits a decomposition $\D$ relative $T_i$ along finite groups with one distinguished vertex group $U$ and all other vertex groups of $\D$ are isomorphic to subgroups of $\Gamma$. Moreover $U$ admits a decomposition as an amalgamated product $\A$ of the form $U=V\ast_{A_1}A$, where the subgroup $A$ is virtually abelian and contains the virtually abelian subgroup $K$ as a subgroup of finite index, $A_1$ contains $C$ as a subgroup of finite index and $T_{i-1}\leq V$.
\item If $T_i=T_{i-1}\ast_{E'}K$ is an extension of $T_{i-1}$ along a finite subgroup and $K$ is isomorphic to a one-ended subgroup of $\Gamma$, then there exist finitely many $\Gamma$-limit quotients $N_1,\ldots,N_p$ of the $\Gamma$-limit group $M_i$ and each $N_i$ admits a decomposition $\D$ relative $T_i$ along finite groups with one distinguished vertex group $U$ and all other vertex groups of $\D$ are isomorphic to subgroups of $\Gamma$. Moreover $U$ admits a splitting as a graph of groups $\A$ of the form $U=V\ast_E P$, where $E$ is a finite subgroup containing $E'$, $T_{i-1}\leq V$ and $P$ is isomorphic to a subgroup of $\Gamma$ containing $K$.
\item If $T_i=T_{i-1}\ast_{E'}$ is an extension of $T_{i-1}$ along a finite subgroup then $M_i$ admits a splitting $\A$ as an HNN-extension $M_i=V\ast_E$, where $E$ is a finite subgroup containing $E'$ and $T_{i-1}\leq V$. 
\end{enumerate}
In all cases the induced decomposition of $T_i$ inherited from $\A$ yields a graph of groups which is isomorphic to $\B_i$.
\end{definition}

\subsection{Orbifold flats}
\begin{satz}\label{orbifoldflat}
Assume that $G$ has the structure of an orbifold flat over some subgroup $H$ with QH vertex group $Q$, and there exists a test sequence $\lambda_n^H$ for $H$. Assume moreover that for all $n\in\N$ there exists a homomorphism $\Theta_n:Q\to\Gamma$ with non-virtually abelian image, such that (up to conjugacy) $\Theta_n(C)=\lambda_n^H(C)$ for all edge groups $C$ adjacent to $Q$ in the orbifold flat decomposition of $G$. Then there exists a test sequence $\lambda_n$ for $G$.
\end{satz}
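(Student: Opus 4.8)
The plan is to build $\lambda_n$ piecewise along the graph of groups $\A$ exhibiting $G$ as an orbifold flat over $H$. On the vertex groups of $\A$ other than the QH vertex $Q$ -- which together carry (a finite decomposition of) $H$ -- I would let $\lambda_n$ restrict to a fixed conjugate of the given test sequence $\lambda_n^H$. On $Q$ I would use a suitably \emph{generic} modification $\widetilde\Theta_n$ of $\Theta_n$, and the construction of this modification is the heart of the matter. First, using the hypothesis that $\Theta_n(C)=\lambda_n^H(C)$ up to a single conjugacy for all edge groups $C$ adjacent to $Q$, conjugate $\Theta_n$ so that it agrees \emph{exactly} with $\lambda_n^H$ on those edge groups. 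Then fix a multicurve on the underlying orbifold $\mO$ that is disjoint from $\partial\mO$ and fills $\mO$, let $\psi$ be the product of Dehn twists along it, realize $\psi$ as an automorphism $\psi_\ast$ of $Q$ that is the identity on each peripheral subgroup and on the finite kernel $E$, and set $\widetilde\Theta_n:=\Theta_n\circ\psi_\ast^{k_n}$ for a sequence $k_n\to\infty$ chosen (by a diagonal argument against the growth of $\Theta_n$) to grow fast enough. Since $\psi_\ast$ fixes the edge subgroups adjacent to $Q$ pointwise, $\widetilde\Theta_n$ and $\lambda_n^H$ still agree on all edge groups of $\A$, so the piecewise recipe produces a well-defined $\lambda_n\colon G\to\Gamma$.

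Next I would record the two designed-in properties. Since the multicurve fills $\mO$, after passing to a subsequence the rescaled sequence $\tfrac{1}{|\widetilde\Theta_n|}d_{\widetilde\Theta_n}$ converges to the $\pi_1(\mO)$-tree dual to the arational measured foliation supported by that multicurve, with $E$ acting trivially; this arationality is the genericity that will force $Q$ into an orbifold-type vertex in any limit. And by letting $k_n\to\infty$ quickly one arranges that $\lambda_n|_Q=\widetilde\Theta_n$ dominates the growth of $\lambda_n|_H=\lambda_n^H$, which is exactly the domination between consecutive tower levels required (implicitly) by Definition \ref{testseq}. Stability and -- after passing to a subsequence and arranging injectivity on $E$ -- stable injectivity of $(\lambda_n)$ then follow from those of $\lambda_n^H$, the faithfulness of the limit $Q$-action on $\pi_1(\mO)$, and a reduced-word argument on the Bass--Serre tree of $\A$ combined with the level domination.

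It remains to verify that $(\lambda_n)$ is a test sequence for the tower $G=T_m\ge T_{m-1}=H\ge\cdots\ge T_0=\Gamma$. At levels $i<m$ the conditions of Definition \ref{testseq} are inherited verbatim from the fact that $\lambda_n|_H$ is a conjugate of the test sequence $\lambda_n^H$, so the genuinely new content is condition (1) at the top level. Given a stably injective $(\Psi_n)\subset\Hom(M_m,\Gamma)$ extending $\lambda_n$, short relative to $G$, with $G$ acting without global fixed point on the limit tree $Y$, I would apply the relative Rips machine (Theorem \ref{relativerips}, with the stability hypotheses supplied by Theorem \ref{eigenschaften}) to split the action of $M_m$ on $Y$ as a graph of actions; use the arationality of the limit of $\widetilde\Theta_n$ to show that $Q$ lies in an orbifold-type vertex whose associated orbifold is again $\mO$ and whose adjacent edge groups are finite-index supergroups of those of $\A$; and then collapse this orbifold vertex and invoke the test-sequence property of $\lambda_n^H$ for the restriction of $(\Psi_n)$ to the subgroup $M'$ carrying the complementary part, recovering the lower-level structure and assembling the orbifold flat decomposition of $M_m$ over the resulting group $V\supseteq T_{m-1}$.

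The hardest part will be this last verification, and within it two issues stand out. The first is torsion: one must show the finite kernel $E'$ of the limiting orbifold vertex is still finite (bounded via Proposition \ref{eig}) and that edge groups only enlarge to finite-index supergroups, since the torsion-free shortcuts of \cite{selahyp} break down and one is forced to argue through the Dunwoody and virtually abelian JSJ structure of $\Gamma$-limit groups. The second is the bookkeeping in the gluing and in the downward induction: matching edge groups exactly when the hypothesis only supplies conjugacy, and checking that relative shortness of $(\Psi_n)$ restricts to the shortness hypothesis needed to apply the test-sequence property of $\lambda_n^H$ one tower level below.
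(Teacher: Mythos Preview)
Your construction has a fundamental gap at its core: iterating a single multitwist does \emph{not} produce an arational limit. If $\gamma$ is a multicurve (pairwise disjoint simple closed curves) and $\psi$ the product of Dehn twists along its components, then for any curve $c$ the projective classes $[\psi^{k}(c)]$ converge in Thurston's boundary to $[\gamma]$ itself---a rational lamination with closed leaves---and the dual tree is the simplicial Bass--Serre tree of the splitting along $\gamma$. Elements of $Q$ supported in a complementary piece of $\mO\setminus\gamma$ then act elliptically in the limit, so the limit $Q$-action is not of orbifold type; in particular you cannot conclude that $Q$ sits in an orbifold-type vertex of the graph of actions for $M_m\curvearrowright Y$. (A collection of pairwise disjoint curves also cannot fill $\mO$ except in trivially small cases.) The paper avoids this by using \emph{two} transverse curve systems $A,B$ with $A\cup B$ filling $\mO$ (Proposition~\ref{curves}) and building $\lambda_Q^{(n)}$ via an iterated scheme $\tau_n,\nu_n$ that alternates high-power twists along $A$ and along $B$; this alternation is precisely what forces every infinite-order non-peripheral element to act hyperbolically with uniformly comparable translation lengths (Lemma~\ref{freeaction}, relying on the ratio inequalities (1)--(8) of Lemma~\ref{quadratic}). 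The paper also imposes an explicit non-extendability condition (Lemma~\ref{quadratic}(9)) on $\lambda_Q^{(n)}$, without which one cannot conclude in Lemma~\ref{orbitest2} that the stabilizer $Q'$ of the orbifold component has the \emph{same} underlying orbifold as $Q$ rather than a strictly larger one; your sketch has no mechanism for this.

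A second gap: you cannot apply Theorem~\ref{relativerips} directly to $M_m\curvearrowright Y$, because $M_m$ is assumed one-ended relative to $G$, not relative to its \emph{elliptic} subgroups (and $G$ itself does not act elliptically, since $Q$ is hyperbolic). The paper flags this explicitly before Lemma~\ref{freeaction} and works around it by strong approximation via geometric actions (Theorem~\ref{relativegeometric}) together with a hand-built transverse covering of $Y$ (the argument of Lemma~\ref{orbitest2}), rather than by invoking the Rips machine.
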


Before we start, we observe that extensions of some orbifold groups cannot appear as QH subgroups of $\Gamma$-limit groups. Recall that a QH subgroup is a finite extension of a compact hyperbolic cone-type 2-orbifold group.

\begin{lemma}\label{smallorbifolds}
Let $L$ be a one-ended $\Gamma$-limit group and $\A$ be a virtually abelian JSJ decomposition of $L$. Let $Q$ be a finite extension of the fundamental group of a cone-type orbifold $\mO$ (possibly with boundary) such that the complement of the cone points $\mO^c$ of $\mO$ is a pair of pants, i.e. $\mO$ is either a sphere with 3 cone points, a disc with 2 cone points, an annulus with 1 cone point or a pair of pants. Then $Q$ is not isomorphic to a QH subgroup of $L$. In particular if $Q$ is a vertex group of $\A$, then $Q$ is rigid.
\end{lemma}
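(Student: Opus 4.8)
The strategy is proof by contradiction, and the crux is the elementary observation that the four listed orbifolds are precisely the compact hyperbolic cone-type $2$-orbifolds carrying no essential, non-peripheral, two-sided simple closed curve. First I would record this topological input: if $\mO$ is a compact cone-type $2$-orbifold whose complement of cone points $\mO^c$ is a pair of pants, then every two-sided embedded circle in $\mO$ either bounds a disc, bounds a disc containing exactly one cone point, or is isotopic to a boundary component; hence $\mO$ has no essential non-peripheral simple closed curve and, equivalently, supports no arational measured foliation. This is classical: on a pair of pants every simple closed curve is inessential or boundary-parallel, and restoring the at most two cone points only adds the ``surrounds one cone point'' type. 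Algebraically, taking as peripheral structure on $\mO$ all of its boundary components and cone points, $\pi_1(\mO)$ — and therefore the finite-by-orbifold group $Q$ together with its peripheral subgroups $\pi^{-1}(P)$ — admits no non-trivial splitting over a virtually cyclic subgroup in which every peripheral subgroup is elliptic.

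Now suppose, towards a contradiction, that $Q$ is isomorphic to a QH subgroup of $L$; since $\mO$ is hyperbolic, $Q$ is neither finite nor virtually abelian. In the virtually abelian JSJ decomposition of a one-ended $\Gamma$-limit group (built from the action on a limit real tree via the Rips machine, Theorem \ref{rips}, together with the normalization of \cite{rewe}) the QH vertices are exactly the flexible ones: each arises from an orbifold-type, indecomposable vertex of a graph of actions whose underlying $2$-orbifold supports an arational measured foliation, and every essential simple closed curve on that orbifold yields a geometric splitting of $L$ over a $2$-ended subgroup (Definition \ref{defunfolding}(4)) in which the QH vertex is not elliptic. If instead $\mO$ has no boundary, i.e.\ $\mO$ is the sphere with three cone points, then all peripheral subgroups, hence all adjacent edge groups of the putative splitting realising $Q$, are finite; as $L$ is one-ended that splitting is trivial and $L=Q$, which returns us to the closed-orbifold alternative of the trichotomy for one-ended $\Gamma$-limit groups and to the rigid case below. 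By the topological input, $\mO$ carries no essential non-peripheral simple closed curve, so $Q$ provides no splitting of $L$ over a virtually abelian subgroup, i.e.\ $Q$ is universally elliptic; by Definitions \ref{defjsj} and \ref{jsjdef} such a vertex group is recorded as a \emph{rigid} vertex of the JSJ, not a QH one — a contradiction. Hence $Q$ is not a QH subgroup of $L$.

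For the final assertion, suppose $Q$ is a vertex group of $\A$. By the Remark following Definition \ref{jsjdef}, a vertex group of $\A$ is rigid unless it is virtually abelian or a QH subgroup. Being a finite extension of a hyperbolic cone-type orbifold group, $Q$ is not virtually abelian, and by the previous paragraph it is not a QH subgroup; therefore $Q$ is rigid.

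The step I expect to be the main obstacle is the bridge between the bare Definition \ref{defunfolding}(3) of a QH vertex — which by itself imposes no condition on essential simple closed curves — and the fact that inside the virtually abelian JSJ of a one-ended $\Gamma$-limit group a QH vertex genuinely comes from an orbifold-type (indecomposable) piece of the Rips decomposition, hence carries an arational foliation and essential curves. Making this precise relies on the explicit construction and the maximal-complexity/normal-form clauses of the JSJ in \cite{rewe} rather than on the definitions reproduced in this excerpt; once that is in hand, the topology of pairs of pants finishes the proof.
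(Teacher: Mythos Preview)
Your argument is correct and is essentially an unpacking of the paper's own proof, which is the single line ``This follows from Proposition 5.12 and Proposition 5.20 in \cite{guirardeljsj}.'' Those two propositions in Guirardel--Levitt establish exactly the bridge you flag as the main obstacle: in the JSJ of a one-ended $K$-CSA group the QH vertices are the flexible ones, and a finite-by-orbifold vertex whose underlying orbifold carries no essential non-peripheral simple closed curve is universally elliptic, hence rigid rather than QH. Your topological input (that the four listed orbifolds are precisely those whose cone-point complement is a pair of pants, hence have no such curve) is the other ingredient of their argument; so you have reconstructed the cited result rather than taken a different route.
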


\begin{proof}
This follows from Proposition 5.12 and Proposition 5.20 in \cite{guirardeljsj}.
\end{proof}

The following result for hyperbolic groups is well-known so we omit its proof.

\begin{lemma}\label{potenz}
Let $\Gamma$ be a hyperbolic group. Then there exists $N>0$ such that for all $g,h \in\Gamma$ the following holds:\\
If $g$ has finite order and $\langle h, g\rangle$ is virtually cyclic, then $gh^N=h^Ng$. 
\end{lemma}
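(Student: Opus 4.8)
The plan is to reduce the statement to two standard finiteness facts about the hyperbolic group $\Gamma$, after which only elementary group theory remains. The two facts are: (i) there is a constant $M=M(\Gamma)$ bounding the order of every finite subgroup of $\Gamma$ (Proposition \ref{eig}(1) applied to $\Gamma$ itself), so every torsion element of $\Gamma$ has order dividing $M!$; and (ii) every virtually cyclic subgroup $V\le\Gamma$ admits a canonical description by a short exact sequence $1\to F\to V\xrightarrow{q}\bar V\to 1$, where $F$ is the unique maximal finite normal subgroup of $V$ and $\bar V$ is trivial, infinite cyclic, or infinite dihedral. Since $F$ is in particular a finite subgroup of $\Gamma$, we have $|F|\le M$. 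I would take
\[
 N := M!\cdot\prod_{|F'|\le M}|\Aut(F')| ,
\]
the product ranging over a set of representatives of the finitely many isomorphism types of finite groups of order at most $M$; only the divisibilities $M!\mid N$ and $|\Aut(F')|\mid N$ (for every finite $F'$ with $|F'|\le M$) will be used. Note that if $\Gamma$ is torsion--free the statement is vacuous, since then $g=1$.

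First I would dispose of the case in which $h$ has finite order: then $V:=\langle h,g\rangle$ is finite with $|\langle h\rangle|\le M$, so $h^{M!}=1$, and $M!\mid N$ gives $gh^{N}=g=h^{N}g$. So assume $h$ has infinite order and set $V:=\langle h,g\rangle$, which is virtually cyclic by hypothesis and infinite because it contains $h$. Take the canonical sequence $1\to F\to V\xrightarrow{q}\bar V\to 1$ of (ii). The conjugation action of $V$ on its normal subgroup $F$ gives a homomorphism $V\to\Aut(F)$ whose image has order dividing $|\Aut(F)|$, hence dividing $N$; consequently $h^{N}$ lies in its kernel, i.e.\ $h^{N}$ centralises $F$. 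It therefore suffices to prove that $g\in F$, for then $g$ and $h^{N}$ commute, which is the assertion.

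To show $g\in F$: the element $q(g)\in\bar V$ has finite order while $q(h)$ has infinite order. If $\bar V$ is infinite cyclic then $q(g)=1$, so $g\in F$ and we are done. If $\bar V$ is infinite dihedral, then $q(h)$ is a translation, so $h$ lies in the index--$2$ translation subgroup $V^{+}:=q^{-1}(\Z)$, which is again virtually cyclic, has the same maximal finite normal subgroup $F$, and satisfies $\bar{V^{+}}\cong\Z$; passing to $V^{+}$ reduces us to the infinite cyclic case treated above (after replacing $N$ by a further bounded multiple to preserve the divisibilities), and once more $g\in F$. The main obstacle — and the only genuinely nontrivial input — is the torsion control given by (i) and (ii): it is the uniform bound on $|F|$, and hence on $|\Aut(F)|$, that allows a single exponent $N$ to work for all pairs $(g,h)$ at once; the rest is bookkeeping inside a virtually cyclic group. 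This is exactly the "well--known" fact the statement rests on, which is why the author omits the details.
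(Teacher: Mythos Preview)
The paper omits the proof entirely, stating only that the result ``is well-known so we omit its proof.'' So there is nothing to compare against; one can only assess your argument on its own merits.

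Your reduction is clean up to and including the case $\bar V\cong\Z$, but the infinite-dihedral case contains a genuine gap. You pass to $V^{+}=q^{-1}(\Z)$ and assert ``once more $g\in F$.'' Passing to $V^{+}$ is only legitimate for elements that lie in $V^{+}$; the element $h$ does, since $q(h)$ has infinite order, but nothing forces $g\in V^{+}$. If $q(g)\in D_{\infty}$ is a reflection then $g\notin V^{+}$, and in particular $g\notin F$, so the argument does not go through.

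This gap cannot be repaired, because the statement as written is in fact false in the dihedral case. Take $\Gamma=\Z/2\ast\Z/2\ast\Z/2$ (a non-elementary hyperbolic group), let $a,b$ be two of the order-$2$ generators, and set $g=a$, $h=ab$. Then $\langle g,h\rangle=\langle a,b\rangle\cong D_{\infty}$ is virtually cyclic and $g$ has finite order, yet
\[
gh^{N}g^{-1}=a(ab)^{N}a=(ba)^{N}=h^{-N}\neq h^{N}
\]
for every $N\ge 1$. A correct version of the lemma needs the additional hypothesis that $\langle h,g\rangle$ is of cyclic type (equivalently, that $g$ already lies in the finite-by-$\Z$ index-$2$ subgroup $V^{+}$); under that extra hypothesis your argument is complete, with $g\in F$ and your choice of $N$ working exactly as you describe.
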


Before we start constructing test sequences for orbifold flats, we will prove some preliminary results. Throughout the whole chapter we will not distinguish between a simple closed curve on an orbifold and the corresponding element of the fundamental group.

\begin{lemma}\label{scc}
Let $Q$ be the fundamental group of a compact hyperbolic cone-type orbifold $\mathcal{O}$, such that the complement $\mO^c$ of the cone points has Euler characteristic at most $-2$ or is a once-punctured torus. Let $\mu: Q\to \Gamma$ be a homomorphism such that the following hold:
\begin{itemize}
\item $\mu(Q)$ is an infinite non-virtually abelian (i.e. non-virtually cyclic) subgroup of $\Gamma$,
\item $\mu$ is injective on boundary components and finite subgroups of $Q$ and
\item there exists a $\Gamma$-limit group $H$, a stably injective sequence $(\vp_n)\subset \Hom(H,\Gamma)$ and an epimorphism $\pi:Q\to H$ with non-virtually abelian image, such that $\mu=\vp_n\circ\pi$ for some $n\in\N$.
\item There exists a collection of non-homotopic, non-boundary parallel, essential simple closed curves $C=\{c_1,\ldots,c_m\}$ on $\mO^c$, which get mapped to elements of infinite order by $\mu$, such that every connected component $\Sigma$ of $\mO\setminus C$ is either a once-punctured sphere with 2 cone points of order $2$, or the complement $\Sigma^c$ of its cone points is a pair of pants and $\pi_1(\Sigma)$ is non-virtually abelian.
\end{itemize}
Then there exists a collection $B=\{b_1,\ldots,b_m\}$ of essential, non-homotopic, non-boundary parallel simple closed curves on $\mO^c$ such that the following hold:
\begin{enumerate}[(1)]
\item Every connected component $\Sigma$ obtained by cutting $\mathcal{O}$ along $B$  is either a once-punctured sphere with 2 cone points of order $2$, or its complement $\Sigma^c$ of the cone points has Euler characteristic $-1$ and $\mu(\Sigma)$ is non-virtually abelian in $\Gamma$.
\item If $x$ corresponds to a cone point on a connected component $\Sigma$ of $\mO\setminus B$, whose fundamental group is not isomorphic to $D_{\infty}$, and the curve $b_i$ is a boundary component of $\Sigma$, then for all $k\geq 1$ either $x^k=1$ or $\langle \mu(x^k),\mu(b_i)\rangle$ is non-virtually abelian.
\end{enumerate}
\end{lemma}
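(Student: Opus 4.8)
The plan is to start from the given curve system $C$ and to modify it in two stages: a surgery stage producing a curve system for which $\mu$ has non-virtually abelian image on every non-$D_{\infty}$ piece, followed by a twisting stage arranging the cone-point condition~(2) without spoiling~(1).

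For the surgery stage, first observe that cutting $\mathcal{O}$ along $C$ gives a graph of groups decomposition of $Q$ with vertex groups the $\pi_1(\Sigma)$ of the pieces and edge groups the infinite cyclic subgroups $\langle c_i\rangle$, each mapped to an infinite order element by $\mu$. Call a piece \emph{bad} if $\pi_1(\Sigma)$ is non-virtually abelian but $\mu(\pi_1(\Sigma))$ is virtually abelian; a $D_{\infty}$ piece is never bad but always has virtually abelian $\mu$-image. I would first show that not every non-$D_{\infty}$ piece can be bad: if all pieces had virtually abelian $\mu$-image, then for adjacent pieces $\Sigma,\Sigma'$ sharing a curve $c_i$ the element $\mu(c_i)$ has infinite order and lies in both maximal virtually abelian subgroups $M(\mu(\pi_1\Sigma))$ and $M(\mu(\pi_1\Sigma'))$ of $\Gamma$, so these coincide by almost malnormality of maximal virtually abelian subgroups (Proposition~\ref{abe2}); propagating through the connected decomposition graph, and using almost malnormality once more to place every Bass--Serre stable letter into the resulting common subgroup $M$, one obtains $\mu(Q)\leq M$ virtually abelian, a contradiction. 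Since moreover a $D_{\infty}$ piece has a single boundary curve and hence is a leaf of the decomposition graph, deleting the $D_{\infty}$ vertices leaves the graph connected, so whenever there is a bad piece there is a bad piece $\Sigma$ sharing a curve $c_i$ with a good (non-$D_{\infty}$, non-virtually abelian image) piece $\Sigma'$. I would then delete $c_i$, obtaining the sub-orbifold $\Sigma\cup\Sigma'$, a four-holed sphere or once-punctured torus with $\mu(\pi_1(\Sigma\cup\Sigma'))\supseteq\mu(\pi_1\Sigma')$ non-virtually abelian, and re-split it along a single new curve $c_i'$ obtained as a high power of a Dehn twist (along an auxiliary essential curve $\eta\subset\Sigma\cup\Sigma'$ with infinite order and ``independent'' $\mu$-image) applied to a standard splitting curve. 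A ping-pong argument in $\Gamma$ then shows that for all sufficiently large twisting parameter both new pieces have non-virtually abelian $\mu$-image; the existence of the auxiliary curve $\eta$ is exactly where the hypothesis $\mu=\varphi_n\circ\pi$ enters, since via $\pi$ one transports the non-virtual abelianness into the $\Gamma$-limit group $H$, where the Rips machine and the $N(\Gamma)$-CSA structure of $\Gamma$-limit groups supply a curve with the required generic behaviour. Replacing $c_i$ by $c_i'$ strictly decreases the number of bad pieces and leaves the other pieces untouched, so after finitely many surgeries one reaches a curve system satisfying~(1).

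For the twisting stage, take a non-$D_{\infty}$ piece $\Sigma$, a cone point $x$ of $\Sigma$ and an adjacent boundary curve $b_i$. Since $\mu$ is injective on the finite group $\langle x\rangle$ we have $\mu(x^k)\neq 1$ whenever $x^k\neq 1$, and $\langle\mu(x^k),\mu(b_i)\rangle$ is virtually abelian exactly when $\mu(x^k)$ lies in the maximal virtually cyclic subgroup $M(\mu(b_i))$ of $\Gamma$ (using Lemma~\ref{potenz} together with the CSA property); there are only finitely many residues $k$ modulo $\ord(x)$ to consider. Replacing $b_i$ by $T_\delta^{\,k}(b_i)$ for a suitable essential curve $\delta$ in the union of $\Sigma$ with its neighbour across $b_i$ changes $\mu(b_i)$ to a conjugate $\mu(\delta)^k\mu(b_i)\mu(\delta)^{-k}$, whose maximal virtually cyclic subgroup is $\mu(\delta)^kM(\mu(b_i))\mu(\delta)^{-k}$; only finitely many of these conjugates can contain any of the finitely many elements $\mu(x^j)$, so almost every twisting power gives a good $b_i$, and a ping-pong argument shows that large powers preserve the non-virtually abelian images of the incident pieces, so~(1) is not disturbed. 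Iterating over all triples $(\Sigma,x,b_i)$, and invoking Lemma~\ref{smallorbifolds} to know that the small pieces (the $\chi^c=-1$ orbifolds) need not and cannot be subdivided further, produces the required system $B$.

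The main obstacle is the re-splitting step: one has to verify carefully that merging a bad piece with a good neighbour always yields an orbifold admitting a single re-splitting curve restoring a $\chi^c=-1$ decomposition into \emph{two} good pieces, and that this can be organised so that an honest complexity (here the number of bad pieces) strictly decreases at each surgery, forcing termination. This is precisely where the topological hypotheses on $\mathcal{O}$ (that $\mathcal{O}^c$ has Euler characteristic at most $-2$ or is a once-punctured torus) are needed, to guarantee enough room for the surgeries, and where the factorisation $\mu=\varphi_n\circ\pi$ with $\pi(Q)$ non-virtually abelian enters, to guarantee via the structure theory of $\Gamma$-limit groups that a genuinely non-abelian re-splitting exists. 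The remaining length of a full proof would go into the bookkeeping of the finitely many arithmetic conditions coming from the cone orders and the case analysis over the types of small sub-orbifolds that can occur.
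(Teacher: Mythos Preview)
Your approach is genuinely different from the paper's and considerably more involved. The paper avoids surgery entirely: it simply sets $B=\{\alpha(c_1),\ldots,\alpha(c_m)\}$ for a single well-chosen automorphism $\alpha\in\Aut(Q)$. The key observation is that conditions (1) and (2) reduce to a \emph{finite} list of non-triviality conditions in $\Gamma$. For (1), the paper fixes a global integer $K$ such that $g^K$ lies in the maximal cyclic subgroup of any virtually cyclic subgroup of $\Gamma$, picks in each non-$D_\infty$ piece $\Sigma$ elements $x,y\in\pi_1(\Sigma)$ with $w_\Sigma:=[x^K,y^K]\neq 1$, and notes that $\mu\circ\alpha(\pi_1(\Sigma))$ virtually abelian would force $\mu\circ\alpha(w_\Sigma)=[\mu\circ\alpha(x)^K,\mu\circ\alpha(y)^K]=1$. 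For (2), by Lemma~\ref{potenz} it suffices that $[\mu\circ\alpha(x^l),\mu\circ\alpha(c_i^p)]\neq 1$ for the finitely many relevant $l$ and all $p\leq N$. The existence of a single $\alpha$ satisfying this finite list of inequalities is exactly where the factorisation $\mu=\varphi_n\circ\pi$ with $\pi(Q)$ non-virtually abelian is invoked, and that is the whole proof.

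Your iterative surgery could plausibly be completed, but the step you yourself flag as the main obstacle is a real one: after merging a bad piece with a good neighbour you must re-split so that \emph{both} new pieces have non-virtually abelian $\mu$-image, and the ping-pong you gesture at is not automatic --- the two new pieces have fundamental groups that move with the twisting parameter, and you need simultaneous control. The paper's trick of encoding everything as finitely many commutator inequalities and then finding one global $\alpha$ sidesteps this completely, turning a multi-page case analysis into half a page.
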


\begin{bem}
\begin{enumerate}[(a)]
\item The assumptions of the lemma hold for example if $Q$ is a QH subgroup of a one-ended $\Gamma$-limit group $L$ and $\mu=\vp_n\circ\pi|_Q$ where $\pi:L\to M$ is a quotient map onto a strict shortening quotient $M$ and $(\vp_n)\subset \Hom(M,\Gamma)$ is a test sequence (i.e. in particular stably injective). In particular these assumptions are fulfilled in the setting of Theorem \ref{orbifoldflat}.
\item To guarantee that a collection of curves $C$ as in the assumption of the lemma exists, we might have to precompose $\mu$ by a properly chosen automorphism $\beta\in\Mod(Q)$. We then denote $\mu\circ\beta$ in abuse of notation again by $\mu$.
\end{enumerate}
\end{bem}

\begin{proof}
To improve readability we restrict ourselves to the case that the underlying surface of $\mO$ is orientable. The proof can easily be adapted to the non-orientable case though. Let $C=\{c_1,\ldots,c_m\}$ be the collection of curves which exists by assumption.\\
Since $\Gamma$ is hyperbolic there exist up to isomorphism only finitely many virtually cyclic subgroups of $\Gamma$. For every virtually cyclic subgroup of $\Gamma$, say $A$, fix a maximal cyclic subgroup $Z_A$ of finite index. Then $|A:Z_A|=:k_A<\infty$ and there exist only finitely many of these indices. So let $K\in \N$ be a positive integer such that for every element $g$ in a virtually cyclic subgroup $A$ of $\Gamma$, $g^K$ is contained in $Z_A$.\\
For every connected component $\Sigma$ of $\mO\setminus C$, whose fundamental group is not isomorphic to $D_{\infty}$, choose elements $x,y\in \pi_1(\Sigma)$ such that $w_{\Sigma}:=[x^K,y^K]\neq 1$. Such elements exist since $\pi_1(\Sigma)\ncong D_{\infty}$ and hence contains a non-abelian free group.\\
Since $\pi(Q)$ is non-virtually abelian there exists an automorphism $\alpha\in \Aut(Q)$ such that the following hold:
\begin{itemize}
\item $\mu\circ\alpha(w_{\Sigma})=\vp_n\circ\pi\circ\alpha(w_{\Sigma})\neq 1$ for all connected components $\Sigma$ of $\mO\setminus C$, whose fundamental groups are not isomorphic to $D_{\infty}$.
\item If $l\geq 1$ and $x$ is an element corresponding to a cone point on a connected component $\Sigma$ of $\mO\setminus C$, whose fundamental group is not isomorphic to $D_{\infty}$, such that $x^l\neq 1$ then
$$[\mu\circ\alpha(x^l),\mu\circ\alpha(c_i^p)]\neq 1$$ for all $c_i$ corresponding to boundary components of $\Sigma$ and all $p\leq N$, where $N$ is chosen according to Lemma \ref{potenz}.
\end{itemize} 
Let $\Sigma$ be a connected component of $\mO\setminus C$, such that $\pi_1(\Sigma)\neq D_{\infty}$ and suppose that $\mu\circ\alpha(\pi_1(\Sigma))$ is virtually abelian. It follows that $$\mu\circ\alpha(w_{\Sigma})=[\mu\circ\alpha(x)^K,\mu\circ\alpha(y)^K]\in [\Z,\Z]=1,$$ 
a contradiction.\\
Now suppose that $x$ corresponds to a cone point on $\Sigma$ and let $l\geq 1$ such that $x^l\neq 1$. Since $[\mu\circ\alpha(x^l),\mu\circ\alpha(c_i^N)]\neq 1$ it follows from Lemma \ref{potenz} that $\langle \mu\circ\alpha(x^l),\mu\circ\alpha(c_i)\rangle$ is non-virtually abelian in $\Gamma$. Now $B:=\{\alpha(c_1),\ldots,\alpha(c_m)\}$ is the desired collection of curves.
\end{proof}

\begin{prop}\label{curves}
Let $Q$ be the fundamental group of a compact hyperbolic cone-type orbifold $\mathcal{O}$, such that the complement $\mO^c$ of the cone points has Euler characteristic at most $-2$ or is a once-punctured torus. Let $\mu: Q\to \Gamma$ be a homomorphism such that the following hold:
\begin{itemize}
\item $\mu(Q)$ is an infinite non-virtually abelian subgroup of $\Gamma$,
\item $\mu$ is injective on boundary components and finite subgroups of $Q$ and
\item there exists a $\Gamma$-limit group $H$, a stably injective sequence $(\vp_n)\subset \Hom(H,\Gamma)$ and an epimorphism $\pi:Q\to H$ with non-virtually abelian image, such that $\mu=\vp_n\circ\pi$ for some $n\in\N$.
\end{itemize}
Then there exist two collections of essential, non-homotopic, non-boundary parallel, disjoint simple closed curves on $\mathcal{O}^c$: $A:=\{a_1,\ldots,a_n\}$ and $B:=\{b_1,\ldots,b_m\}$, and an automorphism $\alpha\in \Aut(Q)$ with the following properties:
\begin{enumerate}[(1)]
\item Each complement of the cone points of a connected component obtained by cutting $\mathcal{O}$ along $A$ has Euler characteristic $-1$, and the homomorphism $\mu\circ\alpha: Q\to \Gamma$ maps the fundamental group of each of these connected components isomorphically onto a quasi-convex subgroup of $\Gamma$.
\item Each of the curves $b_i$ intersects at least one of the curves $a_j$.
\item $A\cup B$ fills $\mathcal{O}$, i.e. $\mathcal{O}^c\setminus (A\cup B)$ is a disjoint collection of discs and annuli.
\end{enumerate}
\end{prop}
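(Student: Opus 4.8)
The plan is to split the conclusion into a topological part — items (2) and (3), which only concern how $A$ and $B$ intersect and the fact that $A\cup B$ fills $\mathcal{O}$ — and a group-theoretic part, item (1), about the restriction of $\mu\circ\alpha$ to the subsurface groups cut out by $A$. I would first produce $A$ together with the automorphism $\alpha$ realizing (1), and then, with $A$ and $\alpha$ fixed, adjoin the transverse system $B$ by a standard surface-topology argument.

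To produce $A$, first I would invoke the Remark following Lemma~\ref{scc}: after precomposing $\mu$ with a suitable element of $\Mod(Q)$ (absorbed into the final $\alpha$) one may assume there is a system $C$ of disjoint essential, non-boundary-parallel simple closed curves on $\mathcal{O}^c$ cutting $\mathcal{O}$ into pieces that are either once-punctured spheres with two cone points of order $2$, or have pair-of-pants cone-complement with non-virtually abelian fundamental group. Feeding $C$ into Lemma~\ref{scc} yields a curve system — which I would take to be $A$ — so that every component $\Sigma$ obtained by cutting $\mathcal{O}$ along $A$ has cone-complement of Euler characteristic $-1$, with $\mu(\pi_1(\Sigma))$ non-virtually abelian in the non-$D_\infty$ case, and with the additional property that cone-point powers on $\Sigma$ and boundary curves of $\Sigma$ have non-virtually abelian joint image. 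This gives the combinatorial half of (1); it remains to upgrade ``non-virtually abelian image'' to ``isomorphic onto a quasi-convex subgroup.'' Each $\pi_1(\Sigma)$ is generated by two of its boundary or cone-point loops; I would take $\alpha$ to be a composition of sufficiently high powers of Dehn twists along $A$ and along a transverse family, arranged so that for each $\Sigma$ the $\mu\circ\alpha$-images of the two chosen loops become elements of $\Gamma$ with independent dynamics (loxodromics with pairwise disjoint fixed-point sets in $\partial\Gamma$, or finite-order elements in general position) and with as large exponents as needed. A ping-pong argument in the hyperbolic group $\Gamma$ — using Lemma~\ref{potenz} to commute finite-order cone-point contributions past high powers, Lemma~\ref{smallorbifolds} for the rigidity of pair-of-pants orbifolds, and the stable injectivity underlying $\vp=\vp_n\circ\pi$ — then shows the subgroup generated is quasi-convex and that $\mu\circ\alpha$ restricts to an isomorphism of $\pi_1(\Sigma)$ onto it. As $A$ is finite, one $\alpha$ can be chosen to work for all $\Sigma$ simultaneously.

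With $A$ and $\alpha$ fixed, $B$ is produced without any reference to $\mu$: one completes the orbifold pants decomposition given by $A$ to a filling system by choosing, for each curve of $A$, a disjoint collection of essential, non-boundary-parallel simple closed curves, each crossing some $a_j$, so that their union with $A$ cuts $\mathcal{O}^c$ into discs and annuli. This is the standard fact that a pants decomposition of a bordered surface extends to a filling curve system upon adjoining one dual curve per pants curve; essentiality, disjointness and the non-boundary-parallel condition are then arranged in the usual way. This yields (2) and (3) and does not disturb (1).

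The routine parts are the reduction through Lemma~\ref{scc} and the topological completion to a filling system. The hard part will be the group-theoretic step: exhibiting a single automorphism $\alpha\in\Aut(Q)$ that simultaneously makes $\mu\circ\alpha$ injective with quasi-convex image on every Euler-characteristic-$(-1)$ piece cut out by $A$. The difficulty is to control the boundary curves and cone-point loops of all pieces at once — keeping their images loxodromic with independent axes after twisting — while the torsion coming from the cone points obstructs a naive high-powers argument and must instead be handled via Lemma~\ref{potenz} and the rigidity of pair-of-pants orbifolds, so that ping-pong applies and the defining relations of each $\pi_1(\Sigma)$ are faithfully reflected in $\Gamma$.
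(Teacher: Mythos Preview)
Your proposal has the roles of $A$ and $B$ reversed relative to the paper, and this is not a harmless relabelling: it breaks the mechanism that actually produces injectivity.

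In the paper, the curve system coming out of Lemma~\ref{scc} is taken to be $B$, not $A$. The point of Lemma~\ref{scc} is to ensure that the pieces obtained by cutting along $B$ have non-virtually abelian $\mu$-image (and the cone-point/boundary compatibility of conclusion~(2)). The second system $A$ is then found purely combinatorially, via the curve complex (Lemma~\ref{newcurves}): one picks $A$ at distance $\geq 3$ from $B$ so that $A\cup B$ fills and each $A$-piece has Euler-characteristic-$(-1)$ cone complement. The automorphism $\alpha$ is a product of high powers of Dehn twists along $B$. The injectivity of $\mu\circ\alpha$ on each $A$-piece is proved not by an abstract ping-pong but by Lemma~\ref{trivial} (the Baumslag-type non-triviality criterion in hyperbolic groups): writing an element of an $A$-piece in $\B$-normal form, the twist turns it into an alternating word $\cdots \mu(s_j)\mu(b_{i_j})^{n_{i_j}}\mu(s_{j+1})\cdots$, and it is exactly the non-virtually abelian property of the $B$-pieces that guarantees each $\langle \mu(s_j),\mu(b_{i_j})\rangle$ is non-virtually cyclic, so Lemma~\ref{trivial} applies. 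Thus the $\mu$-control lives on the $B$-side, and $B$ absolutely cannot be chosen ``without any reference to $\mu$'' as you propose.

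Your ping-pong sketch does not supply a substitute for this. Consider an $A$-piece whose two generators $x,y$ both correspond to boundary components of $\mathcal{O}$: any $\alpha\in\Aut(Q)$ sends $x,y$ to conjugates of themselves, so $\mu\circ\alpha(x)$ and $\mu\circ\alpha(y)$ are conjugates of $\mu(x),\mu(y)$ by $\mu$-images of elements of $Q$ you have no control over. Making these into a ping-pong pair requires exactly the kind of control on intermediate pieces that the paper obtains from the $B$-system. Twisting along $A$ itself is useless here since it fixes the $A$-piece groups up to conjugacy. Also, Lemma~\ref{smallorbifolds} is a JSJ statement and plays no role in this proof, and Lemma~\ref{potenz} is used inside Lemma~\ref{scc}, not in the injectivity step; the lemma you actually need and do not mention is Lemma~\ref{trivial}.
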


\begin{proof}
We again restrict ourselves to the case that the underlying surface of $\mO$ is orientable.
Let $B$ be the collection of curves given by Lemma \ref{scc}. Hence each of the complements of the cone points of a connected component $\Sigma$ obtained by cutting $\mathcal{O}$ along $B$ is a pair of pants and has non-virtually abelian image in $\Gamma$, or $\pi_1(\Sigma)\cong D_{\infty}$.
Note that by construction every component of $\mO\setminus B$ has at most $2$ cone points.\\
Denote by $H$ the fundamental group of such a component. Then either 
$H\cong F_2=\langle a,b\rangle$, where $a,b$ correspond to two boundary components, or $H\cong \Z\ast \Z_p=\langle a,c\ |\ c^p\rangle$, where $a$ corresponds to a boundary component, while $c$ corresponds to a cone point of order $p$, or $H\cong\Z_p\ast\Z_q=\langle x,y\ |\ x^p, y^q\rangle$, where $x,y$ correspond to cone points of order $p$ and $q$ respectively and moreover either $p\neq 2$ or $q\neq 2$, or $H\cong D_{\infty}$.\\

Before we proceed with the proof we need the following criterion for elements in hyperbolic groups to be non-trivial. The equivalent result in free groups was proved by G. Baumslag in \cite{baumslag}.

\begin{lemma}\label{trivial}
Let $\Gamma$ be a hyperbolic group and $z\in \Gamma$ be an element of infinite order. Let moreover $g\in\Gamma$ be an element of the form
$$g=a_0z^{i_1}a_1z^{i_2}a_2\ldots a_{n-1}z^{i_n}a_n,$$
where $n\geq 1$ and whenever $0<k<n$ ($a_0, a_n=1$ is allowed), $\langle a_k,z\rangle$ is not virtually cyclic. Then there exists $K\in\N$ (depending on $z$ and $\max_{i}|a_i|_{\Gamma}$) such that $g\neq 1$ if $|i_k|\geq K$ for all $k\in\{1,\ldots,n\}$. 
\end{lemma}

\begin{proof}
This result is well-known. See for example Lemma 2.4 in \cite{olshanski}.
\end{proof}

Now we are ready to continue the proof of Proposition \ref{curves}. 
Let $\B$ be the graph of groups given by the decomposition of $\mathcal{O}$ along the collection of simple closed curves $B=\{b_1,\ldots,b_m\}$. Denote by $\vp_1,\ldots,\vp_m$ the corresponding Dehn-twists along the edges of this decomposition corresponding to $b_1,\ldots,b_m$ respectively.

\begin{lemma}\label{nontrivial}
Let $D$ be an arbitrary finite collection of essential simple closed curves on $\mathcal{O}$, such that each $d\in D$ has non-trivial intersection with at least one of the curves $b_1,\ldots,b_m$. Then for sufficiently large powers $n_1,\ldots,n_m$ of the Dehn-twists $\vp_1,\ldots,\vp_m$, the image of every $d\in D$ under $\mu\circ\vp_m^{n_m}\circ\ldots\circ\vp_1^{n_1}$ is non-trivial in $\Gamma$.
\end{lemma}

\begin{proof}
By assumption the Euler characteristic of $\mO^c$ is at most $-2$ or $\mO^c$ is a once-punctured torus.\\
We first assume that no fundamental group of a component of $\mO\setminus B$ is isomorphic to $D_{\infty}$ and consider the general case afterwards.\\
Let $d\in D$. We first consider the case that $d$ intersects precisely one curve from $B$, say $b_1$. Then $d$ and $b_1$ are contained in a suborbifold $\Sigma$ of $\mO$ such that either $\Sigma^c$ is a once-punctured torus or a $4$th-punctured sphere.
\begin{figure}[htbp]
\centering
\includegraphics{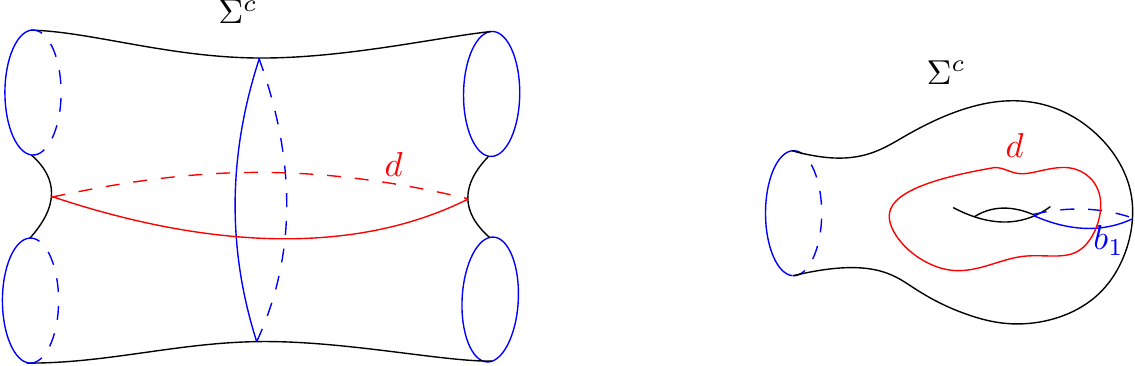}
\caption{$d$ and $b_1$ are contained in a suborbifold $\Sigma$ of $\mO$ such that either $\Sigma^c$ is a once-punctured torus or a $4$th-punctured sphere}
\label{pairs}
\end{figure} 
Suppose that $\Sigma^c$ is a $4$th-punctured sphere. If $d$ either corresponds to a cone point or a boundary component of $\mO$ or a curve in $B$ then $\mu(d)\neq 1$ by assumption. So we can assume that the connected components $P_1^c$ and $P_2^c$ of $\Sigma^c\setminus\{b_1\}$ are pairs of pants (see Figure \ref{pairs}) and there exist $p\geq 1$ and $s_0,\ldots,s_p\in \pi_1(P_1)\cup\pi_1(P_2)\subset \pi_1(\mO)$ such that
$$d=[s_0,e^{\epsilon_1},s_1,e^{\epsilon_2},s_2\ldots,
e^{\epsilon_p},s_p]$$ is a $\B$-path in normal form for $d$, where $\epsilon_j\in \{\pm 1\}$ for all $j\in\{1,\ldots,p\}$. Then $(P_1\setminus \{s_i\})^c$ (respectively $(P_2\setminus\{s_i\})^c$) is an annulus for all $i\in\{0,\ldots,p\}$ 
and therefore $\{b_1,s_i\}$ is a generating set for $\pi_1(P_1)$ (respectively $\pi_1(P_2)$). Since $\mu(\pi_1(P_1))$ and $\mu(\pi_1(P_2))$ are non virtually abelian by construction of the set $B$, it follows that $\langle \mu(b_1), \mu(s_i)\rangle$ is non-virtually abelian for all $i\in\{0,\ldots,p\}$. Hence applying Lemma \ref{trivial} to 
\begin{align*}
\mu\circ\alpha(d)&=\mu\circ\vp_1^{n_1}(d)
=\mu(s_0)\mu(b_1)^{\epsilon_1n_1}\mu(s_1)\mu(b_1)^{\epsilon_2n_1}\mu(s_2)\ldots\mu(b_1)^{\epsilon_pn_1}\mu(s_p)
\end{align*}
yields that $\mu\circ\alpha(d)$ is non-trivial for $n_1$ chosen large enough.\\

So now we assume that $d$ is contained in a suborbifold $\Sigma$ of $\mO$ such that $\Sigma^c$ is a once-punctured torus and cutting $\Sigma$ along  $b_1$ yields an orbifold $P$, such that $P^c$ is a pair of pants. By assumption $d$ has non-trivial intersection with $b_1$. If $d=b_1$ the claim is trivial, so let $p\geq 1$ and  $s_0,\ldots,s_p\in \pi_1(P)\subset \pi_1(\mO)$ such that
$$d=[s_0,e^{\epsilon_1},s_1,e^{\epsilon_2},s_2\ldots,
e^{\epsilon_p},s_p]$$ is a $\B$-path in normal form for $d$, where $\epsilon_j\in \{\pm 1\}$ for all $j\in\{1,\ldots,p\}$. Then $(P\setminus \{s_i\})^c$ is an annulus for all $i\in\{0,\ldots,p\}$ 
\begin{figure}[htbp]
\centering
\includegraphics{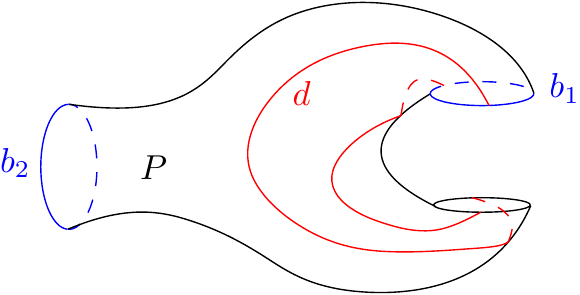}
\caption{$d$ has non trivial intersection with $b_1$}
\end{figure} 
and therefore $\{b_1,s_i\}$ is a generating set for $\pi_1(P)$. Since $\mu(\pi_1(P))$ is non-virtually abelian by construction of the set $B$, it follows that $\langle \mu(b_1), \mu(s_i)\rangle$ is non-virtually abelian for all $i\in\{0,\ldots,p\}$. Hence applying Lemma \ref{trivial} to 
\begin{align*}
\mu\circ\alpha(d)&=\mu\circ\vp_1^{n_1}(d)
=\mu(s_0)\mu(b_1)^{\epsilon_1n_1}\mu(s_1)\mu(b_1)^{\epsilon_2n_1}\mu(s_2)\ldots\mu(b_1)^{\epsilon_pn_1}\mu(s_p)
\end{align*}
yields that $\mu\circ\alpha(d)$ is non-trivial for $n_1$ chosen large enough.\\
So from now on we can assume that $d$ intersects at least two curves, say $b_1,b_2$, from $B$.
Let $$d=[a_0,e_1,a_1,e_2,a_2,\ldots,e_p,a_p]$$ be a $\B$-path in normal form for $d$. Then 
$$\mu\circ\alpha(d)=\mu(a_0)\mu(b_{i_1})^{n_{i_1}}\mu(a_1)\mu(b_{i_2})^{n_{i_2}}\mu(a_2)\ldots\mu(b_{i_p})^{n_{i_p}}\mu(a_p)$$
where $i_1,\ldots,i_p\in\{1,\ldots,m\}$.
Let $j\in\{1,\ldots,p-1\}$. Then $\{b_{i_{j-1}},a_{j}\}$ and $\{a_j,b_{i_{j+1}}\}$ generate the respective fundamental group of a component of $\mO\setminus B$ and therefore $\langle \mu(b_{i_{j-1}}),\mu(a_j)\rangle$ and also $\langle \mu(a_j),\mu(b_{i_{j+1}})\rangle$ is non-virtually abelian.
\begin{figure}[htbp]
\centering
\includegraphics{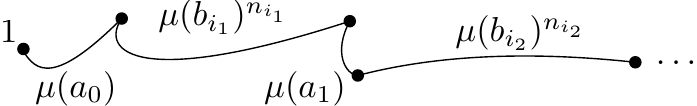}
\caption{The path corresponding to $\mu\circ\alpha(d)$ in $\Cay(\Gamma)$}
\end{figure}
Therefore it follows that for large enough powers of the Dehn-twists along the curves in $B$, the path in $\Cay(\Gamma)$ corresponding to $\mu\circ\alpha(d)$ contains no essential backtracking and hence $\mu\circ\alpha(d)$ is non-trivial.\\

The only thing that remains to be considered is the case that the fundamental group, say $H$, of a connected component $\Sigma$ of $\mO\setminus B$ is isomorphic to $D_{\infty}$. In particular $\Sigma$ is a once-punctured sphere with two cone points $x,y$ of order $2$ and $$H=\langle x,y\rangle\cong \Z_2\ast\Z_2.$$ 
Let as before $$d=[a_0,e_1,a_1,e_2,a_2,\ldots,e_p,a_p]$$ be a $\B$-path in normal form for $d$. Then 
$$\mu\circ\alpha(d)=\mu(a_0)\mu(b_{i_1})^{n_{i_1}}\mu(a_1)\mu(b_{i_2})^{n_{i_2}}\mu(a_2)\ldots\mu(b_{i_p})^{n_{i_p}}\mu(a_p)$$
where $i_1,\ldots,i_p\in\{1,\ldots,m\}$.
Let $j\in\{1,\ldots,p-1\}$ and suppose that $$b_{i_{j-1}}=b_{i_j}\leq H\cong \D_{\infty}.$$ Then $b_{i_j}=xy$ and either $a_{i_{j-1}}=x$ or $a_{i_{j-1}}=y$ (see Figure \ref{2torsion}). It follows that either 
$$\mu(b_{i_{j-1}})^{n_{i_{j-1}}}\mu(a_{i_{j-1}})\mu(b_{i_j})^{n_{i_{j}}}=\mu((xy)^{n_{i_{j-1}}}x(yx)^{n_{i_{j}}})=\mu(xy)^{n_{i_{j-1}}+n_{i_{j}}}\mu(x)$$ or
$$\mu(b_{i_{j-1}})^{n_{i_{j-1}}}\mu(a_{i_{j-1}})\mu(b_{i_j})^{n_{i_{j}}}=\mu((xy)^{n_{i_{j-1}}}y(yx)^{n_{i_{j}}})=\mu(xy)^{n_{i_{j-1}}+n_{i_{j}}-1}\mu(x).$$
Hence there exists no essential backtracking in the path corresponding to $$\mu(b_{i_{j-1}})^{n_{i_{j-1}}}\mu(a_{i_{j-1}})\mu(b_{i_j})^{n_{i_{j}}}$$ in $\Cay(\Gamma)$ and therefore the proof from before generalizes to the
\begin{figure}[htbp]
\centering
\includegraphics{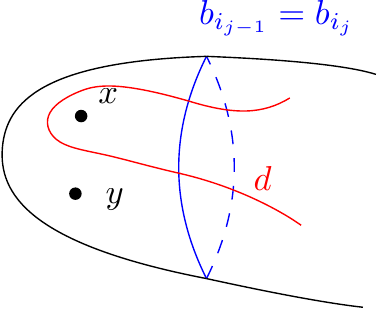}
\caption{The case $H=\langle x,y\rangle\cong D_{\infty}$}
\label{2torsion}
\end{figure}
case when there exists a connected component $\Sigma$ of $\mO\setminus B$ with $\pi_1(\Sigma)\cong D_{\infty}$.
\end{proof}

The aim now is to choose a new collection $A$ of simple closed curves on $\mathcal{O}$ such that the fundamental groups of connected components after cutting along these curves get mapped isomorphically onto quasi-convex subgroups after precomposing $\mu$ by high powers of Dehn-twists along the collection of curves $B$.

\begin{lemma}\label{newcurves}
There exists a collection $A$ of simple closed curves on $\mathcal{O}$ such that each of the components of $\mathcal{O}^c\setminus A$ has Euler characteristic $-1$ and the induced decomposition of the fundamental group of a component of $\mO\setminus A$ with respect to $\B$ is a finite graph of groups with trivial edge groups and finite or infinite cyclic vertex groups (the non-trivial finite ones corresponding to cone points, while the infinite cyclic ones correspond to boundary components of $\mO$). In particular $A\cup B$ fills the surface $\mO^c$.
\end{lemma}

\begin{proof} Denote by $g$ the genus of the surface $S:=\mO^c$ and by $r$ the number of boundary components of $S$. First assume that $S$ is neither the sphere with $4$ punctures nor the once-punctured torus, i.e. neither $g=0$, $r=4$, nor $g=1$, $r=1$. Let $\alpha$ and $\beta$ be two non-homotopic distinct essential simple closed curves on $S$. We denote the geometric intersection number of $\alpha$ and $\beta$ by $i(\alpha,\beta)$.  Note that curves homotopic to the boundary are inessential.\\
Then the complex of curves $C(S)$ of the surface $S$ is defined as follows: The $k$ simplices of $C(S)$ are $(k+1)$-tuples of homotopy classes of distinct, essential simple closed curves, which have pairwise geometric intersection number zero. The 1-skeleton of $C(S)$ is usually referred to as the curve graph $C^1(S)$ of the surface $S$. There is one vertex in $C^1(S)$ for each homotopy class of essential simple closed curves in $S$ and an edge between two vertices $\alpha$ and $\beta$ if and only if $i(\alpha,\beta)=0$.\\
In the case that $S$ is the $4$-punctured sphere or the once-punctured torus, the above defined complex is just an infinite collection of vertices without any edges. Therefore in these cases we alter the definition of $C^1(S)$ in the following way: The vertices are still the homotopy classes of essential simple closed curves on $S$, while there is now an edge between two vertices $\alpha$ and $\beta$ if and only if the geometric intersection number $i(\alpha,\beta)$ is minimal. In this case $C^1(S)$ is the well-known Farey-graph and in all cases Theorem 1.1 in \cite{masur} shows that $C^1(S)$ has infinite diameter.\\
 It is obvious that two (homotopy classes of) essential simple closed curves $\alpha$ and $\beta$ which have distance $\geq 3$ in $C^{1}(S)$ fill the surface, since an essential simple closed curve on $S$ which has trivial intersection with both curves would yield an vertex in $C^1(S)$ connected to both $\alpha$ and $\beta$, contradicting the assumption on the distance.
Moreover the collection of curves $B$ is a subset of diameter $\leq 1$ in $C^1(S)$. Hence the infinite diameter of $C^1(S)$ yields the claim.
\end{proof}

We continue with the proof of Proposition \ref{curves}. Recall that throughout the proof we do not distinguish between a simple closed curve on $\mO$ and the corresponding element of the fundamental group. Let $A$ be the collection of curves chosen according to Lemma \ref{newcurves} and denote by $\A$ the corresponding graph of groups decomposition of $\pi_1(\mathcal{O})$ induced by cutting $\mathcal{O}$ along $A$.\\
So let $v\in VA$ with vertex group $A_v$ and denote by $\mO_v$ the underlying orbifold. Then $\mO_v^c$ is a pair of pants and $\mO_v$ has at least $1$ boundary component  and at most $2$ cone points. Hence $A_v=\langle x_1,x_2\rangle$, where $x_1$ and $x_2$ either correspond to two boundary components or to two cone points.
Let $n_1,\ldots,n_m\in \N$ be large enough such that the conclusion of Lemma \ref{nontrivial} holds for the set of curves $A$. We define $\alpha:=\vp_m^{n_m}\circ\ldots\circ\vp_1^{n_1}$. We assume that every fundamental group of a connected component of $\mO\setminus B$ (i.e. the corresponding vertex group in $\B$) is non-virtually abelian, that is not isomorphic to $D_{\infty}$. If some vertex group of $\B$ is isomorphic to $D_{\infty}$, we can fairly straightforward adjust the arguments of the following proof precisely as we did in the proof of Lemma \ref{nontrivial} to also hold in this case.\\

First suppose that $\mO_v^c$ has two boundary components which are also boundary components of $\mO^c$.
Let $x$ and $y$ be the elements of $A_v$ corresponding to the boundary components of $\mO_v^c$ which are also boundary components of $\mO^c$. That is $x$ (and also $y$) either corresponds to a cone point or to a proper boundary component of $\mO$. In particular $A_v=\langle x\rangle\ast\langle y\rangle$. Since $\mO_v$ is a pair of pants and $A\cup B$ fill the orbifold $\mO$ there exists a curve, say $b_1$, in $B$ such that $x$ and $y$ lie in different components of $\mO_v\setminus \{b_1\}$, as shown in Figure \ref{toyexample}.
\begin{figure}[htbp]
\centering
\includegraphics{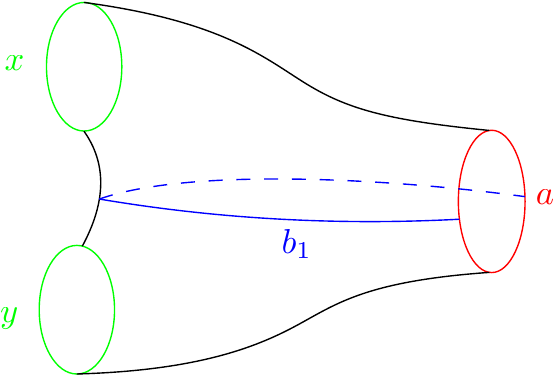}
\caption{Two boundary components $x,y$ of $\mO_v^c$ are boundary components of $\mO^c$}
\label{toyexample}
\end{figure}
For presentation purposes we assume that $b_1$ is the only curve in $B$ which intersects $\mO_v$ non-trivially. The general case (i.e. if there are either multiple curves in $B$, or multiple times the same curve, which separate $x$ and $y$ in $\mO_v$) follows from  the arguments given in the other cases below.\\
Then $y$ and $b_1$ correspond to boundary components of a component of $(\mO\setminus B)^c$, say $\Sigma^c$. Since by the construction of the set of curves $B$, $\mu(\pi_1(\Sigma))$ is non-virtually abelian, it clearly follows that $\langle \mu(y),\mu(b_1)\rangle$ is non-virtually abelian. By the same argument it follows that $\langle \mu(x),\mu(b_1)\rangle$ is non-virtually abelian. Moreover $\mu$ is injective on $\langle x\rangle$ and $\langle y\rangle$. Lemma \ref{scc} yields that $\langle \mu(x^l),\mu(b_1)\rangle$ and $\langle \mu(y^k)\mu(b_1)\rangle$ is non-virtually abelian whenever $x^l\neq 1$ (respectively $y^k\neq 1$) and $x$, $y$ correspond to cone points. If $x$ corresponds to a boundary component of $\mO$ then $\langle \mu(x^l),\mu(b_1)\rangle$ is non-virtually abelian since $\langle \mu(x),\mu(b_1)\rangle$ is non-virtually abelian and elements of infinite order are contained in a unique maximal virtually abelian subgroup of $\Gamma$.\\
Let now $w\in A_v$ be a non-trivial element with normal form 
$$w=x^{i_1}y^{j_1},\ldots x^{i_k}y^{j_k}$$ with respect to the free decomposition $A_v=\langle x\rangle\ast\langle y\rangle$.
We get that
$$\mu\circ\alpha(w)=\mu(x^{i_1})\mu(b_1)^{n_1}\mu(y^{j_1})\mu(b_1)^{-n_1}\mu(x^{i_2})\ldots \mu(b_1)^{n_1}\mu(y^{j_k})\mu(b_1)^{-n_1}.$$
Hence after possibly enlarging $n_1$ there is no essential backtracking in the path corresponding to $\mu\circ\alpha(w)$ in $\Cay(\Gamma)$ and therefore $\mu\circ\alpha$ is injective on $A_v$.\\

Now assume that $\mO_v^c$ has precisely one boundary component which is also a boundary component of $\mO^c$ and denote the corresponding curve on $\mO_v$ by $x$.
\begin{figure}[htbp]
\centering
\includegraphics{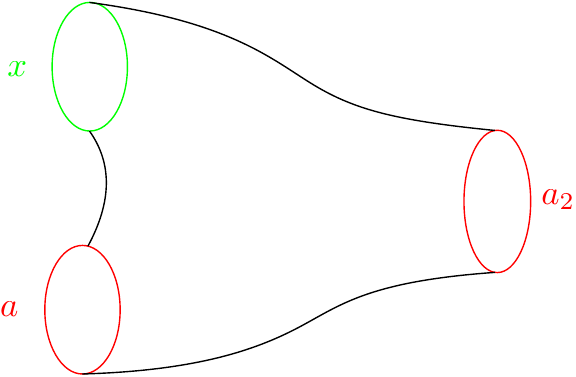}
\caption{Precisely one boundary component of $\mO_v^c$ is a boundary component of $\mO^c$}
\end{figure}
Let $a$ be an element of $A_v$ corresponding to a boundary component of $\mO_v^c$ which is not a boundary component of $\mO^c$, i.e. $a$ corresponds to a curve in $A$. Hence 
$$A_v=\pi_1(\mO_v)=\langle x,a\rangle=\langle x\rangle \ast\langle a\rangle.$$
Let $w=x^{k_1}a^{k_2}x^{k_3}\ldots a^{k_l}$ be a non-trivial element of $A_v$. If either $w=x^{k_1}$ or $w=a^{k_1}$ then $\mu\circ\alpha(w)\neq 1$ since $\mu\circ\alpha$ is injective on $\langle x\rangle$ by assumption and on $\langle a\rangle$ by Lemma \ref{nontrivial}.\\
So we can assume that $l\geq 2$. First consider the case that $|B|=|\{b\}|=1$, i.e. $\mO^c$ is a once punctured torus. Let $\Sigma_B$ be the orbifold obtained from cutting $\mO$ along $b$.
\begin{figure}[htbp]
\centering
\includegraphics{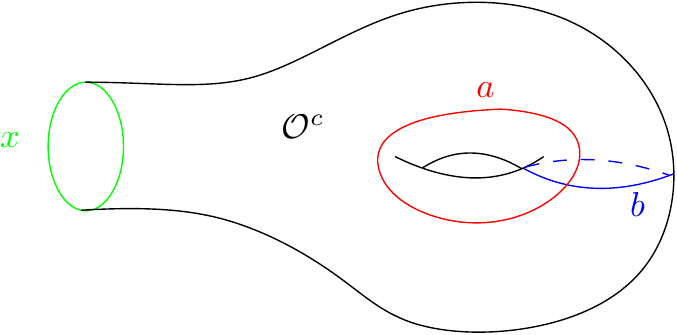}
\caption{The case that $\mO^c$ is a once punctured torus}
\end{figure}
%Then either $$\pi_1(\mO)=\langle x,c,d\ |\ [c,d]x=1\rangle$$ or $$\pi_1(\mO)=\langle x,c,d\ |\ [c,d]x=1, x^k=1\rangle.$$
%In particular $\{c,d\}$ generates $\pi_1(\mO)$ and therefore $\langle \mu\circ\alpha(c),\mu\circ\alpha(d)\rangle$ is non-virtually abelian by assumption. 
Let $a=[s_1,e^{\epsilon_1},s_2,e^{\epsilon_2},s_3,\ldots,e^{\epsilon_{p-1}},s_p]$ be a $\B$-path in normal form for $a$, where $\epsilon_j\in \{\pm 1\}$ for all $j\in\{1,\ldots,p-1\}$. Then $(\Sigma_B\setminus \{s_i\})^c$ is an annulus for all $i\in\{1,\ldots,p\}$ since $a$ is a simple closed curve, and therefore $\{b,s_i\}$ is a generating set for $\pi_1(\Sigma_B)$. Moreover $s_i=b^{n_i}x$ for some $n_i\in\Z$.\\
Since $\mu(\pi_1(\Sigma_B))$ is non-virtually abelian it follows that $\langle \mu(b), \mu(s_i)\rangle$ is non-virtually abelian, for all $i\in\{1,\ldots,p\}$. Moreover Lemma \ref{scc} yields for all $l\geq 1$, such that $x^l\neq 1$, that $\langle \mu(x)^l,\mu(b)\rangle$ is non-virtually abelian if $\mu(x)$ has finite order (i.e. $x$ corresponds to a cone point on $\mO$). If $\mu(x)$ has infinite order (i.e. $x$ corresponds to a boundary component of $\mO$), then clearly $\langle \mu(x),\mu(b)\rangle$ and therefore also $\langle \mu(x)^l,\mu(b)\rangle$ is non-virtually abelian for all $l\geq 1$, since every element of infinite order is contained in a unique maximal virtually abelian subgroup of $\Gamma$. Hence for $n_1$ chosen large enough, the path in $\Cay(\Gamma)$ corresponding to
\begin{align*}
\mu\circ\alpha(w)&=\mu\circ\vp_1^{n_1}(w)\\
&=\mu(x)^{k_1}\Big( \mu(s_1)\mu(b)^{\epsilon_1n_1}\mu(s_2)\mu(b)^{\epsilon_2n_1}\mu(s_3)\ldots\mu(b)^{\epsilon_{p-1}n_1}\mu(s_p)\Big)^{k_2}\mu(x)^{k_3}\cdots\\
&\Big( \mu(s_1)\mu(b)^{\epsilon_1n_1}\mu(s_2)\mu(b)^{\epsilon_2n_1}\mu(s_3)\ldots\mu(b)^{\epsilon_{p-1}n_1}\mu(s_p)\Big)^{k_l}\\
&=\mu(x)^{k_1}\Big( \mu(b^{n_1})\mu(x)\mu(b)^{\epsilon_1n_1}\mu(s_2)\mu(b)^{\epsilon_2n_1}\mu(s_3)\ldots\mu(b)^{\epsilon_{p-1}n_1}\mu(b^{n_p})\mu(x)\Big)^{k_2}\mu(x)^{k_3}\\
&\cdots\Big( \mu(b^{n_1})\mu(x)\mu(b)^{\epsilon_1n_1}\mu(s_2)\mu(b)^{\epsilon_2n_1}\mu(s_3)\ldots\mu(b)^{\epsilon_{p-1}n_1}\mu(b^{n_p})\mu(x)\Big)^{k_l}
\end{align*}
contains no essential backtracking and therefore $\mu\circ\alpha(w)$ is non-trivial.\\
So now we can assume that $B=\{b_1,\ldots,b_m\}$ for some $m\geq 2$. Let $$a=[s_0,e_1^{\epsilon_1},s_1,e_2^{\epsilon_2},s_2,\ldots,e_{p}^{\epsilon_{p}},s_p]$$ be a $\B$-path in normal form for $a$, where $\epsilon_j\in \{\pm 1\}$ for all $j\in\{1,\ldots,p\}$. To simplify notation we assume without loss of generality that $e_i$ corresponds to the curve $b_i$ in $B$. 
\begin{figure}[htbp]
\centering
\includegraphics{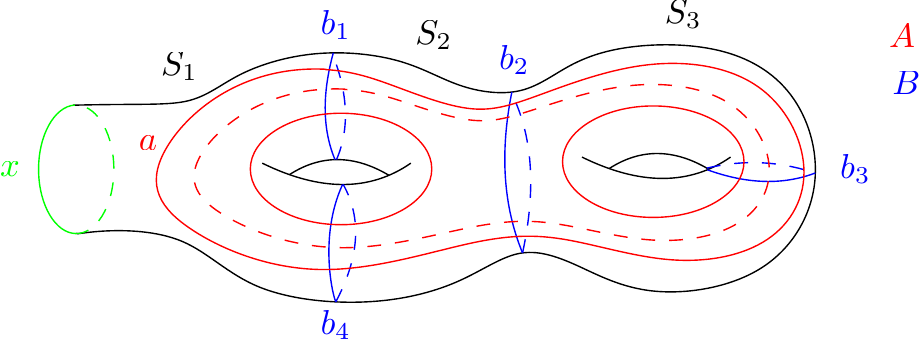}
\caption{The curve $a$ crosses multiple curves from $B$}
\end{figure}
Note that $a$ crosses at least two different simple closed curves $b_i,b_j\in B$.
Hence
\begin{align*}
\mu\circ\alpha(w)&=\mu(x)^{k_1}\Big( \mu(s_0)\mu(b_1)^{\epsilon_1n_1}\mu(s_1)\mu(b_2)^{\epsilon_2n_2}\mu(s_2)\ldots\mu(b_p)^{\epsilon_{p}n_p}\mu(s_p)\Big)^{k_2}\mu(x)^{k_3}\ldots\\
&\Big( \mu(s_0)\mu(b_1)^{\epsilon_1n_1}\mu(s_1)\mu(b_2)^{\epsilon_2n_2}\mu(s_2)\ldots\mu(b_p)^{\epsilon_{p}n_p}\mu(s_p)\Big)^{k_{l}}.
\end{align*}

Let $i\in\{1,\ldots,p-1\}$. Since $a$ is a simple closed curve and components of $(\mO\setminus B)^c$ are pairs of pants, $\{b_{i},s_{i}\}$ and $\{s_i,b_{i+1}\}$ generate the respective fundamental group of a component of $\mO\setminus B$ and therefore $\langle \mu(b_{i}),\mu(s_{i})\rangle$ and also $\langle \mu(s_i),\mu(b_{i+1})\rangle$ are non-virtually abelian. Moreover by the same arguments as before we can assume that $s_0=xb_1^k$, $s_p=b_p^{k'}x$ and $\langle \mu(x^l),\mu(b_1)\rangle$ and $\langle \mu(b_p),\mu(x^l)\rangle$ are non-virtually abelian for all $l\geq 1$ such that $x^l\neq 1$. 
\begin{figure}[htbp]
\centering
\includegraphics{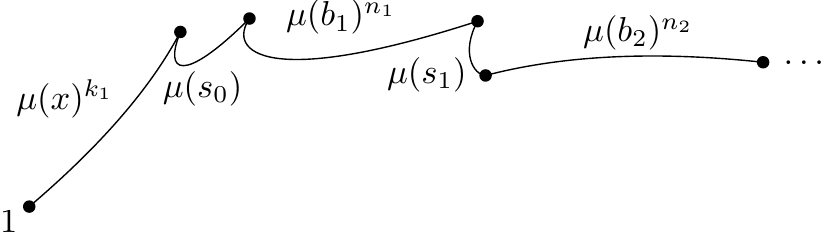}
\caption{The path corresponding to $\mu\circ\alpha(w)$ in $\Cay(\Gamma)$}
\end{figure}
Therefore it follows that there is no essential backtracking in the path corresponding to $\mu\circ\alpha(w)$ in $\Cay(\Gamma)$ and hence $\mu\circ\alpha(w)$ is non-trivial for large enough powers of the Dehn-twists along the set of simple closed curves $B$.\\

So for the last case assume that no boundary component of $\mO^c_v$ is a boundary component of $\mO$. In particular every boundary component of $\mO^c_v$ corresponds to one of the curves in $A$ and $A_v=\pi_1(\mO_v)\cong F_2$. Without loss of generality let $a_1$ and $a_2$ be the curves corresponding to two of the boundary components of $\mO_v$. Then clearly $A_v=\langle a_1,a_2\rangle$.
Let $$a_1=[s_0,e_1,s_1,e_2,s_2,\ldots,e_p,s_p]$$ be a $\B$-path in normal form for $a_1$ and
$$a_2=[r_0,\bar{e}_1,r_1,\bar{e}_2,r_2,\ldots,\bar{e}_q,r_q]$$ be a $\B$-path in normal form for $a_2$. Then 
$$\mu\circ\alpha(a_1)=\mu(s_0)\mu(b_{i_1})^{n_{i_1}}\mu(s_1)\mu(b_{i_2})^{n_{i_2}}\mu(s_2)\ldots\mu(b_{i_p})^{n_{i_p}}\mu(s_p)$$
and 
$$\mu\circ\alpha(a_2)=\mu(r_0)\mu(b_{j_1})^{n_{j_1}}\mu(r_1)\mu(b_{j_2})^{n_{j_2}}\mu(r_2)\ldots\mu(b_{j_q})^{n_{j_q}}\mu(r_q),$$
where $i_1,\ldots,i_p,j_1,\ldots,j_q\in\{1,\ldots,m\}$. 
Let $$w=a_1^{k_1}a_2^{l_1}\ldots a_1^{k_p}a_2^{l_p}$$ be a reduced element in $F(a_1,a_2)\cong A_v$. Then 
\begin{align*}
\mu\circ\alpha(w)&=\Big( \mu(s_0)\mu(b_{i_1})^{n_{i_1}}\mu(s_1)\mu(b_{i_2})^{n_{i_2}}\mu(s_2)\ldots\mu(b_{i_p})^{n_{i_p}}\mu(s_p)\Big)^{k_1}\cdot\\
&\Big(\mu(r_0)\mu(b_{j_1})^{n_{j_1}}\mu(r_1)\mu(b_{j_2})^{n_{j_2}}\mu(r_2)\ldots\mu(b_{j_q})^{n_{j_q}}\mu(r_q)\Big)^{l_1}\cdot\ldots\cdot\\
&\Big(\mu(r_0)\mu(b_{j_1})^{n_{j_1}}\mu(r_1)\mu(b_{j_2})^{n_{j_2}}\mu(r_2)\ldots\mu(b_{j_q})^{n_{j_q}}\mu(r_q)\Big)^{l_p}.
\end{align*}
\begin{figure}[htbp]
\centering
\includegraphics{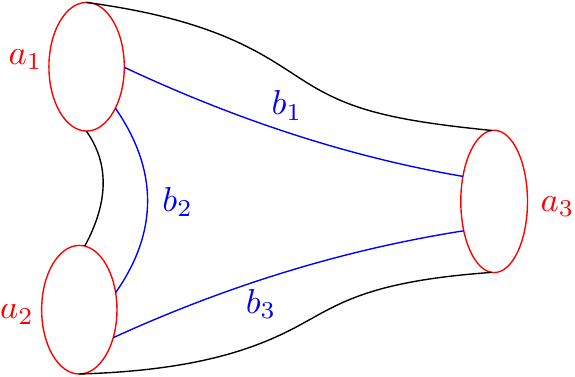}
\caption{All boundary components of $\mO_v^c$ correspond to curves in $A$}
\end{figure}
Since $a_1$ is a simple closed curve, $\{b_{i_j}, s_j\}$ is a generating set of the fundamental group of a component of $\mO\setminus B$ and hence it follows in particular that $\langle \mu(b_{i_j}), \mu(s_j)\rangle$ is non-virtually abelian for all $i\in\{1,\ldots,p\}$. By the same arguments $\langle \mu(s_j), \mu(b_{i_{j+1}})\rangle$, $\langle\mu(b_{j_k}),\mu(r_k)\rangle$ and $\langle \mu(r_k),\mu(b_{j_{k+1}})\rangle$ are also non-virtually abelian.\\
In addition since $a_1\neq a_2$ there exists $k\geq 1$ such that $e_k\neq \bar{e}_k$. The corresponding elements $b_{i_k}$ and $b_{j_k}$ are two different boundary curves of a pair of pants which is a component of $\mO\setminus B$. 
It follows in particular that $\langle \mu(b_{i_k}), \mu(b_{j_k})\rangle$ is non-virtually abelian.\\
Hence for appropriately chosen powers of the Dehn-twists along the set of simple closed curves $B$ the path in $\Cay(\Gamma)$ corresponding to $\mu\circ\alpha(w)$ contains no essential backtracking and therefore $\mu\circ\alpha$ is injective when restricted to $A_v$.\\   

Hence in all cases the fundamental group of every component of $\mathcal{O}\setminus A$ gets mapped isomorphically onto a subgroup of $\Gamma$ by $\mu\circ\alpha$, where $\alpha$ is the product of sufficiently high powers of Dehn-twists along the curves $b_1,\ldots,b_m\in B$. By possibly further increasing the powers of the Dehn-twists we can furthermore guarantee that the images of the fundamental groups of components of $\mathcal{O}\setminus A$ are quasi-convex subgroups of $\Gamma$.\\
We have therefore completed the proof of Proposition \ref{curves}. 
\end{proof}

Now we are ready to proof Theorem \ref{orbifoldflat}.

\begin{proof}[Proof (Theorem \ref{orbifoldflat})] We first assume that $Q$ is just the fundamental group of the cone-type orbifold and deal with the general case later.\\
Applying Proposition \ref{curves} to the homomorphisms $\Theta_n: Q\to \Gamma$ yields automorphisms $\alpha_n\in\Aut(Q)$ and collections of simple closed curves $A=\{a_1,\ldots,a_q\}$ and $B=\{b_1,\ldots,b_t\}$ such that $\mu_n:=\Theta_n\circ \alpha_n$ satisfies the conclusion of Proposition \ref{curves}.
We start by introducing some notation.\\
Let $\vp_1,\ldots,\vp_{q}$ be the automorphisms of $Q$ that correspond to Dehn-twists along the simple closed curves $a_1,\ldots,a_{q}$, and let $\Psi_1,\ldots,\Psi_{t}$ be the automorphisms of $Q$ corresponding to Dehn-twists along $b_1,\ldots,b_t$.
We define the following sequences of automorphisms of $Q$ iteratively:\\
For all $n\in\N$ let $l^{n}\in \N^{t}, m^{n}\in \N^{q}$ be integer tuples. We set 
\begin{itemize}
\item $\tau_1:=\id$ and 
\item $\nu_1:=(\Psi_1)^{l_1^{1}}\circ (\Psi_2)^{l_2^{1}}\circ\ldots\circ (\Psi_{t})^{l_{t}^{1}}$.
\end{itemize}
For $n>1$ we define:
\begin{itemize}
\item $\tau_n=(\vp_1)^{m_1^{n}}\circ (\vp_2)^{m_2^{n}}\circ\ldots\circ (\vp_{q})^{m_{q}^{n}}\circ\nu_{n-1}$ and 
\item  $\nu_n:=(\Psi_1)^{l_1^{n}}\circ (\Psi_2)^{l_2^{n}}\circ\ldots\circ (\Psi_{t})^{l_{t}^{n}}\circ\tau_n$.
\end{itemize}
Let $\{y_1,\ldots,y_s\}$ be a fixed finite generating set of $Q$.
Let $X$ be the Cayley graph of $\Gamma$ with respect to some fixed finite generating set and  $Y$ the Cayley graph of $Q$ with respect to $\{y_1,\ldots,y_s\}$. Let $(T_A,t_A)$ be the Bass-Serre tree corresponding to the decomposition $\A$ of $Q$ along the collection of simple closed curves $A$ with base vertex $t_A$, and let $(T_B,t_B)$ be the Bass-Serre tree corresponding to the decomposition $\B$ of $Q$ along the collection of simple closed curves $B$. We denote by $d_X, d_Y, d_{T_A}$ and $d_{T_B}$ the natural simplicial metric on $X, Y, T_A$ and $T_B$ respectively.\\
For every element $g\in Q$ we set 
$$l_A(g)=d_{T_A}(gt_A,t_A),\quad l_B(g)=d_{T_B}(gt_B,t_B).$$
If $g$ acts hyperbolically on $T_A$ we denote by $\tr_A(g)$ the translation length of the action of $g$ on $T_A$, and similarly if $g$ acts hyperbolically on $T_B$ we denote its translation length by $\tr_B(g)$. For an element $f\in \Gamma$, let $\tr(f)$ be the translation length of $f$ under its action on $X$. Let
$$(a_0^{(y_i)},e_1,a_1^{(y_i)},e_2,\ldots,e_{l(y_i)},a_{l(y_i)}^{(y_i)})$$
be a $\B$-path in normal form for $y_i$, i.e. $[(a_0^{(y_i)},e_1,a_1^{(y_i)},e_2,\ldots,e_{l(y_i)},a_{l(y_i)}^{(y_i)})]=y_i$, that is the corresponding element in $\pi_1(\B)$ is equal to $y_i$. Let 
$$P^{\nu_1}=\left \{[(a_0^{(y_i)},e_1,a_1^{(y_i)},e_2,\ldots,e_j,a_{j}^{(y_i)})]\ |\ i\in\{1,\ldots s\},j\in\{1,\ldots,l(y_i)\}\right \}$$
be the set of all prefixes of the normal forms of the generators $y_i$. We set  $$R^{\tau_1}:=1$$ and $$R^{\nu_1}:=2\cdot \max_{u\in P^{\nu_1}}|u|_Y.$$ 
Moreover we define the constants $R^{\tau_n}$ and $R^{\nu_n}$ for $n\geq 2$ iteratively. For each $n\geq 2$ and $g\in Q$ for which $|g|_Y\leq R^{\nu_{n-1}}$ let 
$$(a_0^{(\nu_{n-1}(g))},e_1,a_1^{(\nu_{n-1}(g))},e_2,\ldots,e_{l(\nu_{n-1}(g))},a_{l(\nu_{n-1}(g))}^{(\nu_{n-1}(g))})$$
be a $\A$-path in normal form for $\nu_{n-1}(g)$. Let 
$$P^{\tau_n}=\left\{[(a_0^{(\nu_{n-1}(g))},e_1,a_1^{(\nu_{n-1}(g))},e_2,\ldots,e_j,a_{j}^{(\nu_{n-1}(g))})]\ |\ |g|\leq R^{\nu_{n-1}},j\in\{1,\ldots,l(\nu_{n-1}(g))\}\right\}$$
be the set of all prefixes of normal forms of elements $g\in Q$ for which $|g|\leq R^{\nu_{n-1}}$. We then define 
$$R^{\tau_n}:=2\cdot \max_{u\in P^{\tau_n}}|\nu_{n-1}^{-1}(u)|_Y.$$
Similarly for each $n\geq 2$ and $g\in Q$ for which $|g|_Y\leq R^{\tau_{n}}$ let 
$$(a_0^{(\tau_n(g))},e_1,a_1^{(\tau_{n}(g))},e_2,\ldots,e_{l(\tau_{n}(g))},a_{l(\tau_n(g))}^{(\tau_n(g))})$$
be a $\B$-path in normal form for $\tau_n(g)$. Let 
$$P^{\nu_n}=\left\{[(a_0^{(\tau_n(g))},e_1,a_1^{(\tau_n(g))},e_2,\ldots,e_j,a_{j}^{(\tau_n(g))})]\ |\ |g|\leq R^{\tau_n},j\in\{1,\ldots,l(\tau_n(g))\}\right\}$$
be the set of all prefixes of normal forms of elements $g\in Q$ for which $|g|\leq R^{\tau_n}$. We then define 
$$R^{\nu_n}:=2\cdot \max_{u\in P^{\nu_n}}|\tau_n^{-1}(u)|_Y.$$
We set for all $n\geq 2$:
\begin{itemize}
\item $HY^{\tau_n}=\{g\in Q\ |\ |g|_{Y}\leq R^{\tau_n} \wedge 0<\tr_A(\nu_{n-1}(g))\}$
\item $NF^{\tau_n}=\{g\in Q\ |\ |g|_{Y}\leq R^{\tau_n} \wedge 0<l_A(\nu_{n-1}(g))\}$
\end{itemize}
and for all $n\geq 1$:
\begin{itemize}
\item $HY^{\nu_n}=\{g\in Q\ |\ |g|_{Y}\leq R^{\nu_n} \wedge 0<\tr_B(\tau_n(g))\}$
\item $NF^{\nu_n}=\{g\in Q\ |\ |g|_{Y}\leq R^{\nu_n}\wedge 0<l_B(\tau_n(g))\}$.
\end{itemize}

%Our aim in defining the sequence of automorphisms $\{\nu_n,\tau_n\}$ is to guarantee that any action of the orbifold flat $Q$, obtained as a limit of a converging subsequence of a test sequence of the ambient limit group $G$ is a minimal orbifold action of the subgroup $Q$ on the limit real tree. To obtain that goal we will need to restrict the sequence  of powers $\{l^{n},m^{n}\}$ to satisfy certain combinatorial properties.\\

\begin{lemma}\label{quadratic}
We keep the notation from above. Let $\lambda_Q^{(n)}:=\mu_n\circ \vp_1^{e_n}\circ\ldots\circ\vp_q^{e_n}\circ\nu_n$. Then for all $n\in\N$ the integer tuples $l^n\in \N^t$, $m^n\in \N^q$ and $e_n\in\N$ can be chosen such that $(\lambda_Q^{(n)},\tau_n,\nu_n)_{n\in\N}$ satisfy the following conditions:\\
\\
To simplify notation, for every index $n$ and every $g\in NF^{\tau_n}$ let
$$\nu_{n-1}(g)=a^{(\nu_{n-1}(g))}_0a^{(\nu_{n-1}(g))}_1\cdots a^{(\nu_{n-1}(g))}_{l(\nu_{n-1}(g))}$$
be the normal form of $\nu_{n-1}(g)$ with respect to $\A$. For every $h\in NF^{\nu_n}$ let 
$$\tau_n(h)=a^{(\tau_n(h))}_0\cdots a^{(\tau_n(h))}_{l(\tau_n(h))}$$
be the normal form of $\tau_n(h)$ with respect to $\B$.\\
\begin{enumerate}[(1)]
	\item For $n>1$ and every $a_i$, $1\leq i\leq q$:
	$$\tr_B((a_i)^{m_i^n})>100\cdot 2^n\cdot \max_{1\leq i\leq q}l_B(a_i)\cdot \sum_{|g|_Y\leq R^{\tau_n}, j\leq l(\nu_{n-1}(g))}l_B(a_j^{(\nu_{n-1}(g))})$$
	\item For $n>1$ and every $b_i$, $1\leq i\leq t$:
	$$\tr_A((b_i)^{l_i^n})>100\cdot 2^n\cdot \max_{1\leq i\leq t}l_A(b_i)\cdot \sum_{|h|_Y\leq R^{\nu_n}, j\leq ln(\tau_n(h))}l_A(a_j^{(\tau_{n}(h))})$$
	\item For every $n>1$ and every $g,g'\in NF^{\tau_n}$:
	$$\left|\frac{l_B(\tau_n(g))l_A(\nu_{n-1}(g'))}{l_B(\tau_n(g'))l_A(\nu_{n-1}(g))}-1\right|<\frac{1}{100\cdot q\cdot 2^n}$$
	\item For every $n>1$ and every $g\in HY^{\tau_n}$:
	$$\left|\frac{\tr_B(\tau_n(g))l_A(\nu_{n-1}(g))}{l_B(\tau_n(g))\tr_A(\nu_{n-1}(g))}-1\right|<\frac{1}{100\cdot q\cdot 2^n}$$
	\item For every $n\geq 1$ and every $h,h'\in NF^{\nu_n}$:
	$$\left|\frac{l_A(\nu_n(h))l_B(\tau_{n}(h'))}{l_A(\nu_n(h'))l_B(\tau_{n}(h))}-1\right|<\frac{1}{100\cdot q\cdot 2^n}$$
	\item For every $n\geq 1$ and every $h\in HY^{\nu_n}$:
	$$\left|\frac{\tr_A(\nu_n(h))l_B(\tau_{n}(h))}{l_A(\nu_n(h))\tr_B(\tau_{n}(h))}-1\right|<\frac{1}{100\cdot q\cdot 2^n}$$
	\item There exist constants $c_1,c_2>0$, so that for every $n\geq 1$ and every $h,h'\in NF^{\nu_n}$:
	$$c_1<\frac{l_A(\nu_n(h))d_X(\lambda_Q^{(n)}(h'),1)}{d_X(\lambda_Q^{(n)}(h),1)l_A(\nu_n(h'))}<c_2$$
	\item There exist constants $c_3,c_4>0$, so that for every $n\geq 1$ and every $h\in HY^{\nu_n}$:
	$$c_3<\frac{\tr_A(\nu_n(h))d_X(\lambda_Q^{(n)}(h),1)}{\tr_X(\lambda_Q^{(n)}(h))l_A(\nu_n(h))}<c_4$$
	\item For every index $n$, the homomorphism $\lambda^{(n)}_Q: Q\to \Gamma$ cannot be extended to an homomorphism $\gamma_n: Q_1\to \Gamma$, such that $\lambda^{(n)}_Q=\gamma_n\circ \iota$ where $\iota: Q\to Q_1$ is an embedding of $Q$ into the fundamental group of a cone-type orbifold $\mathcal{O}_1$, finitely covered by the orbifold $\mathcal{O}$.
\end{enumerate}
Moreover $\lambda_Q^{(n)}$ dominates the growth of $\lambda^H_n$.
\end{lemma}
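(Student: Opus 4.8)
The plan is to construct the three sequences $(\lambda_Q^{(n)},\tau_n,\nu_n)$ by a nested induction on $n$, choosing the tuples $l^n\in\N^t$, $m^n\in\N^q$ and the exponent $e_n\in\N$ in the right order so that each of the nine estimates becomes a matter of making finitely many quantities large (or close to a ratio) at stage $n$. The underlying geometric principle is that a Dehn twist $\varphi_i^{N}$ along a curve $a_i$, when applied to a fixed element $g$ written in normal form with respect to the transverse splitting $\B$, inserts $\pm N$ copies of $a_i$ between consecutive syllables; hence $l_B(\varphi_i^N(g))$ and $\tr_B$ grow \emph{linearly} in $N$ with a slope controlled by the number of times $g$ crosses $a_i$, while the $\A$-length of elements supported away from $a_i$ is unchanged. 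This is exactly the bookkeeping encoded in the sets $HY^{\tau_n},NF^{\tau_n},HY^{\nu_n},NF^{\nu_n}$ and the constants $R^{\tau_n},R^{\nu_n}$: the radius $R^{\nu_{n-1}}$ is chosen large enough that the normal form of $\nu_{n-1}(g)$ for every short $g$ is ``seen'' before one twists again, so the induction never loses control of previously-fixed elements.

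First I would fix $n$ and assume $\tau_n$ (equivalently $\nu_{n-1}$ and $m^n$) has been chosen. Since $A\cup B$ fills $\mathcal O$ by Proposition \ref{curves}(3), every curve $b_i$ crosses some $a_j$, so twisting by high powers of the $b_i$'s strictly increases $\A$-translation lengths with a definite positive rate; this makes estimates (2), (5), (6) attainable by taking the $l_i^n$ large enough, one coordinate at a time (choose $l_1^n$, then $l_2^n$, etc., each time only finitely many inequalities among elements of $NF^{\nu_n}$ and $HY^{\nu_n}$ must be arranged, and the ratio estimates (5),(6) are arranged by making the ``dominant'' twist so large that the contributions of the fixed syllables $a_j^{(\tau_n(h))}$ become negligible relative to $l_A(\nu_n(h))$, forcing all these ratios to be within $\tfrac{1}{100q2^n}$ of the ratio of pure twist-lengths, which one arranges to be $1$). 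Symmetrically, the choice of $m^n$ (the exponents of the $\varphi_i$, i.e. twists along $A$) arranges (1), (3), (4): twisting along $A$ blows up $\B$-lengths linearly, dominating the bounded syllable-contributions in the $\B$-normal forms of $\nu_{n-1}(g)$. The constants $c_1,c_2,c_3,c_4$ in (7),(8) are produced once and for all by the quasi-convexity conclusion of Proposition \ref{curves}(1): on each pair-of-pants piece $\mu_n\circ\alpha_n=\mu_n$ restricts to a quasi-isometric embedding with uniform constants, so $d_X(\lambda_Q^{(n)}(h),1)$ is comparable to $l_A(\nu_n(h))$ with multiplicative error independent of $n$ (after folding in the translation-length/word-length comparison in a hyperbolic group, which costs only a uniform additive constant absorbed into $c_3,c_4$). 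For (9) I would argue that if $\lambda_Q^{(n)}=\gamma_n\circ\iota$ factored through an embedding into $\pi_1(\mathcal O_1)$ with $\mathcal O\to\mathcal O_1$ a finite cover, then a curve of $A$ would map, under the covering, to a proper power, contradicting that $\mu_n$ is injective with quasi-convex image on the pieces and that the twist exponents were chosen with $\Q$-linearly-independent (or simply coprime-growth) profiles; concretely, the last clause of Lemma \ref{scc}'s construction together with Lemma \ref{nontrivial} guarantees we can also pick $e_n$ so that the ``extra'' twist $\varphi_1^{e_n}\circ\cdots\circ\varphi_q^{e_n}$ destroys any such factorization. Finally, that $\lambda_Q^{(n)}$ dominates the growth of $\lambda_n^H$ follows because $\lambda_n^H$ restricted to the edge groups adjacent to $Q$ equals (up to conjugacy) $\Theta_n$ on those same subgroups, whose images under $\lambda_Q^{(n)}$ are being stretched by the twist exponents, so one simply makes $e_n$ (and the $l^n,m^n$) large relative to the growth of $\lambda_n^H$, which is itself fixed in advance by the test-sequence hypothesis on $H$.

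The main obstacle, I expect, is the simultaneous/interleaved nature of the two twist families: twisting along $B$ to fix the $\A$-lengths perturbs $\B$-lengths, and vice versa, so one must be careful that the \emph{order} of choices (first $m^n$ to control $\B$-data of already-fixed elements, then $l^n$ to control $\A$-data, with radii $R^{\tau_n}\le R^{\nu_n}$ chosen between) really does close the induction without circularity. The delicate point is that each estimate at level $n$ refers to normal forms computed \emph{after} applying $\nu_{n-1}$ or $\tau_n$, and these normal forms have bounded syllable length (bounded by the combinatorics of the pants decompositions $\A$, $\B$, independent of $n$), while the inserted twist powers go to infinity; establishing this uniform syllable bound and verifying that ``no essential backtracking'' occurs in the resulting $\Cay(\Gamma)$-paths — which is what converts the tree-level length computations into genuine word-length computations in $\Gamma$ — is exactly where Lemma \ref{trivial} and Lemma \ref{potenz}, together with the non-virtual-abelianness of all the pair-of-pants images, are used, and it is the technically heaviest part. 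Once the syllable bound and the linear-growth-of-twist-length facts are in hand, each of (1)--(9) reduces to choosing one natural number larger than an explicit finite maximum, and the ``moreover'' clause is immediate from the same growth comparison.
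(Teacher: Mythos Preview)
Your treatment of (1)--(6) is essentially the content of the Rips--Sela argument the paper cites, and your description of the order in which the exponents $m^n$, $l^n$ are chosen is correct.

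Your argument for (7)--(8), however, misidentifies the mechanism. You claim the constants $c_1,\ldots,c_4$ come from the quasi-convexity of $\mu_n$ on each pair-of-pants piece, ``with uniform constants''. But the quasi-convexity constants from Proposition~\ref{curves}(1) depend on $n$ (since $\mu_n$ does), so this cannot produce bounds uniform in $n$ by itself. The paper's argument is different and more direct: it uses the \emph{extra} twist $\vp_1^{e_n}\circ\cdots\circ\vp_q^{e_n}$ to make the word-length in $\Gamma$ dominated by the twist contributions. Concretely, for $e_n$ large one has
\[
d_X(\lambda_Q^{(n)}(m_j),1)=2e_n\sum_{i=1}^{l_A(\nu_n(m_j))}|\mu_n(a_{p_i})|_\Gamma+\epsilon_j,\qquad |\epsilon_j|<\tfrac{1}{1000}e_n,
\]
so the ratio in (7) is squeezed between multiples of $\min_i|\mu_n(a_i)|_\Gamma/\max_i|\mu_n(a_i)|_\Gamma$ and its reciprocal. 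The point is that $l_A(\nu_n(h))$ appears as the number of summands, not via any quasi-isometry constant.

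The genuine gap is (9). Your strategy---pick $e_n$ so that some curve $c$ in $A$ has image with no proper root in $\Gamma$, blocking any extension to $\pi_1(\mathcal O_1)$---does not work as stated. Even granting there are only finitely many isomorphism types of pairs $(Q,Q_1)$, for a \emph{fixed} such pair it is not clear which curve in $\mathcal O$ becomes a proper power in $Q_1$, nor that arranging $\lambda_Q^{(n)}(c)$ to be root-free for one such curve blocks \emph{all} extensions (the putative $\gamma_n$ need not send the ``obvious'' root to a root of $\lambda_Q^{(n)}(c)$; it only needs to exist as an abstract homomorphism). Invoking ``$\mathbb Q$-linearly independent exponents'' or Lemma~\ref{nontrivial} does not address this.

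The paper's argument is indirect and uses a limit action. Fix $n$ and the sequence $\nu_n$; suppose for contradiction that for a sequence $e_j\to\infty$ the homomorphisms $\lambda^j:=\mu_n\circ\vp_1^{e_j}\circ\cdots\circ\vp_q^{e_j}\circ\nu_n$ all extend to some $\gamma_j:Q_1\to\Gamma$. After passing to a subsequence with a fixed $Q_1$, the $\gamma_j$ converge to an action of $Q_1$ on a limit tree $T$. The structure of the $\lambda^j$ forces the finite-index subgroup $Q$ to act \emph{discretely} on $T$ (its Bass--Serre tree for the $\A$-splitting pulled back by $\nu_n$), hence $Q_1$ does too and $T$ is simplicial. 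Edge stabilizers in $Q_1$ are then cyclic groups containing the images of the curves $c_i=\nu_n^{-1}(a_i)$, and each pair-of-pants vertex group of $Q$ is elliptic. Now one uses the covering map $\pi:\mathcal O\to\mathcal O_1$: it sends each pair-of-pants suborbifold $\hat{\mathcal O}$ (with $\chi(\hat{\mathcal O}^c)=-1$) into a suborbifold $\hat{\mathcal O}_1$ of $\mathcal O_1$, boundary to boundary; since $\chi(\hat{\mathcal O}^c)=-1$ is minimal, $\pi|_{\hat{\mathcal O}}$ must be a homeomorphism. Doing this for every piece forces $\mathcal O=\mathcal O_1$, contradicting $Q\lneq Q_1$. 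Hence for some $e_n$ no extension exists, and one takes that $e_n$ (enlarged further if needed for (7), (8) and the growth-domination clause).
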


\begin{proof}

Recall that $\A$ is the graph of groups obtained by decomposing $Q$ along the cyclic groups corresponding to the simple closed curves $a_1,\ldots,a_q$.\\
$(1)-(6)$ For the proof of the inequalities $(1)-(6)$ we refer the reader to the proof of Claim 4.7 of \cite{rips}.\\ 
$(7)$ We claim that given any finite set of elements $M=\{m_1,\ldots, m_k\}\subset Q$, there exist constants $c_1,c_2>0$ and some constant $e\in\N$ so that if $e_n>e$, for every $m_1,m_2\in M$ for which both $\nu_n(m_1)$ and $\nu_n(m_2)$ do not fix the point $t_A\in T_A$, i.e. $l_A(\nu_n(m_i))>0$ for $i=1,2$, the following holds
$$c_1<\frac{l_A(\nu_n(m_1))d_X(\lambda_Q^{(n)}(m_2),1)}{d_X(\lambda_Q^{(n)}(m_1),1)l_A(\nu_n(m_2))}<c_2.$$
To see this choose $e>0$ large enough, such that for $e_n>e$ for all $j\in\{1,\ldots,k\}$:
 $$d_X(\lambda_Q^{(n)}(m_j),1)=2e_n(\sum_{i=1}^{l_A(\nu_n(m_j))}\mu_n(a_{p_i}))+\epsilon_j$$
and $|\epsilon_j|<\frac{1}{1000} e_n$. Then for $e_n>e$ the following hold:
\begin{align*}
&d_X(\lambda_Q^{(n)}(m_j),1)< 2\cdot e_n\cdot l_A(\nu_n(m_j))\cdot \max_{i\in \{1,\ldots,q\}}\mu_n(a_i)+\epsilon_j\\
&d_X(\lambda_Q^{(n)}(m_j),1)> 2\cdot e_n\cdot l_A(\nu_n(m_j))\cdot \min_{i\in \{1,\ldots,q\}}\mu_n(a_i)+\epsilon_j
\end{align*}
Since $\epsilon_j<\frac{1}{1000} e_n$ for all $j\in\{1,\ldots,k\}$
\begin{align*}
&d_X(\lambda_Q^{(n)}(m_j),1)< 3\cdot e_n\cdot l_A(\nu_n(m_j))\cdot \max_{i\in \{1,\ldots,q\}}\mu_n(a_i)\\
&d_X(\lambda_Q^{(n)}(m_j),1)> e_n\cdot l_A(\nu_n(m_j))\cdot \min_{i\in \{1,\ldots,q\}}\mu_n(a_i)
\end{align*}
Applying both inequalities to 
$$\frac{l_A(\nu_n(m_1))d_X(\lambda_Q^{(n)}(m_2),1)}{d_X(\lambda_Q^{(n)}(m_1),1)l_A(\nu_n(m_2))}$$
yields the desired upper and lower bounds $c_1$ and $c_2$ and inequality (7) follows, since $c_1$ and $c_2$ are independent of $n$ and the finite set $M$. Note that for all $n\geq 1$ the sets $NF^{\nu_n}$ and $HY^{\nu_n}$ are finite sets.\\
$(8)$ This inequality follows similar to $(7)$.\\
$(9)$ For the rest of the proof we fix some $n>1$ and the sequence of automorphisms $\{\tau_n,\nu_n\}$. Suppose that there exists an increasing sequence of exponents $(e_j)_{j\in\N}$ for which the homomorphisms 
$$\lambda^j:=\mu_n\circ\vp_1^{e_j}\circ\cdots\circ\vp_q^{e_j}\circ\nu_n$$
can be extended to the fundamental group $Q_1$ of a cone-type orbifold $\mathcal{O}_1$ finitely covered by $\mathcal{O}$. Since up to isomorphism there are only finitely many possibilities for a couple $(Q,Q_1)$, where $Q\leq Q_1$ and $Q_1$ is the fundamental group of a cone-type orbifold finitely covered by $\mathcal{O}$, after passing to a subsequence we may assume that all the homomorphisms $\{\lambda^j\}$  can be extended to the fundamental group $Q_1$ of a fixed orbifold $\mathcal{O}_1$ finitely covered by $\mathcal{O}$. So each homomorphism $\lambda^j$ can be extended to a homomorphism $\gamma_j: Q_1\to \Gamma$, such that $\lambda^j=\gamma_j\circ\iota$ where $\iota:Q\to Q_1$ is the canonical embedding.\\
By Theorem \ref{paulin} the sequence $(\gamma^j)$ converges into an action of $Q_1$ on a limit real tree $(T,t_0)$.
Since each homomorphism $\gamma^j$ is an extension of the homomorphism $\lambda^j: Q\to \Gamma$, and since by the structure of the sequence of homomorphisms $(\lambda^j)$ the finite index subgroup $Q<Q_1$ acts discretely on the limit tree $T$, the group $Q_1$ also acts discretely on $T$, i.e. $T$ is a simplicial tree.\\
Let $c_i:=\nu_n^{-1}(a_i)$ for all $i\in\{1,\ldots,q\}$. Since $\nu_n$ is an automorphism of $Q$, $c_1,\ldots,c_m$ are simple closed curves which induce a splitting of $Q$ along cyclic subgroups, denoted by $\C$. Clearly every edge stabilizer in the action of $Q$ on $T$ is conjugate to a cyclic subgroup generated by one of the $c_i$'s, hence each edge stabilizer in the action of $Q_1$ on $T$ is conjugate to a cyclic subgroup containing the element corresponding to $c_i$ in the orbifold fundamental group $Q_1$.\\
Let $\A_{Q_1}$ be the graph of groups corresponding to the action of $Q_1$ on $T$. By the structure of the homomorphisms $\lambda^j$, every vertex group of $\C$ fixes a point in $T$, hence is contained in a vertex group of $\A_{Q_1}$.\\
We denote the covering map corresponding to the embedding $\iota:Q\to Q_1$ by $\pi:\mathcal{O}\to \mathcal{O}_1$. Since $Q_1$ is the fundamental group of the orbifold $\mathcal{O}_1$ and since $\A_{Q_1}$ is a graph of groups with cyclic edge stabilizers, each vertex group in $\A_{Q_1}$ is isomorphic to the fundamental group of a suborbifold of $\mathcal{O}_1$. Let $\hat{\mathcal{O}}$ be a connected component of 
$\mathcal{O}\setminus\bigcup\{c_1,\ldots,c_m\}$ and denote by $\hat{\mO}^c$ the complement of the cone points of $\hat{\mO}$. By construction $\chi(\hat{\mO}^c)=-1$, and $\hat{Q}:=\pi_1(\hat{\mathcal{O}})$ is conjugate to a vertex stabilizer in $\C$. Moreover $\hat{Q}$ fixes a point in 
$T$ so it can be conjugated into a vertex group of $\A_{Q_1}$. Let $\hat{Q}_1$ be this vertex group and let $\hat{\mathcal{O}}_1$ be the suborbifold of $\mathcal{O}_1$ for which 
$\hat{Q}_1=\pi_1(\hat{\mathcal{O}}_1)$. The covering map $\pi:\mathcal{O}\to \mathcal{O}_1$ maps the suborbifold $\hat{\mathcal{O}}$ into the suborbifold $\hat{\mathcal{O}}_1$, and the boundary of $\hat{\mathcal{O}}^c$ is mapped to the boundary of $\hat{\mathcal{O}}^c_1$. Since in addition $\chi(\hat{\mO}^c)=-1$, then, necessarily, $\pi$ maps $\hat{\mathcal{O}}$ homeomorphically onto $\hat{\mathcal{O}}_1$.\\
Since the restriction of $\pi$ to each suborbifold $\hat{\mathcal{O}}$ is a homeomorphism, and since every edge stabilizer in $\C$ is also contained in some edge stabilizer of $\A_{Q_1}$, the orbifold $\mathcal{O}$ must be a suborbifold of the orbifold $\mathcal{O}_1$, such that $\mO^c$ has the same boundary components as $\mO_1^c$, which clearly implies that $\mathcal{O}=\mathcal{O}_1$. 
%(\textbf{Steht in Erics Arbeit})
Hence $Q=Q_1$, a contradiction.\\
Therefore we can choose an exponent $e_n$ large enough such that the homomorphisms $$\lambda_Q^{(n)}=\mu_n\circ\vp_1^{e_n}\circ\cdots\circ\vp_q^{e_n}\circ\nu_n$$
satisfy properties $(1)-(9)$ and moreover $\lambda_Q^{(n)}$ dominates the growth of $\lambda^H_n$.
\end{proof}

Up to now we have only considered the case that $Q$ is the fundamental group of a cone-type orbifold. So from now on we assume that $Q$ is an extension of a finite group $E$ by the fundamental group $\pi_1(\mO)$ of a cone-type orbifold $\mO$, i.e. there exists a short exact sequence
$$1\to E\to Q\to \pi_1(\mO)\to 1.$$
If $\pi: Q\to \pi_1(\mO)$ is the canonical projection, then we say that an automorphism $\alpha\in \Aut(\pi_1(\mO))$ lifts to $Q$ if there exists  $\tilde{\alpha}\in \Aut(Q)$ such that $\pi\circ\tilde{\alpha}=\alpha\circ\pi$.\\
Let $\mathcal{H}$ be the collection of peripheral subgroups of $\pi_1(\mO)$ and $\tilde{\mathcal{H}}$ the corresponding collection of peripheral subgroups of $Q$,
 i.e. groups of the form $\pi^{-1}(Z)$, where $Z$ corresponds to a boundary component of $\mO$. Let $S\leq \Aut_{\mathcal{H}}(\pi_1(\mO))$ be the subgroup of those automorphisms that lift to $\Aut_{\tilde{\mathcal{H}}}(Q)$. Then, by Lemma 4.17 in \cite{rewe}, $S$ has finite index in $\Aut_{\mathcal{H}}(\pi_1(\mO))$.\\
Hence there exists subsequences of the sequences of automorphisms
$(\tau_n), (\nu_n)$ (still denoted $(\tau_n), (\nu_n)$), such that all $\tau_n$ and $\nu_n$ are in the same left coset $C:=\beta \circ S$ of $S$. Therefore the automorphisms of the sequences $(\beta^{-1}\circ\tau_n)$ and $(\beta^{-1}\circ\nu_n)$ lie in $S$ and hence lift from $\pi_1(\mO)$ to $Q$ and therefore the properties of Lemma \ref{quadratic} also hold for $Q$ in the general case. We define the sequence $(\lambda_n)\subset\Hom(G,\Gamma)$ by $\lambda_n|_H=\lambda_n^H$ and $\lambda_n|_Q=\lambda_Q^{(n)}$ for all $n\in\N$. It remains to show that the properties from Lemma \ref{quadratic} imply that $(\lambda_n)$ is a test sequence for $G$.\\

We briefly recall our setup. $G$ is a $\Gamma$-limit group which has the structure of an orbifold flat over $H$ and $(\lambda_n)\subset\Hom(G,\Gamma)$ is a sequence, such that $(\lambda_n|_Q)$ fulfills the properties of Lemma \ref{quadratic}, where $Q$ is the extension of a finite group by the fundamental group of a cone-type orbifold $\mO$ corresponding to the orbifold flat. Let $\tilde{G}$ be a group which contains $G$ as a subgroup and in abuse of notation let moreover $\lambda_n:\tilde{G}\to \Gamma$ be an extension of $\lambda_n$ for all $n\in\N$. Then (a subsequence of) $(\lambda_n)$ converges into the action of a $\Gamma$-limit group $L$ on some real tree $T$, $G\leq L$ and we assume that $G$ does not act with a global fixpoint on $T$. Assume moreover that $L$ is one-ended relative $G$.\\
Let $\B$ be the graph of groups corresponding to the orbifold flat, i.e. the following hold: 
\begin{itemize}
\item There exists a vertex $v\in VB$ with a QH subgroup $Q=B_v$ as vertex group, which fits into the short exact sequence 
$$1\to E\to Q\to \pi_1(\mO)\to 1.$$
\item Every edge $e\in EB$ is of the form $e=(v,u)$ for some $u\in VB\setminus \{v\}$ and for every edge $e$  with $\alpha(e)=v$, either there exists a peripheral subgroup $O_e$ of $\pi_1(\mathcal{O})$ such that $\alpha_e(B_e)$ is in $B_v$ conjugate to a finite index subgroup of $\pi^{-1}(O_e)$, where $\pi:B_v\to\pi_1(\mathcal{O})$ denotes the canonical projection, or $B_e$ is finite. Moreover every boundary component of $\mO$ corresponds to an edge in such a way.
\item $H$ admits a decomposition as a graph of groups along finite subgroups, such that the vertex groups of this decomposition are precisely the vertex groups of the vertices in $VB\setminus\{v\}$.
\end{itemize}
Since $\lambda_n|_Q$ dominates the growth of $\lambda_n|_H$, every vertex group of $\B$ except for $Q$ acts elliptically on $T$. Since $G$ and therefore also $L$ is not necessarily one-ended relative to its elliptic subgroups we can not simply apply the Rips machine to analyze the action of $L$ on $T$. In order to still be able to get information about the action of $L$ on $T$ we first prove the following:

\begin{lemma}\label{freeaction}
Keeping the notation from before, let $Q$ be a finite extension of the fundamental group of some cone-type orbifold $\mO$, i.e. there exists a finite group $E$ and a short exact sequence
$$1\to E\to Q\to\pi_1(\mO)\to 1.$$
Assume that sequences of automorphisms of $Q$, $(\nu_n),(\tau_n)$ and homomorphisms $\lambda_n: Q\to \Gamma$ satisfy the properties of Lemma \ref{quadratic}. Let $g,g'\in Q$ be elements of infinite order, which do not correspond to boundary components of $\mO$, and suppose that $$\max(d_Y(g,1),d_Y(g',1))\leq R^{\nu_s}$$ for some $s\in\N$. Then there exist constants $C_{g,g'}^1, C_{g,g'}^2,C_{g}^3,C_{g}^4>0$ (depending on $g,g'$) so that for every index $n>s+1$ the following holds:
\begin{align}
	C_{g,g'}^1&<\frac{d_X(\lambda_n(g),1)}{d_X(\lambda_n(g'),1)}<C_{g,g'}^2\\
	C_{g}^3&<\frac{\tr_X(\lambda_n(g))}{d_X(\lambda_n(g),1)}<C_{g}^4\\
	C_{g,g'}^1&<\frac{\tr_X(\lambda_n(g))}{\tr_X(\lambda_n(g'))}<C_{g,g'}^2
\end{align}
\end{lemma}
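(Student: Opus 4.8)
### Proof plan for Lemma \ref{freeaction}

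The plan is to read off all three inequalities from the quantitative growth estimates of Lemma \ref{quadratic} by tracking an element $g$ through the two-layered sequence of Dehn twists that defines $\lambda_n$. First I would fix $g,g'$ of infinite order not corresponding to boundary components of $\mO$, with $d_Y(g,1),d_Y(g',1)\le R^{\nu_s}$. The key structural observation is that, by the definition of $R^{\nu_s}$ and the normal-form bookkeeping preceding Lemma \ref{quadratic}, for every $n>s$ the elements $\nu_{n-1}(g)$ and $\nu_{n-1}(g')$ have $\A$-normal forms whose syllables are among the fixed finite set of prefixes, so $g,g'\in NF^{\tau_n}$ (and $g\in HY^{\tau_n}$ precisely when $g$ acts hyperbolically on $T_A$ after applying $\nu_{n-1}$, which holds since $g$ has infinite order and is not peripheral — here I would invoke that curves in $C=\nu_n^{-1}(A)$ together with those in $B$ fill $\mO^c$, so no infinite-order non-peripheral element can be elliptic in both $T_A$ and $T_B$). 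Consequently the whole chain of ratios controlled by items (3)--(8) of Lemma \ref{quadratic} applies to $g,g'$ simultaneously.

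The heart of the argument is then a telescoping estimate. Using item (7) of Lemma \ref{quadratic} (with the fixed universal constants $c_1,c_2$) one gets
\[
c_1<\frac{l_A(\nu_n(g))\,d_X(\lambda_Q^{(n)}(g'),1)}{d_X(\lambda_Q^{(n)}(g),1)\,l_A(\nu_n(g'))}<c_2 ,
\]
so it remains to bound the ratio $l_A(\nu_n(g))/l_A(\nu_n(g'))$ independently of $n$. This is where I would chain items (3) and (5): item (3) compares $l_A(\nu_{n-1}(g))/l_A(\nu_{n-1}(g'))$ with $l_B(\tau_n(g))/l_B(\tau_n(g'))$ up to a factor $1+O(2^{-n})$, item (5) compares $l_B(\tau_n(g))/l_B(\tau_n(g'))$ with $l_A(\nu_n(g))/l_A(\nu_n(g'))$ up to a factor $1+O(2^{-n})$, and together they give $l_A(\nu_n(g))/l_A(\nu_n(g')) = (l_A(\nu_{n-1}(g))/l_A(\nu_{n-1}(g')))\cdot(1+O(2^{-n}))$. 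Since $\prod_{k}(1+O(2^{-k}))$ converges, the ratio stays within a fixed interval $[C'_1,C'_2]$ for all $n>s+1$, bootstrapped from its value at level $s+1$. Multiplying with the bound from (7) yields inequality (1) with $C^1_{g,g'}=c_1C'_1$, $C^2_{g,g'}=c_2C'_2$. Inequality (2) comes the same way from item (8): $\tr_X(\lambda_Q^{(n)}(g))/d_X(\lambda_Q^{(n)}(g),1)$ is comparable (constants $c_3,c_4$) to $\tr_A(\nu_n(g))/l_A(\nu_n(g))$, and this last ratio is itself bounded away from $0$ and $\infty$ by combining items (4) and (6) with the fact that $g\in HY^{\nu_n}$; $C^3_g,C^4_g$ are then read off. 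Inequality (3) is obtained by combining (1) and (2) applied to both $g$ and $g'$, namely $\tr_X(\lambda_n(g))/\tr_X(\lambda_n(g'))$ equals $[\tr_X(\lambda_n(g))/d_X(\lambda_n(g),1)]\cdot[d_X(\lambda_n(g),1)/d_X(\lambda_n(g'),1)]\cdot[d_X(\lambda_n(g'),1)/\tr_X(\lambda_n(g'))]$, each factor already bounded.

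The main obstacle I anticipate is the bookkeeping needed to be sure that $g$ and $g'$ genuinely land in the relevant sets $NF^{\tau_n}$, $HY^{\tau_n}$, $NF^{\nu_n}$, $HY^{\nu_n}$ for all large $n$ and not just for a single level, i.e. that the hypothesis $\max(d_Y(g,1),d_Y(g',1))\le R^{\nu_s}$ propagates upward through the alternating composition $\nu_{n-1}=(\Psi_1)^{l_1^{n-1}}\cdots(\Psi_t)^{l_t^{n-1}}\circ\tau_{n-1}$. This is forced by the inductive definition of the radii $R^{\tau_n},R^{\nu_n}$ as twice the maximal $Y$-length of the $\tau_n^{-1}$- (resp. $\nu_{n-1}^{-1}$-)images of normal-form prefixes, but spelling out why twists along curves disjoint from (or filling with) the relevant decomposition do not destroy the normal-form structure, and why non-peripheral infinite-order elements act hyperbolically on the appropriate Bass--Serre trees, requires care; I would isolate it as a short sublemma about the interaction of $A$-twists and $B$-twists before doing the telescoping. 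Everything else is the routine convergent-product estimate sketched above, essentially the torsion-free computation of Claim 4.7 in \cite{rips} transported to the present setting.
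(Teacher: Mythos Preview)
Your proposal is correct and follows essentially the same approach as the paper: reduce inequality (1) via item (7) of Lemma \ref{quadratic} to bounding the ratio $l_A(\nu_n(g))/l_A(\nu_n(g'))$, then telescope that ratio using items (3) and (5) into a convergent product of factors $1+O(2^{-n})$; handle inequality (2) by the analogous chain through items (8), (4), (6); and deduce inequality (3) by combining the first two. The paper's only organisational difference is that it dispatches the propagation of the radii bound (your ``main obstacle'') in one line by observing that $R^{\nu_s}\le\min(R^{\tau_n},R^{\nu_n})$ for $n>s$ and that the filling property of $A\cup B$ forces each of $g,g'$ into $HY^{\tau_{s+1}}$ or $HY^{\nu_{s+1}}$, rather than isolating this as a sublemma.
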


\begin{proof}
If $\max(d_Y(g,1),d_Y(g',1))\leq R^{\nu_s}$ then $$\max(d_Y(g,1),d_Y(g',1))\leq \min(R^{\nu_n},R^{\tau_n})$$ for every $n>s$. Let $\mO^c$ be the complement of the cone points of the orbifold $\mO$. Since the curves $a_1,\ldots,a_q,b_1,\ldots,b_t$ were chosen to fill the surface $\mO^c$, every  element $u\in \pi_1(\mO)$ of infinite order, which does not correspond to a boundary component, acts hyperbolically on either the Bass-Serre tree $T_A$ or the Bass-Serre tree $T_B$. Since in addition $\max(d_Y(g,1),d_Y(g',1))\leq \min(R^{\tau_n},R^{\nu_n})$ for every $n>s$, each of the elements $g,g'$ belongs to either 
$HY^{\tau_{s+1}}$ or $HY^{\nu_{s+1}}$. Therefore the inequalities $(3)-(8)$ in Lemma \ref{quadratic} hold for the couple $(g,g')$ for every $n>s+1$. Hence inequality (7) yields constants $c_1,c_2>0$ such that 
$$c_1<\frac{l_A(\nu_n(g'))d_X(\lambda_n(g),1)}{d_X(\lambda_n(g'),1)l_A(\nu_n(g))}<c_2.$$
It follows in particular that
\begin{align}\label{stern}
\frac{d_X(\lambda_n(g),1)}{d_X(\lambda_n(g'),1)}<c_2\cdot\frac{l_A(\nu_n(g))}{l_A(\nu_n(g'))}.
\end{align}
From (5) and (3) in Lemma \ref{quadratic} follows:
$$\frac{l_A(\nu_n(g))}{l_A(\nu_n(g'))}<\left(1+\frac{1}{100\cdot q\cdot2^n}\right)\cdot \frac{l_B(\tau_n(g))}{l_B(\tau_n(g'))}$$
and 
$$\frac{l_B(\tau_n(g))}{l_B(\tau_n(g'))}<\left(1+\frac{1}{100\cdot q\cdot2^n}\right)\cdot \frac{l_A(\nu_{n-1}(g))}{l_A(\nu_{n-1}(g'))}.$$
If we plug in the above two inequalities in (\ref{stern}) we get:
$$\frac{d_X(\lambda_n(g),1)}{d_X(\lambda_n(g'),1)}<c_2\cdot\left(1+\frac{1}{100\cdot q\cdot2^n}\right)^2\frac{l_A(\nu_{n-1}(g))}{l_A(\nu_{n-1}(g'))}.$$
So by induction:
$$\frac{d_X(\lambda_n(g),1)}{d_X(\lambda_n(g'),1)}<c_2\cdot\left(1+\frac{1}{100\cdot q\cdot2^n}\right)^n\frac{l_A(g)}{l_A(g')}
<2\cdot c_2\frac{l_A(g)}{l_A(g')}$$
which proves the existence of the desired upper bound. In a complete analogous way we get a lower bound and hence the first part of the claim follows.
Plugging inductively the inequalities (4), (6) of Lemma \ref{quadratic} in inequality (8), we get as before the second part of the claim.
Combining the first two inequalities yields the third one and the proof is complete.
\end{proof}

By using Lemma \ref{freeaction} we will prove the following result, which describes the action of $L$ on $T$. As mentioned before, if $L$ is one-ended relative its elliptic subgroups, we could get the following a lot easier by simply applying the relative version of the Rips machine to the action of $L$ on $T$.

\begin{lemma}\label{orbitest2}
$L$ has a decomposition as a graph of groups $\A$ such that the underlying graph of $\A$ is isomorphic to the underlying graph of $\B$. Moreover every edge group of $\A$ is a finite index supergroup of the corresponding edge group of $\B$. And for the QH vertex group $Q$ of $\B$ the corresponding vertex group in $\A$ is a finite extension of the underlying orbifold of $Q$ and contains $Q$ as a subgroup of finite index.
\end{lemma}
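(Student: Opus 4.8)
The plan is to read the claimed decomposition $\A$ of $L$ off the action of $L$ on the limit tree $T$, using the quantitative control of Lemma \ref{freeaction} as a substitute for the Rips machine, which is unavailable here because $L$ need not be one-ended relative to its elliptic subgroups. First I would pin down the action of $Q$ on its minimal invariant subtree $T_Q\subseteq T$. Since $\lambda_n|_Q$ dominates the growth of $\lambda_n|_H$, every vertex group of $\B$ other than $Q$ fixes a point of $T$; as $G\le L$ is generated by these elliptic subgroups together with $Q$ and the stable letters and does not fix a point of $T$, the group $Q$ must act nontrivially. The finite kernel $E\trianglelefteq Q$, being normal and finite, fixes $T_Q$ pointwise, so only the quotient $Q/E\cong\pi_1(\mathcal{O})$ acts; I would record the full pointwise stabiliser as a finite group $E'\supseteq E$, bounded by $N(\Gamma)$ via Proposition \ref{eig}. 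By the filling property in Proposition \ref{curves}, every infinite-order non-peripheral element of $\pi_1(\mathcal{O})$ acts hyperbolically on $T_A$ or on $T_B$; feeding this into inequalities $(3)$--$(8)$ of Lemma \ref{quadratic} and into Lemma \ref{freeaction} shows that the translation lengths in $T$ of all such elements are positive and pairwise comparable up to uniform multiplicative constants, while peripheral subgroups act elliptically. Thus the $Q$-action has a single scale.

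Next I would classify this action. The bounded-ratio property forces the $\pi_1(\mathcal{O})$-action on $T_Q$ to be indecomposable: a nontrivial graph-of-actions decomposition with a simplicial edge of positive length or more than one orbit of vertex trees would produce two infinite-order non-peripheral elements whose translation-length ratio tends to infinity, contradicting Lemma \ref{freeaction}. Theorem \ref{eigenschaften} supplies the stability hypotheses, so applying the Rips machine (Theorem \ref{rips}) to this indecomposable piece shows the action is axial or of orbifold type; axial is excluded because the image of $Q$ in $L$ is not virtually abelian whereas an axial action modulo its finite-by-abelian kernel embeds in $\isom(\R)$. Hence the $Q$-action is of orbifold type, dual to an arational measured foliation on a compact $2$-orbifold $\mathcal{O}'$ with boundary, and $Q$ sits with finite index inside the corresponding vertex group $\hat{Q}$ of the graph-of-actions decomposition $\A$ of $L$, which is a finite extension of $\pi_1(\mathcal{O}')$. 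To identify $\mathcal{O}'$ with $\mathcal{O}$ I would argue by contradiction: if $\mathcal{O}'\neq\mathcal{O}$ then $\mathcal{O}$ finitely covers $\mathcal{O}'$ nontrivially, Lemma \ref{smallorbifolds} prevents $\mathcal{O}'$ from being a pair-of-pants-type orbifold, and the injectivity of $\lambda_Q^{(n)}$ on the pieces of $\mathcal{O}\setminus A$ from Proposition \ref{curves}, together with property $(9)$ of Lemma \ref{quadratic}, is precisely what forbids such an enlargement; so $\mathcal{O}'\cong\mathcal{O}$ and $\hat{Q}$ is a finite extension of $\pi_1(\mathcal{O})$ containing $Q$ with finite index.

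Finally I would assemble the graph of groups. The remaining vertex trees of the graph-of-actions decomposition of $T$ are the orbits of point stabilisers; since each peripheral subgroup of $\mathcal{O}$ is elliptic and conjugate into one of these, and the non-$Q$ vertex groups of $\B$ are finitely generated and distributed among them, the underlying graph of $\A$ is isomorphic to that of $\B$, its edges correspond to the boundary components of $\mathcal{O}$, and each edge group of $\A$ is a finite-index supergroup of the corresponding edge group of $\B$, being the maximal virtually cyclic subgroup carrying the relevant peripheral subgroup. Since $T_{i-1}$ (respectively $H$) is elliptic it lands inside a non-QH vertex group, which gives the compatibility of the induced splitting with $\B_i$ demanded in the statement.

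The main obstacle will be exactly the step where the Rips machine cannot be invoked directly: I expect the bulk of the work to be the careful derivation of Lemma \ref{freeaction} by iterating inequalities $(3)$--$(8)$ of Lemma \ref{quadratic} through the layers of $(\nu_n)$ and $(\tau_n)$, and then the hand-argument that the resulting single-scale constraint is incompatible with any decomposition of $T$ other than the expected orbifold-type piece glued to the point-stabiliser pieces along the peripheral structure. A secondary nuisance is the torsion bookkeeping, since the finite kernels $E\subseteq E'$ may genuinely grow in the limit, so every statement about $\pi_1(\mathcal{O})$ must be lifted back to the finite-by-orbifold groups $Q$ and $\hat{Q}$, remaining finite thanks to the uniform bound $N(\Gamma)$ of Proposition \ref{eig}; once the decomposition is in place, verifying the three bulleted assertions is routine.
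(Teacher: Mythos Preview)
Your analysis of the $Q$-action on its minimal subtree $T_Q$ is on the right track, and the use of Lemma~\ref{freeaction} to show that every infinite-order non-peripheral element of $Q$ is hyperbolic on $T$ is correct and matches the paper. Likewise, invoking property~(9) of Lemma~\ref{quadratic} to rule out a proper orbifold cover $\mathcal{O}'\supsetneq\mathcal{O}$ is exactly what the paper does.

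However, there is a genuine gap at the transition from the $Q$-action to a decomposition of $L$. In your third paragraph you speak of ``the remaining vertex trees of the graph-of-actions decomposition of $T$'' and in the second paragraph of ``the graph-of-actions decomposition $\A$ of $L$'', but you never explain where such a decomposition of the $L$-action comes from. This is precisely the point you yourself flagged: since $L$ is not one-ended relative to its elliptic subgroups, Theorem~\ref{rips} does not apply to $L\curvearrowright T$, and your bounded-ratio argument only constrains the $Q$-action on $T_Q$, not the global structure of $T$ under $L$. The paper fills this gap with machinery you do not mention: it invokes Theorem~\ref{relativegeometric} to approximate $(L,\mathcal{H})\curvearrowright T$ by a direct system of geometric actions $(L_k,\mathcal{H}_k)\curvearrowright T_k$, uses that the $Q$-action on $T_Q$ is geometric (hence eventually isometrically embedded in $T_k$ by the Levitt--Paulin fact that geometric actions are not nontrivial strong limits), locates $T_Q$ inside an orbifold component of the graph-of-actions decomposition of $T_k$, and then builds a \emph{transverse covering} of $T$ from $L$-translates of $T_Q$ together with closures of complementary components. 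The verification that every arc of $T$ is covered by finitely many members of this family is a separate argument using the approximating trees $T_k$, after which Guirardel's skeleton construction (Lemma~1.5 of \cite{guirardel}) produces the simplicial tree $S$ and hence the splitting $\A$ of $L$. None of this apparatus appears in your proposal, and without it there is no route from ``the $Q$-action is of orbifold type'' to ``$L$ splits as claimed''.

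A secondary remark: your indecomposability argument for the $Q$-action via translation-length ratios is more elaborate than necessary. The paper simply observes that once every infinite-order non-peripheral element is hyperbolic on $T_Q$, the $Q/E$-action is dual to an arational measured foliation on $\mathcal{O}$ and is therefore of orbifold type; no separate exclusion of simplicial or mixed pieces is needed at that stage.
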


\begin{proof}
We are first going to show that every $g\in Q$ of infinite order, which does not correspond to a boundary component of the underlying orbifold acts hyperbolically on $T$. Let $\{y_1,\ldots,y_m\}$ be a generating set for $Q$ and 
$$\mu_n=\max_{i=1,\ldots,m}d_X(1,\lambda_n(y_i))=\max_{i=1,\ldots,n}|\lambda_n(y_i)|_X$$ for all $n\in\N$, where $d_X$ denotes the metric on the Cayleygraph of $\Gamma$ with respect to some fixed generating set $X$.
Since $G$ does not act with a global fixed point on $T$, $\mu_n\geq 1$. Assume without loss of generality that $\mu_n=|\lambda_n(y_1)|_X$. Suppose for the sake of contradiction that  there exists some $g\in Q\setminus E$, not corresponding to a boundary component or a cone point, that acts elliptically on $T$. Then there exists some point $t$ in the real tree $T$ which is fixed by $g$. Let $(t_n)_{n\in\N}\in \Cay(\Gamma,X)^{\N}$ be an approximating sequence for $t$. Recall that we denote the scaled Cayleygraph by $X_n=(\Cay(\Gamma,X),d_n)$ where $d_n=\frac{d_X}{\mu_n}$. Then 
\begin{align*}
0=d_T(t,gt)&=\lim_{n\to\infty}d_n(t_n,\lambda_n(g)t_n)\\
&=\lim_{n\to\infty}d_n(1,t_n^{-1}\lambda_n(g)t_n)\\
&=\lim_{n\to\infty}\frac{d_X(1,t_n^{-1}\lambda_n(g)t_n)}{\mu_n}\\
&\geq\lim_{n\to\infty}\frac{\tr_X(\lambda_n(g))}{d_X(1,\lambda_n(y_1))}\\
&\overset{\ref{freeaction}(2)}{>}\lim_{n\to\infty}\frac{1}{C_{y_1}^3}\cdot\frac{\tr_X(\lambda_n(g))}{\tr_X(\lambda_n(y_1))}\\
&\overset{\ref{freeaction}(3)}{>}\frac{C_{g,y_1}^1}{C_{y_1}^3}>0,
\end{align*}
a contradiction.\\

Let $H_1,\ldots,H_k$ be the vertex groups of $\B$ which do not correspond to the QH vertex. Since $\lambda_n|_Q$ dominates the growth of $\lambda_n|_{H_i}$ for all $i\in\{1,\ldots,k\}$, $H_i$ fixes a point in the real tree $T$. Let $\mathcal{H}=\{H_1,\ldots,H_k\}$. 
Then by Theorem \ref{relativegeometric} the pair action $(L,\mathcal{H})\curvearrowright T$ is the strong limit of a direct system of geometric pair actions 

\begin{center}
\begin{tikzpicture}[descr/.style={fill=white}]
\matrix(m)[matrix of math nodes,
row sep=1em, column sep=2.8em,
text height=1.5ex, text depth=0.25ex]
{L_k&L_{k+1}&\cdots&L\\
T_k&T_{k+1}&\cdots&T\\};
\path[->>,font=\scriptsize]
(m-1-1) edge node[above] {$\vp_k$}(m-1-2)
(m-1-1) edge [bend left=60] node[descr] {$\Phi_{k}$} (m-1-4)

(m-1-2) edge [bend left=40] node[descr] {$\Phi_{k+1}$} (m-1-4)

(m-2-1) edge node[above] {$f_k$} (m-2-2)
(m-2-1) edge [bend right=60] node[descr] {$F_{k}$} (m-2-4)
(m-2-2) edge [bend right=40] node[descr] {$F_{k+1}$} (m-2-4);
\path[->,font=\scriptsize]
(m-1-1.south) edge [bend left=60](m-2-1.north)
(m-1-2.south) edge [bend left=60](m-2-2.north)
(m-1-4.south) edge [bend left=60](m-2-4.north);
\end{tikzpicture}
\end{center}

such that
\begin{itemize}
\item $\vp_k$ and $\Phi_k$ are one-to-one in restriction to arc stabilizers of $T_k$.
\item $\vp_k$ (resp. $\Phi_k$) restricts to an isomorphism between $\mathcal{H}_k$ and $\mathcal{H}_{k+1}$ (resp. $\mathcal{H}$).
\item The action of $L_k$ on $T_k$ splits as a graph of actions where each non-degenerate vertex action is either axial, thin or of orbifold type and therefore indecomposable, or is an arc containing no branch point except at its endpoints.
\end{itemize}

Since $G$ is finitely presented relative $\mathcal{H}$ there exists $k_0\in\N$ such that for all $k\geq k_0$, $G\leq L_k$. Denote by $T_{\min}$ the minimal $G$-invariant subtree of $T$ and by $T_{\min}^k$ the minimal $G$-invariant subtree of $T_k$ for all $k\geq k_0$. Clearly the action of $G$ on $T_{\min}$ and on $T_{\min}^k$ is minimal. Hence the direct system of pair actions $(G,\mathcal{H})\curvearrowright T_{\min}^k$ converges strongly to $(G,\mathcal{H})\curvearrowright T_{\min}$.\\ 
Denote by $T_{\min}^Q\subset T_{\min}$ the minimal $Q$-invariant subtree. Since every element of $Q$ of infinite order, not corresponding to a boundary component, acts hyperbolically on $T_{\min}^Q$, it follows that the action of $Q$ on $T_{\min}^Q$ is of orbifold type, i.e. there exists a finite normal subgroup $E$ of $Q$ such that the action of $Q/E$ is dual to an arational measured foliation on the underlying orbifold. In particular this action is geometric.\\
By Theorem 2.5 in \cite{levittpaulin} a geometric action of a finitely generated group cannot be a non-trivial strong limit of actions. This implies that there exists $k_1\geq k_0$ such that $F_k$ is an isometry when restricted to the minimal $Q$-invariant subtree of $T_{\min}^k$ for all $k\geq k_1$. In abuse of notation for $k\geq k_1$ we denote this minimal $Q$-invariant subtree of $T_k$ also by $T_{\min}^Q$.\\
Since the action of $L_k$ on $T_k$ splits as a graph of actions $\A_k$ where each non-degenerate vertex action is either axial, thin or of orbifold type and therefore indecomposable, or is an arc containing no branch point except at its endpoints, $T_{\min}^Q$ is contained in an orbifold component $T_{\mO'}$ of this graph of actions.
Denote by $Q':=\stab_{L_k}(T_{\mO'})$ the vertex group of $\A_k$ corresponding to this orbifold component. In particular $Q\leq Q'$.\\
Since $Q'$ is the stabilizer of an orbifold component, there exists a finite subgroup $E'$ of $Q'$ and a short exact sequence
$$1\to E'\to Q'\to \pi_1(\mO')\to 1,$$
where $E'$ is the kernel of the action of $Q'$ on $T_{\mO'}$ and the faithful action of $Q'/E'$ on $T_{\mO'}$ is dual to an arational measured foliation on $\mO'$.
From property $(9)$ of Lemma \ref{quadratic} follows that the underlying orbifolds $\mO$ and $\mO'$ are identical and therefore $Q$ has finite index in $Q'$.\\
It follows that the setwise stabilizer of $T_{\min}^Q\subset T$ is a group (also denoted by) $Q'$ fitting into a short exact sequence $$1\to E'\to Q'\to \pi_1(\mO)\to 1,$$ for some finite group $E'$ containing $E$. In particular $Q$ has finite index in $Q'=\stab_L(T_{\min}^Q)$.\\
We define a family $\mathcal{Z}$ of subtrees of $T$ consisting of:
\begin{itemize}
\item $L$-translates of $T_{\min}^Q$ and 
\item closures of connected components of $T\setminus L\cdot T_{\min}^Q$.
\end{itemize}
Since in any of the approximation trees $T_k$ for $k\geq k_1$, either $gT_{\min}^Q\cap T_{\min}^Q$ is at most one point or $g\in Q'=\stab_{L_k}(T_{\min}^Q)$, the same holds for $L\curvearrowright T$. Hence the family $\mathcal{Z}$ gives rise to an equivariant covering of $T$ such that any two subtrees intersect in at most one point. If we can show that every arc of $T$ is covered by finitely many subtrees of $\mathcal{Z}$ then $\mathcal{Z}$ is precisely what V. Guirardel in \cite{guirardel} calls a transverse covering of the tree $T$. So suppose for the the sake of contradiction that there exists some arc $I\subset T$ which is not covered by finitely many subtrees of $\mathcal{Z}$.
Then there exists $k_2\geq k_1$ and an arc $J\subset F_k^{-1}(I)$ such that $F_k$ maps $J$ isometrically onto $I$ for all $k\geq k_2$. Let now $k\geq k_2$. Then the action of $L_k$ on $T_k$ splits as a graph of actions $\A_k$ and therefore every finite subtree of $T_k$ is covered by finitely many translates of the vertex trees of $\A_k$. In particular there exists a decomposition of $J$ into non-trivial subsegments
$$J=J_1\cup\ldots \cup J_l$$
such that each $J_i$ is contained in a unique translate of a vertex tree of $\A_k$. But this implies that $J$ intersects only finitely many translates of $T_{\min}^Q$ in a segment (i.e. not just in a point). But since $F_k$ is an isometry when restricted to $T_{\min}^Q$, it follows that $I$ also intersects only finitely many translates of $T_{\min}^Q\subset T$ in more than a point. So assume that $I_0\subset I$ is a non-trivial subsegment of $I$ that intersects translates of $T_{\min}^Q$ in at most one point. Then $I_0$ is contained in the closure of a connected component of $T\setminus L\cdot T_{\min}^Q$. Hence we conclude that $I$ is covered by finitely many subtrees of $\mathcal{Z}$, a contradiction.\\
Therefore $\mathcal{Z}$ is a transverse covering of $T$ and it now follows from Lemma 1.5 in \cite{guirardel} that the action of $L$ on $T$ splits as a graph of actions $\A$ such that the vertex trees are precisely the trees contained in $\mathcal{Z}$. Associated to this graph of actions there is a simplicial tree $S$ on which $L$ acts (in \cite{guirardel} this tree is called the skeleton of the graph of actions). For convenience we recall the definition of $S$.
The vertex set $V(S)$ is $V_0(S)\cup V_1(S)$, where $V_1(S)$ contains a vertex $x_Y$ for every subtree $Y$ of $\mathcal{Z}$ and $V_0(S)$ is the set of points $x\in T$ lying in the intersection of two distinct subtrees in $\mathcal{Z}$. There is an edge $e=(x,x_Y)$ between a vertex $x\in V_0(S)$ and a vertex $v_Y\in V_1(S)$ if and only if $x\in Y$.\\
The stabilizer of a vertex $v_Y\in V_1(S)$ is the setwise stabilizer of $Y$, while the stabilizer of a vertex $x\in V_0(S)$ is the stabilizer of the corresponding point $x\in T$. It follows from the discussion above that the vertex $v_{T_{\min}^Q}\in V_1(S)$ is stabilized by $Q'$.\\
Let $e=(x,v_{T_{\min}^Q})$ be an edge in $S$ adjacent to $v_{T_{\min}^Q}$. Then 
$$\stab_L(e)=\stab_L(x)\cap \stab_L(v_{T_{\min}^Q})=\stab_L(x)\cap Q'.$$
Since $L$ is one-ended relative $G$, either $\stab_L(e)$ corresponds to a boundary component of $\mO$ or is a finite group $N$ containing a finite edge group adjacent to $Q$ in $\B$. Moreover every edge group of an edge in $\B$ adjacent to $Q$ appears as a finite index subgroup of an edge group of $\A$ adjacent to $Q'$.
\end{proof}

Lemma \ref{orbitest2} completes the proof of Theorem \ref{orbifoldflat}.
\end{proof}

\subsection{Extensions along finite groups}
We start with the following definition.

\begin{definition}
Let $F_m=\langle a_1,\ldots,a_m\rangle$ be a non-abelian free group.
\begin{enumerate}[(a)]
\item Let $x=(x_1,\ldots,x_k)$, $x_1,\ldots,x_k \in F_m$. We call a word $w\in F_m$ a \textnormal{piece} of $x$ if $w$ is either a subword of $x_i^{\pm 1}$ and $x_j^{\pm 1}$ for $i\neq j\in \{1,\ldots,k\}$, or $w$ appears twice in $x_j^{\pm 1}$ for some $j\in\{1,\ldots,k\}$.
\item Let $0< p< 1$. We say that $x=(x_1,\ldots,x_k)$ satisfies the \textnormal{small cancellation property} $C'(p)$ in $F_m$, if for any piece $w$ of $x$, if $w$ is a subword of say $x_i$, then we have $|w|_{F_m}<p\cdot|x_i|_{F_m}$.
\item Let $(x^{(n)})_{n\in\N}$ be a sequence of tuples $x^{(n)}=(x^{(n)}_1,\ldots,x_k^{(n)})\in F_m^k$. We say that $(x^{(n)})$ satisfies a \textnormal{very small cancellation property} if for all $n\geq 1$, $x^{(n)}$ satisfies $C'(1/n)$ in $F_m$.
\end{enumerate}
\end{definition}

\begin{satz}\label{testamalgamated}
Suppose $G$ has the structure of an amalgamated product $H\ast_E K$ where $E$ is some finite group and $K$ is isomorphic to a one-ended subgroup of $\Gamma$.
Assume that there exists a test sequence $(\lambda^H_n)$ for $H$. Assume moreover that for all $n\in\N$ there exists an embedding $\tau_n:K\to \Gamma$ such that $\lambda^H_n(E)=\tau_n(E)$ and $C_{\Gamma}(\tau_n(E))$ contains a non-abelian free subgroup which is quasi-convex in $\Gamma$. Then there exists a test sequence $(\lambda_n)$ for $G$.
\end{satz}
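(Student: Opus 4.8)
The plan is to construct the test sequence $(\lambda_n)$ for $G = H \ast_E K$ by combining the given test sequence $(\lambda_n^H)$ on $H$ with a suitably perturbed family of embeddings of $K$ into $\Gamma$, where the perturbation happens inside the centralizer $C_\Gamma(\tau_n(E))$, which by hypothesis contains a quasi-convex non-abelian free subgroup $F$. First I would fix such an $F = F(c_1,\dots,c_r) \le C_\Gamma(\tau_n(E))$ and, for each $n$, choose a word $z_n \in F$ so that the sequence of conjugating elements $(z_n)$ grows fast (its $\Gamma$-length dominates all lengths appearing in $\lambda_n^H$ and in $\tau_n$) and so that the resulting tuple of new generator-images satisfies a very small cancellation property in the sense of the definition preceding the theorem — this is where the quasi-convexity of $F$ is essential, since it lets one transfer the $C'(1/n)$ small cancellation condition on words in $F$ to a genuine geometric no-backtracking statement in $\Gamma$. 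I would then set $\lambda_n|_H = \lambda_n^H$ and $\lambda_n|_K = c_{z_n} \circ \tau_n$; this is a well-defined homomorphism of $G$ precisely because $z_n$ commutes with $\tau_n(E) = \lambda_n^H(E)$, so the two definitions agree on the amalgamated subgroup $E$.

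Next I would verify that $(\lambda_n)$ is stably injective: on $H$ this is inherited from $(\lambda_n^H)$, on $K$ from the fact that each $\tau_n$ (hence $c_{z_n}\circ\tau_n$) is an embedding, and for a general reduced word $a_1 b_1 \cdots a_k b_k$ alternating between $H\setminus E$ and $K\setminus E$ one argues, exactly as in the reducedness arguments in the proof of Theorem~\ref{completion}, that the image in $\Gamma$ has no essential backtracking — the large powers hidden in $z_n$ separate the $\lambda_n^H(a_i)$-blocks from the $\tau_n(b_i)$-blocks, and Lemma~\ref{trivial} (or its pieces-based analogue via the very small cancellation property) gives non-triviality for large $n$. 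The bulk of the work is then to check the defining property of a test sequence from Definition~\ref{testseq}, case~(3): given a restricted $\Gamma$-limit group $M_i$ containing $T_i = G$, one-ended relative $G$, and a stably injective sequence $(\Psi_n)$ extending $\lambda_n|_G$, short relative to $G$, converging into an action of a $\Gamma$-limit group $M$ on a real tree $Y$ on which $G$ does not fix a point, I must recover the splitting $G = H \ast_E K$ from a relative decomposition of $M$. The strategy is: because $\lambda_n|_K$ dominates the growth of $\lambda_n|_H$ is \emph{not} what we want here — rather the two grow comparably and $K$ is sent to a \emph{subgroup of $\Gamma$}, so $\lambda_n(K)$ has bounded image up to scaling and $K$ fixes a point in $Y$; meanwhile $H$, carrying a genuine test sequence, must act on $Y$ with the structure dictated by its own tower decomposition. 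One then applies the relative Rips machine (Theorem~\ref{relativerips}) to $(M, \mathcal H)$ with $\mathcal H$ the elliptic subgroups, or passes to the strong-approximation picture of Theorem~\ref{relativegeometric} when $M$ fails to be one-ended relative to its elliptic subgroups, exactly as in the proof of Lemma~\ref{orbitest2}, and extracts a transverse covering of $Y$ whose skeleton $S$ is a simplicial tree; the edge separating the $H$-part from the $K$-part of $S$ carries a finite group $E'$ containing $E$, giving the required $M = V \ast_{E'} P$ with $T_{i-1} = H \le V$ and $P \le \Gamma$ containing $K$. I would also need the finitely-many $\Gamma$-limit quotients $N_1,\dots,N_p$ in the statement of case~(3): these arise because $P$ need only be a \emph{subgroup} of $\Gamma$, so one takes the maximal $\Gamma$-limit quotients of $M$ in which $K$ is sent into a subgroup of $\Gamma$ realizing $P$, finitely many by Theorem~\ref{maximal elements}.

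The main obstacle I anticipate is exactly this last recovery step: controlling the action of $M$ on $Y$ when $M$ (equivalently $L$ in the earlier notation) is not one-ended relative to its elliptic subgroups, so that the Rips machine cannot be applied directly. The resolution, following the template of Lemma~\ref{freeaction} and Lemma~\ref{orbitest2}, is to use the strong-convergence machinery (Theorem~\ref{relativegeometric}, Theorem~\ref{geometric}, and Theorem~2.5 of \cite{levittpaulin}) to push the analysis down to geometric approximating actions where a graph-of-actions decomposition is available, prove that the relevant $K$-subtree (here a single point, since $K$ is sent to a subgroup of $\Gamma$, which simplifies matters compared to the orbifold case) and the $H$-subtree stabilize under the approximation, and then invoke Guirardel's transverse covering lemma (Lemma~1.5 in \cite{guirardel}). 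A secondary technical point is ensuring that the conjugating elements $z_n \in C_\Gamma(\tau_n(E))$ can be chosen \emph{simultaneously} fast-growing, very-small-cancellation, and compatible with the word-length bookkeeping forced by $(\lambda_n^H)$ being a test sequence; this is a diagonal choice exactly analogous to the iterative choice of exponents in Lemma~\ref{quadratic}, and I would phrase it as: inductively pick $z_n$ so that $|z_n|_\Gamma$ exceeds $n$ times every length already committed to at stage $n$, and so that the tuple of $\lambda_n$-images of a fixed finite generating set of $G$ satisfies $C'(1/n)$, which is possible because $F$ is non-abelian free and quasi-convex.
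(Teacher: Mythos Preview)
Your construction of $\lambda_n$ via $\lambda_n|_H=\lambda_n^H$ and $\lambda_n|_K=c_{z_n}\circ\tau_n$ with $z_n$ a fast-growing small-cancellation element of the centralizer is exactly what the paper does (with $z_n=w_n=xyxy^2\cdots xy^{n\cdot L(n)}x$). The gap is in your analysis of the limiting action. You write that $\lambda_n|_K$ dominating $\lambda_n|_H$ is ``not what we want'' and that instead $H$ acts on $Y$ ``with the structure dictated by its own tower decomposition''. This is backwards: in the paper $\lambda_n|_K$ \emph{does} dominate $\lambda_n|_H$, and consequently $H$ fixes the base point $t_0$ of the limit tree $T$ --- it is elliptic, and its internal tower structure is invisible at this level. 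The subgroup $K$ is also elliptic, but fixes a \emph{different} point $w$, namely the limit of the approximating sequence $(w_n)$; this works because $\lambda_n(k)w_n=w_n\tau_n(k)$ and $|\tau_n(k)|$ is negligible compared to the scaling. So both vertex groups are elliptic, and the minimal $G$-invariant subtree $T_{\min}$ is the Bass-Serre tree of the splitting $H\ast_E K$ itself, not anything coming from inside $H$.

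The technical heart you are missing is the paper's Lemma~\ref{claim1}: one must show that for any arc $I\subset[t_0,w]$ and any $g\in L$ outside a fixed finite group $E_L$, the translate $gI$ meets $[t_0,w]$ in at most a point. This is where the $C'(1/n)$ property of $(w_n)$ is actually used --- a non-trivial overlap would force a long piece of $w_n$ to repeat, contradicting small cancellation. With this in hand, the transverse covering is built from $L$-translates of $T_{\min}$ (not from a single $K$-fixed point) together with closures of complementary components, and Guirardel's skeleton yields a splitting of $L$ with one vertex stabilized by $\langle H,E_L\rangle\ast_{E_L}\langle K,E_L\rangle$. The remaining shortening-and-quotient step to force $P$ to be a subgroup of $\Gamma$ is along the lines you sketch, but it only makes sense once the correct splitting of $L$ has been obtained.
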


Before we can start the proof, we need the following result about hyperbolic groups, which is well-known to the experts, so we only sketch a proof here.

\begin{lemma}\label{bla}
Let $\Gamma$ be a hyperbolic group and $E\leq E'\leq \Gamma$ two finite subgroups of $\Gamma$. Suppose that the centralizer $C_{\Gamma}(E)$ contains a quasi-convex non-abelian free group. Then one of the following holds:
\begin{enumerate}[(a)]
\item $|C_{\Gamma}(E):C_{\Gamma}(E')|< \infty$.
\item $C_{\Gamma}(E)\setminus \left(C_{\Gamma}(E')\setminus\{1\}\right)$ contains a quasi-convex non-abelian free group.
\end{enumerate}
\end{lemma}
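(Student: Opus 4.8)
The plan is to distinguish two cases according to whether the index $|C_{\Gamma}(E):C_{\Gamma}(E')|$ is finite. If it is finite, we are in case (a) and there is nothing to prove, so assume from now on that $C_{\Gamma}(E')$ has infinite index in $C_{\Gamma}(E)$; we must then produce a quasi-convex non-abelian free subgroup of $C_{\Gamma}(E)$ meeting $C_{\Gamma}(E')$ only in the identity. Write $C:=C_{\Gamma}(E)$ and $C':=C_{\Gamma}(E')\cap C$. By hypothesis $C$ contains a quasi-convex non-abelian free subgroup $F$; since $C$ is itself quasi-convex in $\Gamma$ (being the centralizer of a finite set, hence the intersection of finitely many centralizers of elements, each of which is quasi-convex in a hyperbolic group), $F$ is quasi-convex in $\Gamma$ and $C$ is hyperbolic, so it suffices to work inside the hyperbolic group $C$ and find the desired free subgroup there.

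First I would observe that $C'$ is a proper subgroup of $C$ of infinite index, and that $F\cap C'$ is a subgroup of $F$ of infinite index: indeed if $F\cap C'$ had finite index in $F$ then, $F$ being non-abelian free and hence non-elementary, $F\cap C'$ would be non-elementary, forcing $C'$ to be non-elementary and hence (being quasi-convex in the hyperbolic group $C$) to have a limit set of positive, in fact full, measure only if it has finite index --- more precisely, a quasi-convex subgroup of infinite index in a non-elementary hyperbolic group has limit set a proper closed subset, which is incompatible with containing a finite-index subgroup of the quasi-convex $F$ unless the ambient index bookkeeping collapses. I would instead argue directly: the key input is that an infinite-index quasi-convex subgroup of a non-elementary hyperbolic group $C$ has "infinite co-growth" in the sense that there are loxodromic elements of $C$ whose axes are uniformly far from any translate of the limit set of $C'$; this is standard (e.g. via the fact that the limit set $\Lambda(C')$ is nowhere dense in $\Lambda(C)$, combined with convergence-group dynamics).

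The main construction then goes as follows. Pick two independent loxodromic elements $g,h\in C$ (they exist since $C$ is non-elementary) whose attracting/repelling fixed points avoid $\Lambda(C')$ and all its $C$-translates that matter; using a ping-pong argument on the Gromov boundary of $C$, for sufficiently large $N$ the elements $g^N,h^N$ generate a quasi-convex non-abelian free subgroup $F_0=\langle g^N,h^N\rangle\leq C$ all of whose non-trivial elements are loxodromic with fixed points outside $\Lambda(C')$. Since any non-trivial element of $C'$ that is loxodromic has both fixed points in $\Lambda(C')$, and a non-trivial torsion element of $C'$ cannot lie in the torsion-free... (here one must be slightly careful about torsion in $C'$) --- to handle torsion I would instead arrange, via Lemma \ref{potenz} and the finiteness of torsion subgroups of $\Gamma$ (Proposition \ref{eig}), that the pieces $g^N,h^N$ are chosen so that $F_0\cap C'=\{1\}$ outright: a non-trivial element of $F_0\cap C'$ would have to be loxodromic (as $F_0$ is torsion-free), but its fixed point pair lies in $\Lambda(F_0)$ which was arranged disjoint from $\Lambda(C')$, a contradiction. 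Then $F_0$ is the required subgroup, establishing case (b).

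The hard part will be making precise and correct the dynamical input that an infinite-index quasi-convex subgroup $C'\leq C$ leaves "enough room" on $\partial C$ to run ping-pong avoiding $\Lambda(C')$, while simultaneously keeping the resulting free group quasi-convex in $\Gamma$; the quasi-convexity is comparatively easy (Schottky subgroups of hyperbolic groups are quasi-convex, and quasi-convexity is transitive for $F_0\leq C\leq \Gamma$), but the statement "$\Lambda(C')$ is nowhere dense in $\Lambda(C)$ when $[C:C']=\infty$" for a quasi-convex $C'$ must be invoked carefully, and the torsion bookkeeping in passing from $C'$ to genuinely realizing $F_0\cap C'=\{1\}$ needs Lemma \ref{potenz}. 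I expect this dynamical lemma to be the real obstacle, whereas the ping-pong and quasi-convexity steps are routine; since the paper says the result is well-known and only a sketch is expected, I would cite the nowhere-density fact and spend the bulk of the written proof on the explicit ping-pong construction and the verification that the free subgroup produced avoids $C_{\Gamma}(E')$.
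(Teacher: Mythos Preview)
Your approach is correct and close in spirit to the paper's, but the construction differs. You build a fresh Schottky group $F_0=\langle g^N,h^N\rangle$ by choosing two independent loxodromics in $C$ whose fixed points lie outside the closed nowhere-dense set $\Lambda(C')$ and running boundary ping--pong; since $F_0$ is torsion-free, any element of $F_0\cap C'$ would be loxodromic with fixed points in $\Lambda(F_0)\cap\Lambda(C')=\emptyset$, giving $F_0\cap C'=\{1\}$. The paper instead takes the given quasi-convex free group $F$ and conjugates it away from $C'$: it finds a hyperbolic $w\in C$ whose axis escapes $\CH(\Lambda(C'))$ and a hyperbolic $v\in C$ whose axis escapes $\CH(\Lambda(F))$, then argues that for large $k,n$ and suitable $u$ the conjugate $(w^{k}uv^{-n})F(w^{k}uv^{-n})^{-1}$ has limit set disjoint from $\Lambda(C')$.

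Both arguments rest on the same dynamical fact---that an infinite-index quasi-convex subgroup has limit set a proper closed subset of the boundary---so neither is essentially harder. Your version is more self-contained and makes the disjointness $\Lambda(F_0)\cap\Lambda(C')=\emptyset$ transparent via the ping--pong neighbourhoods; the paper's version uses the hypothesised $F$ more directly. Two small clean-ups: the phrase ``all its $C$-translates that matter'' is unnecessary (you only need $\Lambda(C')$ itself, not its translates), and the appeal to Lemma~\ref{potenz} is not needed since $F_0$ is torsion-free, which already rules out torsion elements of $C'$ lying in $F_0$. Your aborted paragraph about $F\cap C'$ can simply be dropped.
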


\begin{proof}
Let $C:=C_{\Gamma}(E)$ and $C':=C_{\Gamma}(E')$ and suppose that $|C:C'|= \infty$. Then there exists a hyperbolic element $w\in C$ such that the diameter of the intersection of the axis $\Ax(w)$ with the convex hull $\CH(\Lambda(C'))$ of the limit set of $C'$ is finite. Moreover there exists by assumption a quasi-convex non-abelian free group, say $F$, of infinite index in $C$. Hence again there exists a hyperbolic element $v\in C$ such that the diameter of the intersection of the axis $\Ax(v)$ with the convex hull $\CH(\Lambda(F))$ of the limit set of $F$ is finite.\\
After possibly conjugating $w$ or $v$, we can assume that the limit points of $\Ax(w)$ and $\Ax(v)$ are disjoint. Hence we can find an element $u\in C$ such that the path corresponding to the element $w^kuv^{-n}$ is a quasi-geodesic for large $k,n\in\N$. Therefore for $k,n$ large enough $(w^kuv^{-n})F(w^kuv^{-n})^{-1}$ is  the desired quasi-convex non-abelian free group.
\end{proof}

\begin{proof}[Proof (Theorem 9.16)]
Denote by $|\cdot |_{\Gamma}$ and $|\cdot |_G$ the respective word metric on $\Gamma$ and $G$ with respect to some fixed finite generating sets. Denote by $X$ the Cayley graph of $\Gamma$ with respect to this generating set. For all $n\in\N$ let $B_H(n)=\{h\in H\ |\ |h|_G\leq n\}$ and 
$$L(n)=\max\{|\lambda^H_n(h)|_{\Gamma}\ |\ h\in B_H(n)\}.$$
Fix $n\in\N$ and denote by $C_n:=C_{\Gamma}(\lambda_n^H(E))$ the centralizer of the image of $E$ under the test sequence for $H$. 
We distinguish two cases:\\
First assume that for every finite group $E'\leq H$, containing $E$, $$C_n\setminus \left(C_{\Gamma}(\lambda^H_n(E'))\setminus\{1\}\right)$$ contains a quasi-convex non-abelian free group. 
Let $F(a,b)$ be this quasi-convex non-abelian free subgroup of $C_n$ which exists by assumption. Let $F=F(x,y)\leq F(a,b)$ be a non-abelian free subgroup which is quasi-convex in $\Gamma$ such that any segment lying in the $10\delta$-neighborhood of both axis $\Ax(a)$ and $\Ax(cbc^{-1})$ in $X$ of $a$ and any conjugate of $b$ by an element $c\in\Gamma$ has length at most
$$\frac{1}{1000}\min\{|x|_{\Gamma},|y|_{\Gamma}\}.$$ This can be achieved by taking $x=a^k$ and $y=b^ka^k$ for a large integer $k$, since it follows from Proposition 4.1 in \cite{weidmann} that there exists a bound on the length of how long the axis of $a$ and $b$ can lie in the $10\delta$-neighborhood of each other. For all $n\geq 1$ we define
$$w_n:=xyxy^2xy^3x\cdots xy^{n\cdot L(n)}x$$
and a homomorphism $\lambda_n: G\to \Gamma$ by
\begin{itemize}
\item $\lambda_n(H)=\lambda_n^H(H)$ and
\item $\lambda_n(K)= w_n\tau_n(K)w_n^{-1}$.
\end{itemize}
Since $w_n\in C_n$ this is a well-defined homomorphism. Moreover by the construction of $w_n$ we have that 
$$n\cdot|\lambda_n(h)|_{\Gamma}\leq n\cdot L(n)< |\lambda_n(k)|_{\Gamma}$$
for all $n\in\N$, $h\in B_H(n)$, $k\in K\setminus E$. Hence $(\lambda_n|_K)$ dominates the growth of $(\lambda_n|_H)=(\lambda^H_n)$. Note that $(w_n)$ satisfies the small cancellation property $C'(1/n)$ for all $n\in\N$.\\
For the second case assume that there exists some maximal finite subgroup $E'\leq H$, containing $E$ such that $$C_n\setminus \left(C_{\Gamma}(\lambda^H_n(E'))\setminus\{1\}\right)$$ does not contain a quasi-convex non-abelian free group.  By Lemma \ref{bla} this implies that $|C_n:C_{\Gamma}(\lambda^H_n(E'))|<\infty$. Let now $(\vp_n)\subset\Hom(E'\ast_EK,\Gamma)$ be a stably injective sequence. Then for $n$ large enough $$\vp_n|_{E'}=c_{g_n}\circ\iota_n\circ\alpha_n$$ for one of finitely many embeddings $\iota_n:E'\to\Gamma$, an automorphism $\alpha_n\in\Aut(E')$ and some element $g_n\in C_n$ (see \cite{rewe}). Since $\Aut(E')$ is finite and $|C_n:C_{\Gamma}(\lambda^H_n(E'))|<\infty$ we can assume that (after passing to a subsequence) $\vp_n|_{E'}=\lambda_n^H$. Hence we can extend the homomorphisms $\lambda_n^H:H\to\Gamma$ and $\vp_n:E'\ast_EK\to\Gamma$ to a homomorphism $$\mu_n:H\ast_{E'}(E'\ast_EK)\to\Gamma.$$
Let $F(a,b)$ be the quasi-convex non-abelian free subgroup of $C_n':=C_{\Gamma}(\lambda_n^H(E'))$ which exists by assumption. Let again $F=F(x,y)\leq F(a,b)$ be a non-abelian free subgroup which is quasi-convex in $\Gamma$ such that any segment lying in the $10\delta$-neighborhood of both axis $\Ax(a)$ and $\Ax(cbc^{-1})$ in $X$ of $a$ and any conjugate of $b$ by an element $c\in\Gamma$ has length at most
$$\frac{1}{1000}\min\{|x|_{\Gamma},|y|_{\Gamma}\}.$$
Let $$L'(n)=\max\{|\mu_n(g)|_{\Gamma}\ |\ g\in H\cup (E'\ast_E K) \text{ with }|g|_G\leq n\}.$$
For all $n\geq 1$ we define
$$w_n:=xyxy^2xy^3x\cdots xy^{n\cdot L'(n)}x$$
and a homomorphism $\lambda_n:G\to\Gamma$ by 
\begin{itemize}
\item $\lambda_n|_H=\mu_n|_H$ and
\item $\lambda_n|_{E'\ast_EK}=c_{w_n}\circ\mu_n|_{E'\ast_EK}$.
\end{itemize}
Since $w_n\in C_n'$ this is again a well-defined homomorphism. Moreover by the construction of $w_n$ we have that 
$$n\cdot|\lambda_n(h)|_{\Gamma}\leq n\cdot L'(n)< |\lambda_n(k)|_{\Gamma}$$
for all $n\in\N$, $h\in B_H(n)$, $k\in (E'\ast_EK)\setminus E'$. Hence $(\lambda_n|_{E'\ast_EK})$ dominates the growth of $(\lambda_n|_H)=(\lambda^H_n)$. Note that $(w_n)$ satisfies the small cancellation property $C'(1/n)$ for all $n\in\N$. For simplicity we assume from now on that we are in the first case from above. The second one is similar.\\

Let $\tilde{G}$ be a group containing $G$ and in abuse of notation let $\lambda_n: \tilde{G}\to\Gamma$ be an extension of $\lambda_n$ which is short relative to $G$ for all $n\in\N$. Let $T$ be the real tree into which (a convergent subsequence of) $(\lambda_n)$ converges and let $L:=\tilde{G}/\underrightarrow{\ker} \lambda_n$. Assume that $L$ is one-ended relative $G$ and moreover that $G$ does not act elliptically on $T$.
Since $G$ does not fix a point when acting on $T$, there exists a non-trivial minimal $G$-invariant subtree $T_{\min}$ of $T$ on which $G$ acts. 

\begin{lemma}\label{intersection}
The following hold:
\begin{enumerate}[(1)]
\item $T_{\min}$ is equivariantly isomorphic as a $G$-tree to the Bass-Serre tree of the splitting $G=H\ast_EK$.
\item For every $g\in L$ either $T_{\min}\cap gT_{\min}$ is at most one point or $g$ is contained in the setwise stabilizer of $T_{\min}$.
\item $T_{\min}$ intersects any indecomposable subtree of $T$ in at most one point.
\item The setwise stabilizer of $T_{\min}$ in $L$ is $\langle H,E_L\rangle\ast_{E_L}\langle K,E_L\rangle$ for some finite group $E_L$, containing $E$.
\item Edges in $T_{\min}$ are stabilized by conjugates of $E_L$ and vertices by conjugates of either $\langle H,E_L\rangle$ or $\langle K,E_L\rangle$.
\end{enumerate}
\end{lemma}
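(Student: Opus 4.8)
The plan is to follow the same route as in the proof of Lemma \ref{orbitest2}, adapting it from the orbifold-flat case to the present simplicial situation. First I would settle the two ellipticity statements that make the bridge picture available. Since by construction $(\lambda_n|_K)$ dominates the growth of $(\lambda_n|_H)=(\lambda_n^H)$, after rescaling the metric on the Cayley graph of $\Gamma$ by the norm $\mu_n=\max_{s}|\lambda_n(s)|_\Gamma$ the displacement of every generator of $H$ tends to $0$, so $H$ fixes a point $p_0$ of $T_{\min}$. For $K$ I would use that $\lambda_n|_K=c_{w_n}\circ\tau_n|_K$ and that the exponents occurring in $w_n=xyxy^2\cdots xy^{n\cdot L(n)}x$ were chosen so large that $|w_n|_\Gamma$ dwarfs $|\tau_n(k)|_\Gamma$ for every $k$ of bounded word length; since $w_n$ centralizes $\lambda_n^H(E)=\tau_n(E)$ and satisfies a very small cancellation property, the geodesics $[1,w_n]$ (rescaled) converge to a point $p_K\ne p_0$ with $d_{T_{\min}}(p_K,\lambda_n(k)p_K)=|\tau_n(k)|_\Gamma/\mu_n\to 0$, i.e.\ $K$ fixes $p_K$.

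Next I would prove (1). By minimality $T_{\min}$ is the union of the $G$-translates of the bridge $I_0=[p_0,p_K]$, so there is a canonical $G$-equivariant morphism $\phi$ from the Bass--Serre tree of $G=H\ast_E K$ onto $T_{\min}$ sending the base edge to $I_0$. It then remains to rule out folding at the two vertex orbits, i.e.\ to show $\stab_G(I_0)=E$ and that $gI_0\cap I_0$ is a single point for $g\in H\setminus E$, resp.\ $g\in K\setminus E$. This is a Gromov-product estimate in the approximating Cayley graphs: for $h\in E$ the element $\lambda_n(h)$ commutes with $w_n$ and hence fixes $I_0$ in the limit, while for $h\in H\setminus E$ (resp.\ $k\in K\setminus E$) the very small cancellation of $w_n$ together with $|\tau_n(\cdot)|_\Gamma=o(|w_n|_\Gamma)$ forces the common prefix of $w_n$ and $\lambda_n(h)w_n$ (resp.\ the overlap of the corresponding arcs at $w_n$) to be $o(\mu_n)$. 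Thus $\phi$ is a locally injective morphism of trees, hence an isomorphism of $G$-trees; in particular $T_{\min}$ is simplicial, with $G$-edge stabilizers conjugate to $E$ and $G$-vertex stabilizers conjugate to $H$ or $K$.

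For (2)--(5) I would run the transverse-covering argument verbatim from Lemma \ref{orbitest2}. Because $G$ (hence $L$) need not be one-ended relative to its elliptic subgroups, I cannot apply the relative Rips machine directly; instead I pass to the strong approximation of the pair action $(L,\mathcal{H})\curvearrowright T$ by geometric pair actions $(L_k,\mathcal{H}_k)\curvearrowright T_k$ given by Theorem \ref{relativegeometric}, where $\mathcal{H}$ is the collection of point stabilizers. Since a simplicial action of a finitely generated group is geometric and cannot be a non-trivial strong limit of actions (Theorem 2.5 in \cite{levittpaulin}), the comparison maps are eventually isometric on the minimal $G$-subtree, so the family $\mathcal{Z}$ consisting of the $L$-translates of $T_{\min}$ together with the closures of the components of $T\setminus L\cdot T_{\min}$ is a transverse covering: any two of its members meet in at most one point, which is (2), and it yields (3) as well, since an indecomposable subtree of $T$ occurs among the non-simplicial vertex trees of the graphs of actions on the $T_k$ and therefore cannot overlap the simplicial $T_{\min}$ in an arc. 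By Lemma 1.5 in \cite{guirardel}, $L$ acts on the skeleton $S$ of $\mathcal{Z}$; the vertex $v_{T_{\min}}$ is stabilized by $\stab_L(T_{\min})$, which acts on $T_{\min}$ preserving its simplicial structure. Using the super-stability of $T$ (Theorem \ref{eigenschaften}) and one-endedness of $L$ relative to $G$, exactly as in Lemma \ref{orbitest2}, the relevant arc stabilizers are finite, so the edge stabilizers of $T_{\min}$ as an $\stab_L(T_{\min})$-tree form a single conjugacy class of a finite group $E_L$ with $E\le E_L$, and $\stab_L(T_{\min})=\langle H,E_L\rangle\ast_{E_L}\langle K,E_L\rangle$ with vertex and edge stabilizers as in (5); this gives (4) and (5).

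The main obstacle, I expect, is the combination carried out in the first two paragraphs: performing the small-cancellation and Gromov-product estimates inside a hyperbolic (rather than free) group and making sure that the $E$-torsion ``slides through'' $w_n$ --- which is exactly why $w_n$ is taken inside the centralizer $C_n$ --- while simultaneously tracking the varying embeddings $\tau_n$. The transverse-covering part of the third paragraph is then a fairly faithful repetition of Lemma \ref{orbitest2}; the only genuinely new point is that the distinguished vertex action is now simplicial, which in fact simplifies the rigidity input but requires a little extra care with the finite groups $E_L$.
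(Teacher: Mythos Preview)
Your treatment of the ellipticity of $H$ and $K$ and of part (1) is fine and matches the paper.  The gap is in your third paragraph, where you try to obtain (2)--(5) by transplanting the transverse--covering argument from Lemma \ref{orbitest2}.  In the orbifold case that argument works because $T_{\min}^Q$ sits inside an \emph{indecomposable} vertex tree of the graph of actions on each approximating $T_k$; indecomposability is precisely what forces $gT_{\min}^Q\cap T_{\min}^Q$ to be a single point unless $g$ stabilises the component.  Here $T_{\min}$ is simplicial, and in the approximating $T_k$ it lies in the simplicial part of the graph of actions, where two translates of a subtree may perfectly well share a nondegenerate arc.  So the strong approximation and the rigidity of geometric actions give you that $F_k$ is eventually isometric on $T_{\min}^k$, but they do \emph{not} give you (2); rather, (2) is an \emph{input} to the transverse--covering construction, not an output of it.  Your argument is circular at this point.

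The paper closes this gap by a direct small--cancellation computation (its Lemma \ref{claim1}): for an arc $I\subset[t_0,w]$ approximated by a subword $u_n$ of $w_n$, one shows that if $gI\cap[t_0,w]$ were a nondegenerate arc then the quasi--geodesic fellow--travelling in $\mathrm{Cay}(\Gamma)$, together with the choice of $x=a^k$, $y=b^ka^k$ so that the axes of $a$ and any conjugate of $b$ fellow--travel for at most $\tfrac{1}{1000}\min(|x|,|y|)$, forces the label $z_n$ of the overlapping subarc of $[1,w_n]$ to equal $u_n$ letter by letter in $F(x,y)$.  Then either $gI=I$, in which case one bounds $|\lambda_n(f)|$ and $|w_n^{-1}\lambda_n(f)w_n|$ uniformly and shows $g$ lies in a finite group $E_L$ stabilising $[t_0,w]$, or $u_n$ occurs twice in $w_n$, contradicting $C'(1/n)$.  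This is exactly the estimate you sketch for (1), but it has to be run against an arbitrary $g\in L$, not just against $g\in H\setminus E$ or $g\in K\setminus E$; that is where the finite group $E_L\supseteq E$ emerges and where (2), (4) and (5) come from.  Once (2) is in hand, the transverse covering and the skeleton argument are used \emph{afterwards}, in the proof of Theorem \ref{testamalgamated}, to split $L$ --- not to prove Lemma \ref{intersection} itself.
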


\begin{proof}
By Lemma 1.10(3) in \cite{rewe} for all $k\in K$, $(1)_{n\in\N}$ and $(\lambda_n(k))=(w_n\tau_n(k)w_n^{-1})$ are approximating sequences for $t_0$ and $kt_0$ respectively. Hence there exists a point $w$ on the arc $[t_0,kt_0]$ such that $(w_n)$ is an approximating sequence for $w$. It follows that for all $k\in K$: 
$$d_T(kw,w)=\lim_{n\to\infty} d_n(\lambda_n(k)w_n,w_n)
=\lim_{n\to\infty} d_n(w_n\tau_n(k),w_n)=\lim_{n\to\infty}d_n(\tau_n(k),1).$$
Here $d_n$ denotes the scaled metric on the Cayleygraph of $\Gamma$.
Since $w_n$ dominates the growth of $\tau_n$, $K$ fixes the point $w$ in the limit tree $T$ and therefore acts elliptically on $T$.
Since $\Gamma t_0=t_0$ and $Kw=w$, it follows from Lemma 1.14 in \cite{guirardel} that $T_{\min}$ is covered by translates of the convex hull of the points $\{kt_0\ |\ k\in K\}$, i.e. translates of the arcs $[t_0,kt_0], k\in K$.
The first claim is now obvious. For $(2)-(5)$ we need the following lemma.

\begin{lemma}\label{claim1}
Let $I=[a,b]\subset [t_0,w]$ be an arc of $T_{\min}\subset T$. There exists a finite group $E_L\leq L$ such that for any $g\in L\setminus E_L$, $gI$ intersects $[t_0,w]$ in at most one point. Moreover $E_L$ stabilizes the segment $[1,w]\subset T_{\min}$, where $w$ is the point in $T$ approximated by $(w_n)$.
\end{lemma}

\begin{proof}
%Let $E_L$ be the maximal subgroup of $L$ such that $F(x,y)\leq C_{\Gamma}(\lambda_n(E_L))$ for all $n\in\N$ (after possibly passing to a subsequence). Clearly $E\leq E_L$. Since centralizers of two-ended subgroups of a hyperbolic group are two-ended, it holds in particular that $\lambda_n(E_L)$ is finite for all $n$. This implies that $E_L$ is a finite group.\\
We suppose that there exists some $g\in L$ such that $gI\cap [t_0,w]$ is non-trivial and not a point.
% Without loss of generality we may assume that $I\subset [t_0,w]$ and $gI\subset [t_0,w]$ (since $k't_0$ is approximated by $w_n\tau_n(k')w_n^{-1}$ and $\lim_{n\to\infty} d_n(1,\tau_n(k'))=0$). 
 Let $(a_n)_{n\in\N}$ and $(b_n)_{n\in\N}$ be approximating sequences for $a$ and $b$ respectively, and $f\in \tilde{G}$, such that $\eta(f)=g$, where $\eta:\tilde{G}\to L$ is the limit quotient map. Let $u_n\in F(x,y)$ be the unique reduced word such that $b_n=a_nu_n$, i.e. the segment $[a_n,b_n]\subset (X,d_n)$ is labeled by the word $u_n$. Since $I\subset [t_0,w]$, $u_n$ is a subword of $w_n$ (since $(w_n)$ is an approximating sequence of $w$). Clearly the segments $J_n:=[\lambda_n(f)a_n,\lambda_n(f)b_n]$ approximate the segment $gI$ and are labeled by $u_n$.\\
Let $t_{a_n}$ and $t_{b_n}$ be points on the segment $[1,w_n]\leq X$, such that $(t_{a_n})$ and $(t_{b_n})$ are approximating sequences for $ga$ and $gb$ respectively. The subsegment $$I_n:=[t_{a_n},t_{b_n}]\subset [1,w_n]$$ is labeled by some word $z_n\in F(x,y)$ and we want to show that $u_n=z_n$ as words in $F(x,y)$.\\
\begin{figure}[htbp]
\centering
\includegraphics{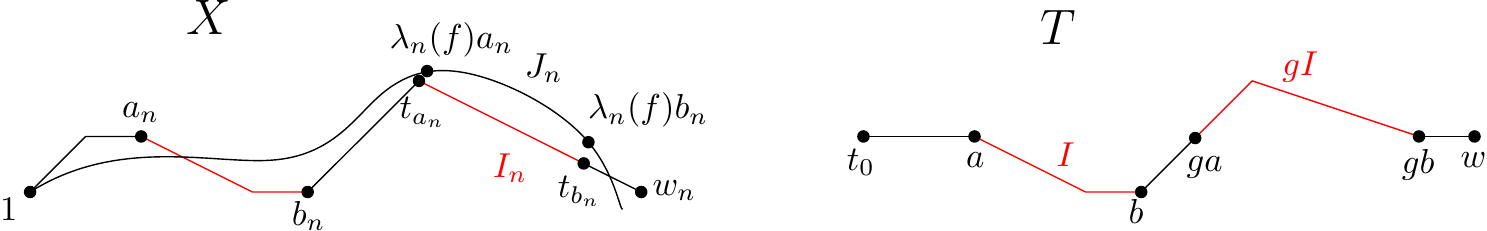}
\caption{Approximating the arc $[1,w]$}
\end{figure}
Since $(\lambda_n(f)a_n)$ and $(t_{a_n})$ are both approximating sequences for $ga$ it follows from Lemma 1.10 in \cite{rewe} that $$\lim_{n\to\infty}d_n(\lambda_n(f)a_n,t_{a_n})=0.$$ From the same argument also follows that 
$$\lim_{n\to\infty}d_n(\lambda_n(f)b_n,t_{b_n})=0.$$
Therefore we can assume that for large $n$ the segment $J_n$ is in the $2\delta$-neighborhood of $I_n$, where $\delta$ is the hyperbolicity constant of $X$. Suppose now that for large $n$ the first letter of $u_n$ is $x$ and the first letter of $z_n$ is $y$. Since $x=a^k$ and $y=b^ka^k$ there exists $c_n\in\Gamma$ such that 
$$\Ax(t_{a_n}at_{a_n}^{-1})\subset N_{10\delta}(\Ax(t_{a_n}c_nb(t_{a_n}c_n)^{-1}))$$
for length at least $\frac{1}{2}\min\{|x|_{\Gamma},|y|_{\Gamma}\}$.
\begin{figure}[htbp]
\centering
\includegraphics{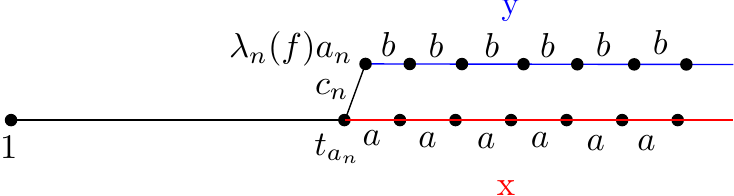}
\caption{$\Ax(t_{a_n}at_{a_n}^{-1})\subset N_{10\delta}(\Ax(t_{a_n}c_nb(t_{a_n}c_n)^{-1}))$}
\end{figure}
But this implies that 
$$\Ax(a)\cap N_{10\delta}(\Ax(c_nbc_n^{-1}))$$
has diameter at least $\frac{1}{2}\min\{|x|_{\Gamma},|y|_{\Gamma}\}$, a contradiction to the assumption that the axis $\Ax(a)$ and $\Ax(cbc^{-1})$ fellow travel for length at most 
$$\frac{1}{1000}\min\{|x|_{\Gamma},|y|_{\Gamma}\}$$
for any $c\in\Gamma$. Essentially the same argument guarantees that the first letter of $z_n$ can also not be $y^{-1}$ or $x^{-1}$. Therefore both $u_n$ and $z_n$ start with the letter $x$ and by induction we conclude that $z_n=u_n$ as words in $F(x,y)$.\\
Note that in the case that $\Gamma$ is a free group and therefore $X$ is a tree, $I_n=J_n$ and the above argument becomes trivial.\\
So either $gI=I$ or $u_n$ appears twice in the label of the segment $[1,w_n]$. In the second case $u_n$ is a piece of $(w_n)$. Since $(w_n)$ satisfies the small cancellation property $C'(1/n)$ it follows that
$$\lim_{n\to\infty} d_n(1,u_n)<\lim_{n\to\infty} \frac{1}{n} d_n(1,w_n)=\lim_{n\to\infty} \frac{1}{n} d_T(t_0,w)=0,$$
a contradiction to the assumption that $gI\cap [t_0,w]$ is not a point.\\
So suppose we are in the case that $gI=I$. Hence $g\in\stab(I)$. If $g\notin \stab([1,w])$ then $[1,w]$ is an unstable arc and therefore $\stab([1,w])$ is finite by Theorem \ref{eigenschaften}. Otherwise $g\in\stab([1,w])$ and $[1,\lambda_n(f)w_n]\subset N_{2\delta}([1,w_n])$ for large $n$. This implies that $\lambda_n(f)$ lies in a ball $B_r(1)$ of radius $r$ around the identity for large $n$ and the diameter of the ball does not depend on $n$. Moreover $d_X(w_n,\lambda_n(f)w_n)$ is also globally bounded for all $n$, implying that $w_n^{-1}\lambda_n(f)w_n\in B_r(1)$ (after possibly enlarging $r$). Hence after passing to a subsequence we can assume that $w_n^{-1}\lambda_n(f)w_n=\lambda_n(f)$, which implies that $w_n\in C_{\Gamma}(\lambda_n(f))$ for $n$ sufficiently large. But by the same arguments every subword of $w_n$ lies in the centralizer of $\lambda_n(f)$. Therefore $\langle \lambda_n(f)\rangle$ is finite. Hence $\langle g\rangle$ is a finite subgroup of $L$. The claim follows.
\end{proof}

First of all it follows from Lemma \ref{claim1} that all arcs in $T_{\min}$ have finite stabilizer. Let now $l\in L\setminus \{1\}$ and $g_1,g_2\in G$ such that 
$$l[g_1t_0,g_1w]\cap[g_2t_0,g_2w]$$
is non-trivial and not a point. Lemma \ref{claim1} yields that $g_2^{-1}lg_1$ stabilizes the segment $[t_0,w]$ and in particular $g_2^{-1}lg_1\in E_L$. Hence $$l\in \langle E_L, G\rangle=\langle H,E_L\rangle\ast_{E_L}\langle K,E_L\rangle.$$ This finally proves $(2)-(5)$.
\end{proof}

We define a family $\mathcal{Z}$ of subtrees of $T$ consisting of:
\begin{itemize}
\item $L$-translates of $T_{\min}$ and 
\item closures of connected components of $T\setminus L\cdot T_{\min}$.
\end{itemize}
By $(2)$ of Lemma \ref{intersection} the family $\mathcal{Z}$ gives rise to an equivariant covering of $T$ such that any two subtrees intersect in at most one point. If we can show that every arc of $T$ is covered by finitely many subtrees of $\mathcal{Z}$ then $\mathcal{Z}$ is precisely what V. Guirardel in \cite{guirardel} calls a transverse covering of the tree $T$. So suppose for the the sake of contradiction that there exists some arc $I\subset T$ which is not covered by finitely many subtrees of $\mathcal{Z}$.\\
Let $\mathcal{H}=\{H,K\}$. 
Then by Theorem \ref{relativegeometric} the pair action $(L,\mathcal{H})\curvearrowright T$ is the strong limit of a direct system of geometric pair actions 

\begin{center}
\begin{tikzpicture}[descr/.style={fill=white}]
\matrix(m)[matrix of math nodes,
row sep=1em, column sep=2.8em,
text height=1.5ex, text depth=0.25ex]
{L_k&L_{k+1}&\cdots&L\\
T_k&T_{k+1}&\cdots&T\\};
\path[->>,font=\scriptsize]
(m-1-1) edge node[above] {$\vp_k$}(m-1-2)
(m-1-1) edge [bend left=60] node[descr] {$\Phi_{k}$} (m-1-4)

(m-1-2) edge [bend left=40] node[descr] {$\Phi_{k+1}$} (m-1-4)

(m-2-1) edge node[above] {$f_k$} (m-2-2)
(m-2-1) edge [bend right=60] node[descr] {$F_{k}$} (m-2-4)
(m-2-2) edge [bend right=40] node[descr] {$F_{k+1}$} (m-2-4);
\path[->,font=\scriptsize]
(m-1-1.south) edge [bend left=60](m-2-1.north)
(m-1-2.south) edge [bend left=60](m-2-2.north)
(m-1-4.south) edge [bend left=60](m-2-4.north);
\end{tikzpicture}
\end{center}

such that
\begin{itemize}
\item $\vp_k$ and $\Phi_k$ are one-to-one in restriction to arc stabilizers of $T_k$.
\item $\vp_k$ (resp. $\Phi_k$) restricts to an isomorphism between $\mathcal{H}_k$ and $\mathcal{H}_{k+1}$ (resp. $\mathcal{H}$).
\item The action of $L_k$ on $T_k$ splits as a graph of actions where each non-degenerate vertex action is either axial, thin or of orbifold type and therefore indecomposable, or is an arc containing no branch point except at its endpoints.
\end{itemize}

Since $G$ is finitely presented relative $\mathcal{H}$ there exists $k_0\in\N$ such that for all $k\geq k_0$, $G\leq L_k$. Denote by $T_{\min}$ the minimal $G$-invariant subtree of $T$ and by $T_{\min}^k$ the minimal $G$-invariant subtree of $T_k$ for all $k\geq k_0$. Clearly the action of $G$ on $T_{\min}$ and on $T_{\min}^k$ is minimal. Hence the direct system of pair actions $(G,\mathcal{H})\curvearrowright T_{\min}^k$ converges strongly to $(G,\mathcal{H})\curvearrowright T_{\min}$.\\ 
Clearly the action of $G$ on $T_{\min}$ is geometric and since by Theorem 2.5 in \cite{levittpaulin} a geometric action of a finitely generated group cannot be a non-trivial strong limit of actions, this implies that there exists $k_1\geq k_0$ such that $F_k$ is an isometry when restricted to $T_{\min}^k$ for all $k\geq k_1$.\\
By Lemma \ref{intersection}(3) $T_{\min}$ intersects any indecomposable subtree of $T$ in at most one point. It follows that $T_{\min}^k$ intersects each indecomposable subtree of $T_k$ in at most one point for $k\geq k_1$. Since the action of $L_k$ on $T_k$ splits as a graph of actions $\A_k$ where each non-degenerate vertex action is either axial, thin or of orbifold type and therefore indecomposable, or is an arc containing no branch point except at its endpoints, it follows that $T_{\min}^k$ is contained in the simplicial part of $T_k$.\\
Now there exists $k_2\geq k_1$ and an arc $J\subset F_k^{-1}(I)$ such that $F_k$ maps $J$ isometrically onto $I$ for all $k\geq k_2$. Let now $k\geq k_2$. Since the action of $L_k$ on $T_k$ splits as a graph of actions, every finite subtree of $T_k$ is covered by finitely many translates of the vertex trees of $\A_k$. In particular there exists a decomposition of $J$ into non-trivial subsegments
$$J=J_1\cup\ldots \cup J_l$$
such that each $J_i$ is contained in a unique translate of a vertex tree of $\A_k$. But this implies that $J$ intersects only finitely many translates of $T_{\min}^k$ in a segment (i.e. not just in a point). But since $F_k$ is an isometry when restricted to $T_{\min}^k$, it follows that $I$ also intersects only finitely many translates of $T_{\min}\subset T$ in more than a point. So assume that $I_0\subset I$ is a non-trivial subsegment of $I$ that intersects translates of $T_{\min}$ in at most one point. Then $I_0$ is contained in the closure of a connected component of $T\setminus L\cdot T_{\min}$. Hence we conclude that $I$ is covered by finitely many subtrees of $\mathcal{Z}$, a contradiction.\\
Therefore $\mathcal{Z}$ is a transverse covering of $T$ and it now follows from Lemma 1.5 in \cite{guirardel} that the action of $L$ on $T$ splits as a graph of actions such that the vertex trees are precisely the trees contained in $\mathcal{Z}$. Associated to this graph of actions there is a simplicial tree $S$ on which $L$ acts (in \cite{guirardel} this tree is called the skeleton of the graph of actions). For convenience we recall the definition of $S$.\\
The vertex set $V(S)$ is $V_0(S)\cup V_1(S)$, where $V_1(S)$ contains a vertex $x_Y$ for every subtree $Y$ of $\mathcal{Z}$ and $V_0(S)$ is the set of points $x\in T$ lying in the intersection of two distinct subtrees in $\mathcal{Z}$. There is an edge $e=(x,x_Y)$ between a vertex $x\in V_0(S)$ and a vertex $v_Y\in V_1(S)$ if and only if $x\in Y$.\\
The stabilizer of a vertex $v_Y\in V_1(S)$ is the setwise stabilizer of $Y$, while the stabilizer of a vertex $x\in V_0(S)$ is the stabilizer of the corresponding point $x\in T$. By Lemma \ref{intersection} $(4)$ the vertex $v_{T_{\min}}\in V_1(S)$ is stabilized by 
$$\langle H,E_L\rangle\ast_{E_L}\langle K,E_L\rangle.$$
Let $e=(x,v_{T_{\min}})$ be an edge in $S$ adjacent to $v_{T_{\min}}$. Then 
$$\stab_L(e)=\stab_L(x)\cap \stab_L(v_{T_{\min}})=\stab_L(x)\cap (\langle H,E_L\rangle\ast_{E_L}\langle K,E_L\rangle).$$
By Lemma \ref{intersection}$(5)$ either $\stab_L(x)=\langle H,E_L\rangle=:H'$ or $\stab(x)=\langle K,E_L\rangle=:K'$. Therefore we can refine $S$ by replacing each vertex $v_{gT_{\min}}$ corresponding to a translate of $T_{\min}$ by $gT_{\min}$. By Lemma \ref{intersection}$(1)$ this yields again a simplicial tree, say $S'$. Hence from the action of $L$ on $S'$, $L$ admits a splitting as a graph of groups $\A'$, such that there exists an edge $e\in EA'$ with edge group $A'_e=E_L$ and $H'\in A'_{\alpha(e)}$, $K'\in A'_{\omega(e)}$. Let $\A$ be the graph of groups which we get by collapsing all edges in $\A'$ except for $e$.\\
Suppose that $\A$ has only one vertex group, i.e. $L$ splits as the HNN-extension $L=M\ast_{E_L}$.
Since $L$ is one-ended relative $G$ and $H', K'\in M$, we conclude that $M$ is one-ended relative $\{H',K'\}$.\\
Let $\{m_1,\ldots,m_l\}$ be a generating set of $M$. We can extend this to a generating set $\{m_1,\ldots,m_l,t\}$ of $L$, where $t$ is the stable letter of the HNN-extension $L=M\ast_{E_L}$.
For every homomorphism $\vp:\tilde{G}\to \Gamma$ that factors through the limit map $\eta:\tilde{G}\to L$, we denote by $\bar{\vp}: L\to\Gamma$ the unique homomorphism such that $\vp=\bar{\vp}\circ\eta$.
Let $A_n$ be the following set:
\begin{align*}
A_n=\{ &\vp_n\in \Hom(L,\Gamma) \ |\ \vp_n|_{\langle t\rangle}=\bar{\lambda}_n|_{\langle t\rangle},\ \vp_n|_{G}=\bar{\lambda}_n|_{G},\\
&\vp_n(v_j)\neq 1 \text{ for all } j\in\{1,\ldots,r\}\}.
\end{align*}
Since $\bar{\lambda}_n\in A_n$ the set $A_n$ is non-empty. Let $U$ be the set of all converging sequences $(\vp_n)$, such that for all $n\in\N$, $\vp_n\in A_n$ and  
$$\sum_{i=1}^l|\vp_n(m_i)|_{\Gamma}$$
is minimal (with respect to the word metric on $\Gamma$). By the same arguments as in the proof of Lemma \ref{maximal1} and Lemma \ref{maximal} the collection of all sequences from $U$ factor through a finite collection of maximal $\Gamma$-limit groups. By abuse of notation let $(\vp_n)\in U$ be a sequence that converges into such a maximal limit group, say $N$.\\ 
In particular $(\vp_n|_M)$ converges into the action of a $\Gamma$-limit group $M_1$ on some real tree $T$ without a global fixpoint. Suppose that $M_1$ is not one-ended relative $H,K$. In this case we can pass to finitely many $\Gamma$-limit quotients $L_1,\ldots,L_k$ of $N$, each admitting a decomposition along finite subgroups with one vertex group $Y$ which has a decomposition either as $Y=N_1\ast_{E_L}$ or as $Y=N_1\ast_{E_L}N_2$, and all other vertex groups are isomorphic to subgroups of $\Gamma$. Moreover either $H,K\leq N_1$ and $N_1$ is one-ended relative $\{H,K\}$ (in the first case) or $H\leq N_1$, $K\leq N_2$ and $N_1$ is one-ended relative $H$ and $N_2$ is one-ended relative $K$ (in the second case).\\
This procedure of passing to finitely many $\Gamma$-limit quotients with the desired properties is straightforward but rather lengthy and we explain it in great detail in the proof of the main theorem (Theorem \ref{maintheorem}). So we omit the proof here and just note that, to simplify notation, we can assume without loss of generality that $N$ already is of this type and moreover that in the decomposition of $N$ along finite subgroups there are no vertices with vertex group isomorphic to a subgroup of $\Gamma$. That is we can assume that $N$ admits a decomposition either as $N=M_1\ast_{E_L}$, where $H,K\leq M_1$ and $M_1$ is one-ended relative $\{H,K\}$, or as $N=M_1\ast_{E_L}M_2$, where $H\leq M_1$, $K\leq M_2$ and $M_1$ is one-ended relative $H$ and $M_2$ is one-ended relative $K$.\\
Suppose we are in the first case., i.e. $M_1$ is one-ended relative $\{H,K\}$. Since $\vp_n$ is an extension of $\lambda_n$, the subgroups $H$ and $K$ act elliptically on $T$. Hence we can apply the relative version of the Rips machine (Theorem \ref{relativerips}) to analyze the action of $M_1$ on $T$. By Theorem \ref{relativerips} the action of $M_1$ on $T$ splits as a graph of actions with corresponding graph of groups $\B$.
Suppose that there exist a vertex in $\B$ corresponding to an axial or orbifold component. By the shortening argument (Corollary \ref{shortening}) we can shorten the action of $M_1$ on $T$ while keeping $\vp_n|_G$ fixed, a contradiction to the assumption that $\vp_n$ was chosen short relative to $G$. Hence there are no axial or orbifold components in $\B$ and therefore the action of $M_1$ on $T$ is simplicial.\\
Suppose that an edge $e$ in $T$ has infinite stabilizer. Again in this case we can shorten the length of $\vp_n$ while keeping $\vp_n|_G$ fixed, a contradiction. Hence all edges in $T$ have finite stabilizer. Therefore an edge $e\in \B$ induces a splitting of $M_1$ along a finite subgroup relative $H,K$ a contradiction to the assumption that $M_1$ is one-ended relative $H,K$.\\
We conclude that there exists a decomposition of $N$ as an amalgamated product $$N=V\ast_{E_L}P$$ along some finite subgroup $E_L$ containing $E$ and $H\leq V$, $K\leq P$. Moreover $V$ and $P$ are one-ended relative $H$, $K$ respectively.\\
To simplify notation we assume that $L$ is already of this type, i.e. $L=V\ast_{E_L}P$ with the properties above.
Let $\{u_1,\ldots,u_s\}$ be a generating set of $V$. We can extend this to a generating set $\{u_1,\ldots,u_s,p_1,\ldots,p_l\}$ of $L$.
Recall that for every homomorphism $\vp:\tilde{G}\to \Gamma$ that factors through the limit map $\eta:\tilde{G}\to L$, we denote by $\bar{\vp}: L\to\Gamma$ the unique homomorphism such that $\vp=\bar{\vp}\circ\eta$.
Let $B_n$ be the following set:
\begin{align*}
B_n=\{ &\vp_n\in \Hom(L,\Gamma) \ |\ \vp_n|_{V}=\bar{\lambda}_n|_{V},\ \vp_n|_{G}=\bar{\lambda}_n|_{G},\\
&\vp_n(v_j)\neq 1 \text{ for all } j\in\{1,\ldots,r\}\}.
\end{align*}
Since $\bar{\lambda}_n\in B_n$ the set $B_n$ is non-empty. Let $U$ be the set of all converging sequences $(\vp_n)$, such that for all $n\in\N$, $\vp_n\in B_n$ and  
$$\sum_{i=1}^l|\vp_n(p_i)|_{\Gamma}$$
is minimal (with respect to the word metric on $\Gamma$). By the same arguments as in the proof of Lemma \ref{maximal1} and Lemma \ref{maximal} the collection of all sequences from $U$ factor through a finite collection of maximal $\Gamma$-limit quotients. By abuse of notation let $(\vp_n)\in U$ be a sequence that converges into such a maximal limit group, say $N$.\\ 
In particular $(\vp_n|_P)$ converges into the action of a $\Gamma$-limit group $P_1$ on some real tree $T$ without a global fixpoint. As noted before we can assume without loss of generality that $P_1$ is one-ended relative $K$.\\
Since $\vp_n$ is an extension of $\lambda_n$, the subgroup $K$ acts elliptically on $T$. Hence we can once again apply the relative version of the Rips machine (Theorem \ref{relativerips}) to analyze the action of $P_1$ on $T$. By Theorem \ref{relativerips} the action of $P_1$ on $T$ splits as a graph of actions with corresponding graph of groups $\mathbb{P}$.
Suppose that there exist an vertex in $\mathbb{P}$ corresponding to an axial or orbifold component. By the shortening argument (Corollary \ref{shortening}) we can shorten the action of $P_1$ on $T$ while keeping $\vp|_K$ fixed, a contradiction to the assumption that $\vp_n|_P$ was chosen to be short. Hence there are no axial or orbifold components in $\mathbb{P}$ and therefore the action of $P_1$ on $T$ is simplicial.\\
Suppose that an edge $e$ in $T$ has infinite stabilizer. Again in this case we can shorten the length of $\vp_n|_P$ while keeping $\vp_n|_K$ fixed, a contradiction. Hence all edges in $T$ have finite stabilizer. Therefore an edge $e\in \mathbb{P}$ induces a splitting of $P_1$ along a finite subgroup relative $K$ a contradiction to the assumption that $P_1$ is one-ended relative $K$. But this contradicts our assumption that $P_1$ acts without a global fixed point on $T$. Hence there exists no sequence of homomorphisms $(\vp_n)\in U$ such that the $\vp_n|_P$ are pairwise distinct.\\
Therefore the sequence $(\vp_n|_P)$ has a constant subsequence and hence after passing to this subsequence, say $(\vp)$, we conclude that $P_1=P/\underrightarrow{\ker} (\vp_n)=P/\ker (\vp)$ is isomorphic to a subgroup of $\Gamma$.
Hence $N=V\ast_{E_L}P_1$, where $E_L$ is some finite group containing $E$ and $P_1$ is isomorphic to a subgroup of $\Gamma$ and contains $K$.
\end{proof}

%\begin{prop}\label{testamalgamated2}
%Suppose $G$ has the structure of an amalgamated product $H\ast_E K$ where $E$ is some finite group.
%Assume that there exists test sequences $(\lambda^H_n)$ for $H$ and $(\lambda^K_n)$ for $K$ such that $\lambda^H_n(E)=\lambda_n^K(E)$ and $C_{\Gamma}(\lambda_n(K)(E))$ contains a free non-abelian subgroup which is quasi-convex in $\Gamma$ for all $n\in\N$. Then there exists a test sequence $(\lambda_n)$ for $G$.
%\end{prop}
%
%\begin{proof}
%The proof is essentially identical to the proof of Proposition \ref{testamalgamated}. The only difference is that the growth of $w_n$ now has to dominate the growth of both $\lambda_n^K(K)$ and $\lambda_n^H(H)$. To guarantee this, for all $n\in\N$ let $B_H(n)=\{h\in H\ |\ |h|_G\leq n\}$,  $B_K(n)=\{k\in K\ |\ |k|_G\leq n\}$ and set
%$$L(n)=\max\{|\lambda^H_n(h)|_{\Gamma}\ |\ h\in B_H(n)\}\cup \{|\lambda^K_n(k)|_{\Gamma}\ |\ k\in B_K(n)\}.$$
%\end{proof}

\begin{satz}\label{testHNN}
Suppose $G$ has the structure of an HNN-extension 
$$\langle H,s\ |\ s\alpha(e)s^{-1}=\beta(e)\ \forall e\in E\rangle$$
 over $H$ along some finite group $E$. Assume that there exists a test sequence $(\lambda^H_n)$ for $H$ and for all $n\in\N$ a homomorphism $\Psi_n:G\to\Gamma$ such that $\Psi_n(\alpha(E))=\lambda_n^H(\alpha(E))$ and $\Psi_n(\omega(E))=\lambda_n^H(\omega(E))$. Assume moreover that for all $n\in\N$, $C_{\Gamma}(\lambda_n^H(\alpha(E)))$ contains a non-abelian free subgroup which is quasi-convex in $\Gamma$. Then there exists a test sequence $(\lambda_n)$ for $G$.
\end{satz}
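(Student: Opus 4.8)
The plan is to follow the proof of Theorem \ref{testamalgamated}, with the (substantial) simplification that an HNN-extension creates no new vertex group, so that no passage to maximal $\Gamma$-limit quotients and no second application of the relative Rips machine is required. Throughout we identify $G$ with the top level $T_i$ of the tower and $H$ with $T_{i-1}$, so that $G=H\ast_{E'}$ is the one-edge HNN-decomposition $\B_i$ of Definition \ref{testseq}, with $E'=\alpha(E)$.

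\emph{Construction of $(\lambda_n)$.} Fix $n$ and write $C_n:=C_\Gamma(\lambda_n^H(\alpha(E)))$. As in the proof of Theorem \ref{testamalgamated} we distinguish two cases. If for every maximal finite subgroup $E'\le H$ containing $\alpha(E)$ the set $C_n\setminus(C_\Gamma(\lambda_n^H(E'))\setminus\{1\})$ contains a quasi-convex non-abelian free subgroup, we work with $C_n$; otherwise Lemma \ref{bla} produces such an $E'$ with $|C_n:C_\Gamma(\lambda_n^H(E'))|<\infty$, and we first pre-amalgamate $H$ with a copy of $E'$ along $\alpha(E)$ (using, as in \cite{rewe}, that a stably injective sequence for $E'\ast_{\alpha(E)}K$-type data agrees, up to an automorphism of $E'$ and an element of $C_n$, with $\lambda_n^H$), thereafter working with $C_\Gamma(\lambda_n^H(E'))$, which is of finite index in $C_n$ and so still contains a quasi-convex non-abelian free subgroup. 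In either case we obtain a quasi-convex non-abelian free subgroup $F(a,b)$ of the relevant centraliser; we pass to $F(x,y)$ with $x=a^k$, $y=b^ka^k$ for $k$ large, so that by Proposition 4.1 in \cite{weidmann} any segment lying $10\delta$-close to both $\Ax(a)$ and a conjugate of $\Ax(b)$ has length $<\tfrac1{1000}\min\{|x|_\Gamma,|y|_\Gamma\}$, and we set $w_n:=xyxy^2\cdots xy^{n\cdot L(n)}x$, where $L(n)$ bounds the $\Gamma$-length of $\lambda_n^H$ on the ball of radius $n$ of $H$ (enlarging $L(n)$ if necessary so that $|w_n|_\Gamma$ also dominates $|\Psi_n(s)|_\Gamma$); thus $(w_n)$ satisfies $C'(1/n)$. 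Define $\lambda_n\colon G\to\Gamma$ by $\lambda_n|_H=\lambda_n^H$ and $\lambda_n(s)=\Psi_n(s)\,w_n$. Since $w_n$ centralises $\lambda_n^H(\alpha(E))$ and (after pre-composing with an automorphism of $E$ and passing to a subsequence if necessary) $\Psi_n(s)$ conjugates $\lambda_n^H(\alpha(E))$ onto $\lambda_n^H(\omega(E))$ compatibly with the boundary monomorphisms, the defining HNN-relation is preserved, so $\lambda_n$ is a well-defined homomorphism; it is routine to check that $(\lambda_n)$ is stably injective on $G$ and that $\lambda_n|_{\langle s\rangle}$ dominates the growth of $\lambda_n|_H$, because $|\lambda_n(s^m)|_\Gamma\sim |m|\,|w_n|_\Gamma$ while $|\lambda_n(h)|_\Gamma\le L(n)\le |w_n|_\Gamma/n$ on the ball of radius $n$ of $H$.

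\emph{Analysis of the limit action.} Let $\tilde G\supseteq G$, let $\lambda_n\colon\tilde G\to\Gamma$ be extensions short relative to $G$, let $T$ be the limit tree of a convergent subsequence (Theorem \ref{paulin}), $L:=\tilde G/\underrightarrow{\ker}\,\lambda_n$, and assume $G\le L$, $L$ one-ended relative $G$, and $G$ without a global fixed point on $T$. The growth domination forces $H$ to fix the base point $t_0$, so the minimal $G$-invariant subtree $T_{\min}$ is covered by $G$-translates of $[t_0,st_0]$ (Lemma 1.14 in \cite{guirardel}) and is $G$-equivariantly isomorphic to the Bass-Serre tree of the HNN-decomposition of $G$ (or of its pre-amalgamated modification). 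The technical heart is the analogue of Lemma \ref{claim1}: there is a finite subgroup $E_L\le L$ containing $E$ which stabilises $[t_0,st_0]$ such that $g[t_0,st_0]\cap[t_0,st_0]$ is at most a point for every $g\in L\setminus E_L$. Its proof copies that of Lemma \ref{claim1}: one reads subwords of $w_n$ off the quasi-geodesics in $\Cay(\Gamma)$ approximating $[t_0,st_0]$ and derives a contradiction with $C'(1/n)$ from the bounded fellow-travelling of $\Ax(a)$ with conjugates of $\Ax(b)$, using Theorem \ref{eigenschaften} to treat the case that $[t_0,st_0]$ is an unstable arc. From this one deduces, exactly as in Theorem \ref{testamalgamated}, that for $g\in L$ either $gT_{\min}\cap T_{\min}$ is at most a point or $g$ stabilises $T_{\min}$ setwise, that $\stab_L(T_{\min})$ is the HNN-extension $\langle H,E_L\rangle\ast_{E_L}$, and that $T_{\min}$ meets every indecomposable subtree of $T$ in at most a point.

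\emph{Extracting the splitting, and the obstacle.} As in the proof of Theorem \ref{testamalgamated}, the family $\mathcal Z$ of all $L$-translates of $T_{\min}$ together with the closures of the components of $T\setminus L\cdot T_{\min}$ is a transverse covering of $T$: one approximates $(L,\{H\})\curvearrowright T$ by a direct system of geometric pair actions via Theorem \ref{relativegeometric}, notes that $(G,\{H\})\curvearrowright T_{\min}$ is geometric and hence, by Theorem 2.5 in \cite{levittpaulin}, not a non-trivial strong limit, so the approximation maps $F_k$ are isometric on the copies of $T_{\min}$, and then uses the graph-of-actions structure of the geometric approximants to see that every arc crosses only finitely many translates of $T_{\min}$. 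Lemma 1.5 in \cite{guirardel} then presents $T$ as a graph of actions over $\mathcal Z$ with skeleton $S$; refining $S$ at $T_{\min}$ by the above Bass-Serre tree and collapsing all remaining edges yields an HNN-splitting $L=V\ast_{E_L}$ with $E_L\supseteq E$ finite and $H\le V$. Since $L$ is one-ended relative $G$ and the stable letter has already been accounted for, this is exactly the decomposition demanded by clause (4) of Definition \ref{testseq}, and the induced decomposition of $G=T_i$ is isomorphic to $\B_i$; hence $(\lambda_n)$ is a test sequence for $G$. The main difficulty is the Lemma \ref{claim1}-type statement above: unlike over free groups, $\Cay(\Gamma)$ is not a tree, so the comparison of the $w_n$-subword along $[t_0,st_0]$ with its putative second occurrence must be executed with quasi-geodesics, quasi-convexity of $F(x,y)$, and the fellow-travelling bound in order to contradict $C'(1/n)$; the torsion also forces the initial case distinction (enlarging $\alpha(E)$ to a maximal compatible finite subgroup and pre-amalgamating), which has no counterpart in the torsion-free setting.
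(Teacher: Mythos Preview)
Your proposal is correct and follows essentially the same route as the paper: construct $\lambda_n$ by sending the stable letter to $\Psi_n(s)w_n$ with $w_n$ a $C'(1/n)$ word in a quasi-convex free subgroup of the centraliser, verify the HNN-relation and growth domination, establish the analogue of Lemma~\ref{intersection} via the small-cancellation argument, build the transverse covering $\mathcal Z$, and read off $L=V\ast_{E_L}$ from the refined skeleton. The paper's proof is in fact shorter than yours: it states Lemma~\ref{intersection2} with the one-line remark ``identical to the proof of Lemma~\ref{intersection}'' and then immediately invokes the transverse-covering construction from Theorem~\ref{testamalgamated}.

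One point worth noting: the initial case distinction you import from Theorem~\ref{testamalgamated} (enlarging $\alpha(E)$ to a maximal compatible finite $E'$ and pre-amalgamating) is \emph{not} carried out in the paper's proof of Theorem~\ref{testHNN}, and is in fact unnecessary here. In the amalgamated product case that distinction is needed because after obtaining $L=V\ast_{E_L}P$ one must further shorten on the new vertex group $P$ to force it into a subgroup of $\Gamma$; the HNN-extension creates no such second vertex group, so clause~(4) of Definition~\ref{testseq} is satisfied directly once $L=V\ast_{E_L}$ is established. Your adaptation of that pre-amalgamation step (mentioning ``$E'\ast_{\alpha(E)}K$-type data'') is accordingly a bit awkward, since there is no $K$ in the HNN situation. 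Dropping that paragraph makes your argument match the paper exactly.
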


\begin{proof}
Denote by $|\cdot |_{\Gamma}$ and $|\cdot |_G$ the word metric on $\Gamma$ and $G$ with respect to some fixed generating set respectively. Let $X$ be the Cayleygraph of $\Gamma$ with respect to this generating set. For all $n\in\N$ let $B_H(n)=\{h\in H\ |\ |h|_G\leq n\}$ and 
$$L(n)=\max\{|\lambda^H_n(h)|_{\Gamma}\ |\ h\in B_H(n)\}.$$ 
Let $n\in\N$. Denote by $C_n:=C_{\Gamma}(\lambda^H_n(\alpha(E)))$ the centralizer of $\lambda^H_n(E)$ in $\Gamma$. Let $F(a,b)$ be the quasi-convex non-abelian free subgroup of $C_n$ which exists by assumption. As in the proof of Theorem \ref{testamalgamated} let $F(x,y)\leq F(a,b)$ be a non-abelian free subgroup of $F(a,b)$ such that any segment lying in the $10\delta$-neighborhood of both axis $\Ax(a)$ and $\Ax(cbc^{-1})$ in $X$ of $a$ and any conjugate of $b$ by an element $c\in\Gamma$ has length at most $$\frac{1}{1000}\min\{|x|_{\Gamma},|y|_{\Gamma}\}.$$ We define
$$w_n:=xyxy^2xy^3x\cdots xy^{n\cdot L(n)}x$$
and a homomorphism $\lambda_n: G\to \Gamma$ by
\begin{itemize}
\item $\lambda_n(H)=\lambda_n^H(H)$ and
\item $\lambda_n(s)=\Psi_n(s)w_n$.
\end{itemize}

Then since $w_n\in C_{\Gamma}(\lambda_n^H(\alpha(E)))$:
\begin{align*}
\lambda_n(s\alpha(E)s^{-1}\omega(E)^{-1})&=\Psi_n(s)w_n\lambda_n^H(\alpha(E))w_n^{-1}\Psi_n(s)^{-1}\lambda_n^H(\omega(E))^{-1}\\
&=\Psi_n(s)\lambda_n^H(\alpha(E))\Psi_n(s)^{-1}\lambda_n^H(\omega(E))^{-1}\\
&=\Psi_n(s\alpha(E)s^{-1}\omega(E)^{-1})=1
\end{align*}
and hence $\lambda_n$ is a well-defined homomorphism.
Moreover by the construction of $w_n$ we have that 
$$n\cdot|\lambda_n(h)|_{\Gamma}\leq n\cdot L(n)< |\lambda_n(s)|_{\Gamma}$$
for all $n\in\N$, $h\in H$. Hence $\lambda_n(s)$ dominates the growth of $\lambda_n|_H=\lambda^H_n$.\\

Let $\tilde{G}$ be a group containing $G$ and in abuse of notation $\lambda_n: \tilde{G}\to\Gamma$ an extension of $\lambda_n$ which is short relative $G$ for all $n\in\N$. Let $(T,t_0)$ be the real tree into which (a convergent subsequence of) $(\lambda_n)$ converges and let $L:=\tilde{G}/\underrightarrow{\ker} \lambda_n$. Assume that $L$ is one-ended relative $G$ and moreover that $G$ does not act elliptically on $T$.
Since $G$ does not fix a point when acting on $T$, there exists a non-trivial minimal $G$-invariant subtree $T_{\min}$ of $T$ on which $G$ acts.

\begin{lemma}\label{intersection2}
The following holds:
\begin{enumerate}[(1)]
\item $T_{\min}$ is equivariantly isomorphic as a $G$-tree to the Bass-Serre tree of the splitting $G=H\ast_E$.
\item For every $g\in L$ either $T_{\min}\cap gT_{\min}$ is at most one point or $g$ is contained in the setwise stabilizer of $T_{\min}$.
\item $T_{\min}$ intersects any indecomposable subtree of $T$ in at most one point.
\item The setwise stabilizer of $T_{\min}$ in $L$ is $\langle H,E_L\rangle\ast_{E_L}$ for some finite group $E_L$, containing $E$.
\item Edges in $T_{\min}$ are stabilized by conjugates of $E_L$ and vertices by conjugates of $\langle H,E_L\rangle$.
\end{enumerate}
\end{lemma}

\begin{proof}
Identical to the proof of Lemma \ref{intersection}.
\end{proof}

We again define a family $\mathcal{Z}$ of subtrees of $T$ consisting of:
\begin{itemize}
\item $L$-translates of $T_{\min}$ and 
\item closures of connected components of $T\setminus L\cdot T_{\min}$.
\end{itemize}
By $(2)$ of Lemma \ref{intersection2} the family $\mathcal{Z}$ gives rise to an equivariant covering of $T$ such that any two subtrees intersect in at most one point. It follows from Lemma 1.5 in \cite{guirardel} that the action of $L$ on $T$ splits as a graph of actions such that the vertex trees are precisely the trees contained in $\mathcal{Z}$. Associated to this graph of actions there is a simplicial tree $S$ on which $L$ acts (in \cite{guirardel} this tree is called the skeleton of the graph of actions).  Refining this tree precisely as we did in the proof of Theorem \ref{testamalgamated}, finally yields a decomposition of $L$ as $L=V\ast_{E_L}$, where $H\leq V$ and $E_L$ is a finite group containing $E$.
\end{proof}

\subsection{Virtually abelian flats}
We distinguish several cases how a virtually abelian flat can appear in the construction of the completion. First consider the case that the virtually abelian flat is coming from an edge connecting two rigid vertices, i.e. is of the form $G=H\ast_CC\oplus \Z^n$, with $C$ infinite finite-by-abelian. This case is also covered by the more general Proposition \ref{testabelian2} below, but we still prefer to present a proof here, because the underlying ideas needed to prove the existence of test sequences for virtually abelian flats are easier to grasp in this simpler case.

\begin{prop}\label{testabelian}
Suppose $G=H\ast_CC\oplus A$ has the structure of a virtually abelian flat over some subgroup $H$, such that $A\cong \Z^k$, $C$ is an infinite finite-by-abelian subgroup of $H$, and there exists a test sequence $(\lambda_n^H)$ for $H$. Then there exists a test sequence $(\lambda_n)$ for $G$.
\end{prop}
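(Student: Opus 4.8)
The plan is to imitate the two-step structure of Theorems \ref{orbifoldflat}, \ref{testamalgamated} and \ref{testHNN}: first produce an explicit candidate sequence $(\lambda_n)$, and then analyse the action on the limit tree of an arbitrary short extension of it. For the construction, recall that a test sequence is stably injective, so for all large $n$ the map $\lambda^H_n$ is injective on $C$ and $\lambda^H_n(C)$ is an infinite finite-by-abelian subgroup of $\Gamma$. By Lemma \ref{abelian}(1) the centre $Z(C)$ has finite index in $C$, and $\lambda^H_n(Z(C))$ centralises $\lambda^H_n(C)$, so $C_\Gamma(\lambda^H_n(C))$ is infinite; being the centraliser of an element of infinite order in a hyperbolic group it is virtually cyclic, and we fix an element $c_n$ of infinite order in it. Writing $A=\langle z_1,\dots,z_k\rangle$, we put $\lambda_n|_H:=\lambda^H_n$ and $\lambda_n(z_i):=c_n^{d_i^{(n)}}$ for integers $d_i^{(n)}$ to be specified; since the $c_n^{d_i^{(n)}}$ commute with each other and with $\lambda^H_n(C)$, this yields a homomorphism $\lambda_n\colon G=H\ast_C(C\oplus A)\to\Gamma$ extending $\lambda^H_n$. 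We then choose the exponents, inductively in $n$, so that: (i) $\min_i|d_i^{(n)}|$ is huge compared with $\max\{|\lambda^H_n(g)|_\Gamma:\ |g|_G\le n\}$, so that $\lambda_n|_A$ dominates the growth of $\lambda_n|_H$; (ii) the ratios $|d_i^{(n)}|/|d_j^{(n)}|$ stay bounded away from $0$ and $\infty$ and the normalised vectors $\bigl(|d_1^{(n)}|,\dots,|d_k^{(n)}|\bigr)/\max_i|d_i^{(n)}|$ converge to a vector with $\mathbb{Q}$-linearly independent coordinates; and (iii) $(\lambda_n)$ is stably injective on $G$ (which follows from (i)--(ii) by inspecting the faithful limit action of $G$).

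For the second step, let $\tilde G=M_i\supseteq G$ with $M_i$ one-ended relative $G$, let $\lambda_n=\Psi_n\colon\tilde G\to\Gamma$ be extensions of $\lambda_n|_G$ which are short relative to $G$, and suppose $(\lambda_n)$ converges to the action of $L:=\tilde G/\underrightarrow{\ker}\lambda_n$ on a real tree $T$ with $G$ not acting elliptically. By (i) the subgroup $H$, hence also $C$, fixes the base point $t_0\in T$. Since $G$ does not fix a point, not all generators of $G$ fix $t_0$, so the scaling factor $|\lambda_n|$ is comparable to $\max_i|\lambda_n(z_i)|_\Gamma$; together with (ii) this shows that $A\cong\Z^k$ acts on $T$ hyperbolically, its minimal subtree $\ell$ being a line on which $C$ acts trivially and $A$ acts by translations with $\mathbb{Q}$-linearly independent lengths, hence faithfully and with dense orbits. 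Consequently $\stab_L(\ell)$ is the maximal virtually abelian subgroup $A':=M_L(C\oplus A)$ (by Proposition \ref{abe2} and Theorem \ref{eigenschaften}(3)--(4)). The crucial geometric point, established exactly as in Lemma \ref{claim1}, is that distinct $L$-translates of $\ell$ meet in at most a point: a non-degenerate overlap $g\ell\cap\ell$ would force the axes $\Ax(c_n)$ and $\lambda_n(f)\Ax(c_n)$ (where $f\in\tilde G$ maps to $g$) to fellow-travel in $\Gamma$ along a segment of length tending to infinity with $n$, which for large $n$ places $\lambda_n(f)$ in the virtually cyclic normaliser of $\langle c_n\rangle$ and so places $g$ in $\stab_L(\ell)$. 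Combining this with the fact that each arc of $T$ meets only finitely many translates of $\ell$ --- obtained via the relative geometric approximation (Theorem \ref{relativegeometric}) and the rigidity of geometric actions under strong limits (Theorem 2.5 in \cite{levittpaulin}), as in the proofs of Theorems \ref{orbifoldflat} and \ref{testamalgamated} --- the family $\mathcal Z$ of all $L$-translates of $\ell$ together with the closures of the components of $T\setminus L\cdot\ell$ is a transverse covering of $T$.

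It then follows from Guirardel's skeleton construction (Lemma 1.5 in \cite{guirardel}) that $L$ splits as a graph of actions over $\mathcal Z$; collapsing to the edge joining $\ell$ to its neighbour gives a one-edge decomposition of $L$ whose vertex groups are the virtually abelian group $A'\supseteq C\oplus A$ and a group $V$ containing $H=T_{i-1}$, and whose edge group $A_1$ contains $C$. The remaining requirements of Definition \ref{testseq}(2) --- passing to finitely many $\Gamma$-limit quotients $N_1,\dots,N_p$ of $M_i$, each equipped with a decomposition along finite subgroups whose non-distinguished vertex groups embed in $\Gamma$, and the finiteness of the indices $[A_1:C]$ and $[A':C\oplus A]$ --- are then extracted by the routine (but lengthy) quotient-taking procedure we carry out in full in the proof of Theorem \ref{maintheorem}, using that $L$ is one-ended relative $G$ together with Corollary \ref{shortening} to rule out axial and orbifold pieces and infinite edge stabilisers. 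Finally the decomposition of $T_i=T_{i-1}\ast_C(C\oplus A)$ induced by $\A$ is the one-edge splitting with vertex groups $H$ and $C\oplus A$ and edge group $C$, i.e. is isomorphic to $\B_i$.

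I expect the transverse-covering step above to be the main obstacle: converting the hyperbolicity estimate into the clean statement that distinct translates of $\ell$ overlap in at most a point, and then reading off from the skeleton precisely the amalgam $V\ast_{A_1}A'$ with the prescribed finite-index relations. Torsion in $C$ is relatively mild here --- it only requires choosing $c_n$ (replacing it by a power via Lemma \ref{potenz} if necessary) so that it genuinely centralises all of $\lambda^H_n(C)$, and checking that the kernel of the $(C\oplus A)$-action on $\ell$ is exactly $C$, which is guaranteed by the $\mathbb{Q}$-linear independence in (ii).
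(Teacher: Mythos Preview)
Your approach is genuinely different from the paper's and worth spelling out. The paper chooses the exponents \emph{hierarchically}, setting $\lambda_n(a_i)=w^{k_n^i}$ with $|w^{k_n^i}|>n\cdot|w^{k_n^{i-1}}|$, so that in the limit only $a_k$ is hyperbolic while $B=\langle C,a_1,\dots,a_{k-1}\rangle$ fixes the base point (Lemma~\ref{lemma0}). Since $L$ is then one-ended relative $\langle H,B\rangle$, the relative Rips machine applies directly; the segment $[t_0,a_kt_0]$ lies either in an axial or a discrete piece, and in both cases one reads off a single splitting $L=V\ast_P M$ separating $a_k$ from $B$ (Lemmas~\ref{lemmaaxial}, \ref{lemmadiscreteabelian}). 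The remaining $k-1$ dimensions are then peeled off one at a time by iterating inside $V$; this iteration, together with the finite-index conditions you defer, is carried out in the proof of Theorem~\ref{testabelian2} (not Theorem~\ref{maintheorem}). Your one-shot route---bounded ratios with $\mathbb{Q}$-independent limit, so that all of $A$ acts axially on a single line $\ell$---is a legitimate alternative that avoids the iteration, but two points need adjusting. First, the overlap statement is not ``exactly as in Lemma~\ref{claim1}'' (that lemma is about small-cancellation pieces, not axes); the clean argument here is almost-malnormality: if $g\ell\cap\ell$ contains an arc then $C$ and $gCg^{-1}$ both lie in its (finite-by-abelian) stabiliser, so $g\in M_L(C)=\stab_L(\ell)$ by Proposition~\ref{abe2}. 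Second, since $H$ is elliptic and $L$ is one-ended relative $H$, you can skip the transverse-covering/strong-limit machinery entirely and apply Theorem~\ref{relativerips} directly, reading off the splitting from the axial vertex exactly as in Lemma~\ref{lemmaaxial}. The paper's hierarchical choice buys a uniform treatment of the axial and discrete cases and plugs straight into the general Theorem~\ref{testabelian2}; your choice buys a one-step splitting at the cost of a more delicate argument for the line.
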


\begin{proof}
Since $\lambda_n^H$ is a test sequence, $\lambda_n^H(C)$ is infinite finite-by-abelian for $n$ sufficiently large. By Proposition \ref{abelian} the center $Z(\lambda_n^H(C)$) is infinite and hence contains an element, say $w$, of infinite order which generates a quasi-convex subgroup of $\Gamma$. We define the test sequence $(\lambda_n)\subset\Hom(G,\Gamma)$ as follows. 
For all $n\in\N$
\begin{itemize}
\item we set $\lambda_n|_H=\lambda_n^H$.
\item Let $\{a_1,\ldots,a_k\}$ be a basis of $A$ and $B(n)$ the intersection of $H$ with the ball of radius $n$ around the identity in the Cayleygraph of $G$ (i.e. all elements in $H$ of word length at most $n$ in the word metric of $G$ with respect to some fixed finite generating set). Further let
$$\dist(n)=\max\{|\lambda_n^H(h)|_{\Gamma}\ |\ h\in B(n)\}.$$
We then choose $k_n^1\in\N$ such that 
$$|w^{k_n^1}|_{\Gamma}> n\cdot \dist(n)$$
and define 
$$\lambda_n(a_1)=w^{k_n^1}$$
Moreover for all $i\in\{2,\ldots,n\}$ we choose inductively $k_n^i\in\N$ such that 
$$|w^{k_n^i}|_{\Gamma}> n\cdot |w^{k_n^{i-1}}|_{\Gamma}$$
and define $$\lambda_n(a_i)=w^{k_n^i}.$$
\end{itemize}

Clearly $\lambda_n$ is a homomorphism and $\lambda_n|_A$ dominates the growth of $\lambda_n^H$. It remains to show that $(\lambda_n)$ is indeed a test sequence.\\

So let $\tilde{G}$ be a group containing $G$ and in abuse of notation let $\lambda_n: \tilde{G}\to\Gamma$ be an extension of $\lambda_n$ which is short relative $G$ for all $n\in\N$. Let $(T,t_0)$ be the real tree into which (a convergent subsequence of) $(\lambda_n)$ converges and let $L:=\tilde{G}/\underrightarrow{\ker} \lambda_n$. Assume that $L$ is one-ended relative $G$ and moreover that $G$ does not act elliptically on $T$. Let $\{a_1,\ldots,a_k\}$ be the basis of $A$ chosen above and $c$ a generating set of $C$. Then $C\oplus A=\langle c,a_1,\ldots,a_k\rangle$.\\
We claim that $L$ admits a splitting as an amalgamated product $L=V\ast_PM$, where $H\leq V$, the subgroups $P$ and $M$ are virtually abelian and $\langle c,a_1,\ldots,a_{k-1}\rangle\leq P$, while $a_k\in M\setminus P$.\\ 
To see this we now take a closer look at the action of $C\oplus A$ on $T$.
Since $\lambda_n|_A$ dominates the growth of $\lambda_n^H$, $H$ fixes the base point of $T$.

\begin{lemma}\label{lemma0}
\begin{enumerate}[(1)]
\item The finite-by-abelian subgroup $B:=\langle a_1,\ldots,a_{k-1},c\rangle$ fixes the base point $t_0$ of $T$.
\item $a_k$ acts hyperbolically on $T$.
\item $B$ fixes the segment $[t_0,a_kt_0]\subset T$.
\end{enumerate}
\end{lemma}
  
\begin{proof}
Let $(X,d_X)$ be the Cayley graph of $\Gamma$ with respect to some fixed finite generating set. Let $Y$ be a generating set of $L$ containing $\{c,a_1,\ldots,a_k\}$ and set
$$\mu_n:=\max_{y\in Y} d_X(1,\lambda_n(y))=|\lambda_n(y)|_{\Gamma}.$$
In particular $\mu_n\geq d_X(1,\lambda_n(a_k))$.
Denote by $d_T$ the metric on the real tree $T$ and by $d_n$ the scaled metric on $X$, i.e. $(X_n,d_n)=(X,\frac{d_X}{\mu_n})$. We assume without loss of generality that $(1)$ is an approximating sequence for $t_0$. By definition of the test sequence $(\lambda_n)$ it holds for all $i\in\{1,\ldots,k-1\}$ that 
$$|\lambda_n(a_k)|_{\Gamma}>n^{k-i}\cdot|\lambda_n(a_i)|_{\Gamma}$$
and therefore:
\begin{align*}d_T(t_0,a_it_0)&=\lim_{n\to\infty}d_n(1,\lambda_n(a_i).1)\\
&=\lim_{n\to\infty}\frac{1}{\mu_n}d_X(1,\lambda_n(a_i))\\
&\leq\lim_{n\to\infty}\frac{|\lambda_n(a_i)|_{\Gamma}}{|\lambda_n(a_k)|_{\Gamma}}\\
&<\lim_{n\to\infty}\frac{1}{n^{k-i}}=0.
\end{align*}
We have shown $(1)$ since $C$ is contained in $H$ and therefore fixes $t_0$.\\
Since $G$ does not act with a global fixed point on $T$, we have that $a_k$ does not fix $t_0$ and it is easy to see that $a_k$ acts by translation on the infinite axis $\bigcup_{i\in\Z}[a_k^it_0,a_k^{i+1}t_0]$ and hence $(2)$ holds.\\
By $(1)$ the group $B=\langle c,a_1,\ldots, a_{k-1}\rangle$ fixes $t_0$, hence it remains to show that $B$ also fixes $a_kt_0$. Since $(1)$ is an approximating sequence for $t_0$, $(\lambda_n(a_k))$ is an approximating sequence for $a_kt_0$ by Lemma 1.10 in \cite{rewe}. For all $i\in\{1,\ldots,k-1\}$ we have that $[\lambda_n(a_i),\lambda_n(a_k)]=1$ and therefore
\begin{align*}
d_T(a_i.a_kt_0,a_kt_0)&=\lim_{n\to\infty}\frac{d_X(\lambda_n(a_i)\lambda_n(a_k),\lambda_n(a_k))}{\mu_n}\\
&=\lim_{n\to\infty}\frac{|\lambda_n(a_k)^{-1}\lambda_n(a_i)\lambda_n(a_k)|_{\Gamma}}{\mu_n}\\
&\leq\lim_{n\to\infty}\frac{|\lambda_n(a_i)|_{\Gamma}}{|\lambda_n(a_k)|_{\Gamma}}\\
&<\lim_{n\to\infty}\frac{1}{n^{k-i}}=0.
\end{align*}
The same argument yields that $C$ fixes $a_kt_0$ and hence the claim follows.
\end{proof}

By assumption $L$ is one-ended relative $G$. In particular $L$ is one-ended relative $\langle B, H\rangle$. It follows from Lemma \ref{lemma0} that $\langle H,B\rangle$ acts elliptically on the real tree $T$ and hence we can apply the relative version of the Rips machine (Theorem \ref{relativerips}). This yields a graph of actions with corresponding graph of groups, say $\A$. By Lemma \ref{lemma0}
$$B\leq \stab_L([t_0,a_kt_0])$$
and hence the stabilizer of the segment $[t_0,a_kt_0]\subset T$ contains a finite-by-abelian group. Therefore this segment has trivial intersection with orbifold components, since segments in orbifold components have finite stabilizer. So suppose there exists a proper subsegment $J$ of $[t_0,a_kt_0]$ which is contained in an axial component, while $[t_0,a_kt_0]\setminus J$ is contained in the discrete part of $T$. Denote by $Z$ the stabilizer of the whole axial component. Since the subgroup $B$ stabilizes in particular $J$, it also stabilizes the whole axial component and therefore $B\leq Z$. Since $L$ is a $\Gamma$-limit group and therefore $N(\Gamma)$-CSA and $Z$ is a maximal virtually abelian subgroup of $L$ (since it is the stabilizer of an axial component) this implies that $a_k\in Z$. But the axis of $a_k$ contains $[t_0,a_kt_0]$, hence is not contained in the axial component stabilized by $Z$, a contradiction.\\
Therefore $[t_0,a_kt_0]$ is either contained in an axial component or contained in the discrete part of $T$.

\begin{lemma}\label{lemmaaxial}
If $[t_0,a_kt_0]$ is contained in an axial component of $T$, then $$L=V\ast_PM,$$ where $M,P$ are finitely generated virtually abelian groups, $a_k\in  M\setminus P$ and $$B=\langle c, a_1,\ldots, a_{k-1}\rangle\leq P.$$
\end{lemma}

\begin{proof}
Let $v$ be the vertex in the graph of groups $\A$, which corresponds to the axial component of $T$ containing $[t_0,a_kt_0]$. Let $M:=A_v$ be the associated vertex group. In particular $M$ is the stabilizer of the axial component and therefore virtually abelian. Let $P$ be the kernel of the action of $M$ on the axial component. Let $e_1,\ldots, e_k$ be the edges in $\A$  connected to $v$, with corresponding edge groups $A_{e_1},\ldots, A_{e_k}$. Hence $A_{e_1},\ldots, A_{e_k}$ are subgroups of $P$.
\begin{figure}[htbp]
\centering
\includegraphics[scale=0.9]{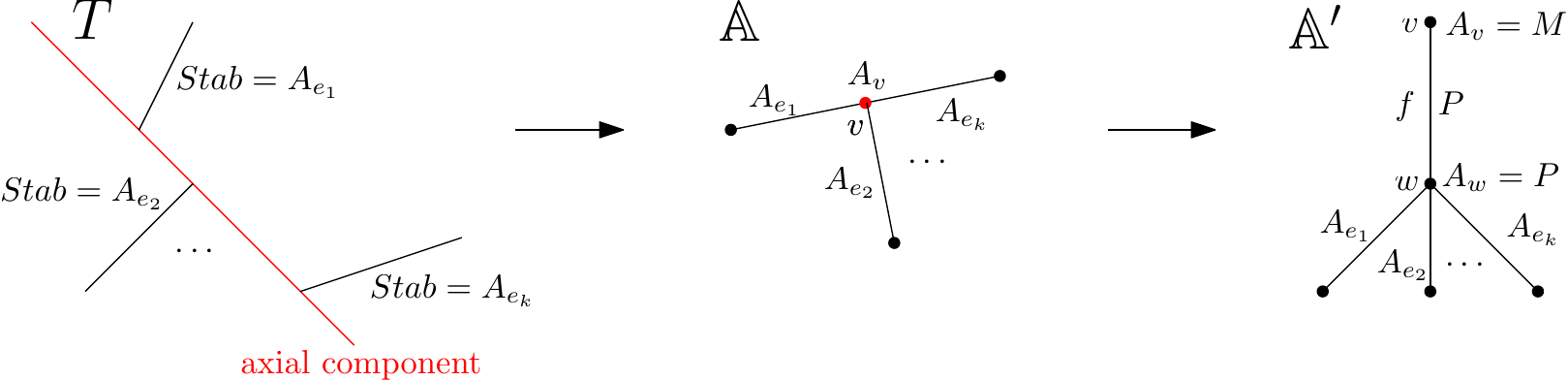}
\caption{The axial case}
\end{figure}
Therefore we can refine $\A$ by adding a new vertex $w$ with vertex group $A_w:=P$ and a new edge $f$ with edge group $A_f:=P$ connecting $v$ and $w$. Moreover the edges $e_1,\ldots,e_k$ are now connected to $w$ instead of $v$. Since $A_{e_1},\ldots, A_{e_k}\leq P=A_w$ this yields a graph of groups $\A'$, with $\pi_1(\A')=\pi_1(\A)$. In particular 
$$L=\pi_1(\A')=V\ast_{P}M.$$
By Lemma \ref{lemma0} the remaining properties of the Lemma follow. 
\end{proof}

\begin{lemma}\label{lemmadiscreteabelian}
If $[t_0,a_kt_0]$ is contained in the discrete part of $T$, then $$L=V\ast_PM,$$ where $M,P$ are finitely generated virtually abelian groups, $a_k\in  M\setminus P$ and $$B=\langle c, a_1,\ldots, a_{k-1}\rangle\leq P.$$
\end{lemma}

\begin{proof}
Let $e_1,\ldots,e_m$ be the edges contained in $e:=[t_0,a_kt_0]$ and let $i\in \{1,\ldots,m\}$. Clearly $B\leq \stab_L(e_i)=:A_{e_i}$ by Lemma \ref{lemma0}. Let $\Ax(a_k)$ be the axis of $a_k$ in $T$.\\
We claim that $A_{e_i}$ stabilizes pointwise the entire axis $\Ax(a_k)$.
\begin{figure}[htbp]
\centering
\includegraphics{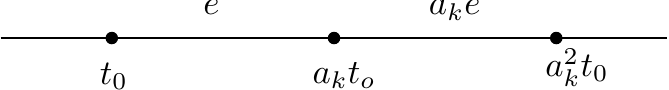}
\caption{Axis of $a_k$ in $T$}
\end{figure}
First we note that since $$B=\langle c,a_1,\ldots,a_{k-1}\rangle\leq \stab(e)$$ and $[B,a_k]=1$ it follows that
$$B=a_kB a_k^{-1}\leq a_k\stab(e)a_k^{-1}=\stab(a_ke)$$
and therefore $B\leq \stab(e)\cap\stab(a_ke)$. Hence $B$ stabilizes pointwise the entire axis of $a_k$ in $T$. Now suppose that there exists a segment $I\subset \Ax(a_k)$ such that $A_{e_i}\nleq \stab(I)$. But this implies that $\Ax(a_k)$ is an unstable subtree of $T$. It follows from Theorem \ref{eigenschaften}(4) that $\stab(\Ax(a_k))$ is finite, a contradiction to the fact that $B\leq \stab(\Ax(a_k))$. Therefore $A_{e_i}$ stabilizes the entire axis of $a_k$ for all $i\in\{1,\ldots,m\}$.\\
Denote by $M$ the setwise stabilizer of $\Ax(a_k)$. By Theorem \ref{eigenschaften} (4) $M$ is finitely generated virtually abelian and contains a finite-by-abelian subgroup of index at most $2$ which leaves $\Ax(a_k)$ invariant and fixes its ends. As shown above there exists a finite-by-abelian group $P'$ such that $\stab(f)=P'$ for all edges $f$ contained in $\Ax(a_k)$. Hence $L$ admits a decomposition as $L\ast_PM$, where $P$ is finitely generated virtually abelian and contains $P'$ as a subgroup of index at most $2$. Moreover it follows from Lemma \ref{lemma0} that $B\leq P$ and $a_k\in M\setminus P$.
\end{proof}

Therefore in both cases (axial and discrete), $L=V\ast_{P}M$, where $B \leq P$, $a_k\in M\setminus P$ and $P,M$ are finitely generated virtually abelian. The remaining part of the proof is identical to the equivalent part of the proof of Theorem \ref{testabelian2}, and we prefer to do this proof there in the more general case.
\end{proof}
 
Now suppose that the virtually abelian flat comes from a virtually abelian vertex group $A'$, i.e. is of the form $G=H\ast_{C}A$, where $C$ is the maximal virtually abelian subgroup of $H$ containing a peripheral subgroup $P(A')$ of $A'$ and $A$ is a virtually abelian group containing $A'$.

\begin{satz}\label{testabelian2}
Suppose $G=H\ast_CA$ has the structure of a virtually abelian flat over some subgroup $H$ coming from a virtually abelian vertex group $A'$, i.e.  $A$ is virtually abelian, $C$ is the maximal virtually abelian subgroup of $H$ containing a peripheral subgroup $P(A')$ of $A'$, and there exists a test sequence $(\lambda_n^H)$ for $H$. Then there exists a test sequence $(\lambda_n)$ for $G$.
\end{satz}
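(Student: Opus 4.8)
\textbf{Proof strategy for Theorem \ref{testabelian2}.}
The plan is to mirror the structure of the proof of Proposition \ref{testabelian}, which already treats the special case of a finite-by-abelian flat between two rigid vertices, but now to deal with the fact that $A$ is an arbitrary (finitely generated) virtually abelian group and $C$ is a \emph{maximal} virtually abelian subgroup of $H$ containing the peripheral subgroup $P(A')$. First I would fix, as in Proposition \ref{testabelian}, a basis-like description of $A$: by Lemma \ref{abelian} and Proposition \ref{abe2}, $A^+$ is finite-by-abelian of index at most $2$ in $A$, and $A^+/(C\cap A)^+$ is free abelian of some rank $k$; I choose lifts $\{a_1,\dots,a_k\}$ of a $\Z$-basis of this quotient, together with a generating set $c$ of $C$ (and the at-most-one extra element $s$ of order $2$ needed to generate $A$ from $A^+$). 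Since $(\lambda_n^H)$ is a test sequence, for large $n$ the image $\lambda_n^H(C)$ is infinite finite-by-abelian, so by Proposition \ref{abelian} its center is infinite and contains an element $w=w_n$ of infinite order generating a quasi-convex cyclic subgroup of $\Gamma$; I then define $\lambda_n$ by $\lambda_n|_H=\lambda_n^H$ and $\lambda_n(a_i)=w^{k_n^i}$ for a rapidly increasing sequence of exponents $k_n^1\ll k_n^2\ll\dots\ll k_n^k$ chosen so that $\lambda_n(a_1)$ already dominates everything in the $n$-ball of $H$ and each $\lambda_n(a_{i+1})$ dominates $\lambda_n(a_i)$, with the torsion parts of $A$ handled by noting that $w$ commutes with $\lambda_n^H(C)$ so that the required relations $[a_i,a_j]=e_{ij}$, $[a_i,c]=\bar e$, $sa_is^{-1}a_i=\tilde e$ are automatically satisfied; one may need to pass to a subsequence so that these finitely many torsion elements stabilize, exactly as in the presentation computation near Figure \ref{fig12} in the proof of Theorem \ref{completion}. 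This makes $\lambda_n$ a well-defined homomorphism with $\lambda_n|_A$ dominating the growth of $\lambda_n^H$.

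Next I would prove the analogue of Lemma \ref{lemma0}: for any extension $\lambda_n\colon\tilde G\to\Gamma$ short relative to $G$ converging to an action of $L=\tilde G/\underrightarrow{\ker}\lambda_n$ on a real tree $T$ with $L$ one-ended relative $G$ and $G$ not elliptic, the domination relations force $H$ to fix the base point $t_0$, the finite-by-abelian subgroup $B:=\langle c,a_1,\dots,a_{k-1}\rangle$ to fix $t_0$, the element $a_k$ to act hyperbolically, and $B$ to fix the segment $[t_0,a_kt_0]$ pointwise; the computation is the same limit-of-scaled-metrics argument using $|\lambda_n(a_k)|_\Gamma>n^{k-i}|\lambda_n(a_i)|_\Gamma$ together with commutativity (up to torsion) of $a_i$ with $a_k$ and with $C$. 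Since $L$ is one-ended relative $\langle H,B\rangle$ and the latter is elliptic, I apply the relative Rips machine (Theorem \ref{relativerips}) to the action of $L$ on $T$, obtaining a graph of actions $\A$. Because $B\le\stab_L([t_0,a_kt_0])$ is infinite finite-by-abelian while orbifold components have finite arc stabilizers, the segment $[t_0,a_kt_0]$ meets no orbifold component; the $N(\Gamma)$-CSA property then rules out it being split between an axial component and the discrete part in the way excluded in Proposition \ref{testabelian} (if a proper subsegment lay in an axial component with stabilizer $Z$, maximality of $Z$ would force $a_k\in Z$, contradicting that $\Ax(a_k)\supset[t_0,a_kt_0]$ is not contained in that component). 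So $[t_0,a_kt_0]$ is either entirely axial or entirely discrete, and in each case the arguments of Lemmas \ref{lemmaaxial} and \ref{lemmadiscreteabelian} (refining $\A$ by splitting off the kernel $P$ of an axial action, resp.\ taking $P'=\stab$ of edges along $\Ax(a_k)$ and $M=$ setwise stabilizer of $\Ax(a_k)$, which is finitely generated virtually abelian by Theorem \ref{eigenschaften}(4)) give a splitting $L=V\ast_P M$ with $V\ge H$, with $P$ and $M$ finitely generated virtually abelian, $B\le P$, and $a_k\in M\setminus P$.

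The final step is to upgrade this one-edge splitting into the full conclusion of Definition \ref{testseq}(2): finitely many $\Gamma$-limit quotients $N_1,\dots,N_p$ of the $\Gamma$-limit group $M_i=L$, each with a decomposition $\D$ relative $T_i$ along finite groups having one distinguished vertex group $U=V'\ast_{A_1}A''$ with $A''$ virtually abelian containing $K=A$ as a finite-index subgroup, $A_1$ containing $C$ with finite index, $T_{i-1}\le V'$, and all other vertex groups isomorphic to subgroups of $\Gamma$. Here I would argue exactly as promised in the amalgamated-product case of Theorem \ref{testamalgamated}: among all short-relative-$V$ extensions of $\lambda_n$ that keep the finitely many inequalities nontrivial, pass to sequences minimizing the norm on a generating set of $M$, which by Lemma \ref{maximal1}/Lemma \ref{maximal}-type arguments factor through finitely many maximal $\Gamma$-limit quotients; for each such quotient, if $M$ is not one-ended relative $V$ one iterates the splitting-off procedure (descending chain condition, Proposition \ref{descendingchain}, guaranteeing termination), and if $M$ \emph{is} one-ended relative $V$ then the relative Rips machine plus the shortening argument (Corollary \ref{shortening}) forces the action of the relevant factor to be trivial, so that factor embeds in $\Gamma$. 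I expect the main obstacle to be precisely this last bookkeeping step --- controlling that the virtually abelian groups $A_1\supseteq C$ and $A''\supseteq A$ that emerge are of \emph{finite index} (not just containing them), which is where the maximality of $C$ in $H$, the $N(\Gamma)$-CSA property, and the careful choice of the dominating element $w$ in the center of $\lambda_n^H(C)$ all have to be combined --- together with handling the torsion subgroups of $A$ and $A''$ uniformly in $n$; the purely tree-theoretic part (Lemmas \ref{lemma0}--\ref{lemmadiscreteabelian}) should go through essentially verbatim from Proposition \ref{testabelian}.
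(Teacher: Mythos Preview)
There is a genuine gap in your construction of $\lambda_n$: the rule $\lambda_n(a_i)=w^{k_n^i}$ with $w\in Z(\lambda_n^H(C))$ does \emph{not} extend to a homomorphism on $A$. Since the $w^{k_n^i}$ pairwise commute and commute with $\lambda_n^H(C)$, the left-hand sides of the relations $[a_i,a_j]=e_{ij}$, $[a_i,c]=\bar e_{ic}$, $sa_is^{-1}a_i=\tilde e_i$ all map to $1$ under your $\lambda_n$, while the right-hand sides lie in the torsion subgroup $E\leq C\leq H$ and are in general mapped nontrivially by the stably injective sequence $(\lambda_n^H)$. Passing to a subsequence cannot repair this; it is exactly what separates the general virtually abelian flat from the $C\oplus\Z^k$ case of Proposition~\ref{testabelian}, where all those torsion elements are trivial. (You also never say what $\lambda_n(s)$ should be.) The paper resolves this by using an ingredient you omit: the retraction $\eta\colon G\to H$ that comes with the flat by the construction in case~(1)(c) of Theorem~\ref{completion}. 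One sets $\lambda_n|_A=\lambda_n^H\circ\eta\circ\alpha_n$ for automorphisms $\alpha_n\in\Aut(A)$; being a composition of homomorphisms, this respects every relation in $A$, including those involving $s$, for free. The growth hierarchy is then arranged through the $\alpha_n$: one splits $\pi(A^+)=K\oplus B$ with $K=\pi(C^+)$ and $B\cong\Z^k$, defines $\alpha_n$ on $W^+$ by an explicit unipotent-type formula fixing $K$ and mixing the $b_j$ with growing coefficients, and lifts back to $\Aut(A)$ via Corollary~4.13 of \cite{rewe}.

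Your tree-theoretic analysis (the analogue of Lemma~\ref{lemma0}, the axial/discrete dichotomy, the splitting $L=V\ast_P M$) is on the right track and matches Lemmas~\ref{lemma01}--\ref{lemmadiscrete}. But the step you correctly flag as ``the main obstacle'' is substantial and your outline does not contain it: after obtaining $L=V\ast_P M$ with $b_k\in M\setminus P$, the paper iterates the entire argument on $V$ with the smaller flat $H\ast_C A_1$ (where $A_1=\langle\tilde N,\tilde s\rangle$, so $\rk Z_{A_1}=\rk Z_A-1$), passing each time to finitely many maximal limit quotients, until all of $B$ lies on the $M$-side; the finite-index claim $[\,\hat M:A\,]<\infty$ is then obtained by a further limit argument using automorphisms of a direct summand $R$ of $\hat\pi(\hat M^+)$, again lifted to $\Aut_{\hat s}(\hat M)$ via \cite{rewe}, to force the quotient $M'$ to contain $A$ with finite index.
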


\begin{proof}
For every infinite virtually abelian subgroup $M$ of a $\Gamma$-limit group  there exists by Proposition \ref{abelian} a unique finite-by-abelian subgroup of index at most $2$ which we denote by $M^+$.\\
Let $E$ be the torsion subgroup of $A^+$. It follows from Lemma \ref{eig} that $E$ is a finite normal subgroup of $A$. Denote by $\pi:A\to A/E$ the canonical projection. Recall that $P(A')^+$ consists of all elements of $A'^+$ which vanish under every homomorphism from $A'^+$ to $\Z$. It follows from the construction of the completion that $E\leq P(A')^+\leq C^+$. Let $W:=\pi(A)=A/E$ and set $W^+:=\pi(A^+)$. Since $A^+$ is finitely generated finite-by-abelian it follows immediately that $W^+$ is finitely generated free abelian.\\
Moreover since $E\leq C^+$ it follows from the construction of the completion that $$A^+/C^+\cong \Z^n$$ for some $n\in\N$. From the second isomorphism theorem then follows
$$W^+/_{\displaystyle (C^+/E)}=(A^+/E)/_{\displaystyle (C^+/E)}\cong A^+/C^+\cong\Z^n.$$
Hence there exists a short exact sequence 
$$1\to C^+/E\to A^+/E \xrightarrow{\tau}\Z^n\to 1$$
and we claim that this sequence splits. To see this choose a basis $\{z_1,\ldots,z_n\}$ of $\Z^n$ and define a map $s_0:\{z_1,\ldots,z_n\} \to A^+/E$ such that $\tau\circ s_0=\id_{\{z_1,\ldots,z_n\}}$. Since $A^+/E$ is abelian there exists a unique extension of $s_0$ to a homomorphism $s:\Z^n\to A^+/E$ by the universal property of the free abelian group $\Z^n$. By construction clearly $\tau\circ s=\id_{\Z^n}$ and therefore the sequence splits. It follows that $$W^+=A^+/E\cong \Z^n\oplus C^+/E,$$
i.e. $K:=\pi(C^+)$ is a direct summand of the free abelian group $W^+=\pi(A^+)$. Hence there exists a subgroup $B\cong\Z^n$ of $W^+$ such that $W^+=K\oplus B$.\\
By construction of the completion, $G$ comes equipped with a retract $\eta:G\to H$. Since by assumption $C$ is the maximal virtually abelian subgroup of $W$ containing $P(A')$ and $\pi(A)$ is virtually abelian, we conclude that $\eta(A)=C$.\\
Let $\{k_1,\ldots,k_m\}$ be a basis of $K$ and $\{b_1,\ldots,b_k\}$ a basis of $B$. We define the following sequence of automorphisms of $W^+$:\\
For $n\in\N$ let $\alpha_n:W^+\to W^+$ be defined by:
\begin{align*}
\alpha_n(k_j)&=k_j\quad \text{ for all } j\in \{1,\ldots,m\},\\
\alpha_n(b_j)&=\sum_{i=1}^jn^{j-i}b_i+n^j\sum_{l=1}^mk_l\quad \text{ for all } j\in \{1,\ldots,k\}. 
\end{align*}
Set $\tilde{K}=\pi^{-1}(K)$, $\tilde{B}=\pi^{-1}(B)$. Then $A=\langle \tilde{K},\tilde{B},s\rangle$ for some element $s$, where either $\pi(s)$ has order $2$ (see \cite{rewe} Lemma 4.9) or $s=1$. Denote by $\Aut_s(A)$ the subgroup of $\Aut(A)$ consisting of those automorphisms which restrict to the identity on $\langle \tilde{K},s\rangle$ and preserve $\tilde{B}$.\\
It follows from Corollary 4.13 in \cite{rewe} that after passing to a subsequence we can extend $\alpha_n$ to an automorphism (still denoted) $\alpha_n\in \Aut_s(A)$ for all $n\in\N$.\\
We then define our test sequence $\lambda_n: G\to \Gamma$ on $A$ as $\lambda_n|_A=\lambda^H_n\circ\eta\circ\alpha_n$ and on $H$ as $\lambda_n|_H=\lambda_n^H$ for all $n\in\N$.\\
Clearly $\lambda_n|_A$ dominates the growth of $\lambda_n|_H=\lambda_n^H$. Moreover for $n$ sufficiently large it holds that $$|\lambda_n(\tilde{b}_i)|_{\Gamma}> n\cdot|\lambda_n(\tilde{b}_{i-1})|_{\Gamma}$$
and $$|\lambda_n(\tilde{b}_1)|_{\Gamma}> n\cdot|\lambda_n(\tilde{k})|_{\Gamma}$$
for all $i\in\{2,\ldots,k\}$, $\tilde{b}_i\in\pi^{-1}(b_i)$ and $\tilde{k}\in \pi^{-1}(K)$.
It remains to show that $\lambda_n$ is indeed a test sequence.\\

So let $\tilde{G}$ be a group containing $G$ and in abuse of notation let $\lambda_n: \tilde{G}\to\Gamma$ be an extension of $\lambda_n$ which is short relative $G$ for all $n\in\N$. Let $(T,t_0)$ be the real tree into which (a convergent subsequence of) $(\lambda_n)$ converges and let $L:=\tilde{G}/\underrightarrow{\ker} \lambda_n$. Assume that $L$ is one-ended relative $G$ and moreover that $G$ does not act elliptically on $T$.\\
We claim that $L$ admits a splitting as an amalgamated product $L=V\ast_PM$, where $H\leq V$, the subgroups $P,M$ are virtually abelian and $$N:=\langle k_1\ldots,k_m,b_1,\ldots,b_{k-1}\rangle\leq P^+/E_P,$$ while $b_k\in (M^+/E_M)\setminus (P^+/E_M)$, here $E_P$, $E_M$ denote the torsion subgroups of $P$ and $M$ respectively.\\ 
To see this we now take a closer look at the action of $A$ on $T$.
Since $\lambda_n|_A$ dominates the growth of $\lambda_n^H$, $H$ fixes the base point of $T$. Recall that $\pi:A \to A/E$ denotes the canonical projection and that $A=\langle \tilde{K},\tilde{B}, \tilde s\rangle$, where $\tilde{K}=\pi^{-1}(K)$, $\tilde{B}=\pi^{-1}(B)$ and either $\pi(\tilde{s})=s$ has order $2$ or $\tilde{s}=1$.

\begin{lemma}\label{lemma01}
\begin{enumerate}[(1)]
\item The subgroup $\tilde{N}:=\pi^{-1}(N)$ fixes the base point $t_0$ of $T$.
\item Elements from $\pi^{-1}(b_k)$ act hyperbolically on $T$.
\item If $\tilde{b}_k\in\pi^{-1}(b_k)$ then $N$ fixes the segment $[t_0,\tilde{b}_kt_0]\subset T$.
\item If $\tilde{s}\neq 1$ then $\tilde{s}$ exchanges the ends of the axis of $\tilde{b}_k$ in $T$.
\end{enumerate}
\end{lemma}

\begin{proof}
Let $(X,d_X)$ be the Cayley graph of $\Gamma$ with respect to some fixed generating set. Let $Y$ be a generating set of $L$ containing the generators of $\pi^{-1}(K\oplus B)$ and $\tilde{s}$ and set
$$\mu_n:=\max_{y\in Y} d_X(1,\lambda_n(y))=|\lambda_n(y)|_{\Gamma}.$$
Let $\tilde{b}_k\in \pi^{-1}(b_k)$. Then in particular $\mu_n\geq d_X(1,\lambda_n(\tilde{b}_k))$.
Denote by $d_T$ the metric on the real tree $T$ and by $d_n$ the scaled metric on $X$, i.e. $(X_n,d_n)=(X,\frac{d_X}{\mu_n})$. We assume without loss of generality that $(1)$ is an approximating sequence for $t_0$. 
Since $\tilde{K}\leq C\leq H$, $\tilde{K}$ fixes $t_0$. In particular $E$ fixes $t_0$. 
By definition of the test sequence $(\lambda_n)$ it holds for all $i\in\{1,\ldots,k-1\}$ and $\tilde{b_i}\in\pi^{-1}(b_i)$ that 
$$|\lambda_n(\tilde{b}_k)|_{\Gamma}>n^{k-i}\cdot|\lambda_n(\tilde{b}_i)|_{\Gamma}$$
and therefore:
\begin{align*}
d_T(t_0,\tilde{b}_it_0)&=\lim_{n\to\infty}d_n(1,\lambda_n(\tilde{b}_i).1)\\
&=\lim_{n\to\infty}\frac{1}{\mu_n}d_X(1,\lambda_n(\tilde{b}_i))\\
&\leq\lim_{n\to\infty}\frac{|\lambda_n(\tilde{b}_i)|_{\Gamma}}{|\lambda_n(\tilde{b}_k)|_{\Gamma}}\\
&<\lim_{n\to\infty}\frac{1}{n^{k-i}}=0.
\end{align*}
We have shown $(1)$.\\
Since $A$ does not act with a global fixed point on $T$, $\tilde{b}_k$ does not fix $t_0$ and it is easy to see that $\tilde{b}_k$ acts by translation on the infinite axis 
$$Y:= \bigcup_{i\in\Z}[\tilde{b}_k^it_0,\tilde{b}_k^{i+1}t_0].$$
By $(1)$ the group $\tilde{N}=\pi^{-1}(N)$ fixes $t_0$, hence it remains to show that $\tilde{N}$ also fixes $\tilde{b}_kt_0$. Since $(1)$ is an approximating sequence for $t_0$, $(\lambda_n(\tilde{b}_k))$ is an approximating sequence for $\tilde{b}_kt_0$ by Lemma 1.10 in\cite{rewe}. For all $i\in\{1,\ldots,k-1\}$, $\tilde{k}\in\tilde{K}$, there exists $e_{i},e_i^{(\tilde{k})}\in E$ such that $[\lambda_n(\tilde{b}_i),\lambda_n(\tilde{b}_k)]=\lambda_n(e_{i})$ and $[\lambda_n(\tilde{b}_i),\lambda_n(\tilde{k})]=\lambda_n(e_{i}^{(\tilde{k})})$ and therefore
\begin{align*}
d_T(\tilde{b}_i.\tilde{b}_kt_0,\tilde{b}_kt_0)&=\lim_{n\to\infty}\frac{d(\lambda_n(\tilde{b}_i)\lambda_n(\tilde{b}_k),\lambda_n(\tilde{b}_k))}{\mu_n}\\
&=\lim_{n\to\infty}\frac{|\lambda_n(\tilde{b}_k)^{-1}\lambda_n(\tilde{b}_i)\lambda_n(\tilde{b}_k)|_{\Gamma}}{\mu_n}\\
&\leq\lim_{n\to\infty}\frac{|\lambda_n(\tilde{b}_i)\lambda_n(e_{i})|_{\Gamma}}{|\lambda_n(\tilde{b}_k)|_{\Gamma}}\\
&<\lim_{n\to\infty}\frac{1}{n^{k-i}}=0.
\end{align*}
The same argument yields that $\tilde{K}$ fixes $\tilde{b}_kt_0$ and hence $(3)$ follows.\\
Assume that $\tilde{s}\neq 1$. It remains to show that $\tilde{s}$ exchanges the ends of $Y$. This follows from Lemma 4.9 in \cite{rewe}. There it is shown that $W=\pi(A)=W^+\rtimes \Z_2$, $\Z_2=\langle s\rangle$ and the action of $\Z_2$ on $W^+$ is given by $sws^{-1}=w^{-1}$ for all $w\in W^+$. In particular $\tilde{b}_k\tilde{s}\tilde{b}_k\tilde{s}^{-1}\in E$. Moreover by construction of the test sequence $\tilde{s}$ fixes $t_0$. The claim now follows from 
$$\tilde{s}\tilde{b}_kt_0=\tilde{b}^{-1}_k\tilde{s}t_0=\tilde{b}_k^{-1}t_0.$$
\end{proof}

Lemma \ref{lemma01} shows in particular that $A$ acts invariantly on $Y$ and $E$ lies in the kernel of this action. Hence the action factors through $\pi$ and induces an action of $$W=\pi(A)=(K\oplus B)\rtimes\Z_2$$ on $Y$.\\
By assumption $L$ is one-ended relative $G$. In particular $L$ is one-ended relative $\langle H,\tilde{N}\rangle$. It follows from Lemma \ref{lemma01} that $\langle\tilde{N},H\rangle$ acts elliptically on the real tree $T$ and hence we can apply the relative version of the Rips machine (Theorem \ref{relativerips}). This yields a graph of actions with corresponding graph of groups, say $\A$. As in the proof of Proposition \ref{testabelian} we conclude that $Y$ is either contained in an axial component or in the discrete part of $T$.

\begin{lemma}\label{lemmaaxial1}
If $Y$ is contained in an axial component of $T$, then $L=V\ast_PM$, where $M,P$ are finitely generated virtually abelian groups, where $H\leq V$, $\pi^{-1}(N)\subset P$ and $$b_k\in (M/E_M)\setminus (P/E_M)$$ (here $E_P,E_M$ denote the torsion subgroups of $P$ and $M$ respectively).
\end{lemma}

\begin{proof}
The proof is identical to the one of Lemma \ref{lemmaaxial}.
\end{proof}

\begin{lemma}\label{lemmadiscrete}
If $Y$ is contained in the discrete part of $T$, then $L=V\ast_PM$, where $M,P$ are finitely generated virtually abelian groups, where $H\leq V$, $\pi^{-1}(N)\subset P$ and $b_k\in (M/E_M)\setminus (P/E_M)$.
\end{lemma}

\begin{proof}
Since $E$ lies in the kernel of the action of $A$ on $Y$ we get an induced action of $W=(K\oplus B)\rtimes \Z_2$ on $Y$.
Let $e_1,\ldots,e_p$ be the edges contained in $e:=[t_0,\tilde{b}_kt_0]$. Let $i\in \{1,\ldots,m\}$. Clearly $\pi^{-1}(N)\leq \stab_L(e_i)=:A_{e_i}$ by Lemma \ref{lemma01}.
We claim that $A_{e_i}$ stabilizes pointwise the entire axis $Y=\Ax(\tilde{b}_k)$ of $\tilde{b}_k$ in $T$.\\
First note that since $[N,b_k]=1_W$ it follows that
$$N=b_kN b_k^{-1}\leq b_k\stab_W(e)b_k^{-1}=\stab_W(b_ke)$$
and therefore $\pi^{-1}(N)\leq \stab_L(e)\cap\stab_L(\tilde{b}_ke)$. Hence $\tilde{N}$ stabilizes pointwise the entire axis of $\tilde{b}_k$ in $T$. Now suppose that there exists a segment $I\subset Y$ such that $A_{e_i}\nleq \stab(I)$. But this implies that $Y$ is an unstable subtree of $T$. It follows from Theorem \ref{eigenschaften}(4) that $\stab(Y)$ is finite, a contradiction to the fact that $\pi^{-1}(N)\leq \stab(Y)$. Therefore $A_{e_i}$ stabilizes the entire axis of $\tilde{b}_k$ for all $i\in\{1,\ldots,p\}$.\\
Denote by $M$ the setwise stabilizer of $Y$. By Theorem \ref{eigenschaften}(4) $M$ is finitely generated virtually abelian and contains a finite-by-abelian subgroup of index at most $2$ which leaves $Y$ invariant and fixes its ends. As shown above there exists a finite-by-abelian group $P'$, containing $\tilde{N}$, such that $\stab(f)=P'$ for all edges $f$ contained in $Y$. Hence $L$ admits a decomposition as $L\ast_PM$, where $P$ is finitely generated virtually abelian and contains $P'$ as a subgroup of index at most $2$. In particular $P=\langle P',\tilde{s}\rangle$  by Lemma \ref{lemma01}(4). The remaining part of the claim follows from Lemma \ref{lemma01}.
\end{proof}

In the following we denote the maximal free abelian subgroup of a virtually abelian group $P$ by $Z_P$.
Let $\{m_1,\ldots,m_l\}$ be a generating set of $M$ and $\{v_1,\ldots,v_q\}$ a generating set for $V$. Then clearly $\{m_1,\ldots,m_l,v_1,\ldots,v_q\}$ is a generating set of $L$.
For every homomorphism $\vp:\tilde{G}\to \Gamma$ that factors through the $\Gamma$-limit map $\eta:\tilde{G}\to L$, we denote by $\bar{\vp}: L\to\Gamma$ the unique homomorphism such that $\vp=\bar{\vp}\circ\eta$.
Let $A_n$ be the following set:
\begin{align*}
A_n=\{ &\vp_n\in \Hom(L,\Gamma) \ |\  \vp_n|_{M}=\bar{\lambda}_n|_{M},\ \vp_n|_{G}=\bar{\lambda}_n|_{G},\\ &\vp_n(v_j)\neq 1 \text{ for all } j\in\{1,\ldots,r\}\}.
\end{align*}
Since $\bar{\lambda}_n\in A_n$ the set $C_n$ is non-empty. Let $U$ be the set of all converging sequences $(\vp_n)$, such that for all $n\in\N$, $\vp_n\in C_n$ and  
$$\sum_{i=1}^q|\vp_n(v_i)|_{\Gamma}$$
is minimal (with respect to the word metric on $\Gamma$). By the same arguments as in the proof of Lemma \ref{maximal1} and Lemma \ref{maximal} the collection of all sequences from $U$ factor through a finite collection of maximal limit groups. Let $(\vp_n)\in U$ be a sequence that converges into such a maximal limit group, say $N$.\\ 
In particular $(\vp_n|_V)$ converges into the action of a $\Gamma$-limit group $V_1$ on some real tree $T$ without a global fixpoint. 
Moreover $(\vp_n|_V)$ is the extension of $\lambda_n|_Y$ where $Y=H\ast_{C}A_1$ and $A_1\leq A$ is the subgroup of the original virtually abelian group $A$ such that $A_1=\langle \tilde{N},\tilde{s}\rangle$, i.e. $A_1^+=\tilde{N}$. In particular $\rk(Z_{A_1})=\rk(Z_A)-1$.\\
As in the proof of Theorem \ref{testamalgamated} (where we referred to the proof of Theorem \ref{maintheorem}) we can assume without loss of generality that $N$ is one-ended relative $G$ and moreover that $Y$ does not act elliptically on the limit tree $T$.
Hence we can apply the same procedure to analyze the action of $Y$ on $T$ as we did before for the action of $A$ on the corresponding real tree.\\
Therefore (in general) we end up with finitely many $\Gamma$-limit quotients of $L$ which admit a decomposition $\D$ relative $G$ along finite subgroups with one vertex with vertex group $U_1$, which contains $G$, while all other vertices are isomorphic to subgroups of $\Gamma$. Moreover $U_1=V_1\ast_{P_1}M_1$, where $H\leq V_1$, $P_1$ is virtually abelian and contains $C$ and $M_1$ is virtually abelian and contains $A$. In addition $b_{k-1},b_k\in (M_1/E_{M_1})\setminus (P_1/ E_{M_1})$ and $\tilde{K},\tilde{b}_1,\ldots,\tilde{b}_{k-2}\subset P_1$.\\

Hence after repeating these steps finitely many times we end up with finitely many $\Gamma$-limit quotients of $\tilde{G}$ which admit a decomposition $\D$ relative $G$ along finite subgroups with one vertex with vertex group $\hat{U}$, which contains $G$, while all other vertices are isomorphic to subgroups of $\Gamma$. Moreover $\hat{U}=\hat{V}\ast_{\hat{P}}\hat{M}$, where $H\leq \hat{V}$, $\hat{P}$ is virtually abelian and contains $C$ and $\hat{M}$ is virtually abelian and contains $A$. In addition $$B=\langle b_1,\ldots,b_k\rangle \subset (\hat{M}/E_{\hat{M}})\setminus (\hat{P}/E_{\hat{M}})$$ and $\tilde{K}\subset \hat{P}$.\\
To simplify notation we assume that $L$ is already of this form. We moreover assume without loss of generality that $L=\hat{U}=\hat{V}\ast_{\hat{P}}\hat{M}$, i.e. that the decomposition $\D$ of $L$ along finite subgroups is trivial.
Let $(\vp_n)\subset\Hom(L,\Gamma)$ be a stably injective sequence. After passing to a subsequence we can assume that $\vp_n|_{\hat{M}}\subset \Hom(\hat{M},Z)$ for some virtually cyclic group $Z\leq \Gamma$ and that moreover $\vp_n$ is injective on a virtually cyclic subgroup of $\hat{M}$, containing $\hat{E}$, and $\vp_n(v_j)\neq 1$ for all $j\in\{1,\ldots,r\}$ and $n\in\N$. In particular $\hat{M}$ is a $Z$-limit group.\\
Let us briefly recall the structure of $A$ and $\hat{M}$. $E\leq A$ is the torsion subgroup of $A$ and $\pi:A\to A/E$ denotes the canonical projection. Then $\pi(A)=(K\oplus B)\rtimes \Z_2$, where $\Z_2=\langle s\rangle$, $K=\pi(C^+)$ and $K,B$ are finitely generated free abelian. Let $\tilde{X}=\pi^{-1}(X)$ for all $X\leq \pi(A)$. Then $A=\langle \tilde{K},\tilde{B}, \tilde{s}\rangle$. As shown before $\hat{M}$ is finitely generated virtually abelian, $$A\leq \hat{M}, B\leq (\hat{M}/E_{\hat{M}})\setminus (\hat{P}/E_{\hat{M}})\text{ and }C\leq \hat{P},$$ where $E_{\hat{M}}$ is the torsion subgroup of $\hat{M}$. Denote by $\hat{\pi}:\hat{M}\to\hat{M}/\hat{E}$ the canonical projection.\\
Then there exists a decomposition of $\hat{\pi}(\hat{M}^+)$ as $\hat{\pi}(\hat{M}^+)=R\oplus S$ such that $\hat{\pi}(A)$ is a subgroup of finite index of $R$. In particular $\hat{M}=\langle \hat{\pi}^{-1}(R),\hat{\pi}^{-1}(S), \hat{s}\rangle$ for some element $\hat{s}$ such that either $\hat{\pi}(\hat{s})$ has order $2$ or $\hat{s}=1$.\\
Let $\Aut_{\hat{s}}(\hat{M})\leq \Aut(\hat{M})$ be the subgroup consisting of those automorphisms that restrict to the identity on $\langle \hat{\pi}^{-1}(S), \hat{s}\rangle$ and preserve $\hat{\pi}^{-1}(R)$. Any $\alpha\in \Aut_{\hat{s}}(\hat{M})$ restricts to an automorphism of $\hat{\pi}^{-1}(R)$ and therefore induces an automorphism of $R$. Denote the subgroup of $\Aut(R)$ induced in this fashion by $K_{\hat{s}}$. By Lemma 4.12 in \cite{rewe} $K_{\hat{s}}$ has finite index in $\Aut(R)$. Therefore
for every sequence of automorphisms $(\alpha_n)\subset\Aut(R)$ we can assume, after passing to a subsequence, that $(\alpha_n)\subset K_{\hat{s}}$.\\ 
Let $(\alpha_n)\subset \Aut(R)$. Then (after passing to a subsequence) there exists a lift $\hat{\alpha}_n\in \Aut_{\hat{s}}(\hat{M})$ of $\alpha_n$ for all $n\in\N$. Define $\Psi_n:=\vp_1\circ\hat{\alpha}_n$ for all $n\in\N$ and let $M':=\hat{M}/\underrightarrow{\ker} \Psi_n$ be the corresponding $\Gamma$-limit group. Since $M'$ is also a $Z$-limit group, it is in particular virtually abelian. We now choose the sequence $(\alpha_n)\subset \Aut(R)$ such that $(\Psi_n|_{\langle\pi^{-1}(R),\hat{s}\rangle})$ is stably injective. In particular it follows that $A\leq M'$ is a subgroup of finite index. Let $L':=\hat{V}\ast_{\hat{P}}M'$ and denote by $\eta:L\to L'$ the corresponding $\Gamma$-limit map. The claim follows.
\end{proof}

Now suppose that the virtually abelian flat comes from an isolated virtually abelian vertex group $A$, i.e. is of the form $G=H\ast_{E}A$ for some finite group $E$. 

\begin{satz}\label{testabelian3}
Suppose $G=H\ast_EA$ where $A$ is virtually abelian, $E$ is finite and there exists a test sequence $(\lambda_n^H)$ for $H$. Let $F$ be the torsion subgroup of $A^+$. Denote by $\pi:A\to A/F$ the canonical projection. Let $s\in A$ be an element such that $A=\langle A^+,s\rangle$ and let $\{z_1,\ldots,z_k\}$ be a basis of $\pi(A^+)\cong\Z^k$.
Assume that there exists a sequence of homomorphisms $(\lambda_n^A)\subset\Hom(A,\Gamma)$ such that $\lambda_n^H(E)=\lambda_n^A(E)$ for all $n\in\N$. Assume moreover that $\lambda_n^A$ is injective when restricted to the virtually cyclic subgroup $Z_1:=\langle s, \pi^{-1}(\langle z_1\rangle)\rangle$, $\lambda_n^A(A)=\lambda_n^A(Z_1)$ and that $C_{\Gamma}(\lambda_n^A(E))$ contains a non-abelian free subgroup which is quasi-convex in $\Gamma$. Then there exists a test sequence $(\lambda_n)$ for $G$.
\end{satz}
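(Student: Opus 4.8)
The plan is to build the test sequence for $G = H \ast_E A$ by the same recipe used in the proofs of Theorems~\ref{testamalgamated} and~\ref{testabelian2}: combine a conjugating word trick (to make the abelian flat dominate the growth of $H$) with the domination-by-powers construction internal to the abelian vertex group $A$. Concretely, first I would set $\lambda_n|_H = \lambda_n^H$. Since $C_{\Gamma}(\lambda_n^H(E)) = C_{\Gamma}(\lambda_n^A(E))$ contains a quasi-convex non-abelian free subgroup $F(a,b)$, I would pass to a subgroup $F(x,y)$ with $x=a^k$, $y=b^ka^k$ for $k$ large, so that axes of $a$ and of arbitrary conjugates of $b$ fellow-travel for at most $\tfrac{1}{1000}\min\{|x|_\Gamma,|y|_\Gamma\}$, exactly as in the proof of Theorem~\ref{testamalgamated}. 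Setting $w_n := xyxy^2x\cdots xy^{n\cdot L(n)}x$ with $L(n) = \max\{|\mu_n(g)|_\Gamma : g \in (H\cup A),\ |g|_G \le n\}$, and $\lambda_n|_A := c_{w_n}\circ(\lambda_n^A\circ\beta_n)$ where $\beta_n$ is a suitable automorphism of $A$, gives a well-defined homomorphism because $w_n \in C_{\Gamma}(\lambda_n^A(E))$; it satisfies the very small cancellation property $C'(1/n)$, and $\lambda_n|_A$ dominates the growth of $\lambda_n|_H$. The automorphism $\beta_n$ is chosen inside the finite-index subgroup of $\mathrm{Aut}(A)$ that lifts from $\mathrm{Aut}(\pi(A^+))$ (Lemma 4.12/4.13 in \cite{rewe}), so that on $\pi(A^+) \cong \Z^k$ it is the map $z_j \mapsto \sum_{i=1}^j n^{j-i} z_i$, forcing $|\lambda_n(\tilde z_j)|_\Gamma > n|\lambda_n(\tilde z_{j-1})|_\Gamma$ and $|\lambda_n(\tilde z_1)|_\Gamma > n|\lambda_n(\tilde s^{\pm}\text{-stuff})|_\Gamma$ while fixing $Z_1$-type data; the hypothesis $\lambda_n^A(A)=\lambda_n^A(Z_1)$ guarantees that the image stays inside one virtually cyclic subgroup, so each $\lambda_n^A$ really is a $Z$-limit-style homomorphism.

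Next I would verify the test sequence property. Let $\tilde G \supseteq G$, let $\lambda_n:\tilde G \to \Gamma$ be extensions that are short relative to $G$, let $T$ be the limit tree, $L = \tilde G/\underrightarrow{\ker}\lambda_n$, and assume $L$ is one-ended relative $G$ with $G$ not elliptic. The structure of the proof then parallels Theorem~\ref{testamalgamated}: because $w_n$ dominates the growth of everything in $H$ and of $\lambda_n^A$, the subgroup $H$ fixes the base point $t_0$, and there is a minimal $G$-invariant subtree $T_{\min}$. Using the $C'(1/n)$ small-cancellation property of $(w_n)$ one shows (as in Lemma~\ref{claim1} and Lemma~\ref{intersection}) that arcs of $T_{\min}$ have finite stabilizer, that distinct $L$-translates of $T_{\min}$ meet in at most a point, and that the setwise stabilizer of $T_{\min}$ is $\langle H,E_L\rangle \ast_{E_L} \langle A', E_L\rangle$-type, where $E_L$ is finite containing $E$ — here I have to be careful because $A$ is virtually abelian rather than a subgroup of $\Gamma$, so the analysis of how $A$ acts on $T_{\min}$ should be handled the way Lemmas~\ref{lemma01}, \ref{lemmaaxial1}, \ref{lemmadiscrete} treat the abelian vertex: the sub-basis $\tilde z_1,\dots,\tilde z_{k-1}$ and the torsion $F$ act elliptically on the axis of $\tilde z_k$, so $T_{\min}$ decomposes with virtually abelian edge/vertex pieces. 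Then I build the transverse covering $\mathcal Z$ of $T$ by $L$-translates of $T_{\min}$ and closures of components of $T \setminus L\cdot T_{\min}$, invoke Theorem~\ref{relativegeometric} plus Theorem 2.5 of \cite{levittpaulin} (a geometric action is not a non-trivial strong limit) to show every arc meets only finitely many pieces, hence $\mathcal Z$ is a genuine transverse covering, and apply Lemma 1.5 of \cite{guirardel} to get a graph-of-actions decomposition of $L$ with skeleton $S$.

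Finally, from $S$ one reads off a splitting $L = V \ast_{E_L} P$ (or $V\ast_{E_L}$) with $H \le V$, $E_L$ finite containing $E$, and $A \le P$ up to finite index; then one runs the "choose the sequence minimizing the length on the $P$-generators, pass to finitely many maximal $\Gamma$-limit quotients, apply the relative Rips machine and the shortening argument (Corollary~\ref{shortening})" loop exactly as in the closing pages of the proof of Theorem~\ref{testamalgamated}, using the descending chain condition (Proposition~\ref{descendingchain}) to terminate, to conclude that on the finitely many resulting $\Gamma$-limit quotients $N_i$ of $M_i$ there is a decomposition $\D$ relative $T_i$ along finite subgroups with one distinguished vertex group $U = V \ast_E P$, $P$ isomorphic to a subgroup of $\Gamma$ containing $A$-data, all other vertex groups subgroups of $\Gamma$, and $T_{i-1}\le V$ — which is precisely clause~(3) of Definition~\ref{testseq}. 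The main obstacle I anticipate is the combinatorial bookkeeping in the middle step: showing that $A$ (being virtually abelian, possibly with the order-two element $s$ swapping the ends of the dominant axis, and with torsion $F\le A^+$) acts on $T_{\min}$ so that the dominant generator $\tilde z_k$ is hyperbolic while everything below it in the $n$-power hierarchy is elliptic and fixes the whole axis, so that the setwise stabilizer is genuinely virtually abelian of the right rank. This requires combining the growth estimates from the chosen $\beta_n$ with the finiteness of unstable-arc stabilizers (Theorem~\ref{eigenschaften}(2),(4)) and the $N(\Gamma)$-CSA property, essentially transplanting Lemmas~\ref{lemma01}--\ref{lemmadiscrete} into the present HNN-along-finite-group-with-abelian-vertex setting; once that is in place, the transverse-covering and shortening-argument machinery goes through verbatim.
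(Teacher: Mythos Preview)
Your construction of the sequence $\lambda_n$ (the words $w_n$, the automorphisms $\beta_n$ on $\pi(A^+)$ lifted via \cite{rewe}, and $\lambda_n|_A = c_{w_n}\circ\lambda_n^A\circ\beta_n$) is exactly the paper's construction. The divergence is in the verification, and there you have both a structural difference and a genuine error.

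The error is in the target: you aim for clause~(3) of Definition~\ref{testseq} and conclude that the distinguished vertex group is $V\ast_E P$ with ``$P$ isomorphic to a subgroup of $\Gamma$ containing $A$-data''. This cannot be right. The group $A$ is virtually $\Z^k$ with $k$ arbitrary, and a hyperbolic group contains no virtually $\Z^k$ subgroup for $k\ge 2$; so $A$ cannot sit inside any subgroup of $\Gamma$. The statement belongs to clause~(2) (virtually abelian flat with $C=E$ finite), and the correct output is $U = V\ast_{E'}A_1$ with $E'$ finite containing $E$ and $A_1$ \emph{virtually abelian} containing $A$ as a finite-index subgroup. Your final shortening loop, copied from the end of Theorem~\ref{testamalgamated}, would force $P$ to be a subgroup of $\Gamma$, which kills the higher-rank abelian part and is simply false here.

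Structurally, the paper does not attempt a single transverse-covering analysis of $T_{\min}$ mixing the small-cancellation segments coming from $w_n$ with the abelian axis of $\tilde z_k$. Instead it proceeds in two layers. First it runs the argument of Theorem~\ref{testabelian2} verbatim: the growth hierarchy $|\lambda_n(\tilde z_k)|\gg\cdots\gg|\lambda_n(\tilde z_2)|\gg|\lambda_n(\tilde z_1)|$ (which dominates $|w_n|$) lets one peel off $z_k,\dots,z_2$ via the axial/discrete dichotomy of Lemmas~\ref{lemmaaxial1}--\ref{lemmadiscrete}, yielding finitely many quotients whose distinguished vertex group splits as $V\ast_P M$ with $z_1\in P^+/E_P$ and $z_2,\dots,z_k\in (M^+/E_M)\setminus(P^+/E_P)$. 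Only then, on $V$, does one look at the remaining piece $X = H\ast_E Z_1$: since $Z_1$ is virtually cyclic and $\lambda_n|_{Z_1}$ is an embedding conjugated by the $C'(1/n)$-word $w_n$, this is \emph{exactly} the situation of Theorem~\ref{testamalgamated}, whose argument gives $V_1 = R\ast_{E'}C_1$ with $C_1$ virtually cyclic containing $Z_1$. Reassembling via the end of Theorem~\ref{testabelian2} produces the required $V_1\ast_{E'}A_1$. Your one-shot analysis of $T_{\min}$ would have to disentangle two different length scales (the $w_n$-scale and the much larger $\beta_n(\tilde z_k)$-scale) in a single tree, and your sketch does not do this; the paper's layered reduction sidesteps the issue entirely.
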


\begin{proof}
Denote by $|\cdot |_{\Gamma}$ and $|\cdot |_G$ the word metric on $\Gamma$ and $G$ respectively. For all $n\in\N$ let $B_H(n)=\{h\in H\ |\ |h|_G\leq n\}$ and 
$$L(n)=\max\{|\lambda^H_n(h)|_{\Gamma}\ |\ h\in B_H(n)\}.$$
Denote by $C_n:=C_{\Gamma}(\lambda_n^A(E))$ the centralizer of $\lambda_n^A(E)$ in $\Gamma$. Let $F(a,b)$ be the quasi-convex non-abelian free subgroup of $C_n$ which exists by assumption. As in the proof of Theorem \ref{testamalgamated} let $F(x,y)\leq F(a,b)$ be a non-abelian free subgroup of $F(a,b)$ such that any segment lying in the $10\delta$-neighborhood of both axis $\Ax(a)$ and $\Ax(cbc^{-1})$ in $X$ of $a$ and any conjugate of $b$ by an element $c\in\Gamma$ has length at most $$\frac{1}{1000}\min\{|x|_{\Gamma},|y|_{\Gamma}\}.$$ For all $n\geq 1$ we define
$$w_n:=xyxy^2xy^3x\cdots xy^{n\cdot L(n)}x.$$
Let $W:=\pi(A)=A/E$ and set $W^+:=\pi(A^+)$.
We define the following sequence of automorphisms of $W^+$:\\
For $n\in\N$ let $\alpha_n:W^+\to W^+$ be defined by:
$$\alpha_n(z_j)=\sum_{i=1}^jn^{j-i}z_i\quad \text{ for all } j\in \{1,\ldots,k\}.$$ 
It follows from Corollary 4.13 in \cite{rewe} that after passing to a subsequence we can lift $\alpha_n$ to an automorphism (still denoted) $\alpha_n\in \Aut(A)$ for all $n\in\N$.\\
We then define our test sequence $\lambda_n: G\to \Gamma$ on $A$ as $\lambda_n|_A=c_{w_n}\circ\lambda^A_n\circ\alpha_n$ and on $H$ as $\lambda_n|_H=\lambda_n^H$ for all $n\in\N$.\\
By further modifying the automorphisms $(\alpha_n)$ (if necessary) we can assume that $\lambda_n|_A$ dominates the growth of $\lambda_n^H$.
It remains to show that $\lambda_n$ is indeed a test sequence.\\

So let $\tilde{G}$ be a group containing $G$ and in abuse of notation let $\lambda_n: \tilde{G}\to\Gamma$ be an extension of $\lambda_n$ which is short relative $G$ for all $n\in\N$. Let $(T,t_0)$ be the real tree into which (a convergent subsequence of) $(\lambda_n)$ converges and let $L:=\tilde{G}/\underrightarrow{\ker} \lambda_n$. Assume that $L$ is one-ended relative $G$ and moreover that $G$ does not act elliptically on $T$.\\
After applying the same same procedure as we did in the proof of Theorem \ref{testabelian2} we end up with finitely many $\Gamma$-limit quotients of $\tilde{G}$ which admit a decomposition $\D$ relative $G$ along finite subgroups with one vertex with vertex group $Q$, which contains $G$, while all other vertices are isomorphic to subgroups of $\Gamma$. Moreover $Q=V\ast_PM$, where $H\leq V$, $P$ is virtually abelian and contains $E$ and $M$ is virtually abelian and contains $A$. In addition $z_1\leq P^+/E_P$ and $\{z_2,\ldots,z_k\}\leq (M^+/E_M)\setminus (P^+/E_P)$, where $E_M$, $E_P$ denote the respective torsion subgroups of $M$ and $P$.\\
To simplify notation we assume without loss of generality that $L=V\ast_PM$ is already of this type and moreover that $L$ is one-ended relative $G$, i.e. $\D$ contains only a single vertex. Let $\{u_1,\ldots,u_l\}$ be a a generating set for $V$.
For every homomorphism $\vp:\tilde{G}\to \Gamma$ that factors through the $\Gamma$-limit map $\eta:\tilde{G}\to L$, we denote by $\bar{\vp}: L\to\Gamma$ the unique homomorphism such that $\vp=\bar{\vp}\circ\eta$.
Let $A_n$ be the following set:
\begin{align*}
A_n=\{&\vp_n\in \Hom(L,\Gamma) \ |\  \vp_n|_{M}=\bar{\lambda}_n|_{M},\ \vp_n|_{G}=\bar{\lambda}_n|_{G},\\
&\vp_n(v_j)\neq 1 \text{ for all } j\in\{1,\ldots,r\}\}.
\end{align*}
Since $\bar{\lambda}_n\in A_n$ the set $A_n$ is non-empty. Let $U$ be the set of all converging sequences $(\vp_n)$, such that for all $n\in\N$, $\vp_n\in A_n$ and  
$$\sum_{i=1}^l|\vp_n(u_i)|_{\Gamma}$$
is minimal (with respect to the word metric on $\Gamma$). By the same arguments as in the proof of Lemma \ref{maximal1} and Lemma \ref{maximal} the collection of all sequences from $U$ factor through a finite collection of maximal limit groups. Let $(\vp_n)\in U$ be a sequence that converges into such a maximal $\Gamma$-limit group, say $N$.\\ 
In particular $(\vp_n|_V)$ converges into the action of a $\Gamma$-limit group $V_1$ on some real tree $T$ without a global fixpoint. Recall that $Z_1=\langle s, \pi^{-1}(\langle z_1\rangle)\rangle$. As in the proof of Theorem \ref{testamalgamated} (where we referred to the proof of Theorem \ref{maintheorem}) we can assume without loss of generality that $N$ is one-ended relative $G$ and moreover that $X:=H\ast_EZ_1$ does not act elliptically on the limit tree $T$.\\
But now we are in the case of Theorem \ref{testamalgamated}. $\lambda_n|_{Z_1}$ is by assumption an embedding postcomposed by conjugation with an element satisfying a $C'(1/n)$ small-cancellation property, while $H$ acts elliptically on $T$. Hence by the same arguments as in the proof of Theorem \ref{testamalgamated} it follows that (finitely many quotients of) $V_1$ (still denoted $V_1$) have the structure $V_1=R\ast_{E'}C_1$ where $E'$ is a finite group containing $E$, and $C_1$ is virtually cyclic and contains $Z_1$ as a subgroup of finite index.\\
Applying the same procedure as in the end of the proof of Theorem \ref{testabelian2} yields (finitely many) $\Gamma$-limit quotients $L_1,\ldots,L_m$ of $\tilde{G}$ such that every $L_i$ has a decomposition $\D$ along finite subgroups with one distinguished vertex group $M$ and all other vertex group are isomorphic to subgroups of $\Gamma$. Moreover $M=V_1\ast_{E'} A_1$, where $H\leq V_1$, $E'$ is a finite group containing $E$, and $A_1$ is virtually abelian and contains $A$ as a subgroup of finite index. This completes the proof of the theorem.
\end{proof}

\subsection{Test sequences for Completions of Resolutions}
\begin{satz}\label{testsequence}
Let $\Comp(L)$ be the completion of a good well-structured resolution with completed resolution $$\Comp(L)=Comp(L)_0\xrightarrow{\eta_1}\Comp(L)_1\xrightarrow{\eta_2}\ldots\xrightarrow{\eta_l}\Comp(L)_l.$$ Then there exists a test sequence for $\Comp(L)$.
\end{satz}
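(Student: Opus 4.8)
The plan is to build the test sequence for $\Comp(L)$ by induction on the height of the $\Gamma$-limit tower, working from the bottom ($\Gamma$ itself) up to the top. The base case is $\Comp(L)_l$, which by the construction in Theorem \ref{completion} is just a Dunwoody decomposition all of whose vertex groups are isomorphic to subgroups of $\Gamma$, with $\Gamma$ itself occurring as one vertex group; here a ``test sequence'' is essentially trivial to produce (take a stably injective sequence that on each vertex group isomorphic to a subgroup of $\Gamma$ is the identity, and arrange the growth conditions by conjugating by suitable small-cancellation elements as in the proofs of Theorems \ref{testamalgamated} and \ref{testHNN}). The inductive step is then a bookkeeping exercise: the completed resolution is built level by level, and passing from $\Comp(L)_{i+1}$ to $\Comp(L)_i$ amounts to a finite sequence of the four elementary tower moves --- adding an orbifold flat, adding a virtually abelian flat (in its three incarnations: between two rigid vertices, from a virtually abelian vertex group, from an isolated virtually abelian vertex group), performing an amalgamated product along a finite group with a subgroup of $\Gamma$, and performing an HNN-extension along a finite group. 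For each of these moves we have already proved, in Theorems \ref{orbifoldflat}, \ref{testamalgamated}, \ref{testHNN}, \ref{testabelian}, \ref{testabelian2} and \ref{testabelian3}, that a test sequence for the lower group extends to a test sequence for the group obtained after the move.

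\textbf{Key steps.} First I would set up the induction hypothesis precisely: for each $i$, a test sequence $(\lambda_n^{(i)})$ for $\Comp(L)_i$ in the sense of Definition \ref{testseq}, together with the extra hypotheses required to feed into the next move (for the orbifold case, the auxiliary homomorphisms $\Theta_n$ on the QH vertex group with non-virtually-abelian image agreeing with $\lambda_n^H$ on adjacent edge groups; for the flat and extension cases, the embeddings $\tau_n$ of the relevant piece together with the quasi-convex non-abelian free subgroup in the centralizer of the image of the finite edge group). The second step is to verify that these auxiliary data are available at each level: this is exactly where the hypothesis that the well-structured resolution is \emph{good} enters --- goodness guarantees that for a homomorphism factoring through the resolution, the centralizer in $\Gamma$ of the image of every edge group of the terminal Dunwoody decomposition contains a quasi-convex non-abelian free group, and this property propagates up the tower because edge groups along the completion are controlled by the edge groups of the $\D^{(i)}$ (and, for QH vertex groups, by Lemma \ref{scc} and Proposition \ref{curves}, whose conclusions supply the required non-virtually-abelian images $\Theta_n$). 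The third step is simply to compose: within a single level of the completed resolution, chain together the applications of Theorems \ref{orbifoldflat}--\ref{testabelian3} following the order $e_1,\dots,e_m$ in which the construction of Theorem \ref{completion} added edges, at each stage invoking the appropriate one of the six results, and then pass to the next level. After $l$ levels one obtains a test sequence for $\Comp(L)_0 = \Comp(L)$.

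\textbf{Main obstacle.} The genuinely delicate point is not any single move but the \emph{compatibility of the auxiliary hypotheses between consecutive moves}: when we add, say, a virtually abelian flat on top of a group $H$ for which we already have a test sequence, Theorem \ref{testabelian2} delivers a test sequence for the enlarged group, but to apply the \emph{next} theorem we need to know that this new test sequence still has a quasi-convex non-abelian free subgroup in the centralizer of the relevant (possibly new) finite edge group, and that on QH pieces the restriction still has non-virtually-abelian image compatible with the edge data. Establishing that these properties survive each move --- in particular that passing to a test sequence does not destroy the quasi-convexity of centralizers of finite subgroups, which is where Lemma \ref{bla} and Lemma \ref{potenz} together with the $C'(1/n)$ small-cancellation choices in the proofs of Theorems \ref{testamalgamated} and \ref{testHNN} do the work --- is the technical heart of the argument. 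Once that propagation is in hand, the theorem follows by a straightforward finite induction, since a well-structured resolution has finite length and each level is built from finitely many elementary moves. I would also remark that the hypothesis ``good'' is used only to start and maintain the quasi-convex-free-centralizer condition; if one could dispense with it the same induction would go through verbatim.
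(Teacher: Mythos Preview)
Your proposal is correct and follows essentially the same approach as the paper: induct from bottom to top along the completed resolution, handle the base level $\Comp(L)_l$ using the locally injective homomorphism supplied by the ``good'' hypothesis together with Theorems \ref{testamalgamated} and \ref{testHNN}, and at each higher level extend the test sequence by repeatedly invoking Theorems \ref{orbifoldflat}, \ref{testamalgamated}, \ref{testHNN}, \ref{testabelian2}, \ref{testabelian3}.

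Two small points of comparison. First, the paper organizes the inductive step not by the edge order $e_1,\dots,e_m$ from the completion construction, but by \emph{type}: it first restricts the given test sequence to the subgraph $\A_1$ of $\B_{i+1}$ that survives in $\B_i$, then extends over the isolated QH and isolated virtually abelian vertices (yielding $\A_2$), then over the virtually abelian flats (yielding $\A_3$), and finally over the non-isolated QH vertex groups. This grouping is convenient because it matches how the completion was assembled and makes the source of the auxiliary data transparent. Second, your identification of where the auxiliary homomorphisms $\Theta_n$ come from is slightly off: Lemma \ref{scc} and Proposition \ref{curves} are internal to the proof of Theorem \ref{orbifoldflat}, not the source of $\Theta_n$. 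The paper supplies $\Theta_n$ externally as $\eta_{i+1}\circ\lambda_n$ restricted to the QH vertex group --- the non-virtually-abelian image is guaranteed by strictness of the well-structured resolution (Definition \ref{def well-structured}(5)), and compatibility on edge groups holds because rigid and edge groups are mapped injectively. With that correction your ``compatibility of auxiliary hypotheses'' concern is resolved directly by the structure of a well-structured resolution, and the paper's proof is correspondingly brief.
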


\begin{proof}
We construct a test sequence for $\Comp(L)$ iteratively from bottom to top along the completed resolution by essentially combining Theorem \ref{orbifoldflat}, Theorem \ref{testamalgamated}, Theorem \ref{testHNN}, Theorem \ref{testabelian2} and Theorem \ref{testabelian3}.\\
Denote by $\B_i$ the completed decomposition of $\Comp(L)_i$ for all $i\in\{1,\ldots,l\}$. Then $\B_l$ is a Dunwoody decomposition of $\Comp(L)_l$ with one vertex group, say $H$, isomorphic to $\Gamma$ and all other vertex groups isomorphic to subgroups of $\Gamma$. By Theorem \ref{MRdiagram} there exists a restricted locally injective homomorphism $\vp:\Comp(L)_l\to\Gamma$ such that the centralizer in $\Gamma$ of the image of each edge group of $\B_l$ contains a quasi-convex non-abelian free group. We then construct the test sequence $\lambda_n$ of $\Comp(L)_l$ by setting 
$\lambda_n|_H=\id$ and applying Theorem \ref{testamalgamated} and Theorem \ref{testHNN} finitely many times.\\
So now let $i<l$ and assume that we already have constructed a test sequence $$(\lambda_n)\subset\Hom(\Comp(L)_{i+1},\Gamma).$$ Let us first recall how we have constructed $\Comp(L)_i$ and its completed decomposition $\B_i$ out of $\Comp(L)_{i+1}$ and the corresponding completed decomposition $\B_{i+1}$.\\
First we have replaced subgraphs of groups whose vertex groups were isomorphic to subgroups of $\Gamma$ by either isolated QH groups or by isolated virtually abelian groups. Afterwards we have added several virtually abelian flats to this graph of groups. Then we have added several (non-isolated) QH vertex groups, all amalgamated along their extended boundary subgroups and finally we have removed some subgraphs which were contained in the image of these QH subgroups under $\eta_{i+1}$. The resulting graph of groups $\B_i$ is the completed decomposition of $\Comp(L)_i$.\\
So let $\A_1$ be the (possibly disconnected) subgraph of $\B_{i+1}$ which "survives" in $\B_i$, that is all vertices and edges which neither get replaced by isolated QH or virtually abelian groups nor which get removed after the addition of QH vertex groups. We then define our test sequence $\vp_n$ on $\A_1$ to be equal to $\lambda_n$. Let $\A_2$ be the (still possibly disconnected) subgraph of groups of $\B_i$ which we get out of $\A_1$ after having added all the isolated QH and virtually abelian vertex groups. We define the test sequence $\vp_n$ on $\pi_1(\A_2)$ by applying finitely many times Theorem \ref{testabelian3} and Theorem \ref{orbifoldflat}.\\
Now let $\A_3$ be the graph of groups which we get after having added all the virtually abelian flats to the graph of groups $\A_3$. Note that $\A_3$ can still be disconnected. Applying Theorem  \ref{testabelian2} finitely many times yields a test sequence (still denoted) $\vp_n$ for $\pi_1(\A_3)$.  The only thing left is to extend the test sequence onto the (non-isolated) QH vertex groups added in the construction of the test sequence. This is done by applying Theorem \ref{orbifoldflat}. Note that $\eta_{i+1}\circ\lambda_n$ plays the role of the homomorphism $\Theta_n$ in the assumption of Theorem \ref{orbifoldflat}. Hence we end up with a test sequence $\vp_n: \Comp(L)_{i}\to\Gamma$. 
\end{proof}

The following Lemma follows from the construction of the test sequences.

\begin{lemma}\label{opq}
Let $\Comp(\Res(L))$ be a completed resolution of a completed $\Gamma$-limit group $\Comp(L)$ and $s,t\in\Z$. Let moreover $A$ be a virtually abelian vertex group in one of the completed decompositions appearing along $\Comp(\Res(L))$. Denote by $Z_A$ the maximal free abelian subgroup of $A$. Let $a$ be a basis element of $Z_A$. Then there exists a test sequence $(\lambda_n)$ for $\Comp(L)$, integers $m_n\in\Z$ and an element $g\in\Gamma$ which has infinite order and no non-trivial roots, such that $\lambda_n(a)=g^{m_ns+t}$.
\end{lemma}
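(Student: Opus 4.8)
The plan is to unwind the iterative construction of the test sequence from Theorem \ref{testsequence} and track what happens to a chosen basis element $a$ of $Z_A$ at the level where the virtually abelian vertex group $A$ is created. Recall that $A$ arises in the construction of the completion either from a virtually abelian flat (cases (1)(a), (1)(c) and (2) in the proof of Theorem \ref{completion}, handled by Theorem \ref{testabelian}, Theorem \ref{testabelian2}, Theorem \ref{testabelian3}) or as an isolated virtually abelian vertex group. In each of these constructions the test sequence on (a basis of) the free abelian part $Z_A$ is defined by postcomposing a fixed homomorphism $\lambda^A$ (or $\lambda^H_n\circ\eta$) with a sequence of automorphisms $\alpha_n$ of the form prescribed in the respective theorems. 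The key point is that in every one of these cases the values $\lambda_n(\tilde b_j)$, $\lambda_n(a_i)$, or $\lambda_n(a)$ are powers of a single element $w$ (resp.\ $g$) of $\Gamma$ of infinite order; in Theorem \ref{testabelian} this $w$ is chosen in the center of $\lambda^H_n(C)$ with no roots, in Theorem \ref{testabelian2} it comes from the image of the virtually cyclic group containing $Z_A$, and similarly in Theorem \ref{testabelian3}. So I would first isolate the level $i$ and the flat (or isolated vertex) that produces $A$, and record the explicit formula $\lambda_n(a)=w^{k_n}$ valid for that particular test sequence.

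The next step is to observe that we have genuine freedom in choosing the exponents $k_n$. In the proofs of Theorems \ref{testabelian}, \ref{testabelian2}, \ref{testabelian3} the only constraints on the integers defining $\alpha_n$ (the $k^i_n$, or the entries of the matrices $\alpha_n$) are growth/domination inequalities of the form ``$|\lambda_n(\cdot)|_\Gamma$ grows faster than $n\cdot(\text{previous data})$'' and linear-independence-over-$\mathbb Q$ conditions; these are satisfied by all sufficiently large choices, and are preserved under adding a fixed affine shift or passing to an arithmetic subprogression of exponents. Concretely, given $s,t\in\mathbb Z$ (with $s\neq 0$, the case $s=0$ being trivial since then $g^{m_ns+t}=g^t$ is constant and we may take any $g$ of infinite order with no roots and incorporate the constant $g^t$ into $\lambda^A$, or simply note $a$ can be sent to a fixed power) I would set $k_n:=m_n s+t$ where $m_n\in\mathbb Z$ is chosen large enough that $k_n$ satisfies all the inequalities needed in the relevant Theorem's proof; since those inequalities only require $k_n$ to be eventually large, and $\{m_n s+t: m_n\in\mathbb N\}$ is cofinal in $\mathbb N$, this is always possible. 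Then redefine $\alpha_n$ (hence $\lambda_n|_A$) using these exponents, leaving the rest of the iterative construction of Theorem \ref{testsequence} unchanged above and below level $i$; the verification that the resulting sequence is still a test sequence is word-for-word the proof of Theorem \ref{testsequence}, since nothing there used the precise exponents, only the domination and independence properties.

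Finally, the element $g$: take $g:=w$, the element of $\Gamma$ used to define $\lambda_n|_A$ in the construction of the flat producing $A$. In Theorem \ref{testabelian} $w$ was explicitly chosen to have infinite order; after possibly replacing $w$ by a generator of the maximal cyclic subgroup of $\Gamma$ containing $\langle w\rangle$ (a hyperbolic group has a uniform bound on the index of a maximal cyclic subgroup in its commensurator, and we may pass from $w$ to a root generating the maximal cyclic subgroup), we may assume $g$ has no non-trivial roots; the exponents $k_n$ change only by a bounded multiplicative factor, which is again absorbed into the choice of $m_n$. In the Theorem \ref{testabelian2}/\ref{testabelian3} cases, the same argument applies with $w$ replaced by a root-free generator of the relevant maximal virtually cyclic subgroup's infinite cyclic part. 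Thus $\lambda_n(a)=g^{m_n s+t}$ as required. The main obstacle I anticipate is purely bookkeeping: checking that, after substituting the arithmetic-progression exponents, \emph{all} the inequalities in whichever of Theorems \ref{testabelian}, \ref{testabelian2}, \ref{testabelian3} is invoked still hold simultaneously with the choices made for the \emph{other} basis elements of $Z_A$ and for the data at other levels of the resolution — i.e.\ that one can choose the $m_n$ for this one basis element $a$ last, after all other constants have been fixed. This is fine because the construction is genuinely iterative and each exponent is chosen ``large enough'' depending only on previously fixed data, so $a$'s exponent can be the final one chosen at level $i$.
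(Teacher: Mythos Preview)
Your proposal is correct and is essentially an expansion of the paper's own one-line proof, which simply reads ``The following Lemma follows from the construction of the test sequences.'' Your observation that the only constraints on the exponents in Theorems \ref{testabelian}, \ref{testabelian2}, \ref{testabelian3} are growth/domination inequalities satisfied by any cofinal set of integers, and hence by any arithmetic progression $\{m_n s + t\}$ with $m_n$ chosen large, is exactly the point.
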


\section{The Generalized Merzlyakov's Theorem over hyperbolic groups}\label{9}
Having constructed test sequences for completions of well-structured resolutions we are finally ready to prove a generalization of Theorem 1.18 in \cite{sela}, respectively Theorem 2.3 in \cite{selahyp}, that is a Generalized Merzlyakov's Theorem over hyperbolic groups with torsion.

\begin{satz}[Generalized Merzlyakov's Theorem]\label{maintheorem}
Let $\Gamma=\langle a\rangle$ be a hyperbolic group, and let 
$R:=\langle y,a\ |\ V(y,a)\rangle$ be a restricted $\Gamma$-limit group. Denote by $V_y$ the basic definable set corresponding to $R$. Let $\Res(R)$ be a good well-structured resolution of $R$, and let 
$\Comp(\Res(R))$ be the completion of the resolution $\Res(R)$ with corresponding completed $\Gamma$-limit group $\Comp(R)$.
Let $$\Sigma(x,y,a):\ w_1(x,y,a)=1,\ldots, w_s(x,y,a)=1$$ be a system of equations over $\Gamma$ and let $v_1(x,y,a),\ldots,v_r(x,y,a)$ be a collection of words in the alphabet $\{x,y,a\}$. Let $$G_{\Sigma}:=\langle x,y,a\ |\ V(y,a), w_1(x,y,a),\ldots,w_s(x,y,a)\rangle.$$
Suppose that 
$$\Gamma \models \forall y\in V_y\ \exists x:\ w_1(x,y,a)=1,\ldots, w_s(x,y,a)\wedge v_1(x,y,a)\neq 1,\ldots, v_r(x,y,a)\neq 1.$$
Then there exist closures $$\Cl(\Res(R))_1,\ldots,\Cl(\Res(R))_q$$
of $\Res(R)$ and for each index $1\leq i\leq q$, there exists a generalized closure $\GCl(\Res(R))_i$ and a retraction 
$$\pi_i: G_{\Sigma}\ast_{R}\GCl(\Comp(R))_i\to \GCl(\Comp(R))_i.$$
In addition for each $1\leq i\leq q$ there exists a homomorphism $$\Psi_i: \GCl(\Comp(R))_i\to \Gamma$$ that factors through the resolution $\GCl(\Res(R))_i$ such that $\Psi_i\circ\pi_i(v_j)\neq 1$ for all $j\in\{1,\ldots,r\}$. $\pi_i(x)$ is called a formal solution.\\
If $\Gamma$ is torsion-free then in addition $\Cl(\Res(R))_1,\ldots,\Cl(\Res(R))_q$ form a covering closure.
\end{satz}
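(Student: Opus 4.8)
\textbf{Proof strategy for Theorem \ref{maintheorem}.}
The plan is to mimic Sela's approach in \cite{sela} (and its torsion-free hyperbolic analogue in \cite{selahyp}), but replacing every appeal to ``the formal solution pulls back the action'' with the machinery of test sequences developed in Chapter \ref{8}. First I would fix a test sequence $(\lambda_n)$ for $\Comp(R)$ as provided by Theorem \ref{testsequence}; since $\Res(R)$ is good, this is legitimate. Each $\lambda_n$ restricts to a homomorphism $\lambda_n|_R\in\Hom(R,\Gamma)$ which, by Lemma \ref{completion property}(3), factors through $\Res(R)$ and hence lies in $V_y$. By the hypothesis of the theorem, for each $n$ there is a tuple $x_n\in\Gamma$ with $w_j(x_n,\lambda_n(y),a)=1$ for all $j$ and $v_k(x_n,\lambda_n(y),a)\neq 1$ for all $k$. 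This produces a sequence of homomorphisms $\mu_n:G_\Sigma\ast_R\Comp(R)\to\Gamma$ agreeing with $\lambda_n$ on $\Comp(R)$ and sending $x$ to $x_n$. After pre-composing with modular automorphisms to make the $\mu_n$ short relative to $\Comp(R)$ and passing to a convergent subsequence, I would form the limit $\Gamma$-limit group $L:=(G_\Sigma\ast_R\Comp(R))/\underrightarrow{\ker}\mu_n$ together with its action on a real tree $Y$.

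The key structural claim is that $\Comp(R)$ embeds into $L$ and that, because $(\lambda_n)$ is a \emph{test sequence}, the top-level tower decomposition of $\Comp(R)$ is ``visible'' in $L$ — this is precisely what Definition \ref{testseq} is engineered to guarantee. Working down the tower level by level and invoking the four clauses of Definition \ref{testseq} (for orbifold flats, virtually abelian flats, and the two finite extension cases), I would show that $L$ is, up to passing to finitely many $\Gamma$-limit quotients and adjusting by elements outside $\Comp(R)$, a generalized closure $\GCl(\Comp(R))$ of $\Comp(R)$: the graph-of-groups decomposition inherited from the action on $Y$ has a distinguished vertex group that is a closure of $\Comp(R)$ in the sense of Definition \ref{closure}, with all other vertex groups isomorphic to subgroups of $\Gamma$. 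The composition $G_\Sigma\ast_R\Comp(R)\to L\hookrightarrow\GCl(\Comp(R))$ followed by the canonical identification of $\GCl(\Comp(R))$ inside itself gives the retraction $\pi_i$; strictness/injectivity of the maps involved, together with $\mu_n|_{v_k}\neq 1$ (which survives in the limit because we can arrange the $v_k$ to have stably non-trivial image, exactly as in Corollary \ref{shortening}), yields a homomorphism $\Psi_i:\GCl(\Comp(R))_i\to\Gamma$ factoring through $\GCl(\Res(R))_i$ with $\Psi_i\circ\pi_i(v_k)\neq1$. The finitely many quotients arising in the tower analysis account for the finitely many indices $i\in\{1,\ldots,q\}$.

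For the covering-closure conclusion in the torsion-free case, I would argue as follows: given \emph{any} $\vp\in\Hom(R,\Gamma)$ factoring through $\Res(R)$, extend it to $\bar\vp:\Comp(R)\to\Gamma$ via Lemma \ref{completion property}(3), pick a witness tuple $x_0$ for the sentence at $y=\vp(y)$, and feed the resulting homomorphism into the finite Makanin-Razborov-type analysis above; this shows $\bar\vp$ (hence $\vp$) extends over one of the closures $\Cl(\Res(R))_i$. The delicate point — and the \emph{main obstacle} — is that the \emph{collection} of closures one reads off the finitely many limit quotients genuinely covers, i.e.\ that the ``abelian parameters'' of the closure embeddings exhaust all of $\Z^m$. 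This is where Remark \ref{abeliantorsionfree} is used: in the torsion-free case the image of a virtually abelian group under any homomorphism to $\Gamma$ lies in a genuine $\Z$, so the extension question over an abelian flat reduces to a union of cosets of finite-index subgroups of $\Z^m$ covering $\Z^m$, and the finiteness and exhaustiveness follow from a descending-chain argument (Proposition \ref{descendingchain}) combined with the maximality in Theorem \ref{maximal elements}. In the presence of torsion the image of an abelian flat can land in a virtually cyclic, non-cyclic group, breaking the clean lattice-covering picture, which is exactly why the covering-closure statement is asserted only for torsion-free $\Gamma$; I would therefore carry out this last step verbatim in the torsion-free setting and flag the obstruction otherwise, matching the caveat already announced in the introduction.
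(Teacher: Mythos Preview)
Your overall architecture matches the paper's: induction on tower height, test sequence, extend by witnesses, pass to limit, and use the four clauses of Definition~\ref{testseq} to peel off one tower level at a time. Two points deserve comment, one minor and one a real gap.

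\emph{Minor.} You say you pre-compose with modular automorphisms to make the $\mu_n$ short relative to $\Comp(R)$. The paper instead chooses $\bar\lambda_n(x)$ \emph{shortest possible} and then, at each inductive stage, introduces a separate minimization problem (the sets $A_n$, $B_n$, $C_n$) and proves directly that there are only finitely many maximal ``test limit groups'' (Lemmas~\ref{maximal1} and~\ref{maximal}, reproved at each stage). Your appeal to Theorem~\ref{maximal elements} is not quite the right tool here: that result is about shortening quotients, whereas what is needed is finiteness of maximal limits of \emph{extensions of a fixed test sequence}. This can be patched along the same lines, but it is worth being explicit.

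\emph{Genuine gap: the covering-closure argument.} Your proposed direct argument---take an arbitrary $\vp$, extend it to $\bar\vp$, pick a witness $x_0$, and ``feed the resulting homomorphism into the analysis''---does not work. The closures $\Cl(\Res(R))_1,\ldots,\Cl(\Res(R))_q$ were produced from \emph{test sequences}, i.e.\ sequences with very specific growth and small-cancellation behaviour; a single homomorphism $\bar\vp$ has no reason to factor through any of the resulting maximal test limit groups. The paper argues instead \emph{by contradiction}: if the closures do not cover, then by Remark~\ref{abeliantorsionfree} there is a tuple $p\in\Z^m\setminus\bigcup_i U_{f_i}$ (or a coset representative in the pegged case), and then one invokes Lemma~\ref{opq} to construct a \emph{new test sequence} $(\lambda_n)$ whose values on the relevant abelian basis element land in $d_1\cdots d_q\Z+p$, hence outside every $U_{f_i}$. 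Since any test sequence must, by the construction already carried out, factor through one of the finitely many closures, this yields the contradiction. The essential ingredient you are missing is Lemma~\ref{opq}: the freedom to prescribe the residue class hit by the test sequence on an abelian flat. Without it there is no bridge between ``an arbitrary $\vp$ fails to extend'' and ``some test sequence fails to extend''.
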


\begin{bem}
\begin{enumerate}[(1)]
\item We are still not sure if one can possibly always replace the generalized closure in the above theorem by an ordinary closure. In the torsion-free case this is always true. On the other hand if $\Gamma$ is torsion-free hyperbolic and the words $$w_1(x,y),\ldots,w_s(x,y),v_1(x,y),\ldots,v_r(x,y)$$ are coefficient free, then there exist coefficient free formal solutions  but only in a generalized closure and not in an ordinary closure of the corresponding well-structured resolution.
\item We expect the last claim of the theorem to also be true in the case with torsion, but the proof from the torsion-free case does no longer work in this setting.
\item We expect that the assumption that $\Res(R)$ is a good resolution can be dropped by either refining the construction of the MR diagram in \cite{rewe} to guarantee that every strict resolution in this diagram is already good, or by defining a new type of test sequences in the case that $\Res(R)$ is not good.
\end{enumerate}
\end{bem}

\begin{bem} An alternative way to formulate Theorem \ref{maintheorem} is as follows:\\
Let $M_i:=\GCl(\Comp(R))_i$, $\iota_i: R\to M_i$ the canonical embeddings into the closures of the completion of $R$ and $\nu:R\to G_{\Sigma}$ the canonical (injective) map.\\
For all restricted homomorphisms $\vp:R\to\Gamma$ there exists an extension $\tilde{\vp}:M_i\to\Gamma$ of $\vp$ for some $i$ and under the assumption of the theorem there exists an extension $\bar{\vp}:G_{\Sigma}\to\Gamma$ making the following diagram commute.  
$$\begin{xy}
  \xymatrix{
      & G_{\Sigma}\ar[rd]^{\exists\bar{\vp}}   &       \\
      R\ar[ur]_{\nu}\ar[r]_{\iota_i} & M_i \ar[r]_{\tilde{\vp}} & \Gamma
  }
\end{xy}$$
Now the theorem states that there exists a map (not necessarily a retraction) $\pi:G_{\Sigma}\to M_i$ such that both sides of the following diagram commute:
$$\begin{xy}
  \xymatrix{
      & G_{\Sigma}\ar[rd]^{\exists\bar{\vp}}\ar[d]^{\pi}   &       \\
      R\ar[ur]_{\nu}\ar[r]_{\iota_i} & M_i \ar[r]_{\tilde{\vp}} & \Gamma
  }
\end{xy}$$
\end{bem}

To prove Theorem \ref{maintheorem} we have to plug together all results from the previous chapters.

\begin{proof}[Proof of Theorem \ref{maintheorem}]
By Theorem \ref{completion} $\Comp(R)$ is a $\Gamma$-limit tower. We proceed by induction on the height of the tower $\Comp(R)$ over $\Gamma$. If the height is zero then necessarily $\Comp(R)$ is of the form $\Comp(R)=\Gamma$ and the claim is trivial. Hence we may assume that the height of $\Comp(R)$ is at least one.\\
By Theorem \ref{testsequence} there exists a test sequence 
$(\lambda_n)\subset\Hom(\Comp(R),\Gamma)$ for $\Comp(R)$.
For all $n\geq 1$ by the assumption of the theorem we can extend $\lambda_n|_R: R\to \Gamma$ to a homomorphism $\bar{\lambda}_n:G_{\Sigma}\to \Gamma$, where 
$$G_{\Sigma}=\langle x,y,a\ |\ V(y,a),w_1(x,y,a),\ldots,w_s(x,y,a)\rangle,$$
such that $\bar{\lambda}_n(v_j)\neq 1$ for all $j\in\{1,\ldots,r\}$. Note that we can view $R$ as a subgroup of $G_{\Sigma}$. In particular $\bar{\lambda}_n|_R=\lambda_n$.
Moreover for all $n\geq 1$ we can choose $\bar{\lambda}_n$ in such a way that $\bar{\lambda}_n(x)$ is shortest possible with respect to the word metric on $\Gamma$ for the generating set $\{a\}$.
Clearly $\bar{\lambda}_n$ can be extended to a homomorphism $$\vp_n: G:=G_{\Sigma}\ast_R\Comp(R)\to \Gamma,$$ such that $\vp_n|_{\Comp(R)}=\lambda_n$ for all $n\in\N$. 

\begin{definition}
If such a sequence of homomorphisms $(\vp_n)\subset\Hom(G,\Gamma)$ corresponding to a test sequence (and some shortest possible specializations $\{\vp_n(x)\}$) converges, we call the obtained $\Gamma$-limit group $T$ a \textnormal{test limit group} with corresponding \textnormal{test limit map} $\eta:G\to T$.\\ On the set of test limit groups, we define a partial order $\geq$, by setting $T_1\geq T_2$ if there exists an epimorphism $\pi: T_1\to T_2$, such that $\eta^2=\pi\circ\eta^1$, where $\eta^i:G\to T_i$ is the corresponding test limit map for $i\in\{1,2\}$. We further say that $T_1>T_2$ if $T_1\geq T_2$ and $T_2\ngeq T_1$.
\end{definition}

\begin{lemma}\label{maximal1}
There exist maximal elements in the set of all test limit groups, with the partial order defined above.
\end{lemma}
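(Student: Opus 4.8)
The plan is to verify the hypotheses of Zorn's lemma for the poset of test limit groups. Every test limit group is by definition a $\Gamma$-limit quotient of the fixed finitely generated group $G:=G_{\Sigma}\ast_R\Comp(R)$, and $T_1\geq T_2$ holds precisely when $\ker(\eta^1)\subseteq\ker(\eta^2)$, where $\eta^i\colon G\to T_i$ is the test limit map; in terms of kernels a larger test limit group has a smaller kernel. Thus it suffices to show that every chain in this poset has an upper bound that is again a test limit group (if the chain is finite or has a maximum there is nothing to prove, so the content lies in handling strictly ascending chains).

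So let $\{T_i=G/K_i\}_{i\in I}$ be a chain; the normal subgroups $K_i$ form a chain under reverse inclusion, and I would set $K_{\infty}:=\bigcap_{i\in I}K_i$ and $T_{\infty}:=G/K_{\infty}$, a finitely generated group mapping onto every $T_i$. First I would check that $T_{\infty}$ is a $\Gamma$-limit group, i.e.\ fully residually $\Gamma$: given a finite transversal $\hat S\subset G$ of a finite subset of $T_{\infty}$, there are only finitely many pairs $s\neq s'$ in $\hat S$, for each of them $ss'^{-1}\notin K_{\infty}$, so one can pick an index $i^{*}$ with $K_{i^{*}}$ contained in each of the finitely many $K_{i(s,s')}$; since $T_{i^{*}}$ is itself a $\Gamma$-limit group it is fully residually $\Gamma$, and precomposing such a separating homomorphism with $G\to T_{i^{*}}$ gives a homomorphism $G\to\Gamma$ that separates all these pairs and factors through $T_{\infty}$ because $K_{\infty}\subseteq K_{i^{*}}$. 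Hence $T_{\infty}$ is a $\Gamma$-limit quotient of $G$, it dominates every $T_i$, and the domination is strict whenever the chain below it is strictly ascending.

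The remaining, genuinely delicate, point is to show that $T_{\infty}$ is actually a \emph{test} limit group, i.e.\ that it is the limit of some sequence $(\psi_n)\subset\Hom(G,\Gamma)$ whose restriction to $\Comp(R)$ is a test sequence and for which each $\psi_n(x)$ is a shortest possible specialization. I would obtain this by a diagonal argument: enumerate $G=\{g_1,g_2,\dots\}$, realize each $T_i$ in a chosen cofinal subchain by a sequence $(\vp^{(i)}_n)_n$ as in the definition, and pick indices together with a cofinal family of the $T_i$ so that $\psi_k:=\vp^{(i_k)}_{n_k}$ agrees with $T_{i_k}$ on $\{g_1,\dots,g_k\}$ as far as triviality of images is concerned; a subsequence of $(\psi_k)$ then converges, and since (for $g=g_j$ and $k\geq j$) triviality of $g$ in the limit forces $g_j\in K_{i_k}$ for cofinally many $k$, hence $g_j\in K_i$ for all $i$, the limit has kernel exactly $K_{\infty}$ and so equals $T_{\infty}$. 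The main obstacle is to confirm that the diagonalized sequence still restricts to a test sequence on $\Comp(R)$: the conditions of Definition \ref{testseq} are, for each sufficiently large index, finitary statements about the geometry of $\lambda_n|_{\Comp(R)}$ (growth domination between consecutive tower levels and the shapes of the induced splittings) and are preserved under passing to tails and subsequences, so they survive the diagonalization; when the whole chain can be carried out over one fixed test sequence $(\lambda_n)$ provided by Theorem \ref{testsequence}, $\psi_k|_{\Comp(R)}$ is literally a subsequence of $(\lambda_n)$ and this point is immediate. This is the analogue in the present setting of the argument behind Theorem \ref{maximal elements} (Theorems 6.10 and 6.11 of \cite{rewe}).

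With every chain now admitting a test limit group upper bound, Zorn's lemma yields maximal elements in the set of all test limit groups, which is the assertion of the lemma. I note that a variant of the same reasoning, using that a nested tree of $\Gamma$-limit quotients of the finitely generated group $G$ is locally finite together with the descending chain condition (Proposition \ref{descendingchain}), also gives that there are only finitely many maximal test limit groups, the refinement invoked later in the paper.
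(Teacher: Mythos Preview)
Your approach is essentially the same as the paper's: both construct an upper bound for an ascending chain by a diagonal argument and then (explicitly in your case, implicitly in the paper's) invoke Zorn's lemma. The paper assumes an ascending sequence $T_1<T_2<\ldots$, picks for each $i$ an index $n^{(i)}$ so that $\ker(\vp^{(i)}_{n^{(i)}})$ agrees with $\ker(\eta^i)$ on the ball $B_i$, arranges the $n^{(i)}$ to be increasing so that the diagonal restricts to a subsequence of the fixed test sequence $(\lambda_n)$, and checks that the resulting test limit group $T$ satisfies $T_i<T$ for all $i$.

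Two remarks on your write-up. First, your preliminary verification that $T_\infty=G/K_\infty$ is fully residually $\Gamma$ is correct but redundant: once you diagonalize, the limit of the diagonal sequence is automatically a $\Gamma$-limit group, and your argument that its kernel equals $K_\infty$ already identifies it with $T_\infty$. Second, your conditional phrasing (``when the whole chain can be carried out over one fixed test sequence'') undersells the situation: in the paper's setup there \emph{is} one fixed test sequence $(\lambda_n)$, every $\vp_n^{(i)}$ restricts to $\lambda_n$ on $\Comp(R)$, and so the diagonal restricts to the subsequence $(\lambda_{n^{(i)}})$; this is exactly how the paper handles the point you flag as delicate. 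Your general discussion of Definition~\ref{testseq} being preserved under subsequences is therefore not needed here.
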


\begin{proof}
Assume that there exists an infinite ascending sequence of test limit groups $$T_1<T_2<T_3<T_4<\ldots$$
For all $i\geq 1$, we denote the test limit map corresponding to $T_i$ by $\eta^i$ and the sequence of homomorphisms from $G$ to $\Gamma$ which converges into $T_i$ by $\vp_n^{(i)}$. Let $B_i$ be the ball of radius $i$ in $\Cay(G)$ around the identity with respect to some fixed finite generating set.\\
Since for all $i\geq 1$, $(\vp_n^{(i)})$ is a stable sequence, there exists $n^{(i)}\in\N$, such that $$\ker (\vp^{(i)}_{n^{(i)}})\cap B_i=\ker(\eta^i)\cap B_i.$$
After possibly increasing some of the $n^{(i)}$'s, the sequence $(\vp^{(i)}_{n^{(i)}})_{i\in\N}$ is an extension of the test sequence $(\lambda_n)$. We denote the corresponding test limit group into which (a subsequence of) $(\vp^{(i)}_{n^{(i)}})$ converges by $T$ and the test limit map by $\eta: G\to T$. It remains to show that $T_i<T$ for all $i\in\N$. Assume that there exists $i_0\in\N$ such that $T_{i_0}\nless T$, i.e. there exists no proper epimorphism $\pi:T\to T_{i_0}$ such that $\eta^{i_0}=\pi\circ\eta$. Hence there exists some element $g\in\ker\eta$, such that $g\notin\ker \eta^{i_0}$. Let $m=\max\{i_0,|g|\}$, where $|\cdot|$ denotes the metric on the Cayley graph of $G$ with respect to the fixed generating set chosen before. Since $T_1<T_2<T_3<T_4<\ldots$ is an infinite ascending sequence of test limit groups, for all $i\geq m$, $g\notin\ker \vp^{(i)}_{n^{(i)}}$ and hence $g\notin \ker \eta$, a contradiction.
\end{proof}

\begin{lemma}\label{maximal}
There exist only finitely many equivalence classes (with respect to the equivalence relation $\leq$) of maximal test limit groups.
\end{lemma}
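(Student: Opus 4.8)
The plan is to argue by contradiction using the same machinery that appeared in the proof of Lemma \ref{maximal1}, now combined with the descending chain condition for $\Gamma$-limit groups (Proposition \ref{descendingchain}) and the finiteness statements in Theorem \ref{maximal elements}. Suppose there were infinitely many equivalence classes of maximal test limit groups. Each maximal test limit group $T$ is in particular a $\Gamma$-limit quotient of $G=G_{\Sigma}\ast_R\Comp(R)$, via its test limit map $\eta: G\to T$. So first I would observe that the set of all test limit groups, ordered by $\geq$, sits inside the poset of all $\Gamma$-limit quotients of the fixed finitely generated group $G$. By Theorem \ref{maximal elements}(2) there are only finitely many maximal $\Gamma$-limit quotients of $G$; hence every test limit group is $\leq$ one of finitely many fixed $\Gamma$-limit groups $Q_1,\ldots,Q_N$. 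This reduces the problem to showing that below each fixed $Q_j$ there are only finitely many equivalence classes of maximal test limit groups.

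Next I would set up a Königs-lemma / pigeonhole argument entirely analogous to the proof of Lemma \ref{maximal1}. Assume for contradiction there is an infinite family $(T_i)_{i\in\N}$ of pairwise inequivalent maximal test limit groups. For each $i$ let $\eta^i:G\to T_i$ be the test limit map, and let $(\vp_n^{(i)})_n$ be a sequence of homomorphisms $G\to\Gamma$ extending the test sequence $(\lambda_n)$ (with shortest possible specializations of $x$) that converges into $T_i$. As in Lemma \ref{maximal1}, pick $n^{(i)}$ so large that $\ker(\vp^{(i)}_{n^{(i)}})$ agrees with $\ker(\eta^i)$ on the ball $B_i$ of radius $i$ in $\Cay(G)$, and so that $(\vp^{(i)}_{n^{(i)}})_i$ is still an extension of $(\lambda_n)$. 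Passing to a convergent subsequence, this produces a single test limit group $T$ with test limit map $\eta:G\to T$. The key point, exactly as in Lemma \ref{maximal1}, is that $T\geq T_i$ for all $i$ in the subsequence: any $g\in\ker\eta$ lies in $\ker\vp^{(i)}_{n^{(i)}}\cap B_i=\ker\eta^i\cap B_i$ for all sufficiently large $i$, so $\ker\eta\subseteq\ker\eta^i$. But then every $T_i$ (for large $i$) is a proper quotient of $T$ (properness since the $T_i$ are pairwise inequivalent, so at most one of them is equivalent to $T$), contradicting maximality of the $T_i$.

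To turn "every $T_i$ is $\leq T$" into an outright contradiction with there being infinitely many classes, I would argue as follows: having produced one such $T$, either $T$ is itself maximal, in which case $T\geq T_i$ for infinitely many $i$ forces $T$ equivalent to all but finitely many of them (since $T_i$ maximal and $T\geq T_i$ gives $T_i\geq T$), contradicting pairwise inequivalence; or $T$ is not maximal, in which case by Lemma \ref{maximal1} $T$ lies below a maximal test limit group $T'$, and $T'\geq T\geq T_i$ again forces $T'\sim T_i$ for infinitely many $i$, the same contradiction. Alternatively, and perhaps more cleanly, I would invoke the descending chain condition (Proposition \ref{descendingchain}): if there were infinitely many inequivalent maximal test limit groups all $\leq$ the fixed $\Gamma$-limit quotient $Q_j$, one extracts by the above subsequence construction an element $T$ strictly above infinitely many of them, then iterates to build an infinite strictly descending chain $T > T_{i_1} > \cdots$ of $\Gamma$-limit quotients of $G$, which is impossible.

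The main obstacle I anticipate is bookkeeping rather than conceptual: one must be careful that the diagonal sequence $(\vp^{(i)}_{n^{(i)}})_i$ genuinely remains an extension of the prescribed test sequence $(\lambda_n)$ (so that its limit is again a \emph{test} limit group, not merely some $\Gamma$-limit quotient of $G$) — this is why the $n^{(i)}$ must be chosen to increase and to respect $\lambda_n|_{\Comp(R)}$, and why one passes to a subsequence before taking the limit. The second delicate point is properness of the epimorphism $T\to T_i$: one needs the $T_i$ pairwise inequivalent precisely so that $T$ cannot be equivalent to more than one of them, and then maximality of each $T_i$ upgrades $T\geq T_i$ to $T\sim T_i$, yielding the contradiction. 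Everything else is a direct transcription of the arguments of Lemma \ref{maximal1} and an application of Theorem \ref{maximal elements} and Proposition \ref{descendingchain}.
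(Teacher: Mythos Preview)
There is a genuine gap in the central step. You claim that the diagonal limit $T$ satisfies $T\geq T_i$ for all large $i$, arguing that ``any $g\in\ker\eta$ lies in $\ker\vp^{(i)}_{n^{(i)}}\cap B_i=\ker\eta^i\cap B_i$ for all sufficiently large $i$, so $\ker\eta\subseteq\ker\eta^i$.'' The premise is correct but the conclusion does not follow: what you have shown is that for each \emph{fixed} $g\in\ker\eta$ there is a threshold $I(g)$ with $g\in\ker\eta^i$ for $i\geq I(g)$. Since $\ker\eta$ is in general infinite, this is a quantifier-swap error; there is no reason why $\ker\eta\subseteq\ker\eta^i$ should hold for any single $i$. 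In Lemma \ref{maximal1} the missing ingredient is supplied by the ascending chain hypothesis $T_1<T_2<\cdots$, which gives $\ker\eta^{i_0}\supseteq\ker\eta^i$ for $i\geq i_0$ and lets you propagate $g\notin\ker\eta^{i_0}$ to all later indices. Here the $T_i$ are pairwise incomparable maximal elements, so that monotonicity is simply absent. Your alternative routes do not rescue this: Theorem \ref{maximal elements}(2) bounds the maximal $\Gamma$-limit quotients of $G$, not the maximal \emph{test} limit groups (which need not be maximal as $\Gamma$-limit quotients); and the DCC argument you sketch never produces a descending chain, since $T>T_{i_1}$ and $T>T_{i_2}$ gives a tree, not a chain.

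The paper's proof runs the logic in the opposite direction. First one performs a K\"onig-type extraction so that $\ker\eta^m\cap B_i=\ker\eta^n\cap B_i$ for all $m,n\geq i$; this pins down the stable kernel of the diagonal sequence independently of the particular choices of $n^{(i)}$. One then observes, using maximality and pairwise inequivalence of the $T_i$, that $T\ngeq T_i$ for all but at most one $i$ (if $T\geq T_i$ then maximality forces $T\sim T_i$, which can happen for at most one $i$). This yields witnesses $g_i\in\ker\eta\setminus\ker\eta^i$, and after enlarging the $n^{(i)}$ one arranges $\vp^{(i)}_{n^{(i)}}(g_i)\neq 1$, so that \emph{no} $\vp^{(i)}_{n^{(i)}}$ factors through $\eta$. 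But $\eta$ is the limit map of this very sequence, and Theorem 6.5 of \cite{rewe} forces a subsequence to factor through it --- the desired contradiction. In short, rather than proving $T\geq T_i$ directly (which fails), one proves that $T\ngeq T_i$ is untenable.
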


\begin{proof}
Assume that there exist infinitely many equivalence classes of maximal test limit maps $T_1,T_2,T_3,\ldots$. For all $i\geq 1$, we again denote the test limit map corresponding to $T_i$ by $\eta^i$ and the sequence of homomorphisms from $G$ to $\Gamma$ which converges into $T_i$ by $\vp_n^{(i)}$. Let $B_i$ be the ball of radius $i$ in $\Cay(G)$ around the identity with respect to some fixed finite generating set.\\
Since for all $i\geq 1$, $B_i$ contains only finitely many elements, there exists a sequence of pairwise distinct maximal test limit maps (which we denote for simplicity again by) $(\eta^i)_{i\in\N}$ such that for each $m,n\geq i$ the following hold:
$$\ker \eta^m\cap B_i=\ker \eta^n\cap B_i.$$
As in the proof of Lemma \ref{maximal1} for all $i\geq 1$ there exists $n^{(i)}\in\N$, such that $$\ker (\vp^{(i)}_{n^{(i)}})\cap B_i=\ker(\eta^i)\cap B_i$$
and the sequence $(\vp^{(i)}_{n^{(i)}})_{i\in\N}$ is an extension of the test sequence $(\lambda_n)$ after possibly enlarging some of the $n^{(i)}$'s. Let $T$ be the associated test limit group and $\eta:G\to T$ the corresponding test limit map. Since the sequence $(\eta^i)_{i\in\N}$ consists of pairwise distinct maximal test limit maps, after possibly removing $\eta$ from the sequence, we have that $T\ngeq T_i$ for all $i\geq 1$.\\
Hence for all $i\geq 1$ there exists some element $g_i\in G$, such that $\eta^i(g_i)\neq 1$ and $\eta(g_i)=1$. Since for all $i\geq 1$, $(\vp_n^{(i)})$ is a stable sequence, by possibly enlarging the ${n^{(i)}}$'s we can assume that 
$$\ker (\vp^{(i)}_{n^{(i)}})\cap B_i=\ker(\eta^i)\cap B_i$$
and $\vp^{(i)}_{n^{(i)}}(g_i)\neq 1$. In particular none of the $\vp^{(i)}_{n^{(i)}}$'s factors through $\eta$, since $\eta(g_i)=1$ for all $i\geq 1$. By Theorem 6.5 in \cite{rewe} a subsequence of $(\vp^{(i)}_{n^{(i)}})$ factors through $\eta$ (recall that $\eta$ is the limit map corresponding to the sequence $(\vp^{(i)}_{n^{(i)}})$), a contradiction.
\end{proof}

By Lemma \ref{maximal} there exist only finitely many maximal test limit groups, which we denote by $M_1,\ldots,M_m$.
For all $i\in\{1,\ldots,m\}$ we denote the test limit map corresponding to $M_i$ by $\eta^i$. Moreover for all $i\in\{1,\ldots,m\}$ let $(\vp_n^{(i)})_{n\in\N}\subset\Hom(G,\Gamma)$ be the extension of a test sequence, such that $$M_i=G/\underrightarrow{\ker}(\vp_n^{(i)}).$$
Clearly the images of the words $v_j$ under the test limit maps $\eta^i$ are non-trivial in the maximal test limit groups $M_1,\ldots,M_m$.\\
Let $M\in\{M_1,\ldots,M_m\}$ be one of these maximal test limit groups. For simplicity we denote the extension of a test sequence which converges into the action of $M$ on the associated real tree $T$ by $(\vp_n)$ and the corresponding test limit map by $\eta:G\to M$. We identify $\Comp(R)$ and the words $v_j$ with their images in $M$ under $\eta$. Then there exists a sequence of homomorphisms $\bar{\vp}_n: M\to \Gamma$ such that $\vp_n=\bar{\vp}_n\circ\eta$ for all $n\geq 1$. Let $\D$ be a Dunwoody decomposition of $M$ relative $\Comp(R)$, i.e. $\pi_1(\D)=M$ and  $\Comp(R)$ is contained in a vertex group.\\
Suppose that $\Comp(R)\leq M$ fixes a point in $T$. Denote by $H$ the vertex group of $\D$ containing $\Comp(R)$ and by $D_1,\ldots,D_t$ the vertex group of $\D$ which do not contain $\Comp(R)$ and are not isomorphic to subgroups of $\Gamma$. Suppose that $t\geq 1$, i.e. there exists a vertex group in $\D$ not isomorphic to a subgroup of $\Gamma$, which does not contain $\Comp(R)$. Let $D_{t+1},\ldots,D_s$ be the vertex groups of $\D$ isomorphic to subgroups of $\Gamma$.\\
Let $\Gamma_1,\ldots,\Gamma_s$ be isomorphic copies of $\Gamma$. Let moreover $\D'$ be the graph of groups  which we get by replacing the vertex groups $D_1,\ldots,D_s$ in $\D$ by $\Gamma_1,\ldots,\Gamma_s$ respectively and the vertex group $H$ by $\Gamma$. For all $i\in\{t+1,\ldots,s\}$ denote by $\iota_i:D_i\to\Gamma_i$ the canonical embedding. By the Bestvina-Feighn combination theorem $\pi_1(\D')$ is hyperbolic.\\
Clearly $\bar{\vp}_n\in\Hom(M,\Gamma)$ induces a homomorphism (still denoted by) $$\bar{\vp}_n\in \Hom(M,\pi_1(\D')).$$
For all $n\in\N$ we define the following set:
\begin{align*}
A_n=\{&\Psi_n\in \Hom(M,\pi_1(\D')) \ |\ \Psi_n|_H=\bar{\vp}_n|_H,\  \Psi_n|_{\pi_1(D)}=\id,\\
 &\Psi_n|_{D_i}=\iota_i \text{ for } i\in\{t+1,\ldots,s\}\text{ i.e. if } D_i \text{ is isomorphic to a subgroup of } \Gamma,\\
 &\Psi_n(v_j)\neq 1 \text{ for all } j\in\{1,\ldots,r\}\}.
\end{align*}
Since $\bar{\vp}_n\in A_n$, the set $A_n$ is non-empty. We fix a finite generating set $B$ of $D_t$. Let $U$ be the set of all converging sequences $(\Psi_n)$, such that for all $n\in\N$, $\Psi_n\in A_n$ and  
$$\sum_{b\in B}|\Psi_n(b)|_{\Gamma_t}$$
is minimal (with respect to the word metric on $\Gamma_t$). By the same arguments as in the proof of Lemma \ref{maximal1} and Lemma \ref{maximal} the collection of all sequences from $U$ factor through a finite collection of maximal test limit groups. Let $(\Psi_n)\in U$ be a sequence that converges into such a maximal test limit group, say $N$.\\ 
In particular $(\Psi_n|_{D_t})$ converges either into a finite group or into the action of an infinite $\Gamma$-limit group $\tilde{D}_t$ on some real tree $T$ without a global fixpoint. Assume that $\tilde{D}_t$ is one-ended. By the shortening argument $T$ contains no axial or orbifold components and all stabilizers of edges in the discrete part of the output of the Rips machine are finite. Since $\tilde{D}_t$ does not act with a global fixpoint on $T$, $\tilde{D}_t$ admits a decomposition along a finite subgroup, a contradiction to the one-endedness assumption. Hence $\tilde{D}_t$ is either finite or admits a decomposition along a finite subgroup and therefore $\tilde{D}_t$ is a proper quotient of $D_t$. It follows that $N$ is a proper quotient of the original maximal test limit group $M$.\\
It follows from the descending chain condition for $\Gamma$-limit groups (Proposition \ref{descendingchain}) that after repeating this procedure finitely many times, we end up with finitely many maximal test limit groups $N_1,\ldots, N_k$ such that for all $N\in\{N_1,\ldots,N_k\}$ the Dunwoody decomposition $\D$ relative $\Comp(R)$ of $N$ consists of one vertex with vertex group $H$ which contains $\Comp(R)$ and all other vertex groups are isomorphic to subgroups of $\Gamma$. To simplify notation we assume that our original test limit groups $M_1,\ldots,M_k$ are already of this type.\\
We continue with all the maximal test limit groups $M_1,\ldots,M_k$ in parallel. So let $M\in\{M_1,\ldots,M_k\}$ be such a maximal test limit group with Dunwoody decomposition $\D$ relative $\Comp(R)$. Denote by $v_0$ the unique vertex of $\D$ whose vertex group $H$ contains $\Comp(R)$. Recall that $(\vp_n)\subset \Hom(G,\Gamma)$ is the extension of a test sequence which converges into the action of $M$ on the associated real tree $T$ and $\eta:G\to M$ is the corresponding test limit map. Then there exists a sequence of homomorphisms $\bar{\vp}_n:M\to \Gamma$ such that $\vp_n=\bar{\vp}_n\circ\eta$ for all $n\geq 1$. (In general, i.e. without the simplifying assumption made above, we would have to precompose by a modular automorphism $\alpha_n\in\Mod(M)$, i.e. $\vp_n=\bar{\vp}_n\circ\pi\circ\alpha_n\circ\eta$, where $\pi:M\to N$ is the limit map from $M$ to $N$).\\
Suppose that $\Comp(R)$ is still elliptic when acting on the limit tree $T$. Let $\D'$ be the graph of groups which we get by replacing the vertex group $H$ in $\D$ by $\Gamma$ and all other vertex groups by isomorphic copies of $\Gamma$. By the Bestvina-Feighn combination theorem $\pi_1(\D')$ is hyperbolic. Again $\bar{\vp}_n\in\Hom(M,\Gamma)$ induces a homomorphism (still denoted by) $$\bar{\vp}_n\in \Hom(M,\pi_1(\D')).$$
For all $n\in\N$ we define the following set:
\begin{align*}
B_n=\{ &\Psi_n\in \Hom(M,\pi_1(\D')) \ |\ \Psi_n|_{D_v}=\id \text{ for all } v\in VD\setminus \{v_0\},\\
&\Psi_n|_{\pi_1(D)}=\id,
 \Psi_n(v_j)\neq 1 \text{ for all } j\in\{1,\ldots,r\},\ \Psi_n|_{\Comp(R)}=\bar{\vp}_n|_{\Comp(R)}\}.
\end{align*}
Since $\bar{\vp}_n\in B_n$, the set $B_n$ is non-empty. We fix a generating set $\{h_1,\ldots,h_m\}$ of $H$. Let $U$ be the set of all converging sequences $(\Psi_n)$, such that for all $n\in\N$, $\Psi_n\in B_n$ and  
$$\sum_{i=1}^m|\Psi_n(h_i)|_{\Gamma}$$
is minimal (with respect to the word metric on $\Gamma$). By the same arguments as in the proof of Lemma \ref{maximal1} and Lemma \ref{maximal} the collection of all sequences from $U$ factor through a finite collection of maximal test limit groups. Let $(\Psi_n)\in U$ be a sequence that converges into such a maximal test limit group, say $N$.\\ 
In particular $(\Psi_n|_H)$ converges into the action of a $\Gamma$-limit group $\tilde{H}$ on some real tree $T$ without a global fixpoint. Assume that $\tilde{H}$ is one-ended relative $\Comp(R)$. Suppose that $\Comp(R)$ fixes a point in $T$. Then it follows from the shortening argument (relative $\Comp(R)$) that $T$ contains no axial or orbifold components and all stabilizers of edges in the discrete part in the output of the Rips machine are finite. Since $\tilde{H}$ does not act with a global fixpoint on $T$, $\tilde{H}$ admits a decomposition along a finite subgroup relative $\Comp(R)$, a contradiction to the one-endedness assumption. Hence $\tilde{H}$ is a proper quotient of $H$. It follows that $N$ is a proper quotient of the original maximal test limit group $M$.\\
It follows from the descending chain condition for $\Gamma$-limit groups (Proposition \ref{descendingchain}) that after repeating this procedure finitely many times we end up with finitely many maximal test limit groups $N_1,\ldots, N_k$ such that for all $N\in\{N_1,\ldots,N_k\}$ the Dunwoody decomposition $\D$ relative $\Comp(R)$ of $N$ consists of one vertex with vertex group $H$ which contains $\Comp(R)$, all other vertex groups are isomorphic to subgroups of $\Gamma$ and $\Comp(R)$ does not fix a point when acting on the corresponding limit tree. To simplify notation we assume that our original test limit groups $M_1,\ldots,M_k$ are already of this type.\\
We continue with all the maximal test limit groups $M_1,\ldots,M_k$ in parallel. So let $M\in\{M_1,\ldots,M_k\}$ be such a maximal test limit group with Dunwoody decomposition $\D$ relative $\Comp(R)$. Recall that $(\vp_n)\subset \Hom(G,\Gamma)$ is the extension of a test sequence which converges into the action of $M$ on the associated real tree $T$ and $\eta:G\to M$ is the corresponding test limit map. Then there exists a sequence of homomorphisms $\bar{\vp}_n:M\to \Gamma$ such that $\vp_n=\bar{\vp}_n\circ\eta$ for all $n\geq 1$.\\
The graph of groups $\D$ consists of a unique vertex $v_0$ whose vertex group $H$ contains $\Comp(R)$ and all other vertex groups are isomorphic to subgroups of $\Gamma$. Note that $\bar{\vp}_n$ is still short with respect to the fixed generating set of $H$. Moreover $\Comp(R)$ does not fix a point when acting on $T$.\\
We can describe the action of $M$ on $T$ more precisely. To do this we exploit the fact that $\bar{\vp}_n|_{\Comp(R)}$ is a test sequence.  Recall that $\Comp(R)$ has the structure of a $\Gamma$-limit tower $$\Gamma=T_0\leq T_1\leq\ldots\leq T_n=\Comp(R).$$ Denote by $\B$ the graph of groups decomposition of $T_n$ corresponding to the top level tower structure, i.e. the graph of groups corresponding to the orbifold/virtually abelian flat or extension along a finite group which witnesses the tower structure of $T_n$ over $T_{n-1}$.

\begin{lemma}\label{operation}
\begin{enumerate}[(1)]
\item Assume that $T_n$ has the structure of an orbifold flat over $T_{n-1}$ and let $$1\to E\to Q\to \pi_1(\mO)\to 1$$ be the short exact sequence corresponding to the QH vertex group $Q$ of $\B$. Then $H$ inherits from its action on $T$ an orbifold flat decomposition $\A$ over a group $V$, which has one vertex $v$ with vertex group $Q'$ which contains the subgroup $Q\leq \Comp(R)$ as a subgroup of finite index, and several vertices with vertex groups $V_1,\ldots,V_r$ corresponding to the various orbits of point stabilizers in the action of $H$ on $T$. In addition $Q'$ fits into the short exact sequence $$1\to E'\to Q'\to \pi_1(\mO)\to 1.$$ Moreover the underlying graphs of $\A$ and $\B$ are isomorphic, edge groups in $\A$ are isomorphic to finite index supergroups of the corresponding edge groups in $\B$ and $T_{n-1}\leq V$.
%\item Assume that $\B$ corresponds to the splitting $T_n=T_{n-1}\ast_EQ$ for some finite group $E$ and some QH vertex group $Q$, fitting into the short exact sequence
%$$1\to N\to Q\to\pi_1(\mO)\to 1.$$
%Then $M$ (and $H$) inherits from its action on $T$ a splitting as a graph of groups $\A$ of the form $M=V\ast_E Q'$, where $E$ is a finite subgroup, $T_{n-1}\leq V$, and $Q'$ fits into the short exact sequence
%$$1\to N'\to Q'\to \pi_1(\mO)\to 1,$$
%where $N'$ is a finite group containing $N$.
\item Assume that $T_n$ has the structure of a virtually abelian flat $T_n=T_{n-1}\ast_CK$ over $T_{n-1}$. Then there exist finitely many $\Gamma$-limit quotients $N_1,\ldots,N_p$ of the test limit group $M$ and each $N_i$ admits a decomposition $\D$ along finite groups with one distinguished vertex group $U$ and all other vertex groups of $\D$ are isomorphic to subgroups of $\Gamma$. Moreover $U$ admits a decomposition as an amalgamated product of the form $U=V\ast_{A_1}A$, where the subgroup $A$ is virtually abelian and contains the virtually abelian subgroup $K$ as a subgroup of finite index, $A_1$ contains  $C$ as a subgroup of finite index and $T_{n-1}\leq V$.
\item If $T_n=T_{n-1}\ast_{E'}K$ is an extension of $T_{n-1}$ along a finite subgroup and $K$ is isomorphic to a one-ended subgroup of $\Gamma$, then there exist finitely many $\Gamma$-limit quotients $N_1,\ldots,N_p$ of the test limit group $M$ and each $N_i$ admits a decomposition $\D$ along finite groups with one distinguished vertex group $U$ and all other vertex groups of $\D$ are isomorphic to subgroups of $\Gamma$. Moreover $U$ admits a splitting of the form $U=V\ast_E P$, where $E$ is a finite subgroup containing $E'$, $T_{n-1}\leq V$ and $P$ is isomorphic to a subgroup of $\Gamma$ containing $K$.
\item If $T_n=T_{n-1}\ast_{E'}$ is an extension of $T_{n-1}$ along a finite subgroup $E'$ then $H$ admits a splitting $\A$ as an HNN-extension $M=V\ast_E$, where $E$ is a finite subgroup containing $E'$ and $T_{n-1}\leq V$. 
\end{enumerate} 
\end{lemma}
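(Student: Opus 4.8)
The plan is to derive all four cases directly from the definition of a test sequence (Definition \ref{testseq}), applied to the tower $\Comp(R) = T_n \geq T_{n-1} \geq \ldots \geq T_0 = \Gamma$ and to the test sequence $(\bar{\vp}_n|_{\Comp(R)})$ furnished by Theorem \ref{testsequence}. First I would set up the dictionary with Definition \ref{testseq}: take the level $i = n$, so that $T_i = T_n = \Comp(R)$ and $\B_i = \B$, and take $M_i := H$, the distinguished vertex group of the Dunwoody decomposition $\D$ of $M$ containing $\Comp(R)$. Then I would check the hypotheses of that definition for the sequence $\Psi_n := \bar{\vp}_n|_H$: it is stably injective (because $M$ is a test limit group, so $\underrightarrow{\ker}(\bar{\vp}_n) = 1$, and hence the same holds for the restriction to $H$), convergent after passing to a subsequence, an extension of the test sequence $\bar{\vp}_n|_{\Comp(R)}$ which by the choice of the $\bar{\vp}_n$ is short relative to $\Comp(R)$, and finally $\Comp(R)$ does not act with a global fixed point on $T$; the last two facts were arranged in the reduction steps carried out immediately before the statement of the lemma. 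Those same reduction steps --- passing to the finitely many $\Gamma$-limit quotients of $M$ whose Dunwoody decomposition relative $\Comp(R)$ has a single non-coefficient vertex group $H$ --- are exactly what makes $H$ one-ended relative $\Comp(R)$, which is the standing hypothesis on $M_i$ in Definition \ref{testseq}.

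With this dictionary in place, clauses (1)--(4) of Definition \ref{testseq} translate essentially verbatim into clauses (1)--(4) of the lemma: the group $V$, the vertex groups $V_1,\ldots,V_r$, the subgroups $A$, $A_1$, $K$, $E$, $P$ and the isomorphism of underlying graphs are supplied directly, and $T_{n-1} \leq V$ holds because $T_{n-1} \leq \Comp(R) \leq H$ and $T_{n-1}$ is elliptic in the flat or extension decomposition $\A$ of $H$ inherited from its action on $T$. The one assertion in the lemma not already literally present in Definition \ref{testseq} is in case (1), that $Q'$ contains the QH subgroup $Q \leq \Comp(R)$ as a finite index subgroup. For this I would reproduce the argument from the proof of Theorem \ref{orbifoldflat} that uses property (9) of Lemma \ref{quadratic}: since $\bar{\vp}_n|_Q$ cannot be extended to the fundamental group of a proper orbifold cover, the underlying orbifold of $Q'$ must coincide with $\mO$, whence $Q$ has finite index in $Q'$ because $Q$ and $Q'$ both surject onto $\pi_1(\mO)$ with finite kernels $E \leq E'$.

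The only place where this is more than bookkeeping is the verification that we really are in the hypotheses of Definition \ref{testseq} --- that is, that the preceding reductions leave $H$ one-ended relative $\Comp(R)$, that $\bar{\vp}_n|_H$ is short relative to $\Comp(R)$, and that $\Comp(R)$ does not fix a point in $T$. This is the content of the paragraphs immediately preceding the lemma, which terminate the reduction process using the shortening argument (Corollary \ref{shortening}) and the descending chain condition for $\Gamma$-limit groups (Proposition \ref{descendingchain}). Granting this, Lemma \ref{operation} is an immediate translation of Definition \ref{testseq}, and I expect no further obstacle beyond carefully matching the edge-group and finite-index bookkeeping in each of the four cases.
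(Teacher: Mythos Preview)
Your proposal is correct and matches the paper's approach exactly: the paper's proof is the single sentence ``This follows immediately from the definition of test sequences (Definition \ref{testseq}).'' Your write-up is simply a careful unpacking of that sentence --- setting $i=n$, $M_i=H$, checking the hypotheses arranged in the preceding reduction steps, and reading off clauses (1)--(4) --- and the extra remark about $Q\leq Q'$ of finite index is already implicit in the last line of Definition \ref{testseq} (the induced decomposition of $T_i$ from $\A$ is isomorphic to $\B_i$) together with both groups surjecting onto $\pi_1(\mO)$ with finite kernel.
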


\begin{proof}
This follows immediately from the definition of test sequences (Definition \ref{testseq}).
\end{proof}

To improve readability we split the remaining part of the proof of Theorem \ref{maintheorem} into several lemmas.

\begin{lemma}\label{induction1}
If the action of the maximal test limit group $M$ satisfies the properties of case (3) or (4) in Lemma \ref{operation}, i.e. $Comp(R)=T_n$ is an extension of $T_{n-1}$ along a finite subgroup, then there exists finitely many closures $$\Cl(\Res(R))_1,\ldots,\Cl(\Res(R))_q$$
of $\Res(R)$ and for each index $1\leq i\leq q$, there exists a generalized closure $\GCl(\Res(R))_i$ and a retraction 
$$\pi_i: G_{\Sigma}\ast_{R}\GCl(\Comp(R))_i\to \GCl(\Comp(R))_i.$$
\end{lemma}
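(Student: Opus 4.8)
The situation is the one produced by the reduction that precedes the lemma: $M$ is a maximal test limit group whose Dunwoody decomposition relative $\Comp(R)$ has a single vertex group $H\supseteq\Comp(R)$ (all other vertex groups embed in $\Gamma$), $\Comp(R)$ acts without a global fixed point on the limit tree $T$, and $\bar\vp_n|_{\Comp(R)}$ is (the restriction of) a test sequence for $\Comp(R)=T_n$, which by hypothesis is an extension of $T_{n-1}$ along a finite subgroup. The plan is to feed this into the definition of a test sequence (Definition \ref{testseq}, cases (3) and (4)), as repackaged in Lemma \ref{operation}, to get an explicit graph-of-groups description of $H$, and then to run the tower induction on $T_{n-1}$.

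First I would apply Lemma \ref{operation}(3) or (4). In case (4) ($T_n=T_{n-1}\ast_{E'}$) this gives directly a splitting $H=V\ast_{E}$ with $E$ a finite group containing $E'$ and $T_{n-1}\le V$; in case (3) one first passes to finitely many $\Gamma$-limit quotients $N_1,\dots,N_p$ of $M$, each of which has a Dunwoody-type decomposition with a distinguished vertex $U=V\ast_E P$, $E\supseteq E'$ finite, $T_{n-1}\le V$, and $P$ isomorphic to a subgroup of $\Gamma$ containing $K$ — and the other vertex groups of the decomposition embed in $\Gamma$. Replacing $M$ by these quotients costs nothing because each $v_j$ still has non-trivial image (the images were non-trivial already in $M$, hence survive in any quotient through which the relevant $\Psi_n$ factors; and we only ever pass to quotients compatible with the $\Psi_n\in A_n/B_n$). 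So after this step the whole of $M$ (respectively each $N_i$) is built from the vertex group $V$ — which contains $T_{n-1}$ and acts without global fixed point on the corresponding tree when $T_{n-1}$ is not yet $\Gamma$ — together with edge-amalgamations/HNN-extensions along finite groups and vertex groups embedding in $\Gamma$.

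Next I would build the closure and the generalized closure. The point of Definition \ref{closure} is precisely to absorb the passage from $E'$ to the larger finite group $E$: the closure $\Cl(\Res(R))_i$ replaces the top flat $T_n=T_{n-1}\ast_{E'}(K)$ by $\bar T_n=\bar T_{n-1}\ast_{E}(V')$ with $E\supseteq\iota(E')$ finite and, in case (3), $V'$ a subgroup of $\Gamma$ with $\iota(K)\le V'$, while leaving the lower levels untouched; it restricts $T_{n-1}\to\bar T_{n-1}$, and by the induction hypothesis applied to the tower $T_{n-1}$ (which has strictly smaller height) we already have closures, generalized closures and retractions at level $n-1$. The generalized closure $\GCl(\Res(R))_i$ is then the graph of groups whose distinguished vertex group is $\Cl(\Comp(R))_i$, with the extra finite-edge vertices coming from the embeddable vertex groups $P$, $D_i$ produced above; the required retraction
$$\pi_i: G_\Sigma\ast_R\GCl(\Comp(R))_i\to \GCl(\Comp(R))_i$$
is assembled as follows: on $\GCl(\Comp(R))_i$ it is the identity; on the extra $G_\Sigma$-part one composes the test limit map $\eta:G\to M$ (on the generators $x$), followed by the epimorphism $M\to N_i$, followed by the embedding $N_i\hookrightarrow \GCl(\Comp(R))_i$ furnished by the closure construction — one checks this is well-defined on $G_\Sigma\ast_R\GCl(\Comp(R))_i$ because $\eta$ already agrees with $\iota_i$ on $R$ and the closure embedding is compatible with $\iota_i$. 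That $\pi_i$ is a retraction (restricts to the identity on $\GCl(\Comp(R))_i$) is then immediate from the construction.

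\textbf{Main obstacle.} The routine part is the bookkeeping of finitely many quotients $N_1,\dots,N_p$ and the verification that $\pi_i$ descends to the amalgam; the genuinely delicate point is organizing the induction so that \emph{every} homomorphism $\vp:R\to\Gamma$ factoring through $\Res(R)$ is accounted for by one of the finitely many closures, i.e. the "covering'' bookkeeping — and this is exactly the place where, as the remarks to Theorem \ref{maintheorem} note, one must in general settle for a \emph{generalized} closure rather than an ordinary one, and where (for the covering-closure conclusion) one has to invoke Remark \ref{abeliantorsionfree} and restrict to $\Gamma$ torsion-free. For the present lemma, which only asserts existence of the closures and retractions (not the covering property), the hard work is checking that $\GCl(\Comp(R))_i$ is a genuine $\Gamma$-limit tower of the prescribed shape — this is where one uses that the finite group $E$ containing $E'$, the subgroup $V'\le\Gamma$ containing $K$, and the lower-level closures all fit together compatibly, which is guaranteed by the compatibility clauses in Definition \ref{closure} together with Lemma \ref{operation}(3),(4) — and that the extra vertex groups produced along the way really do embed in $\Gamma$, which follows from the maximality of the test limit groups via Lemma \ref{maximal1} and Lemma \ref{maximal} exactly as in the reduction preceding the statement.
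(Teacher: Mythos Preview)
Your overall architecture is right — apply Lemma \ref{operation}(3) or (4) to get the splitting $H=V\ast_E P$ (resp.\ $H=V\ast_E$) with $T_{n-1}\le V$, then descend to $T_{n-1}$ by induction on tower height, and finally read off the retraction from the test limit map. But you have skipped the step that actually sets up the induction, and without it the phrase ``apply the induction hypothesis to $T_{n-1}$'' is not justified.

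Concretely: after Lemma \ref{operation} gives you $H=V\ast_E P$, the sequence $\bar\vp_n|_V$ is \emph{not} known to be short relative $T_{n-1}$, so Definition \ref{testseq} (and hence Lemma \ref{operation} at the next level down) does not apply to it. The paper's proof inserts an explicit step here: it defines the set
\[
C_n=\{\Psi_n:\ \Psi_n|_P=\bar\vp_n|_P,\ \Psi_n|_{\Comp(R)}=\bar\vp_n|_{\Comp(R)},\ \Psi_n(v_j)\neq 1,\ \ldots\}
\]
and minimizes $\sum_i|\Psi_n(u_i)|_\Gamma$ over a fixed generating set $\{u_1,\ldots,u_n\}$ of $V$. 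The resulting convergent sequences factor through finitely many maximal test limit groups (again by the argument of Lemmas \ref{maximal1}–\ref{maximal}), and the key point is that $(\Psi_n|_V)$ is now a \emph{short} extension of a test sequence of $T_{n-1}$. This is exactly the hypothesis needed to rerun the whole machinery one level down, which is what ``the induction hypothesis'' really means here. Your proposal collapses this into ``by the induction hypothesis applied to the tower $T_{n-1}$'', but the induction hypothesis is a statement about the outcome of the entire test-sequence/shortening procedure at a given height, and you have not put yourself in a position to invoke it.

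A smaller point: your justification that the $v_j$ stay non-trivial (``the images were non-trivial already in $M$, hence survive in any quotient'') is backwards — non-triviality does not pass to quotients. The correct reason, which you gesture at parenthetically, is that the condition $\Psi_n(v_j)\neq 1$ is built into every set $A_n$, $B_n$, $C_n$ along the way, so the $v_j$ are non-trivial in each successive limit group by construction.
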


\begin{proof}
By assumption either $T_n=T_{n-1}\ast_{E'}K$ or $T_n=T_{n-1}\ast_{E'}$, where $E'$ is some finite group and $K$ is a one-ended subgroup of $\Gamma$. By Lemma \ref{operation} there exist finitely many $\Gamma$-limit quotients $N_1,\ldots,N_k$ of the test limit group $M$ and each $N_i$ admits a decomposition $\D$ along finite groups with one distinguished vertex group $H$ and all other vertex groups of $\D$ are isomorphic to subgroups of $\Gamma$.
Moreover $H$ admits a splitting as $H=V\ast_EP$ in the first case and as $H=V\ast_E$ in the second case, where $T_{n-1}\leq V$, $K\leq P$, $P$ is isomorphic to a subgroup of $\Gamma$ and $E'\leq E$, where $E$ is some finite group. 
To simplify notation we assume that $\{M_1,\ldots,M_k\}$ is already of this type. So let $M\in\{M_1,\ldots,M_k\}$.\\
We only consider the amalgamated product case, the HNN case is similar.
Let $C_n$ be the following set:
\begin{align*}
C_n=\{ &\Psi_n\in \Hom(M,\pi_1(\D')) \ |\ \Psi_n|_{D_v}=\id \text{ for all } D_v \text{ isomorphic to a subgroup of } \Gamma,\\
&\Psi_n|_{\pi_1(D)}=\id,\
 \Psi_n(v_j)\neq 1 \text{ for all } j\in\{1,\ldots,r\},\ \Psi_n|_P=\bar{\vp}_n|_P,\\
 &\Psi_n|_{\Comp(R)}=\bar{\vp}_n|_{\Comp(R)}\}.
\end{align*}
Since $\bar{\vp}_n\in C_n$ the set $C_n$ is non-empty. We fix a generating set $\{u_1,\ldots,u_n\}$ of $V$. Let $U$ be the set of all converging sequences $(\Psi_n)$, such that for all $n\in\N$, $\Psi_n\in C_n$ and  
$$\sum_{i=1}^n|\Psi_n(u_i)|_{\Gamma}$$
is minimal (with respect to the word metric on $\Gamma$). By the same arguments as in the proof of Lemma \ref{maximal1} and Lemma \ref{maximal} the collection of all sequences from $U$ factor through a finite collection of maximal test limit groups. Let $(\Psi_n)\in U$ be a sequence that converges into such a maximal test limit group, say $N$.\\ 
In particular $(\Psi_n|_V)$ converges into the action of a $\Gamma$-limit group $\tilde{V}$ on some real tree $T$ without a global fixpoint. 
Moreover $(\Psi_n|_V)$ is the extension of a test sequence of $T_{n-1}$.  Since $T_{n-1}$ is a tower of height $n-1$, we can apply the induction hypothesis. Denote by $\tilde{H}$ the limit group into which $\Psi_n|_H$ converges. It follows that $\tilde{H}$ admits a decomposition along finite edge groups with one distinguished vertex $h_0$ whose vertex group $H_0$ is a closure of the tower $T_n=\Comp(R)$ and all other vertices have vertex groups isomorphic to subgroups of $\Gamma$. Therefore $N$ admits a decomposition along finite edge groups with one distinguished vertex whose vertex group is a closure of the tower $T_n=\Comp(R)$ and all other vertices have vertex groups isomorphic to subgroups of $\Gamma$. Hence we can assume that every extension of a test sequence to a convergent sequence of homomorphisms from $G$ to $\Gamma$ factors through one of finitely many maximal test limit groups (still denoted) $M_1,\ldots,M_m$ with corresponding quotient maps $\eta_1,\ldots,\eta_m$. Each $M_i$ admits a decomposition $\A$ along finite edge groups with one vertex $v_0$ with vertex group $\Cl(\Comp(R))_i$ which is a closure of $\Comp(R)$ and all other vertex groups are isomorphic to subgroups of $\Gamma$. By construction the images of the words $v_j$ under the maps $\eta_1,\ldots,\eta_m$ are non-trivial in these test limit groups.\\The map $\eta_i: G\to \GCl(\Comp(R))_i$ can be extended in the obvious way to a retraction $$G_{\Sigma}\ast_{R}\GCl(\Comp(R))_i\to \GCl(\Comp(R))_i$$
which maps the words $v_j$ to non-trivial elements.
\end{proof}

%\begin{lemma}\label{induction1b}
%If the action of the maximal test limit group $M$ satisfies property (2) in Lemma \ref{operation}, i.e. $T_n=T_{n-1}\ast_EQ$ for some finite group $E$ and some QH vertex group $Q$, fitting into the short exact sequence
%$$1\to N\to Q\to\pi_1(\mO)\to 1,$$
%then there exist finitely many closures $$\Cl(\Res)_1(s,z,y,a),\ldots,\Cl(\Res)_q(s,z,y,a),$$
%and for each index $1\leq i\leq q$, there exists a generalized closure $\GCl(\Res)_i(s,z,y,a)$ and a retraction 
%$$\pi_i: G_{\Sigma}\ast_{R(y,a)}\GCl(\Res)_i(s,z,y,a)\to \GCl(\Res)_i(s,z,y,a).$$
%\end{lemma}
%
%\begin{proof}
%As in the proof of Lemma \ref{induction1} there exists a retraction $\pi_i$ from $N_i=\pi_1(\A_i)$ onto $\Cl(\Res)_i$ such that $\pi_i\circ\eta_i(v_j)\neq 1$.
%\end{proof}

\begin{lemma}
If the action of the maximal test limit group $M$ satisfies the properties of case (1) in Lemma \ref{operation}, i.e. $Comp(R)=T_n$ is an orbifold flat over $T_{n-1}$, then there exist finitely many closures $$\Cl(\Res(R))_1,\ldots,\Cl(\Res(R))_q$$
of $\Res(R)$ and for each index $1\leq i\leq q$, there exists a generalized closure $\GCl(\Res(R))_i$ and a retraction 
$$\pi_i: G_{\Sigma}\ast_{R}\GCl(\Comp(R))_i\to \GCl(\Comp(R))_i.$$
\end{lemma}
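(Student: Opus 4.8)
The argument will closely follow that of Lemma \ref{induction1}, with the extension along a finite subgroup replaced by the orbifold-flat operation. Recall that in this case $\Comp(R)=T_n$ is an orbifold flat over $T_{n-1}$ with QH vertex group $Q$ fitting into $1\to E\to Q\to\pi_1(\mO)\to 1$, and that the test sequence $(\lambda_n)$ for $\Comp(R)$ was produced from a test sequence for $T_{n-1}$ by the orbifold-flat construction of Theorem \ref{orbifoldflat}; in particular $\bar\vp_n|_{T_{n-1}}$ is again a test sequence for $T_{n-1}$. By Lemma \ref{operation}(1) the vertex group $H$ of $\D$ containing $\Comp(R)$ inherits from its action on the limit tree an orbifold-flat decomposition $\A$ over a subgroup $V\supseteq T_{n-1}$, with a QH vertex group $Q'$ sitting in $1\to E'\to Q'\to\pi_1(\mO)\to 1$ where $E\leq E'$ and $Q\leq Q'$ has finite index, with point-stabilizer vertex groups $V_1,\dots,V_r$ whose fundamental group is $V$, and such that $\A$ and $\B$ have isomorphic underlying graphs with the edge groups of $\A$ being finite-index supergroups of the corresponding edge groups of $\B$.

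The plan is to isolate the factor $V$ and apply the induction on tower height. First I would freeze the QH part: replacing $Q'$, every $V_i$ isomorphic to a subgroup of $\Gamma$, and every vertex group of $\D$ other than $H$ by copies of $\Gamma$ yields, by the Bestvina--Feighn combination theorem, a hyperbolic group $\pi_1(\D')$, and I consider the sets
$$A_n=\{\Psi_n\in\Hom(M,\pi_1(\D'))\ |\ \Psi_n|_{Q'}=\bar\vp_n|_{Q'},\ \Psi_n|_{\pi_1(D)}=\id,\ \Psi_n|_{D_v}=\id\text{ on the }\Gamma\text{-vertices},\ \Psi_n(v_j)\neq 1\ \forall j\},$$
which are non-empty since $\bar\vp_n\in A_n$. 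Among the converging sequences in $A_n$ for which $\sum_i|\Psi_n(u_i)|_\Gamma$ is minimal over a fixed generating set $\{u_1,\dots,u_l\}$ of $V$, the usual maximality arguments (Lemma \ref{maximal1} and Lemma \ref{maximal}) show that all such sequences factor through finitely many maximal test limit groups; fixing one, say $N$, the restriction $\Psi_n|_V$ converges into a $\Gamma$-limit group $\tilde V$ with no global fixed point, and $\Psi_n|_V$ is an extension of a test sequence for the tower $T_{n-1}$ of height $n-1$.

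At this point I would invoke the induction hypothesis on tower height: the structural analysis already established for $T_{n-1}$ shows that $\tilde V$, and hence also the limit $\tilde H$ of $\Psi_n|_H$, admits a decomposition along finite edge groups with one distinguished vertex group a tower closure $\Cl(T_{n-1})$ of $T_{n-1}$ and all other vertex groups isomorphic to subgroups of $\Gamma$. The final step is reassembly: gluing $Q'$ back onto this distinguished vertex group along the edge groups of $\A$ (finite-index supergroups of those of $\B$), and using that $Q'$ has the same underlying orbifold $\mO$ as $Q$ and finite kernel $E'\supseteq E$, produces precisely an orbifold flat over $\Cl(T_{n-1})$ in the sense of Definition \ref{closure}; hence $\tilde H$ contains a tower closure $\Cl(\Comp(R))_i$ of $\Comp(R)=T_n$, which by Definition \ref{closure} induces a closure $\Cl(\Res(R))_i$ of the resolution, and $N$ is a generalized closure $\GCl(\Comp(R))_i$. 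The test-limit map $\eta_i$ extends in the obvious way to a retraction $G_\Sigma\ast_R\GCl(\Comp(R))_i\to\GCl(\Comp(R))_i$, with $\eta_i(v_j)\neq 1$ by construction. Carrying this out over the finitely many maximal test limit groups, and invoking the descending chain condition (Proposition \ref{descendingchain}) to guarantee that the finitely many length-minimization passes needed to force the $V_i$ and the auxiliary vertex groups to become subgroups of $\Gamma$ terminate, produces the finite family $\Cl(\Res(R))_1,\dots,\Cl(\Res(R))_q$.

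The main obstacle I anticipate is the reassembly step: one must check carefully that the orbifold-flat decomposition $\A$ of $H$ furnished by Lemma \ref{operation}(1), once the point-stabilizer vertices $V_1,\dots,V_r$ have been absorbed into the tower closure of $T_{n-1}$, really satisfies the orbifold-flat clause of Definition \ref{closure} --- that is, that the finite kernel $E'$, the new edge groups, and the underlying orbifold all match the definition and that no spurious edges or vertices survive the absorption. The remaining finiteness bookkeeping is handled by Lemma \ref{maximal1}, Lemma \ref{maximal}, the length-minimization, and Proposition \ref{descendingchain}, exactly as in Lemma \ref{induction1}.
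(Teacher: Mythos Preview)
Your approach is essentially identical to the paper's: freeze $Q'$ and the $\Gamma$-vertices, minimize length over a generating set of $V$, pass to finitely many maximal test limit groups via Lemmas \ref{maximal1} and \ref{maximal}, and then apply the induction hypothesis on tower height to the restriction $\Psi_n|_V$. The paper is terser and simply concludes with ``the claim follows in the same way as in the proof of Lemma \ref{induction1}''; your reassembly discussion is extra caution the paper does not spell out.

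There is one small but genuine omission in your constraint set $A_n$: you must also impose $\Psi_n|_{\Comp(R)}=\bar\vp_n|_{\Comp(R)}$, exactly as the paper does in its set $C_n$. Fixing $\Psi_n$ on $Q'$ alone pins down $\Psi_n|_Q$ but not $\Psi_n|_{T_{n-1}}$, since $T_{n-1}\leq V$ and you are minimizing freely over $V$. Without this constraint the length-minimization may alter $\Psi_n|_{T_{n-1}}$, and then your assertion that $\Psi_n|_V$ is an extension of a test sequence for $T_{n-1}$ is unjustified and the induction hypothesis does not apply. You clearly intend this constraint (you use its consequence), so add it to $A_n$ and the argument goes through.
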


\begin{proof}
By assumption $T_n$ is an orbifold flat over $T_{n-1}$, i.e. there exists some finite group $N$, a cone-type orbifold $\mO$ and a short exact sequence 
$$1\to N\hookrightarrow Q\twoheadrightarrow \pi_1(\mO)\to 1,$$
such that $T_n$ admits a graph of groups decomposition $\B$ with one vertex $q$ with vertex group $Q$ and several vertices $v_1,\ldots, v_r$ with vertex groups $H_1,\ldots, H_r$. Moreover $T_{n-1}$ admits a decomposition as a graph of groups with finite edge groups, whose vertex groups are precisely $H_1,\ldots,H_r$. Edges adjacent to $q$ in $\B$ are either finite or have virtually cyclic edge groups which correspond to boundary components of $\mO$ and every boundary component of $\mO$ corresponds to precisely one adjacent edge.\\
Since we are in case (1) of Lemma \ref{operation} $H$ inherits from its action on $T$ an orbifold flat decomposition $\A$ over $V$ with the same underlying graph as $\B$. One vertex $q'$ in $\A$ has vertex group $Q'$, which fits into the short exact sequence
$$1\to N'\to Q'\to\pi_1(\mO)\to 1,$$
where $N'$ is a finite group, $N\leq N'$ and the other vertices of $\A$ have vertex groups $V_1,\ldots,V_r$ and $H_i\leq V_i$ for all $i\in\{1,\ldots, r\}$. Edges adjacent to $q'$ are isomorphic to finite index supergroups of the corresponding ones in $\B$. In particular $T_{n-1}\leq V$.\\
Let $C_n$ be the following set:
\begin{align*}
C_n=\{ &\Psi_n\in \Hom(M,\pi_1(\D')) \ |\ \Psi_n|_{D_v}=\id \text{ for all } v\in VD\setminus \{v_0\},\\
&\Psi_n|_{\pi_1(D)}=\id,\
 \Psi_n(v_j)\neq 1 \text{ for all } j\in\{1,\ldots,r\},\ \Psi_n|_{Q'}=\bar{\vp}_n|_{Q'},\\
&\Psi_n|_{\Comp(R)}=\bar{\vp}_n|_{\Comp(R)}\}.
\end{align*}
Since $\bar{\vp}_n\in C_n$ the set $C_n$ is non-empty. We fix a generating set $\{u_1,\ldots,u_n\}$ of $V$. Let $U$ be the set of all converging sequences $(\Psi_n)$, such that for all $n\in\N$, $\Psi_n\in C_n$ and  
$$\sum_{i=1}^n|\Psi_n(u_i)|_{\Gamma}$$
is minimal (with respect to the word metric on $\Gamma$). By the same arguments as in the proof of Lemma \ref{maximal1} and Lemma \ref{maximal} the collection of all sequences from $U$ factor through a finite collection of maximal test limit groups. Let $(\Psi_n)\in U$ be a sequence that converges into such a maximal test limit group, say $N$.\\ 
In particular $(\Psi_n|_V)$ converges into the action of a $\Gamma$-limit group $\tilde{V}$ on some real tree $T$ without a global fixpoint. 
Moreover $(\Psi_n|_V)$ is the extension of a test sequence of $T_{n-1}$. Since $T_{n-1}$ is a tower of height $n-1$, we can apply the induction hypothesis and the claim follows in the same way as in the proof of Lemma \ref{induction1}. 
\end{proof}

\begin{lemma}
If the action of the maximal test limit group $M$ satisfies the properties of case (2) in Proposition \ref{operation}, i.e. $Comp(R)=T_n$ is a virtually abelian flat over $T_{n-1}$, then there exist finitely many closures $$\Cl(\Res(R))_1,\ldots,\Cl(\Res(R))_q$$
of $\Res(R)$ and for each index $1\leq i\leq q$, there exists a generalized closure $\GCl(\Res(R))_i$ and a retraction 
$$\pi_i: G_{\Sigma}\ast_{R}\GCl(\Comp(R))_i\to \GCl(\Comp(R))_i.$$
\end{lemma}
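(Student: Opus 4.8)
The plan is to follow the same template used in the proofs of Lemma \ref{induction1} and the orbifold-flat case, adapting it to the virtually abelian flat $T_n = T_{n-1} \ast_C K$. By Lemma \ref{operation}(2), the maximal test limit group $M$ has finitely many $\Gamma$-limit quotients $N_1,\ldots,N_p$, each admitting a decomposition $\D$ relative to $G$ along finite subgroups with one distinguished vertex group $U$ and all other vertex groups isomorphic to subgroups of $\Gamma$, where $U = V \ast_{A_1} A$ with $A$ virtually abelian containing $K$ as a finite-index subgroup, $A_1$ containing $C$ as a finite-index subgroup, and $T_{n-1} \leq V$. So first I would pass to these finitely many quotients and (abusing notation, as is done throughout this proof) assume $M$ is already of this form, i.e.\ $M = V \ast_{A_1} A$ with the stated properties, and that the Dunwoody decomposition $\D$ relative $\Comp(R)$ is trivial.

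Next I would run the now-standard shortening/minimality argument on the complementary piece: fix a generating set $\{u_1,\ldots,u_n\}$ of $V$, define for each $n$ the set
\begin{align*}
C_n = \{ \Psi_n \in \Hom(M,\pi_1(\D')) \ |\ & \Psi_n|_{D_v} = \id \text{ for all } D_v \text{ isomorphic to a subgroup of } \Gamma,\\
& \Psi_n|_{\pi_1(D)} = \id,\ \Psi_n(v_j) \neq 1 \text{ for all } j \in \{1,\ldots,r\},\\
& \Psi_n|_A = \bar{\vp}_n|_A,\ \Psi_n|_{\Comp(R)} = \bar{\vp}_n|_{\Comp(R)}\},
\end{align*}
which is nonempty since $\bar{\vp}_n \in C_n$, and let $U$ be the set of converging sequences $(\Psi_n)$ with $\Psi_n \in C_n$ minimizing $\sum_{i=1}^n |\Psi_n(u_i)|_{\Gamma}$. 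As in Lemma \ref{maximal1} and Lemma \ref{maximal}, these factor through finitely many maximal test limit groups; pick one, say $N$, into which $(\Psi_n)$ converges. Then $(\Psi_n|_V)$ converges into the action of a $\Gamma$-limit group $\tilde{V}$ on a real tree, and — crucially — it is an extension of a test sequence for $T_{n-1}$. Since $T_{n-1}$ is a tower of height $n-1$, the induction hypothesis applies: $\tilde{V}$ admits a decomposition along finite edge groups with one distinguished vertex whose vertex group is a closure $\Cl(\Comp(T_{n-1}))$ of the subtower, and all other vertex groups isomorphic to subgroups of $\Gamma$. Gluing back the virtually abelian flat $U = V \ast_{A_1} A$ on top of this closure produces a virtually abelian flat over $\Cl(\Comp(T_{n-1}))$, and by Remark \ref{abeliantorsionfree} together with Definition \ref{closure}(1) this is precisely (after possibly passing to the finitely many cosets that arise) a closure $\Cl(\Comp(R))$ of $\Comp(R)$ — or, allowing the extra vertex groups isomorphic to subgroups of $\Gamma$ that cannot be absorbed in the torsion case, a generalized closure $\GCl(\Comp(R))$. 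The resulting test limit map $\eta_i : G \to \GCl(\Comp(R))_i$ extends in the obvious way to a retraction $\pi_i : G_{\Sigma} \ast_R \GCl(\Comp(R))_i \to \GCl(\Comp(R))_i$ sending each $v_j$ to a nontrivial element, which is exactly the conclusion.

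The main obstacle I expect is controlling exactly how the virtually abelian group $A$ sits over the closure coming from the lower tower, i.e.\ verifying that the amalgam $V \ast_{A_1} A$ really fits the closure axioms in Definition \ref{closure}: one needs $A_1 \to A$ to be injective with finite-index image, $Z_{A_1}$ to be a direct factor of $Z_A$, and $C$ to be finite-index in $A_1$ — and in the presence of torsion these facts require the careful analysis of the structure of $A^+/C^+$ and the splitting lemmas (Lemma 4.9, Corollary 4.12, Corollary 4.13 in \cite{rewe}) used in Theorem \ref{testabelian2}. A secondary difficulty, and the reason for the generalized closure in the torsion case, is that the peripheral subgroup $C$ may have infinite index issues or the retraction $G^{i+1} \to G^i$ may fail to exist, so one must carefully track which vertex groups isomorphic to subgroups of $\Gamma$ survive; this is the same phenomenon already flagged in Remark (1) after Theorem \ref{maintheorem}, and it is why the statement does not claim an ordinary closure. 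Apart from these bookkeeping points, the argument is a direct transcription of the preceding lemmas and the induction, so I would keep it brief and refer back to Theorem \ref{testabelian2} for the virtually abelian structure theory and to Lemma \ref{induction1} for the retraction construction.
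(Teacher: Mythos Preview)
Your proposal is correct and follows essentially the same approach as the paper: pass to the finitely many quotients from Lemma \ref{operation}(2), define the set $C_n$ fixing the restrictions on $A$ and $\Comp(R)$, minimize over generators of $V$, and apply the induction hypothesis to the height-$(n-1)$ tower $T_{n-1}$ to obtain the closure structure. The paper's proof is slightly terser about the final gluing step (it just writes ``The claim follows'') whereas you spell out the closure-axiom bookkeeping, but the argument is the same.
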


\begin{proof}
By assumption $T_n$ has the structure of a virtually abelian flat $T_n=T_{n-1}\ast_CK$ over $T_{n-1}$. It follows from Lemma \ref{operation} that there exist finitely many $\Gamma$-limit quotients $N_1,\ldots,N_k$ of the test limit group $M$ and each $N_i$ admits a decomposition $\D$ along finite groups with one distinguished vertex group $H_0$ and all other vertex groups of $\D$ are isomorphic to subgroups of $\Gamma$. Moreover $H_0$ admits a decomposition as an amalgamated product of the form $H_0=V\ast_{A_1}A$, where the subgroup $A$ is virtually abelian and contains the virtually abelian subgroup $K$ as a subgroup of finite index, $A_1$ contains  $C$ as a subgroup of finite index and $T_{n-1}\leq V$.
To simplify notation we assume that $\{M_1,\ldots,M_k\}$ is already of this type. So let $M\in\{M_1,\ldots,M_k\}$.\\
Let $C_n$ be the following set:
\begin{align*}
C_n=\{ &\Psi_n\in \Hom(M,\pi_1(\D')) \ |\ \Psi_n|_{D_v}=\id \text{ for all } D_v \text{ isomorphic to a subgroup of } \Gamma,\\
&\Psi_n|_{\pi_1(D)}=\id,\
 \Psi_n(v_j)\neq 1 \text{ for all } j\in\{1,\ldots,r\},\ \Psi_n|_A=\bar{\vp}_n|_A,\\
 &\Psi_n|_{\Comp(R)}=\bar{\vp}_n|_{\Comp(R)}\}
\end{align*}
Since $\bar{\vp}_n\in C_n$ the set $C_n$ is non-empty. We fix a generating set $\{u_1,\ldots,u_n\}$ of $V$. Let $U$ be the set of all converging sequences $(\Psi_n)$, such that for all $n\in\N$, $\Psi_n\in C_n$ and  
$$\sum_{i=1}^n|\Psi_n(u_i)|_{\Gamma}$$
is minimal (with respect to the word metric on $\Gamma$). By the same arguments as in the proof of Lemma \ref{maximal1} and Lemma \ref{maximal} the collection of all sequences from $U$ factor through a finite collection of maximal test limit groups. Let $(\Psi_n)\in U$ be a sequence that converges into such a maximal test limit group, say $N$.\\ 
In particular $(\Psi_n|_V)$ converges into the action of a $\Gamma$-limit group $\tilde{V}$ on some real tree $T$ without a global fixpoint. 
Moreover $(\Psi_n|_V)$ is the extension of a test sequence of $T_{n-1}$.  Since $T_{n-1}$ is a tower of height $n-1$, we can apply the induction hypothesis. Denote by $\tilde{H}_0$ the limit group into which $\Psi_n|_{H_0}$ converges. It follows that $\tilde{H}_0$ admits a decomposition along finite edge groups with one distinguished vertex $h_1$ whose vertex group $H_1$ is a closure of the tower $T_n=\Comp(R)$ and all other vertices have vertex groups isomorphic to subgroups of $\Gamma$. Therefore $N$ admits a decomposition along finite edge groups with one distinguished vertex whose vertex group is a closure of the tower $T_n=\Comp(R)$ and all other vertices have vertex groups isomorphic to subgroups of $\Gamma$. Hence we can assume that every extension of a test sequence to a convergent sequence of homomorphisms from $G$ to $\Gamma$ factors through one of finitely many maximal test limit groups (still denoted) $M_1,\ldots,M_q$, with corresponding quotient maps $\eta_1,\ldots,\eta_q$. Each $M_i$ admits a decomposition $\A_i$ along finite edge groups with one vertex $v_0$ with vertex group $\Cl(\Comp(R))_i$ which is a closure of $\Comp(R)$ and all other vertex groups are isomorphic to subgroups of $\Gamma$. By construction the images of the words $v_j$ under the maps $\eta_1,\ldots,\eta_q$ are non-trivial in these test limit groups. The claim follows.
\end{proof}

Hence in all cases there exists for every $i\in\{1,\ldots,m\}$ some homomorphism $$\Psi_i: \GCl(\Comp(R))_i\to \Gamma,$$ such that $\Psi_i\circ\eta_i(v_j)\neq 1$ in $\Gamma$. It remains to show that if $\Gamma$ is torsion-free the finitely many closures $\Cl(\Res(R))_1,\ldots, \Cl(\Res(R))_q$ form a covering closure.\\
So let from now on $\Gamma$ be a torsion-free hyperbolic group and suppose that $$\Cl(\Res(R))_1,\ldots, \Cl(\Res(R))_q$$ do not form a covering closure. Note that in particular $R$ and $\Comp(R)$ are torsion-free.
Then there exists a homomorphism $\vp: R\to\Gamma$ that factors through $\Res(R)$ which cannot be extended to a homomorphism of any of the closures $\Cl(\Res(R))_i$. By Lemma \ref{completion property} $\vp$ can be extended to a homomorphism (still denoted $\vp$) from $\Comp(R)$ to $\Gamma$ which factors through the completed resolution $\Comp(\Res(R))$. $\Comp(R)$ has the structure of a $\Gamma$-limit tower  
$$\Comp(R)=T_l\geq\ldots\geq T_1\geq T_0=\Gamma$$
and hence there are two possibilities:
\begin{enumerate}[(a)]
\item There exists $i\in\{1,\ldots,l\}$ such that $T_i=T_{i-1}\ast_CA$ is an abelian flat, i.e. either $C$ is trivial or infinite free abelian and $A=\langle a_1,\ldots,a_m\rangle$ is a free abelian subgroup of rank $m\geq 2$, and $\vp|_A$ cannot be extended to the closure of $A$ corresponding to any of the closures $\Cl(\Comp(R))_1,\ldots,\Cl(\Comp(R))_q$. 
\item  There exists $i\in\{1,\ldots,l\}$ such that $T_i=T_{i-1}\ast H$, where $H$ is isomorphic to a subgroup of $\Gamma$ and $\vp|_H$ cannot be extended to the closure of $H$ corresponding to any of the closures $\Cl(\Comp(R))_1,\ldots,\Cl(\Comp(R))_q$. 
\end{enumerate}
Assume we are in the first case. Denote by $f_1,\ldots,f_q$ the respective embeddings of the free abelian group $A$ into $\Cl(\Comp(R))_1,\ldots,\Cl(\Comp(R))_q$. For all $i\in\{1,\ldots,q\}$ there exists a maximal free abelian subgroup $\bar{A}_i\leq \Cl(\Comp(R))_i$ that contains $f_i(A)$ as a subgroup of finite index. Following Remark \ref{abeliantorsionfree} we can either associate a subgroup $U_{f_i}\leq \Z^m$ to every embedding $f_i$ (if $C=1$) or we can associate a coset $K_{f_i}+U_{f_i}\leq \Z^m$ to every embedding $f_i$ (if $C\neq 1$). We assume without loss of generality that we can associate a subgroup $U_{f_i}\leq \Z^m$ to every embedding $f_i$ (the other case is identical). Since  $\Cl(\Res(R))_1,\ldots,\Cl(\Res(R))_q$ do not form a covering closure, the union of the subgroups $\bigcup_{i=1}^q U_{f_i}$ does not cover $\Z^m$ by Remark \ref{abeliantorsionfree}. Hence there exists an element
$$p=(p_1,\ldots,p_m)\in\Z^m\setminus \bigcup_{i=1}^q U_{f_i}.$$
Let $d_i:=|\Z^m:U_{f_i}|$ for all $i\in\{1,\ldots,q\}$. Then clearly for every element $v\in\Z^m$
$$d_1\cdots d_qv+p\notin U_{f_1},\ldots,U_{f_q}.$$
By Lemma \ref{opq} for all $n\in\N$ there exist elements $v^{(n)}\in \Z^m$ and a test sequence $(\lambda_n)\subset\Hom(\Comp(R),\Gamma)$, such that $$\lambda_n(a_i)=u^{d_1\cdots d_qv^{(n)}_i+p_i}$$ for all $i\in\{1,\ldots,m\}$ and some element $u\in\Gamma$ with no root. By the assumption of the theorem $\lambda_n$ can be extended to a homomorphism (still denoted) $\lambda_n:G_{\Sigma}\to\Gamma$ such that $\vp_n(v_j)\neq 1$ for all $j\in\{1,\ldots,r\}$. By the construction of $\Cl(\Res(R))_1,\ldots,\Cl(\Res(R))_q$ in the proof, there exist $i\in\{1,\ldots,q\}$, quotient maps $\eta_1,\ldots,\eta_p$, modular automorphism $\alpha_1,\ldots,\alpha_p$ and a homomorphism $\bar{\lambda}_n:\Cl(\Comp(R))_i\to\Gamma$ such that 
$$\lambda_n=\bar{\lambda}_n\circ\eta_p\circ\alpha_p\circ\ldots\circ\eta_1\circ\alpha_1.$$
Moreover $\eta_p\circ\alpha_p\circ\ldots\circ\eta_1\circ\alpha_1|_A=f_i$. Hence $\lambda_n=\bar{\lambda}_n\circ f_i$ and therefore by Remark \ref{abeliantorsionfree} $d_1\cdots d_qv^{(n)}+p\in U_{f_i}$ a contradiction to the choice of $p$. Hence $$\Cl(\Res(R))_1,\ldots, \Cl(\Res(R))_q$$  is a covering closure.\\
So now assume that we are in the second case. Again denote by $f_1,\ldots,f_q$ the respective embeddings of the subgroup $H$ into $\Cl(\Comp(R))_1,\ldots,\Cl(\Comp(R))_q$. For all $i\in\{1,\ldots,q\}$ denote by $V_i$ the maximal subgroup of $\Cl(\Comp(R))_i$ which is isomorphic to a subgroup of $\Gamma$ and contains $f_i(H)$. By assumption $\vp|_H$ cannot be extended to $V_1,\ldots,V_q$. Let $(\lambda_n)\subset\Hom(\Comp(R),\Gamma)$ be a test sequence. Then $\lambda_n|_H$ is an embedding of $H$ into $\Gamma$. Changing $\lambda_n|_H$ to $\vp|_H$ yields a new test sequence, say $(\Psi_n)$.\\
Again by the construction of $\Cl(\Res(R))_1,\ldots,\Cl(\Res(R))_q$ in the proof, there exist $i\in\{1,\ldots,q\}$, quotient maps $\eta_1,\ldots,\eta_p$, modular automorphism $\alpha_1,\ldots,\alpha_p$ and a homomorphism $\bar{\Psi}_n:\Cl(\Comp(R))_i\to\Gamma$ such that 
$$\Psi_n=\bar{\Psi}_n\circ\eta_p\circ\alpha_p\circ\ldots\circ\eta_1\circ\alpha_1.$$
Moreover $\eta_p\circ\alpha_p\circ\ldots\circ\eta_1\circ\alpha_1|_H=f_i$. Hence $\Psi_n|_H=\bar{\Psi}_n\circ f_i$ and therefore 
$$\vp|_H=\Psi_n|_H=\bar{\Psi}_n|_{V_i}\circ f_i,$$ a contradiction to the assumption that $\vp|_H$ cannot be extended to $V_i$. We have shown that $$\Cl(\Res(R))_1,\ldots, \Cl(\Res(R))_q$$ is a covering closure.
\end{proof}

\newpage


\begin{thebibliography}{9999999}
\addcontentsline{toc}{section}{Bibliography}
%\bibitem[Ar]{arzha} G. N. Arzhantseva, On quasiconvex subgroups of word hyperbolic groups, Geometriae Dedicata 87 (2001), 191-208.
\bibitem[An]{simon} S. Andr\'e, Hyperbolicity and cubulability are preserved under elementary equivalence, arXiv:1801.09411.
\bibitem[G.Ba]{baumslag} G. Baumslag, On generalized free products, Math. Z. 78 (1962), 423-438.
\bibitem[BeFe]{feighn} M. Bestvina, M. Feighn, Stable actions of groups on real trees, Invent. Math. 121 (1995), 287-321.
%\bibitem[BoGe]{oleg} O. Bogopolski, V. Gerasimov, Finite subgroups of hyperbolic groups, Algebra and Logic, Vol. 34, No. 6 (1995), 343-345.
%\bibitem[BCD]{brady} N. Brady, M. Clay, P. Dani, Morse theory and conjugacy classes of finite subgroups II.
\bibitem[BySk]{sklinos} A. Byron, R. Sklinos, Fields definable in the free group, arXiv:1512.07922 (2015).
\bibitem[ChGu]{champetierguirardel} C. Champetier, V. Guirardel, Limit groups as limits of free groups, Israel J. Math. 146 (2005), 1-75.
\bibitem[Ch]{chiswell} I. Chiswell, Introduction to $\Lambda$-trees, World Scientific Publishing Co. Inc., River Edge, NJ, 2001, xii+315 pages.
\bibitem[GLP]{gaboriau} D. Gaboriau, G. Levitt, F. Paulin, Pseudogroups of isometries of R and Rips' theorem of free actions on R-trees, Israel J. Math. 87 (1994), no. 1-3, 403-428.
\bibitem[Go]{javier} J. de la Nuez Gonz\`{a}lez, On expansions of non-abelian free groups by cosets of a finite index subgroup, arXiv:1707.03066 (2017).
\bibitem[Gu]{guirardel} V. Guirardel, Actions of finitely generated groups on $\R$-trees, Ann. Inst. Fourier 58 (2008), 159-211. 
\bibitem[GuLe]{guirardeljsj} V. Guirardel, G. Levitt, JSJ decompositions of groups, arXiv:1602.05139 (2016).
\bibitem[Gr]{grovessolo} D. Groves, Limit groups for relatively hyperbolic groups II, Makanin-Razborov diagrams, Geom. Topol. 9 (2005), 2319-2358.
\bibitem[GrHu]{groves} D. Groves, M. Hull, Homomorphisms to acylindrically hyperbolic groups I: Equationally noetherian groups and families, arXiv:1704.03491 (2017).
\bibitem[He]{heil} S. Heil, JSJ decompositions of doubles of free groups, 	arXiv:1611.01424.
\bibitem[JaSe]{jaligot} E. Jaligot, Z. Sela, Makanin-Razborov diagrams over free products, Illinois Journal of Math. 54 (2010), 19-68.
\bibitem[KMW]{kmw} I. Kapovich, A. Myasnikov, R. Weidmann, Foldings, graphs of groups and the membership problem, Internat. J. Algebra Comput. 15 (2005), no.1, 95-128.
\bibitem[KhMy]{kharlampovich/myasnikov} O. Kharlampovich, A. Myasnikov, Elementary theory of free non-abelian groups, J. Algebra 302 (2006), 451-552.
\bibitem[Le]{levitt} G. Levitt, Graphs of actions on R-trees, Comment. Math. Helv. 69 (1994), no. 1, 28-38.
\bibitem[LePa]{levittpaulin} G. Levitt, F. Paulin, Geometric group actions on trees, Amer. J. Math. 119 (1997), 83-102.
\bibitem[Li]{linnell} P. Linnell, On accessibility of groups, J. Pure Appl. Algebra 30 (1983), 39-46.
\bibitem[MaMi]{masur} H. Masur, Y. Minsky, Geometry of the complex of curves I: Hyperbolicity, Invent. Math. 138 (1999), 103-149.
\bibitem[Me]{merzlyakov}Yu. I. Merzlyakov, Positive formulae on free groups, Algebra i Logika 5 (1966), 257-266.
\bibitem[Ol]{olshanski} A. Yu. Ol'shanskii, On residualing homomorphisms and G-subgroups of hyperbolic groups, Internat. J. Algebra Comp. 3 (1993), no. 4, 365-409. 
\bibitem[Pa]{paulin} F. Paulin, Outer automorphisms of hyperbolic groups and small actions on R-trees, Arboreal group theory (1991), 331-343.
\bibitem[Re]{reinfeld} C. Reinfeldt, PhD thesis.
\bibitem[ReWe]{rewe} C. Reinfeldt, R. Weidmann, Makanin-Razborov diagrams for hyperbolic groups, preprint (2014).
\bibitem[RiSe]{shortening} E. Rips, Z. Sela, Structure and rigidity in hyperbolic groups I, Geom. Funct. Anal. 4, No.3 (1994), 337-371
\bibitem[RiSe2]{rips} E. Rips, Z. Sela, Cyclic splittings of finitely presented groups and the canonical JSJ decomposition, Ann. of Math. 146 (1997), 53-109.
\bibitem[Sc]{scott} P. Scott, Subgroups of surface groups are almost geometric, J. London Math. Soc. 17 (3) (1978), 555-565.
\bibitem[Se1]{sela1} Z. Sela, Diophantine geometry over groups I: Makanin-Razborov diagrams, Publ. Math. Inst. Hautes \'{e}tudes Sci. 93 (2001), 31-105.
\bibitem[Se2]{sela} Z. Sela, Diophantine geometry over groups II: Completions, closures and formal solutions, Israel J. Math. 134 (2003), 173-254.
\bibitem[Se3]{sela3} Z. Sela, Diophantine geometry over groups III: Rigid and solid solutions, Israel J. Math. 147 (2005), 1-73.
\bibitem[Se4]{sela4} Z. Sela, Diophantine geometry over groups IV: An iterative procedure for validation of a sentence, Israel J. Math. 143 (2004), 1-130.
\bibitem[Se5.1]{sela5.1} Z. Sela, Diophantine geometry over groups V1: Quantifier Elimination I, Israel J. Math. 150 (2005), 1-197.
\bibitem[Se5.2]{sela5.2} Z. Sela, Z. Sela, Diophantine geometry over groups V2: Quantifier Elimination II, Geom. Funct. Anal. 16 (2006), 537-706.
\bibitem[Se6]{sela6} Z. Sela, Diophantine geometry over groups VI: The elementary theory of a free group, Geom. Funct. Anal. 16 (2006), 707-730.
\bibitem[Se7]{selahyp} Z. Sela, Diophantine geometry over groups VII: The elementary theory of a hyperbolic group, Proc. London Math. Soc.(3) 99 (2009), 217-273.
\bibitem[Se8]{sela8} Z. Sela, Endomorphisms of hyperbolic groups I: The Hopf property, Topology Vol.38 (1999), 301-321.
\bibitem[Se9]{sela9} Z. Sela, Acylindrical accessibility, Invent. Math. 129, no.3 (1997), 527-565. 
\bibitem[Se10]{sela10} Z. Sela, Diophantine geometry over groups X: The elementary theory of free products of groups, preprint. 
\bibitem[Se11]{sela11} Z. Sela, Word equations I: Pairs and their Makanin-Razborov diagrams, arXiv:1607.05431 (2016).
\bibitem[Ser]{serre} J.-P. Serre, Trees, Springer-Verlag, Berlin-New York
 (1980).
\bibitem[We]{weidmann} R. Weidmann, On the rank of quotients of hyperbolic groups, J. Group Theory 16 (2013), 651-665.
\bibitem[Wi2]{wilton} H. Wilton, Solutions to Bestvina and Feighn's exercises on limit groups, Geometric and cohomological methods in group theory, London Math. Soc. Lecture Note Ser. 358 (2009), Cambridge Univ. Press, Cambridge, 30-62.
\end{thebibliography}
\end{document}